\newcounter{TmpEnumi}
\numberwithin{equation}{section}
\def\today{\number\day\space\ifcase\month\or   January\or February\or
   March\or April\or May\or June\or   July\or August\or September\or
   October\or November\or December\fi\   \number\year}
\theoremstyle{definition}
\newtheorem{thm}{Theorem}[section]
\newtheorem{lem}[thm]{Lemma}
\newtheorem{prp}[thm]{Proposition}
\newtheorem{dfn}[thm]{Definition}
\newtheorem{cor}[thm]{Corollary}
\newtheorem{rmk}[thm]{Remark}
\newtheorem{ntn}[thm]{Notation}
\newtheorem{pbm}[thm]{Problem}
\newtheorem{qst}[thm]{Question}
\newtheorem{cns}[thm]{Construction}
\newcommand{\beq}{\begin{equation}}
\newcommand{\eeq}{\end{equation}}
\newcommand{\beqa}{\begin{eqnarray*}}
\newcommand{\eeqa}{\end{eqnarray*}}
\newcommand{\bal}{\begin{align*}}
\newcommand{\eal}{\end{align*}}
\newcommand{\bi}{\begin{itemize}}
\newcommand{\ei}{\end{itemize}}
\newcommand{\be}{\begin{enumerate}}
\newcommand{\ee}{\end{enumerate}}
\newcommand{\limi}[1]{\lim_{{#1} \to \infty}}
\newcommand{\af}{\alpha}
\newcommand{\bt}{\beta}
\newcommand{\gm}{\gamma}
\newcommand{\dt}{\delta}
\newcommand{\ep}{\varepsilon}
\newcommand{\zt}{\zeta}
\newcommand{\et}{\eta}
\newcommand{\ch}{\chi}
\newcommand{\io}{\iota}
\newcommand{\te}{\theta}
\newcommand{\ld}{\lambda}
\newcommand{\sm}{\sigma}
\newcommand{\kp}{\kappa}
\newcommand{\ph}{\varphi}
\newcommand{\ps}{\psi}
\newcommand{\rh}{\rho}
\newcommand{\om}{\omega}
\newcommand{\ta}{\tau}
\newcommand{\Gm}{\Gamma}
\newcommand{\Ph}{\Phi}
\newcommand{\Ps}{\Psi}
\newcommand{\Q}{{\mathbb{Q}}}
\newcommand{\Z}{{\mathbb{Z}}}
\newcommand{\R}{{\mathbb{R}}}
\newcommand{\N}{{\mathbb{N}}}
\newcommand{\Nz}{{\mathbb{Z}}_{\geq 0}}
\newcommand{\Hi}{{\mathcal{H}}}
\newcommand{\OI}{{\mathcal{O}}_{\infty}}
\newcommand{\OT}{{\mathcal{O}}_{2}}
\newcommand{\cZ}{{\mathcal{Z}}}
\newcommand{\btt}{{\widetilde{\bt}}}
\newcommand{\Lt}{{\mathtt{Lt}}}
\newcommand{\id}{{\operatorname{id}}}
\newcommand{\ev}{{\operatorname{ev}}}
\newcommand{\dist}{{\operatorname{dist}}}
\newcommand{\sa}{{\mathrm{sa}}}
\newcommand{\spec}{{\operatorname{sp}}}
\newcommand{\diag}{{\operatorname{diag}}}
\newcommand{\rank}{{\operatorname{rank}}}
\newcommand{\spn}{{\operatorname{span}}}
\newcommand{\card}{{\operatorname{card}}}
\newcommand{\Aut}{{\operatorname{Aut}}}
\newcommand{\Ad}{{\operatorname{Ad}}}
\newcommand{\Cu}{{\operatorname{Cu}}}
\newcommand{\T}{{\operatorname{T}}}
\newcommand{\QT}{{\operatorname{QT}}}
\newcommand{\rc}{{\operatorname{rc}}}
\newcommand{\W}{{\operatorname{W}}}
\newcommand{\Ker}{{\operatorname{Ker}}}
\newcommand{\cJ}{{\mathcal{J}}}
\newcommand{\dirlim}{\varinjlim}
\newcommand{\invlim}{\varprojlim}
\newcommand{\andeqn}{\qquad {\mbox{and}} \qquad}
\newcommand{\wolog}{without loss of generality}
\newcommand{\Wolog}{Without loss of generality}
\newcommand{\tfae}{the following are equivalent}
\newcommand{\ifo}{if and only if}
\newcommand{\ca}{C*-algebra}
\newcommand{\uca}{unital C*-algebra}
\newcommand{\hm}{homomorphism}
\newcommand{\uhm}{unital homomorphism}
\newcommand{\fd}{finite dimensional}
\newcommand{\tst}{tracial state}
\newcommand{\hsa}{hereditary subalgebra}
\newcommand{\pj}{projection}
\newcommand{\mops}{mutually orthogonal \pj s}
\newcommand{\nzp}{nonzero projection}
\newcommand{\mvnt}{Murray-von Neumann equivalent}
\newcommand{\ct}{continuous}
\newcommand{\cfn}{continuous function}
\newcommand{\chs}{compact Hausdorff space}
\newcommand{\trp}{tracial Rokhlin property}
\newcommand{\trpc}{tracial Rokhlin property with comparison}
\newcommand{\ucp}{unital completely positive}
\newcommand{\cpc}{completely positive contractive}
\renewcommand{\S}{\subseteq}
\newcommand{\ov}{\overline}
\newcommand{\SM}{\setminus}
\newcommand{\I}{\infty}
\newcommand{\E}{\varnothing}
\title[The tracial Rokhlin property with comparison]{Compact
 Group Actions with the Tracial Rokhlin Property}
\author{Javad Mohammadkarimi and N. Christopher Phillips}
\date{14~June 2022}
\address{Department of Pure Mathematics,
 Faculty of Mathematical Sciences,
 Tarbiat Modares University, Tehran, Iran
 \and
 Department of Mathematics, University  of Oregon,
       Eugene OR 97403-1222, USA.}
\thanks{The work of the second author was partially supported by the
Simons Foundation Collaboration Grant for Mathematicians
 \#587103 and by the US National Science Foundation under
  Grant DMS-2055771.}
\begin{document}

\begin{abstract}
We define a ``tracial'' analog of
the Rokhlin property for actions of second countable compact groups
on infinite dimensional simple separable unital C*-algebras.
We prove that fixed point algebras under such actions
(and, in the appropriate cases, crossed products by such actions)
preserve simplicity, Property~(SP), tracial rank zero,
tracial rank at most one,
the Popa property, tracial ${\mathcal{Z}}$-stability,
${\mathcal{Z}}$-stability when the algebra is nuclear,
infiniteness, and pure infiniteness.
We also show that the radius of comparison of the fixed point algebra
is no larger than that of the original algebra.
Our version of the tracial Rokhlin property is an exact
generalization of the tracial Rokhlin property for actions of
finite groups on classifiable C*-algebras
(in the sense of the Elliott program),
but for actions of finite groups on more general C*-algebras
it may be stronger.
We discuss several alternative versions of the tracial Rokhlin property.
We give examples of actions of a totally disconnected
infinite compact group on a UHF~algebra,
and of the circle group on a simple unital AT~algebra
and on ${\mathcal{O}}_{\infty}$,
which have our version of the tracial Rokhlin property,
but do not have the Rokhlin property,
or even finite Rokhlin dimension with commuting towers.
\end{abstract}

\maketitle

\tableofcontents

\section{Introduction}\label{Sec_1X21_Intro}

\indent
Tracially AF \ca{s}, now known as \ca{s} with tracial rank zero
(see~\cite{LnTTR}),
were introduced in~\cite{LnTAF}.
Roughly speaking, a \ca{}  has tracial rank zero if
the local approximation characterization of AF~algebras
holds after cutting out a ``small'' approximately central projection.
The term ``tracial'' comes from the fact that,
in good cases, a projection $p$ is ``small'' if $\ta (p) < \ep$
for every tracial state $\ta$ on~$A$.
Simple \ca{s} with tracial rank zero are much more common
than simple AF~algebras, and
the classification~\cite{Ln15} of simple separable nuclear \ca{s}
with tracial rank zero and satisfying the Universal Coefficient Theorem
can be regarded as a vast generalization of the classification
of AF~algebras.

The use of the Rokhlin property for actions of finite groups
on \ca{s} goes back at least to Herman and Jones
in \cite{HJ1} and~\cite{HJ2}.
It was motivated by earlier work in von Neumann algebras,
such as Jones~\cite{Jns}.
Major advances on classification of Rokhlin actions of finite groups
on \ca{s} which are classifiable in the sense of the Elliott program
appear in~\cite{Izum} and~\cite{Iz2}.
The Rokhlin property can be viewed as
a regularity condition for the group action, which
can be used to show that various structural
properties pass from a \ca{} to its crossed product.
This was first realized in Theorem~2.2 of~\cite{phill23}.
The paper~\cite{OsaPhi_crossed_2012}, and later work, followed up
on this idea.
See Theorem~2.6 of~\cite{phillfree} for a summary of what
was known at the time that survey was written.
For an example of more recent work, see~\cite{San_crossed_2015}.
The papers \cite{HirWin_rokhlin_2007} and~\cite{crossrep},
mentioned below, contain some permanence properties which were
proved for actions of compact groups
without first having been proved for actions of finite groups.

Actions of finite groups with the Rokhlin property are rare
(unlike actions of $\Z$ with the Rokhlin property).
See the discussion in~\cite{phillfree}, especially Section~3 there.
The tracial Rokhlin property for actions of finite groups,
introduced in~\cite{phill23},
is related to the Rokhlin property in roughly the same way
that tracial rank zero is related to the AF~property.
There are many more actions with the tracial Rokhlin property
than there are Rokhlin actions.
See Section~3 of~\cite{phillfree}, especially Example 3.12.
Crossed products by actions of finite groups with
the tracial Rokhlin property still preserve various structural
properties of \ca{s},
although not as many as with the Rokhlin property.
The original result was Theorem~2.6 of~\cite{phill23},
for tracial rank zero.
This result played a key role in the proof in~\cite{ELPW}
that the crossed products of irrational rotation algebras
by the ``standard'' actions of $\Z_3$, $\Z_4$, and~$\Z_6$ are~AF.
Other preservation of structure theorems can be found
in \cite{Arh1}, \cite{OskTry2}
(for a more general setup than group actions),
and~\cite{radifinite} (using the weak tracial Rokhlin property).

In~\cite{HirWin_rokhlin_2007}, Hirshberg and Winter introduced
the Rokhlin property for actions of
second countable compact groups on \ca{s}.
Since then,
crossed products by compact group actions with the Rokhlin property
have been studied by several authors.
In particular, permanence properties
are proved in~\cite{HirWin_rokhlin_2007} and~\cite{crossrep}.
As with finite groups, Rokhlin actions of compact groups are rare,
especially if the group is connected.
For example, by Theorem 3.3(3) of~\cite{GardKKcirc},
if $A$ is unital, $K_0 (A)$ is finitely generated,
and $A$ admits an action of the circle with the Rokhlin property,
then $K_0 (A) \cong K_1 (A)$.

In this paper, we therefore extend the definition of
the tracial Rokhlin property to actions
of second countable compact groups on simple separable unital \ca{s}.
Our property, which we call the \trpc,
is formally stronger than the naive extension.
We were unable to prove the desired permanence properties
using the naive extension, and we doubt that it can be done.
For actions of finite groups on simple separable unital \ca{s}
with strict comparison, and in some other cases, the \trpc{}
is equivalent to the tracial Rokhlin property,
but this seems unlikely to be true in general.
We then prove that fixed point algebras under such actions,
and, in the appropriate cases, crossed products by such actions,
preserve simplicity, Property~(SP), tracial rank zero,
tracial rank at most one,
the Popa property, tracial $\cZ$-stability,
the combination of nuclearity and $\cZ$-stability,
infiniteness, and pure infiniteness.
We also show that the radius of comparison of the fixed point algebra
is no larger than that of the original algebra.
The fact that the crossed product is usually not unital
causes some problems, and makes it more convenient to work with the
fixed point algebra.
In addition, the Popa property is only defined for unital algebras.
We further give examples of actions of both
an infinite totally disconnected
compact group and the circle group~$S^1$ which have the \trpc{}
but don't have the Rokhlin property.
Our examples include an action of $S^1$ on~$\OI$.
If we weaken the definition slightly,
but in a way that does not affect any of the permanence results,
then we get such an action on every
unital purely infinite simple separable nuclear \ca.
Actions of connected compact groups with the Rokhlin property
on AH~algebras without strict comparison will appear elsewhere.

We briefly address a different generalization of the
Rokhlin property, namely finite Rokhlin dimension with commuting towers.
This concept was defined for actions of compact groups
in~\cite{Gar_rokhlin_2017}, generalizing the version for finite
groups in~\cite{HWZ}.
It is clear from~\cite{HrsPh1} that even actions of finite
groups which have finite Rokhlin dimension with commuting towers
are relatively rare.
On the other hand, for finite~$G$,
and under some restrictions on the algebra
(infinite dimensional, simple, separable
[not stated, but used in the proof],
finite, unital, with strict comparison,
and having at most countably many extreme quasitraces),
finite Rokhlin dimension with commuting towers implies
the tracial Rokhlin property, by Theorem~3.4 of~\cite{Rokhdimtracial}.
We hope to address this question for compact groups in future work,
but we point out here that we prove that the examples in this paper
do not have finite Rokhlin dimension with commuting towers
as in~\cite{Gar_rokhlin_2017}.
One of our examples is an action of $S^1$ on~$\OI$,
and, by Corollary 4.23 of \cite{Gar_rokhlin_2017},
there is {\emph{no}} action of $S^1$
on $\OI$ which has finite Rokhlin dimension with commuting towers.

The paper is organized as follows.
In the rest of this section, we present some notation, definitions,
and basic lemmas that we will use throughout the paper.
Section~\ref{Sec_722_CTRAwC_Other_typ} contains the
definition of the tracial Rokhlin property with comparison
for compact groups, and its basic properties.
In particular, we give a first version of a central sequence
formulation of the definition,
and give an averaging process which is the key technical tool
for proofs of permanence properties.

In Section~\ref{Sec_1160_Simpli_Prf},
we prove that the fixed point algebra and the crossed product
of an infinite dimensional simple separable unital \ca{}
by an action of a compact group with the tracial Rokhlin property
with comparison are again simple.
We then give an improved version of a central sequence
formulation of the \trpc.
The rest of our permanence properties are in
Section~\ref{Sec_1951_TRP_Crossed_TRR0}.

Section~\ref{S_2795_N_TRP} discusses the relation between
the \trpc{} and the tracial Rokhlin property.
In particular, when the group is finite and under some
reasonable conditions on the algebra,
we prove that they are equivalent.
Other variants of the tracial Rokhlin property are possible,
and are suggested by the apparent failure of the naive
generalization to do what is wanted.
We discuss two of the most promising variants
in Section~\ref{S_2795_mod_TRP},
and describe what we can prove with them.

In Sections \ref{Sec_3749_Exam_TRPZ2}, \ref{Sec_1908_Exam_TRPS1},
and~\ref{Sec_2114_OI},
we construct two examples of actions which have the
tracial Rokhlin property with comparison.
The first is an action of $(\Z_2)^{\N}$ on the $3^{\I}$~UHF algebra.
The second is an action of the circle group~$S^1$
on a simple unital AT~algebra.
The third is an action of $S^1$ on~$\OI$.
These examples do not have the Rokhlin property,
or even finite Rokhlin dimension with commuting towers.
Indeed, there is no action at all of $S^1$ on~$\OI$
which has finite Rokhlin dimension with commuting towers.
In the first two cases, the actions do have one of the alternate versions
of the tracial Rokhlin property
discussed in Section~\ref{S_2795_mod_TRP}.

In Section~\ref{Sec_1919_NonE} we give an easy nonexistence result
for actions of $S^1$ with even the
weakest form of the tracial Rokhlin property we consider.

Most of this work was done while the first
author was a visiting scholar at the University of
Oregon during the period September 2019 to December 2020.
He wishes to thank that institution for its hospitality.
This paper constitutes a part of first author's Ph.D.\  dissertation.

The first author would like to thank Ilan Hirshberg for
an in person conversation when he was visiting
the second author at the
University of Oregon as well as Eusebio Gardella for
electronic correspondence about the Rokhlin property.
The first author would like to specially
thank and express his gratitude to Massoud Amini.

In the rest of this section, we collect some notation, definitions,
and results that we need.

The \ca{} of $n \times n$ matrices will be denoted by $M_{n}$.
If $C$ is a \ca,
we write $C_{+}$ for the set of positive elements in~$C$
and $C_{\sa}$ for the set of selfadjoint elements in~$C$.
If $C$ is unital, we denote its tracial state space by $\T (C)$,
its set of normalized $2$-quasitraces by $\QT (C)$,
and its unitary group by ${\operatorname{U}} (C)$.
Similarly, the set of unitary operators on a Hilbert space~$\Hi$
is denoted ${\operatorname{U}} (\Hi)$.
We denote the circle group by $S^{1}$, and identify
it with the set of complex numbers of absolute value~$1$.
An action $\alpha \colon G \to \Aut (A)$ of group $G$ on a \ca~$A$
is assumed \ct{} unless stated otherwise.
Also, we denote by $A^{\alpha}$ the fixed point
subalgebra of $A$ under $\alpha$.

We take $\N = \{ 1, 2, \ldots \}$,
and we abbreviate $\N \cup \{ 0 \}$ to $\Nz$.

\begin{ntn}\label{N_1Z31_EF}
If $A$ is a \ca{} and $E, F \S A$ are subsets, then we set
\[
E F = \spn \bigl( \bigl\{ x y \colon
 {\mbox{$x \in E$ and $y \in F$}} \bigr\} \bigr).
\]
\end{ntn}

We emphasize that we take the linear span of the products,
but not its closure: this is written ${\overline{E F}}$.

\begin{ntn}\label{N_1X07_SeqAlgs}
Let $A$ be a \ca.
We define
\[
l^{\I} (\N, A)
  = \left\{ (a_{n})_{n \in \N} \in A^{\N} \colon
     \sup_{n \in \N} \| a_{n} \| < \I\right\},
\]
\[
c_0 (\N, A)
 = \left\{(a_{n})_{n \in \N} \in l^{\I} (\N, A) \colon
      \lim\limits_{n \to \I} \| a_{n} \| = 0 \right\},
\]
and
\[
A_{\I} = l^{\I} (\N, A) / c_0 (\N, A).
\]
We further let $\pi_A \colon l^{\I} (\N, A) \to A_{\I}$
be the quotient map.

Now let $G$ be a topological group
and let $\af \colon G \to \Aut (A)$ be an action of $G$ on~$A$.
There are obvious actions $g \mapsto \af_{g}^{\I}$ of $G$
on $l^{\I} (\N, A)$
and $g \mapsto \af_{\I, g}$ of $G$ on $A_{\I}$,
which need {\emph{not}} be \ct.
We define
\[
l^{\I}_{\af} (\N, A)
 = \bigl\{ a \in l^{\I} (\N, A) \colon
   {\mbox{$g \mapsto \af_{g}^{\I} (a)$ is \ct}} \bigr\}
\]
and
\[
A_{\I, \af}
 = \pi_A \bigl( l^{\I}_{\af} (\N, A) \bigr)
 \S A_{\I}.
\]
The subalgebra $l^{\I}_{\af} (\N, A)$ is $\af^{\I}$-invariant,
so we still write $\af^{\I}$ for the action of $G$ on $A_{\I, \af}$.
By construction, this action {\emph{is}} \ct.
We further still write $g \mapsto \af_{\I, g}$ for the induced action
on $A_{\I, \af}$, which is also obviously \ct.

We identify $A$ in the obvious way
with the subalgebra of $l^{\I}_{\af} (\N, A)$
consisting of constant sequences,
and also with the image of this subalgebra in $A_{\I, \af}$
under $\pi_A$.
Then we can form the relative commutant algebra
$A_{\I, \af} \cap A' \S A_{\I, \af}$.
It is clearly $\af_{\I}$-invariant,
and we also denote the restricted action on this subalgebra
by $\af_{\I}$.
\end{ntn}

We have
\[
\begin{split}
A_{\I, \af} \cap A'
& = \Big\{\pi_{A} ((a_{n})_{n \in \N}) \in A_{\I} \colon
  {\mbox{$(a_{n})_{n \in \N} \in l^{\I}_{\af} (\N, A)$}}
\\
& \hspace*{3em} {\mbox{}}
    {\mbox{and $\lim\limits_{n \to \I} \| a_{n} a - a a_{n} \| = 0$
     for all $a \in A$}} \Big\}.
\end{split}
\]

\begin{dfn}[\cite{HrsPh1}, Definition 1.3]\label{phillhir}
Let $G$ be a compact group, and let $A$ and $D$ be unital \ca{s}.
Let $\alpha \colon G \to \Aut (A)$
and $\gamma \colon G \to \Aut ( D)$
be actions of $G$ on $A$ and $D$.
Let $S \S D$
and $F \S A$ be subsets, and let $\varepsilon > 0$.
A unital completely positive map $\varphi \colon D \to A$
is said to be an {\emph{$(S, F, \varepsilon )$-approximately equivariant
central multiplicative map}} if:
\begin{enumerate}
\item\label{Item_1X07_4}
$\| \varphi (x y) - \varphi (x) \varphi (y) \| < \varepsilon$
for all $x, y \in S$.
\item\label{Item_1X07_5}
$\| \varphi (x) a - a \varphi (x) \| < \varepsilon$
for all $x \in S$ and all $a \in F$.
\item\label{equiapprox}
$\mathrm{sup}_{g \in G} \| \varphi ( \gamma_{g} (x)) -
        \alpha_{g} (\ph (x)) \| < \varepsilon$
for all $x \in S$.
\end{enumerate}
\end{dfn}

Ignoring the actions and omitting condition~(\ref{equiapprox}),
we get the usual definition of
an $(S, F, \varepsilon )$-approximately central multiplicative map.

The following versions of approximate centrality
and approximate multiplicativity are convenient.

\begin{dfn}\label{D_1X10_nSFe}
Let $A$ and $D$ be unital \ca{s}, and let $S \subseteq D$.
A unital completely positive map $\varphi \colon D \to A$
is said to be an
{\emph{$(n, S, \varepsilon )$-approximately multiplicative map}}
if whenever $m \in \{ 1, 2, \ldots, n \}$
and $x_1, x_2, \ldots, x_m \in S$, we have
\[
\bigl\| \varphi (x_1 x_2 \cdots x_m)
  - \varphi (x_1) \varphi (x_2) \cdots \varphi (x_m) \bigr\|
   < \varepsilon.
\]
If also $F \subseteq A$ is given, then $\ph$ is said to be an
{\emph{$(n, S, F, \varepsilon )$-approximately central
multiplicative map}} if, in addition,
$\| \varphi (x) a - a \varphi (x) \| < \varepsilon$
for all $x \in S$ and all $a \in F$.
\end{dfn}

\begin{lem}\label{L_1X09_Get_nSFe}
Let $A$ and $D$ be unital \ca{s}, let $\varepsilon > 0$,
let $S \S D$ be compact, and let $n \in \N$.
\begin{enumerate}
\item\label{Item_1X11_Get_am}
There exist $\dt > 0$ and a compact subset $T \S D$
such that whenever $\ph \colon D \to A$ is \ucp{}
and is a $(T, \dt )$-approximately multiplicative map,
then $\varphi$ is an $(n, S, \ep)$-approximately multiplicative map.
\item\label{Item_1X11_Get_accm}
If in addition $F \S A$ is a compact subset,
then there exist $\dt > 0$ and compact subsets $T \S D$ and $E \S A$
such that whenever $\ph \colon D \to A$ is \ucp{}
and is a $(T, E, \dt )$-approximately central multiplicative map,
then $\varphi$ is an
$(n, S, F, \ep)$-approximately central multiplicative map.
\end{enumerate}
\end{lem}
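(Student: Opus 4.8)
The plan is to reduce the $n$-fold (many-variable) approximate multiplicativity condition to the ordinary two-variable one by a telescoping estimate, using compactness of $S$ in two ways: to get a uniform norm bound on its elements, and to guarantee that the larger test set $T$ we must build is again compact.

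First I would introduce the iterated product sets. For $k \in \{ 1, 2, \ldots, n \}$ put
\[
P_k = \bigl\{ x_1 x_2 \cdots x_k \colon x_1, x_2, \ldots, x_k \in S \bigr\},
\]
which is the image of the compact set $S \times S \times \cdots \times S$ ($k$ factors) under the jointly continuous multiplication map $D^{k} \to D$; hence each $P_k$ is compact, and so is $T = \bigcup_{k=1}^{n} P_k \S D$. I would also set $M = \sup_{x \in S} \| x \| < \I$, which is finite because $S$ is compact.

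The core of part~(\ref{Item_1X11_Get_am}) is an induction on $m$ showing that if $\ph$ is \ucp{} and $(T, \dt )$-approximately multiplicative, and $x_1, \ldots, x_m \in S$ with $m \leq n$, then $e_m = \| \ph (x_1 \cdots x_m) - \ph (x_1) \cdots \ph (x_m) \|$ is bounded by a constant depending only on $M$ and $n$ times~$\dt$. For the inductive step I would write
\[
\ph (x_1 \cdots x_m) - \ph (x_1) \cdots \ph (x_m)
 = \bigl[ \ph (x_1 (x_2 \cdots x_m)) - \ph (x_1) \ph (x_2 \cdots x_m) \bigr]
   + \ph (x_1) \bigl[ \ph (x_2 \cdots x_m) - \ph (x_2) \cdots \ph (x_m) \bigr].
\]
The first bracket is $< \dt$ because $x_1 \in P_1 \S T$ and $x_2 \cdots x_m \in P_{m-1} \S T$, so this pair is tested by the $(T, \dt )$-condition; the second bracket equals $e_{m-1}$ after pulling out $\| \ph (x_1) \| \leq \| x_1 \| \leq M$ (using that a \ucp{} map is contractive). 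This yields the recursion $e_m \leq \dt + M e_{m-1}$ with $e_1 = 0$, hence $e_m \leq \dt \sum_{j=0}^{m-2} M^{j} \leq \dt \sum_{j=0}^{n-2} M^{j}$ for all $m \leq n$ (empty sum $= 0$ when $m = 1$). Choosing $\dt > 0$ small enough that this last bound is $< \ep$ completes part~(\ref{Item_1X11_Get_am}).

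For part~(\ref{Item_1X11_Get_accm}) I would keep the same $T$, shrink $\dt$ further so that also $\dt \leq \ep$, and simply take $E = F$. The multiplicativity clause of the conclusion is exactly part~(\ref{Item_1X11_Get_am}), and the centrality clause is immediate: for $x \in S \S T$ and $a \in F = E$ the $(T, E, \dt )$-hypothesis already gives $\| \ph (x) a - a \ph (x) \| < \dt \leq \ep$. I do not expect a genuine obstacle here; the only point needing care is that the telescoping constant involves the norm bound $M$ rather than~$1$, since elements of $S$ need not be contractions — precisely the place where compactness of $S$ enters, keeping both $M$ finite and $T$ compact.
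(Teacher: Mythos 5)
Your proof is correct, and since the paper explicitly omits this proof as ``routine,'' your telescoping argument (compactness of $S$ giving a finite norm bound $M$ and a compact product set $T = \bigcup_{k=1}^{n} P_k$, followed by the recursion $e_m \leq \dt + M e_{m-1}$ and taking $E = F$ for the centrality clause) is exactly the standard argument the authors have in mind. No gaps.
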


\begin{proof}
The proof is routine, and is omitted.
\end{proof}

\begin{ntn}\label{N_1X07_Lt}
If $G$ is a locally compact group, we denote by
${\mathtt{Lt}} \colon G \to \Aut (C_0 (G))$
the action of $G$ on $C_0 (G)$ induced by the action of $G$ on itself
by left translation.
\end{ntn}

We also recall the following definitions related to Cuntz comparison.
The first part is originally from~\cite{Cuntz78}.

\begin{dfn}\label{Cuntz.df}
Let $A$ be a \ca.
\begin{enumerate}
\item\label{Cuntz_def_property_a}
For $a, b \in M_{\infty} (A)_{+}$,
we say that $a$ is
\emph{Cuntz subequivalent} to~$b$ in~$A$,
written $a \precsim_{A} b$,
if there is a sequence $(v_{n})_{n = 1}^{\infty}$ in $M_{\infty} (A)$
such that $\limi{n} v_{n} b v_{n}^{*} = a$.
\item\label{Cuntz_def_property_b}
We say that $a$ and $b$ are \emph{Cuntz equivalent} in~$A$,
written $a \sim_{A} b$, if $a \precsim_{A} b$ and $b \precsim_{A} a$.
This relation is an equivalence relation.
\item\label{1X18_Cuntz_def_dtau}
If $A$ is unital, $\ta \in \QT (A)$, and $a \in M_{\infty} (A)_{+}$,
then we define $d_{\ta} (a) = \lim_{n \to \infty} \ta (a^{1 / n})$.
\item\label{1X30_Cuntz_def_rc}
If $A$ is unital, the {\emph{radius of comparison}} $\rc (A)$ is the
infimum of all $\rh > 0$
such that whenever $a, b \in M_{\infty} (A)_{+}$
and $d_{\ta} (a) + \rh < d_{\ta} (b)$ for all $\ta \in \QT (A)$,
then $a \precsim_{A} b$.
\end{enumerate}
\end{dfn}

\begin{lem}[\cite{philar}, Lemma 2.6]\label{lma_L683_baj}
Let $A$ be a simple \ca,
and let $B\subseteq A$ be a nonzero hereditary subalgebra.
Let $n \in \N$, and let
$a_{1}, \ldots, a_{n} \in A_{+} \setminus \{0 \}$.
Then there exists $b \in B_{+} \setminus \{0 \}$ such
that $b \precsim_{A} a_{j}$ for $j = 1, \ldots, n$.
\end{lem}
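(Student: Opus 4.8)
The plan is to reduce everything to the case $n = 1$ by an induction whose single step rests on the following claim: \emph{if $A$ is simple and $a, b \in A_{+} \SM \{0\}$, then there is a nonzero $c \in \overline{bAb}$ with $c \precsim_{A} a$.} Granting the claim, the lemma follows quickly. For $n = 1$ I would choose any nonzero $b_{0} \in B_{+}$ (one exists since $B \neq 0$) and apply the claim to $a_{1}$ and $b_{0}$; because $B$ is hereditary and $b_{0} \in B$ we have $\overline{b_{0} A b_{0}} \S B$, so the resulting $c$ lies in $B_{+} \SM \{0\}$ and satisfies $c \precsim_{A} a_{1}$. For the inductive step, suppose $b' \in B_{+} \SM \{0\}$ has been found with $b' \precsim_{A} a_{j}$ for $j = 1, \ldots, n-1$. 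Applying the claim to $a_{n}$ and $b'$ produces a nonzero $c \in \overline{b' A b'}$ with $c \precsim_{A} a_{n}$. Since $c$ is a positive element of the hereditary subalgebra $\overline{b' A b'}$, we have $c \precsim_{A} b'$, so transitivity gives $c \precsim_{A} a_{j}$ for $j = 1, \ldots, n-1$ as well; and $c \in \overline{b' A b'} \S B$. Thus $b = c$ works, completing the induction.

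The heart of the matter is the claim, which I would prove by an explicit construction. For $z \in A$ set $c_{z} = b z^{*} a z b$, and write $w = a^{1/2} z b$, so that $c_{z} = w^{*} w$. Then $c_{z} \in A_{+}$ and
\[
c_{z} \sim_{A} w w^{*} = a^{1/2} (z b^{2} z^{*}) a^{1/2} \leq \| z b^{2} z^{*} \| \, a,
\]
so $c_{z} \precsim_{A} a$. Moreover $c_{z} = b (z^{*} a z) b \in \overline{b A b}$. Hence \emph{every} $c_{z}$ lies in the hereditary subalgebra generated by $b$ and is Cuntz subequivalent to $a$, and the only point that remains is to arrange $c_{z} \neq 0$ for some choice of~$z$.

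This last point is exactly where simplicity enters, and I expect it to be the only genuine obstacle, the Cuntz estimates above being routine. Suppose for contradiction that $c_{z} = 0$ for every $z \in A$. Since $c_{z} = w^{*} w$ with $w = a^{1/2} z b$, this forces $a^{1/2} z b = 0$ for all $z$, i.e.\ $a^{1/2} A b = 0$. On the other hand, simplicity of $A$ together with $a \neq 0$ gives that the closed ideal generated by $a^{1/2}$ is all of $A$, so $b \in A = \overline{A a^{1/2} A}$; approximating $b$ by finite sums $\sum_{i} u_{i} a^{1/2} v_{i}$ and multiplying on the right by $b$, each term equals $u_{i} \cdot (a^{1/2} v_{i} b) = 0$ because $a^{1/2} A b = 0$, whence $b^{2} = 0$ and so $b = 0$, a contradiction. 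Therefore some $c_{z}$ is nonzero, which proves the claim and with it the lemma.
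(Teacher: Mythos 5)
Your proof is correct, and it follows essentially the standard argument (the paper itself gives no proof here, citing Lemma~2.6 of~\cite{philar}, whose proof runs along the same lines: reduce to one element at a time via the hereditary subalgebra $\overline{b' A b'}$, and produce the element $b z^* a z b \sim_A a^{1/2} z b^2 z^* a^{1/2} \leq \| z b^2 z^* \| \, a$, with simplicity guaranteeing a nonzero choice of~$z$). The only ingredients you use without proof --- that $w^* w \sim_A w w^*$, that $d \leq \lambda a$ implies $d \precsim_A a$, and that a positive element of $\overline{b' A b'}$ is Cuntz subequivalent to $b'$ --- are standard facts about Cuntz comparison, so nothing is missing.
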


\begin{lem}[\cite{phill23}, Lemma 1.10]\label{OrthInSP}
Let $A$ be an infinite dimensional simple unital \ca{}
with Property~(SP).
Let $B \S A$ be a nonzero hereditary subalgebra, and let $n \in \N$.
Then there exist nonzero Murray-von
Neumann equivalent mutually orthogonal projections
$p_{1}, p_{2}, \ldots, p_{n} \in B$.
\end{lem}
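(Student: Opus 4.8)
The plan is to build the required family by transporting a single projection into $n$ mutually orthogonal hereditary subalgebras, using Lemma~\ref{lma_L683_baj}, Property~(SP), and the following standard fact from Cuntz comparison theory: if $e \in A$ is a projection, $a \in A_{+}$, and $e \precsim_{A} a$, then $e$ is \mvnt{} to a projection lying in $\overline{a A a}$.

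First I would note that $B$ is infinite dimensional. Otherwise $B$ would contain a minimal projection $p$; since $B$ is hereditary we have $p A p \S B$ and hence $p A p = p (p A p) p \S p B p = \mathbb{C} p$, so $p$ would be minimal in the simple unital algebra $A$, forcing $A \cong M_{k}$ for some $k$ and contradicting the hypothesis that $A$ is infinite dimensional. Being infinite dimensional, $B$ contains $n$ nonzero mutually orthogonal positive elements $a_{1}, \ldots, a_{n}$ (take a selfadjoint element of $B$ with infinite spectrum and apply functional calculus with $n$ bump functions of disjoint support). Set $B_{j} = \overline{a_{j} A a_{j}} \S B$. Since $a_{i} a_{j} = 0$ for $i \ne j$, the hereditary subalgebras $B_{1}, \ldots, B_{n}$ are mutually orthogonal.

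Next I would apply Lemma~\ref{lma_L683_baj} to the nonzero hereditary subalgebra $B$ and the elements $a_{1}, \ldots, a_{n}$, obtaining $b \in B_{+} \SM \{ 0 \}$ with $b \precsim_{A} a_{j}$ for $j = 1, \ldots, n$. Property~(SP) applied to $\overline{b A b}$ yields a nonzero projection $e \in \overline{b A b}$, so $e \precsim_{A} b \precsim_{A} a_{j}$ and hence $e \precsim_{A} a_{j}$ for every $j$. By the fact quoted above, for each $j$ there is a projection $p_{j} \in \overline{a_{j} A a_{j}} = B_{j}$ with $p_{j} \sim e$. The $p_{j}$ are nonzero (they are equivalent to the nonzero projection $e$), they lie in the mutually orthogonal subalgebras $B_{j} \S B$ and are therefore mutually orthogonal, and they are all \mvnt{} to $e$ and hence to one another. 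This is exactly the assertion of the lemma.

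The step I expect to be the crux is the quoted Cuntz-comparison fact, since it is where subequivalence of positive elements is converted into honest \mvnc{} of projections. It can be proved directly: writing $e \precsim_{A} a_{j}$ as $v_{k} a_{j} v_{k}^{*} \to e$ and setting $y = e v_{k}$ for $k$ large, one checks that $\| y a_{j} y^{*} - e \| < 1$, so $y a_{j} y^{*}$ is invertible in the corner $e A e$; then $s = a_{j}^{1/2} y^{*} (y a_{j} y^{*})^{-1/2}$ (inverse in $e A e$) satisfies $s^{*} s = e$ and $s s^{*} = a_{j}^{1/2} y^{*} (y a_{j} y^{*})^{-1} y a_{j}^{1/2} \in \overline{a_{j} A a_{j}}$, so $p_{j} = s s^{*}$ is the desired projection.
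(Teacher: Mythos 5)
Your argument is correct and complete: the orthogonal positive elements in $B$, the single positive element $b$ dominated by all of them via Lemma~\ref{lma_L683_baj}, the projection from Property~(SP), and the transport of that projection into each $\overline{a_j A a_j}$ by the standard Cuntz-comparison fact (which you also prove correctly) together give exactly the desired family. The paper itself gives no proof, citing Lemma~1.10 of~\cite{phill23} instead, and your argument is essentially the same standard one used there, so there is nothing to add.
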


\section{The tracial Rokhlin property with
 comparison}\label{Sec_722_CTRAwC_Other_typ}

In this section we define the
tracial Rokhlin property with comparison for
actions of compact groups, and prove several equivalent versions
and some basic facts.
We begin with a brief reminder of the tracial Rokhlin property
for actions of finite groups
and the Rokhlin property for actions of compact groups.
These are the properties we combine in this paper.

When restricted to finite groups, our version of the
tracial Rokhlin property for actions of compact groups
is stronger than the usual tracial Rokhlin property
for actions of finite groups.
We discuss this issue in more detail
in Section~\ref{S_2795_N_TRP},
where we also discuss other possible definitions.
We have been unable to prove the expected results just using
the naive generalization of the tracial Rokhlin property
for actions of finite groups.

In Lemma~\ref{mtrpcentral}, we give a version of the definition
(for separable \ca{s}) in terms of central sequence algebras.
It will be improved later, but the proof of the improvement
uses simplicity of the fixed point algebra, whose proof in turn
uses the version in this section.
We end this section with a theorem on existence of
suitable maps from $A$ to $A^{\af}$,
which in many proofs involving the \trpc{} is what we actually use.

\begin{dfn}[\cite{phill23}, Definition 1.2]\label{D_1619_TRP}
Let $A$ be an infinite dimensional simple unital \ca,
let $G$ be a finite group,
and let $\af \colon G \to \Aut (A)$ be an action of $G$ on~$A$.
The action $\af$ has the {\emph{tracial Rokhlin property}}
if for every finite set $F \subseteq A$, every $\ep > 0$, and
every $x \in A_{+}$ with $\| x \| = 1$,
there exist \mops{} $p_g \in A$ for $g \in G$ such that,
with $p = \sum_{g \in G} p_g$, we have:
\begin{enumerate}
\item\label{Item_1619_tRp_p_comm}
$\| p_g a - a p_g \| < \ep$ for all $a \in F$ and all $g \in G$.
\item\label{Item_1619_tRp_CM}
$\| \af_g (p_h) - p_{g h} \| < \ep$ for all $g, h \in G$.
\item\label{Item_1619_tRp_sub_x}
$1 - p \precsim_A x$.
\item\label{Item_1619_tRp_pxp}
$\| p x p \| > 1 - \ep$.
\end{enumerate}
\end{dfn}

Separability is assumed in Definition 1.2 of~\cite{phill23},
but there is no reason to require separability.

We note in passing that, by Proposition 5.26 of~\cite{phieqsemi},
in~(\ref{Item_1619_tRp_CM}), one can require
$\af_g (e_h) = e_{g h}$ for all $g, h \in G$.

Hirshberg and Winter defined the
Rokhlin property for an action of a second countable compact
group in Definition~3.2 of~\cite{HirWin_rokhlin_2007}.
See the explanation before Definition 2.3 in \cite{Gdla}.

\begin{dfn} [\cite{Gdla}, Definition 2.3]\label{df_Rpcpt}
Let $A$ be a separable \uca, let $G$ be a second countable compact
group, and let $\alpha \colon G \to \Aut (A)$ be an
action of $G$ on~$A$.
Recalling Notation~\ref{N_1X07_Lt},
we say that $\alpha$ has the {\emph{Rokhlin property}} if
there is an equivariant unital homomorphism
\[
\varphi \colon
 (C (G), {\mathtt{Lt}}) \to (A_{\I, \alpha} \cap A', \alpha_{\I}).
\]
\end{dfn}

Separability is not assumed in~\cite{Gdla},
but the use of the central sequence algebra means that the
definition is not appropriate for nonseparable \ca{s}.
The right definition for the nonseparable case is the condition
in the following lemma.

\begin{lem}\label{L_1X16_AppDfRk}
Let $A$ be a separable \uca, let $G$ be a second countable compact
group, and let $\alpha \colon G \to \Aut (A)$ be an
action of $G$ on~$A$.
Then $\af$ has the Rokhlin property \ifo{} for every
if for every finite set $F \subseteq A$,
every finite set $S \subseteq C (G)$,
and every $\varepsilon > 0$, there exists a
unital completely positive map $\varphi \colon C (G) \to A$
which is $(F, S, \varepsilon)$-approximately equivariant
central multiplicative (Definition~\ref{phillhir}).
\end{lem}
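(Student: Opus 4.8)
The plan is to prove the two implications separately, viewing the central sequence formulation of Definition~\ref{df_Rpcpt} and the local formulation as the same data at different ``resolutions''. Throughout I would use that $C(G)$ is separable (as $G$ is second countable compact) and nuclear (as it is commutative), that $A$ is separable, and that $A_{\I,\af} = l^\I_\af(\N,A)/c_0(\N,A)$, which holds because $c_0(\N,A) \S l^\I_\af(\N,A)$: a norm-null sequence has a norm-continuous orbit map into $l^\I(\N,A)$, since all but finitely many of its entries are uniformly small and $\af$ is continuous.

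For the forward implication, assume $\af$ has the Rokhlin property, witnessed by an equivariant unital \hm{} $\varphi \colon (C(G),\Lt) \to (A_{\I,\af}\cap A', \af_\I)$. First I would regard $\varphi$ as a \ucp{} map into the quotient $l^\I_\af(\N,A)/c_0(\N,A)$ and apply the Choi--Effros lifting theorem to obtain a \ucp{} lift $\Phi \colon C(G) \to l^\I_\af(\N,A)$; composing with the coordinate evaluations (unital $*$-\hms) yields \ucp{} maps $\varphi_n \colon C(G) \to A$ with $\Phi(x) = (\varphi_n(x))_n$. Because $\varphi$ is multiplicative and lands in $A'$, for each fixed $x,y \in C(G)$ and $a \in A$ one has $\|\varphi_n(xy)-\varphi_n(x)\varphi_n(y)\| \to 0$ and $\|\varphi_n(x)a - a\varphi_n(x)\| \to 0$; since $S$ and $F$ are finite, a single large $n$ settles conditions~(\ref{Item_1X07_4}) and~(\ref{Item_1X07_5}) of Definition~\ref{phillhir}. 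The equivariance condition~(\ref{equiapprox}) is the only delicate point, because equivariance of $\varphi$ gives only $\|\varphi_n(\Lt_g(x)) - \af_g(\varphi_n(x))\| \to 0$ for each \emph{fixed} $g$, whereas we need this uniformly over the infinite group $G$.

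The hard part is exactly this passage from pointwise to uniform control in $g$, and here compactness of $G$ together with the choice to lift into the continuity algebra is what saves the day. Since $\Phi(x) \in l^\I_\af(\N,A)$, the map $g \mapsto (\af_g(\varphi_n(x)))_n$ is norm continuous into $l^\I(\N,A)$, that is, $\sup_n \|\af_g(\varphi_n(x)) - \af_{g'}(\varphi_n(x))\| \to 0$ as $g' \to g$; combined with $\|\varphi_n(\Lt_g(x)) - \varphi_n(\Lt_{g'}(x))\| \le \|\Lt_g(x) - \Lt_{g'}(x)\|$, this makes the continuous functions $F_n \colon g \mapsto \varphi_n(\Lt_g(x)) - \af_g(\varphi_n(x))$ equicontinuous in $n$. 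An equicontinuous sequence of functions on the compact space $G$ that converges to $0$ pointwise converges to $0$ uniformly (a three-$\ep$ argument over a finite subcover), so $\sup_{g \in G}\|F_n(g)\| \to 0$ for each $x \in S$, and a single large $n$ gives~(\ref{equiapprox}). Choosing $n$ large enough for all three conditions, $\varphi_n$ is the desired map.

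For the reverse implication, I would fix increasing finite sets $S_1 \S S_2 \S \cdots \S C(G)$ and $F_1 \S F_2 \S \cdots \S A$ with dense unions, and for each $n$ apply the hypothesis with $S_n$, $F_n$, $1/n$ to obtain a \ucp{} map $\psi_n \colon C(G) \to A$ that is approximately equivariant central multiplicative at level $1/n$. Assembling the $\psi_n$ into $\Psi \colon C(G) \to l^\I(\N,A)$ and setting $\ov\Psi = \pi_A \circ \Psi$, the approximate multiplicativity and centrality estimates force $\ov\Psi$ to be a unital \hm{} whose image commutes with $A$, while the approximate equivariance estimate, whose error tends to $0$, forces $\ov\Psi(\Lt_g(x)) = \af_{\I,g}(\ov\Psi(x))$ exactly on the dense set and hence everywhere; since $\ov\Psi$ and $g \mapsto \Lt_g(x)$ are continuous, each orbit $g \mapsto \af_{\I,g}(\ov\Psi(x)) = \ov\Psi(\Lt_g(x))$ is then norm continuous. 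The one remaining point is that an element of $A_\I$ with norm-continuous orbit already lies in the continuity algebra $A_{\I,\af}$; this is the standard characterization of $A_{\I,\af}$, proved by averaging a bounded representative against an approximate identity in $L^1(G)$, the averaged coordinate sequences being manifestly in $l^\I_\af(\N,A)$ with a modulus of continuity independent of the coordinate. With this, $\ov\Psi$ maps into $A_{\I,\af}\cap A'$ and is the required equivariant unital \hm, so $\af$ has the Rokhlin property.
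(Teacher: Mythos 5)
Your proof is correct. The paper itself omits the proof of Lemma~\ref{L_1X16_AppDfRk} as ``standard'', so there is nothing to compare line by line; but your argument is essentially the standard one, and it uses exactly the techniques the paper deploys for the analogous tracial statement in Lemma~\ref{mtrpcentral}: a Choi--Effros lift of the Rokhlin \hm{} into $l^{\I}_{\af}(\N, A)$, followed by an upgrade of pointwise-in-$g$ convergence of the equivariance defect to uniform convergence. Where the paper gets this uniformity from Dini's theorem (monotone continuous majorants $\rh_n$ built from the backwards shift), you get it from equicontinuity of the defect functions plus pointwise convergence on the compact group; the two devices are interchangeable here. In the reverse direction the paper's habit (in Lemma~\ref{mtrpcentral}) is to average each $\ps_n$ over $G$ first, so that the assembled sequence lands in $l^{\I}_{\af}(\N, A)$ by construction; you instead pass to the quotient first and then invoke the characterization of $A_{\I, \af}$ as the elements of $A_{\I}$ with norm-continuous orbit. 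That characterization is true and your sketch of it (average a bounded lift against an $L^1(G)$-approximate identity; the averaged sequence has a coordinate-independent modulus of continuity, and a reverse-Fatou estimate shows its image approximates the given element) is the right proof, so this is a legitimate alternative route rather than a gap.
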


\begin{proof}
The proof is standard, and is omitted.
\end{proof}

\begin{dfn}\label{traR}
Let $A$ be an infinite dimensional simple unital \ca,
and let $\alpha \colon G \to \Aut (A)$ be
an action of a second countable compact group $G$ on~$A$.
The action $\alpha$ has the
\emph{tracial Rokhlin property with comparison}
if for every finite set $F \subseteq A$,
every finite set $S \subseteq C (G)$,
every $\varepsilon > 0$, every $x \in A_{+}$ with $\| x \| = 1$,
and every $y \in (A^{\alpha})_{+} \setminus \{ 0 \}$,
there exist a projection $p \in A^{\alpha}$ and a
unital completely positive map $\varphi \colon C (G) \to p A p$
such that the following hold.
\begin{enumerate}
\item\label{Item_893_FS_equi_cen_multi_approx}
$\varphi$ is an $(F, S, \varepsilon)$-approximately equivariant
central multiplicative map (Definition~\ref{phillhir}).
\item\label{1_pxcompactsets}
$1 - p \precsim_{A} x$.
\item\label{1_pycompactsets}
$1 - p \precsim_{A^{\alpha}} y$.
\item\label{1_ppcompactsets}
$1 - p \precsim_{A^{\alpha}} p$.
\item\label{Item_902_pxp_TRP}
$\| p x p \| > 1 - \varepsilon$.
\end{enumerate}
\end{dfn}

Definition~\ref{traR} requires that $p$ be $\af$-invariant,
a requirement not present in Definition~\ref{D_1619_TRP}.
But Definition~\ref{D_1619_TRP} is unchanged if this requirement
is added, by Lemma~1.17 of~\cite{phill23}.
Definition~\ref{traR} has two conditions which have no analogs
in Definition~\ref{D_1619_TRP}.
Condition~(\ref{1_pycompactsets}) is automatic for finite groups.
We do not know whether it is automatic in general,
but there is evidence to suggest that it isn't.
Condition~(\ref{1_ppcompactsets}) is automatic for finite groups
under some additional conditions, for example,
if $A$ has strict comparison.
Even for finite groups, we do not know whether it is always automatic.
For this reason, we don't use the term ``tracial Rokhlin property''
in Definition~\ref{traR}.
See Section~\ref{S_2795_N_TRP} for these results, and more.
Condition~(\ref{Item_902_pxp_TRP}) is included to ensure that,
for finite groups,
the \trpc{} implies the tracial Rokhlin property
as originally defined (Definition~\ref{D_1619_TRP}).
It is not needed for any of the permanence properties we prove;
see Remark~\ref{R_2015_nzp} and Remark~\ref{L_2015_S4_nzp}.
It is redundant when $A$ is stably finite;
see Lemma~\ref{L_1X09_NoNorm1}.

When $G$ is finite, and assuming suitable separability and
nuclearity hypotheses, this definition can be reformulated
in terms of the central sequence algebra.
(As far as we know, such a reformulation
has not appeared in the literature.)
In Lemma~\ref{mtrpcentral} and Proposition~\ref{P_1X14_CentSq}
below, we give central sequence algebra formulations
of Definition~\ref{traR}.

\begin{rmk}\label{R_1Z09_Variants}
The main technical result of this section
is Theorem~\ref{thm_ApproxHommtr}, which for a given tolerance
and given compact subsets of $A$ and $A^{\alpha}$
gives an $\af$-invariant projection $p$ which is ``large''
in the senses described in Conditions (\ref{1_pycompactsets}),
(\ref{1_ppcompactsets}), and~(\ref{Item_902_pxp_TRP})
of Definition~\ref{traR}, and a
\ucp{} approximately multiplicative map from $A$ to $p A^{\alpha} p$.
Primarily for our discussion of other versions of Definition~\ref{traR},
we use variants in which some of
these conditions are omitted or modified, and we will need
the corresponding versions of Theorem~\ref{thm_ApproxHommtr}.
For this purpose, we point out here that, in the proof of
Theorem~\ref{thm_ApproxHommtr} and the lemmas leading up to it,
these conditions are treated independently, in some cases only as long
as enough of them are present to ensure that $p \neq 0$.
We explicitly state several versions which we use (after
Theorem~\ref{thm_ApproxHommtr}:
Remark~\ref{R_1Z09_ntr}, Remark~\ref{R_1Z09_nzp},
Remark~\ref{R_1Z09_mod_trp}, and Remark~\ref{R_S_mod_trp}),
and describe how the reasoning below must be modified, but
the method is quite general.
\end{rmk}

For comparison, we give a reformulation of Definition~\ref{traR}
for finite groups
which more closely resembles Definition~\ref{D_1619_TRP}.

\begin{lem}\label{L_1X16_trpc_GFin}
Let $A$ be an infinite dimensional simple unital \ca,
let $G$ be a finite group,
and let $\af \colon G \to \Aut (A)$ be an action of $G$ on~$A$.
Then $\af$ has the \trpc{} \ifo{}
for every finite set $F \subseteq A$, every $\ep > 0$,
every $x \in A_{+}$ with $\| x \| = 1$,
and every $y \in (A^{\af})_{+} \setminus \{ 0 \}$
there exist a projection $p \in A^{\alpha}$
and mutually orthogonal projections $(p_{g})_{g \in G}$
such that the following hold.
\begin{enumerate}
\item\label{Item_1X16_Inv}
$p = \sum_{g \in G} p_{g}$.
\item\label{Item_1X16_Comm}
$\| p_g a - a p_g \| < \ep$ for all $a \in F$ and all $g \in G$.
\item\label{Item_1X16_Prm}
$\| \af_g (p_h) - p_{g h} \| < \ep$ for all $g, h \in G$.
\item\label{Item_1X16_sub_x}
$1 - p \precsim_A x$.
\item\label{Item_1X16_sub_yy}
$1 - p \precsim_{A^{\alpha}} y$.
\item\label{Item_1X16_sub_1mp}
$1 - p \precsim_{A^{\alpha}} p$.
\item\label{Item_1X16_Mnp}
$\| p x p \| > 1 - \ep$.
\setcounter{TmpEnumi}{\value{enumi}}
\end{enumerate}
\end{lem}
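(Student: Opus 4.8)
The statement is an equivalence, and the bridge between the two formulations is the standard correspondence (available because $G$ is finite) between unital completely positive maps out of $C(G)$ and families of \mops. Since $G$ is finite, $C(G) \cong \mathbb{C}^{|G|}$; write $\delta_g \in C(G)$ for the characteristic function of $\{g\}$, so that the $\delta_g$ are \mops{} with $\sum_{g \in G} \delta_g = 1$, and, with $\Lt$ as in Notation~\ref{N_1X07_Lt}, one checks $\Lt_g(\delta_h) = \delta_{gh}$. Throughout, the quantitative work is just to match the single tolerance $\ep$ in each formulation, and the only genuinely technical point is a rounding argument in the forward direction.

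For the implication from the projection formulation to the \trpc, let $F$, $S \subseteq C(G)$, $\ep > 0$, $x$, and $y$ be given as in Definition~\ref{traR}. Put $M = 1 + \max\{|f(g)| : f \in S,\ g \in G\}$, choose $\ep' > 0$ with $|G| M \ep' < \ep$, and apply the projection formulation with data $F$, $\ep'$, $x$, $y$ to obtain $p \in A^{\af}$ and \mops{} $(p_g)_{g \in G}$. Define $\ph \colon C(G) \to pAp$ by $\ph(f) = \sum_{g \in G} f(g) p_g$. Because the $p_g$ are \mops{} with $\sum_g p_g = p$, the map $\ph$ is a unital homomorphism into $pAp$, so condition~(\ref{Item_1X07_4}) of Definition~\ref{phillhir} holds exactly. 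Condition~(\ref{Item_1X07_5}) follows from $\|\ph(f) a - a \ph(f)\| \le \sum_g |f(g)|\,\|p_g a - a p_g\| < |G| M \ep' < \ep$ for $f \in S$, $a \in F$, using~(\ref{Item_1X16_Comm}). For condition~(\ref{equiapprox}), reindexing gives $\ph(\Lt_g(f)) = \sum_h f(h) p_{gh}$, whence $\ph(\Lt_g(f)) - \af_g(\ph(f)) = \sum_h f(h)\bigl(p_{gh} - \af_g(p_h)\bigr)$ has norm $< |G| M \ep' < \ep$ by~(\ref{Item_1X16_Prm}). Finally, since $\ep' < \ep$, conditions (\ref{1_pxcompactsets})--(\ref{Item_902_pxp_TRP}) of Definition~\ref{traR} are exactly~(\ref{Item_1X16_sub_x}), (\ref{Item_1X16_sub_yy}), (\ref{Item_1X16_sub_1mp}), and~(\ref{Item_1X16_Mnp}).

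For the converse, assume the \trpc. Given $F$, $\ep$, $x$, $y$, apply Definition~\ref{traR} with the same $F$, $x$, $y$, with $S = \{\delta_g : g \in G\}$, and with a tolerance $\ep' > 0$ to be chosen small, obtaining $p \in A^{\af}$ and a \ucp{} map $\ph \colon C(G) \to pAp$. Set $e_g = \ph(\delta_g) \in (pAp)_+$. Then $\sum_g e_g = \ph(1) = p$ exactly, while condition~(\ref{Item_1X07_4}) of Definition~\ref{phillhir}, applied to the pairs $\delta_g, \delta_h$, gives $\|e_g^2 - e_g\| < \ep'$ and $\|e_g e_h\| < \ep'$ for $g \ne h$; that is, the $e_g$ are approximately orthogonal approximate projections summing to $p$. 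By stability of the finite-dimensional relations defining $\mathbb{C}^{|G|}$ (round each $e_g$ to $\chi_{(1/2,\infty)}(e_g)$, perturb to make these orthogonal, and absorb the defect of the sum into one index by replacing a single projection with $p$ minus the sum of the others), for $\ep'$ small enough there are \mops{} $p_g \in pAp$ with $\sum_g p_g = p$ and $\|p_g - e_g\|$ as small as desired. Since $\sum_g p_g = p \in A^{\af}$, condition~(\ref{Item_1X16_Inv}) holds and conditions~(\ref{Item_1X16_sub_x})--(\ref{Item_1X16_Mnp}) hold with this very $p$, being literally~(\ref{1_pxcompactsets})--(\ref{Item_902_pxp_TRP}) of Definition~\ref{traR}. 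Condition~(\ref{Item_1X16_Comm}) follows from $\|p_g a - a p_g\| \le 2\|p_g - e_g\|\,\|a\| + \|e_g a - a e_g\|$ together with condition~(\ref{Item_1X07_5}) of Definition~\ref{phillhir}. For condition~(\ref{Item_1X16_Prm}) we estimate $\|\af_g(p_h) - p_{gh}\| \le \|p_h - e_h\| + \|\af_g(e_h) - e_{gh}\| + \|e_{gh} - p_{gh}\|$, where the middle term is $< \ep'$ by condition~(\ref{equiapprox}) of Definition~\ref{phillhir}, since $e_{gh} = \ph(\delta_{gh}) = \ph(\Lt_g(\delta_h))$. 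Choosing $\ep'$ and the rounding small enough makes both~(\ref{Item_1X16_Comm}) and~(\ref{Item_1X16_Prm}) smaller than $\ep$.

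The main obstacle is the rounding step in the converse: converting the approximate data $(e_g)$ into genuine \mops. The point worth emphasizing is that keeping the \emph{same} projection $p$ coming from Definition~\ref{traR}, rather than assembling a new projection from the rounded pieces, is what makes this painless: it forces $\sum_g p_g = p \in A^{\af}$ by construction, so the $\af$-invariance demanded in~(\ref{Item_1X16_Inv}) and all four comparison and norm conditions are inherited verbatim, and one never has to re-establish Cuntz comparisons for a perturbed projection. This is possible precisely because the $e_g$ sum exactly to $p$, so the defect introduced by orthogonalization can be absorbed into a single summand.
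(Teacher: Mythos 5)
The proposal is correct and follows essentially the same route as the paper: in one direction define $\ph(f)=\sum_g f(g)p_g$ as a unital homomorphism and check equivariance and centrality on the characteristic functions, and in the other apply Definition~\ref{traR} with $S=\{\chi_{\{g\}}\}$ and a small tolerance, then round the approximately orthogonal positive elements $\ph(\chi_{\{g\}})$ to \mops{} summing to the same projection $p$ so that the comparison and norm conditions carry over verbatim. The only cosmetic differences are that the paper first reduces to the case $S=\{\chi_{\{g\}}\}$ rather than carrying the bound $M$ for a general $S$, and that it deduces $\sum_g p_g=p$ from the fact that this sum is a subprojection of $p$ at distance less than one, instead of explicitly absorbing the defect into one summand.
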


\begin{proof}
Set
$S = \bigl\{ \ch_{ \{ g \} } \colon g \in G \bigr\} \S C (G)$.
It is easily seen that Definition~\ref{traR} is equivalent
(with a change in the value of $\ep$) to the same statement
but in which we always use this choice of~$S$.

Assume the conditions of the lemma.
Let $F \subseteq A$ be finite,
let $\varepsilon > 0$, let $x \in A_{+}$ satisfy $\| x \| = 1$,
and let $y \in (A^{\alpha})_{+} \setminus \{ 0 \}$.
Let $p$ and $(p_{g})_{g \in G}$ be as in the condition of the lemma
for these choices.
Then Conditions (\ref{1_pxcompactsets}), (\ref{1_pycompactsets}),
(\ref{1_ppcompactsets}), and~(\ref{Item_902_pxp_TRP})
in Definition~\ref{traR}
follow immediately.
Define a unital \hm{} $\ph \colon C (G) \to p A p$ by
$\ph (f) = \sum_{g \in G} f (g) p_g$ for $f \in C (G)$.
The following calculations then show that
$\varphi$ is $(F, S, \varepsilon)$-approximately equivariant
central, and prove
(\ref{Item_893_FS_equi_cen_multi_approx}) in Definition~\ref{traR}.
First, for $h \in G$, recalling Notation~\ref{N_1X07_Lt},
and by~(\ref{Item_1X16_Prm}) at the second step, we have
\[
\max_{g \in G} \bigl\| (\ph \circ \Lt_g) ( \ch_{ \{ h \} })
       - (\af_g \circ \ph) ( \ch_{ \{ h \} }) \bigr\|
 = \max_{g \in G} \| p_{g h} - \af_g (p_h) \|
 < \ep.
\]
Second, for $g \in G$ and $a \in F$,
by~(\ref{Item_1X16_Comm}) at the second step, we have
\[
\| \ph (\ch_{ \{ g \} }) a - a \ph (\ch_{ \{ g \} }) \|
 = \| p_g a - a p_g \|
 < \ep.
\]

For the other direction, assume that $\af$ has the \trpc.
Set $n = \card (G)$.
Let $F \subseteq A$ be finite,
let $\varepsilon > 0$, let $x \in A_{+}$ satisfy $\| x \| = 1$,
and let $y \in (A^{\alpha})_{+} \setminus \{ 0 \}$.
\Wolog{} $\| a \| \leq 1$ for all $a \in A$.
Set
\[
\ep_0 = \min \biggl( \frac{1}{2 n}, \, \frac{\ep}{3} \biggr).
\]
Choose $\dt > 0$ so small that $\dt \leq \ep_0$ and
whenever $B$ is a \ca{}
and $b_1, b_2, \ldots, b_n \in B$ are selfadjoint
and satisfy $\| b_j^2 - b_j \| < 3 \dt$ for $j = 1, 2, \ldots, n$
and $\| b_j b_k \| < \dt$
for distinct $j, k \in \{ 1, 2, \ldots, n \}$,
then there are \mops{} $e_j \in B$ for $j = 1, 2, \ldots, n$
such that $\| e_j - b_j \| < \ep_0$ for $j = 1, 2, \ldots, n$.

Apply Definition~\ref{traR} with $\dt$ in place of~$\ep$,
and with $F$, $x$, and~$y$ as given,
getting $p \in A^{\af}$ and $\ph \colon C (G) \to p A p$ as there.
In particular:
\begin{enumerate}
\setcounter{enumi}{\value{TmpEnumi}}
\item\label{It_1X21_26_Sq}
$\| \ph (\ch_{ \{ g \} })^2 - \ph (\ch_{ \{ g \} }) \| < \dt$
for all $g \in G$.
\item\label{It_1X21_26_Z}
$\| \ph (\ch_{ \{ g \} }) \ph (\ch_{ \{ h \} }) \| < \dt$
for all $g, h \in G$ with $g \neq h$.
\item\label{It_1X21_26_Comm}
$\| \ph (\ch_{ \{ g \} }) a - a \ph (\ch_{ \{ g \} }) \| < \dt$
for all $g \in G$ and all $a \in F$.
\item\label{It_1X21_26_Tr}
$\| (\af_g \circ \ph) (\ch_{ \{ h \} }) - \ph (\ch_{ \{ g h \} }) \|
   < \dt$
for all $g, h \in G$.
\end{enumerate}
By (\ref{It_1X21_26_Sq}), (\ref{It_1X21_26_Z}),
and the choice of $\dt$, there are \mops{}
$p_g$ for $g \in G$
such that $\| p_g - \ph (\ch_{ \{ g \} }) \| < \ep_0$.
Using~(\ref{It_1X21_26_Comm}), for $g \in G$ and $a \in F$ we get
\[
\| p_g a - a p_g \|
 \leq \| \ph (\ch_{ \{ g \} }) a - a \ph (\ch_{ \{ g \} }) \|
        + 2 \| p_g - \ph (\ch_{ \{ g \} }) \|
 < \dt + 2 \ep_0 \leq \ep.
\]
This is~(\ref{Item_1X16_Comm}).

Using~(\ref{It_1X21_26_Tr}), for $g, h \in G$ we get
\[
\begin{split}
\| \af_g (p_h) - p_{g h} \|
& \leq \| (\af_g \circ \ph) (\ch_{ \{ h \} })
           - \ph (\ch_{ \{ g h \} }) \|
        + \| p_h - \ph (\ch_{ \{ h \} }) \|
        + \| p_{g h} - \ph (\ch_{ \{ g h \} }) \|
\\
& < \dt + 2 \ep_0
  \leq \ep.
\end{split}
\]
This is~(\ref{Item_1X16_Prm}).
Also, since $\ph (1) = 1$,
\[
\biggl\| p - \sum_{g \in G} p_g \biggr\|
 \leq \sum_{g \in G} \| \ph (\ch_{ \{ g \} }) - p_g \|
 < n \ep_0
 < 1,
\]
so, since $\sum_{g \in G} p_g$ is a \pj{},
we get $\sum_{g \in G} p_g = p$.
This is~(\ref{Item_1X16_Inv}).
Conditions (\ref{Item_1X16_sub_x}), (\ref{Item_1X16_sub_yy}),
(\ref{Item_1X16_sub_1mp}), and~(\ref{Item_1X16_Mnp})
are Conditions (\ref{1_pxcompactsets}), (\ref{1_pycompactsets}),
(\ref{1_ppcompactsets}), and~(\ref{Item_902_pxp_TRP})
in Definition~\ref{traR}.
\end{proof}

\begin{prp}\label{P_1X19_RP_to_TRPC}
Let $A$ be an infinite dimensional simple unital \ca,
and let $\alpha \colon G \to \Aut (A)$ be
an action of a second countable compact group $G$ on~$A$.
If $\af$ has the Rokhlin property,
then $\af$ has the tracial Rokhlin property with comparison.
\end{prp}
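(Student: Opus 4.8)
The plan is to observe that the Rokhlin property already supplies exactly the map required in Definition~\ref{traR}, and that taking $p = 1$ makes every comparison condition trivial. Concretely, given a finite set $F \subseteq A$, a finite set $S \subseteq C (G)$, an $\varepsilon > 0$, an element $x \in A_{+}$ with $\| x \| = 1$, and an element $y \in (A^{\alpha})_{+} \setminus \{ 0 \}$, I would simply set $p = 1$. Since $\alpha$ acts by automorphisms, $1 \in A^{\alpha}$ is an $\alpha$-invariant projection, and $p A p = A$, so the corner into which the map must go is all of~$A$.

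Because $\af$ has the Rokhlin property, the approximate characterization in Lemma~\ref{L_1X16_AppDfRk} (which is what ``Rokhlin property'' means here, and is the appropriate formulation in the nonseparable case as well) produces a \ucp{} map $\varphi \colon C (G) \to A = p A p$ which is $(F, S, \varepsilon)$-approximately equivariant central multiplicative in the sense of Definition~\ref{phillhir}. This is precisely Condition~(\ref{Item_893_FS_equi_cen_multi_approx}) of Definition~\ref{traR}. For the remaining conditions, note that $1 - p = 0$. Since $0 \precsim_{B} b$ for every \ca{} $B$ and every $b \in M_{\infty} (B)_{+}$ (take the zero sequence in Definition~\ref{Cuntz.df}(\ref{Cuntz_def_property_a})), Conditions~(\ref{1_pxcompactsets}), (\ref{1_pycompactsets}), and~(\ref{1_ppcompactsets}) hold automatically. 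Finally, $\| p x p \| = \| x \| = 1 > 1 - \varepsilon$, which is Condition~(\ref{Item_902_pxp_TRP}).

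There is essentially no obstacle here: the content of the proposition is the observation that the \trpc{} degenerates to the Rokhlin property once one is permitted to take $p = 1$, and the only things to verify are that the comparison conditions are vacuous for $1 - p = 0$ and that the norm condition requires nothing beyond $\| x \| = 1$. The single point worth flagging is that one should invoke the Rokhlin property through the approximate formulation of Lemma~\ref{L_1X16_AppDfRk} rather than the central-sequence formulation of Definition~\ref{df_Rpcpt}, since Proposition~\ref{P_1X19_RP_to_TRPC} does not assume that $A$ is separable.
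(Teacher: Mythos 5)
Your proposal is correct and is essentially the paper's own proof, which consists of the single sentence that by Lemma~\ref{L_1X16_AppDfRk} one can take $p = 1$ in Definition~\ref{traR}; you have simply spelled out the routine verifications (the comparison conditions are vacuous for $1 - p = 0$ and $\| p x p \| = \| x \| = 1$). Your remark about using the approximate formulation rather than the central sequence formulation in the nonseparable case is consistent with the paper's own framing of Lemma~\ref{L_1X16_AppDfRk}.
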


\begin{proof}
By Lemma~\ref{L_1X16_AppDfRk}, one can take $p = 1$
in Definition~\ref{traR}.
\end{proof}

\begin{lem}\label{PROSPTRPCA}
Let $A$ be an infinite dimensional simple separable unital \ca.
Let $\alpha \colon G \to \Aut (A)$ be
an action of a second countable compact group $G$
on $A$ which has the tracial Rokhlin property with comparison.
Then $A$ has Property~(SP) or $\alpha$ has the Rokhlin property.
\end{lem}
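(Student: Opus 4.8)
The plan is to prove the dichotomy by assuming that $A$ does \emph{not} have Property~(SP) and deducing that $\af$ has the Rokhlin property. Failure of Property~(SP) means there is a nonzero hereditary subalgebra $B \S A$ containing no nonzero projection. I would fix $x \in B_{+}$ with $\| x \| = 1$; since $B$ is hereditary and $x \in B$, we have $\ov{x A x} \S B$, so $\ov{x A x}$ also contains no nonzero projection. The goal is then to verify the characterization of the Rokhlin property given in Lemma~\ref{L_1X16_AppDfRk}, using this single fixed~$x$ throughout.

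Given an arbitrary finite set $F \S A$, an arbitrary finite set $S \S C (G)$, and $\ep > 0$, I would apply Definition~\ref{traR} with these $F$, $S$, and $\ep$, with the chosen~$x$, and with any fixed $y \in (A^{\af})_{+} \SM \{ 0 \}$ (for instance $y = 1$, which is legitimate since $A$ is unital). This produces a projection $p \in A^{\af}$ and a \ucp{} map $\ph \colon C (G) \to p A p$ satisfying Conditions (\ref{Item_893_FS_equi_cen_multi_approx})--(\ref{Item_902_pxp_TRP}) of Definition~\ref{traR}. The only output I actually use is Condition~(\ref{1_pxcompactsets}): $1 - p \precsim_{A} x$.

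The key step is to show that this forces $p = 1$. Here I invoke the standard Cuntz comparison fact that a projection which is Cuntz subequivalent to a positive element~$x$ is Murray--von Neumann equivalent to a projection lying in $\ov{x A x}$. Applying it to the projection $1 - p$, I obtain a projection $q \in \ov{x A x} \S B$ that is Murray--von Neumann equivalent to $1 - p$. Since $B$ contains no nonzero projection, $q = 0$, and any projection Murray--von Neumann equivalent to~$0$ is itself~$0$; hence $1 - p = 0$, that is, $p = 1$. With $p = 1$ we have $p A p = A$, so $\ph \colon C (G) \to A$ is a \ucp{} map which, by Condition~(\ref{Item_893_FS_equi_cen_multi_approx}) of Definition~\ref{traR}, is $(F, S, \ep)$-approximately equivariant central multiplicative in the sense of Definition~\ref{phillhir}. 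As $F$, $S$, and $\ep$ were arbitrary, Lemma~\ref{L_1X16_AppDfRk} then yields that $\af$ has the Rokhlin property.

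The only nonroutine ingredient is the cited Cuntz comparison fact relating subequivalence of a projection to a positive element with the existence of a Murray--von Neumann equivalent projection inside the associated hereditary subalgebra; once that is in hand, every other step is immediate. That comparison fact is therefore the part on which I would focus, either by supplying a short functional-calculus argument (approximate $1-p$ by $v_n x v_n^{*}$, cut down by a spectral projection, and recover an equivalent projection in $\ov{x A x}$) or, more economically, by a precise citation to the Cuntz comparison literature.
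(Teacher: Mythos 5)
Your proposal is correct and follows essentially the same route as the paper: assume Property~(SP) fails, pick $x$ with $\| x \| = 1$ and $\ov{x A x}$ containing no nonzero projection, take $y = 1$, apply Definition~\ref{traR}, and conclude $p = 1$ from $1 - p \precsim_{A} x$ so that Lemma~\ref{L_1X16_AppDfRk} applies. The only difference is that you spell out the Cuntz comparison fact forcing $p = 1$ (which the paper leaves implicit, and which follows quickly from Lemma~\ref{L_1X09_CSbPj}).
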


\begin{proof}
Suppose $A$ does not have Property~(SP).
We verify the condition of Lemma~\ref{L_1X16_AppDfRk}.
Let $F \subseteq A$ and $S \subseteq C (G)$ be finite,
and let $\varepsilon > 0$.
Choose $x \in A_{+} \setminus \{ 0 \}$ such that $\| x \| = 1$
and ${\overline{x A x}}$ contains no nonzero \pj, and take $y = 1$.
Apply Definition~\ref{traR}.
Then $p = 1$ by Definition \ref{traR}(\ref{1_pxcompactsets}),
so the condition of Lemma~\ref{L_1X16_AppDfRk} holds.
\end{proof}

Definition~\ref{traR} is unchanged if we merely assume that
$F$ and $S$ are compact instead of finite,
and require that the map $\varphi \colon C (G) \to p A p$
be exactly equivariant and be an
$(n, F, S, \varepsilon)$-approximately central multiplicative map.

\begin{lem}\label{TRPcequi}
Let $A$ be an infinite dimensional simple unital \ca,
and let $\alpha \colon G \to \Aut (A)$ be
an action of a second countable compact group $G$ on~$A$.
Then $\alpha$ has the tracial Rokhlin property with comparison
if and only if for every compact
set $F \subseteq A$, every compact set $S \subseteq C (G)$,
every $\varepsilon > 0$, every $n \in \N$,
every $x \in A_{+}$ with $\| x \| = 1$,
and every $y \in (A^{\alpha})_{+} \setminus \{ 0 \}$,
there exist a projection $p \in A^{\alpha}$ and an
equivariant unital completely positive map
$\varphi \colon C (G) \to p A p$ such that the following hold.
\begin{enumerate}
\item\label{FSexactequi}
$\varphi$ is an
$(n, F, S, \varepsilon)$-approximately central multiplicative map
(Definition~\ref{D_1X10_nSFe}).
\item\label{1_pxcompactsetscpt}
$1 - p \precsim_{A} x$.
\item\label{1_pycompactsetscpt}
$1 - p \precsim_{A^{\alpha}} y$.
\item\label{1_ppcompactsetscpt}
$1 - p \precsim_{A^{\alpha}} p$.
\item\label{pxpcompactsetscpt}
$\| p x p \| > 1 - \varepsilon$.
\end{enumerate}
\end{lem}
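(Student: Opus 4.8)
The statement is a biconditional. The forward implication, that the compact/exactly-equivariant/$n$-fold condition implies Definition~\ref{traR}, is immediate, and the reverse implication is the substantive one; it will be proved by averaging over the Haar measure of~$G$.

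For the forward direction I would take finite sets $F \S A$ and $S \S C(G)$, an $\ep > 0$, and $x$, $y$ as in Definition~\ref{traR}, and apply the compact condition to these same data with $n = 2$. An exactly equivariant map satisfies Condition~(\ref{equiapprox}) of Definition~\ref{phillhir} with supremum~$0$, the $(2, F, S, \ep)$-approximate central multiplicativity (Definition~\ref{D_1X10_nSFe}) supplies Conditions~(\ref{Item_1X07_4}) and~(\ref{Item_1X07_5}), and the comparison Conditions~(\ref{1_pxcompactsetscpt})--(\ref{pxpcompactsetscpt}) are literally Conditions~(\ref{1_pxcompactsets})--(\ref{Item_902_pxp_TRP}) of Definition~\ref{traR}. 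Thus $\af$ has the \trpc.

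For the reverse direction, fix compact $F \S A$ and $S \S C(G)$, together with $\ep > 0$, $n \in \N$, $x$, and $y$. I would first apply Lemma~\ref{L_1X09_Get_nSFe}(\ref{Item_1X11_Get_accm}) to obtain $\dt > 0$ and compact sets $T \S C(G)$ and $E \S A$ such that every \ucp{} $(T, E, \dt)$-approximately central multiplicative map is automatically $(n, S, F, \ep)$-approximately central multiplicative, i.e.\ satisfies Condition~(\ref{FSexactequi}). Next I would pass to a compact, left-translation-invariant set $\widehat{T} \S C(G)$ containing $T$ and all products $h_1 h_2$ with $h_1, h_2 \in T$ (for instance the image of $G \times \bigl( T \cup \{ h_1 h_2 \colon h_1, h_2 \in T \} \bigr)$ under $(g, h) \mapsto \Lt_g h$, which is compact). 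Choosing finite $\et$-dense subsets $T_0 \S \widehat{T}$ and $E_0 \S E$ for a small $\et$, I would apply Definition~\ref{traR} with these in the roles of the $C(G)$-set and the $A$-set, a tolerance $\dt_0 \le \ep$, and the given $x$, $y$, obtaining a projection $p \in A^{\af}$ and a \ucp{} map $\ph \colon C(G) \to p A p$ which is $(E_0, T_0, \dt_0)$-approximately equivariant central multiplicative as in Definition~\ref{traR}(\ref{Item_893_FS_equi_cen_multi_approx}) and which satisfies the comparison conditions; choosing $\et$ and $\dt_0$ small enough relative to $\dt$ ensures, by contractivity of $\ph$, that its approximate multiplicativity and centrality extend from $T_0$ and $E_0$ to $T$ and $E$.

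The core step is to replace $\ph$ by an exactly equivariant map. Since $p \in A^{\af}$, each $\af_g$ restricts to an automorphism of $p A p$, and I would set
\[
\widetilde{\ph}(f) = \int_{G} \af_{g}\bigl( \ph( \Lt_{g^{-1}} f ) \bigr) \, dg ,
\]
the integral against normalized Haar measure. The substitution $g \mapsto h g$ gives $\widetilde{\ph}(\Lt_{h} f) = \af_{h}(\widetilde{\ph}(f))$, so $\widetilde{\ph}$ is exactly equivariant; being an average of \ucp{} maps into $p A p$ with $\widetilde{\ph}(1) = \int_{G} \af_{g}(p) \, dg = p$, it is again \ucp{} and unital into $p A p$. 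Applying Condition~(\ref{equiapprox}) for $\ph$ on $T_0$, together with the $\Lt$-invariance of $\widehat{T}$ and the $\et$-density of $T_0$, yields the uniform bound $\| \widetilde{\ph}(h) - \ph(h) \| < 2 \et + \dt_0$ for every $h \in \widehat{T}$. Feeding this estimate into the triangle inequalities for products (whose arguments $h_1 h_2$ lie in $\widehat{T}$ by construction) and for commutators shows that $\widetilde{\ph}$ is $(T, E, \dt)$-approximately central multiplicative, so Lemma~\ref{L_1X09_Get_nSFe}(\ref{Item_1X11_Get_accm}) promotes it to Condition~(\ref{FSexactequi}); Conditions~(\ref{1_pxcompactsetscpt})--(\ref{pxpcompactsetscpt}) hold verbatim since $p$ is unchanged and $\dt_0 \le \ep$. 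I expect the main obstacle to be ensuring that the single perturbation estimate for $\widetilde{\ph} - \ph$ is small simultaneously on $T$ and on the products entering the multiplicativity inequality; this is exactly why I would arrange from the start that $\widehat{T}$ be $\Lt$-invariant and absorb the pairwise products of $T$, so that one uniform bound on a large enough compact set does all the work.
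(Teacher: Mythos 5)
Your argument is correct and follows essentially the same route as the paper: the paper's proof simply cites Remark~1.4 of \cite{HrsPh1} for the Haar-averaging and finite-to-compact reduction that you write out explicitly, and then invokes Lemma~\ref{L_1X09_Get_nSFe}(\ref{Item_1X11_Get_accm}) to upgrade to $n$-fold approximate multiplicativity, exactly as you do.
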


\begin{proof}
If in~(\ref{FSexactequi}) we merely ask for an
$(F, S, \varepsilon)$-approximately central multiplicative map,
the argument is the same as in Remark 1.4 of \cite{HrsPh1}.
To get~(\ref{FSexactequi}) as stated,
use Lemma \ref{L_1X09_Get_nSFe}(\ref{Item_1X11_Get_accm}).
\end{proof}

If $A$ is finite, then
Condition~(\ref{Item_902_pxp_TRP}) in Definition~\ref{traR}
and Condition~(\ref{pxpcompactsetscpt}) in Lemma~\ref{TRPcequi}
are redundant.

\begin{lem}\label{L_1X09_NoNorm1}
Let $A$ be a finite infinite dimensional simple unital \ca,
and let $\alpha \colon G \to \Aut (A)$ be
an action of a second countable compact group $G$ on~$A$.
Then $\af$ has the tracial Rokhlin property with comparison
\ifo{} for every finite
set $F \subseteq A$, every finite set $S \subseteq C (G)$,
every $\varepsilon > 0$, every $x \in A_{+} \setminus \{ 0 \}$,
and every $y \in (A^{\alpha})_{+} \setminus \{ 0 \}$,
there exist a projection $p \in A^{\alpha}$ and a
unital completely positive map $\varphi \colon C (G) \to p A p$
such that the following hold.
\begin{enumerate}
\item\label{L_1X09_NoNorm1_ecmm}
$\varphi$ is an $(F, S, \varepsilon)$-approximately equivariant
central multiplicative map.
\item\label{L_1X09_NoNorm1_1mpx}
$1 - p \precsim_{A} x$.
\item\label{L_1X09_NoNorm1_1py}
$1 - p \precsim_{A^{\alpha}} y$.
\item\label{L_1X09_NoNorm1_1mppp}
$1 - p \precsim_{A^{\alpha}} p$.
\end{enumerate}
Moreover, $\af$ has the tracial Rokhlin property with comparison
\ifo{} for every $\varepsilon$, $x$, and $y$ as above,
every compact set $F \subseteq A$,
every compact set $S \subseteq C (G)$,
and every $n \in \N$,
there exist a projection $p \in A^{\alpha}$ and an equivariant
unital completely positive map $\varphi \colon C (G) \to p A p$
such that
conditions (\ref{L_1X09_NoNorm1_1mpx}),
(\ref{L_1X09_NoNorm1_1py}), and~(\ref{L_1X09_NoNorm1_1mppp}) hold, and
$\varphi$ is an $(F, S, \varepsilon)$-approximately
central multiplicative map.
\end{lem}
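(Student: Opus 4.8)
The plan is to prove each of the two biconditionals by treating its directions separately; the forward (``only if'') direction is a routine weakening, and the reverse (``if'') direction is where finiteness does the work, producing Condition~(\ref{Item_902_pxp_TRP}) of Definition~\ref{traR} ``for free.'' For the first biconditional, forward direction, suppose $\af$ has the \trpc{} and let $F$, $S$, $\ep$, $x \in A_{+} \SM \{ 0 \}$, and $y$ be given. I would apply Definition~\ref{traR} with the norm-one element $x / \| x \|$ in place of $x$, obtaining $p$ and $\ph$ satisfying Definition~\ref{traR}(\ref{Item_893_FS_equi_cen_multi_approx})--(\ref{Item_902_pxp_TRP}). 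Since Cuntz subequivalence is unchanged under scaling by a positive number, $1 - p \precsim_{A} x / \| x \|$ gives $1 - p \precsim_{A} x$, so Conditions (\ref{L_1X09_NoNorm1_ecmm})--(\ref{L_1X09_NoNorm1_1mppp}) hold; one simply discards Definition~\ref{traR}(\ref{Item_902_pxp_TRP}).

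For the reverse direction, assume the stated weaker condition and verify Definition~\ref{traR}. Given $F$, $S$, $\ep$, $x \in A_{+}$ with $\| x \| = 1$, and $y$, I would set $c = \bigl( x - (1 - \ep) \bigr)_{+}$, which is nonzero because $\| x \| = 1 > 1 - \ep$. As $A$ is simple and infinite dimensional, the nonzero hereditary subalgebra $\ov{c A c}$ is infinite dimensional, so it contains two nonzero orthogonal positive elements $\btt$ and $w$; then $\btt + w \in \ov{c A c}$, whence $\btt + w \precsim_{A} c \precsim_{A} x$. Now apply the weaker condition with $\btt \in A_{+} \SM \{ 0 \}$ in place of $x$ (keeping $F$, $S$, $\ep$, $y$), obtaining $p \in A^{\af}$ and $\ph \colon C(G) \to p A p$ satisfying its Conditions (\ref{L_1X09_NoNorm1_ecmm})--(\ref{L_1X09_NoNorm1_1mppp}). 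These immediately give Definition~\ref{traR}(\ref{Item_893_FS_equi_cen_multi_approx}), (\ref{1_pycompactsets}), and~(\ref{1_ppcompactsets}), and, since $1 - p \precsim_{A} \btt \precsim_{A} x$, also Definition~\ref{traR}(\ref{1_pxcompactsets}).

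It remains to verify Definition~\ref{traR}(\ref{Item_902_pxp_TRP}), and this is the step where finiteness is essential. I claim that $\| p x p \| \leq 1 - \ep$ forces $c \precsim_{A} 1 - p$. To see this, put $b = x^{1/2} (1 - p) x^{1/2}$; then $\| x - b \| = \| x^{1/2} p x^{1/2} \| = \| p x p \| \leq 1 - \ep$, so the standard Cuntz-comparison fact that $\| u - v \| \leq \eta$ implies $(u - \eta)_{+} \precsim_{A} v$ yields $c = \bigl( x - (1 - \ep) \bigr)_{+} \precsim_{A} b$. Writing $d = (1 - p) x^{1/2}$, so that $b = d^{*} d$ and $(1 - p) x (1 - p) = d d^{*}$, gives $b \sim_{A} (1 - p) x (1 - p) \precsim_{A} 1 - p$, and hence $c \precsim_{A} 1 - p$, proving the claim. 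Because $A$ is finite and simple, $\QT(A)$ contains an element $\ta$, which is then faithful (this is the only place finiteness enters). If $\| p x p \| \leq 1 - \ep$, then combining the claim with $1 - p \precsim_{A} \btt$ gives $c \precsim_{A} 1 - p \precsim_{A} \btt$, so $d_{\ta}(c) \leq d_{\ta}(\btt)$; but $\btt \perp w$ and $\btt + w \precsim_{A} c$ give $d_{\ta}(\btt) + d_{\ta}(w) = d_{\ta}(\btt + w) \leq d_{\ta}(c) \leq d_{\ta}(\btt)$, forcing $d_{\ta}(w) = 0$, which is impossible since $\ta$ is faithful and $w \neq 0$. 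Therefore $\| p x p \| > 1 - \ep$, which is Definition~\ref{traR}(\ref{Item_902_pxp_TRP}).

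The ``moreover'' statement is obtained by running exactly this argument, but starting from the compact-set, exactly equivariant, $(n, F, S, \ep)$-multiplicative reformulation of the \trpc{} furnished by Lemma~\ref{TRPcequi} in place of Definition~\ref{traR}; the passage between the approximate and the $(n, \cdot)$ forms of multiplicativity and centrality is handled by Lemma~\ref{L_1X09_Get_nSFe}(\ref{Item_1X11_Get_accm}), and none of these reformulations interacts with the comparison argument above, which produces Condition~(\ref{pxpcompactsetscpt}) of Lemma~\ref{TRPcequi} verbatim. The main obstacle is precisely the comparison claim together with its use: recovering the norm bound $\| p x p \| > 1 - \ep$ requires a faithful $2$-quasitrace to rule out the ``defect'' element $w$, and it is exactly the hypothesis that $A$ is finite that supplies such a quasitrace.
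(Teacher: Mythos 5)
Your reverse direction has the right skeleton: plug a well chosen element of the hereditary subalgebra of $c = (x - (1 - \ep))_{+}$ into the weakened condition, and show that $\| p x p \| \leq 1 - \ep$ would force $c \precsim_{A} 1 - p$ and hence a comparison contradiction. Your derivation of $c \precsim_{A} 1 - p$ via $b = x^{1/2} (1 - p) x^{1/2}$ and R{\o}rdam's lemma is correct, and this is essentially the mechanism in the proof the paper imports from Lemma~1.16 of~\cite{phill23}. The gap is in the final contradiction: you assert that ``because $A$ is finite and simple, $\QT (A)$ contains an element $\ta$.'' The lemma assumes only that $A$ is finite, not stably finite, and for a simple unital \ca{} these are genuinely different: in R{\o}rdam's example~\cite{Rdm8} (cited in this paper) the identity is finite but $M_2 (A)$ contains an infinite projection. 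Such an algebra admits \emph{no} normalized $2$-quasitrace at all, since a $2$-quasitrace extends to every $M_n (A)$ and, by simplicity, must be strictly positive on nonzero projections, which is incompatible with an infinite projection in $M_2 (A)$. The Blackadar--Handelman existence theorem for quasitraces requires stable finiteness. So the one step at which you claim finiteness enters is exactly the step that finiteness does not justify; as written, your argument proves the lemma only under the stronger hypothesis that $A$ admits a faithful $2$-quasitrace.

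The paper's proof (via Lemma~1.16 of~\cite{phill23}) avoids quasitraces by splitting on Property~(SP), which is why Lemma~\ref{PROSPTRPCA} and Proposition~\ref{P_1X19_RP_to_TRPC} are listed as ingredients. If $A$ does not have Property~(SP), applying the weakened condition to an $x$ whose hereditary subalgebra contains no nonzero projection forces $p = 1$, so $\af$ has the Rokhlin property and Proposition~\ref{P_1X19_RP_to_TRPC} gives the \trpc{} outright. If $A$ has Property~(SP), replace your orthogonal positive elements by nonzero orthogonal projections $q_1, q_2 \in {\ov{c A c}}$ (Lemma~\ref{OrthInSP}) and apply the weakened condition with $q_1$ in place of $x$; your own comparison chain then reads $q_1 + q_2 \precsim_{A} c \precsim_{A} 1 - p \precsim_{A} q_1$, so the projection $q_1 + q_2$ is Murray--von Neumann equivalent to a proper subprojection of itself, i.e., infinite, and a finite simple unital \ca{} contains no infinite projections. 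This uses only finiteness of~$A$. Your proof is repaired by exactly this substitution (projections instead of positive elements, plus the (SP) dichotomy); the rest of what you wrote, including the forward direction and the reduction of the ``moreover'' statement to Lemma~\ref{TRPcequi}, is fine.
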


\begin{proof}
Given Proposition~\ref{P_1X19_RP_to_TRPC},
Lemma~\ref{PROSPTRPCA}, and Lemma~\ref{TRPcequi},
the proof is the same as that of Lemma~1.16 of~\cite{phill23}.
\end{proof}

When $A$ is separable, we characterize
the tracial Rokhlin property with comparison
in terms of central sequences.
(A better version will be given in Proposition~\ref{P_1X14_CentSq}.)
We begin with a definition.

\begin{dfn}\label{smallness_in_centralseq}
Let $A$ be a \ca{} and let $\alpha \colon G \to \Aut (A)$
be an action of a topological group $G$ on~$A$.
We say that a projection $p$ in $A_{\I, \af}$ is {\emph{$\af$-small}}
if whenever $(q_{n})_{n \in \N} \in l^{\I}_{\af} (\N, A)$
is a sequence of projections in $A^{\af}$ which lifts~$p$,
then for every nonzero $x \in A_{+}$
there exist $N \in \N$ such that for every $n \geq N$
we have $q_{n} \precsim_{A} x$.
Without the action (that is, if $G = \{ 1 \}$ and $\af$ is trivial),
we say that $p$, now in $A_{\I}$, is {\emph{small}}.
\end{dfn}

It is enough to consider only one lift instead of all of them.

\begin{lem}\label{L_1X09_SmallPj}
Let $A$ be a \ca{} and let $\alpha \colon G \to \Aut (A)$
be an action of a topological group $G$ on~$A$.
Let $p \in A_{\I}$ be a projection.
Then $p$ is $\af$-small \ifo{} there is a sequence
$(q_{n})_{n \in \N} \in l^{\I}_{\af} (\N, A)$ which lifts~$p$
and such that for every nonzero $x \in A_{+}$
there exists $N \in \N$ such that for every $n \geq N$
we have $q_{n} \precsim_{A} x$.
\end{lem}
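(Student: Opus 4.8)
The plan is to prove both implications by comparing whatever lift the condition provides (or seeks) with an arbitrary lift of $p$ by projections in $A^{\af}$, exploiting the fact that Cuntz subequivalence is stable under small norm perturbations of positive elements. The only quantitative input is the standard perturbation property of $\precsim_A$: if $a, b \in A_{+}$ and $\| a - b \| < \ep$, then $(a - \ep)_+ \precsim_A b$. Everything else is bookkeeping, resting on the observation that any two lifts of the single class $p$ differ by a sequence tending to $0$ in norm.

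For the implication that one good lift forces $p$ to be $\af$-small, suppose $(q_n) \in l^{\I}_{\af}(\N, A)$ is a lift of $p$ by positive elements such that for every nonzero $x \in A_{+}$ one has $q_n \precsim_A x$ for all large $n$. Let $(q_n')$ be \emph{any} lift of $p$ by projections in $A^{\af}$; I must show it has the same property. Since $\pi_A((q_n)) = \pi_A((q_n')) = p$, we have $\| q_n - q_n' \| \to 0$, so there is $N_1$ with $\| q_n - q_n' \| < \tfrac12$ for $n \geq N_1$. For such $n$ the perturbation property gives $\bigl( q_n' - \tfrac12 \bigr)_+ \precsim_A q_n$, and because $q_n'$ is a projection, $\bigl( q_n' - \tfrac12 \bigr)_+ = \tfrac12 q_n' \sim_A q_n'$, whence $q_n' \precsim_A q_n$. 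Given a nonzero $x \in A_{+}$, choose $N_2$ with $q_n \precsim_A x$ for $n \geq N_2$; then for $n \geq \max(N_1, N_2)$ transitivity of $\precsim_A$ yields $q_n' \precsim_A x$. As $(q_n')$ was an arbitrary lift by projections in $A^{\af}$, this is exactly the defining condition in Definition~\ref{smallness_in_centralseq} for $p$ to be $\af$-small.

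For the converse, assume $p$ is $\af$-small. It suffices to exhibit a single lift of $p$ by projections in $A^{\af}$: by definition any such lift already has the required comparison property, and it lies in $l^{\I}_{\af}(\N, A)$, so it serves as the desired witness. Such a lift is produced in the usual way: take a selfadjoint lift $(b_n)$ of $p$ with $b_n \in A^{\af}$, observe that $\| b_n^2 - b_n \| \to 0$ since $p$ is a projection, so for large $n$ the spectrum of $b_n$ is concentrated near $\{0,1\}$, and applying a continuous function equal to $0$ near $0$ and $1$ near $1$ yields a projection $q_n' = f(b_n) \in A^{\af}$ with $\| q_n' - b_n \| \to 0$ (set $q_n' = 0$ for the finitely many small $n$); then $(q_n')$ lifts $p$. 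The main obstacle is precisely this last step: it requires that $p$ genuinely admit a lift by elements of the fixed-point algebra $A^{\af}$, equivalently that $p$ lie in the image of a fixed selfadjoint sequence. For a general topological group $G$ one cannot average to impose invariance, so this is the delicate point rather than any comparison estimate; it is exactly the hypothesis under which Definition~\ref{smallness_in_centralseq} has content, and for those $p$ the lift exists and the argument closes.
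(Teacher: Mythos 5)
Your proof is correct and follows essentially the same route as the paper: the substantive direction is handled by comparing an arbitrary projection lift in $A^{\af}$ with the given lift via norm-closeness, the only difference being that you invoke the perturbation lemma $\|a-b\|<\ep \Rightarrow (a-\ep)_+\precsim_A b$ where the paper simply uses Murray--von Neumann equivalence of projections at distance less than one. The reverse direction is dismissed by the paper as immediate (given the existence of a projection lift in $A^{\af}$, which holds in all applications by Lemma~\ref{L_1X09_LiftPj}), which matches the content of your final paragraph.
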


\begin{proof}
Only one direction needs proof.
Assume the condition of the lemma,
and let $(e_{n})_{n \in \N} \in l^{\I}_{\af} (\N, A)$
be any other sequence of projections in $A^{\af}$ which lifts~$p$.
Then $\lim_{n \to \infty} \| e_n - q_n \| = 0$.
So there is $N_0 \in \N$ such that for all $n \geq N_0$ the \pj{s}
$q_n$ and $e_n$ are \mvnt.
Then $q_{n} \precsim_{A} x$ \ifo{} $e_{n} \precsim_{A} x$.
So, for every nonzero $x \in A_{+}$,
there exists $N \in \N$ such that for every $n \geq N$
we have $e_{n} \precsim_{A} x$.
\end{proof}

\begin{lem}\label{L_1X09_LiftPj}
Let $A$ be a \uca,
and let $\alpha \colon G \to \Aut (A)$ be
an action of a compact group $G$ on~$A$.
Let $p \in A_{\I, \af}$ be an $\af_{\I}$-invariant \pj.
Then there exists an $\af^{\I}$-invariant \pj{}
$q \in l^{\I}_{\af} (\N, A)$ such that $\pi_A (q) = p$.
\end{lem}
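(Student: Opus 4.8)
The plan is to produce the lift in two stages: first find an $\af^{\I}$-invariant selfadjoint lift of $p$, and then cut it down to a projection by a functional calculus carried out inside the fixed point algebra.

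First I would choose any $b \in l^{\I}_{\af}(\N, A)$ with $\pi_A(b) = p$, which exists because $p \in A_{\I, \af}$, and replace $b$ by $\frac{1}{2}(b + b^*)$ so that $b$ is selfadjoint. The key step is then to make the lift invariant by averaging over the group. Since $G$ is compact it carries a normalized Haar measure, and since $b \in l^{\I}_{\af}(\N, A)$ the map $g \mapsto \af_g^{\I}(b)$ is norm continuous; hence the Bochner integral $\bar b = \int_G \af_g^{\I}(b)\, dg$ exists in $l^{\I}(\N, A)$. By invariance of Haar measure $\bar b$ is $\af^{\I}$-invariant, so, evaluating coordinatewise (coordinate evaluation is a bounded linear map and commutes with the integral), each component $\bar b_n = \int_G \af_g(b_n)\, dg$ lies in $A^{\af}$ and is selfadjoint; in particular $\bar b \in l^{\I}_{\af}(\N, A)$, since $g \mapsto \af_g^{\I}(\bar b)$ is constant. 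Because $\pi_A$ is a continuous linear map it commutes with the integral, and because $p$ is $\af_{\I}$-invariant we get
\[
\pi_A(\bar b) = \int_G \af_{\I, g}(\pi_A(b))\, dg = \int_G \af_{\I, g}(p)\, dg = \int_G p\, dg = p .
\]
Thus $\bar b$ is a selfadjoint, $\af^{\I}$-invariant lift of $p$.

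It remains to extract a projection. Since $p^2 = p$ we have $\pi_A(\bar b^2 - \bar b) = 0$, so $\bar b^2 - \bar b \in c_0(\N, A)$, that is, $\| \bar b_n^2 - \bar b_n \| \to 0$. For all large $n$ the spectrum of the selfadjoint element $\bar b_n$ is therefore contained in a union of two small intervals about $0$ and $1$ that avoids $\frac{1}{2}$; fixing a continuous function $f \colon \R \to \R$ with $f \equiv 0$ near $0$ and $f \equiv 1$ near $1$, the element $q_n = f(\bar b_n)$ is a projection with $\| q_n - \bar b_n \| \to 0$. As $\bar b_n \in A^{\af}$ and $1 \in A^{\af}$, the functional calculus keeps $q_n$ inside the unital C*-subalgebra $A^{\af}$; I would set $q_n = 0$ for the finitely many remaining $n$. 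Then $q = (q_n)_{n \in \N}$ is a bounded sequence of projections in $A^{\af}$, so $q$ is $\af^{\I}$-invariant and lies in $l^{\I}_{\af}(\N, A)$, and $\pi_A(q) = \pi_A(\bar b) = p$, as required. The routine parts are the symmetrization of the lift and this last functional-calculus passage to nearby projections, which is the standard fact that a selfadjoint $c$ with $\| c^2 - c \|$ small is close to a projection in the C*-algebra it generates. The one point that needs care, and the crux of the argument, is the averaging step: it is exactly the continuity built into $l^{\I}_{\af}(\N, A)$ that guarantees the Haar integral exists and lands back in $l^{\I}_{\af}(\N, A)$, and the $\af_{\I}$-invariance of $p$ together with the continuity of $\pi_A$ that guarantees the averaged element still lifts $p$.
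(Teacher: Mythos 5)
Your proposal is correct and follows essentially the same route as the paper: lift $p$ to a selfadjoint (in the paper, positive) element of $l^{\I}_{\af}(\N, A)$, average over $G$ against Haar measure to make the lift $\af^{\I}$-invariant while preserving $\pi_A(\bar b)=p$, and then use $\|\bar b_n^2 - \bar b_n\|\to 0$ together with functional calculus (the paper applies $\ch_{[1/2,\I)}$ to the whole sequence after zeroing out finitely many entries; you do it coordinatewise, which amounts to the same thing) to replace the lift by an invariant projection. No gaps.
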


\begin{proof}
Choose any positive element
$a = (a_{n})_{n \in \N} \in l^{\I}_{\af} (\N, A)$
such that $\pi_A (a) = p$.
By averaging over~$G$, we may assume that $a$ is $\af^{\I}$-invariant.
Since $\pi_A (a)$ is a \pj,
we have $\lim_{n \to \I} \| a_n^2 - a_n \| = 0$.
Therefore there is $N \in \N$ such that for all $n \geq N$
we have $\spec (a_n) \cap \bigl( \frac{1}{3}, \frac{2}{3} \bigr) = \E$.
Define
\[
b_n = \begin{cases}
   a_n & \hspace*{1em} n \geq N
        \\
   0   & \hspace*{1em} n < N.
\end{cases}
\]
One easily checks that
$b = (b_{n})_{n \in \N} \in l^{\I}_{\af} (\N, A)$,
that $\pi_A (b) = p$, and that
$\spec (b) \cap \bigl( \frac{1}{3}, \frac{2}{3} \bigr) = \E$.
Set $q = \ch_{[ 1/2, \, \I)} (b)$.
\end{proof}

\begin{lem}\label{L_1X09_CSbPj}
Let $A$ be a \ca, let $a \in A_{+}$, and let $p \in A$ be a \pj.
Suppose there is $v \in A$ such that $\| v^* a v - p \| < 1$.
Then there is $w \in A$ such that $w^* a w = p$,
and $p \precsim_{A} a$.
\end{lem}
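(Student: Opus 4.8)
The plan is to reduce everything to an invertibility argument inside the corner $p A p$, regarded as a \ca{} with unit~$p$. First I would set $c = v^{*} a v$, so that $c \in A_{+}$ and, by hypothesis, $\| c - p \| < 1$. Compressing by~$p$ and using $p (c - p) p = p c p - p$ gives
\[
\| p c p - p \| = \| p (c - p) p \| \leq \| c - p \| < 1 .
\]
If $p = 0$ the statement is trivial (take $w = 0$ and note $0 \precsim_{A} a$), so assume $p \neq 0$.

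Next I would work in the corner $p A p$, which is a \ca{} whose unit is~$p$. The element $d = p c p = (v p)^{*} a (v p)$ is positive in $p A p$, and $\| d - p \| < 1$ forces its spectrum (computed in $p A p$) to lie in $[\, 1 - \| d - p \|, \, 1 + \| d - p \| \,] \subseteq (0, \infty)$. Hence $d$ is invertible in $p A p$ and admits a positive inverse square root $d^{-1/2} \in p A p$, satisfying $p \, d^{-1/2} = d^{-1/2} p = d^{-1/2}$ and $d^{-1/2} d \, d^{-1/2} = p$. Setting $w = v p \, d^{-1/2}$ (equivalently $w = v d^{-1/2}$), a direct computation using $(v p)^{*} a (v p) = d$ then yields
\[
w^{*} a w = d^{-1/2} (v p)^{*} a (v p) \, d^{-1/2}
          = d^{-1/2} d \, d^{-1/2} = p ,
\]
which is the first assertion.

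Finally, $p \precsim_{A} a$ follows from the standard fact that $w^{*} a w \precsim_{A} a$ for any $w \in A$. Writing $e = a^{1/2} w$, we have $p = w^{*} a w = e^{*} e \sim_{A} e e^{*} = a^{1/2} (w w^{*}) a^{1/2} \leq \| w \|^{2} a$; since $x \leq y$ implies $x \precsim_{A} y$, since $z^{*} z \sim_{A} z z^{*}$, and since scaling by a positive scalar preserves Cuntz equivalence, transitivity of $\precsim_{A}$ gives $p \precsim_{A} a$.

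I do not expect a genuine obstacle here; the only point that needs care is to perform the invertibility argument and extract $d^{-1/2}$ \emph{inside} the corner $p A p$ (where $d$ is invertible) rather than in~$A$ (where $d = p c p$ need not be invertible), and to observe that it is precisely the compression estimate $\| p c p - p \| < 1$ that makes $d$ invertible in that corner.
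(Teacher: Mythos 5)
Your proof is correct and follows essentially the same route as the paper's: compress $v^* a v$ by $p$, observe that $\| p v^* a v p - p \| < 1$ makes this element invertible in the corner $p A p$, and set $w = v p$ times the inverse square root (the paper writes $w = v p\, c^{1/2}$ with $c$ the inverse, which is your $d^{-1/2}$). The paper simply asserts that $p \precsim_A a$ follows from $w^* a w = p$ (immediate from the definition of Cuntz subequivalence with the constant sequence), whereas you give a slightly longer but equally valid justification.
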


\begin{proof}
The second statement follows from the first.

For the first, observe that $\| p v^* a v p - p \| < 1$.
Therefore $p v^* a v p$ has an inverse in $p A p$.
Call it~$c$.
Then $w = v p c^{1/2}$ satisfies $w^* a w = p$.
\end{proof}

\begin{lem}\label{L_1X10_y0y1p}
Let $A$ be a \ca, let $y_0, y_1 \in A_{+}$ satisfy $y_0 y_1 = y_1$,
and let $p \in A$ be a \pj{} such that $p \precsim_A y_1$.
Then there exists $v \in A$
such that $\| v \| \leq 1$ and $v^* y_0 v = p$.
\end{lem}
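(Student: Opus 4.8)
The plan is to first distill the hypothesis $y_0 y_1 = y_1$ into a clean functional-calculus identity, then upgrade the Cuntz subequivalence $p \precsim_A y_1$ to an exact equation, and finally assemble $v$ so that the norm bound comes for free.

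First I would observe that since $y_0$ and $y_1$ are self-adjoint, taking adjoints in $y_0 y_1 = y_1$ gives $y_1 y_0 = y_1$, so $y_0$ and $y_1$ commute. Hence $y_0$ commutes with every element of the C*-subalgebra generated by $y_1$, in particular with $y_1^{1/2}$, and therefore
\[
y_1^{1/2} y_0 y_1^{1/2} = y_0 y_1^{1/2} y_1^{1/2} = y_0 y_1 = y_1 .
\]
This is the identity that lets me slip $y_0$ in between two copies of $y_1^{1/2}$ at the end.

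Next I would convert the subequivalence into an exact relation. By Definition~\ref{Cuntz.df} there is a sequence $(v_n)$ in $A$ with $v_n y_1 v_n^* \to p$; writing $u_n = v_n^*$, this reads $u_n^* y_1 u_n \to p$, so for $n$ large enough $\| u_n^* y_1 u_n - p \| < 1$. Applying Lemma~\ref{L_1X09_CSbPj} with $a = y_1$ then produces $w \in A$ with $w^* y_1 w = p$. Finally I set $v = y_1^{1/2} w$. Using the identity from the first step,
\[
v^* y_0 v = w^* y_1^{1/2} y_0 y_1^{1/2} w = w^* y_1 w = p ,
\]
which is the desired equation, while simultaneously $v^* v = w^* y_1 w = p$, so that $\| v \|^2 = \| v^* v \| = \| p \| \leq 1$.

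The computations are short, so there is no serious obstacle; the only point requiring attention is achieving the equation $v^* y_0 v = p$ and the contraction bound $\| v \| \leq 1$ at the same time. The choice $v = y_1^{1/2} w$ is what makes this automatic: the identity $y_1^{1/2} y_0 y_1^{1/2} = y_1$ delivers the equation, and the fact that $v^* v = w^* y_1 w = p$ is a projection forces $\| v \| \leq 1$ with no rescaling needed.
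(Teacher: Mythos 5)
Your proof is correct and follows essentially the same route as the paper's: upgrade $p \precsim_A y_1$ to an exact relation $w^* y_1 w = p$ via Lemma~\ref{L_1X09_CSbPj}, set $v = y_1^{1/2} w$, and use $y_1^{1/2} y_0 y_1^{1/2} = y_1$ together with $v^* v = p$ to get both the identity and the norm bound. No issues.
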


\begin{proof}
Choose $w_0 \in A$ such that $\| w_0^* y_1 w_0 - p \| < 1$.
Lemma~\ref{L_1X09_CSbPj} provides $w \in A$ such that $w^* y_1 w = p$.
Set $v = y_1^{1/2} w$.
Then $v^* v = p$ so $\| v \| \leq 1$.
(If $p = 0$ then $\| v \| = 0$.)
Moreover, $y_1^{1/2} y_0 = y_1^{1/2}$, so $v^* y_0 v = p$.
\end{proof}

\begin{lem}\label{mtrpcentral}
Let $G$ be a second countable compact group,
let $A$ be a simple separable infinite dimensional \uca, and let
$\alpha \colon G \to \Aut (A)$ be an action of $G$ on~$A$.
Then $\af$ has the tracial Rokhlin property with comparison
\ifo{} for
every $x \in A_{+}$ with $\| x \| = 1$
and every $y \in (A^{\alpha})_{+} \setminus \{ 0 \}$,
there exist a projection
$p \in (A_{\I, \af} \cap A')^{\alpha_{\infty}}$
and a unital equivariant homomorphism
\[
\ps \colon C (G) \to p (A_{\I, \af} \cap A') p
\]
such that the following hold:
\begin{enumerate}
\item\label{1_psmalllnes}
$1 - p$ is $\af$-small in $A_{\I, \af}$.
\item\label{1_psmalllnes_FP}
$1 - p \precsim_{(A^{\af})_{\I}} y$.
\item\label{1_pandpinA_inf}
$1 - p \precsim_{(A^{\af})_{\I}} p$.
\item\label{nonzerop1_var}
$\| p x p\| = 1$.
\setcounter{TmpEnumi}{\value{enumi}}
\end{enumerate}
\end{lem}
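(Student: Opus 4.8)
The plan is to prove the two implications by the usual reindexing correspondence between the fixed-data formulation of Definition~\ref{traR} (in the compact-set form of Lemma~\ref{TRPcequi}) and the central sequence algebra, with one genuinely delicate point in the forward direction; throughout I write $\pi_A \colon l^{\I}(\N, A) \to A_{\I}$ for the quotient map of Notation~\ref{N_1X07_SeqAlgs} and identify a fixed element of $A^{\af}$ with the corresponding constant sequence. For the implication from the central sequences to the \trpc, fix $x$, $y$ and the data $p$, $\psi$, and fix finite $F \subseteq A$, $S \subseteq C(G)$ and $\varepsilon > 0$. By Lemma~\ref{L_1X09_LiftPj} I lift $p$ to an $\af^{\I}$-invariant sequence of projections $(p_n)_n$ in $A^{\af}$, and since $C(G)$ is separable and nuclear I lift the unital homomorphism $\psi$, by the Choi--Effros theorem, to unital completely positive maps $\widetilde\varphi_n \colon C(G) \to A$; set $\varphi_n = p_n \widetilde\varphi_n(\,\cdot\,) p_n \colon C(G) \to p_n A p_n$. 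Because $\psi$ is multiplicative, takes values in $A_{\I, \af} \cap A'$, and is equivariant, for all large $n$ the map $\varphi_n$ is $(F, S, \varepsilon)$-approximately equivariant central multiplicative (Definition~\ref{phillhir}); the uniformity over $g \in G$ required in Definition~\ref{phillhir}(\ref{equiapprox}) comes from a compactness argument using continuity of $\af$ and of $\Lt$. Applying Definition~\ref{smallness_in_centralseq} (hypothesis~\ref{1_psmalllnes}) to the single element $x$ gives $1 - p_n \precsim_A x$ for large $n$. To obtain the relations $1 - p_n \precsim_{A^{\af}} y$ and $1 - p_n \precsim_{A^{\af}} p_n$ of Definition~\ref{traR}, I reflect hypotheses~\ref{1_psmalllnes_FP} and~\ref{1_pandpinA_inf} down to finite level: lifting a witness of each Cuntz relation yields, for large $n$, an element $v \in A^{\af}$ with $\| v^* y v - (1 - p_n) \| < 1$ (respectively with $p_n$ in place of $y$), so Lemma~\ref{L_1X09_CSbPj} applies. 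Finally $\| pxp \| = 1$ (hypothesis~\ref{nonzerop1_var}) means $\limsup_n \| p_n x p_n \| = 1$, hence $\| p_n x p_n \| > 1 - \varepsilon$ for infinitely many $n$. Since every other condition holds for all large $n$, I pick one such $n$; then $p_n$ and $\varphi_n$ verify Definition~\ref{traR}.

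For the converse the difficulty is that the element $x$ in Definition~\ref{traR} is used both to force $1 - p \precsim_A x$ and to force $\| pxp \| > 1 - \varepsilon$, whereas now I must make $1 - p$ \emph{$\af$-small} (Cuntz below \emph{every} nonzero positive element) while keeping $\| pxp \| = 1$; these requirements pull the input element in opposite directions, and reconciling them is the heart of the argument. I resolve this with the dichotomy of Lemma~\ref{PROSPTRPCA}. If $\af$ has the Rokhlin property I take $p = 1$ and let $\psi$ be the Rokhlin homomorphism (Definition~\ref{df_Rpcpt}); then $1 - p = 0$ is trivially $\af$-small and $\| pxp \| = \| x \| = 1$. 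Otherwise $A$ has Property~(SP), and I fix a sequence $(z_j)$ dense in $\{ a \in A_+ \colon \| a \| = 1 \}$ and numbers $\eta_n \downarrow 0$. Using Lemma~\ref{lma_L683_baj} in the hereditary subalgebra ${\overline{(x - 1 + \eta_n)_+ A (x - 1 + \eta_n)_+}}$ together with Property~(SP) (Lemma~\ref{OrthInSP}), I choose a \emph{nonzero projection} $e_n$ in that spectral ``peak'' of $x$ with $e_n \precsim_A z_j$ for $j \le n$. I then apply Lemma~\ref{TRPcequi} with $e_n$ in the slot of $x$, with $F_n \uparrow A$, $S_n \uparrow C(G)$, $\varepsilon_n \downarrow 0$, multiplicativity parameter $n$, and with $y_1 = (y - 2/3)_+$ in the slot of $y$, producing $p_n \in A^{\af}$ and an equivariant $\varphi_n \colon C(G) \to p_n A p_n$. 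Setting $p = \pi_A((p_n))$ and $\psi = \pi_A \circ (\varphi_n)$ gives, from approximate centrality, multiplicativity and equivariance, a projection $p \in (A_{\I, \af} \cap A')^{\af_{\I}}$ and a unital equivariant homomorphism $\psi \colon C(G) \to p (A_{\I, \af} \cap A') p$ with $\psi(1) = p$.

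It remains to check the four conditions for the Property~(SP) construction. Condition~\ref{1_psmalllnes} holds because $1 - p_n \precsim_A e_n \precsim_A z_j$ for $j \le n$, so $1 - p_n$ is eventually Cuntz below each $z_j$ and hence, by density and the standard estimate that $\| a - b \| < \delta$ gives $(a - \delta)_+ \precsim_A b$, eventually below every nonzero positive element; this is Definition~\ref{smallness_in_centralseq} via Lemma~\ref{L_1X09_SmallPj}. For Condition~\ref{nonzerop1_var}, the choice of $e_n$ gives $e_n x e_n \ge (1 - \eta_n) e_n$, while $\| p_n e_n p_n \| > 1 - \varepsilon_n$ produces an almost $p_n$-fixed unit vector lying almost in the range of $e_n$; a short vector computation then yields $\| p_n x p_n \| \ge (1 - \eta_n)(1 - \varepsilon_n) - 3 \sqrt{\varepsilon_n} \to 1$, so $\| pxp \| = \limsup_n \| p_n x p_n \| = 1$. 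Condition~\ref{1_pandpinA_inf} follows because $1 - p_n$ and $p_n$ are projections with $1 - p_n \precsim_{A^{\af}} p_n$, giving norm-one partial isometries $w_n \in A^{\af}$ with $w_n^* w_n = 1 - p_n$ and $w_n w_n^* \le p_n$, which assemble to a partial isometry in $(A^{\af})_{\I}$ witnessing $1 - p \precsim_{(A^{\af})_{\I}} p$. Finally Condition~\ref{1_psmalllnes_FP} is where the cutdown $y_1$ enters: choosing a continuous $f$ with $f = 1$ on the support of $y_1$ and $f = 0$ near $0$ and setting $y_0 = f(y)$, we have $y_0 y_1 = y_1$ and $y_0 \precsim_{A^{\af}} y$, so from $1 - p_n \precsim_{A^{\af}} y_1$ Lemma~\ref{L_1X10_y0y1p} supplies \emph{norm-bounded} $v_n \in A^{\af}$ with $v_n^* y_0 v_n = 1 - p_n$; passing to the sequence algebra yields $1 - p = V^* y_0 V \precsim_{(A^{\af})_{\I}} y_0 \precsim_{(A^{\af})_{\I}} y$, where $V = \pi_A((v_n))$. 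The one essential difficulty, as noted, is the conflict between smallness and $\| pxp \| = 1$; the peak-projection device, which lets a single small projection $e_n$ control both at once, is exactly what removes it.
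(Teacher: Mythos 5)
Your argument is correct and follows essentially the same route as the paper's proof: the forward direction uses the same lifting and reindexing (Lemma~\ref{L_1X09_LiftPj}, Choi--Effros, a Dini-type argument for the uniformity over $G$, and Lemma~\ref{L_1X09_CSbPj} for the two Cuntz comparisons in $A^{\af}$), and the reverse direction uses the same dichotomy via Lemma~\ref{PROSPTRPCA}, the same peak-projection device $e_n$ to reconcile $\af$-smallness with $\| p x p \| = 1$, and the same cutdown $y_0 y_1 = y_1$ with Lemma~\ref{L_1X10_y0y1p} for condition~(\ref{1_psmalllnes_FP}). One small repair in your smallness argument: you should arrange $e_n \precsim_A \bigl( z_j - \frac{1}{2} \bigr)_{+}$ for $j \leq n$ rather than $e_n \precsim_A z_j$, since the estimate you quote gives $\bigl( z_j - \frac{1}{2} \bigr)_{+} \precsim_A t$ when $\| z_j - t \| < \frac{1}{2}$, so the projections $1 - p_n$ must sit below $\bigl( z_j - \frac{1}{2} \bigr)_{+}$ and not merely below $z_j$; Lemma~\ref{lma_L683_baj} applies just as well to the elements $\bigl( z_j - \frac{1}{2} \bigr)_{+}$, and this is exactly the paper's choice of its elements $z_n$.
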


We will later improve the statement to replace~(\ref{1_psmalllnes_FP})
with the statement that $1 - p$ is small in $(A^{\af})_{\I}$.
(See Proposition~\ref{P_1X14_CentSq}.)
For this, however, we need to know that $A^{\af}$ is simple,
and the statement here is used in the proof of that fact.
Once we have simplicity of $A^{\af}$,
the proof will be essentially the same, and we will refer
back to this proof for most of the steps.

\begin{proof}[Proof of Lemma~\ref{mtrpcentral}]
We first prove that the condition implies the \trpc.

Let $F \subseteq A$ and $S \subseteq C (G)$ be finite,
let $\varepsilon > 0$, let $x \in A_{+}$ satisfy $\| x \| = 1$,
and let $y \in (A^{\alpha})_{+} \setminus \{ 0 \}$.
Choose $p$ and $\ph$ as in the statement for $x$ and $y$ as given.
Use Lemma~\ref{L_1X09_LiftPj} to choose an $\af^{\I}$-invariant \pj{}
$q = (q_{n})_{n \in \N} \in l^{\I}_{\af} (\N, A)$
such that $\pi_A (q) = p$.
Then $\pi_A$ defines a surjective map
from $q l^{\I}_{\af} (\N, A) q$ to $p A_{\I, \af} p$.
Use the Choi-Effros lifting theorem to choose a lift
$\te = (\te_n)_{n \in \N} \colon C (G) \to q l^{\I}_{\af} (\N, A) q$
of $\ps$,
consisting of \ucp{} maps $\te_n \colon C (G) \to q_{n} A q_{n}$.
We claim that the following properties hold
(recalling Notation~\ref{N_1X07_Lt} for~(\ref{Item_1X07_17})):
\begin{enumerate}
\setcounter{enumi}{\value{TmpEnumi}}
\item\label{Item_1X07_15}
$\lim\limits_{n \to \I}
 \| \te_n (f_{1} f_{2}) - \te_n (f_{1}) \te_n (f_{2}) \| = 0$
for all $f_{1}, f_{2} \in C (G)$.
\item\label{Item_1X07_16}
$\lim\limits_{n \to \I} \|\te_n (f) a - a \te_n (f) \| = 0$
for all $a \in A$ and all $f \in C (G)$.
\item\label{Item_1X07_17}
$\lim\limits_{n \to \I} \sup\limits_{g \in G}
     \| (\te_n \circ \Lt_g) (f) - (\alpha_g \circ \te_n) (f) \| = 0$
for all $f \in C (G)$.
\setcounter{TmpEnumi}{\value{enumi}}
\end{enumerate}

Properties (\ref{Item_1X07_15}) and~(\ref{Item_1X07_16})
follow immediately from the fact that $\pi_A \circ \te = \ps$
is a \hm{} and has range contained in $A_{\I} \cap A'$.
For~(\ref{Item_1X07_17}), let $f \in C (G)$,
and define $\rh_n \colon G \to [0, \I)$
by
\[
\rh_n (g)
 = \sup_{m \geq n}
   \| (\te_m \circ \Lt_g) (f) - (\alpha_g \circ \te_m) (f) \|
\]
for $g \in G$.
We have $\lim_{n \to \I} \rh_n (g) = 0$ for all $g \in G$
because $\pi_A \circ \te = \ps$ is equivariant,
and $\rh_m (g) \leq \rh_n (g)$ whenever $g \in G$ and $m \geq n$.
Further let $\sm \colon l^{\I}_{\af} (\N, A) \to l^{\I}_{\af} (\N, A)$
be the backwards shift,
given by $\sm ( (a_n)_{n \in \N} ) = (a_2, a_3, \ldots)$.
It is easy to see that the range of $\sm$ is
in fact contained in $l^{\I}_{\af} (\N, A)$.
Now
\[
\rh_n (g)
 = \bigl\| \sm^{n - 1} \bigl( (\te \circ \Lt_g) (f)
     - (\alpha_g^{\I} \circ \te) (f) \bigr) \bigr\|,
\]
so $\rh_n$ is \ct.
Therefore~(\ref{Item_1X07_17}) follows from Dini's Theorem.
The claim is proved.

Let $\mu$ be normalized Haar measure on~$G$.
For $n \in \N$ and $f \in C (G)$, define
\[
\gm_n (f)
 = \int_G (\af_g^{-1} \circ \te_n \circ \Lt_g) (f) \, d \mu (g).
\]
Then $\gm_n$ is an equivariant \ucp{} map from $C (G)$ to~$A$,
and, using~(\ref{Item_1X07_17}), one sees that
\begin{enumerate}
\setcounter{enumi}{\value{TmpEnumi}}
\item\label{Eq_1X09_gmte}
$\lim_{n \to \I} \| \gm_n (f) - \te_n (f) \| = 0$ for all $f \in C (G)$.
\end{enumerate}

Using (\ref{Item_1X07_15}), (\ref{Item_1X07_16}),
and~(\ref{Eq_1X09_gmte}),
choose $n_0 \in \N$ such that whenever $n \geq n_0$,
we have $\| \gm_n (f_1 f_2) - \gm_n (f_1) \gm_n (f_2) \| < \ep$
for all $f_{1}, f_{2} \in C (G)$
and $\|\te_n (f) a - a \te_n (f) \| < \ep$
for all $a \in A$ and $f \in C (G)$.
Since $1 - p$ is $\af$-small, there is $n_1 \in \N$
such that for every $n \geq n_1$ we have $1 - q_{n} \precsim_{A} x$.
Since $1 - p \precsim_{(A^{\af})_{\I}} y$,
by Lemma~\ref{L_1X09_CSbPj} there is $c \in (A^{\af})_{\I}$
such that $1 - p = c^* y c$.
Choose $d = (d_{n})_{n \in \N} \in l^{\I}_{\af} (\N, A)$
such that $\pi_A (d) = c$.
Then $\lim_{n \to \infty} \| d_n^* y d_n - (1 - q_n) \| = 0$.
Therefore there is $n_2 \in \N$ such that for every $n \geq n_2$
we have $\| d_n^* y d_n - (1 - q_n) \| < 1$.
Since $1 - p \precsim_{(A^{\af})_{\I}} p$,
there is $v \in (A^{\af})_{\I}$ such that $v^* v = 1 - p$ and
$v v^* \leq p$.
Then $1 - p = v^* p v$.
Choose $w = (w_{n})_{n \in \N} \in l^{\I}_{\af} (\N, A)$
such that $\pi_A (w) = v$.
Then $\lim_{n \to \infty} \| w_n^* q_n w_n - (1 - q_n) \| = 0$.
Therefore there is $n_3 \in \N$ such that for every $n \geq n_3$
we have $\| w_n^* q_n w_n - (1 - q_n) \| < 1$.
Also, there is $n_4 \in \N$
such that for every $n \geq n_4$ we have $\| q_n x q_n \| > 1 - \ep$.
Set $n = \max (n_0, n_1, n_2, n_3, n_4)$, and in Definition~\ref{traR}
take $\ph$ to be $\gm_n$ and take $p$ to be~$q_n$.
All the conditions hold by construction, except that we use
Lemma~\ref{L_1X09_CSbPj} to see that
$1 - q_n \precsim_{A^{\alpha}} y$
and $1 - q_n \precsim_{A^{\alpha}} q_n$.

We now show that the \trpc{} implies the existence of~$\ps$.
Since the Rokhlin property case is already known
(see Theorem~1.7 of~\cite{gardkk-eq} and
Theorem~2.11 of~\cite{crossrep}), we may assume that $\af$ does not have
the Rokhlin property.
Therefore $A$ has Property~(SP) by Lemma~\ref{PROSPTRPCA}.

Let $x \in  A_{+}$ satisfy $\| x \| = 1$
and let $y \in (A^{\af})_{+} \setminus \{ 0 \}$.
We may assume that $\| y \| = 1$.
Choose dense sequences
\begin{equation}\label{Eq_1X14_fnan}
f_1, f_2, \ldots \in C (G),
\qquad
a_1, a_2, \ldots \in A,
\end{equation}
and
\begin{equation}\label{Eq_1X14_xn}
x_1, x_2, \ldots \in \bigl\{ a \in A_{+} \colon \| a \| = 1 \bigr\}.
\end{equation}
For $n \in \N$, use simplicity of~$A$ and Lemma~\ref{lma_L683_baj}
to choose $z_n \in A_{+} \setminus \{ 0 \}$
such that
\begin{equation}\label{Eq_1X10_StSt}
z_n \precsim_{A} \Bigl( x_k - \frac{1}{2} \Bigr)_{+}
\end{equation}
for $k = 1, 2, \ldots, n$.
Define $h_{n}, k_{n} \colon [0, 1] \to [0, 1]$ by
\[
h_n (\ld)
 = \begin{cases}
   \left( 1 - \frac{1}{n + 1} \right)^{-1} \ld
    & \hspace*{1em} 0 \leq \ld \leq 1 - \frac{1}{n + 1}
        \\
   1 & \hspace*{1em} 1 - \frac{1}{n + 1} < \ld \leq 1
\end{cases}
\]
and
\[
k_n (\ld)
 = \begin{cases}
   0 & \hspace*{1em} 0 \leq \ld \leq 1 - \frac{1}{n + 1}
        \\
   (n + 1) \ld - n
     & \hspace*{1em} 1 - \frac{1}{n + 1} < \ld \leq 1.
\end{cases}
\]
Since $\| x \| = 1$, we have $\| k_n (x) \| = 1$,
and there is a nonzero \pj{} $r_n \in {\ov{k_n (x) A k_n (x)}}$.
Then
\begin{equation}\label{Eq_1X10_4St}
h_n (x) r_n = r_n.
\end{equation}
Since $A$ is simple, Lemma~\ref{lma_L683_baj}
provides $d_n \in (r_n A r_n)_{+} \SM \{ 0 \}$ such that
$d_n \precsim_{A} z_n$.
Use Property~(SP) to choose a nonzero \pj{} $e_n \in \ov{d_n A d_n}$.
Then
\begin{equation}\label{Eq_1X10_St}
e_n \leq r_n \andeqn e_n \precsim_{A} d_n \precsim_{A} z_n.
\end{equation}
Also define $y_0 = h_1 (y)$ and $y_1 = k_1 (y)$, getting
\begin{equation}\label{Eq_1X10_StStSt}
\| y_1 \| = 1,
\qquad
y_0 y_1 = y_1,
\andeqn
y_0 \sim y.
\end{equation}
Apply Lemma~\ref{TRPcequi} with
\begin{equation}\label{Eq_1X18_3St}
S_n = \{ f_1, f_2, \ldots, f_n \}
\andeqn
F_n = \{ a_1, a_2, \ldots, a_n \}
\end{equation}
in place of $S$ and~$F$,
with $\ep = \frac{1}{n}$,
with $e_n$ in place of~$x$, and with $y_1$ in place of~$y$.
We get a projection $q_n \in A^{\alpha}$ and an equivariant
unital completely positive map $\ph_n \colon C (G) \to p A p$,
such that the following hold.
\begin{enumerate}
\setcounter{enumi}{\value{TmpEnumi}}
\item\label{Item_1X10_FSE}
$\ph_n$ is an $\bigl( F_n, S_n, \frac{1}{n} \bigr)$-approximately
central multiplicative map.
\item\label{Item_1X10_1mpx}
$1 - q_n \precsim_{A} e_n$.
\item\label{Item_1X10_1py}
$1 - q_n \precsim_{A^{\alpha}} y_1$.
\item\label{Item_1X10_1ppp}
$1 - q_n \precsim_{A^{\alpha}} q_n$.
\item\label{Item_1X10_Nm}
$\left\| q_n e_n q_n \right\| > 1 - \frac{1}{n}$.
\end{enumerate}

Define an $\af$-invariant \pj{} $q \in l^{\I}_{\af} (\N, A)$
by $q = (q_{n})_{n \in \N}$.
Define $\ph \colon C (G) \to l^{\I} (\N, A)$ by
$\ph (f) = (\ph_{n} (f))_{n \in \N}$ for $f \in C (G)$.
Clearly $\ph$ is equivariant and \ucp.
It follows from equivariance that the range of $\ph$ is
contained in $l^{\I}_{\af} (\N, A)$.
Set $p = \pi_A (q)$ and $\ps = \pi_A \circ \ph$.
Obviously $\ps$ is equivariant.
Using the fact that $\| \ph_n \| \leq 1$ for all $n \in \N$,
density of the sequences in~(\ref{Eq_1X14_fnan}),
and~(\ref{Eq_1X18_3St}),
it is easily seen that $\ps$ is a \hm{}
whose range commutes with the image of~$A$ in $A_{\I, \af}$.

We prove~(\ref{1_psmalllnes}).
By Lemma~\ref{L_1X09_SmallPj},
it suffices to prove that for all nonzero $t \in A_{+}$
there exists $N \in \N$ such that for every $n \geq N$
we have $1 - q_{n} \precsim_{A} t$.
We may assume that $\| t \| = 1$.
Choose $N \in \N$ such that $\| x_N - t \| < \frac{1}{2}$.
Then for $n \geq N$ we have,
using (\ref{Item_1X10_1mpx}) at the first step,
(\ref{Eq_1X10_St}) at the second step,
and (\ref{Eq_1X10_StSt}) at the third step,
\[
1 - q_n \precsim_{A} e_n \precsim_{A} z_n
 \precsim_{A} \Bigl( x_N - \frac{1}{2} \Bigr)_{+} \precsim_{A} t.
\]

For~(\ref{1_psmalllnes_FP}), use
(\ref{Eq_1X10_StStSt}), (\ref{Item_1X10_1py}),
and Lemma~\ref{L_1X10_y0y1p}
to choose $w_n \in A^{\af}$ such that $\| w_n \| \leq 1$ and
$w_n^* y_0 w_n = 1 - q_n$.
Then $v = \pi_A ( (w_1, w_2, \ldots ) )$ satisfies $v^* y_0 v = 1 - p$.
Since $y_0 \sim_A y$ (by~(\ref{Eq_1X10_StStSt})),
we conclude that $1 - p \precsim_{(A^{\af})_{\I}} y$.

To prove~(\ref{1_pandpinA_inf}), by~(\ref{Item_1X10_1ppp})
there is $t_n \in A^{\af}$ such that $t_n^* t_n = 1 - q_n$
and $t_n t_n^* \leq q_n$.
Then $s = \pi_A ( (t_1, t_2, \ldots ) )$ satisfies
$s^* s = 1 - p$ and $s s^* \leq p$.

Finally, we prove~(\ref{nonzerop1_var}).
It suffices to show that $\lim_{n \to \infty} \| q_n x q_n \| = 1$.
For $n \in \N$ we have $\| x - h_n (x) \| < \frac{1}{n}$.
Therefore
$\| q_n x q_n \| > \| q_n h_n (x) q_n \| - \frac{1}{n}$.
Now $e_n \leq h_n (x)$ by (\ref{Eq_1X10_St}) and~(\ref{Eq_1X10_4St}),
so, by~(\ref{Item_1X10_Nm}),
\[
\| q_n h_n (x) q_n \| \geq \| q_n e_n q_n \| > 1 - \frac{1}{n}.
\]
Therefore $\| q_n x q_n \| > 1 - \frac{2}{n}$, and~(\ref{nonzerop1_var})
follows.
This completes the proof.
\end{proof}

The following theorem is a combination
of Theorem~1.7 of~\cite{gardkk-eq} and
Theorem~2.11 of~\cite{crossrep}, modified for the \trpc.
It is a key tool for transferring properties
from the original algebra to the fixed point algebra.
The proof follows that of Theorem~2.11 of~\cite{crossrep},
but there is a mistake there: the formula for $\rh_n (f \otimes a)$
isn't additive in~$f$, so can't be used to define a \hm{}
as claimed there.
The fix is taken from the proof of Theorem~1.7 of~\cite{gardkk-eq}.

\begin{thm}\label{thm_ApproxHommtr}
Let $A$ be an infinite dimensional simple separable unital \ca,
let $G$ be a second countable compact group, and let
$\alpha \colon G \to \Aut (A)$ be an action which has the
tracial Rokhlin property with comparison.
For every $\ep > 0$, every $n \in \N$,
every compact subset $F_{1} \S A$,
every compact subset $F_2 \S A^{\alpha}$,
every $x \in A_{+}$ with $\| x \| = 1$,
and every $y \in (A^{\alpha})_{+} \setminus \{ 0 \}$, there exist
a projection $p \in A^{\alpha}$ and a
unital completely positive contractive map
$\psi \colon A \to p A^{\alpha} p$ such that the following hold.
\begin{enumerate}
\item\label{Item_1X07_18}
$\ps$ is an $(n, F_1 \cup F_2, \ep)$-approximately
multiplicative map (Definition~\ref{D_1X10_nSFe}).
\item\label{commute1469}
$\| p a - a p \| < \varepsilon$ for all $a \in F_{1} \cup F_{2}$.
\item\label{Item_1X07_21}
$\| \psi (a) - p a p \| < \ep$ for all $a \in F_2$.
\item\label{Item_1Z11_BigN}
$\| \ps (a) \| > \| a \| - \ep$ for all $a \in F_{1} \cup F_{2}$.
\item\label{Item_1X07_19}
$1 - p \precsim_{A} x$.
\item\label{Item_1X07_20}
$1 - p \precsim_{A^{\alpha}} y$.
\item\label{Item_1X07_20p}
$1 - p \precsim_{A^{\alpha}} p$.
\item\label{Item_1X07_22}
$\| p x p \| > 1 - \varepsilon$.
\setcounter{TmpEnumi}{\value{enumi}}
\end{enumerate}
\end{thm}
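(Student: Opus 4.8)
The plan is to perform the construction inside the central sequence algebra, where the maps can be taken to be exact, and then reindex back to~$A$. First I would apply Lemma~\ref{mtrpcentral} with the given $x$ and~$y$ to obtain an $\af_{\I}$-invariant projection $p \in (A_{\I, \af} \cap A')^{\af_{\I}}$ and a unital equivariant \hm{} $\ph \colon C (G) \to p (A_{\I, \af} \cap A') p$ satisfying its conditions (\ref{1_psmalllnes})--(\ref{nonzerop1_var}). All the comparison and smallness conclusions (\ref{Item_1X07_19})--(\ref{Item_1X07_22}) of the theorem will come directly from these, so the real task is to upgrade $\ph$, which only involves $C (G)$, to an approximate embedding of all of~$A$ into the corner $p A^{\af} p$.

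The central construction is a single genuine \hm{} on a tensor product. I would define $\pi \colon C (G) \otimes A \to p A_{\I, \af} p$ on elementary tensors by $\pi (f \otimes a) = \ph (f) a$. Since the range of $\ph$ lies in the relative commutant, each $\ph (f)$ commutes with every element of~$A$ and with~$p$; from this one checks directly that $\pi$ is additive and multiplicative, and, as $C (G)$ is nuclear, that it extends to a $*$-\hm{} on $C (G) \otimes A$. (It is precisely this honestly bilinear and multiplicative formula that repairs the gap in Theorem~2.11 of~\cite{crossrep}, where the corresponding expression failed to be additive in~$f$.) A short computation shows $\pi$ intertwines $\Lt \otimes \af$ with~$\af_{\I}$. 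The fixed point algebra $(C (G) \otimes A)^{\Lt \otimes \af}$ is canonically isomorphic to~$A$ via $a \mapsto F_a$, where $F_a (g) = \af_g (a)$, so restricting $\pi$ gives a $*$-\hm{} $\Psi \colon A \to (p A_{\I, \af} p)^{\af_{\I}}$, namely $\Psi (a) = \pi (F_a)$.

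Two properties of $\Psi$ carry the argument. For $a \in A^{\af}$ the function $F_a$ is the constant~$a$, so $\Psi (a) = \ph (1) a = p a = p a p$; in particular $\Psi$ is nonzero because $p \neq 0$. Since $A$ is simple, a nonzero \hm{} out of~$A$ is injective, hence isometric, so $\Psi$ is an isometry with $\Psi|_{A^{\af}} = p (\,\cdot\,) p$. I expect the conceptual heart of the proof to be exactly these two points: verifying that $\pi$ is an honest \hm{} (additivity as well as multiplicativity, both resting on centrality of the range of~$\ph$) and then extracting norm preservation from simplicity of~$A$. Everything after this is bookkeeping.

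Finally I would reindex. Using Lemma~\ref{L_1X09_LiftPj} I lift $p$ to a sequence of $\af^{\I}$-invariant projections $(q_n)_{n \in \N}$ in $l^{\I}_{\af} (\N, A)$, and by the Choi--Effros lifting theorem I lift $\Psi$ to a sequence of \ucp{} maps $\Psi_n \colon A \to q_n A q_n$. Composing with the conditional expectation $E \colon A \to A^{\af}$, $E (a) = \int_G \af_g (a) \, d \mu (g)$, and using $q_n \in A^{\af}$, yields \ucp{} maps $\widetilde{\Psi}_n \colon A \to q_n A^{\af} q_n$ with $\widetilde{\Psi}_n \to \Psi$ in the appropriate sense. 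Because $\Psi$ is an isometric \hm{} with $\Psi (a) = p a p$ on $A^{\af}$, for large~$n$ the map $\widetilde{\Psi}_n$ satisfies the approximate multiplicativity~(\ref{Item_1X07_18}), the estimate~(\ref{Item_1X07_21}), and the norm lower bound~(\ref{Item_1Z11_BigN}) on the compact set $F_1 \cup F_2$, while~(\ref{commute1469}) holds since $p$ commutes with~$A$ in $A_{\I, \af}$. Conditions (\ref{Item_1X07_19})--(\ref{Item_1X07_22}) on $1 - q_n$ follow for large~$n$ from (\ref{1_psmalllnes})--(\ref{nonzerop1_var}) of Lemma~\ref{mtrpcentral}, using the definition of $\af$-smallness for~(\ref{Item_1X07_19}) and Lemma~\ref{L_1X09_CSbPj} to turn the central-sequence subequivalences into honest ones in $A$ and~$A^{\af}$ for (\ref{Item_1X07_20}) and~(\ref{Item_1X07_20p}). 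Choosing a single $n$ large enough for all the finitely many requirements and setting $p = q_n$, $\psi = \widetilde{\Psi}_n$ finishes the proof.
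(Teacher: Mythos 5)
Your overall strategy is the same as the paper's: apply Lemma~\ref{mtrpcentral}, assemble the commuting pair $(\ph, \io)$ into a genuine homomorphism $\et$ on $C(G) \otimes A$ (the formula $\et(f \otimes a) = \ph(f)\,\io(a)$, which is honestly bilinear precisely because the range of $\ph$ lies in the relative commutant, is indeed the fix for the non-additivity problem in Theorem~2.11 of~\cite{crossrep}), restrict along the isomorphism $A \cong C(G,A)^{\Lt \otimes \af}$, use simplicity to get isometry, and reindex. All of that matches the paper.

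The genuine gap is in the lifting step. You invoke the Choi--Effros lifting theorem to lift $\Psi \colon A \to p\,A_{\I,\af}\,p$ (equivalently, to lift $\et$ on $C(G) \otimes A$) to a sequence of \ucp{} maps into $q\,l^{\I}_{\af}(\N, A)\,q$. Choi--Effros requires the domain to be separable and \emph{nuclear}, but $A$ is only assumed to be an infinite dimensional simple separable unital \ca{} --- no nuclearity hypothesis is present, and none can be imported, since the theorem is used later for non-nuclear algebras (e.g.\ in the radius-of-comparison results). This is exactly the obstruction the paper's proof is built to avoid: it writes $\et$ as a point-norm limit of the maps $\et \circ [(\sm_m \circ \rh_m) \otimes \id_A]$, each of which factors through ${\mathbb{C}}^{k(m)} \otimes A \cong A^{k(m)}$ and is therefore liftable by an explicit construction (lift the positive elements $(\bt \circ \sm_m)(e_j)$ to a partition $\sum_j b_j = q$ and set $\gm_m(a_1, \ldots, a_{k(m)}) = \sum_j b_j^{1/2} q a_j q\, b_j^{1/2}$), and then applies Theorem~6 of~\cite{Ar}, which says that a point-norm limit of liftable \ucp{} maps from a separable \ca{} is liftable. (Note that the use of Choi--Effros inside the proof of Lemma~\ref{mtrpcentral} is legitimate because there the domain is $C(G)$, which is nuclear.) With that substitution --- and keeping your averaging over $G$ to make the lift equivariant, which requires the lift to land in $l^{\I}_{\af}(\N,A)$ rather than just $l^{\I}(\N,A)$ --- the rest of your bookkeeping goes through as you describe.
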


\begin{proof}
Let $\ep$, $F_{1}$, $F_2$, $x$, and $y$ be as in the statement.

We first claim that there are an $\af^{\I}$-invariant \pj{}
$q \in l^{\I}_{\af} (\N, A)$ and a \ucp{} map
$\te \colon C (G) \otimes A \to q l^{\I}_{\af} (\N, A) q$
such that, with $r = \pi_A (q)$, the following hold:
\begin{enumerate}
\setcounter{enumi}{\value{TmpEnumi}}
\item\label{Item_1X07_eq}
Recalling Notation~\ref{N_1X07_Lt},
$\te$ is equivariant for the actions $g \mapsto \Lt_g \otimes \af_g$
and $g \mapsto \af^{\I}_g$.
\item\label{Item_1X07_hm}
$\pi_A \circ \te$ is a \hm.
\item\label{Item_1X07_1otimes}
$(\pi_A \circ \te) (1 \otimes a) = r a r$ for all $a \in A$.
\item\label{Item_1X07_afs}
$1 - r$ is $\af$-small in $A_{\I, \af}$.
\item\label{Item_1X07_sm}
$1 - r \precsim_{(A^{\af})_{\I}} y$.
\item\label{Item_1X07_sbp}
$1 - r \precsim_{(A^{\af})_{\I}} r$.
\item\label{Item_1X07_norm1}
$\| r x r \| = 1$.
\end{enumerate}

To prove the claim, apply the ``only if''
part of Lemma~\ref{mtrpcentral} with $x$ and~$y$ as given,
getting $r \in (A_{\I, \af} \cap A')^{\alpha_{\infty}}$
and a unital equivariant homomorphism
\[
\bt \colon C (G) \to r (A_{\I, \af} \cap A') r,
\]
such that Conditions (\ref{1_psmalllnes}), (\ref{1_psmalllnes_FP}),
(\ref{1_pandpinA_inf}), and~(\ref{nonzerop1_var})
of Lemma~\ref{mtrpcentral} hold.
We write $\io \colon A \to r A_{\I, \af} r$
for the inclusion of $A$ in $A_{\I, \af}$ using constant sequences,
followed by the cutdown $b \mapsto r b r$.
This map is a \hm, since $r \in A_{\I, \af} \cap A'$.
The ranges of $\bt$ and $\io$ commute,
so there is a unital \hm{}
$\et \colon C (G) \otimes A \to r A_{\I, \af} r$ such that
\begin{equation}\label{Eq_1X18_NewSt}
\et (f \otimes a) = \bt (f) \io (a)
\end{equation}
for all $f \in C (G)$ and $a \in A$.
Use Lemma~\ref{L_1X09_LiftPj} to choose
an $\af^{\I}$-invariant \pj{}
$q = (q_{m})_{m \in \N} \in l^{\I}_{\af} (\N, A)$
such that $\pi_A (q) = r$.
We now have Conditions (\ref{Item_1X07_afs}), (\ref{Item_1X07_sm}),
(\ref{Item_1X07_sbp}), and~(\ref{Item_1X07_norm1}) of the claim.

We now construct~$\te$.
This requires a lifting of~$\et$, which we obtain
by showing that $\et$ is a pointwise limit of liftable maps.
Using a partition of unity argument and second countability of~$G$,
choose positive integers $k (m)$,
\ucp{} maps $\rh_m \colon C (G) \to {\mathbb{C}}^{k (m)}$,
and \ucp{} maps $\sm_m \colon {\mathbb{C}}^{k (m)} \to C (G)$,
for $m \in \N$,
such that for all $f \in C (G)$ we have
$\lim_{m \to \infty} (\sm_m \circ \rh_m) (f) = f$.

Fix $m \in \N$,
and let $e_1, e_2, \ldots, e_{k (m)}$
be the minimal \pj{s} in~${\mathbb{C}}^{k (m)}$.
For $j = 1, 2, \ldots, k (m) - 1$,
choose $c_j \in [q l^{\I}_{\af} (\N, A) q]_{+}$ such that
$\pi_A (c_j) = (\bt \circ \sm_m) (e_j)$.
Let $c_{k (m)}$ be the positive part of
$q - \sum_{j = 1}^{k (m) - 1} c_j$.
Then $\pi_A (c_{k (m)}) = (\bt \circ \sm_m) (e_{k (m)})$
and, with
$c = \sum_{j = 1}^{k (m)} c_j \in [q l^{\I}_{\af} (\N, A) q]_{+}$,
we have $c \geq q$.
Taking functional calculus in $q l^{\I}_{\af} (\N, A) q$,
for $j = 1, 2, \ldots, k (m)$ define $b_j = c^{- 1/2} c_j c^{- 1/2}$.
Then $b_j \geq 0$ and $\pi_A (b_j) = (\bt \circ \sm_m) (e_j)$
for $j = 1, 2, \ldots, k (m)$, and $\sum_{j = 1}^{k (m)} b_j = q$.
Identify ${\mathbb{C}}^{k (m)} \otimes A$ with $A^{k (m)}$, and define
$\gm_m \colon
 {\mathbb{C}}^{k (m)} \otimes A \to q l^{\I}_{\af} (\N, A) q$
by, identifying $a \in A$ with the corresponding constant sequence
in $l^{\I}_{\af} (\N, A)$,
\[
\gm_m (a_1, a_2, \ldots, a_{k (m)} )
 = \sum_{j = 1}^{k (m)} b_j^{1/2} q a_j q b_j^{1/2}
\]
for $a_1, a_2, \ldots, a_{k (m)} \in A$.
Then one checks that $\gm_m$ is a \ucp{} map such that
$\pi_A \circ \gm_m = \et \circ (\sm_m \otimes \id_A)$.
Therefore
\[
\pi_A \circ \gm_m \circ (\rh_m \otimes \id_A)
 = \et \circ [(\sm_m \circ \rh_m) \otimes \id_A].
\]
This formula shows that the \ucp{} maps
\[
\et \circ [(\sm_m \circ \rh_m) \otimes \id_A] \colon
  C (G) \otimes A \to r A_{\I, \af} r
\]
have lifts to \ucp{} maps
\[
\gm \circ (\rh_m \otimes \id_A)
 \colon C (G) \otimes A \to q l^{\I}_{\af} (\N, A) q.
\]
Since $\sm_m \circ \rh_m \to \id_{C (G)}$ pointwise,
it follows that
\[
\et \circ [(\sm_m \circ \rh_m) \otimes \id_A] \to \et
\]
pointwise.
Now Theorem~6 of~\cite{Ar} shows that $\et$ lifts to a
\ucp{} map $\te_0 \colon C (G) \otimes A \to q l^{\I}_{\af} (\N, A) q$.
Since $\et$ is equivariant, we can average $\te_0$ over~$G$
to get an equivariant \ucp{} map
$\te \colon C (G) \otimes A \to q l^{\I}_{\af} (\N, A) q$
such that $\pi_A \circ \te = \et$.
This gives (\ref{Item_1X07_eq}) and~(\ref{Item_1X07_hm}) of the claim.
Condition~(\ref{Item_1X07_1otimes}) follows from~(\ref{Eq_1X18_NewSt}).
The claim is proved.

Let $\te$ be as in the claim.
For $b \in C (G, A)$, write $\te (b) = (\te_m (b))_{n = 1}^{\I}$.
Then the maps $\te_m \colon C (G, A) \to A$ are
equivariant and \ucp.
It follows from Proposition~2.3 of~\cite{crossrep}
that there is an isomorphism
$\kp \colon A \to C (G, A)^{\Lt \otimes \af}$
such that $\kp (a) = 1 \otimes a$ for all $a \in A^{\af}$.
Then $\pi_A \circ \te \circ \kp$ is a \hm.
Using compactness of $F_1 \cup F_2$ and the fact that $\te \circ \kp$
is \ct, it is easy to see that
there is $m_1 \in \N$ such that, for all $m \geq m_1$
and all $a_1, a_2, \ldots, a_n \in F_1 \cup F_2$,
we have
\[
\bigl\| (\te_m \circ \kp) (a_1 a_2 \cdots a_n)
  - (\te_m \circ \kp) (a_1) (\te_m \circ \kp) (a_2)
      \cdots (\te_m \circ \kp) (a_n) \bigr\|
   < \varepsilon.
\]

Since $r \in A_{\I, \af} \cap A'$,
there is $m_2 \in \N$ such that for all $m \geq m_2$
and all $a \in F_1 \cup F_2$,
we have $\| q_m a - a q_m \| < \ep$.
For $a \in A^{\af}$ we have
\[
(\pi_A \circ \te \circ \kp) (a) = \io (a) = r a r,
\]
so we can choose $m_3 \in \N$ such that for all $m \geq m_3$
and all $a \in F_2$,
we have $\| (\te_m \circ \kp) (a) - q_m a q_m \| < \ep$.
Because $1 - r$ is $\af$-small (by~(\ref{Item_1X07_afs})),
there is $m_4 \in \N$ such that $1 - q_m \precsim_A x$
for all $m \geq m_4$.
Using (\ref{Item_1X07_sm}) and~(\ref{Item_1X07_sbp}),
arguments given after~(\ref{Eq_1X09_gmte})
in the proof of Lemma~\ref{mtrpcentral}
show that there are $m_5, m_6 \in \N$ such that
$1 - q_m \precsim_A y$ for all $m \geq m_5$
and $1 - q_m \precsim_A q_m$ for all $m \geq m_6$.
Use~(\ref{Item_1X07_norm1}) to choose $m_7 \in \N$ such that
for all $m \geq m_7$ we have $\| q_m x q_m \| > 1 - \ep$.

Choose a finite subset $T \subseteq F_1 \cup F_2$ such that
for all $a \in F_1 \cup F_2$
there is $b \in T$ with $\| b - a \| < \frac{\ep}{3}$.
By~(\ref{Item_1X07_norm1}), we have $r \neq 0$.
Therefore the \hm{}
$\pi_A \circ \te \circ \kp \colon A \to r A_{\I, \af} r$ is nonzero.
Since $A$ is simple, this \hm{} is isometric.
It follows that there is $m_8 \in \N$ such that for all $m \geq m_8$
and all $b \in T$,
we have $\| (\te_m \circ \kp) (b) \| >  \| b \| - \frac{\ep}{3}$.
Let $a \in F_1 \cup F_2$ and choose $b \in T$
such that $\| b - a \| < \frac{\ep}{3}$.
Then for $m \geq m_8$ we have
\[
\begin{split}
\| (\te_m \circ \kp) (a) \|
& \geq \| (\te_m \circ \kp) (b) \| - \| b - a \|
\\
& > \| b \| - \frac{\ep}{3} - \frac{\ep}{3}
  \geq \| a \|  - \| b - a \| - \frac{2 \ep}{3}
  > \| a \| - \ep.
\end{split}
\]

Take
\[
m = \max ( m_1, m_2, \ldots, m_8),
\qquad
p = q_m,
\andeqn
\ps = \te_m \circ \kp.
\]
Since $\te_m$ is equivariant
and $\kp (A) \subseteq C (G, A)^{\Lt \otimes \af}$,
it follows that $\te_m \circ \kp$ is a \ucp{} map from
$A$ to $q_m A^{\af} q_m$.
Therefore these choices prove the theorem.
\end{proof}

The next four remarks describe the modifications
of Theorem~\ref{thm_ApproxHommtr} mentioned in
Remark~\ref{R_1Z09_Variants},
to which we refer for the general principle.

\begin{rmk}\label{R_1Z09_ntr}
The naive \trp{} (Definition~\ref{D_1920_NTRP} below)
is obtained from Definition~\ref{traR} by omitting
Conditions (\ref{1_ppcompactsets}) and~(\ref{Item_902_pxp_TRP}).
The corresponding version of Theorem~\ref{thm_ApproxHommtr}
states that if $\af$ is as there, but is only assumed to
have the naive \trp,
then the conclusion of Theorem~\ref{thm_ApproxHommtr} still holds,
except that
Conditions (\ref{Item_1X07_20}) and~(\ref{Item_1X07_20p})
must be omitted.

For the proof this version, first,
the conclusion of Lemma~\ref{PROSPTRPCA} still holds
if one assumes this property,
since Conditions (\ref{1_pycompactsets}) and~(\ref{1_ppcompactsets})
of the definition are not used in the proof.
Next, one uses an analog of Lemma~\ref{TRPcequi}
in which the \trpc{} is replaced by the naive \trp{}
and Conditions (\ref{1_pycompactsetscpt}) and~(\ref{1_ppcompactsetscpt})
there are omitted.
The proof of this analog
is the same as the proof of Lemma~\ref{TRPcequi} as stated,
since Conditions (\ref{1_pycompactsets}) and~(\ref{1_ppcompactsets})
of the definition are simply carried over to the conclusion of
the lemma, and are not used in the rest of the proof.
Lemma~\ref{mtrpcentral} holds for the naive \trp{} in place of
the \trpc{}
when Conditions (\ref{1_psmalllnes_FP}) and~(\ref{1_pandpinA_inf})
there are omitted.
The proof of this analog
is the same as the proof of Lemma~\ref{mtrpcentral} as stated,
except that the parts of the argument dealing with the two omitted
conditions from the definition and from the conclusion of the lemma
are omitted.
Finally, the proof of the modified version
of Theorem~\ref{thm_ApproxHommtr} is again obtained from
the proof of the theorem as stated by simply omitting
the parts involving the omitted conditions,
and noting that the remaining conditions,
specifically~(\ref{Item_1X07_20p}) in the statement
and~(\ref{Item_1X07_sbp}) in the proof,
still ensure that the \pj{} $r$ in the proof is nonzero.
\end{rmk}

\begin{rmk}\label{R_1Z09_nzp}
Consider the modified version of Definition~\ref{traR} in which
Condition~(\ref{Item_902_pxp_TRP}) ($\| p x p \| > 1 - \ep$)
is omitted.
(If we also omitted Definition \ref{traR}(\ref{1_ppcompactsets}),
we would replace
Definition \ref{traR}(\ref{Item_902_pxp_TRP})
with the simpler condition $p \neq 0$, and carry this condition over
throughout the argument.
However,
Definition \ref{traR}(\ref{1_ppcompactsets}) implies $p \neq 0$.)
Then the conclusion of Theorem~\ref{thm_ApproxHommtr} still holds,
except that Condition~(\ref{Item_1X07_22}) must be omitted.
For the proof, the conclusion of Lemma~\ref{PROSPTRPCA} still holds,
with the same proof.
The analog of Lemma~\ref{TRPcequi} for this variant
omits Condition~(\ref{pxpcompactsetscpt}) in the conclusion;
in the proof of Lemma~\ref{TRPcequi} as stated,
this condition is merely carried over from Definition~\ref{traR},
and not used otherwise.
The analog of Lemma~\ref{mtrpcentral} for this variant
omits Condition~(\ref{nonzerop1_var}),
and the proof is an easy modification of the
proof of Lemma~\ref{mtrpcentral} as stated.
The modification to prove this version of Theorem~\ref{thm_ApproxHommtr}
is also easy.
\end{rmk}

\begin{rmk}\label{R_1Z09_mod_trp}
The modified \trp{} (Definition~\ref{moditra} below)
is obtained from Definition~\ref{traR} by omitting
Condition~(\ref{1_pycompactsets}) and strengthening
Condition~(\ref{1_ppcompactsets}) to require that
there be $s \in A^{\alpha}$ such that, in addition to
$s^* s = 1 - p$ and $s s^* \leq p$, we have
$\| s a - a s \| < \varepsilon$ for all $a$ in a given
finite subset $F_2$ of $A^{\alpha}$.
(The original finite subset $F \subseteq A$ is called $F_1$
in this definition.
Theorem~\ref{thm_ApproxHommtr} already uses
compact subsets $F_1 \S A$ and $F_2 \S A^{\af}$.)
The corresponding version of Theorem~\ref{thm_ApproxHommtr}
states that if $\af$ is as there, but is assumed to
have the modified \trp,
then the conclusion of Theorem~\ref{thm_ApproxHommtr} still holds,
except that
Condition~(\ref{Item_1X07_20})
is omitted and Condition~(\ref{Item_1X07_20p}) is strengthened
in the same way as Definition \ref{traR}(\ref{1_ppcompactsets}) was:
there is $s \in A^{\alpha}$ such that
$s^* s = 1 - p$, $s s^* \leq p$, and
$\| s a - a s \| < \varepsilon$ for all $a \in F_2$.

The conclusion of Lemma~\ref{PROSPTRPCA} still holds,
with the same proof.
Lemma~\ref{TRPcequi} for the modified \trp{}
uses compact sets $F_1 \subseteq A$ (in place of $F$)
and $F_2 \subseteq A^{\af}$,
omits Condition~(\ref{1_pycompactsetscpt}), and replaces
Condition~(\ref{1_ppcompactsetscpt}) with the requirement that
there be $s \in A^{\alpha}$ such that
$s^* s = 1 - p$, $s s^* \leq p$, and
$\| s a - a s \| < \varepsilon$ for all $a \in F_2$.
In the analog of Lemma~\ref{mtrpcentral},
Condition~(\ref{1_psmalllnes_FP}) is omitted and
Condition~(\ref{1_pandpinA_inf})
is replaced with $1 - p \precsim_{(A^{\af})_{\I} \cap (A^{\af})'} p$.
The corresponding modification of the forwards implication in the proof
is easy.
(For example, one also needs a dense sequence in~$A^{\af}$,
one uses finite subsets $F_{1, n} \S A$ and $F_{2, n} \S A^{\af}$,
(\ref{Item_1X10_1py}) is omitted,
and~(\ref{Item_1X10_1ppp}) is suitably strengthened.)
The reverse implication uses the same argument as described below
for the proof of the modified version of Theorem~\ref{thm_ApproxHommtr}.

There is additional work to do in the proof of the modified version
of Theorem~\ref{thm_ApproxHommtr}.
At the end of the proof,
start with $t \in (A^{\af})_{\I} \cap (A^{\af})'$
such that $t^{*} t = 1 - p$ and $t t^{*} \leq p$.
Lift $t$ to a sequence $c = (c_{m})_{m \in \N} \in l^{\I} (\N, A^{\af})$
such that $\| c \| \leq 1$.
Then
\[
\lim_{m \to \I} \| c_m^* c_m - (1 - q_m) \| = 0,
\qquad
\lim_{m \to \I} \| q_m c_m c_m^* q_m - c_m c_m^* \| = 0,
\]
and for all $a \in F_2$ we have
$\lim_{m \to \I} \| c_m a - a c_m \| = 0$.
Set $M = 1 + \sup_{a \in F_2} \| a \|$.
Choose $\dt > 0$ such that whenever
$D$ is a \ca, $e, f \in D$ are \pj{s}, and $c \in D$ satisfies
\[
\| c^* c - e \| < \dt \andeqn \| f c c^* f - c c^* \| < \dt,
\]
then there is $s \in D$ such that
\[
s^* s = e,
\qquad
s s^* \leq f,
\andeqn
\| s - c \| < \frac{\ep}{3 M}.
\]
Then choose $m_6 \in \N$ such that for all $m \geq m_6$ we have
\[
\| c_m^* c_m - (1 - q_m) \| < \dt,
\qquad
\| q_m c_m c_m^* q_m - c_m c_m^* \| < \dt,
\]
and $\| c_m a - a c_m \| < \frac{\ep}{3}$ for all $a \in F_2$.
For $m \geq m_6$ let $s_m \in A^{\af}$ be the partial isometry
gotten from the choice of~$\dt$,
with $c = c_m$, $e = 1 - q_m$, and $f = q_m$.
Then $s_m^* s_m = 1 - q_m$, $s_m s_m^* \leq q_m$,
and for all $a \in F_2$ we have
\[
\| s_m a - a s_m \|
 \leq 2 \| a \| \| s_m - c_m \| + \| c_m a - a c_m \|
 < \frac{2 \ep \| a \|}{3 M} + \frac{\ep}{3}
 < \ep.
\]
In the final step, we take (omitting $m_5$ in the definition of~$m$)
\[
m = \max ( m_1, m_2, m_3, m_4, m_6, m_7, m_8),
\quad
p = q_m,
\quad
s = s_m,
\quad {\mbox{and}} \quad
\ps = \te_m \circ \kp.
\]
\end{rmk}

\begin{rmk}\label{R_S_mod_trp}
The strong modified \trp{} (Definition~\ref{D_1824_AddToTRP_Mod} below)
is like the modified \trp,
but requires that $s$ approximately commute
with elements of a given finite subset of~$A$
instead of with elements of a given finite subset of~$A^{\af}$.
The modifications are like those in Remark~\ref{R_1Z09_mod_trp},
but using $A$ in place of $A^{\af}$.
We state only two of the changes.
In Lemma~\ref{mtrpcentral},
omit Condition~(\ref{1_psmalllnes_FP}) and replace
Condition~(\ref{1_pandpinA_inf})
with $1 - p \precsim_{(A^{\af})_{\I} \cap A'} p$.
In Theorem~\ref{thm_ApproxHommtr},
omit Condition~(\ref{Item_1X07_20})
and strengthen Condition~(\ref{Item_1X07_20p})
to say that there is $s \in A^{\alpha}$ such that
$s^* s = 1 - p$, $s s^* \leq p$, and
$\| s a - a s \| < \varepsilon$ for all $a \in F_1$.
\end{rmk}

\section{Simplicity of the fixed point algebra
  and the crossed product}\label{Sec_1160_Simpli_Prf}

In this section we prove simplicity of the fixed point algebra
and crossed product
by an action which has the tracial Rokhlin property with comparison.

For finite groups, the tracial Rokhlin property implies pointwise
outerness by Lemma 1.5 in \cite{phill23}, so that the
crossed product is simple by Theorem 3.1 in \cite{kish.sim}.
Theorem 3.1 in \cite{kish.sim} is valid for general discrete groups.
But, without discreteness of the group, even assuming compactness,
pointwise outerness of the action does not imply simplicity of
the crossed product.
For example, consider the gauge action $\gamma$ of the circle group
$S^{1}$ on $\mathcal{O}_{\infty}$, which in terms of the standard
generators $s_j$ is given by $\gamma_{\zeta} (s_{j}) = \zeta s_{j}$
for all $\zeta$ in $S^{1}$ and all $j \in \N$.
Then $\gamma$ is pointwise outer by Theorem 4 of~\cite{outer_gua}.
However, its strong Connes spectrum is $\Nz$
(Remark 5.2 of~\cite{Kis_simpleCP}), so, by
Theorem 3.5 of \cite{Kis_simpleCP}, its crossed product is not simple.

Instead, we use the \trpc,
in the form of Theorem~\ref{thm_ApproxHommtr}, to prove that the
fixed point algebra is simple.
(This part does not need
Condition~(\ref{1_ppcompactsets}) in Definition~\ref{traR}.)
We also use the \trpc{} to show that the action is saturated,
so that results of~\cite{GooLazPel} give simplicity
of the crossed product.

Simplicity of the fixed point algebra also allows us to give
a nicer formulation of Lemma~\ref{mtrpcentral}.
This is in Proposition~\ref{P_1X14_CentSq}.

\begin{rmk}\label{R_2015_nzp}
Except for Proposition~\ref{P_1X14_CentSq},
all results of this section which have the \trpc{}
(Definition~\ref{traR}) in their hypotheses
are still valid if in the definition of the \trpc{}
we omit Condition~(\ref{Item_902_pxp_TRP}).
Whenever we need to know $p \neq 0$ in an application of
Theorem~\ref{thm_ApproxHommtr},
we deduce it from Theorem \ref{thm_ApproxHommtr}(\ref{Item_1X07_20p}).
This claim then follows from Remark~\ref{R_1Z09_nzp}.
\end{rmk}

\begin{thm}\label{thm_simple12}
Let $A$ be a simple separable unital infinite dimensional \ca,
let $G$ be a second countable compact group, and let
$\alpha \colon G \to \Aut (A)$ be an action which has the
tracial Rokhlin property with comparison.
Then the fixed point algebra $A^{\alpha}$ is simple.
\end{thm}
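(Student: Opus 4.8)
The plan is to show that every nonzero closed two-sided ideal $I \subseteq A^{\alpha}$ equals $A^{\alpha}$. Since $A^{\alpha}$ is unital (because $1 \in A^{\alpha}$), it suffices to prove $\dist (1, I) < 1$: if $I \neq A^{\alpha}$, then the quotient $A^{\alpha} / I$ is a nonzero unital \ca, and its unit has norm~$1$, so $\dist (1, I) = \| 1 + I \| = 1$. Thus exhibiting a single element of $I$ lying within distance strictly less than~$1$ of~$1$ already forces $I = A^{\alpha}$.

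So I would fix a nonzero ideal $I$ and choose $b \in I_{+}$ with $\| b \| = 1$; note $b \in A^{\alpha}$. Because $A$ is simple, $b$ is full in~$A$, so $1$ lies in the closed ideal of $A$ generated by~$b$, and hence there are $a_1, a_1', \ldots, a_m, a_m' \in A$ with $\| w - 1 \| < \tfrac{1}{2}$, where $w = \sum_{k = 1}^{m} a_k b a_k'$. I would then apply Theorem~\ref{thm_ApproxHommtr} with $n = 3$, with $F_1 = \{ a_1, a_1', \ldots, a_m, a_m', b \}$, with $F_2 = \{ b \}$, with $x = b$, with $y = b$, and with a tolerance $\ep > 0$ still to be chosen. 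This produces a projection $p \in A^{\alpha}$ and a \ucp{} contraction $\psi \colon A \to p A^{\alpha} p$ which, in particular, is $(3, F_1 \cup F_2, \ep)$-approximately multiplicative (Theorem~\ref{thm_ApproxHommtr}(\ref{Item_1X07_18})), satisfies $\| \psi (b) - p b p \| < \ep$ (Theorem~\ref{thm_ApproxHommtr}(\ref{Item_1X07_21})), and satisfies $1 - p \precsim_{A^{\alpha}} b$ (Theorem~\ref{thm_ApproxHommtr}(\ref{Item_1X07_20})).

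Now I would combine two observations. First, $1 - p \precsim_{A^{\alpha}} b$ places $1 - p$ in the closed ideal of $A^{\alpha}$ generated by~$b$, which is contained in~$I$; hence $1 - p \in I$, and therefore $\dist (1, I) = \dist (p, I)$. Second, I would push the relation $w \approx 1$ through~$\psi$. Since $\psi (1) = p$ and $\psi$ is contractive, $\| p - \psi (w) \| < \tfrac{1}{2}$; approximate multiplicativity on the triple products $a_k b a_k'$ gives $\bigl\| \psi (w) - \sum_{k} \psi (a_k) \psi (b) \psi (a_k') \bigr\| \le m \ep$; and replacing $\psi (b)$ by $p b p$ costs at most $m M \ep$, where $M = \max_k \| a_k \| \, \| a_k' \|$. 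Crucially $p b p \in I$ (as $b \in I$ and $p \in A^{\alpha}$), while $\psi (a_k), \psi (a_k') \in p A^{\alpha} p \subseteq A^{\alpha}$, so $\sum_{k} \psi (a_k) (p b p) \psi (a_k') \in I$. Putting these together,
\[
\dist (1, I) = \dist (p, I) \le \tfrac{1}{2} + m (1 + M) \ep .
\]
Choosing $\ep$ small enough that $m (1 + M) \ep < \tfrac{1}{2}$ then yields $\dist (1, I) < 1$, whence $I = A^{\alpha}$.

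The genuinely hard part is already resolved by Theorem~\ref{thm_ApproxHommtr}, which supplies the approximately multiplicative, norm-controlled map $\psi \colon A \to p A^{\alpha} p$ together with the smallness $1 - p \precsim_{A^{\alpha}} b$; this is precisely the device that transfers fullness of~$b$ in the ambient simple algebra~$A$ into a corner of the fixed point algebra. The only real care needed above is the order of quantifiers — fixing the fullness data $a_k, a_k'$, and hence $m$ and~$M$, \emph{before} feeding the tolerance~$\ep$ into the theorem — together with the elementary observation that $1 - p \in I$ lets us replace $\dist (1, I)$ by $\dist (p, I)$. Note that we do not even need $p \neq 0$ (otherwise guaranteed by Theorem~\ref{thm_ApproxHommtr}(\ref{Item_1X07_20p})): if $p = 0$, then $1 - p = 1 \precsim_{A^{\alpha}} b$ already forces $1 \in I$.
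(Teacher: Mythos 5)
Your proof is correct and follows essentially the same route as the paper: both arguments push a fullness relation $\sum_k a_k b a_k' \approx 1$ through the approximately multiplicative map $\psi \colon A \to p A^{\alpha} p$ of Theorem~\ref{thm_ApproxHommtr} and use $1 - p \precsim_{A^{\alpha}} y$ (with $y$ in the ideal) to absorb $1 - p$. The only difference is cosmetic --- the paper exhibits the explicit invertible element $1 - p + \sum_j \psi (a_j) p x_j p \psi (b_j) \in I$, whereas you phrase the same estimate as $\dist (1, I) < 1$.
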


\begin{proof}
Let $I$ be a nonzero ideal in $A^{\alpha}$.
We claim that $I$ contains an invertible element.
This will prove the theorem.

Following Notation~\ref{N_1Z31_EF}, set $J = \overline{A I A}$.
Then $J$ is a closed ideal in~$A$.
Since $J \neq 0$ and $A$ is simple, $J = A$.
Since $A$ is unital, in fact $A I A = A$.
Therefore there are $m \in \N$,
$a_1, a_2 \ldots, a_{m}, b_1, b_2, \ldots, b_{m} \in A$,
and $x_1, x_2, \ldots, x_{m} \in I$ such that
$\sum_{j = 1}^{m} a_j x_j b_j = 1$.

Fix a nonzero positive element $z \in I$.
Set
\begin{equation}\label{Eq_1X20_Md}
M = 1 + \max\limits_{j = 1, 2, \ldots, m} \max ( \| a_j \|, \| b_j \| )
\andeqn
\delta = \frac{1}{2 (m M^2 + m)}.
\end{equation}
Also set
\begin{equation}\label{Eq_1X20_F1F2}
F_{1} = \{ a_j, b_j, x_j \colon j = 1, 2, \ldots, m \} \cup \{ x \}
\end{equation}
and
\begin{equation}\label{Eq_1X20_F1F2_p2}
F_{2} = \{x_j \colon j = 1, 2, \ldots, m \} \cup \{ x \}.
\end{equation}
Use Theorem~\ref{thm_ApproxHommtr} with $\dt$ in place of $\varepsilon$,
with $n = 3$, with $1$ in place of $x$, with $z$ in place of $y$,
and with $F_{1}$ and $F_{2}$ as given.
We get a projection $p \in A^{\alpha}$ and a \ucp{}
map $\psi \colon A \to p A^{\alpha} p$ such that the following hold.
\begin{enumerate}
\item\label{Item_1577_psiab_muliti_TRPC}
$\| \psi (a b c) - \psi (a) \psi (b) \ps (c) \| < \delta$
for all $a, b, c \in F_{1}$.
\item\label{Item_1578_paap_TRPC}
$\| p a - a p \| < \delta$ for all $a \in F_{1} \cup F_{2}$.
\item\label{Item_1576_psia_iden_TRPC}
$\| \psi (a) - p a p \| < \delta$ for all $a \in F_{2}$.
\item\label{Item_1579_1mp_subz_fix_TRPC}
$1 - p \precsim_{A^{\alpha}} z$.
\end{enumerate}
Define
\[
c = 1 - p + \sum_{j = 1}^{m} \psi (a_{j}) p x_{j} p \psi (b_{j}).
\]
We have $1 - p \in I$ by~(\ref{Item_1579_1mp_subz_fix_TRPC}),
and $x_1, x_2, \ldots, x_{m} \in I$ by construction, so $c \in I$.
It remains to show that $c$ is invertible.

First, using (\ref{Item_1577_psiab_muliti_TRPC})
and~(\ref{Item_1576_psia_iden_TRPC}) at the second step,
\[
\begin{split}
& \bigg\| \psi \bigg( \sum_{j = 1}^{m} a_{j} x_{j} b_{j} \bigg) -
       \sum_{j = 1}^{m} \psi (a_{j}) p x_{j} p \psi (b_{j}) \bigg\|
\\
& \hspace*{1em} {\mbox{}}
  \leq \sum_{j = 1}^{m} \bigl\| \ps ( a_{j} x_{j} b_{j})
              - \psi (a_{j}) \psi (x_{j}) \psi (b_{j}) \bigr\|
      + \sum_{j = 1}^{m}
         \| \psi (a_{j}) \| \| \ps (x_j) - p x_j p \| \| \psi (b_{j}) \|
\\
& \hspace*{1em} {\mbox{}}
  < m \dt + m M^{2} \delta
  = \frac{1}{2}.
\end{split}
\]
Combining this estimate with
\[
\psi \bigg( \sum_{j = 1}^{m} a_{j} x_{j} b_j \bigg) = \ps (1) = p,
\]
we get
\[
\| 1 - c \|
 = \bigg\| p
       - \sum_{j = 1}^{m} \psi (a_{j}) p x_{j} p \psi (b_{j}) \bigg\|
 < \frac{1}{2}.
\]
So $c$ is invertible, as desired.
\end{proof}

\begin{prp}\label{C_1Z11_InfDim}
Let $A$ be a simple separable unital infinite dimensional \ca,
let $G$ be a second countable compact group, and let
$\alpha \colon G \to \Aut (A)$ be an action which has the
tracial Rokhlin property with comparison.
Then
$A^{\alpha}$ is in\fd{} and not of Type~I.
\end{prp}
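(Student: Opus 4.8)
The plan is to reduce the statement to infinite-dimensionality. By Theorem~\ref{thm_simple12} the algebra $A^{\af}$ is simple, and it is unital since $1_A \in A^{\af}$. A simple unital Type~I \ca{} is isomorphic to a full matrix algebra $M_k$, hence finite dimensional; conversely a finite dimensional simple \ca{} is some $M_k$, which is Type~I. Thus for $A^{\af}$ the two conclusions coincide, and it suffices to prove that $A^{\af}$ is infinite dimensional. I would argue by contradiction: suppose $A^{\af}$ is finite dimensional, so that $A^{\af} \cong M_k$ for some $k \in \N$, and fix the (unnormalized) trace $\Tr$ on $M_k = A^{\af}$.

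First I would produce in $A$ a family too large to fit into $M_k$ under an almost-multiplicative, almost-isometric map. Since $A$ is infinite dimensional, for $N = k + 1$ there exist mutually orthogonal positive elements $b_1, \dots, b_N \in A_+$ with $\| b_i \| = 1$. Fix $\ep > 0$ (how small depending only on $k$, as determined below) and apply Theorem~\ref{thm_ApproxHommtr} with multiplicativity parameter $n = 2$, tolerance $\ep$, $F_1 = \{ b_1, \dots, b_N \}$, $F_2 = \E$, and $x = y = 1$. This gives a projection $p \in A^{\af}$ and a \ucp{} map $\ps \colon A \to p A^{\af} p$ with $\| \ps (b_i) \| > 1 - \ep$ by Theorem~\ref{thm_ApproxHommtr}(\ref{Item_1Z11_BigN}); and, since $b_i b_j = 0$ for $i \neq j$ and $\ps$ is $(2, F_1, \ep)$-approximately multiplicative, Theorem~\ref{thm_ApproxHommtr}(\ref{Item_1X07_18}) gives $\| \ps (b_i) \ps (b_j) \| = \| \ps (b_i) \ps (b_j) - \ps (b_i b_j) \| < \ep$ for $i \neq j$ (using $b_i b_j = 0$). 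Writing $c_i = \ps (b_i)$, these are positive contractions in $M_k$ that are almost orthogonal and almost of norm~$1$.

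The key step is a trace count inside $M_k$. Set $S = \sum_{i=1}^N c_i \geq 0$. Since each $c_i$ is a positive contraction, $c_i^2 \leq c_i$, so $0 \le \sum_i c_i^2 \leq S$ and $\| \sum_i c_i^2 \| \le \| S \|$; also $\| S^2 - \sum_i c_i^2 \| \leq \sum_{i \neq j} \| c_i c_j \| < N^2 \ep$. Hence $\| S \|^2 = \| S^2 \| \leq \| S \| + N^2 \ep$, which forces $\| S \| \leq 1 + N^2 \ep$. On the other hand $\Tr (c_i) \geq \| c_i \| > 1 - \ep$, so $\Tr (S) > N (1 - \ep)$, while $\Tr (S) \leq k \| S \| \leq k ( 1 + N^2 \ep )$ because $S$ has at most $k$ nonzero eigenvalues, each at most $\| S \|$. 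With $N = k + 1$ this reads $(k + 1)(1 - \ep) < k \bigl( 1 + (k + 1)^2 \ep \bigr)$, that is $1 < \ep (k + 1) \bigl( 1 + k (k + 1) \bigr)$, which fails as soon as $\ep < \bigl[ (k + 1)(1 + k (k + 1)) \bigr]^{-1}$. Choosing $\ep$ this small yields the desired contradiction, so $A^{\af}$ is infinite dimensional and not of Type~I.

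The step I expect to be the main obstacle is exactly this passage from the \emph{approximate} orthogonality and isometry of $\ps$ to a genuine finite-dimensional obstruction. The naive route is to perturb the $c_i$ to honestly orthogonal positive elements (or to projections) and count; but in the absence of projections this needs a perturbation lemma and extra care. Working inside the hypothetical $M_k$, where $\Tr$ is available and the elementary inequalities $c_i^2 \le c_i$ and $\Tr (S) \le k \| S \|$ do all the work, sidesteps this entirely and is what keeps the argument short. The only external inputs are simplicity of $A^{\af}$ (Theorem~\ref{thm_simple12}), Theorem~\ref{thm_ApproxHommtr}, and the standard fact that an infinite dimensional \ca{} contains, for each $N$, $N$ mutually orthogonal nonzero positive elements.
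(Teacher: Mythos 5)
Your proof is correct, and its skeleton matches the paper's: both invoke Theorem~\ref{thm_simple12} for simplicity of $A^{\af}$, observe that for a simple unital algebra the Type~I claim reduces to infinite dimensionality, and then apply Theorem~\ref{thm_ApproxHommtr} with $n = 2$, $x = y = 1$, $F_2 = \E$, and $F_1$ a family of mutually orthogonal norm-one positive elements of~$A$, using conclusions (\ref{Item_1X07_18}) and~(\ref{Item_1Z11_BigN}) to obtain almost orthogonal, almost norm-one positive contractions in $A^{\af}$. Where you genuinely diverge is the final step, which is exactly the step you flagged as the obstacle. The paper chooses its tolerance via semiprojectivity of the cone over ${\mathbb{C}}^n$ and perturbs the elements $\ps(y_k)$ to exactly orthogonal positive elements of $A^{\af}$ of norm at least $\tfrac{1}{3}$, thereby exhibiting, for every $n$, $n$ nonzero orthogonal positive elements of $A^{\af}$ --- a direct witness of infinite dimensionality. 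You instead assume $A^{\af} \cong M_k$ and run an elementary trace count: the chain $c_i^2 \le c_i$, $\bigl\| \sum_i c_i^2 \bigr\| \le \| S \|$, $\| S \|^2 \le \| S \| + N^2 \ep$, together with $\Tr(c_i) \ge \| c_i \| > 1 - \ep$ and $\Tr(S) \le k \| S \|$, all check out, and the resulting inequality $(k+1)(1-\ep) < k\bigl(1 + (k+1)^2 \ep\bigr)$ does fail for $\ep$ small. The trade-off is as you describe: your route avoids any stability or perturbation machinery and is self-contained modulo the two cited theorems, at the price of being a proof by contradiction; the paper's route needs the (standard) semiprojectivity input but actually produces the orthogonal elements inside $A^{\af}$. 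Both arguments are complete.
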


\begin{proof}
For infinite dimensionality, it is enough to prove that
for every $n \in \N$ there are nonzero orthogonal
positive elements $x_1, x_2, \ldots, x_n \in A^{\alpha}$.
Since $A$ is in\fd{} and simple,
there exist orthogonal $y_1, y_2, \ldots, y_n \in A_{+}$
such that $\| y_k \| = 1$ for $k = 1, 2, \ldots, n$.
Using semiprojectivity of the cone over~${\mathbb{C}}^n$,
choose $\dt > 0$
such that whenever $B$ is a \ca{} and $b_1, b_2, \ldots, b_n \in A_{+}$
satisfy $\| b_k \| \leq 1$ for $k = 1, 2, \ldots, n$
and $\| b_j b_k \| < \dt$ for
$j, k \in \{ 1, 2, \ldots, n \}$ with $j \neq k$,
then there are $c_1, c_2, \ldots, c_n \in A_{+}$
such that $\| c_k - b_k \| < \frac{1}{3}$ for $k = 1, 2, \ldots, n$
and $c_j c_k = 0$ when $j \neq k$.
We may also require $\dt < \frac{1}{3}$.

Apply Theorem~\ref{thm_ApproxHommtr}
with $\dt$ in place of $\varepsilon$, with $n = 2$, with $x = y = 1$,
with $F_{1} = \{ y_1, y_2, \ldots, y_n \}$, and with $F_{2} = \E$.
We get a projection $p \in A^{\alpha}$ and a \ucp{}
map $\psi \colon A \to p A^{\alpha} p$ such that the following hold.
\begin{enumerate}
\item\label{Item_1Z11_2mult}
$\| \psi (y_j y_k) - \psi (y_j) \psi (y_k) \| < \delta$
for $j, k = 1, 2, \ldots, n$.
\item\label{Item_1Z23_BigN}
$\| \psi (y_k) \| > 1 - \delta$ for $k = 1, 2, \ldots, n$.
\end{enumerate}
By~(\ref{Item_1Z11_2mult}),
we have $\| \psi (y_j) \psi (y_k) \| < \dt$ for $j \neq k$.
By the choice of~$\dt$,
there are $x_1, x_2, \ldots, x_n \in (A^{\alpha})_{+}$
such that $\| x_k - \ps (y_k) \| < \frac{1}{3}$
for $k = 1, 2, \ldots, n$
and $x_j x_k = 0$ when $j \neq k$.
Also, for $k = 1, 2, \ldots, n$,
\[
\| x_k \|
 > \| \ps (y_k) \| - \frac{1}{3}
 > 1 - \delta - \frac{1}{3}
 \geq \frac{1}{3},
\]
so $x_k \neq 0$.
Infinite dimensionality of $A^{\alpha}$ is proved.

We know that $A^{\alpha}$ is unital.
It is simple by Theorem~\ref{thm_simple12}.
No simple in\fd{} \ca{} has Type~I,
so $A^{\alpha}$ is not of Type~I.
\end{proof}

We recall the definition of the strong Arveson spectrum of an
action of a compact group on a \ca,
and related concepts.

\begin{dfn}[\cite{GooLazPel}, Definition 1.1(b), the preceding
 discussion, and Definition 1.2(b)]\label{D_1X14_A2pi}
Let $A$ be a \ca, let $\alpha \colon G \to \Aut (A)$ be an
action of a compact group $G$ on~$A$,
and let $\pi \colon G \to {\operatorname{U}} (\Hi_{\pi})$
be a unitary representation
of $G$ on a Hilbert space $\Hi_{\pi}$.
We define
\[
A_2 (\pi)
 = \bigl\{ x \in B (\Hi_{\pi}) \otimes A \colon
   {\mbox{$(\id_{B (\Hi_{\pi})} \otimes \alpha_g) (x)
        = x (\pi (g) \otimes 1_A)$
     for all $g \in G$}} \bigr\}.
\]
Let ${\widehat{G}}$ be a set consisting of exactly
one representation in each unitary equivalence class
of irreducible representations of~$G$.
Define (following Notation~\ref{N_1Z31_EF})
the strong Arveson spectrum by
\[
{\widetilde{\operatorname{Sp}}} (\alpha)
 = \bigl\{ \pi \in {\widehat{G}} \colon
    {\overline{A_{2} (\pi)^{*} A_{2} (\pi)}}
      = (B (\Hi_{\pi}) \otimes A)^{\Ad (\pi) \otimes \alpha}
  \bigr\}.
\]
Finally, the strong Connes spectrum is the intersection over
all nonzero $\af$-invariant \hsa{s}~$B$ of the
strong Arveson spectrum of the restriction of $\af$ to~$B$.
\end{dfn}

\begin{lem}\label{L_1X14_A2Pi_id}
In the situation of Definition~\ref{D_1X14_A2pi}, for any \fd{}
unitary representation $\pi$ of~$G$, the set
${\overline{A_{2} (\pi)^{*} A_{2} (\pi)}}$ is a closed two sided ideal
in $(B (\Hi_{\pi}) \otimes A)^{\Ad (\pi) \otimes \alpha}$.
\end{lem}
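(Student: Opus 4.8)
The plan is to reduce everything to two covariance identities, after which the ideal property is formal. Write $C = B(\Hi_\pi)\otimes A$, let $\gamma_g = \id_{B(\Hi_\pi)}\otimes\alpha_g$, let $u_g = \pi(g)\otimes 1_A$, and let $\beta_g = \Ad(\pi(g))\otimes\alpha_g$, so that $D := C^{\beta}$ is the algebra appearing in the statement. Since $\pi$ is finite dimensional, $B(\Hi_\pi)$ is a full matrix algebra, $C$ is an honest C*-algebra, and $u_g$ is a unitary in $C$ (or in $M(C)$ when $A$ is nonunital), so that left and right multiplication by $u_g$ preserve $C$. The first step is to record the factorization $\beta_g = \Ad(u_g)\circ\gamma_g$, checked on elementary tensors. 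From this I read off two transformation rules: for $x \in A_2(\pi)$ the defining relation $\gamma_g(x) = x u_g$ is equivalent to $\beta_g(x) = u_g x$; and for $b \in D$ the relation $\beta_g(b) = b$ is equivalent to $\gamma_g(b) = u_g^* b u_g$.

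Second, I would show $\ov{A_2(\pi)^* A_2(\pi)} \subseteq D$. For $x, y \in A_2(\pi)$, applying $\beta_g$ and using $\beta_g(x) = u_g x$ gives $\beta_g(x^* y) = (u_g x)^*(u_g y) = x^* u_g^* u_g y = x^* y$, so $x^* y$ is $\beta$-fixed; hence the linear span $A_2(\pi)^* A_2(\pi)$, and so its closure, lie in $D$.

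Third, I would establish the two module inclusions $A_2(\pi)\cdot D \subseteq A_2(\pi)$ and $D\cdot A_2(\pi)^* \subseteq A_2(\pi)^*$. For $x \in A_2(\pi)$ and $b \in D$ one computes $\gamma_g(xb) = \gamma_g(x)\gamma_g(b) = (x u_g)(u_g^* b u_g) = (xb)u_g$, so $xb \in A_2(\pi)$; the second inclusion follows by taking adjoints, noting that $z$ lies in $A_2(\pi)^*$ exactly when $\gamma_g(z) = u_g^* z$. Granting these, the ideal property is immediate: for a typical element $w = x^* y$ with $x, y \in A_2(\pi)$ and any $b \in D$, write $wb = x^*(yb)$ and $bw = (bx^*)y$; since $yb \in A_2(\pi)$ and $bx^* \in A_2(\pi)^*$, both products again lie in $A_2(\pi)^* A_2(\pi)$. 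Extending by linearity and using continuity of multiplication, $\ov{A_2(\pi)^* A_2(\pi)}$ absorbs $D$ on both sides; it is closed by construction and manifestly self-adjoint, hence a closed two-sided (indeed C*-) ideal of $D$.

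The computations are all routine, and I expect no genuine obstacle. The only points that require care are the factorization $\beta_g = \Ad(u_g)\circ\gamma_g$ and treating $u_g$ correctly as a (possibly multiplier) unitary of $C$ — which is precisely where the finite dimensionality of $\pi$ enters — together with the bookkeeping needed to keep the left and right module relations straight.
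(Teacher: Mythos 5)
Your argument is correct; the paper gives no proof of this lemma, stating only that it ``is easily checked'' and deferring to the discussion before Definition~1.1 of~\cite{GooLazPel}, and your computation is precisely that routine verification. The two covariance identities you isolate --- $(\Ad(\pi(g))\otimes\alpha_g)(x) = (\pi(g)\otimes 1)x$ for $x \in A_2(\pi)$ and $(\id\otimes\alpha_g)(b) = (\pi(g)\otimes 1)^* b\, (\pi(g)\otimes 1)$ for $\beta$-fixed $b$ --- are exactly what make the module inclusions, and hence the ideal property, formal.
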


\begin{proof}
This is easily checked, and is in the discussion before
Definition 1.1 of~\cite{GooLazPel}.
\end{proof}

Recall (Definition 7.1.4 of~\cite{Phl1})
that an action $\alpha \colon G \to \Aut (A)$
of a compact group $G$ on a \ca~$A$ is saturated if a suitable
completion of $A$ is a Morita equivalence bimodule
from $A^{\af}$ to $C^* (G, A, \af)$.
We recall some essentially known results on saturation,
the strong Arveson spectrum, and simplicity.
We won't actually use Condition~(\ref{Item_1X07_aSimGm});
it is included primarily to give context.

\begin{thm}\label{satunonabel}
Let $A$ be a \ca, and let $\alpha \colon G \to \Aut (A)$ be an
action of a compact group $G$ on~$A$.
Then following are equivalent:
\begin{enumerate}
\item\label{Item_1X07_SimSat}
$A^{\af}$ is simple and $\af$ is saturated.
\item\label{Item_1X07_SinSp}
$A^{\af}$ is simple and
${\widetilde{\operatorname{Sp}}} (\alpha) = \widehat{G}$.
\item\label{Item_1X07_CPSim}
$C^* (G, A, \af)$ is simple.
\item\label{Item_1X07_aSimGm}
$A$ has no nontrivial $G$-invariant ideals and
${\widetilde{\Gm}} (\alpha) = \widehat{G}$.
\end{enumerate}
When these conditions hold, $A^{\af}$
is isomorphic to a full \hsa{} of $C^* (G, A, \af)$,
and is strongly Morita equivalent to $C^* (G, A, \af)$.
\end{thm}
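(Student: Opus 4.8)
The plan is to treat the deep analytic inputs as cited black boxes and to organize everything around one elementary observation: the fixed point algebra is always a corner of the crossed product. Write $C = C^*(G, A, \af)$ and let $e \in M(C)$ be the averaging projection obtained by integrating the canonical unitaries $u_g$ against normalized Haar measure, so that $A^\af \cong e C e$ is a \hsa{} of $C$. I would quote from Chapter~7 of \cite{Phl1} both this corner realization and the fact that $\af$ is saturated exactly when the corner $eCe$ is full, equivalently when the suitable completion of $A$ implements a Morita equivalence between $A^\af$ and $C$. From \cite{GooLazPel} I would take two further facts of a spectral nature: that $\af$ is saturated if and only if $\widetilde{\operatorname{Sp}}(\af) = \widehat{G}$ (unconditionally, with no simplicity assumption), and that simplicity of $C$ is governed by the strong Connes spectrum.

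With these in hand the four conditions line up around the single fact that a nonzero \hsa{} of a simple \ca{} is simple and full. The equivalence of (1) and (2) is then immediate, since both carry the clause ``$A^\af$ is simple'' and saturation is equivalent to $\widetilde{\operatorname{Sp}}(\af) = \widehat{G}$. For (1) $\Rightarrow$ (3): saturation furnishes a Morita equivalence between $A^\af$ and $C$, and as simplicity is a Morita invariant, $A^\af$ simple forces $C$ simple. For (3) $\Rightarrow$ (1): if $C$ is simple, then the nonzero corner $eCe \cong A^\af$ is automatically simple (ideals of a \hsa{} pull back to ideals of the ambient algebra, so triviality of the latter forces triviality of the former) and automatically full (a nonzero \hsa{} of a simple \ca{} generates the whole algebra as an ideal); fullness of the corner is precisely saturation, giving (1). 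Finally, the equivalence of (3) and (4) is the crossed-product simplicity criterion phrased in terms of the strong Connes spectrum together with the absence of nontrivial $G$-invariant ideals, which I would cite from \cite{GooLazPel}, the abelian prototype being Theorem~3.5 of \cite{Kis_simpleCP}.

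The concluding assertion is then contained in the proof of (3) $\Rightarrow$ (1): under any of the equivalent conditions $A^\af \cong eCe$ is a full \hsa{} of $C$, and a full corner is strongly Morita equivalent to the ambient algebra. The step I expect to require the most care is exactly this corner picture in the non-unital setting, since $C$ is typically non-unital and $e$ lives in $M(C)$ rather than in $C$: one must make sure that ``$A^\af \cong eCe$,'' ``fullness of $eCe$,'' and ``full corner implies Morita equivalence'' are all invoked in the forms valid for \hsa{s} and multiplier projections rather than for honest unital corners. Everything beyond that is bookkeeping, since once the dictionary between saturation, fullness of $eCe$, and Morita equivalence is fixed, the implications above close the cycle $(2)\Leftrightarrow(1)\Leftrightarrow(3)\Leftrightarrow(4)$.
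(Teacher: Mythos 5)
Your proposal is correct, and it reaches the same destination from the same two external inputs (Rosenberg's realization of $A^{\af}$ as the corner $e C e$ with $e \in M(C)$ the averaging projection, and the spectral results of \cite{GooLazPel}), but it organizes the cycle differently from the paper. The paper cites (\ref{Item_1X07_SimSat})$\Leftrightarrow$(\ref{Item_1X07_SinSp}) and (\ref{Item_1X07_CPSim})$\Leftrightarrow$(\ref{Item_1X07_aSimGm}) outright, proves (\ref{Item_1X07_CPSim})$\Rightarrow$(\ref{Item_1X07_SinSp}) by passing through (\ref{Item_1X07_aSimGm}) and the containment ${\widetilde{\Gm}} (\alpha) \S {\widetilde{\operatorname{Sp}}} (\alpha)$ plus Rosenberg's Corollary, and proves (\ref{Item_1X07_SinSp})$\Rightarrow$(\ref{Item_1X07_CPSim}) by an explicit computation identifying the set of module inner products with $\{ x^* e y \colon x, y \in A \}$; you instead avoid the strong Connes spectrum entirely and pivot everything on the single dictionary ``saturation $\Leftrightarrow$ the corner $e C e$ is full,'' closing the loop with the elementary facts that a nonzero hereditary subalgebra of a simple algebra is simple and full and that a full corner is Morita equivalent to the ambient algebra. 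The one place where you are being optimistic is in saying you would simply \emph{quote} that dictionary from Chapter~7 of \cite{Phl1}: it is not stated there in that form, and establishing it is exactly where the paper spends its effort, comparing the proof of Corollary 7.1.5 of \cite{Phl1} with Rosenberg's Proposition to show that the closed span of the inner products $\langle x, y \rangle_{C^* (G, A, \af)}$ coincides with $\ov{\spn} \{ x^* e y \}$ and hence (being automatically a two sided ideal, by the Hilbert module identities, and generating the same ideal as~$e$ because $\{ e x c \colon x \in A, \, c \in C \}$ spans a dense subspace of $\ov{e C}$) with $\ov{C e C}$. So your argument is sound, and arguably cleaner in that it isolates this identification as the only nontrivial step, but that step does need the small computation rather than a citation; you correctly flagged the multiplier/non-unital bookkeeping, and the only other hypothesis to record is $e \neq 0$, which holds because averaging a nonzero positive element of $A$ over $G$ gives a nonzero element of $A^{\af}$.
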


\begin{proof}
The equivalence of (\ref{Item_1X07_SimSat}) and~(\ref{Item_1X07_SinSp})
follows from Theorem 5.10(1) of~\cite{phillfree}, originally from
the remarks after Lemma~3.1 of~\cite{GooLazPel}.

The equivalence of (\ref{Item_1X07_CPSim}) and~(\ref{Item_1X07_aSimGm})
is Theorem~3.4 of~\cite{GooLazPel}.

For the implication from~(\ref{Item_1X07_CPSim})
to~(\ref{Item_1X07_SinSp}),
use the implication
from~(\ref{Item_1X07_CPSim}) to~(\ref{Item_1X07_aSimGm}) and
${\widetilde{\Gm}} (\alpha) \S {\widetilde{\operatorname{Sp}}} (\alpha)$
to get ${\widetilde{\operatorname{Sp}}} (\alpha) = \widehat{G}$.
Use the Corollary in~\cite{Rs} to get simplicity of~$A^{\af}$.
(Warning: the reference there to Corollary~3.8 of~\cite{Rf0}
is to Corollary~3.3 in the published version of~\cite{Rf0},
and there is also a Proposition~3.3 in~\cite{Rf0}.)

To finish, we prove that (\ref{Item_1X07_SinSp})
implies~(\ref{Item_1X07_CPSim}), by comparing the proof of
Corollary 7.1.5 of~\cite{Phl1} with the proof of
the Proposition in~\cite{Rs}.
For $x \in A$ let ${\widetilde{x}} \in C^* (G, A, \af)$ be as in
Definition 7.1.2 of~\cite{Phl1},
and let $p \in M (C^* (G, A, \af))$ be the projection in
the proof of the Proposition in~\cite{Rs}.
A calculation shows that $p {\widetilde{x}} = {\widetilde{x}}$
for all $x \in A$, so also ${\widetilde{x}}^* p = {\widetilde{x}}^*$.
Now, at the first step
as in the proof of Corollary 7.1.5 of~\cite{Phl1},
\[
\bigl\{ \langle x, y \rangle_{C^* (G, A, \af)} \colon x, y \in A \bigr\}
 = \bigl\{ ({\widetilde{x}})^* {\widetilde{y}} \colon x, y \in A \bigr\}
 = \bigl\{ x^* p y \colon x, y \in A \bigr\}.
\]
So the first set spans a dense subspace of $C^* (G, A, \af)$
\ifo{} the last set does too.
That the first spans a dense subspace of $C^* (G, A, \af)$
is the definition of saturation.
Using the Corollary in~\cite{Rs} and simplicity of~$A^{\af}$,
having the last span a dense subspace of $C^* (G, A, \af)$
is equivalent to simplicity of $C^* (G, A, \af)$.

Assuming the conditions, $A^{\af}$
is isomorphic to a full \hsa{} of $C^* (G, A, \af)$
by the Corollary in~\cite{Rs},
and is strongly Morita equivalent to $C^* (G, A, \af)$
by~(\ref{Item_1X07_SimSat}).
\end{proof}

\begin{prp}\label{satpropnoncptp}
Let $A$ be an infinite dimensional simple separable unital \ca.
Let $\alpha \colon G \to \Aut (A)$ be an
action of a second countable compact group $G$ on $A$ which
has the tracial Rokhlin property with comparison.
Then $\alpha$ is saturated.
\end{prp}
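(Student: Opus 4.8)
The plan is to combine simplicity of the fixed point algebra with the characterization of saturation through the strong Arveson spectrum. By Theorem~\ref{thm_simple12}, $A^{\af}$ is simple, so by the equivalence of conditions (\ref{Item_1X07_SimSat}) and~(\ref{Item_1X07_SinSp}) in Theorem~\ref{satunonabel} it suffices to prove that ${\widetilde{\operatorname{Sp}}} (\af) = {\widehat{G}}$; that is, for every $\pi \in {\widehat{G}}$ I must show that ${\overline{A_{2} (\pi)^{*} A_{2} (\pi)}}$ equals $C := (B (\Hi_{\pi}) \otimes A)^{\Ad (\pi) \otimes \af}$. So I would fix such a $\pi$, write $d = \dim (\Hi_{\pi})$ (finite, since $G$ is compact), fix matrix units $e_{ij}$ and matrix coefficients $\pi_{ij} \in C (G)$ of $\pi$, and set $I = {\overline{A_{2} (\pi)^{*} A_{2} (\pi)}}$. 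By Lemma~\ref{L_1X14_A2Pi_id}, $I$ is a closed two sided ideal in the unital \ca~$C$, so it suffices to show that $1_{C} \in I$.

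The key construction is to feed the matrix coefficients of $\pi$ into an approximately equivariant Rokhlin map. I would apply Lemma~\ref{TRPcequi} with $n = 2$, with $F = \{ 1 \}$, with $S \supseteq \{ \pi_{ij}, {\overline{\pi_{ij}}} \}$, with a small tolerance $\ep$, and with $x = y = 1$, to obtain a projection $p \in A^{\af}$ and an exactly equivariant \ucp{} map $\ph \colon C (G) \to p A p$ that is $(2, S, \ep)$-approximately multiplicative and satisfies $1 - p \precsim_{A^{\af}} p$. Put $u = \sum_{i, j} e_{ij} \otimes \ph (\pi_{ij}) \in B (\Hi_{\pi}) \otimes p A p$. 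Exact equivariance of $\ph$, together with the transformation rule $\Lt_g (\pi_{ij}) = \sum_{k} {\overline{\pi_{ki} (g)}} \, \pi_{kj}$, shows by a direct computation that $u^{*} \in A_{2} (\pi)$, so that $u u^{*} \in A_{2} (\pi)^{*} A_{2} (\pi) \subseteq I$. On the other hand, the Schur orthogonality relation $\sum_{j} \pi_{ij} {\overline{\pi_{kj}}} = \dt_{ik} 1$ in $C (G)$, combined with $(2, S, \ep)$-approximate multiplicativity, gives $\| u u^{*} - 1_{B (\Hi_{\pi})} \otimes p \| < d^{3} \ep$.

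Choosing $\ep$ small enough that $d^{3} \ep < 1$, the projection $1_{B (\Hi_{\pi})} \otimes p$ is close to $u u^{*}$, so cutting by it and applying Lemma~\ref{L_1X09_CSbPj} inside $C$ (with $v = 1_{B (\Hi_{\pi})} \otimes p$ and $a = u u^{*}$) yields $1_{B (\Hi_{\pi})} \otimes p \precsim_{C} u u^{*}$. Since $u u^{*} \in I$ and $I$ is a closed ideal, it follows that $1_{B (\Hi_{\pi})} \otimes p \in I$. To capture the remaining corner I would note that $1_{B (\Hi_{\pi})} \otimes A^{\af} \subseteq C$, and that $1 - p \precsim_{A^{\af}} p$ provides $v_{n} \in A^{\af}$ with $v_{n} p v_{n}^{*} \to 1 - p$; hence $1_{B (\Hi_{\pi})} \otimes (1 - p) \precsim_{C} 1_{B (\Hi_{\pi})} \otimes p \in I$, so $1_{B (\Hi_{\pi})} \otimes (1 - p) \in I$ as well. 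Adding the two gives $1_{C} = 1_{B (\Hi_{\pi})} \otimes p + 1_{B (\Hi_{\pi})} \otimes (1 - p) \in I$, whence $I = C$ and $\pi \in {\widetilde{\operatorname{Sp}}} (\af)$.

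The main obstacle is that the tracial Rokhlin property only produces $\ph$ on a corner $p A p$ rather than on all of $A$, so $u$ is merely a partial isometry with defect projection $1 - p$ rather than a unitary in the spectral subspace (which is what the genuine Rokhlin property would supply, directly giving $1_{C} \in I$). The comparison hypothesis $1 - p \precsim_{A^{\af}} p$ of Definition~\ref{traR}(\ref{1_ppcompactsets}) is exactly what lets me absorb this defect into $I$, and this is the one place where that extra comparison condition (absent from the naive generalization) is essential. The remaining work is the routine bookkeeping of the two matrix-coefficient identities and the estimate on $\| u u^{*} - 1_{B (\Hi_{\pi})} \otimes p \|$, which I would not carry out in detail here.
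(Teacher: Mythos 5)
Your proposal is correct and follows essentially the same route as the paper: apply the equivariant approximately multiplicative map from Lemma~\ref{TRPcequi} to the matrix coefficients of~$\pi$ to produce an element of $A_{2}(\pi)$ (your $u^{*}$ is exactly the paper's $c = (\id \otimes \ph)(\pi^{*})$), estimate $\| u u^{*} - 1 \otimes p \| < d^{3}\ep$, and use $1 - p \precsim_{A^{\alpha}} p$ to handle the complementary corner. The only cosmetic difference is the endgame: the paper adds $(1 \otimes s)^{*} c^{*} c (1 \otimes s)$ to $c^{*}c$ to get a single element of the ideal within distance $1$ of the identity, while you show $1 \otimes p$ and $1 \otimes (1 - p)$ lie in the ideal separately; both are valid.
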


\begin{rmk}\label{R_1X20_No14}
In the proof of Proposition~\ref{satpropnoncptp},
we do not use
Conditions (\ref{1_pxcompactsetscpt}), (\ref{1_pycompactsetscpt}),
or~(\ref{pxpcompactsetscpt}) in Lemma~\ref{TRPcequi}.
We also don't use any later results
in Section~\ref{Sec_722_CTRAwC_Other_typ}.
Thus, the modified version of Lemma~\ref{TRPcequi}
described in Remark~\ref{R_1Z09_mod_trp} suffices.
As in Remark~\ref{R_1Z09_mod_trp}, it follows
that Proposition~\ref{satpropnoncptp} still holds
if in its hypotheses the \trpc{}
is replaced with the modified tracial Rokhlin property
of Definition~\ref{moditra} below.
(In fact, we really only need 
Conditions (\ref{Item_893_FS_equi_cen_multi_approx})
and~(\ref{1_ppcompactsets}) in Definition~\ref{traR}.)
\end{rmk}

\begin{proof}[Proof of Proposition~\ref{satpropnoncptp}]
Adopt the notation of Definition~\ref{D_1X14_A2pi}.
Let $\pi \colon G \to {\operatorname{U}} (\Hi_{\pi})$
be in $\widehat{G}$.
We need to prove that
$(B (\Hi_{\pi}) \otimes A)^{\Ad (\pi) \otimes \alpha}
 \S {\overline{ A_{2} (\pi)^{*}  A_{2} (\pi)}}$.
By Lemma~\ref{L_1X14_A2Pi_id},
${\overline{ A_{2} (\pi)^{*}  A_{2} (\pi)}}$ is a
closed two sided ideal in
$(B (\Hi_{\pi}) \otimes A)^{\Ad (\pi) \otimes \alpha}$.
So it is enough to prove that $\overline{ A_{2} (\pi)^{*}  A_{2} (\pi)}$
contains an invertible element.

Let $d_{\pi} = \dim (\Hi_{\pi})$
and identify $\Hi_{\pi}$ with ${\mathbb{C}}^{d_{\pi}}$.
Set $\delta = 1 / (4 d_{\pi}^{3})$.
Equip $C (G)$ with the action $\Lt$ of Notation~\ref{N_1X07_Lt}.
An easy computation shows that the function $g \mapsto \pi (g)^*$,
which we abbreviate to $\pi^*$, is in $C (G)_{2} (\pi)$.
For $j, k = 1, 2, \ldots, d_{\pi}$,
let $\pi_{j, k} \in C (G)$ be the function $g \mapsto \pi (g)_{j, k}$
whose value is the $(j, k)$ matrix entry of~$\pi (g)$.
Apply Lemma~\ref{TRPcequi}
to $\alpha$ with $\delta$ in place of $\varepsilon$, with $n = 2$, with
\[
F = \{ 1_{A} \}
\andeqn
S = \bigl\{ \pi_{j, k}, \, (\pi_{j, k})^* \colon
 j, k = 1, 2, \ldots, d_{\pi} \bigr\},
\]
and with $x = y = 1$.
We get a projection $p \in A^{\alpha}$
such that $1 - p \precsim_{A^{\af}} p$ and an equivariant \ucp{}
$(F, S, \dt)$-approximately central multiplicative map
$\varphi \colon C (G) \to p A p$.
Then
\[
\id_{B (\Hi_{\pi})} \otimes \varphi \colon
  B (\Hi_{\pi}) \otimes C (G)
   \to (1_{B (\Hi_{\pi})} \otimes p) (B (\Hi_{\pi}) \otimes A)
    (1_{B (\Hi_{\pi})} \otimes p)
\]
is an equivariant \ucp{} map.

Let $(e_{j, k})_{j, k = 1, 2, \ldots, n}$
be the standard system of matrix units for $B (\Hi_{\pi})$.
Set
\[
c = ( \id_{B (\Hi_{\pi})} \otimes \varphi ) (\pi^*)
  = \sum_{j, k = 1}^{d_{\pi}}
          e_{j, k} \otimes \varphi ( (\pi_{k, j})^* )
  \in B (\Hi_{\pi}) \otimes p A p.
\]
Since $\id_{B (\Hi_{\pi})} \otimes \varphi$ is equivariant
and $\pi^* \in C (G)_2 (\pi)$,
an easy calculation shows that $c \in A_{2} (\pi)$.
Thus $c^* c \in {\overline{ A_{2} (\pi)^{*} A_{2} (\pi)}}$.
Moreover, since $\pi \pi^* = 1$,
\begin{equation}\label{Eq_1X14_cStc}
\| c^{*} c - 1_{B (\Hi_{\pi})} \otimes p \|
 \leq \sum_{j, k, l = 1}^{\I}
   \bigl\| \ph ( \pi_{j, k}) \ph ( (\pi_{l, k})^* )
           - \ph (\pi_{j, k} (\pi_{l, k})^* ) \bigr\|
 < d_{\pi}^3 \dt.
\end{equation}

Using $1_{A} - p \precsim_{A^{\alpha}} p$, choose $s \in A^{\alpha}$
such that $s^* s = 1_{A} - p$ and $s s^* \leq p$.
Then $s^* p s = 1_{A} - p$.
Clearly
$1_{B (\Hi_{\pi})} \otimes s
 \in (B (\Hi_{\pi}) \otimes A)^{\Ad (\pi) \otimes \alpha}$.
Using~(\ref{Eq_1X14_cStc}), we get
\[
\bigl\| (1_{B (\Hi_{\pi})} \otimes s)^* c^*
       c (1_{B (\Hi_{\pi})} \otimes s)
  - 1_{B (\Hi_{\pi})} \otimes (1 - p) \bigr\|
 < d_{\pi}^3 \dt.
\]
Therefore the element
\[
z = c^{*} c + (1_{B (\Hi_{\pi})} \otimes s)^* c^*
       c (1_{B (\Hi_{\pi})} \otimes s)
\]
satisfies
\begin{equation}\label{Eq_1X14_NEst}
\| z - 1_{B (\Hi_{\pi})} \otimes 1 \|
  < 2 d_{\pi}^3 \dt
  < 1.
\end{equation}
Also, since ${\overline{ A_{2} (\pi)^{*}  A_{2} (\pi)}}$ is an ideal in
$(B (\Hi_{\pi}) \otimes A)^{\Ad (\pi) \otimes \alpha}$,
it follows that
\[
(1_{B (\Hi_{\pi})} \otimes s)^* c^* c (1_{B (\Hi_{\pi})} \otimes s)
  \in {\overline{ A_{2} (\pi)^{*} A_{2} (\pi)}}.
\]
Therefore $z \in {\overline{ A_{2} (\pi)^{*} A_{2} (\pi)}}$.
So~(\ref{Eq_1X14_NEst}) implies that
${\overline{ A_{2} (\pi)^{*} A_{2} (\pi)}}$
contains an invertible element.
This completes the proof.
\end{proof}

\begin{thm}\label{simplecrosscomparison}
Let $A$ be an infinite dimensional simple separable unital \ca,
and let $\alpha \colon G \to \Aut (A)$
be an action of a second countable compact group $G$ on~$A$
which has the tracial Rokhlin property with comparison.
Then the crossed product $C^{*} (G, A, \alpha)$ is simple.
\end{thm}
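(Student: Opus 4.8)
The plan is to assemble two structural facts that have already been established for actions with the \trpc{} and feed them into the general equivalence theorem for compact group actions, Theorem~\ref{satunonabel}. The key observation is that simplicity of the crossed product need not be proved directly; instead it is extracted from simplicity of the fixed point algebra together with saturation of the action, both of which are available as separate results.

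Concretely, first I would invoke Theorem~\ref{thm_simple12} to conclude that the fixed point algebra $A^{\alpha}$ is simple, using that $A$ is infinite dimensional, simple, separable, and unital, and that $\alpha$ has the \trpc. Next I would invoke Proposition~\ref{satpropnoncptp}, under the same hypotheses, to conclude that $\alpha$ is saturated. Together these two statements are precisely Condition~(\ref{Item_1X07_SimSat}) of Theorem~\ref{satunonabel}, namely that $A^{\alpha}$ is simple and $\alpha$ is saturated.

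Finally, Theorem~\ref{satunonabel} asserts the equivalence of Condition~(\ref{Item_1X07_SimSat}) with Condition~(\ref{Item_1X07_CPSim}), which is the assertion that $C^{*} (G, A, \alpha)$ is simple. Applying the implication from~(\ref{Item_1X07_SimSat}) to~(\ref{Item_1X07_CPSim}) yields the conclusion immediately. I expect no real obstacle at this stage: the genuine difficulties, namely proving simplicity of $A^{\alpha}$ through the approximately multiplicative maps of Theorem~\ref{thm_ApproxHommtr} and proving saturation through a strong Arveson spectrum computation, were already dispatched in the preceding results. The only point requiring attention is bookkeeping of hypotheses, checking that the standing assumptions of the present theorem (infinite dimensional, simple, separable, unital $A$; second countable compact $G$; the \trpc) coincide exactly with those demanded by Theorem~\ref{thm_simple12} and Proposition~\ref{satpropnoncptp}, which they do.
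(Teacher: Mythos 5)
Your proposal is correct and is essentially identical to the paper's own proof: the paper likewise cites Theorem~\ref{thm_simple12} for simplicity of $A^{\alpha}$, Proposition~\ref{satpropnoncptp} for saturation, and then applies the equivalence of Conditions (\ref{Item_1X07_SimSat}) and (\ref{Item_1X07_CPSim}) in Theorem~\ref{satunonabel}. Nothing further is needed.
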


\begin{proof}
The algebra $A^{\af}$ is simple by Theorem~\ref{thm_simple12}
and $\af$ is saturated by Proposition~\ref{satpropnoncptp}.
So Condition~(\ref{Item_1X07_SimSat}) in Theorem~\ref{satunonabel}
holds.
\end{proof}

\begin{cor}\label{C_1X15_StableIso}
Under the hypotheses of Theorem~\ref{simplecrosscomparison},
the algebras $C^{*} (G, A, \alpha)$ and $A^{\af}$
are strongly Morita equivalent and stably isomorphic.
\end{cor}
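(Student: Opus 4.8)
The plan is to read off the strong Morita equivalence directly from Theorem~\ref{satunonabel} and then upgrade it to a stable isomorphism using the Brown--Green--Rieffel theorem; the argument is therefore short, since the substantive work has already been done.

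First I would record that the proof of Theorem~\ref{simplecrosscomparison} establishes precisely what is needed to invoke the concluding assertion of Theorem~\ref{satunonabel}. Indeed, $A^{\af}$ is simple by Theorem~\ref{thm_simple12}, and $\af$ is saturated by Proposition~\ref{satpropnoncptp}, so Condition~(\ref{Item_1X07_SimSat}) of Theorem~\ref{satunonabel} holds. The final sentence of Theorem~\ref{satunonabel} then gives that $A^{\af}$ is strongly Morita equivalent to $C^{*}(G, A, \af)$ (in fact isomorphic to a full \hsa{} of it), which is the first half of the corollary.

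For the stable isomorphism I would first check that both algebras are separable: $A$ is separable by hypothesis, hence so is the subalgebra $A^{\af}$, and the \cp{} of a separable \ca{} by a second countable compact group is again separable. The Brown--Green--Rieffel theorem asserts that two separable \cas{} are strongly Morita equivalent \ifo{} they become isomorphic after tensoring with the \ca{} $\K$ of compact operators. Applying this to the pair $A^{\af}$ and $C^{*}(G, A, \af)$ yields $A^{\af} \otimes \K \cong C^{*}(G, A, \af) \otimes \K$, that is, the two algebras are stably isomorphic.

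There is no genuine obstacle here, as the real content lies in Theorem~\ref{thm_simple12}, Proposition~\ref{satpropnoncptp}, and Theorem~\ref{simplecrosscomparison}. The only points needing care are the separability of the \cp{} --- standard for second countable compact groups --- and the verification of the separability hypothesis of the Brown--Green--Rieffel theorem, which the preceding paragraph supplies.
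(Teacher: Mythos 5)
Your proof is correct and follows essentially the same route as the paper: both invoke the concluding statement of Theorem~\ref{satunonabel} (you via Condition~(\ref{Item_1X07_SimSat}), the paper via Condition~(\ref{Item_1X07_CPSim}) through Theorem~\ref{simplecrosscomparison}, which are equivalent) and then deduce stable isomorphism from separability via Brown--Green--Rieffel. Your explicit check of separability of the crossed product is a welcome detail the paper leaves implicit.
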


\begin{proof}
By Theorem~\ref{simplecrosscomparison}, the condition
in Theorem \ref{satunonabel}(\ref{Item_1X07_CPSim}) holds.
Strong Morita equivalence follows from the last statement
in Theorem~\ref{satunonabel},
and stable isomorphism then follows from separability.
\end{proof}

We can also improve Lemma~\ref{mtrpcentral},
replacing in Condition~(\ref{1_psmalllnes_FP}) there
the requirement $1 - p \precsim_{(A^{\af})_{\I}} y$
with the requirement that $1 - p$ be small in $(A^{\af})_{\I}$.
Similar improvements are valid
for the versions of Lemma~\ref{mtrpcentral} used in
Remarks \ref{R_1Z09_ntr}, \ref{R_1Z09_nzp}, \ref{R_1Z09_mod_trp},
and~\ref{R_S_mod_trp}.
Since we don't use them, we don't state them formally.

\begin{prp}\label{P_1X14_CentSq}
Let $G$ be a second countable compact group,
let $A$ be a simple separable infinite dimensional \uca, and let
$\alpha \colon G \to \Aut (A)$ be an action of $G$ on~$A$.
Then $\af$ has the tracial Rokhlin property with comparison
\ifo{} for
every nonzero positive element $x$ in $A_{+}$ with $\| x \| = 1$,
there exist a projection $p \in (A_{\I, \af} \cap A')^{\alpha_{\infty}}$
and a unital equivariant homomorphism
\[
\ps \colon C (G) \to p (A_{\I, \af} \cap A') p,
\]
such that the following hold:
\begin{enumerate}
\item\label{It_CSq_1X14__1mp_af_sm}
$1 - p$ is $\af$-small in $A_{\I, \af}$.
\item\label{It_CSq_1X14_1mp_sm_Aaf}
$1 - p$ is small in $(A^{\af})_{\I}$.
\item\label{It_CSq_1X14_1mp_p}
$1 - p \precsim_{(A^{\af})_{\I}} p$.
\item\label{It_CSq_1X14_Norm}
Identifying $A$ with its image in $A_{\I, \af}$,
we have $\| p x p\| = 1$.
\setcounter{TmpEnumi}{\value{enumi}}
\end{enumerate}
\end{prp}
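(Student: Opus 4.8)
The plan is to obtain both directions by leaning on the already-proved Lemma~\ref{mtrpcentral}, changing only the single place where the fixed target $y$ must be upgraded to full smallness, and using the newly available simplicity of $A^{\af}$ (Theorem~\ref{thm_simple12}) as the one extra ingredient. For the backward (``if'') direction I would assume the stated condition and verify the hypothesis of Lemma~\ref{mtrpcentral}. Given $x$ with $\|x\|=1$ and $y \in (A^{\af})_{+}\setminus\{0\}$, normalize to $\|y\|=1$ and choose $y_0,y_1\in(A^{\af})_{+}$ with $y_0y_1=y_1$, $\|y_1\|=1$, and $y_0\sim y$ by the same functional calculus used in the proof of Lemma~\ref{mtrpcentral}. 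Apply the condition to $x$, obtaining $p$ and $\ps$ satisfying (\ref{It_CSq_1X14__1mp_af_sm})--(\ref{It_CSq_1X14_Norm}), and fix a lift $(1-q_n)_{n\in\N}$ of $1-p$ by projections in $A^{\af}$. Conditions (\ref{1_psmalllnes}), (\ref{1_pandpinA_inf}), and~(\ref{nonzerop1_var}) of Lemma~\ref{mtrpcentral} coincide with (\ref{It_CSq_1X14__1mp_af_sm}), (\ref{It_CSq_1X14_1mp_p}), and~(\ref{It_CSq_1X14_Norm}). For Lemma \ref{mtrpcentral}(\ref{1_psmalllnes_FP}), I would apply smallness of $1-p$ in $(A^{\af})_{\I}$ (Definition~\ref{smallness_in_centralseq}) with the test element $y_1$, giving $1-q_n\precsim_{A^{\af}}y_1$ for all large $n$, and then reuse verbatim the bounded-lift step from the proof of Lemma~\ref{mtrpcentral}: Lemma~\ref{L_1X10_y0y1p} produces $w_n\in A^{\af}$ with $\|w_n\|\le 1$ and $w_n^{*}y_0w_n=1-q_n$, so the induced element of $(A^{\af})_{\I}$ witnesses $1-p\precsim_{(A^{\af})_{\I}}y_0\sim y$. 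Lemma~\ref{mtrpcentral} then delivers the \trpc.

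For the forward (``only if'') direction I would assume the \trpc. The Rokhlin case is trivial with $p=1$; otherwise $A$ has Property~(SP) by Lemma~\ref{PROSPTRPCA}, and \emph{now} $A^{\af}$ is simple by Theorem~\ref{thm_simple12}. I would then run the construction in the proof of Lemma~\ref{mtrpcentral} unchanged, keeping the sequence $e_n$ built from $x$ (which governs $\af$-smallness of $1-p$ in $A_{\I,\af}$), but replacing the single element $y$ by an internally generated sequence that will force smallness in $(A^{\af})_{\I}$. Concretely, fix a dense sequence $\eta_1,\eta_2,\ldots$ in $\{a\in(A^{\af})_{+}\colon\|a\|=1\}$, and for each $n$ use simplicity of $A^{\af}$ together with Lemma~\ref{lma_L683_baj} to select $\zeta_n\in(A^{\af})_{+}\setminus\{0\}$ with $\zeta_n\precsim_{A^{\af}}(\eta_k-\frac{1}{2})_{+}$ for $k=1,2,\ldots,n$. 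In the $n$-th application of Lemma~\ref{TRPcequi} I would feed $e_n$ as the $x$-parameter and $\zeta_n$ as the $y$-parameter, so that the resulting $\af$-invariant projection $q_n$ satisfies $1-q_n\precsim_{A^{\af}}\zeta_n$ on top of all the earlier estimates. The projection $p$ and the equivariant homomorphism $\ps\colon C(G)\to p(A_{\I,\af}\cap A')p$ are then assembled exactly as in Lemma~\ref{mtrpcentral} (via the Choi--Effros lift and averaging over $G$), and Conditions (\ref{It_CSq_1X14__1mp_af_sm}), (\ref{It_CSq_1X14_1mp_p}), and~(\ref{It_CSq_1X14_Norm}) are produced with no change.

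To read off Condition~(\ref{It_CSq_1X14_1mp_sm_Aaf}) I would invoke Lemma~\ref{L_1X09_SmallPj} for the trivial action on $A^{\af}$ and check the single-lift criterion for $(1-q_n)_{n\in\N}$: given nonzero $t\in(A^{\af})_{+}$, normalized so that $\|t\|=1$, choose $N$ with $\|\eta_N-t\|<\frac{1}{2}$; then for every $n\ge N$,
\[
1-q_n\precsim_{A^{\af}}\zeta_n
 \precsim_{A^{\af}}\Bigl(\eta_N-\tfrac{1}{2}\Bigr)_{+}
 \precsim_{A^{\af}}t,
\]
the last step being the standard comparison from $\|\eta_N-t\|<\frac{1}{2}$, exactly as in the $\af$-smallness argument of Lemma~\ref{mtrpcentral}. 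The one genuinely new point, and the place to be careful, is that simplicity of $A^{\af}$ is precisely what licenses the choice of the $\zeta_n$ through Lemma~\ref{lma_L683_baj}; this is the ingredient that was unavailable when Lemma~\ref{mtrpcentral} was first proved, and it is what promotes domination by a single fixed $y$ into genuine smallness in $(A^{\af})_{\I}$. Everything else is a transcription of the earlier proof, so I expect no further obstacle beyond bookkeeping.
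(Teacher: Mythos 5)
Your proposal is correct and follows essentially the same route as the paper's proof: the forward direction is identical (a dense sequence in the unit sphere of $(A^{\af})_{+}$, elements $\zeta_n$ chosen via simplicity of $A^{\af}$ and Lemma~\ref{lma_L683_baj} fed as the $y$-parameter into Lemma~\ref{TRPcequi}, and smallness read off through Lemma~\ref{L_1X09_SmallPj}). The only cosmetic difference is in the reverse direction, where you verify hypothesis~(\ref{1_psmalllnes_FP}) of Lemma~\ref{mtrpcentral} (via $y_0$, $y_1$, and Lemma~\ref{L_1X10_y0y1p}) and then cite that lemma as a black box, while the paper re-runs its proof and applies smallness directly to obtain $1-q_n \precsim_{A^{\af}} y$ at a finite stage; both are valid.
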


\begin{proof}
We only describe the changes from the proof of
Lemma~\ref{mtrpcentral}.

To show that the \trpc{} implies the existence of~$\ps$,
in addition to the dense sequences (\ref{Eq_1X14_fnan})
and~(\ref{Eq_1X14_xn}),
we choose a dense sequence
\[
y_1, y_2, \ldots
 \in \bigl\{ a \in (A^{\af})_{+} \colon \| a \| = 1 \bigr\}.
\]
For $n \in \N$, use simplicity of~$A^{\af}$
(Theorem~\ref{thm_simple12}) and Lemma~\ref{lma_L683_baj}
to choose $d_n \in (A^{\af})_{+} \setminus \{ 0 \}$
such that
\begin{equation}\label{Eq_1X14_NStSt}
d_n \precsim_{A} \Bigl( y_k - \frac{1}{2} \Bigr)_{+}
\end{equation}
for $k = 1, 2, \ldots, n$.
In the application of Lemma~\ref{TRPcequi} in the proof of
Lemma~\ref{mtrpcentral}, use $d_n$ instead of~$y$.
This replaces (\ref{Item_1X10_1py}) in the proof of
Lemma~\ref{mtrpcentral}
with the requirement that $1 - q_n \precsim_{A^{\alpha}} d_n$.
Let $p$ be as in that proof.
To show that $1 - p$ is small in $(A^{\af})_{\I}$,
let $t \in A_{+} \setminus \{ 0 \}$; as there, it is enough to find
$N \in \N$ such that for all $n \geq N$
we have $1 - q_{n} \precsim_{A^{\af}} t$.
We may assume that $\| t \| = 1$.
Choose $N \in \N$ such that $\| y_N - t \| < \frac{1}{2}$.
Then $n \geq N$ implies
\[
1 - q_n \precsim_{A^{\af}} d_n
 \precsim_{A^{\af}} \Bigl( y_N - \frac{1}{2} \Bigr)_{+}
 \precsim_{A} t,
\]
as desired.

For the reverse direction, all parts of the proof of the corresponding
part of Lemma~\ref{mtrpcentral} apply, except that we must use
the hypothesis that $1 - p$ is small in $(A^{\af})_{\I}$,
instead of the hypothesis $1 - p \precsim_{(A^{\af})_{\I}} y$,
to show that for all sufficiently large $n$
we have $q_n \precsim_{A^{\af}} y$.
Accordingly, let $n_0, n_1, n_2, n_3, n_4 \in \N$ be as in the
proof of Lemma~\ref{mtrpcentral}.
Since $1 - p$ is small in $(A^{\af})_{\I}$, there is $n_5 \in \N$
such that for every $n \geq n_5$
we have $1 - q_{n} \precsim_{A^{\af}} y$.
Then take $n = \max (n_0, n_1, \ldots, n_5)$,
and in Definition~\ref{traR}
take $\ph$ to be $\gm_n$ and take $p$ to be~$q_n$.
Lemma~\ref{L_1X09_CSbPj} is still used to prove that
$1 - q_n \precsim_{A^{\alpha}} q_n$.
\end{proof}

\section{Permanence properties}\label{Sec_1951_TRP_Crossed_TRR0}

In this section, we prove that fixed point algebras of actions
of compact groups on infinite dimensional simple separable unital \ca{s}
which have the \trpc{} preserve Property~(SP),
tracial rank zero, tracial rank at most one,
the Popa property, tracial $\cZ$-stability,
the combination of nuclearity and $\cZ$-stability,
infiniteness, and pure infiniteness.
Where appropriate, we prove the same for crossed products.
We also show that the radius of comparison of the fixed point algebra
is no larger than that of the original algebra.

For finite group actions with the tracial Rokhlin property,
the tracial rank zero case is Theorem~2.6 of~\cite{phill23}.
For a second countable compact group and an action with the
Rokhlin property, the tracial rank zero
and tracial rank at most one cases are Theorem~4.5 of~\cite{crossrep}.
Preservation of tracial rank zero
and of tracial rank at most one answers a question posed
after Theorem~4.5 in~\cite{crossrep}.

\begin{rmk}\label{L_2015_S4_nzp}
As in Section~\ref{Sec_1160_Simpli_Prf},
all results of this section which have the \trpc{}
(Definition~\ref{traR}) in their hypotheses
are still valid if in the definition of the \trpc{}
we omit Condition~(\ref{Item_902_pxp_TRP}).
Whenever we need to know $p \neq 0$ in an application of
Theorem~\ref{thm_ApproxHommtr},
we deduce it from Theorem \ref{thm_ApproxHommtr}(\ref{Item_1X07_20p}).
This claim then follows from Remark~\ref{R_1Z09_nzp}.
\end{rmk}

\begin{lem}\label{PROSPTRPCfixed}
Let $A$ be an infinite dimensional simple separable unital \ca.
Let $\alpha \colon G \to \Aut (A)$ be
an action of a second countable compact group
$G$ on $A$ which has the tracial Rokhlin property with comparison,
but does not have the Rokhlin property.
Then $A^{\alpha}$ has Property~(SP).
\end{lem}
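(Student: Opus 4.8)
The plan is to verify Property~(SP) directly. Given a nonzero hereditary subalgebra $H \subseteq A^{\alpha}$, pick $b \in H_{+}$ with $\| b \| = 1$; since $\overline{b A^{\alpha} b} \subseteq H$, it suffices to produce a nonzero projection in $\overline{b A^{\alpha} b}$. The starting point is that $A$ itself has Property~(SP): this is exactly Lemma~\ref{PROSPTRPCA} together with the hypothesis that $\alpha$ does not have the Rokhlin property. So the real content is to \emph{transfer} a projection from $A$ into the fixed point algebra using the approximately multiplicative maps of Theorem~\ref{thm_ApproxHommtr}, while keeping it inside the prescribed hereditary subalgebra.

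First I would set up cut-off functions. Choose continuous $f_{0}, f_{1} \colon [0,1] \to [0,1]$ with $f_{0}(0) = 0$, $f_{1}(1) = 1$, and $f_{0} f_{1} = f_{1}$ (take $f_{1}$ supported near~$1$ and $f_{0} = 1$ there), and set $b_{0} = f_{0}(b)$ and $c = f_{1}(b)$, both in $(A^{\alpha})_{+}$, with $c \neq 0$ and $b_{0} c = c b_{0} = c$; note $b_{0}, b_{0}^{1/2} \in \overline{b A^{\alpha} b}$ because $f_{0}(0) = 0$. Applying Property~(SP) of $A$ to the nonzero hereditary subalgebra $\overline{c A c} \subseteq A$ yields a nonzero projection $e \in \overline{c A c}$, and the relation $b_{0} c = c b_{0} = c$ forces $b_{0} e = e b_{0} = e$; in particular $e$ commutes with $b_{0}$, so $e = b_{0}^{1/2} e\, b_{0}^{1/2}$. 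This factorization is the key device that will pin the transferred projection down inside $\overline{b A^{\alpha} b}$.

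Next I would invoke Theorem~\ref{thm_ApproxHommtr} with $n = 3$, a small tolerance $\varepsilon$, $F_{1} = \{ e \}$, $F_{2} = \{ b_{0}^{1/2} \}$, and $x = y = 1$, obtaining a projection $p \in A^{\alpha}$ and a \ucp{} map $\psi \colon A \to p A^{\alpha} p$. By approximate multiplicativity (Theorem~\ref{thm_ApproxHommtr}(\ref{Item_1X07_18})) and $\| \psi(e) \| > 1 - \varepsilon$ (Theorem~\ref{thm_ApproxHommtr}(\ref{Item_1Z11_BigN})), the element $\psi(e)$ is a self-adjoint near-projection of norm close to $1$, so the spectral projection $q = \chi_{(1/2, \infty)}(\psi(e))$ is a nonzero projection in $p A^{\alpha} p \subseteq A^{\alpha}$ with $\| q - \psi(e) \| = O(\varepsilon)$. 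Using $e = b_{0}^{1/2} e\, b_{0}^{1/2}$, approximate multiplicativity, the estimate $\| \psi(b_{0}^{1/2}) - p b_{0}^{1/2} p \| < \varepsilon$ (Theorem~\ref{thm_ApproxHommtr}(\ref{Item_1X07_21}), since $b_{0}^{1/2} \in A^{\alpha}$), and $\| p b_{0}^{1/2} - b_{0}^{1/2} p \| < \varepsilon$ (Theorem~\ref{thm_ApproxHommtr}(\ref{commute1469})), one collapses the cut-downs by $p$ (recalling $\psi(e) = p\, \psi(e)\, p$) and finds $\| \psi(e) - b_{0}^{1/2} \psi(e) b_{0}^{1/2} \| = O(\varepsilon)$. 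Replacing $\psi(e)$ by $q$ then gives $\| q - b_{0}^{1/2} q\, b_{0}^{1/2} \| < \tfrac{1}{2}$ once $\varepsilon$ is small enough.

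To finish, set $z = b_{0}^{1/2} q\, b_{0}^{1/2}$. Then $z \geq 0$ lies in $\overline{b_{0} A^{\alpha} b_{0}} \subseteq \overline{b A^{\alpha} b}$, while $\| z - q \| < \tfrac{1}{2}$ with $z$ and $q$ self-adjoint. Spectral variation for self-adjoint elements shows $\spec(z) \subseteq [0, \tfrac{1}{2}) \cup (\tfrac{1}{2}, \tfrac{3}{2})$ has a gap at $\tfrac{1}{2}$, with the upper piece nonempty because $1 \in \spec(q)$; hence a continuous function that vanishes at $0$ sends $z$ to a nonzero projection lying in $\overline{b A^{\alpha} b}$, as required. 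I expect the main obstacle to be not the production of a projection — $A$ already has Property~(SP) — but guaranteeing that the projection survives the passage through $\psi$ \emph{and} remains in the given hereditary subalgebra; this is precisely what the factorization $e = b_{0}^{1/2} e\, b_{0}^{1/2}$ and the approximate commutation of $p$ with $b_{0}^{1/2}$ are designed to accomplish.
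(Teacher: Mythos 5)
Your argument is correct, but it is a genuinely different (and considerably longer) route than the paper's. The paper proves the lemma by contraposition in a few lines: if $A^{\alpha}$ fails Property~(SP), choose $y \in (A^{\alpha})_{+} \setminus \{0\}$ such that $\overline{y A^{\alpha} y}$ contains no nonzero projection, feed this $y$ into Definition~\ref{traR}; condition~(\ref{1_pycompactsets}) ($1 - p \precsim_{A^{\alpha}} y$) then forces $1 - p = 0$, so $p = 1$ and the condition of Lemma~\ref{L_1X16_AppDfRk} holds, i.e.\ $\alpha$ has the Rokhlin property — contradicting the hypothesis. No projection transfer is needed at all; the whole content of the lemma is extracted from the comparison condition against elements of $(A^{\alpha})_{+}$. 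Your proof instead first deduces that $A$ has Property~(SP) from Lemma~\ref{PROSPTRPCA} (this is where the ``not Rokhlin'' hypothesis enters for you) and then transports a projection into a prescribed hereditary subalgebra of $A^{\alpha}$ via Theorem~\ref{thm_ApproxHommtr}; the factorization $e = b_{0}^{1/2} e\, b_{0}^{1/2}$, the approximate commutation of $p$ with $b_{0}^{1/2}$, and the norm bound $\| \psi(e) \| > 1 - \varepsilon$ from Theorem~\ref{thm_ApproxHommtr}(\ref{Item_1Z11_BigN}) all check out, and there is no circularity since Theorem~\ref{thm_ApproxHommtr} does not depend on this lemma. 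What your approach buys is that it actually proves the stronger transfer statement ``trpc and $A$ has (SP) imply $A^{\alpha}$ has (SP)'' uniformly, which is essentially Corollary~\ref{L_1X17_} without the case split into Rokhlin/non-Rokhlin that the paper performs (via Theorem~\ref{T_1X20_SP}); what it costs is all the machinery of Section~\ref{Sec_722_CTRAwC_Other_typ}, where the paper's two-line argument uses only the definition.
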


\begin{proof}
Suppose $A^{\af}$ does not have Property~(SP).
We verify the condition of Lemma~\ref{L_1X16_AppDfRk}.
Let $F \subseteq A$ and $S \subseteq C (G)$ be finite,
and let $\varepsilon > 0$.
Choose $y \in (A^{\af})_{+} \setminus \{ 0 \}$ such that
and ${\overline{y A^{\af} y}}$ contains no nonzero \pj,
and take $x = 1$.
Apply Definition~\ref{traR}.
Then $p = 1$, so the condition of Lemma~\ref{L_1X16_AppDfRk} holds.
\end{proof}

The following result could have been in~\cite{crossrep}.
However, our proof needs simplicity of~$A$.
Without simplicity,
we don't know how to prove that the \pj{} we construct is nonzero,
and it remains open whether fixed point algebras
and crossed products by actions of
compact groups preserve Property~(SP) in general.

\begin{thm}\label{T_1X20_SP}
Let $A$ be a simple separable infinite dimensional unital \ca,
let $G$ be a second countable compact group, and let
$\alpha \colon G \to \Aut (A)$ be an action
which has the Rokhlin property.
If $A$ has Property~(SP), so do $A^{\alpha}$ and $C^{*} (G, A, \alpha)$.
\end{thm}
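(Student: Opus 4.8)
The plan is to handle the two algebras in turn: first establish Property~(SP) for $A^{\af}$, and then transport it to the crossed product. Since $\af$ has the Rokhlin property, it has the \trpc{} by Proposition~\ref{P_1X19_RP_to_TRPC}, so Theorem~\ref{thm_ApproxHommtr} is available; this is the only route by which the Rokhlin hypothesis enters the fixed-point argument. To prove Property~(SP) for $A^{\af}$, let $D \S A^{\af}$ be a nonzero \hsa{} and fix $b \in D_{+}$ with $\| b \| = 1$; it suffices to produce a \nzp{} in $\ov{b A^{\af} b} \S D$. Set $c_0 = (b - \tfrac12)_{+} \in A^{\af}$, which is nonzero, and choose a continuous $g \colon [0,1] \to [0,1]$ with $g(0) = 0$, $g \equiv 1$ on $[\tfrac12, 1]$, and $g(t) = 2t$ on $[0, \tfrac12]$, so that $g(b) \in \ov{b A^{\af} b}$ and $g(b) c_0 = c_0$. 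Because $A$ has Property~(SP), the \hsa{} $\ov{c_0 A c_0} \S A$ contains a \nzp{} $q$, and then $g(b) q = q = q\, g(b)$.

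Next I would feed $q$ through the approximately multiplicative map supplied by Theorem~\ref{thm_ApproxHommtr}, applied with $\ep$ small, $n = 2$, $x = y = 1$, $F_1 = \{ q, b, g(b) \}$, and $F_2 = \{ b, g(b) \}$. This gives a \pj{} $p \in A^{\af}$ and a \ucp{} map $\ps \colon A \to p A^{\af} p$. Write $a = \ps(q)$; it is positive and selfadjoint, approximate multiplicativity of $\ps$ on $q \cdot q$ gives $\| a^2 - a \| < \ep$, and $\| a \| \le 1$ since $\ps$ is contractive. Using $g(b) q = q$, approximate multiplicativity on $g(b) \cdot q$, the estimate $\| \ps(g(b)) - p\, g(b)\, p \| < \ep$ from Theorem~\ref{thm_ApproxHommtr}(\ref{Item_1X07_21}), the approximate commutation $\| p\, g(b) - g(b)\, p \| < \ep$ from Theorem~\ref{thm_ApproxHommtr}(\ref{commute1469}), and $p a = a$, a short chain of triangle-inequality estimates yields $\| g(b)\, a - a \| < 3\ep$, and symmetrically $\| a\, g(b) - a \| < 3\ep$. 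Hence $d = g(b)\, a\, g(b)$ is a positive selfadjoint element of $\ov{g(b) A^{\af} g(b)} \S \ov{b A^{\af} b} \S D$ with $\| d - a \| < 6\ep$. Consequently $\| d^2 - d \|$ is small, so the spectrum of $d$ misses a neighborhood of $\tfrac12$, and $\ch_{[1/2, \I)}(d)$ is a \pj{} in $C^{*}(d) \S \ov{b A^{\af} b}$.

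The crux, and the reason simplicity of $A$ is needed, is to know this \pj{} is nonzero. Here I would invoke the norm lower bound Theorem~\ref{thm_ApproxHommtr}(\ref{Item_1Z11_BigN}): since $q \in F_1$ is a \pj, $\| a \| = \| \ps(q) \| > \| q \| - \ep = 1 - \ep$, whence $\| d \| > 1 - 7\ep$, so for $\ep$ small the spectrum of $d$ reaches above $\tfrac34$ and $\ch_{[1/2, \I)}(d) \neq 0$. This norm estimate is precisely the step that uses simplicity of $A$ (in the proof of Theorem~\ref{thm_ApproxHommtr} the relevant \hm{} is isometric because $A$ is simple and the map is nonzero); without it there is no evident way to force the \pj{} to be nonzero, and this is the main obstacle and the point of the simplicity hypothesis.

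Finally, for the crossed product $B = C^{*}(G, A, \af)$, I would use that $B$ is simple by Theorem~\ref{simplecrosscomparison} and that, by Theorem~\ref{satunonabel}, $A^{\af}$ is isomorphic to a full \hsa{} of $B$. Given a nonzero \hsa{} $D \S B$ and $c \in D_{+} \SM \{ 0 \}$, Lemma~\ref{lma_L683_baj} applied to the simple algebra $B$ and its \hsa{} $A^{\af}$ yields $b' \in (A^{\af})_{+} \SM \{ 0 \}$ with $b' \precsim_{B} c$. By the Property~(SP) for $A^{\af}$ just established, $\ov{b' A^{\af} b'}$ contains a \nzp{} $e$, and $e \precsim_{A^{\af}} b' \precsim_{B} c$ gives $e \precsim_{B} c$. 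Extracting a single $w \in B$ with $\| w c w^{*} - e \| < 1$ (the $M_{\I}$ approximation reduces to $B$ because $c$ and $e$ already lie in $B$), Lemma~\ref{L_1X09_CSbPj} provides $u \in B$ with $u^{*} c u = e$; then $s = c^{1/2} u$ satisfies $s^{*} s = e$, so $s s^{*}$ is a \nzp{} in $\ov{c B c} \S D$. This gives Property~(SP) for $B$. (Alternatively, as $A^{\af}$ and $B$ are stably isomorphic by Corollary~\ref{C_1X15_StableIso}, one could simply invoke stable-isomorphism invariance of Property~(SP).)
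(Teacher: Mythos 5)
Your argument is correct, and its overall shape matches the paper's: use Property~(SP) of $A$ to find a projection sitting under the given positive element, push it into $A^{\alpha}$ by an approximately multiplicative, approximately central map, and perturb to a projection. The differences are in the machinery and in how nonzeroness is certified. The paper works directly from the Rokhlin property, invoking Theorem~2.11 of~\cite{crossrep} to get a \emph{unital} map $\psi \colon A \to A^{\alpha}$ (no cut-down projection $p$), and then proves $q \neq 0$ by hand: simplicity of $A$ gives $\sum_j a_j e b_j = 1$, and the estimate $\bigl\| 1 - \sum_j \psi(a_j) q \psi(b_j) \bigr\| < \tfrac{1}{2}$ forces $q \neq 0$. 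You instead route through Proposition~\ref{P_1X19_RP_to_TRPC} and Theorem~\ref{thm_ApproxHommtr}, and certify nonzeroness via the norm lower bound Theorem~\ref{thm_ApproxHommtr}(\ref{Item_1Z11_BigN}); that bound is itself proved from simplicity of $A$ (the limiting homomorphism into the sequence algebra is isometric), so the two proofs use simplicity at the same conceptual point, yours just packaged inside the averaging theorem. Your version buys a shorter argument by leveraging more of the established machinery, at the cost of carrying the extra projection $p$ and the conditions of Definition~\ref{traR} that are irrelevant here; the paper's version is more self-contained relative to the Rokhlin literature and avoids the TRPC apparatus entirely. Your final perturbation step (functional calculus on the almost-projection $g(b)\,\psi(q)\,g(b)$) and the paper's (the partial isometry $x^{1/2}(qxq)^{-1/2}$) are interchangeable standard devices. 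For the crossed product, the paper simply cites stable isomorphism (Corollary~\ref{C_1X15_StableIso}); your direct Cuntz-comparison argument through the full hereditary subalgebra $A^{\alpha} \subseteq C^*(G,A,\alpha)$ is also valid, and you note the stable isomorphism shortcut yourself.
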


\begin{proof}
We prove that $A^{\alpha}$ has Property~(SP).
Let $x \in A^{\alpha}_{+} \setminus \{ 0 \}$;
we need to prove that $\ov{x A^{\alpha} x}$ contains a nonzero \pj.
\Wolog{} $\| x \| = 1$.
Define \cfn{s} $h, h_0 \colon [0, 1] \to [0, 1]$ by
\[
h (\ld) = \begin{cases}
   0 & \hspace*{1em} 0 \leq \ld \leq \frac{3}{4}
        \\
  4 \ld - 3 & \hspace*{1em} \frac{3}{4} \leq \ld \leq 1
\end{cases}
\]
and
\[
h_0 (\ld) = \begin{cases}
   \frac{4}{3} \ld & \hspace*{1em} 0 \leq \ld \leq \frac{3}{4}
        \\
   1 & \hspace*{1em} \frac{3}{4} \leq \ld \leq 1.
\end{cases}
\]
Set $x_0 = h_0 (x)$.
Use Property~(SP) to choose a nonzero \pj{} $e \in \ov{h (x) A h (x)}$.
Then
\begin{equation}\label{Eq_1X20_x0xe}
\| x - x_0 \| \leq \frac{1}{4}
\andeqn
x_0 e = e.
\end{equation}
Since $A$ is simple, there are $m \in \N$ and
$a_1, a_2 \ldots, a_{m}, b_1, b_2, \ldots, b_{m} \in A$ such that
$\sum_{j = 1}^{m} a_j e b_j = 1$.
Define
\[
M = 1 + \max_{j = 1, 2, \ldots, m} \max ( \| a_j \|, \| b_j \| )
\andeqn
\ep_0
 = \min \left( \frac{1}{24}, \, \frac{1}{2 m (M^2 + M + 1)} \right).
\]
Choose $\ep > 0$ so small that $\ep \leq \ep_0$
and whenever $D$ is a \ca{}
and $a \in D_{\sa}$ satisfies $\| a^2 - a \| < \ep$,
then there is a \pj{} $q \in D$ such that $\| q - a \| < \ep_0$.
Set
\begin{equation}\label{Eq_1X20_F1F2_SP}
F_{1}
 = \{ a_j, a_j e, b_j \colon j = 1, 2, \ldots, m \} \cup \{ e, x_0 \}
\andeqn
F_2 = \{ x_0 \}.
\end{equation}
Apply Theorem~2.11 of~\cite{crossrep} with these choices of
$F_1$, $F_2$, and~$\ep$, getting a \ucp{} map $\ps \colon A \to A^{\af}$
such that the following hold.
\begin{enumerate}
\item\label{Item_1X20_SP_mult}
$\| \ps (a b) - \ps (a) \ps (b) \| < \ep$ for all $a, b \in F_1$.
\item\label{Item_1X20_SP_fix}
$\| \ps (x_0) - x_0 \| < \ep$.
\end{enumerate}
We have $\| \ps (e)^2 - \ps (e) \| < \ep$ by~(\ref{Item_1X20_SP_mult}).
Therefore the choice of $\ep$ provides
a \pj{} $q \in A^{\af}$ such that $\| q - \ps (e) \| < \ep_0$.

We claim that $q \neq 0$.
For $j = 1, 2, \ldots, m$ we get
\[
\begin{split}
& \| \ps (a_j e b_j) - \ps (a_j) q \ps (b_j) \|
\\
& \hspace*{3em} {\mbox{}}
  \leq \| \ps (a_j e b_j) - \ps (a_j e) \ps (b_j) \|
    + \| \ps (a_j e) - \ps (a_j) \ps (e) \| \| \ps (b_j) \|
\\
& \hspace*{6em} {\mbox{}}
    + \| \ps (a_j) \| \| \ps (e) - q \| \| \ps (b_j) \|
\\
& \hspace*{3em} {\mbox{}}
  < \ep + M \ep + M^2 \ep_0
  \leq (M^2 + M + 1) \ep_0.
\end{split}
\]
So
\[
\biggl\| 1 - \sum_{j = 1}^{m} \ps (a_j) q \ps (b_j) \biggr\|
  \leq \sum_{j = 1}^{m} \| \ps (a_j e b_j) - \ps (a_j) q \ps (b_j) \|
  < m (M^2 + M + 1) \ep_0
  = \frac{1}{2}.
\]
Therefore $q \neq 0$, as claimed.

We have, using~(\ref{Item_1X20_SP_mult}) and $e x_0 = x_0 e = e$
at the second step,
\[
\begin{split}
& \| \ps (e) x_0 \ps (e) - \ps (e) \|
\\
& \hspace*{3em} {\mbox{}}
  \leq \| x_0 - \ps (x_0) \|
    + \| \ps (e) \ps (x_0) - \ps (e) \| \| \ps (e) \|
    + \| \ps (e)^2 - \ps (e) \|
\\
& \hspace*{3em} {\mbox{}}
  < \ep + \ep + \ep
  = 3 \ep.
\end{split}
\]
So
\[
\begin{split}
\| q x q - q \|
& \leq \| x - x_0 \|
    + 3 \| q - \ps (e) \|
    + \| \ps (e) x_0 \ps (e) - \ps (e) \|
\\
& < \frac{1}{4} + 3 \ep_0 + 3 \ep
  \leq \frac{1}{4} + 6 \ep_0
  \leq \frac{1}{2}.
\end{split}
\]
Therefore $(q x q)^{- 1/2}$ makes sense in $q A^{\af} q$,
and $s = x^{1 / 2} (q x q)^{- 1/2}$ is a partial isometry
with $s^* s = q$.
So $s s^*$ is a nonzero \pj{} in $\ov{x A^{\alpha} x}$.
Thus $A^{\af}$ has Property~(SP).

Corollary~\ref{C_1X15_StableIso} implies that
$C^{*} (G, A, \alpha)$ and $A^{\af}$ are stably isomorphic.
Therefore $C^{*} (G, A, \alpha)$ also has Property~(SP).
\end{proof}

\begin{cor}\label{L_1X17_}
Let $G$ be a second countable compact group,
let $A$ be a simple separable infinite dimensional \uca, and let
$\alpha \colon G \to \Aut (A)$ be an action of $G$ on~$A$
which has the tracial Rokhlin property with comparison.
If $A$ has Property~(SP), so do $A^{\alpha}$ and $C^{*} (G, A, \alpha)$.
\end{cor}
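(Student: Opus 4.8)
The plan is to reduce the statement to results already established, splitting into two cases according to whether $\af$ has the Rokhlin property; these cases are exhaustive, and in each of them the fixed-point assertion is already available.

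First I would treat the fixed point algebra. If $\af$ has the Rokhlin property, then, since $A$ is assumed to have Property~(SP), Theorem~\ref{T_1X20_SP} applies and shows directly that $A^{\af}$ (indeed also $C^{*}(G, A, \af)$) has Property~(SP). If instead $\af$ does not have the Rokhlin property, then Lemma~\ref{PROSPTRPCfixed} applies and gives that $A^{\af}$ has Property~(SP). (In this second case the hypothesis on $A$ is in fact automatic by the dichotomy of Lemma~\ref{PROSPTRPCA}, but this is not needed.) In either case, $A^{\af}$ has Property~(SP).

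It then remains to pass from $A^{\af}$ to the crossed product. For this I would invoke Corollary~\ref{C_1X15_StableIso}, which under the standing hypotheses gives that $C^{*}(G, A, \af)$ and $A^{\af}$ are stably isomorphic. Since Property~(SP) is a stable-isomorphism invariant --- it is preserved under tensoring with~$\K$, so it passes between $A^{\af}$ and $A^{\af} \otimes \K \cong C^{*}(G, A, \af) \otimes \K$, and it passes to hereditary subalgebras, hence back from the stabilization to $C^{*}(G, A, \af)$ itself --- we conclude that $C^{*}(G, A, \af)$ also has Property~(SP). This is the same reasoning used to finish the proof of Theorem~\ref{T_1X20_SP}.

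I do not expect any serious obstacle: the corollary is essentially a bookkeeping combination of Theorem~\ref{T_1X20_SP}, Lemma~\ref{PROSPTRPCfixed}, and Corollary~\ref{C_1X15_StableIso}. The one point meriting a sentence of care is the claim that Property~(SP) is invariant under stable isomorphism, which rests on the standard facts that it descends to hereditary subalgebras and is unaffected by stabilization.
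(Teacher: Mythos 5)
Your proposal is correct and follows exactly the paper's own argument: the same dichotomy on whether $\af$ has the Rokhlin property, using Theorem~\ref{T_1X20_SP} in one case and Lemma~\ref{PROSPTRPCfixed} in the other, then Corollary~\ref{C_1X15_StableIso} and stable-isomorphism invariance of Property~(SP) for the crossed product. Your extra remark justifying that invariance (passage to stabilizations and hereditary subalgebras) is a welcome bit of care that the paper leaves implicit.
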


\begin{proof}
If $\af$ has the Rokhlin property, then $A^{\alpha}$ has Property~(SP),
by Theorem~\ref{T_1X20_SP}.
Otherwise, apply Lemma~\ref{PROSPTRPCfixed} to get the same conclusion.

Corollary~\ref{C_1X15_StableIso} implies that
$C^{*} (G, A, \alpha)$ and $A^{\af}$ are stably isomorphic.
Therefore $C^{*} (G, A, \alpha)$ also has Property~(SP).
\end{proof}

We next consider preservation of tracial rank.

\begin{ntn}\label{L_2123_Jk}
For $k = 0, 1$ let $\cJ_k$ be the set of all \ca{s} of the form
\[
\bigoplus_{j = 1}^m C ([0, 1]^{\ep (j)}, \, M_{r (j)} )
\]
for $m , r (1), r(2), \ldots, r (m) \in \N$ and
$\ep (j) \in \Nz$ with $\ep (j) \leq k$ for $j = 1, 2, \ldots, m$.
\end{ntn}

Up to isomorphism, $\cJ_0$ is the class of \fd{} \ca{s}
and $\cJ_1$ is the class of all direct sums $E_0 \oplus C ([0, 1], E_1)$
for \fd{} \ca{s} $E_0$ and $E_1$.
However, the description above is technically more convenient.

We need the following modification of known characterizations of
tracial rank zero and one for simple \uca{s}.
In the known version (Theorem 7.1(2) of \cite{LnTTR}),
the \hm~$\ph$ is required to be injective.

\begin{lem}\label{L_2123_TR01}
Let $A$ be a simple separable \uca, and let $k \in \{ 0, 1 \}$.
Then $A$ has tracial rank at most~$k$,
in the sense of Definition 3.6.2 of~\cite{linbook},
\ifo{} for every $\ep > 0$, every finite subset $F \S A$,
and every $x \in A_{+} \SM \{ 0 \}$, there are a \ca{} $B \in \cJ_k$,
a \nzp{} $p \in A$, and a unital \hm{} $\ph \colon B \to p A p$,
such that the following hold.
\begin{enumerate}
\item\label{I_2123_appa_TRA0}
$\| a p - p a \| < \ep$ for all $a \in F$.
\item\label{I_2123_papB_TRA0}
$\dist (p a p, \, \varphi (B)) < \ep$ for all $a \in F$.
\item\label{I_2123_1_p_x_TRA0}
$1 - p$ is Murray-von Neumann equivalent to a
projection in ${\overline{x A x}}$.
\end{enumerate}
\end{lem}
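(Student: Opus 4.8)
The plan is to treat the two directions separately, with essentially all of the work in the reverse implication. For the forward direction, if $A$ has tracial rank at most~$k$ then Theorem 7.1(2) of~\cite{LnTTR} produces, for the given $\ep$, $F$, and~$x$, a \ca{} $B \in \cJ_k$, a \nzp{} $p \in A$, and a unital \emph{injective} \hm{} $\ph \colon B \to p A p$ satisfying (\ref{I_2123_appa_TRA0}), (\ref{I_2123_papB_TRA0}), and~(\ref{I_2123_1_p_x_TRA0}). An injective \hm{} is in particular a \hm, so this is already the asserted condition and nothing further is needed.

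For the reverse direction I would show that the condition in the statement, with a possibly non-injective~$\ph$, already implies the injective condition of Theorem 7.1(2) of~\cite{LnTTR} (with $\ep$ replaced by, say, $2 \ep$), after which that theorem gives that $A$ has tracial rank at most~$k$. So fix $\ep > 0$, a finite set $F \S A$, and $x \in A_{+} \SM \{ 0 \}$, apply the hypothesis with $\ep / 2$ in place of~$\ep$, and obtain $B \in \cJ_k$, a \nzp{}~$p$, and a unital \hm{} $\ph \colon B \to p A p$ with the four listed properties for~$\ep / 2$. The key observation is that the image $E := \ph (B)$ is a unital C*-subalgebra of $p A p$ containing~$p$ and, being a quotient of~$B$, has an explicit form: writing $B = \bigoplus_j C ([0,1]^{\ep (j)}, M_{r (j)})$, every closed ideal of a summand $C ([0,1], M_{r (j)})$ consists of the functions vanishing on a closed subset of~$[0,1]$, so $E \cong \bigoplus_j C (X_j, M_{r (j)})$ with each $X_j \S [0,1]^{\ep (j)}$ closed (and $X_j$ a point when $\ep (j) = 0$).

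The heart of the argument is then the following approximation statement, which I would isolate as the key step: for every finite $G \S E$ and every $\eta > 0$ there is a unital subalgebra $C \S E$ with $p \in C$, $C \in \cJ_k$, and $\dist (g, C) < \eta$ for all $g \in G$. Granting this, I apply it with $G = \{ \ph (b_a) \colon a \in F \}$, where $b_a \in B$ is chosen so that $\| p a p - \ph (b_a) \| < \ep / 2$, and with $\eta = \ep / 2$, to produce $C \in \cJ_k$; the inclusion $\ph' \colon C \hookrightarrow p A p$ is then an injective unital \hm{} with the same projection~$p$, so (\ref{I_2123_appa_TRA0}) and~(\ref{I_2123_1_p_x_TRA0}) are inherited verbatim, while for $a \in F$ one has $\dist (p a p, \ph' (C)) \le \| p a p - \ph (b_a) \| + \dist (\ph (b_a), C) < \ep$, which is~(\ref{I_2123_papB_TRA0}) with tolerance~$\ep$.

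It remains to prove the key step, and this is where the main obstacle lies, entirely in the case $k = 1$ (for $k = 0$ the algebra $E$ is \fd, so one simply takes $C = E$). Reducing to a single summand $C (X, M_r)$ with $X \S [0,1]$ closed, and, by expanding against a system of matrix units for~$M_r$ and adjusting the tolerance, further to the commutative algebra $C (X)$, the task is to find inside $C (X)$ a unital subalgebra isomorphic to a finite direct sum of copies of $\mathbb{C}$ and $C ([0,1])$ that approximately contains a given finite set $G_0$. Choose $\dt > 0$ by uniform continuity so that $| s - t | < \dt$ forces $| g (s) - g (t) | < \eta$ for all $g \in G_0$; only finitely many gaps of $[0,1] \SM X$ have length $\ge \dt$, and these split $X$ into finitely many relatively clopen ``clusters'' whose internal gaps are all shorter than~$\dt$. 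On each cluster I would build a monotone continuous surjection onto a point or onto $[0,1]$ whose fibers have diameter less than~$\dt$ (collapsing each internal gap to a point); pulling back the function algebra of the target along these surjections and taking the direct sum over clusters yields the desired $C \S C (X)$, on which every $g \in G_0$ is within $\eta$ of a function constant along fibers. The one delicate point to verify carefully is the existence of this fiber-controlled monotone surjection when $X$ has empty interior (for instance a Cantor set), where Lebesgue measure cannot be used to define it; this is elementary but is the only place that requires genuine attention.
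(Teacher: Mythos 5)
Your overall strategy for the reverse direction---pass to the image $E = \ph (B) \cong \bigoplus_j C (X_j, M_{r (j)})$ and then locate inside it a unital subalgebra in $\cJ_k$ which approximately contains the relevant elements, the inclusion then serving as the injective homomorphism---is the same as the paper's, and the forward direction and the case $k = 0$ are fine. The gap is in your key approximation step for $k = 1$, specifically in the dichotomy ``each cluster surjects onto a point or onto $[0, 1]$ with fibers of diameter less than $\dt$''. A cluster, as you define it (a maximal piece of $X$ all of whose internal gaps have length less than $\dt$), can be a totally disconnected set of large diameter: a finite $\dt / 2$-net of $[0, 1]$, or a tail $\{ 0 \} \cup \{ 1/n \colon n \geq N \}$, is a single cluster. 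Such a cluster is countable, so it admits no continuous surjection onto $[0, 1]$ at all, and collapsing it to a point produces a single fiber whose diameter is the diameter of the whole cluster, which is not controlled by $\dt$; so neither branch of your dichotomy is available, and approximating $g$ by a constant there incurs an error equal to the oscillation of $g$ over a set of diameter possibly close to~$1$. The underlying problem is that small gaps do not imply connectedness, so the admissible targets must be finite unions of points \emph{and} intervals, with many points allowed per cluster. (A secondary, fixable, issue: even for a perfect Cantor-like cluster, where an atomless fully supported measure does yield a monotone continuous surjection with small fibers, the natural candidate $t \mapsto g (\min h^{-1} (t))$ is only approximately continuous, so the claim that $g$ lies within $\eta$ of the pulled-back copy of $C ([0, 1])$ still needs an argument.)

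The paper's Lemma~\ref{L_2123_SubsetI} avoids this with a different decomposition: choose a finite $\dt$-dense subset $S = \{ y_0 < y_1 < \cdots < y_{\nu} \}$ of $X$, group consecutive points of $S$ according to the connected components of $X$, and let $Y$ be the union of the resulting closed intervals $[y_{\mu (s)}, \, y_{\mu (s + 1) - 1}] \subseteq X$ (degenerate, that is, single points, whenever the component is a point). Then $C (Y, M_r) \in \cJ_1$, and the retraction $h \colon X \to Y$, which moves every point by less than $2 \dt$, furnishes the injective unital homomorphism $a \mapsto a \circ h$ together with the approximants $a_l |_Y$. In particular a totally disconnected stretch of $X$ is approximated by finitely many $\dt$-dense points, that is, by a finite dimensional summand, rather than by a single point or a copy of $C ([0, 1])$. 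If you replace your cluster decomposition by this one, the rest of your argument goes through.
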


For convenience, we state the following lemma separately.

\begin{lem}\label{L_2123_SubsetI}
Let $X \S [0, 1]$ be nonempty and closed,
let $\ep > 0$, let $r, n \in \N$,
and let $a_1, a_2, \ldots, a_n \in C (X, M_r)$.
Then there are a \ca{} $B \in \cJ_1$,
an injective unital \hm{} $\ps \colon B \to  C (X, M_r)$,
and $b_1, b_2, \ldots, b_n \in B$,
such that $\| \ps (b_l) - a_l \| < \ep$ for $l = 1, 2, \ldots, n$.
\end{lem}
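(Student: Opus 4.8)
The plan is to realize the approximating subalgebra as the image of composition with a continuous surjection that collapses $X$ onto a finite disjoint union of points and closed intervals. Concretely, I would build a continuous surjection $\pi \colon X \to Y$, where $Y \S [0,1]$ is a finite union of closed intervals and isolated points, such that each $a_l$ oscillates by less than $\ep$ on every fibre of $\pi$. Then $B := C(Y, M_r)$ lies in $\cJ_1$, the map $\ps(f) = f \circ \pi$ is a unital injective \hm{} (injective precisely because $\pi$ is onto $Y$), and suitable $b_l \in B$ will satisfy $\| \ps(b_l) - a_l \| < \ep$. This is the device that repairs the obvious but non-injective restriction map $C([0,1], M_r) \to C(X, M_r)$.

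First, using uniform continuity of the finitely many $a_l$ on the \cms{} $X$, I would fix $\eta > 0$ so that $\| a_l(s) - a_l(t) \| < \ep$ whenever $s, t \in X$ and $|s - t| \le \eta$ (this is the only role of $n$: forcing one scale to control all $a_l$ at once). The key structural observation is a large/small scale dichotomy for $X \S [0,1]$: the maximal closed subintervals of $X$ of length $\ge \eta$ (the \emph{bands} $I_1, \dots, I_p$) are pairwise disjoint, hence finitely many, and any closed subinterval of $[0,1]$ of length $\ge \eta$ that avoids the band interiors $\sint(I_q)$ must meet the complement $U = [0,1] \SM X$. I would let $\pi$ be an affine homeomorphism (essentially the identity) on each band $I_q$, carrying it onto a closed interval $J_q$, and let $\pi$ be locally constant elsewhere, collapsing ``dust chunks'' to points. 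The chunks arise by covering the part of $X$ outside the band interiors by finitely many intervals of diameter $\le \eta$ whose endpoints lie in $U$ (or are band endpoints); finiteness comes from compactness of $X$, and the availability of $U$-endpoints near any non-band point comes from the displayed observation together with maximality of the short bands.

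The crucial point --- and the main obstacle --- is continuity of $\pi$ \emph{on $X$} at the interface between a collapsed chunk and an adjacent band, where a band endpoint $a_q \in X$ may be a limit of dust points from outside the band. I would resolve this by \emph{gluing}: assign to the chunk abutting $I_q$ the constant value $\pi(a_q)$ (the corresponding endpoint of $J_q$), so that $\pi$ is continuous there. Chunks bounded on both sides by points of $U$ are relatively clopen in $X$ and so can be collapsed to genuinely isolated points with no continuity constraint at all, since $\pi$ need only be continuous on $X$ and is free to ``jump'' across the gaps of $U$. It is exactly this freedom --- the cut points lying in $U$, hence outside $X$ --- that makes the construction possible and that the naive restriction map lacks.

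Finally, $Y = \pi(X)$ is a finite union of the intervals $J_q$ and finitely many points, so its connected components are finitely many points and closed intervals, giving $B = C(Y, M_r) \cong \bigoplus_q C(J_q, M_r) \oplus \bigoplus M_r \in \cJ_1$. I would define $b_l \in B$ to equal $a_l$ transported by the homeomorphism $\pi|_{I_q}$ on each $J_q$, and to take the value $a_l(x_k)$ at each collapsed point $y_k$ for a chosen $x_k$ in its fibre; continuity of $b_l$ is then immediate. The estimate $\| \ps(b_l) - a_l \| < \ep$ follows fibre by fibre from the oscillation bound: on band fibres, which are singletons, the two sides agree exactly, while every remaining fibre has diameter $\le \eta$, and the gluing convention makes the chunk value agree with the adjacent band endpoint value to within the same bound.
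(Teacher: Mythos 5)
Your proof is correct and rests on the same essential idea as the paper's: construct a continuous surjection from $X$ onto a finite disjoint union of closed intervals and points whose fibres have diameter controlled by the modulus of continuity of the $a_l$, exploiting the fact that the cut points separating the pieces can be taken in $[0,1] \SM X$, so that the map is continuous on $X$ even though it jumps across the gaps. The combinatorial implementation differs, though. The paper chooses a finite $\dt$-dense subset $y_0 < y_1 < \cdots < y_{\nu}$ of $X$, groups consecutive points lying in a common connected component, spans a closed interval $Y_s \S X$ over each group, and defines a retraction $h \colon X \to Y = \bigcup_s Y_s$ pushing each intervening point of $X$ onto the nearest interval endpoint on its side of a cut $z_s \notin X$; then $B = C (Y, M_r)$, $\ps (f) = f \circ h$, and $b_l = a_l |_Y$, with the estimate coming from $| h (x) - x | < 2 \dt$. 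You instead isolate the finitely many large connected components and collapse the remaining small-scale part of $X$ to finitely many points, gluing the chunk abutting a band to the corresponding band endpoint. Both routes work. The paper's retraction treats large and small components uniformly and so sidesteps your band/dust dichotomy, the gluing convention, and the construction of the finite chunk decomposition --- which is the one place your sketch needs more care: producing disjoint chunks of diameter at most $\eta$ with endpoints in $U$ (or at band endpoints) requires applying your displayed observation at scale $\eta / 2$ and using maximality of the components to locate a cut in $U$ near each band endpoint, exactly as you gesture at, and in the worst case yields fibres of diameter $2 \eta$ rather than $\eta$, a harmless constant to absorb into the choice of $\eta$. In exchange, your version makes the approximation exact on the large components rather than merely within $\ep$.
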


\begin{proof}
For $x \in X$, $l = 1, 2, \ldots, n$, and $j, k = 1, 2, \ldots, r$,
let $a_{l, j, k} (x)$ be the matrix entries of $a_l (x)$.
Set $\ep_0 = \ep / (2 r^2)$.
Choose $\dt > 0$ such that whenever $x_1, x_2 \in X$
satisfy $| x_1 - x_2 | < 2 \dt$,
then for $l = 1, 2, \ldots, n$ and $j, k = 1, 2, \ldots, r$
we have $| a_{l, j, k} (x_1) - a_{l, j, k} (x_2) | < \ep_0$.
Choose $\nu \in \N$ and $y_0, y_1, \ldots, y_{\nu} \in X$ such that
the set $S = \{ y_0, y_1, \ldots, y_{\nu} \}$ is $\dt$-dense in~$X$ and
$y_0 < y_1 < \cdots < y_{\nu}$.
Then there are $t \in \Nz$
and $\mu (0), \mu (1), \ldots, \mu (t + 1) \in \Z$
such that $0 = \mu (0) < \mu (1) < \cdots < \mu (t + 1) = \nu + 1$,
such that for $s = 0, 1, \ldots, t$ the points
$y_{\mu (s)}, \, y_{\mu (s) + 1}, \, \ldots, \, y_{\mu (s + 1) - 1}$
are all in the same connected component of~$X$,
and such that for $s = 1, 2, \ldots, t$ the points
$y_{\mu (s) - 1}$ and $y_{\mu (s)}$
are not in the same connected component of~$X$.
In particular, for $s = 0, 1, \ldots, t$, the
set $Y_s = [y_{\mu (s)}, \, y_{\mu (s + 1) - 1}]$ is contained in~$X$,
while for $s = 1, 2, \ldots, t$
we have $(y_{\mu (s) - 1}, \, y_{\mu (s)}) \not\S X$.

For $s = 1, 2, \ldots, t$, if $y_{\mu (s)} - y_{\mu (s) - 1} < 2 \dt$
choose some $z_s \in (y_{\mu (s) - 1}, \, y_{\mu (s)})$
with $z_s \not\in X$,
while if $y_{\mu (s)} - y_{\mu (s) - 1} \geq 2 \dt$
set $z_s = y_{\mu (s) - 1} + \dt$.
In the second case, we also have $z_s \not\in X$,
because $S$ is $\dt$-dense in~$X$ but $\dist (z_s, S) = \dt$.
Set $Y = \bigcup_{s = 0}^t Y_s$, which is a closed subset of~$X$.
Define a retraction $h \colon X \to Y$ by
\[
h (x) = \begin{cases}
   x & \hspace*{1em} x \in Y
        \\
   y_{\mu (s) - 1} & \hspace*{1em}
          {\mbox{$s \in \{ 1, 2, \ldots, t \}$
           and $y_{\mu (s) - 1} < x < z_s$}}
       \\
   y_{\mu (s)} & \hspace*{1em}
          {\mbox{$s \in \{ 1, 2, \ldots, t \}$
              and $z_s < x < y_{\mu (s)}$}}.
\end{cases}
\]
We claim that $| h (x) - x | < 2 \dt$ for all $x \in X$.
This is trivial if $x \in Y$.
So suppose $s \in \{ 1, 2, \ldots, t \}$ and
$y_{\mu (s) - 1} < x  < y_{\mu (s)}$.
If $y_{\mu (s)} - y_{\mu (s) - 1} < 2 \dt$,
the claim is immediate.
Otherwise, since $S$ is $\dt$-dense in~$X$,
we have $[ y_{\mu (s) - 1} + \dt, \, y_{\mu (s)} - \dt] \cap X = \E$,
while $h (x) = y_{\mu (s) - 1}$
if $x \in (y_{\mu (s) - 1}, \, y_{\mu (s) - 1} + \dt)$
and $h (x) = y_{\mu (s)}$
if $x \in (y_{\mu (s)} - \dt, \, y_{\mu (s)})$.
The claim follows.

The algebra $B = C (Y, M_r)$ is in $\cJ_1$.
Define $\ps \colon B \to C (X, M_r)$ by $\ps (a) (x) = a (h (x))$
for $x \in X$.
This \hm{} is injective since $h$ is surjective.
For $l = 1, 2, \ldots, n$ set $b_l = a_l |_{Y}$.
The previous claim and the choice of $\dt$ imply
that for $x\in X$, $l = 1, 2, \ldots, n$, and $j, k = 1, 2, \ldots, r$
we have
\[
| a_{l, j, k} (x) - a_{l, j, k} ( h (x)) | < \frac{\ep}{2 r^2}.
\]
Therefore
\[
\| a_l (x) - \ps (b_l) (x) \|
 = \| a_l (x) - a_l ( (h (x)) \|
 \leq \sum_{j, k = 1}^r | a_{l, j, k} (x) - a_{l, j, k} ( h (x)) |
 < \frac{\ep}{2}.
\]
It follows that $\| a_l - \ps (b_l) \| \leq \frac{\ep}{2} < \ep$.
\end{proof}

\begin{proof}[Proof of Lemma~\ref{L_2123_TR01}]
The only difference between the condition in the lemma and 
Definition 3.6.2 of~\cite{linbook} is that in~\cite{linbook}
the \hm{} $\ph$ is required to be injective.

First suppose $k = 0$.
Since in our situation $B / \Ker (\ph)$ is again \fd,
hence in $\cJ_0$,
the two sets of conditions are in fact equivalent.

Now assume $k = 1$.
We assume $A$ satisfies the condition in the lemma, and
prove that one can force the map $\ph$ to be injective.
Accordingly,
let $\ep > 0$, let $F \S A$ be finite, and
let $x \in A_{+} \SM \{ 0 \}$.
Choose a \nzp{} $p \in A$, a \ca{} $D \in \cJ_1$,
and a unital \hm{} $\sm \colon D \to p A p$ satisfying the
conditions stated in the lemma,
but with $\frac{\ep}{4}$ in place of~$\ep$.
Following Notation~\ref{L_2123_Jk}, write
\[
D = \bigoplus_{j = 1}^m C ([0, 1]^{\ep (j)}, \, M_{r (j)} )
\]
with $m , r (1), r(2), \ldots, r (m) \in \N$ and
$\ep (j) \in \{ 0, 1 \}$ for $j = 1, 2, \ldots, m$.
There are closed subsets $X_j \S [0, 1]^{\ep (j)}$ such that
\[
\Ker (\sm)
 = \bigoplus_{j = 1}^m C_0 ([0, 1]^{\ep (j)} \SM X_j, \, M_{r (j)} ).
\]
We make the obvious identification of $B / \Ker (\sm)$
with $\bigoplus_{j = 1}^m C (X_j, M_{r (j)} )$, and we let
\[
\kp \colon D \to \bigoplus_{j = 1}^m C (X_j, M_{r (j)} )
\andeqn
{\ov{\sm}} \colon \bigoplus_{j = 1}^m C (X_j, M_{r (j)} ) \to p A p
\]
be the quotient map and the induced map from the quotient,
so that $\sm = {\ov{\sm}} \circ \kp$.
Discarding summands,
\wolog{} ${\ov{\sm}} |_{C (X_j, M_{r (j)} )} \neq 0$
for $j = 1, 2, \ldots, m$.
Write $F = \{ a_1, a_2, \ldots, a_t \}$.
For $s = 1, 2, \ldots, t$ choose $c_s \in D$ such that
$\| \sm (c_s) - p a_s p \| < \frac{\ep}{2}$.
Write $\kp (c_s) = (d_{s, 1}, d_{s, 2}, \ldots, d_{s, m} )$
with $d_{s, j} \in C (X_j, M_{r (j)} )$ for $j = 1, 2, \ldots, m$.

For $j \in \{ 1, 2, \ldots, m \}$ for which $\ep (j) = 0$,
the map ${\ov{\sm}} |_{C (X_j, M_{r (j)} )}$ is injective.
Set $B_j = C (X_j, M_{r (j)} )$, $\ps_j = \id_{B_j}$,
$\ph_j = {\ov{\sm}} |_{C (X_j, M_{r (j)} )}$, and
$b_{s, j} = d_{s, j}$ for $s = 1, 2, \ldots, t$.
For all other $j \in \{ 1, 2, \ldots, m \}$, since $X_j \neq \E$
we can use Lemma~\ref{L_2123_SubsetI} to choose a \ca{} $B_j \in \cJ_1$,
an injective unital \hm{} $\ps_j \colon B_j \to  C (X_j, M_{r (j)})$,
and, for $s = 1, 2, \ldots, t$, $b_{s, j} \in B_j$
such that $\| \ps_j (b_{s, j}) - d_{s, j} \| < \frac{\ep}{2}$.
Then define $\ph_j = {\ov{\sm}} \circ \ps_j \colon B_j \to p A p$.
Set $B = \bigoplus_{j = 1}^m B_j$, which is in $\cJ_1$,
and define $\ph \colon B \to p A p$ by
$\ph (b_1, b_2, \ldots, b_m) = \sum_{j = 1}^m \ph_j (b_j)$.
Then $\ph$ is an injective unital \hm{}
and, for $s = 1, 2, \ldots, t$,
\[
\begin{split}
& \bigl\| \ph (b_{s, 1}, b_{s, 2}, \ldots, b_{s, m} ) - p a_s p \bigr\|
\\
& \hspace*{3em} {\mbox{}}
  \leq \max_{1 \leq j \leq m} \| \ps_j (b_{s, j}) - d_{s, j} \|
      + \bigl\| {\ov{\sm}} (d_{s, 1}, d_{s, 2}, \ldots, d_{s, m} )
           - p a_s p \bigr\|
\\
& \hspace*{3em} {\mbox{}}
  < \frac{\ep}{2} + \| \sm (c_s) - p a_s p \|
  < \frac{\ep}{2} + \frac{\ep}{2}
  = \ep.
\end{split}
\]
This completes the proof.
\end{proof}

\begin{thm}\label{T_2123_PrsvTR}
Let $G$ be a second countable compact group,
let $A$ be a simple separable infinite dimensional \uca, and let
$\alpha \colon G \to \Aut (A)$ be an action of $G$ on~$A$
which has the tracial Rokhlin property with comparison.
Let $k \in \{ 0, 1 \}$.
If $A$ has tracial rank at most~$k$,
then $A^{\alpha}$ has tracial rank at most~$k$.
\end{thm}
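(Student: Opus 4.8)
The plan is to verify, for the fixed point algebra $A^{\af}$, the local characterization of tracial rank at most~$k$ from Lemma~\ref{L_2123_TR01}; this applies because $A^{\af}$ is simple by Theorem~\ref{thm_simple12} and is clearly separable and unital. So I would fix $\ep > 0$, a finite set $F \S A^{\af}$ (with elements of norm at most~$1$), and $y \in (A^{\af})_{+} \SM \{ 0 \}$, and produce $B \in \cJ_k$ (Notation~\ref{L_2123_Jk}), a \nzp{} $q \in A^{\af}$, and a unital \hm{} $\ph \colon B \to q A^{\af} q$ satisfying conditions (\ref{I_2123_appa_TRA0})--(\ref{I_2123_1_p_x_TRA0}) there. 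Since $A$ is simple with tracial rank at most one it has Property~(SP), so by Corollary~\ref{L_1X17_} the algebra $A^{\af}$ has Property~(SP) as well; using Lemma~\ref{OrthInSP} I first choose nonzero mutually orthogonal projections $g_1, g_2 \in \ov{y A^{\af} y}$ that are Murray--von Neumann equivalent, so that $g_1 + g_2 \precsim_{A^{\af}} y$.

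The idea is to manufacture the approximating subalgebra inside~$A$ and transport it into $A^{\af}$ through the averaging map supplied by Theorem~\ref{thm_ApproxHommtr}. First I would apply the hypothesis on~$A$, via Lemma~\ref{L_2123_TR01}, to the set~$F$, a small tolerance $\dt_1$, and the positive element $g_2 \in A_{+}$: this yields $B \in \cJ_k$, a \nzp{} $e \in A$, and a unital \hm{} $\rho \colon B \to e A e$ with $\| a e - e a \| < \dt_1$ and $\dist (e a e, \, \rho (B)) < \dt_1$ for $a \in F$, and with $1 - e$ Murray--von Neumann equivalent to a projection $g_0 \le g_2$; write $1 - e = w^{*} w$ and $w w^{*} = g_0$. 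Then I would apply Theorem~\ref{thm_ApproxHommtr} with $x = 1$, with $y$-parameter equal to~$g_1$, with $n = 3$, a small tolerance $\dt_2$, with $F_2 = F \cup \{ g_2 \}$, and with $F_1 \S A$ containing $F$, $e$, $w$, $w^{*}$, $g_0$, the finitely many elements $\rho (b_a)$ (where $\| e a e - \rho (b_a) \| < \dt_1$), and a generating set of~$B$ adequate for the semiprojectivity argument below. This produces a projection $p \in A^{\af}$ with $1 - p \precsim_{A^{\af}} g_1$ (and $p \neq 0$), and a \ucp{} map $\ps \colon A \to p A^{\af} p$ that is approximately multiplicative on $F_1 \cup F_2$ and satisfies $\ps (a) \approx p a p$ for $a \in F_2$.

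Because $\ps$ is approximately multiplicative and $*$-preserving, $\ps (e)$ is an approximate projection in the corner $p A^{\af} p$, which I correct by functional calculus to a genuine projection $q \le p$ with $\| q - \ps (e) \|$ small. The algebras in $\cJ_k$ are semiprojective (finite direct sums of $M_r$ and $C ([0, 1], M_r)$), so the approximately multiplicative unital map $b \mapsto q \ps (\rho (b)) q$ can be perturbed, on the finite set of interest, to a genuine unital \hm{} $\ph \colon B \to q A^{\af} q$. Conditions (\ref{I_2123_appa_TRA0}) and~(\ref{I_2123_papB_TRA0}) of Lemma~\ref{L_2123_TR01} for $A^{\af}$ then follow from routine estimates: for $a \in F$ one has $a \ps (e) \approx \ps (a e) \approx \ps (e a) \approx \ps (e) a$ (using $\ps (a) \approx p a p$, approximate multiplicativity, and $\| a e - e a \| < \dt_1$), giving $\| a q - q a \| < \ep$; and $q a q \approx \ps (e) \ps (a) \ps (e) \approx \ps (e a e) \approx \ps (\rho (b_a)) \approx \ph (b_a)$, giving $\dist (q a q, \, \ph (B)) < \ep$.

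The main obstacle is Condition~(\ref{I_2123_1_p_x_TRA0}), the comparison estimate $1 - q \precsim_{A^{\af}} y$, promoted to a projection in $\ov{y A^{\af} y}$. Since $q \le p$, I would write $1 - q = (1 - p) + (p - q)$ as a sum of orthogonal projections and bound the two pieces separately. The first satisfies $1 - p \precsim_{A^{\af}} g_1$ by Theorem~\ref{thm_ApproxHommtr}. For the second I transport the Cuntz-smallness of $1 - e$ through~$\ps$: since $p - q \approx \ps (1 - e) = \ps (w^{*} w)$, approximate multiplicativity gives $\ps (w)^{*} \ps (w) \approx p - q$ and $\ps (w) \ps (w)^{*} \approx \ps (g_0) \le \ps (g_2)$, whence by Lemma~\ref{L_1X09_CSbPj} and a standard $(\,\cdot - \dt\,)_{+}$ argument $p - q \precsim_{A^{\af}} \ps (g_2)$; finally $\ps (g_2) \approx p g_2 p$ and $g_2^{1/2} p g_2^{1/2} \le g_2$ give $\ps (g_2) \precsim_{A^{\af}} g_2$, so $p - q \precsim_{A^{\af}} g_2$. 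As $g_1 \perp g_2$, additivity of Cuntz subequivalence over orthogonal summands yields $1 - q \precsim_{A^{\af}} g_1 + g_2 \precsim_{A^{\af}} y$; since $1 - q$ is a projection and $A^{\af}$ is simple with Property~(SP), Lemma~\ref{L_1X09_CSbPj} lets me replace it by a Murray--von Neumann equivalent projection in $\ov{y A^{\af} y}$, as required. The chief difficulties are this comparison transport --- which crucially uses the approximate multiplicativity (not mere positivity) of~$\ps$ --- and the careful ordering of $\dt_1$, $\dt_2$, and the semiprojectivity constants so that all the approximations above close up below~$\ep$.
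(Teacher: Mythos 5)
Your proposal is correct and follows essentially the same route as the paper's proof: apply the local characterization (Lemma~\ref{L_2123_TR01}) to~$A$, push the resulting approximating subalgebra into $A^{\af}$ through the averaging map of Theorem~\ref{thm_ApproxHommtr}, correct $\ps(e)$ and the map $b \mapsto q \ps(\rho(b)) q$ using functional calculus and semiprojectivity of $\cJ_k$-algebras, and verify the comparison condition by splitting $1 - q = (1 - p) + (p - q)$ and transporting Cuntz subequivalence of each piece through~$\ps$ into $A^{\af}$ via Lemma~\ref{L_1X09_CSbPj}. The only adjustment needed is in your application of Theorem~\ref{thm_ApproxHommtr}: the multiplicativity degree cannot be fixed at $n = 3$, since the semiprojectivity perturbation of $b \mapsto q \ps(\rho(b)) q$ requires $(N, T, \dt_0)$-approximate multiplicativity for an $N$ determined by the generators and relations of~$B$ (which may exceed~$3$); this is harmless because $B$ is chosen before $\ps$, so one simply takes $n \geq N$ as the paper does.
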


\begin{proof}
Combining Theorem 7.1(2) of \cite{LnTTR}
and Theorem~3.2 of~\cite{Ln_tr1},
we see that $A$ has Property~(SP).
(We do not actually need this step.
If $A$ does not have Property~(SP),
then $\alpha$ has the Rokhlin property by Lemma~\ref{PROSPTRPCA}.
So $A$ has tracial rank at most~$k$ by Theorem~4.5 of~\cite{crossrep}.)

We verify the criterion in Lemma~\ref{L_2123_TR01}.
So let $\ep > 0$, let $F \S A^{\af}$ be finite, and
let $x \in (A^{\af})_{+} \SM \{ 0 \}$.
\Wolog{} $\ep < 1$ and $\| a \| \leq 1$ for all $a \in F$.

The algebra $A^{\af}$ is simple by Theorem~\ref{thm_simple12}
and has Property~(SP) by Theorem~\ref{T_1X20_SP}.
So there is a \nzp{} $q \in {\ov{x A^{\af} x}}$.
Lemma~\ref{OrthInSP} then provides
nonzero mutually orthogonal projections $q_1, q_2 \in q A^{\af} q$.

Apply Lemma~\ref{L_2123_TR01}, getting a \ca{} $B \in \cJ_k$,
a \nzp{} $p_0 \in A$, a unital \hm{} $\ph_0 \colon B \to p_0 A p_0$,
and a partial isometry $s \in A$ (implementing the relation
in Lemma \ref{L_2123_TR01}(\ref{I_2123_1_p_x_TRA0}))
such that the following hold.
\begin{enumerate}
\item\label{I_2123_appa_Pf}
$\| a p_0 - p_0 a \| < \frac{\ep}{4}$ for all $a \in F$.
\item\label{I_2123_papB_Pf}
$\dist (p_0 a p_0, \, \varphi_0 (B)) < \frac{\ep}{4}$ for all $a \in F$.
\item\label{I_2123_1_p_x_Pf}
$s^* s = 1 - p_0$ and $s s^* \leq q_1$.
\setcounter{TmpEnumi}{\value{enumi}}
\end{enumerate}
In particular, there is a finite set $F_0 \S B$ such that
for every $a \in F$ there is $y \in F_0$ with
$\| \ph_0 (y) - p_0 a p_0 \| < \frac{\ep}{2}$.
Further, let $T \S B$ be a finite set which generates $B$ as a \ca,
and such that $\| y \| \leq 1$ for all $y \in T$.

The algebra $B$ is semiprojective, as is seen by combining
Proposition 16.2.1, Theorem 14.2.1,
and Theorem 14.2.2 in~\cite{Lrng_bk}.
One can then easily see that there are $N \in \N$ and $\dt_0 > 0$
such that whenever $D$ is a \uca{} and $\sm \colon B \to D$
is unital, completely positive, satisfies $\| \sm \| \leq 1$, and
is $(N, T, \dt_0)$-approximately multiplicative
(Definition~\ref{D_1X10_nSFe}),
then there is a unital \hm{} $\ph \colon B \to D$
such that $\| \ph (y) - \sm (y) \| < \frac{\ep}{4}$ for all $y \in F_0$.
(There is no reason to think that this can always be done with $N = 2$,
since the relations on $T$ which determine $B$ may
involve products with more than two factors.)

Set
\begin{equation}\label{Eq_2123_dt1}
\dt_1
 = \min \Bigl( \frac{1}{16}, \, \frac{\dt_0}{4 N},
                 \, \frac{\ep}{16} \Bigr).
\end{equation}
Choose $\dt > 0$ so small that
\begin{equation}\label{Eq_2123_dt_l_1}
\dt \leq \dt_1
\end{equation}
and whenever $D$ is a \ca{}
and $c \in D_{\sa}$ satisfies $\| c^2 - c \| < \dt$,
then there is a \pj{} $p \in D$ such that $\| p - c \| < \dt_1$.
Set $n = \max (N, 5)$.
Define finite sets
\begin{equation}\label{Eq_2205_Df}
F_1 = \bigl\{ q, q_1, s, s^*, p_0 \bigr\} \cup F \cup \ph_0 (T)
    \S A
\andeqn
F_2 = \{ q_1 \} \cup F
    \S A^{\af}.
\end{equation}
Apply Theorem~\ref{thm_ApproxHommtr} with $q_2$ in place of $x$ and~$y$,
with $\dt$ in place of~$\ep$,
and with $n$, $F_1$, and $F_2$ as given.
This gives a \pj{} $f \in A^{\af}$
(necessarily nonzero, by (\ref{I_2123_norm}) below),
and a \ucp{} map $\ps \colon A \to f A^{\af} f$,
such that the following hold.
\begin{enumerate}
\setcounter{enumi}{\value{TmpEnumi}}
\item\label{I_2123_am}
Whenever $m \in \{ 1, 2, \ldots, n \}$
and $a_1, a_2, \ldots, a_m \in F_1 \cup F_2$, we have
\[
\bigl\| \ps (a_1 a_2 \cdots a_m)
  - \ps (a_1) \ps (a_2) \cdots \ps (a_m) \bigr\|
   < \dt.
\]
\item\label{I_2123_comm}
$\| f a - a f \| < \dt$ for all $a \in F_{1} \cup F_{2}$.
\item\label{I_2123_faf}
$\| \psi (a) - f a f \| < \dt$ for all $a \in F_2$.
\item\label{I_2123_Big}
$\| \ps (a) \| > \| a \| - \dt$ for all $a \in F_{1} \cup F_{2}$.
\item\label{I_2123_lessq2}
$1 - f \precsim_{A^{\alpha}} q_2$.
\item\label{I_2123_norm}
$1 - f \precsim_{A^{\alpha}} f$.
\setcounter{TmpEnumi}{\value{enumi}}
\end{enumerate}

By (\ref{I_2123_am}) and~(\ref{Eq_2205_Df}),
we have $\| \ps (p_0)^2 - \ps (p_0) \| < \dt$.
Therefore there is a \pj{} $p \in f A^{\af} f$
such that
\begin{equation}\label{Eq_2123_ppsp0}
\| p - \ps (p_0) \| < \dt_1.
\end{equation}
Using (\ref{I_2123_Big}) and~(\ref{Eq_2205_Df}) at the second step
and (\ref{Eq_2123_dt_l_1}) and (\ref{Eq_2123_dt1}) at the third step,
we have
\[
\| p \|
  > \| \ps (p_0) \| - \dt_1
  > 1 - \dt - \dt_1
  > 0,
\]
so $p \neq 0$.

Define a \ucp{} map $\rh \colon B \to p A^{\af} p$ by
\begin{equation}\label{Eq_2123_rh}
\rh (y) = p (\ps \circ \ph_0) (y) p
\end{equation}
for all $y \in B$.
We will find a unital \hm{} $\ph \colon B \to p A^{\af} p$
which is close to~$\rh$,
and we will show that $p$ and~$\ph$ satisfy the conditions
in Lemma~\ref{L_2123_TR01}.
We begin by estimating $\| p a - a p \|$ for $a \in F$
and $a \in (\ps \circ \ph_0) (T)$.

Let $a \in F$.
Since $\| a \| \leq 1$,
and using (\ref{Eq_2123_ppsp0}), (\ref{Eq_2205_Df}) (twice),
(\ref{I_2123_faf}), (\ref{I_2123_am}), and~(\ref{I_2123_appa_Pf})
at the second step,
and (\ref{Eq_2123_dt_l_1}) at the third step,
\[
\begin{split}
\| p f a f - f a f p \|
& \leq 2 \| f a f \| \| p - \ps (p_0) \|
             + 2 \| \ps (p_0) \| \| f a f - \ps (a) \|
\\
& \hspace*{3em} {\mbox{}}
             + \| \ps (p_0) \ps (a) - \ps (p_0 a) \|
             + \| \ps (a) \ps (p_0) - \ps (a p_0) \|
\\
& \hspace*{3em} {\mbox{}}
             + \| \ps \| \| p_0 a - a p_0 \|
\\
& < 2 \dt_1 + 2 \dt + \dt + \dt + \frac{\ep}{4}
  \leq 6 \dt_1 + \frac{\ep}{4}.
\end{split}
\]
Also, using (\ref{I_2123_comm}), (\ref{Eq_2123_dt_l_1}), and $p f = p$,
\[
\| p f a f - p a \|
 \leq \| p \| \| f \| \| a f - f a \|
 < \dt
 \leq \dt_1.
\]
Similarly $\| f a f p - a  p\| < \dt_1$.
Therefore, using~(\ref{Eq_2123_dt1}),
$\| p a - a p \| < 8 \dt_1 + \frac{\ep}{4} < \ep$.
This is Lemma \ref{L_2123_TR01}(\ref{I_2123_appa_TRA0}).

Let $y \in T$.
Then $\| y \| \leq 1$ and $p_0 \ph_0 (y) = \ph_0 (y) = \ph_0 (y) p_0$.
Therefore, using (\ref{Eq_2123_ppsp0}), (\ref{I_2123_am}),
and~(\ref{Eq_2205_Df}) at the second step,
and using (\ref{Eq_2123_dt_l_1}) at the third step,
it follows that
\[
\begin{split}
& \| p (\ps \circ \ph_0) (y) - (\ps \circ \ph_0) (y) p \|
\\
& \hspace*{3em} {\mbox{}}
 \leq 2 \| (\ps \circ \ph_0) (y) \| \| p - \ps (p_0) \|
  + \| \ps (p_0) \ps (\ph_0 (y)) - \ps (p_0 \ph_0 (y)) \|
\\
& \hspace*{6em} {\mbox{}}
  + \| \ps (\ph_0 (y)) \ps (p_0) - \ps (\ph_0 (y) p_0) \|
\\
& \hspace*{3em} {\mbox{}}
 < 2 \dt_1 + \dt + \dt
  \leq 4 \dt_1.
\end{split}
\]

Using this estimate, we prove approximate multiplicativity
of~$\rh$.
Let $m \in \{ 1, 2, \ldots, N \}$ and let $y_1, y_2, \ldots, y_m \in T$.
For $k = 1, 2, \ldots, m - 1$, we have
\[
\begin{split}
& \bigl\| (\ps \circ \ph_0) (y_k) p (\ps \circ \ph_0) (y_{k + 1}) p
  - (\ps \circ \ph_0) (y_k) (\ps \circ \ph_0) (y_{k + 1}) p \bigr\|
\\
& \hspace*{3em} {\mbox{}}
  \leq \| (\ps \circ \ph_0) (y_k) \|
      \| p (\ps \circ \ph_0) (y_{k + 1})
                    - (\ps \circ \ph_0) (y_{k + 1}) p \|
         \| p \|
  < 4 \dt_1.
\end{split}
\]
Recalling~(\ref{Eq_2123_rh}) and $\| y \| \leq 1$ for all $y \in T$,
an inductive argument shows that
\[
\begin{split}
& \bigl\| \rh (y_1) \rh (y_2) \cdots \rh (y_m)
  - p (\ps \circ \ph_0) (y_1) (\ps \circ \ph_0) (y_2)
        \cdots (\ps \circ \ph_0) (y_m) p \bigr\|
\\
& \hspace*{3em} {\mbox{}}
  < 4 (m - 1) \dt_1
  \leq 4 (N - 1) \dt_1.
\end{split}
\]
By~(\ref{I_2123_am}), (\ref{Eq_2205_Df}), and~(\ref{Eq_2123_dt_l_1}),
we have
\[
\bigl\| (\ps \circ \ph_0) (y_1) (\ps \circ \ph_0) (y_2)
           \cdots (\ps \circ \ph_0) (y_m)
      - \ps \bigl( \ph_0 (y_1) \ph_0 (y_2)
           \cdots \ph_0 (y_m) \bigr) \bigr\|
  < \dt
  \leq \dt_1.
\]
Therefore, also using~(\ref{Eq_2123_dt1}),
\[
\bigl\| \rh (y_1) \rh (y_2) \cdots \rh (y_m)
   - \rh (y_1 y_2 \cdots y_m) \bigr\|
 < 4 (N - 1) \dt_1 + \dt_1
 \leq \dt_0.
\]
We have shown that
$\rh$ is $(N, T, \dt_0)$-approximately multiplicative.
By the choices of $N$ and~$\dt_0$,
there is a unital \hm{} $\ph \colon B \to p A^{\af} p$ such that
\begin{equation}\label{Eq_2123_Phrh}
\| \ph (y) - \rh (y) \| < \frac{\ep}{4}
\end{equation}
for all $y \in F_0$.

We claim that the condition
in Lemma \ref{L_2123_TR01}(\ref{I_2123_papB_TRA0}) holds.
Let $a \in F$.
By the choice of~$F_0$, there is $y \in F_0$ such that
\begin{equation}\label{Eq_2123_ya}
\| \ph_0 (y) - p_0 a p_0 \| < \frac{\ep}{2}.
\end{equation}
Using $p f = f p = p$ at the first step,
using (\ref{Eq_2123_ppsp0}), (\ref{Eq_2205_Df}) (twice),
(\ref{I_2123_faf}), and~(\ref{I_2123_am}) at the third step,
using (\ref{Eq_2123_dt_l_1}) at the fourth step,
and using (\ref{Eq_2123_dt1}) at the fifth step,
\[
\begin{split}
\| p a p - p \ps (p_0 a p_0) p \|
& \leq \| p \| \| p f a f p - \ps (p_0 a p_0) \| \| p \|
\\
& \leq 2 \| p - \ps (p_0) \| \| f a f \|
         + \| \ps (p_0) \|^2 \| f a f - \ps (a) \|
\\
& \hspace*{3em} {\mbox{}}
         + \| \ps (p_0) \ps (a) \ps (p_0) - \ps (p_0 a p_0) \|
\\
& < 2 \dt_1 + \dt + \dt
  \leq 4 \dt_1
  \leq \frac{\ep}{4}.
\end{split}
\]
Also,
using (\ref{Eq_2123_ya}) and~(\ref{Eq_2123_Phrh}) at the third step,
\[
\begin{split}
\| p \ps (p_0 a p_0) p - \ph (y) \|
& = \| p [\ps (p_0 a p_0) - \ph (y)] p \|
\\
& \leq \| p \|^2 \| \ps \| \| p_0 a p_0 - \ph_0 (y) \|
    + \| p (\ps \circ \ph_0) (y) p - \ph (y) \|
\\
& < \frac{\ep}{2} + \frac{\ep}{4}.
\end{split}
\]
Therefore $\| p a p - \ph (y) \| < \ep$.
The claim is proved.

It remains to prove the condition
in Lemma \ref{L_2123_TR01}(\ref{I_2123_1_p_x_TRA0}).
Define $t_0 = q_1 \ps (s) (f - p) \in A^{\af}$.
We claim that
\begin{equation}\label{Eq_2123_t0fmp}
\| t_0^* t_0 - (f - p) \| < 1.
\end{equation}
Using (\ref{Eq_2123_ppsp0}) and $\ps (1) = f$, we get
\begin{equation}\label{Eq_2123_ps1mp0}
\| \ps (1 - p_0) - (f - p) \| < \dt_1.
\end{equation}
Since $\ps (s) \in f A^{\af} f$, we can rewrite the definition as
$t_0 = q_1 f \ps (s) (f - p)$.
Therefore, using (\ref{Eq_2123_ps1mp0}), (\ref{I_2123_faf}),
and~(\ref{Eq_2205_Df})
at the second step, and (\ref{Eq_2123_dt_l_1}) at the third step,
\[
\begin{split}
& \bigl\| t_0^* t_0
 - \ps (1 - p_0) \ps (s)^* \ps (q_1) \ps (s) \ps (1 - p_0) \bigr\|
\\
& \hspace*{3em} {\mbox{}}
  \leq 2 \| (f - p) - \ps (1 - p_0) \|
     + \| f q_1 f - \ps (q_1) \|
  < 2 \dt_1 + \dt
  \leq 3 \dt_1.
\end{split}
\]
Also, using $(1 - p_0) s^* q_1 s (1 - p_0) = 1 - p_0$
(which follows from~(\ref{I_2123_1_p_x_Pf})) at the first step,
using (\ref{I_2123_am}), (\ref{Eq_2205_Df}),
$n \geq 5$, and~(\ref{Eq_2123_ps1mp0}) at the second step,
using (\ref{Eq_2123_dt_l_1}) at the third step,
\[
\begin{split}
& \bigl\| \ps (1 - p_0) \ps (s)^* \ps (q_1) \ps (s) \ps (1 - p_0)
  - (f - p) \bigr\|
\\
& \hspace*{3em} {\mbox{}}
  \leq \bigl\| \ps (1 - p_0) \ps (s)^* \ps (q_1) \ps (s) \ps (1 - p_0)
    - \ps \bigl( (1 - p_0) s^* q_1 s (1 - p_0) \bigr) \bigr\|
\\
& \hspace*{6em} {\mbox{}}
       + \| \ps (1 - p_0) - (f - p) \|
\\
& \hspace*{3em} {\mbox{}}
  < \dt + \dt_1
  \leq 2 \dt_1.
\end{split}
\]
Combining the last two estimates and using~(\ref{Eq_2123_dt1}),
we get
\[
\| t_0^* t_0 - (f - p) \| < 5 \dt_1 < 1,
\]
as claimed.

We have $t_0^* t_0 \in (f - p) A^{\af} (f - p)$.
With functional calculus in $(f - p) A^{\af} (f - p)$,
we can therefore define $t = t_0 ( t_0^* t_0)^{- 1/2}$,
which is a partial isometry with $t^* t = f - p$ and $t t^* \leq q_1$.
Thus $f - p \precsim_{A^{\alpha}} q_1$.
Combining this with~(\ref{I_2123_lessq2})
gives $1 - p \precsim_{A^{\alpha}} q_1 + q_2$.
Since $q_1 + q_2 \in {\overline{x A x}}$,
we have verified the condition
in Lemma \ref{L_2123_TR01}(\ref{I_2123_1_p_x_TRA0}).
By Lemma~\ref{L_2123_TR01}, the proof is complete.
\end{proof}

\begin{cor}\label{C_2123_trZero}
Let $G$ be a second countable compact group,
let $A$ be a simple separable infinite dimensional \uca, and let
$\alpha \colon G \to \Aut (A)$ be an action of $G$ on~$A$
which has the tracial Rokhlin property with comparison.
If $A$ has tracial rank zero, then $A^{\alpha}$
and $C^* (G, A, \af)$ have tracial rank zero.
\end{cor}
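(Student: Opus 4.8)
The plan is to reduce the fixed point algebra to Theorem~\ref{T_2123_PrsvTR} and the crossed product to the stable isomorphism of Corollary~\ref{C_1X15_StableIso}. For $A^{\af}$ the statement is immediate: apply Theorem~\ref{T_2123_PrsvTR} with $k = 0$, noting that tracial rank at most~$0$ is exactly tracial rank zero.

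For $C^* (G, A, \af)$, I would argue via Morita theory. By Theorem~\ref{simplecrosscomparison} the crossed product is simple, and by Corollary~\ref{C_1X15_StableIso} it is stably isomorphic to $A^{\af}$, so $A^{\af} \otimes \K \cong C^* (G, A, \af) \otimes \K$. Now invoke the two standard permanence properties of tracial rank zero for simple separable \ca{s} (both due to Lin): that $B$ has tracial rank zero \ifo{} $B \otimes \K$ does, and that tracial rank zero passes to \hsa{s}. From the first, $A^{\af} \otimes \K$ has tracial rank zero, hence so does $C^* (G, A, \af) \otimes \K$; and since $C^* (G, A, \af)$ is isomorphic to a \hsa{} of $C^* (G, A, \af) \otimes \K$, the second gives that $C^* (G, A, \af)$ has tracial rank zero. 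This parallels the treatment of Property~(SP) in Corollary~\ref{L_1X17_}, where stable isomorphism already sufficed.

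The only point requiring care — and the main potential obstacle — is that the crossed product need not be unital (unlike $A^{\af}$, and unlike the finite group case), so the unital characterization in Lemma~\ref{L_2123_TR01} does not apply directly to $C^* (G, A, \af)$. This is precisely why I route the crossed product argument through Lin's permanence results in the general ($\sigma$-unital) setting, rather than trying to verify a local approximation condition inside the crossed product directly; those permanence results hold without any unitality hypothesis. Alternatively, one could use that $A^{\af}$ is a \emph{full} \hsa{} of $C^* (G, A, \af)$ (the last statement of Theorem~\ref{satunonabel}), which makes the two algebras Morita equivalent and again reduces the claim to Morita invariance of tracial rank zero.
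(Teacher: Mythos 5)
Your proposal is correct and follows essentially the same route as the paper: the fixed point algebra is handled by Theorem~\ref{T_2123_PrsvTR} with $k = 0$, and the crossed product by Corollary~\ref{C_1X15_StableIso} together with the fact that tracial rank zero is invariant under stable isomorphism (strong Morita equivalence) for simple separable \ca{s}. The only difference is bookkeeping: the paper cites Theorem A.24 of~\cite{MFNG} for that invariance, while you assemble it from Lin's permanence results, which amounts to the same thing.
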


\begin{proof}
The algebra $A^{\alpha}$ has tracial rank zero
by Theorem~\ref{T_2123_PrsvTR}.
Corollary~\ref{C_1X15_StableIso}
implies that $C^* (G, A, \af)$ is strongly Morita equivalent
to $A^{\alpha}$,
so Theorem A.24 of~\cite{MFNG} implies that
$C^* (G, A, \af)$ has tracial rank zero.
\end{proof}

We don't get an analog of Corollary~\ref{C_2123_trZero}
for tracial rank at most one,
because the analog of Theorem A.24 of~\cite{MFNG} for
tracial rank one has never been proved.
We expect, however, that it is true.

The following definition is based on
a condition in Theorem~1.2 of~\cite{nb}.
That theorem only considers unital algebras.

\begin{dfn}\label{popa}
A simple separable unital C*-algebra $A$ is
called a {\emph{Popa algebra}} if for every finite subset $F \S A$ and
every $\varepsilon > 0$ there
are a nonzero projection $p \in A$, a finite dimensional \ca~$B$,
and a unital homomorphism $\varphi \colon B \to p A p$, such that
the following hold:
\begin{enumerate}
\item\label{Item_2024_appa_popa}
$\| p a - a p \| < \varepsilon$ for all $a \in  F$.
\item\label{Item_2026_papB_popa}
$\dist (p a p, \, \varphi (B)) < \ep$ for all $a \in F$.
\end{enumerate}
\end{dfn}

\begin{thm}\label{spfix}
Let $A$ be a simple, separable unital \ca{}
which is a Popa algebra, let $G$ be a second countable compact group,
and let $\alpha \colon G \to \Aut (A)$ be an action which has
the tracial Rokhlin property with comparison.
Then $A^{\alpha}$ is Popa algebra.
\end{thm}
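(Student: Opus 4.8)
The plan is to follow the $k = 0$ case of the proof of Theorem~\ref{T_2123_PrsvTR}, simplified by the fact that Definition~\ref{popa} imposes no smallness condition on the complementary projection. First note that $A^{\af}$ is unital and separable, and is simple by Theorem~\ref{thm_simple12}, so it is a legitimate candidate to be a Popa algebra; it remains to verify the two conditions of Definition~\ref{popa}. So let a finite set $F \S A^{\af}$ and $\ep > 0$ be given, and \wolog{} assume $\ep < 1$ and $\| a \| \leq 1$ for all $a \in F$.

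Since $A$ is itself a Popa algebra, I would first apply Definition~\ref{popa} to~$A$, with $F$ (viewed as a finite subset of~$A$) and a tolerance $\ep_1 \leq \ep / 4$ to be fixed, obtaining a nonzero projection $p_0 \in A$, a \fd{} \ca~$B$, and a unital \hm{} $\ph_0 \colon B \to p_0 A p_0$ with $\| p_0 a - a p_0 \| < \ep_1$ and $\dist (p_0 a p_0, \, \ph_0 (B)) < \ep_1$ for all $a \in F$. Fix a finite set $F_0 \S B$ such that for every $a \in F$ there is $y \in F_0$ with $\| \ph_0 (y) - p_0 a p_0 \| < \ep / 2$, and a finite generating set $T \S B$ with $\| y \| \leq 1$ for $y \in T$. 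Because every \fd{} \ca{} is semiprojective, there are $N \in \N$ and $\dt_0 > 0$ such that any unital $(N, T, \dt_0)$-approximately multiplicative map (Definition~\ref{D_1X10_nSFe}) from $B$ into a \uca{} can be perturbed to a genuine unital \hm{} agreeing with it to within $\ep / 4$ on $F_0$.

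Next I would push this structure into $A^{\af}$ using Theorem~\ref{thm_ApproxHommtr}. Set $F_1 = \{ p_0 \} \cup F \cup \ph_0 (T) \S A$ and $F_2 = F \S A^{\af}$, choose $n = \max (N, 3)$ and a sufficiently small tolerance $\dt$, and apply Theorem~\ref{thm_ApproxHommtr} with $x = y = 1$. Here the Cuntz-comparison conclusions (\ref{Item_1X07_19})--(\ref{Item_1X07_22}) are vacuous or trivial; I only need that the resulting projection is nonzero, which follows from~(\ref{Item_1X07_20p}), or alternatively from~(\ref{Item_1Z11_BigN}). This yields a nonzero projection $f \in A^{\af}$ and a \ucp{} map $\ps \colon A \to f A^{\af} f$ that is approximately multiplicative on $F_1 \cup F_2$, approximately commutes with $F_1 \cup F_2$, satisfies $\| \ps (a) - f a f \| < \dt$ for $a \in F_2$, and is norm-preserving to within~$\dt$. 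Since $\| \ps (p_0)^2 - \ps (p_0) \| < \dt$, functional calculus produces a projection $p \in f A^{\af} f$ close to $\ps (p_0)$; it is nonzero because $\| \ps (p_0) \| > \| p_0 \| - \dt = 1 - \dt$. I would then define $\rh \colon B \to p A^{\af} p$ by $\rh (y) = p (\ps \circ \ph_0) (y) p$ and, exactly as in Theorem~\ref{T_2123_PrsvTR}, show that $\rh$ is $(N, T, \dt_0)$-approximately multiplicative, using $p \approx \ps (p_0)$, the approximate multiplicativity of~$\ps$, and the identities $p_0 \ph_0 (y) = \ph_0 (y) = \ph_0 (y) p_0$. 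By the choice of $N$ and~$\dt_0$, there is a unital \hm{} $\ph \colon B \to p A^{\af} p$ with $\| \ph (y) - \rh (y) \| < \ep / 4$ for all $y \in F_0$.

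Finally I would verify the two conditions of Definition~\ref{popa} for the pair $(p, \ph)$, by the same estimates as in Theorem~\ref{T_2123_PrsvTR}: condition~(\ref{Item_2024_appa_popa}), $\| p a - a p \| < \ep$, follows by comparing $p a$ and $a p$ with $\ps (p_0) f a f$, using approximate commutation of $f$ and of $p_0$ with~$F$, approximate multiplicativity of~$\ps$, and $\| \ps (a) - f a f \| < \dt$; condition~(\ref{Item_2026_papB_popa}), $\dist (p a p, \, \ph (B)) < \ep$, follows by choosing $y \in F_0$ with $\ph_0 (y) \approx p_0 a p_0$ and chaining $p a p \approx p \ps (p_0 a p_0) p \approx \rh (y) \approx \ph (y)$. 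The only real work is the bookkeeping of tolerances and the approximate-multiplicativity estimate for~$\rh$; I expect no new difficulty beyond Theorem~\ref{T_2123_PrsvTR}, and in fact the argument is strictly shorter, since Definition~\ref{popa} requires no control on $1 - p$, so the partial-isometry construction at the end of that proof is omitted entirely.
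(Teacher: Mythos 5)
Your proposal is correct and follows essentially the same route as the paper's proof, which itself simply runs the $k=0$ case of Theorem~\ref{T_2123_PrsvTR} with the choices $x = y = 1$, the same sets $F_0$, $T$ and constants $N$, $\dt_0$, $\dt_1$, $\dt$, $n$, and with the final partial-isometry/Cuntz-subequivalence paragraphs deleted. Your observation that nonvanishing of the projection can be extracted from Theorem \ref{thm_ApproxHommtr}(\ref{Item_1X07_20p}) (or from the norm condition) is exactly how the paper handles it.
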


\begin{proof}
The proof is essentially the same
as that of the case $k = 0$ of Theorem~\ref{T_2123_PrsvTR}.

The algebra $A^{\alpha}$ is simple by Theorem~\ref{thm_simple12}.
So let $F \subseteq A^{\alpha}$ be a finite subset
and let $\ep > 0$.
\Wolog{} $\ep < \frac{1}{3}$.
Apply Definition~\ref{popa},
getting a nonzero projection $p_0 \in A$, a finite dimensional \ca~$B$,
and a unital homomorphism $\varphi \colon B \to p_0 A p_0$,
such that the following hold.
\begin{enumerate}
\item\label{Z_pfItem_202004_appa_popa}
$\| a p_0 - p_0 a \| < \frac{\ep}{4}$ for all $a \in F$.
\item\label{Z_pfItem_202006_papB_popa}
$\dist (p_0 a p_0, \, \varphi (B)) < \frac{\ep}{4}$ for all $a \in F$.
\setcounter{TmpEnumi}{\value{enumi}}
\end{enumerate}
We may clearly assume that $\ph$ is injective.

Let $F_0$, $T$, $N$, $\dt_0$, $\dt_1$, $\dt$, and~$n$
be as in the proof of Theorem~\ref{T_2123_PrsvTR}.
Apply Theorem~\ref{thm_ApproxHommtr}, getting
a projection $f \in A^{\alpha}$ and a
unital completely positive contractive map
$\psi \colon A \to f A^{\alpha} f$ such that the following hold.
\begin{enumerate}
\setcounter{enumi}{\value{TmpEnumi}}
\item\label{Z_Pf_1X07_18}
Whenever $m \in \{ 1, 2, \ldots, n \}$
and $a_1, a_2, \ldots, a_m \in \ph (F_0) \cup \{ p \} \cup F$, we have
\[
\bigl\| \varphi (a_1 a_2 \cdots a_m)
  - \varphi (a_1) \varphi (a_2) \cdots \varphi (a_m) \bigr\|
   < \dt.
\]
\item\label{Z_Pf_commute1469}
$\| f a - a f \| < \dt$ for all $a \in \{ p_0 \} \cup F \cup \ph (T_0)$.
\item\label{Z_Pf_1X07_21}
$\| \psi (a) - f a f \| < \dt$ for all $a \in F$.
\item\label{Z_Pf_1X07_22}
$\| \ps (a) \| > 1 - \dt$
for all $a \in \{ p_0 \} \cup F \cup \ph (T_0)$.
\item\label{I_2123_Pp_norm}
$f \precsim_{A^{\af}} 1 - f$.
\end{enumerate}
(These are essentially the same
as in the proof of Theorem~\ref{T_2123_PrsvTR}.
We have omitted Condition~(\ref{I_2123_lessq2}) there,
and we are using smaller sets in place of $F_1$ and~$F_2$.)
As in the proof of Theorem~\ref{T_2123_PrsvTR},
there is a \nzp{} $p \in A^{\af}$
such that $\| p - \ps (p_0) \| < \dt_1$.

By the same reasoning as in the proof of Theorem~\ref{T_2123_PrsvTR},
we now get $\| a p - p a \| < \ep$ for all $a \in F$,
and a unital \hm{} $\ph \colon B \to p A^{\af} p$ such that
$\dist (p a p, \, \varphi (B)) < \ep$ for all $a \in F$.
The reasoning in the last two paragraphs of that proof
(proving Lemma \ref{L_2123_TR01}(\ref{I_2123_1_p_x_TRA0}))
is simply omitted.
This completes the proof.
\end{proof}

We now turn to pure infiniteness.
It is convenient to state the following lemma separately.

\begin{lem}\label{L_1Z11_pi}
Let $A$ be a unital \ca.
Then \tfae:
\begin{enumerate}
\item\label{Item_1Z11_pi_pi}
$A$ is purely infinite and simple.
\item\label{Item_1Z11_pi_bsxb}
For every $x \in A_{+} \setminus \{ 0 \}$ there is $b \in A$
such that $b^* x b = 1$.
\item\label{Item_1Z11_pi_bsxbm1}
For every $x \in A_{+} \setminus \{ 0 \}$ there is $b \in A$
such that $\| b^* x b - 1 \| < 1$.
\end{enumerate}
\end{lem}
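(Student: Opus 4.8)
The plan is to establish the three conditions as a cycle
(\ref{Item_1Z11_pi_pi}) $\Rightarrow$ (\ref{Item_1Z11_pi_bsxbm1}) $\Rightarrow$ (\ref{Item_1Z11_pi_bsxb}) $\Rightarrow$ (\ref{Item_1Z11_pi_pi}),
using throughout the standard characterization of purely infinite simple \uca{s} in terms of Cuntz comparison (Definition~\ref{Cuntz.df}): $A$ is purely infinite and simple \ifo{} $1 \precsim_A a$ for every $a \in A_{+} \setminus \{ 0 \}$, equivalently $a \precsim_A b$ for all nonzero $a, b \in A_{+}$. All of the work then reduces to translating back and forth between this Cuntz-comparison formulation and the concrete relation $b^{*} x b = 1$.

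First I would treat (\ref{Item_1Z11_pi_pi}) $\Rightarrow$ (\ref{Item_1Z11_pi_bsxbm1}): given nonzero $x \in A_{+}$, pure infiniteness gives $1 \precsim_A x$, so by Definition~\ref{Cuntz.df}(\ref{Cuntz_def_property_a}) there is a sequence $(v_n)$ in $A$ with $v_n x v_n^{*} \to 1$; for $n$ large $\| v_n x v_n^{*} - 1 \| < 1$, and setting $b = v_n^{*}$ yields $\| b^{*} x b - 1 \| < 1$. The implication (\ref{Item_1Z11_pi_bsxbm1}) $\Rightarrow$ (\ref{Item_1Z11_pi_bsxb}) is then immediate from Lemma~\ref{L_1X09_CSbPj} applied with $a = x$, with $p = 1$, and with $v = b$: the hypothesis $\| b^{*} x b - 1 \| < 1$ produces $w$ with $w^{*} x w = 1$. (Equivalently, one can note directly that $b^{*} x b = (x^{1/2} b)^{*}(x^{1/2} b)$ is positive and, being within distance $1$ of the unit, invertible, so $b (b^{*} x b)^{-1/2}$ does the job; this is the ``absorbing the error'' step.)

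The substantive direction is (\ref{Item_1Z11_pi_bsxb}) $\Rightarrow$ (\ref{Item_1Z11_pi_pi}), which splits into simplicity and pure infiniteness. For simplicity, if $I \subseteq A$ is a nonzero ideal I would choose $x \in I_{+} \setminus \{ 0 \}$ and $b$ with $b^{*} x b = 1$; since $x \in I$ and $I$ is an ideal, $1 = b^{*} x b \in I$, so $I = A$. For pure infiniteness I would verify the Cuntz characterization directly: given nonzero $a, b \in A_{+}$, apply (\ref{Item_1Z11_pi_bsxb}) to $b$ to obtain $v$ with $v^{*} b v = 1$, and then write
\[
a = a^{1/2} (v^{*} b v) a^{1/2} = (v a^{1/2})^{*} \, b \, (v a^{1/2}),
\]
which exhibits $a \precsim_A b$ (in fact via a constant sequence). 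Hence $A$ is simple and satisfies the comparison condition, i.e.\ $A$ is purely infinite and simple.

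The only delicate point is the interface with whichever definition of ``purely infinite'' is in force; I would handle this by invoking the Cuntz-comparison characterization cited in the first paragraph, under which the entire argument is formal and the two computations above (the absorption in (\ref{Item_1Z11_pi_bsxbm1}) $\Rightarrow$ (\ref{Item_1Z11_pi_bsxb}) and the factorization in (\ref{Item_1Z11_pi_bsxb}) $\Rightarrow$ (\ref{Item_1Z11_pi_pi})) are routine. No genuine obstacle arises; the main care is simply to keep the ``concrete'' and ``comparison'' formulations aligned and to note that the degenerate case $A = \mathbb{C}$ is harmless, since then all three conditions hold trivially.
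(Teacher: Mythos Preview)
Your proof is correct. The difference from the paper is in which formulation of ``purely infinite simple'' serves as the interface: you route through the Cuntz-comparison characterization ($a \precsim_A b$ for all nonzero positive $a, b$), whereas the paper works directly with the algebraic condition that for every nonzero $x \in A$ there exist $a, b$ with $a x b = 1$. Concretely, for (\ref{Item_1Z11_pi_pi}) $\Rightarrow$ (\ref{Item_1Z11_pi_bsxb}) the paper picks $c, d$ with $c x^{1/2} d = 1$, observes $1 = d^{*} x^{1/2} c^{*} c x^{1/2} d \leq \|c\|^{2} d^{*} x d$ so that $d^{*} x d$ is invertible, and sets $b = d (d^{*} x d)^{-1/2}$; for the converse it applies (\ref{Item_1Z11_pi_bsxb}) to $x^{*} x$ to get $b^{*} x^{*} x b = 1$, whence $a = b^{*} x^{*}$ yields $a x b = 1$. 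Your route separates out simplicity cleanly via the ideal argument, while the paper's is self-contained relative to its chosen definition and does not invoke the Cuntz-comparison characterization as an external result. The equivalence (\ref{Item_1Z11_pi_bsxb}) $\Leftrightarrow$ (\ref{Item_1Z11_pi_bsxbm1}) is handled identically in both.
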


\begin{proof}
Assume~(\ref{Item_1Z11_pi_pi}); we prove~(\ref{Item_1Z11_pi_bsxb}).
Let $x \in A_{+} \setminus \{ 0 \}$.
Choose $c, d \in A$ such that $c x^{1/2} d = 1$.
Then
\[
1 = d^* x^{1/2} c^* c x^{1/2} d \leq \| c \|^2 d^* x d.
\]
So $d^* x d$ is invertible.
Set $b = d (d^* x d)^{- 1/2}$.
Then $b^* x b = 1$, as desired.

For the converse, assume~(\ref{Item_1Z11_pi_bsxb}),
and let $x \in A \setminus \{ 0 \}$.
We need $a, b \in A$ such that $a x b = 1$.
By hypothesis, there is $b \in A$ such that $b^* x^* x b = 1$.
Take $a = b^* x^*$.

It is trivial that (\ref{Item_1Z11_pi_bsxb})
implies~(\ref{Item_1Z11_pi_bsxbm1}).
For the converse, if $x \in A_{+} \setminus \{ 0 \}$
and $\| c^* x c - 1 \| < 1$,
then $b = c (c^* x c)^{- 1/2}$ satisfies $b^* x b = 1$.
\end{proof}

\begin{thm}\label{Thm_2603_pro33}
Let $A$ be a purely infinite simple separable unital \ca,
let $G$ be a second countable compact group, and let
$\alpha \colon G \to \Aut (A)$ be action of $G$ on $A$
which has the tracial Rokhlin property with comparison.
Then $A^{\alpha}$ and $C^{*} (G, A, \alpha)$ are purely infinite.
\end{thm}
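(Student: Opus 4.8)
The plan is to prove that $A^{\af}$ is purely infinite and then deduce the statement for the crossed product. By Lemma~\ref{L_1Z11_pi}, and since $A^{\af}$ is simple by Theorem~\ref{thm_simple12}, it suffices to show that for every $x \in (A^{\af})_{+} \SM \{ 0 \}$ there is $b \in A^{\af}$ with $\| b^* x b - 1 \| < 1$, equivalently $1 \precsim_{A^{\af}} x$. I would obtain this from a single application of Theorem~\ref{thm_ApproxHommtr} producing an $\af$-invariant projection $p$ which is simultaneously properly infinite in $A^{\af}$ and satisfies $p \precsim_{A^{\af}} x$; combined with Condition~(\ref{Item_1X07_20p}) of that theorem ($1 - p \precsim_{A^{\af}} p$) and proper infiniteness, these give $1 = p + (1 - p) \precsim_{A^{\af}} p \oplus p \precsim_{A^{\af}} p \precsim_{A^{\af}} x$, as desired.

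Concretely, fix $x \in (A^{\af})_{+} \SM \{ 0 \}$. Since $A$ is purely infinite simple and unital, $1_A$ is properly infinite, so there are isometries $v_1, v_2 \in A$ with $v_j^* v_j = 1$ and orthogonal ranges, whence $v_1^* v_2 = 0$; and by Lemma~\ref{L_1Z11_pi} applied in $A$ there is $d \in A$ with $d^* x d = 1$. I would apply Theorem~\ref{thm_ApproxHommtr} with $n = 3$, a sufficiently small tolerance~$\ep$, $F_1 = \{ v_1, v_2, v_1^*, v_2^*, d, d^*, x \} \S A$, $F_2 = \{ x \} \S A^{\af}$, with $1$ in the role of the theorem's norm-one positive element and an arbitrary $y \in (A^{\af})_{+} \SM \{ 0 \}$, obtaining a projection $p \in A^{\af}$ and a \ucp{} map $\ps \colon A \to p A^{\af} p$ satisfying the approximate multiplicativity of Condition~(\ref{Item_1X07_18}), the estimate $\| \ps (a) - p a p \| < \ep$ for $a \in F_2$ of Condition~(\ref{Item_1X07_21}), and $1 - p \precsim_{A^{\af}} p$ of Condition~(\ref{Item_1X07_20p}) (which forces $p \neq 0$, cf.\ Remark~\ref{L_2015_S4_nzp}).

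Setting $c_j = \ps (v_j) \in p A^{\af} p$, approximate multiplicativity gives $\| c_j^* c_j - p \| < \ep$ and $\| c_1^* c_2 \| < \ep$; for small $\ep$ the standard functional-calculus correction $\tilde c_1 = c_1 (c_1^* c_1)^{-1/2}$ followed by orthogonalizing $c_2$ against $\tilde c_1 \tilde c_1^*$ produces genuine isometries $\tilde c_1, \tilde c_2 \in p A^{\af} p$ with $\tilde c_j^* \tilde c_j = p$ and orthogonal ranges, so $p$ is properly infinite, i.e.\ $p \oplus p \precsim_{A^{\af}} p$. Setting $c = \ps (d) \in p A^{\af} p$ and using $\ps (d^* x d) = \ps (1) = p$ together with $\| \ps (x) - p x p \| < \ep$ and approximate multiplicativity, I get $\| c^* x c - p \| < 1$ (here $c^* (p x p) c = c^* x c$ since $c \in p A^{\af} p$); then Lemma~\ref{L_1X09_CSbPj} yields $w \in A^{\af}$ with $w^* x w = p$, so $p \precsim_{A^{\af}} x$. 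Chaining the comparisons as above gives $1 \precsim_{A^{\af}} x$, hence the required~$b$, and Lemma~\ref{L_1Z11_pi} shows $A^{\af}$ is purely infinite.

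Finally, for the crossed product I would use that $C^{*} (G, A, \af)$ is simple (Theorem~\ref{simplecrosscomparison}) and that $A^{\af}$ is isomorphic to a full hereditary subalgebra of it, being strongly Morita equivalent and stably isomorphic to it (Theorem~\ref{satunonabel} and Corollary~\ref{C_1X15_StableIso}); since pure infiniteness of simple \cas{} is preserved under stable isomorphism and passes to (nonzero) hereditary subalgebras, $C^{*} (G, A, \af)$ is purely infinite. The main obstacle I expect is the middle step: transporting proper infiniteness of $1_A$ through the merely approximately multiplicative map $\ps$ into the corner $p A^{\af} p$ and perturbing the approximate Cuntz isometry relations to exact ones there, while keeping enough estimates to guarantee $p \neq 0$ and to carry out the final Cuntz-comparison bookkeeping.
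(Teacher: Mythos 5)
Your proof is correct, but it assembles the pieces differently from the paper. Both arguments reduce to the criterion of Lemma~\ref{L_1Z11_pi} for $A^{\af}$, apply Theorem~\ref{thm_ApproxHommtr} to transport a relation of the form $a^* z a = 1$ from $A$ into the corner $p A^{\af} p$, and then finish the crossed product case via Corollary~\ref{C_1X15_StableIso}; where you diverge is in how $1 - p$ is absorbed. The paper chooses, inside $\ov{h (x) A^{\af} h (x)}$, two orthogonal nonzero positive elements $z_1 \perp z_2$ (using simplicity and non-Type-I of $A^{\af}$), transports $a_1^* z_1 a_1 = 1$ to get the $p$-part, and uses the clause $1 - p \precsim_{A^{\af}} y$ of Theorem~\ref{thm_ApproxHommtr} with $y = z_2$ to get $a_2^* z_2 a_2 = 1 - p$ exactly; the element $b = z_1^{1/2} \ps (a_1) + z_2^{1/2} a_2$ then satisfies $\| b^* x b - 1 \| < 1$ directly, with no need for $p$ to be properly infinite. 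You instead use the clause $1 - p \precsim_{A^{\af}} p$, make $p$ properly infinite by pushing two isometries of $A$ with orthogonal ranges through $\ps$ and perturbing, and conclude in the Cuntz semigroup via $1 \precsim_{A^{\af}} p \oplus p \precsim_{A^{\af}} p \precsim_{A^{\af}} x$. Your route trades the paper's orthogonalization inside the hereditary subalgebra of $x$ (and the auxiliary functions $h, h_0$) for the standard but nontrivial perturbation of approximate Cuntz isometries, which you correctly identify as the main technical burden; the bookkeeping you sketch for it ($\tilde c_1 = c_1 (c_1^* c_1)^{-1/2}$, then cutting $c_2$ by $p - \tilde c_1 \tilde c_1^*$ and repolarizing) does go through for small enough tolerance. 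The paper's version is slightly more economical in that it uses only condition~(\ref{Item_1X07_20}) of Theorem~\ref{thm_ApproxHommtr} at this point and needs no statement about proper infiniteness of~$p$, while yours illustrates that condition~(\ref{Item_1X07_20p}) alone (together with pure infiniteness of $A$) already suffices.
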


\begin{proof}
We first prove that $A^{\alpha}$ is purely infinite.
We verify the condition in
Lemma \ref{L_1Z11_pi}(\ref{Item_1Z11_pi_bsxbm1}).
Let $x \in A_{+} \setminus \{ 0 \}$.
\Wolog{} $\| x \| = 1$.
Set $\dt_0 = \frac{1}{8}$, and
define \cfn{s} $h, h_0 \colon [0, 1] \to [0, 1]$ by
\[
h (\ld) = \begin{cases}
   0 & \hspace*{1em} 0 \leq \ld \leq 1 - \dt_0
        \\
   \dt_0^{-1} (\ld - 1 + \dt_0)
          & \hspace*{1em} 1 - \dt_0 \leq \ld \leq 1
\end{cases}
\]
and
\[
h_0 (\ld) = \begin{cases}
   (1 - \dt_0)^{-1} \ld & \hspace*{1em} 0 \leq \ld \leq 1 - \dt_0
        \\
   1 & \hspace*{1em} 1 - \dt_0 \leq \ld \leq 1.
\end{cases}
\]
Set $y_0 = h_0 (x)$ and $y = h (x)$.
Then
\begin{equation}\label{Eq_2025_yuy}
\| y \| = 1,
\qquad
\| y_0 - x \| \leq \dt_0,
\andeqn
y_0 y = y.
\end{equation}

Since $A^{\alpha}$ is simple (Theorem~\ref{thm_simple12})
and not of Type~I (Proposition~\ref{C_1Z11_InfDim}),
there are orthogonal nonzero positive elements
$z_1, z_2 \in \ov{y A^{\af} y}$.
Lemma \ref{L_1Z11_pi}(\ref{Item_1Z11_pi_bsxb}) provides $a_1 \in A$
such that $a_1^* z_1 a_1 = 1$.
Set
\[
\dt = \frac{1}{4 ( \| a_1 \|^2 + 1)}.
\]
Use Theorem \ref{thm_ApproxHommtr} with
\[
\varepsilon = \delta,
\quad
x = 1,
\quad
y = z_2,
\quad
n = 3,
\quad
F_{1} = \{ a_{1}, a_1^*, z_1 \},
\quad {\mbox{and}} \quad
F_{2} = \{ z_1, z_2 \},
\]
obtaining a projection $p \in A^{\alpha}$ and a \ucp{} map
$\psi \colon A \to p A^{\alpha} p$ which satisfy:
\begin{enumerate}
\item\label{Item_1Z11_Mlt}
$\| \psi (a_1^* z_1 a_1) - \psi (a_1)^* \psi (z_1) \psi (a_1) \|
  < \dt$.
\item\label{Item_1Z11_pap}
$\| \ps (z_j) - p z_j p \| < \dt$ for $j = 1, 2$.
\item\label{Item_1Z11_sub_z2}
$1 - p \precsim_{A^{\af}} z_2$.
\end{enumerate}

Since $\ps (a_1^* z_1 a_1) = \ps (1) = p$
and $\ps (a_1) \in p A^{\af} p$, we get
\begin{equation}\label{Eq_1Z11_b_ps}
\begin{split}
\| \ps (a_1)^* z_1 \ps (a_1) - p \|
& = \bigl\| \ps (a_1)^* p z_1 p \ps (a_1) - \ps (a_1^* z_1 a_1) \bigr\|
\\
& \leq \| \ps (a_1)^* \| \| p z_1 p - \ps (z_1) \| \| \ps (a_1) \|
\\
& \hspace*{3em} {\mbox{}}
      + \| \psi (a_1)^* \psi (z_1) \psi (a_1) - \psi (a_1^* z_1 a_1) \|
\\
& < \dt ( \| a_1 \|^2 + 1)
  = \frac{1}{4}.
\end{split}
\end{equation}
Also, since $1 - p \precsim_{A^{\af}} z_2$,
we can use Lemma~\ref{L_1X09_CSbPj} to find
$a_2 \in A^{\af}$ such that $a_2^* z_2 a_2 = 1 - p$.

Define $b = z_1^{1/2} \ps (a_1) + z_2^{1/2} a_2$.
We have, getting the last two from~(\ref{Eq_2025_yuy})
and $z_1, z_2 \in \ov{y A^{\af} y}$,
\[
z_1^{1/2} z_2^{1/2} = 0,
\qquad
z_1^{1/2} y_0 = z_1^{1/2},
\andeqn
z_2^{1/2} y_0 = z_2^{1/2}.
\]
It follows that
\[
b^* y_0 b
 = b^* b
 = \ps (a_1)^* z_1 p \ps (a_1) + a_2^* z_2 a_2
 = \ps (a_1)^* z_1 p \ps (a_1) + (1 - p).
\]
We deduce from~(\ref{Eq_1Z11_b_ps}) that
$\| b^* y_0 b - 1 \| = \| b^* b - 1 \| < \frac{1}{4}$.
In particular, $\| b^* b \| < 1 + \frac{1}{4} < 2$,
so $\| b \| < \sqrt{2}$.
Now, using~(\ref{Eq_2025_yuy}) at the second step,
\[
\| b^* x b - 1 \|
  \leq \| b^* \| \| x - y_0 \| \| b \|
    + \| b^* y_0 b - 1 \|
  < 2 \dt_0 + \frac{1}{4}
  \leq \frac{1}{2}.
\]
Thus, $\| b^* x b - 1 \| < 1$, as desired,
and $A^{\af}$ is purely infinite.

Corollary~\ref{C_1X15_StableIso} implies that
$C^{*} (G, A, \alpha)$ and $A^{\af}$ are stably isomorphic.
Therefore $C^{*} (G, A, \alpha)$ is also purely infinite.
\end{proof}

\begin{thm}\label{T_1Z11_inf}
Let $A$ be a simple separable unital \ca,
let $G$ be a second countable compact group,
and let $\alpha \colon G \to \Aut (A)$ be an action which has
the tracial Rokhlin property with comparison.
If $A$ is infinite, so are $A^{\alpha}$ and $C^{*} (G, A, \alpha)$.
\end{thm}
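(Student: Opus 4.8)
The plan is to manufacture a non-unitary isometry inside $A^{\af}$ directly from the isometry witnessing infiniteness of $A$, using Theorem~\ref{thm_ApproxHommtr}, and then to transport infiniteness to the crossed product via the Morita picture. Since $A$ is infinite it is in particular infinite dimensional (a finite dimensional \ca{} is finite), so all the results of the previous sections apply. Choose an isometry $s \in A$ with $s^{*} s = 1$ and $e := 1 - s s^{*} \neq 0$; note $\| e \| = 1$. I would apply Theorem~\ref{thm_ApproxHommtr} with $x = y = 1$, with $n = 2$, with $F_{1} = \{ s, s^{*}, e \} \S A$, with $F_{2} = \E$, and with a small tolerance $\ep$ to be fixed at the end. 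This produces a projection $p \in A^{\af}$ and a \ucp{} map $\ps \colon A \to p A^{\af} p$. By approximate multiplicativity (Theorem~\ref{thm_ApproxHommtr}(\ref{Item_1X07_18})) applied to $s^{*} \cdot s$, and using $\ps (1) = p$ and $\ps (s^{*}) = \ps (s)^{*}$, we get $\| \ps (s)^{*} \ps (s) - p \| < \ep < 1$. Hence $\ps (s)^{*} \ps (s)$ is invertible in $p A^{\af} p$, and $w := \ps (s) \bigl( \ps (s)^{*} \ps (s) \bigr)^{-1/2}$ is an isometry in $p A^{\af} p$, that is, $w^{*} w = p$ and $w w^{*} \leq p$.

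The first delicate point is to check that $w$ is a \emph{proper} isometry. A routine functional calculus estimate gives $\| w - \ps (s) \| = O (\ep)$, so $w w^{*}$ lies within $O (\ep)$ of $\ps (s) \ps (s)^{*}$. Applying approximate multiplicativity to $s \cdot s^{*}$ and using linearity, $\ps (s) \ps (s)^{*}$ is within $\ep$ of $\ps (s s^{*}) = \ps (1 - e) = p - \ps (e)$. Therefore $w w^{*}$ is within $O (\ep)$ of $p - \ps (e)$. Since $\| \ps (e) \| > \| e \| - \ep = 1 - \ep$ by Theorem~\ref{thm_ApproxHommtr}(\ref{Item_1Z11_BigN}), for $\ep$ small enough we obtain $\| p - w w^{*} \| \geq \| \ps (e) \| - O (\ep) > 0$, so $w w^{*} \neq p$. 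Thus $p$ is an infinite projection in $A^{\af}$. This is exactly where Condition~(\ref{Item_1Z11_BigN}) is used.

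The main obstacle is to pass from infiniteness of $p$ to infiniteness of $1_{A^{\af}}$. Here I use that $A^{\af}$ is simple (Theorem~\ref{thm_simple12}), so the infinite projection $p$ is in fact properly infinite; this is classical for simple \ca{s}, and can also be seen by iterating $w$ to produce arbitrarily many mutually orthogonal copies of the nonzero full projection $p - w w^{*}$ beneath $p$. Properly infinite means $p \oplus p \precsim_{A^{\af}} p$. Combining this with the comparison Condition~(\ref{Item_1X07_20p}) of Theorem~\ref{thm_ApproxHommtr}, namely $1 - p \precsim_{A^{\af}} p$, yields
\[
1 = p + (1 - p) \sim_{A^{\af}} p \oplus (1 - p)
  \precsim_{A^{\af}} p \oplus p \precsim_{A^{\af}} p .
\]
If $p = 1$ then $A^{\af}$ is infinite already; otherwise $1 \precsim_{A^{\af}} p$ realizes $1_{A^{\af}}$ as Murray--von Neumann equivalent to a projection underneath the proper subprojection $p$, so $1_{A^{\af}}$ is infinite. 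In either case $A^{\af}$ is infinite. The interplay of simplicity of $A^{\af}$ with the built-in comparison $1 - p \precsim_{A^{\af}} p$ is the crux of the argument.

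Finally, for the crossed product, $A^{\af}$ is simple and $\af$ is saturated (Theorem~\ref{thm_simple12} and Proposition~\ref{satpropnoncptp}), so Condition~(\ref{Item_1X07_SimSat}) of Theorem~\ref{satunonabel} holds and $A^{\af}$ is isomorphic to a full \hsa{} of $C^{*} (G, A, \af)$. An infinite projection in $A^{\af}$ maps to an infinite projection in this subalgebra, and the implementing partial isometry then lies in $C^{*} (G, A, \af)$ as well, so $C^{*} (G, A, \af)$ contains an infinite projection and is therefore infinite.
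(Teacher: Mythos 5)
Your proposal is correct and follows essentially the same route as the paper's proof: apply Theorem~\ref{thm_ApproxHommtr} to the set $\{ s, s^{*}, 1 - s s^{*} \}$, correct $\ps (s)$ to an exact isometry in $p A^{\af} p$, and use $\| \ps (1 - s s^{*}) \| > 1 - \ep$ to see that its range projection is a proper subprojection of~$p$, so that $p$ is infinite; the crossed product is then handled exactly as in the paper via Theorem~\ref{satunonabel}. The only difference is that you explicitly justify the passage from an infinite projection to infiniteness of $1_{A^{\af}}$ (via proper infiniteness in the simple algebra $A^{\af}$ together with $1 - p \precsim_{A^{\af}} p$), a step the paper leaves implicit; your justification is valid.
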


\begin{proof}
We find an infinite \pj{} in~$A^{\alpha}$.
This will show that $A^{\alpha}$ is infinite.
Since $A^{\alpha}$
is isomorphic to a subalgebra of $C^{*} (G, A, \alpha)$,
by Theorem~\ref{simplecrosscomparison} and Theorem~\ref{satunonabel},
it will also follow that $C^{*} (G, A, \alpha)$ is infinite

Since $A$ is infinite and unital,
there is $s \in A$ such that $s^* s = 1$ and $s s^* \neq 1$.
Then $\| 1 - s s^* \| = 1$.
Choose $\dt_0 > 0$ so small that whenever $D$ is a \ca{}
and $c \in D$ satisfies $\| c^* c - 1 \| < \dt_0$,
then there is $w \in D$
such that $w^* w = 1$ and $\| w - c \| < \frac{1}{8}$.
Set $\dt = \min \bigl( \frac{1}{8}, \dt_0 \bigr)$.

Use Theorem \ref{thm_ApproxHommtr} with
\[
\varepsilon = \delta,
\quad
x = 1,
\quad
y = 1,
\quad
n = 2,
\quad
F_{1} = \{ s, s^*, s s^*, 1 - s s^* \},
\quad {\mbox{and}} \quad
F_{2} = \E,
\]
obtaining a projection $p \in A^{\alpha}$ and a \ucp{} map
$\psi \colon A \to p A^{\alpha} p$ which satisfy:
\begin{enumerate}
\item\label{Item_1Z11_inf_Mlt}
$\| \psi (a b) - \psi (a) \psi (b) \| < \delta$ for all $a, b \in F$.
\item\label{Item_1Z11_inf_BigN}
$\| \ps (1 - s s^*) \| > 1 - \delta$.
\end{enumerate}
Then $\ps (s) \in p A^{\alpha} p$ and
$\| \psi (s)^* \psi (s) - p \| < \delta \leq \dt_0$.
By the choice of $\dt_0$, there is $v \in p A^{\alpha} p$
such that $v^* v = p$ and $\| v - \psi (s) \| < \frac{1}{8}$.
We have
\[
\begin{split}
\bigl\| (p - v v^*) - (p - \psi (s) \psi (s)^*) \bigr\|
& = \| v v^* - \psi (s) \psi (s)^* \|
\\
& \leq \| v - \psi (s) \| \| v^* \|
     + \| \ps (s) \| \| (v - \psi (s))^* \|
\\
& < \frac{1}{8} + \frac{1}{8}
  = \frac{1}{4}.
\end{split}
\]
Also
\[
\| p - \psi (s) \psi (s)^* \|
  \geq \| \ps (1 - s s^*) \| - \| \psi (s s^*) - \psi (s) \psi (s)^* \|
  > (1 - \dt) - \dt \geq \frac{3}{4}.
\]
Therefore $\| p - v v^* \| > \frac{3}{4} - \frac{1}{4} > 0$.
So $v v^* \neq p$.
Since $v v^* \leq p$, we have shown that $p$ is an infinite \pj.
\end{proof}

It is obvious that if $A$ is (stably) finite, then $A^{\alpha}$
is stably finite, no matter what the action is.
Therefore
Theorem~\ref{Thm_2603_pro33} and Theorem~\ref{T_1Z11_inf}
come close to saying that if $\af$ has the \trpc,
then $A^{\af}$ is one of purely infinite,
infinite but not purely infinite, stably finite,
or finite but not stably finite, \ifo{} $A$ has the same type.
We don't quite get everything: we don't rule out the combinations
$A$ infinite but not purely infinite while $A^{\af}$ is purely infinite,
and $A$ finite but not stably finite while $A^{\af}$ is stably finite.
There is a bigger issue with $C^{*} (G, A, \alpha)$,
since finiteness is not preserved by stable isomorphism,
even for simple \ca{s}~\cite{Rdm8}.
Some hypothesis on the action is needed,
since the gauge action of $S^1$ on the Cuntz algebra ${\mathcal{O}}_2$
has fixed point algebra a UHF algebra.

We next consider $\cZ$-stability and tracial $\cZ$-stability.
We need some preliminaries, which correspond roughly
to Lemma~2.7 and part of the proof of Lemma~2.8 of~\cite{ArBcPh1}.
We first recall the definition of tracial $\cZ$-stability.

\begin{dfn}[Definition 2.1 of~\cite{HrsOv}]\label{TracialZst_HirOr} 
Let $A$ be a unital \ca.
We say that $A$ is {\emph{tracially ${\mathcal{Z}}$-absorbing}}
(or {\emph{tracially ${\mathcal{Z}}$-stable}})
if $A \not\cong {\mathbb{C}}$
and for any $\ep > 0$, any finite set $F \subseteq A$,
any $n \in {\mathbb{N}}$, and any $x \in A_{+} \setminus \{ 0 \}$,
there is a completely positive contractive order zero map
$\varphi \colon M_{n} \to A$ such that the following hold.
\begin{enumerate}
\item\label{Item_TracialZta_320_CTRwC}
$1 - \varphi (1) \precsim_{A} x$.
\item\label{Item_TracialZta_322_CTWwC}
For any $y \in M_{n}$ with $\| y \| = 1$ and any $a \in F$,
we have $\| \varphi (y) a - a \varphi (y) \| < \ep$.
\end{enumerate}
\end{dfn}

The following stable relations statement for \cpc{} order zero maps
is known but seems never to have been formally stated.
We recall from the condition in Corollary 2.6(3)
of~\cite{ArBcPh1} that if
$\rh \colon M_n \to D$ is a \cpc{} order zero map,
then its values $\rh (e_{j, k})$ on the standard matrix units in $M_n$
satisfy the exact version of the relations in this proposition.

\begin{prp}\label{P_1Z12_OrdZ_SmPj}
Let $n \in \N$ and let $\ep > 0$.
Then there is $\dt > 0$ with the following property.
Let $D$ be a \ca{} and let $y_{j, k} \in D$,
for $j, k = 1, 2, \ldots, n$, satisfy (in the unitization of $D$)
\begin{enumerate}
\item\label{L_1Z12_OrdZ_SmPj_CMid}
$\| y_{j, k} y_{k, m} - y_{j, l} y_{l, m} \| < \dt$
for $j, k, l, m = 1, 2, \ldots, n$.
\item\label{L_1Z12_OrdZ_SmPj_Orth}
$\| y_{j, j} y_{k, k} \| < \dt$
for $j, k = 1, 2, \ldots, n$
with $j \neq k$.
\item\label{L_1Z12_OrdZ_SmPj_Adj}
$\| y_{j, k} - y_{k, j}^* \| < \dt$ for $j, k = 1, 2, \ldots, n$.
\item\label{L_1Z12_OrdZ_SmPj_Norm}
$\| y_{j, j} \| < 1 + \dt$ for $j = 1, 2, \ldots, n$.
\item\label{L_1Z12_OrdZ_SmPj_Pos}
$\| 1 - y_{j, j} \| < 1 + \dt$ for $j = 1, 2, \ldots, n$.
\end{enumerate}
Then there is a \cpc{} order zero map $\rh \colon M_n \to D$
such that, with $(e_{j, k})_{j, k = 1, 2, \ldots, n}$
being the standard system of matrix units for~$M_n$,
we have $\| \rh (e_{j, k}) - y_{j, k} \| < \ep$
for $j, k = 1, 2, \ldots, n$.
\end{prp}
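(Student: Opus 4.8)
The plan is to recognize conditions (\ref{L_1Z12_OrdZ_SmPj_CMid})--(\ref{L_1Z12_OrdZ_SmPj_Pos}) as an approximate form of the relations presenting the cone $C_0((0,1]) \otimes M_n$, and then to deduce the statement from projectivity of that cone by a standard perturbation (``weak stability'') argument. Recall from the order zero structure theorem that \cpc{} order zero maps $M_n \to D$ correspond bijectively to \hms{} $C_0((0,1]) \otimes M_n \to D$, with $\rh(e_{j,k})$ corresponding to the image of $\io \otimes e_{j,k}$, where $\io(t) = t$; and, by Corollary 2.6(3) of~\cite{ArBcPh1}, a family $(z_{j,k})$ in a \ca{} arises as $(\rh(e_{j,k}))$ for such a $\rh$ precisely when it satisfies the \emph{exact} versions of (\ref{L_1Z12_OrdZ_SmPj_CMid})--(\ref{L_1Z12_OrdZ_SmPj_Pos}), namely $z_{j,k} z_{k,m} = z_{j,l} z_{l,m}$, $z_{j,j} z_{k,k} = 0$ for $j \ne k$, $z_{j,k} = z_{k,j}^*$, and $0 \le z_{j,j} \le 1$.

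First I would argue by contradiction. Suppose the conclusion fails for some $n$ and some $\ep > 0$. Then for each $\nu \in \N$ there are a \ca{} $D_\nu$ and elements $y_{j,k}^{(\nu)} \in D_\nu$ satisfying (\ref{L_1Z12_OrdZ_SmPj_CMid})--(\ref{L_1Z12_OrdZ_SmPj_Pos}) with $\dt = 1/\nu$, but admitting no \cpc{} order zero map $\rh \colon M_n \to D_\nu$ with $\| \rh(e_{j,k}) - y_{j,k}^{(\nu)} \| < \ep$ for all $j,k$. A short computation using (\ref{L_1Z12_OrdZ_SmPj_CMid}) (taking the indices so that $\| y_{j,k} y_{k,j} - y_{j,j}^2 \| < \dt$), (\ref{L_1Z12_OrdZ_SmPj_Adj}), and (\ref{L_1Z12_OrdZ_SmPj_Norm}) shows that the $y_{j,k}^{(\nu)}$ are uniformly bounded, so the sequences $(y_{j,k}^{(\nu)})_{\nu \in \N}$ define elements $\et_{j,k}$ of $\prod_{\nu} D_\nu$. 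Let $q \colon \prod_\nu D_\nu \to \bigl( \prod_\nu D_\nu \bigr) / \bigl( \bigoplus_\nu D_\nu \bigr)$ be the quotient map, and set $\ov{y}_{j,k} = q(\et_{j,k})$.

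Because the defects in (\ref{L_1Z12_OrdZ_SmPj_CMid})--(\ref{L_1Z12_OrdZ_SmPj_Pos}) tend to $0$ as $\nu \to \I$, the elements $\ov{y}_{j,k}$ satisfy the exact relations listed above. By the correspondence recalled in the first paragraph, there is therefore a \cpc{} order zero map $\ov{\rh} \colon M_n \to \bigl( \prod_\nu D_\nu \bigr) / \bigl( \bigoplus_\nu D_\nu \bigr)$ with $\ov{\rh}(e_{j,k}) = \ov{y}_{j,k}$, equivalently a \hm{} $\ov{\pi} \colon C_0((0,1]) \otimes M_n \to \bigl( \prod_\nu D_\nu \bigr) / \bigl( \bigoplus_\nu D_\nu \bigr)$. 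Now I would invoke projectivity of the cone $C_0((0,1]) \otimes M_n$ to lift $\ov{\pi}$ along the surjection $q$ to a \hm{} $\pi \colon C_0((0,1]) \otimes M_n \to \prod_\nu D_\nu$. Reading $\pi$ coordinatewise yields genuine \cpc{} order zero maps $\rh_\nu \colon M_n \to D_\nu$ with $q \circ \pi = \ov{\pi}$, so that $\rh_\nu(e_{j,k})$ and $y_{j,k}^{(\nu)}$ have the same image $\ov{y}_{j,k}$ under $q$; hence $\lim_{\nu \to \I} \| \rh_\nu(e_{j,k}) - y_{j,k}^{(\nu)} \| = 0$ for all $j,k$. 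For large $\nu$ this contradicts the defining property of $D_\nu$, completing the argument.

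The step I expect to be the main obstacle is the lifting: one must know that the cone $C_0((0,1]) \otimes M_n$ is projective, so that the order zero map in the quotient lifts to a \emph{sequence of order zero maps}, not merely to a sequence of completely positive maps, and one must check that projectivity applies to the surjection $q$ onto the quotient by $\bigoplus_\nu D_\nu$. Projectivity of cones over \fd{} \ca{s} is available in the literature (it can be extracted from~\cite{Lrng_bk} together with the order zero structure theorem); the only real care needed is to confirm that the matrix-unit relations (\ref{L_1Z12_OrdZ_SmPj_CMid})--(\ref{L_1Z12_OrdZ_SmPj_Pos}) genuinely constitute a presentation of this cone, which is the content of Corollary 2.6(3) of~\cite{ArBcPh1}. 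The remaining bookkeeping---the uniform bound on the $y_{j,k}$ and the vanishing of the defects in the quotient---is routine.
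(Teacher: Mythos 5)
Your proposal is correct and rests on the same two pillars as the paper's proof: the exact versions of relations (\ref{L_1Z12_OrdZ_SmPj_CMid})--(\ref{L_1Z12_OrdZ_SmPj_Pos}) present the cone over $M_n$ (Lemma 2.5 and Corollary 2.6(3) of~\cite{ArBcPh1}), and that cone is stable/projective, so approximate representations can be perturbed to exact ones. The only difference is presentational: the paper cites the stability of these relations directly from the proof of Lemma~2.7 of~\cite{ArBcPh1}, whereas you unfold the standard derivation of weak stability from projectivity of $C_0((0,1]) \otimes M_n$ via the sequence-algebra argument, which is a legitimate and complete way to supply the same fact.
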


\begin{proof}
The case $n = 1$ is easy.
For $n \geq 2$,
this result is essentially contained in the proof of the case $n \geq 2$
of Lemma~2.7 of~\cite{ArBcPh1}.
Specifically, the universal \uca{} on the exact version of these
relations (stated in Lemma~2.5 of~\cite{ArBcPh1}) is, by that lemma,
isomorphic to the unitized cone $(C M_n)^{+}$, with
$y_{j, k}$ corresponding to the function
$f_{j, k} (\ld) = \ld e_{j, k}$ for $\ld \in [0, 1]$.
Therefore, as explained in the proof of Lemma~2.7 of~\cite{ArBcPh1},
the exact versions of the relations are stable.
Thus, as there, there is $\dt > 0$ such that whenever $D$ is a \uca{}
and elements $y_{j, k} \in D$, for $j, k = 1, 2, \ldots, n$,
satisfy (\ref{L_1Z12_OrdZ_SmPj_CMid}), (\ref{L_1Z12_OrdZ_SmPj_Orth}),
(\ref{L_1Z12_OrdZ_SmPj_Adj}),
(\ref{L_1Z12_OrdZ_SmPj_Norm}), and~(\ref{L_1Z12_OrdZ_SmPj_Pos}),
then there is a \hm{} $\ph \colon (C M_n)^{+} \to D^{+}$
such that $\| \ph (f_{j, k}) - y_{j, k} \| < \ep$
for $j, k = 1, 2, \ldots, n$.
The required \cpc{} order zero map is given by, for $z \in M_n$,
taking $\rh (z)$ to be the value of $\ph$ on the function
$\ld \mapsto \ld z$ in $C M_n$.
\end{proof}

\begin{lem}\label{L_1Z22_OrdZ}
Let $D$ and $E$ be \uca{s},
and let $\ps_0 \colon D \to E$ be a \cpc{} order zero map.
Let $\ep > 0$.
Then there is a \cpc{} order zero map $\ps \colon D \to E$
such that $\| \ps - \ps_0 \| \leq \ep$ and
$(1 - \ep) (1 - \ps (1)) = (1 - \ps_0 (1) - \ep)_{+}$.
\end{lem}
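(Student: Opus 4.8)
The plan is to invoke the Winter--Zacharias structure theorem for completely positive contractive order zero maps and then to reparametrize $\ps_0$ in the ``cone direction'' by a single continuous function, chosen so that the target identity holds automatically. Write $C = C^* (\ps_0 (D)) \S E$. The structure theorem provides a homomorphism $\pi \colon D \to M (C)$ and a positive contraction $k \in M (C)$ commuting with $\pi (D)$ such that $\ps_0 (d) = k \pi (d)$ for all $d \in D$; since $D$ is unital we may arrange $k = \ps_0 (1)$, and then $k \pi (1) = k$, so that $p = \pi (1)$ is a unit for $C^* (k)$.

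Assuming, as we may, that $\ep < 1$, I would first isolate the function that forces the required relation. The identity $(1 - \ep) (1 - \ps (1)) = (1 - \ps_0 (1) - \ep)_{+}$ demands $\ps (1) = g (k)$ for the continuous $g \colon [0, 1] \to [0, 1]$ determined by $1 - g (t) = (1 - \ep)^{-1} (1 - t - \ep)_{+}$, which simplifies to $g (t) = \min \bigl( (1 - \ep)^{-1} t, \, 1 \bigr)$; note $g (0) = 0$. I would then \emph{define} $\ps \colon D \to E$ by $\ps (d) = g (k) \pi (d)$, where $g (k) \in C^* (k) \S M (C)$ is given by functional calculus. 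Because $g$ vanishes at $0$, approximating $g$ by polynomials without constant term shows that each summand $k^m \pi (d) = k^{m - 1} \ps_0 (d)$ (for $m \geq 1$) lies in $C$, so in fact $\ps (d) \in C \S E$ and $\ps$ genuinely maps into $E$.

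Next I would verify that $\ps$ is again cpc order zero and extract the two conclusions. Since $g (k)$ is a positive contraction in $M (C)$ commuting with $\pi (D)$, the map $d \mapsto g (k) \pi (d) = g (k)^{1/2} \pi (d) g (k)^{1/2}$ is completely positive and contractive, and $\ps (d_1) \ps (d_2) = g (k)^2 \pi (d_1 d_2)$ shows that it preserves orthogonality; this is precisely the order zero structure. The conclusions then follow from functional calculus applied to $k$ (whose spectrum lies in $[0, 1]$): because $k \pi (1) = k$ we get $\ps (1) = g (k) \pi (1) = g (k)$, and the scalar identity $(1 - \ep) (1 - g (t)) = (1 - t - \ep)_{+}$ evaluated at $k$ yields $(1 - \ep) (1 - \ps (1)) = (1 - \ps_0 (1) - \ep)_{+}$. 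For the norm estimate, $\ps (d) - \ps_0 (d) = (g (k) - k) \pi (d)$ gives $\| \ps - \ps_0 \| \leq \| g (k) - k \| \leq \sup_{t \in [0, 1]} | g (t) - t | = \ep$, the supremum being attained at $t = 1 - \ep$.

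The only genuine obstacle is correctly applying the structure theorem and tracking where the elements live: one must check that $g (k) \pi (d)$ stays inside $E$ (handled by the polynomial approximation above, using $g (0) = 0$) and that $p = \pi (1)$ acts as a unit on $C^* (k)$ so that $\ps (1) = g (k)$ rather than $g (k) p$. Everything after that is routine functional calculus, and since the function $g (t) = \min \bigl( (1 - \ep)^{-1} t, \, 1 \bigr)$ is forced by the target relation, there is no guesswork once the structure theorem is in hand.
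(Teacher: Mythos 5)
Your proof is correct and is essentially the paper's argument: the paper defines $\ps = f(\ps_0)$ using the order zero functional calculus of Corollary~4.2 of Winter--Zacharias with exactly your function $f (t) = \min \bigl( (1 - \ep)^{-1} t, 1 \bigr)$, and derives both conclusions from the scalar identities $| f (t) - t | \leq \ep$ and $(1 - \ep)(1 - f (t)) = (1 - t - \ep)_{+}$. You simply unpack that functional calculus via the structure theorem ($\ps_0 (d) = k \pi (d)$, $\ps (d) = f (k) \pi (d)$), which the cited corollary packages for you.
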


\begin{proof}
We will use functional calculus for \cpc{} order zero maps,
as in Corollary~4.2 of~\cite{WntZch09}.
Define $f \colon [0, 1] \to [0, 1]$ by
\[
f (\ld)
 = \begin{cases}
   (1 - \ep)^{- 1} \ld & \hspace*{1em} 0 \leq \ld \leq 1 - \ep
        \\
   1 & \hspace*{1em} 1 - \ep < \ld \leq 1.
\end{cases}
\]
Set $\ps = f (\ps_0)$ as defined in Corollary~4.2 of~\cite{WntZch09}.
Following the notation there, we have
\[
\ps_0 (d) = h \pi_{\ps_0} (d)
\andeqn
\ps (d) = f (h) \pi_{\ps_0} (d)
\]
for all $d \in D$.
The estimate $\| \ps - \ps_0 \| \leq \ep$
follows from the easily verified estimate $\| f (h) - h\| \leq \ep$,
and the relation $(1 - \ep) (1 - \ps (1)) = (1 - \ps_0 (1) - \ep)_{+}$
follows from the easily verified relation
$(1 - \ep) (1 - f (\ld)) = (1 - \ld - \ep)_{+}$
for all $\ld \in [0, 1]$.
\end{proof}

\begin{thm}\label{T_1Z22_TrZStab}
Let $A$ be a simple separable infinite dimensional \uca,
let $G$ be a second countable compact group, and let
$\alpha \colon G \to \Aut (A)$ be an action 
which has the tracial Rokhlin property with comparison. 
If $A$ is tracially ${\mathcal{Z}}$-stable,
then $A^{\af}$ is tracially ${\mathcal{Z}}$-stable.
\end{thm}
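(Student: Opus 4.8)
The plan is to verify the conditions of Definition~\ref{TracialZst_HirOr} directly for $A^{\af}$, transporting a \cpc{} order zero map out of $A$ into a corner of $A^{\af}$ by means of Theorem~\ref{thm_ApproxHommtr}. Fix $\ep > 0$, a finite set $F \S A^{\af}$, an integer $n \in \N$, and $x \in (A^{\af})_{+} \SM \{ 0 \}$. Since $A^{\af}$ is simple by Theorem~\ref{thm_simple12} and infinite dimensional by Proposition~\ref{C_1Z11_InfDim}, the hereditary subalgebra $\ov{x A^{\af} x}$ is infinite dimensional, so it contains two orthogonal nonzero positive elements $x_1, x_2$; then $x_1 \perp x_2$ and $x_1 + x_2 \precsim_{A^{\af}} x$. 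The goal is to build a \cpc{} order zero map $\varphi \colon M_n \to A^{\af}$ for which $1 - \varphi (1)$ decomposes as an orthogonal sum $(1 - p) + (p - \varphi (1))$, with the first summand Cuntz dominated by $x_1$ and the second by $x_2$.

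First I would extract the raw order zero map and a comparison witness inside~$A$. Applying tracial $\cZ$-stability of~$A$ (Definition~\ref{TracialZst_HirOr}) with $F$ regarded as a subset of~$A$, with matrix size~$n$, with a small tolerance, and with $x_2$ as the test element, produces a \cpc{} order zero map $\varphi_0 \colon M_n \to A$ with $1 - \varphi_0 (1) \precsim_A x_2$ and with $\varphi_0$ approximately commuting with~$F$. Writing $w_{j, k} = \varphi_0 (e_{j, k})$, these elements satisfy the exact form of the relations of Proposition~\ref{P_1Z12_OrdZ_SmPj} (as recalled there for the matrix unit values of a \cpc{} order zero map). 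Crucially, \emph{before} invoking Theorem~\ref{thm_ApproxHommtr} I would fix $v \in A$ with $\| v x_2 v^{*} - (1 - \varphi_0 (1)) \|$ as small as desired, so that $v$, $v^{*}$, $x_2$ and the products $v x_2$, $v x_2 v^{*}$ can all be placed in the compact set $F_1$. Now apply Theorem~\ref{thm_ApproxHommtr} with multiplicativity length at least~$3$, a small tolerance~$\dt$, with $F_1 \S A$ containing all $w_{j, k}$ together with $F$, $v$, $v^{*}$, $x_2$ (and the products needed below), with $F_2 = F \cup \{ x_2 \} \S A^{\af}$, with $1$ in place of the element called~$x$ there, and with $y = x_1$. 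This yields a projection $p \in A^{\af}$ and a \ucp{} map $\psi \colon A \to p A^{\af} p$ which is approximately multiplicative on $F_1 \cup F_2$, for which $p$ approximately commutes with $F_1 \cup F_2$ (Theorem~\ref{thm_ApproxHommtr}(\ref{commute1469})), for which $\| \psi (a) - p a p \| < \dt$ for $a \in F_2$ (Theorem~\ref{thm_ApproxHommtr}(\ref{Item_1X07_21})), and for which $1 - p \precsim_{A^{\af}} x_1$ (Theorem~\ref{thm_ApproxHommtr}(\ref{Item_1X07_20})). Setting $y_{j, k} = \psi (w_{j, k}) \in p A^{\af} p$, approximate multiplicativity (together with the fact that $\psi$ is $*$-preserving) carries the exact relations on the $w_{j, k}$ to the approximate relations of Proposition~\ref{P_1Z12_OrdZ_SmPj}, so that Proposition produces a \cpc{} order zero map $\rho \colon M_n \to p A^{\af} p$ with $\| \rho (e_{j, k}) - y_{j, k} \|$ small.

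For the defect estimate, observe that $p - \rho (1) \approx \psi \bigl( 1 - \varphi_0 (1) \bigr) \approx \psi (v x_2 v^{*}) \approx \psi (v) \psi (x_2) \psi (v)^{*}$, where the three approximations come respectively from $\rho (e_{j, j}) \approx \psi (w_{j, j})$, the choice of~$v$, and approximate multiplicativity. Rørdam's lemma (the standard fact that $\| a - b \| < s$ forces $(a - s)_{+} \precsim b$) then gives $(p - \rho (1) - s)_{+} \precsim_{A^{\af}} \psi (x_2)$, while $\psi (x_2) \approx p x_2 p \precsim_{A^{\af}} x_2$ since $x_2 \in F_2$. Combining these by standard Cuntz comparison manipulations yields, after slightly enlarging the cut-off, $(p - \rho (1) - s)_{+} \precsim_{A^{\af}} x_2$. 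Applying Lemma~\ref{L_1Z22_OrdZ} to $\rho$ inside the unital algebra $p A^{\af} p$ replaces $\rho$ by a nearby \cpc{} order zero map $\varphi$ with $p - \varphi (1) = (1 - s)^{-1} (p - \rho (1) - s)_{+}$, hence $p - \varphi (1) \precsim_{A^{\af}} x_2$ exactly. Since $1 - \varphi (1) = (1 - p) + (p - \varphi (1))$ is an orthogonal sum with $1 - p \precsim_{A^{\af}} x_1$ and $p - \varphi (1) \precsim_{A^{\af}} x_2$ and $x_1 \perp x_2$, orthogonal additivity of Cuntz comparison gives $1 - \varphi (1) \precsim_{A^{\af}} x_1 + x_2 \precsim_{A^{\af}} x$, which is Definition~\ref{TracialZst_HirOr}(\ref{Item_TracialZta_320_CTRwC}).

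Finally, for Definition~\ref{TracialZst_HirOr}(\ref{Item_TracialZta_322_CTWwC}), let $a \in F$ and $z \in M_n$ with $\| z \| = 1$. Since $\varphi (z) \in p A^{\af} p$ and $p$ approximately commutes with~$a$, and since $\psi (a) \approx p a p$, one reduces $\varphi (z) a - a \varphi (z)$ to $\varphi (z) \psi (a) - \psi (a) \varphi (z)$; expanding $\varphi (z)$ through the $y_{j, k} = \psi (w_{j, k})$ and using approximate multiplicativity together with the approximate commutation $w_{j, k} a \approx a w_{j, k}$ inherited from~$\varphi_0$, each of the $n^{2}$ matrix contributions is forced to be small. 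Choosing the tolerances in the $\cZ$-stability step and in Theorem~\ref{thm_ApproxHommtr} small relative to $\ep / n^{2}$ then gives $\| \varphi (z) a - a \varphi (z) \| < \ep$. The main obstacle throughout is the transfer of the Cuntz comparison $1 - \varphi_0 (1) \precsim_A x_2$ across the merely approximately multiplicative map~$\psi$: this is precisely what forces the comparison witness~$v$ to be chosen before Theorem~\ref{thm_ApproxHommtr} is invoked and included, with its relevant products, in~$F_1$, and it is the reason the various $(\,\cdot - s)_{+}$ corrections must be tracked and then absorbed using Rørdam's lemma and Lemma~\ref{L_1Z22_OrdZ}.
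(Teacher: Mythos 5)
Your proposal is correct and follows essentially the same route as the paper's proof: decompose $1 - \varphi(1)$ as the orthogonal sum $(1-p) + (p - \varphi(1))$, handle the first piece via Theorem~\ref{thm_ApproxHommtr}(\ref{Item_1X07_20}) and the second by pushing the defect $1 - \varphi_0(1)$ through $\psi$ using a comparison witness $v$ fixed \emph{before} Theorem~\ref{thm_ApproxHommtr} is invoked, then repair the resulting $(\,\cdot - s)_{+}$ error with Proposition~\ref{P_1Z12_OrdZ_SmPj} and Lemma~\ref{L_1Z22_OrdZ}. The only differences are cosmetic (the roles of $x_1$ and $x_2$ are swapped, and you route the final comparison through $\psi(x_2) \approx p x_2 p$ where the paper uses $\psi(v)^* x_1 \psi(v) \precsim x_1$ directly), and you correctly identified the key subtlety of placing $v$ and its products in the compact set ahead of time.
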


\begin{proof}
We verify the condition in Definition~\ref{TracialZst_HirOr}.
So let $\ep > 0$, let $F \subseteq A^{\af}$ be a finite set, 
let $x \in (A^{\af})_{+} \setminus \{ 0 \}$,
and let $n \in {\mathbb{N}}$.
Without loss of generality, we may assume that
$\| a \| \leq 1$ for all $a \in F$ and $\ep < 1$. 
Set
\begin{equation}\label{Eq_1Z23_e0}
\ep_0 = \frac{\ep}{16 n^2}.
\end{equation}
Apply Proposition~\ref{P_1Z12_OrdZ_SmPj} with $\ep_0$ in place of~$\ep$,
getting $\dt > 0$.

The algebra $A^{\af}$ is simple by Theorem~\ref{thm_simple12}
and not of Type~I by Proposition~\ref{C_1Z11_InfDim}. 
So, by Lemma 2.4 of~\cite{philar},
there exist $x_{1}, x_{2} \in (A^{\af})_{+} \setminus \{0\}$ such that
\begin{equation}\label{Eq_361_x1_x2_herAfix}
x_1 x_2 = 0,
\qquad
x_{1} \sim x_{2},
\andeqn
x_{1} + x_{2}  \in {\overline{x A^{\af} x}}.
\end{equation}

Apply Definition~\ref{TracialZst_HirOr}
with $n$ and $F$ as given, with $\ep_0$ in place of~$\ep$,
and with $x_1$ in place of~$a$,
getting $\ph_0 \colon M_n \to A$.
In particular,
\begin{equation}\label{Eq_2115_Star}
\| \ph_0 (z) a - a \ph_0 (z) \| < \ep_0
\end{equation}
for all $a \in F$ and all $z \in M_n$ with $\| z \| = 1$.
Also, $1 - \ph_0 (1) \precsim_A x_1$,
so there is $v \in A$ such that
\begin{equation}\label{Eq_1Z22_CS}
\| v^* x_1 v - (1 - \ph_0 (1)) \| < \frac{\ep}{32}.
\end{equation}
Set
\begin{equation}\label{Eq_1Z23_ep1}
\dt_0 = \min \left( \dt, \, \frac{\ep}{32 ( \| v \|^2 + 1)} \right).
\end{equation}

Let $(e_{j, k})_{j, k = 1, 2, \ldots, n}$
be the standard system of matrix units for~$M_n$.
Set
\begin{equation}\label{Eq_2205_F0}
F_0 = F \cup \bigl\{ \ph_0 (e_{j, k}) \colon
            j, k \in \{ 1, 2, \ldots, n \} \bigr\}
         \cup \{ x_1, v, v^* \}.
\end{equation}
Apply Theorem~\ref{thm_ApproxHommtr}, getting
a projection $p \in A^{\alpha}$ and a
unital completely positive contractive map
$\psi \colon A \to p A^{\alpha} p$ such that the following hold.
\begin{enumerate}
\item\label{I_1Z22_AppM}
$\ps$ is a $(3, F_0, \dt_0)$-approximately
multiplicative map (Definition~\ref{D_1X10_nSFe}).
\item\label{I_1Z22_Commp}
$\| p a - a p \| < \dt_0$ for all $a \in F_0$.
\item\label{I_1Z22_pap}
$\| \psi (a) - p a p \| < \dt_0$ for all $a \in F \cup \{ x_1 \}$.
\item\label{I_1Z22_1mpx2}
$1 - p \precsim_{A^{\alpha}} x_2$.
\end{enumerate}

For $j, k = 1, 2, \ldots, n$,
define $y_{j, k} = (\ps \circ \ph_0) (e_{j, k})$.
Using the condition in Corollary 2.6(3) of~\cite{ArBcPh1},
for $j, k, l, m = 1, 2, \ldots, n$ we have
$\ph_0 (e_{j, k}) \ph_0 (e_{k, m}) = \ph_0 (e_{j, l}) \ph_0 (e_{l, m})$.
It therefore follows from~(\ref{I_1Z22_AppM})
that the elements $y_{j, k}$
satisfy the relations (\ref{L_1Z12_OrdZ_SmPj_CMid})
and~(\ref{L_1Z12_OrdZ_SmPj_Orth}) in Proposition~\ref{P_1Z12_OrdZ_SmPj}.
It is immediate that the following stronger versions
of (\ref{L_1Z12_OrdZ_SmPj_Adj}), (\ref{L_1Z12_OrdZ_SmPj_Norm}),
and~(\ref{L_1Z12_OrdZ_SmPj_Pos}) in Proposition~\ref{P_1Z12_OrdZ_SmPj}
hold:
$y_{j, k} = y_{k, j}^*$ for $j, k = 1, 2, \ldots, n$,
and $\| y_{j, j} \| \leq 1$ and
$\| 1 - y_{j, j} \| \leq 1$ for $j = 1, 2, \ldots, n$.
By the choice of $\dt$ there is a \cpc{} order zero map
$\ph_1 \colon M_n \to p A^{\af} p$
such that
\[
\| \ph_1 (e_{j, k}) - y_{j, k} \| < \ep_0
\]
for $j, k = 1, 2, \ldots, n$.
It follows that
\begin{equation}\label{Eq_1Z22_OZest}
\| \ph_1 (z) - (\ps \circ \ph_0) (z) \| \leq n^2 \ep_0 \| z \|
\end{equation}
for all $z \in M_n$.

We have, at the third step using (\ref{Eq_2205_F0}) (twice),
(\ref{I_1Z22_pap}),
(\ref{I_1Z22_AppM}), (\ref{Eq_1Z22_CS}), and~(\ref{Eq_1Z22_OZest}),
and at the fourth step using
(\ref{Eq_1Z23_ep1}) and~(\ref{Eq_1Z23_e0}),
\[
\begin{split}
& \bigl\| \ps (v)^* x_1 \ps (v) - [p - \ph_1 (1) ] \bigr\|
\\
& \hspace*{3em} {\mbox{}}
 = \bigl\| \ps (v^*) p x_1 p \ps (v) - [p - \ph_1 (1) ] \bigr\|
\\
& \hspace*{3em} {\mbox{}}
 \leq 2 \| \ps (v) \|^2 \| p x_1 p - \ps (x_1) \|
   + \| \ps (v^*) \ps (x_1) \ps (v) - \ps (v^* x_1 v) \|
\\
& \hspace*{6em} {\mbox{}}
   + \| \ps \| \| v^* x_1 v - (1 - \ph_0 (1)) \|
   + \| (\ps \circ \ph_0) (1) - \ph_1 (1) \|
\\
& \hspace*{3em} {\mbox{}}
 < 2 \| \ps (v) \|^2 \dt_0 + \dt_0 + \frac{\ep}{32} + n^2 \ep_0
  \leq \frac{\ep}{8}.
\end{split}
\]
Therefore
\begin{equation}\label{Eq_1Z22_NewSb}
\left( p - \ph_1 (1) - \frac{\ep}{8} \right)_{+}
 \precsim_{A^{\af}} \ps (v)^* x_1 \ps (v)
 \precsim_{A^{\af}} x_1.
\end{equation}
Apply Lemma~\ref{L_1Z22_OrdZ} with $\ph_1$ in place of $\ps_0$
and $\frac{\ep}{8}$ in place of $\ep$,
getting a \cpc{} order zero map $\ph \colon M_n \to p A^{\af} p$
such that
\begin{equation}\label{Eq_1Z23_Eqn}
\| \ph - \ph_1 \| \leq \frac{\ep}{8}
\andeqn
\left( 1 - \frac{\ep}{8} \right) [p - \ph (1)]
 = \left( p - \ph_1 (1) - \frac{\ep}{8} \right)_{+}.
\end{equation}
Combining $\| \ph - \ph_1 \| \leq \frac{\ep}{8}$
with~(\ref{Eq_1Z22_OZest}), and then using~(\ref{Eq_1Z23_e0}),
for all $z \in M_n$ we get
\begin{equation}\label{Eq_1Z22_Nbd}
\| \ph (z) - (\ps \circ \ph_0) (z) \|
 \leq \left( \frac{\ep}{8} + n^2 \ep_0 \right) \| z \|
 \leq \left( \frac{\ep}{4} \right) \| z \|.
\end{equation}

Let $a \in F$ and let $z \in M_n$ satisfy $\| z \| = 1$.
Then, at the second step using (\ref{Eq_2205_F0}) (twice),
(\ref{I_1Z22_pap}), (\ref{Eq_1Z22_Nbd}), and~(\ref{I_1Z22_AppM}),
\[
\begin{split}
& \| p a p \ph (z) - \ps (a \ph_0 (z)) \|
\\
& \hspace*{3em} {\mbox{}}
  \leq \| p a p - \ps (a) \| \| \ph (z) \|
     + \| \ps (a) \| \| \ph (z) - (\ps \circ \ph_0) (z) \|
\\
& \hspace*{6em} {\mbox{}}
     + \| \ps (a) \ps ( \ph_0 (z)) - \ps (a \ph_0 (z)) \|
\\
& \hspace*{3em} {\mbox{}}
  < \dt_0 + \frac{\ep}{4} + \dt_0
  = 2 \dt_0 + \frac{\ep}{4}.
\end{split}
\]
Also, since $p \ph (z) = \ph (z)$ and $p^2 = p$,
and by~(\ref{I_1Z22_Commp}) and~(\ref{Eq_2205_F0}), we have
\[
\| a \ph (z) - p a p \ph (z) \|
 = \| a p^2 \ph (z) - p a p \ph (z) \|
 \leq \| a p - p a \| \| \ps (z) \|
 < \dt_0.
\]
So $\| a \ph (z) - \ps (a \ph_0 (z)) \| < 3 \dt_0 + \frac{\ep}{4}$.
Similarly
$\| \ph (z) a - \ps (\ph_0 (z) a) \| < 3 \dt_0 + \frac{\ep}{4}$.
Combining these two estimates with~(\ref{Eq_2115_Star}),
and using (\ref{Eq_1Z23_e0}) and~(\ref{Eq_1Z23_ep1}), gives
\[
\| a \ph (z) - \ph (z) a \|
 < 2 \left( 3 \dt_0 + \frac{\ep}{4} \right)
      + \| \ps \| \| a \ph_0 (z) - \ph_0 (z) a \|
 < 6 \dt_0 + \frac{\ep}{2} + \ep_0
 < \ep.
\]
This is Condition~(\ref{Item_TracialZta_322_CTWwC})
in Definition~\ref{TracialZst_HirOr}.

For the other condition,
we combine (\ref{Eq_1Z23_Eqn}), (\ref{Eq_1Z22_NewSb}), and $\ep < 1$
to get $p - \ph (1) \precsim_{A^{\af}} x_1$.
Therefore,
also using (\ref{I_1Z22_1mpx2}) and~(\ref{Eq_361_x1_x2_herAfix}),
\[
1 - \ph (1)
 = [1 - p] + [p - \ph (1)]
 \precsim_{A^{\af}} x_2 + x_1
 \precsim_{A^{\af}} x,
\]
as desired.
\end{proof}

\begin{thm}\label{T_1Z22_ZStb}
Let $A$ be a simple separable infinite dimensional nuclear \uca,
let $G$ be a second countable compact group, and let
$\alpha \colon G \to \Aut (A)$ be an action 
which has the tracial Rokhlin property with comparison. 
If $A$ is ${\mathcal{Z}}$-stable,
then $A^{\af}$ and $C^* (G, A, \af)$ are $\mathcal{Z}$-stable.
\end{thm}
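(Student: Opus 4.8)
The plan is to route through tracial $\cZ$-stability and then upgrade to $\cZ$-stability using the equivalence of the two notions for simple separable unital nuclear \ca{s}. First, since $A$ is $\cZ$-stable and infinite dimensional (so $A \not\cong \mathbb{C}$), $A$ is tracially $\cZ$-stable; this is the easy direction of the Hirshberg--Orovitz equivalence in~\cite{HrsOv}, whose hypotheses $A$ certainly satisfies. Applying Theorem~\ref{T_1Z22_TrZStab}, which carries no nuclearity hypothesis, I then conclude that $A^{\af}$ is tracially $\cZ$-stable.

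Next I would assemble the structural facts about $A^{\af}$ needed to run the hard direction of Hirshberg--Orovitz. The algebra $A^{\af}$ is unital and separable (being a closed subalgebra of the separable algebra~$A$), simple by Theorem~\ref{thm_simple12}, and infinite dimensional by Proposition~\ref{C_1Z11_InfDim} (in particular $A^{\af} \not\cong \mathbb{C}$). The remaining point is nuclearity: since $G$ is compact, hence amenable, and $A$ is nuclear, the crossed product $C^{*} (G, A, \af)$ is nuclear; by the last statement of Theorem~\ref{satunonabel}, $A^{\af}$ is isomorphic to a full \hsa{} of $C^{*} (G, A, \af)$, and nuclearity is inherited by hereditary subalgebras, so $A^{\af}$ is nuclear.

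With $A^{\af}$ known to be simple, separable, unital, nuclear, and tracially $\cZ$-stable, the hard direction of the Hirshberg--Orovitz theorem~\cite{HrsOv} yields that $A^{\af}$ is $\cZ$-stable. For the crossed product I would appeal once more to Corollary~\ref{C_1X15_StableIso}, which gives that $C^{*} (G, A, \af)$ is stably isomorphic to $A^{\af}$. Since a separable \ca{} is $\cZ$-stable if and only if its stabilization is, $\cZ$-stability passes between stably isomorphic separable \ca{s}, and hence $C^{*} (G, A, \af)$ is $\cZ$-stable as well.

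The substantive work is already done in Theorem~\ref{T_1Z22_TrZStab}, so this argument is essentially an assembly step. The only places demanding care are the verification of nuclearity of $A^{\af}$ (handled through nuclearity of the crossed product and passage to a hereditary subalgebra) and the clean invocation of the Hirshberg--Orovitz equivalence; I expect no genuine obstacle, the one subtlety being that $\cZ$-stability is transferred to the (generally nonunital) crossed product via stable isomorphism rather than proved directly.
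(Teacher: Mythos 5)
Your proposal is correct and follows essentially the same route as the paper's proof: pass from $\cZ$-stability of $A$ to tracial $\cZ$-stability (Proposition~2.2 of~\cite{HrsOv}), apply Theorem~\ref{T_1Z22_TrZStab} to get tracial $\cZ$-stability of $A^{\af}$, upgrade via Theorem~4.1 of~\cite{HrsOv} using nuclearity, and transfer to the crossed product by stable isomorphism through Corollary~\ref{C_1X15_StableIso}. Your explicit verification that $A^{\af}$ is nuclear (via nuclearity of the crossed product and passage to a hereditary subalgebra) is a detail the paper leaves implicit, but it does not change the argument.
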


\begin{proof}
The algebra $A$ is tracially ${\mathcal{Z}}$-stable
by Proposition~2.2 of~\cite{HrsOv}.
Therefore $A^{\af}$ is tracially ${\mathcal{Z}}$-stable
by Theorem~\ref{T_1Z22_TrZStab}.
Since $A^{\af}$ is nuclear, Theorem~4.1 of~\cite{HrsOv}
now implies that $A$ is ${\mathcal{Z}}$-stable.
Since ${\mathcal{Z}}$-stability is preserved by stable isomorphism
(Corollary~3.2 of~\cite{TmWnt1}), Corollary~\ref{C_1X15_StableIso}
implies that $C^* (G, A, \af)$ is $\mathcal{Z}$-stable.
\end{proof}

Finally, we consider the radius of comparison.
Since it is defined only for unital \ca{s},
we treat only the fixed point algebra.
We start with some preliminaries.

\begin{prp}\label{P_1X25_Mn}
Let $G$ be a second countable compact group,
let $A$ be a stably finite simple separable infinite dimensional \uca,
and let $\alpha \colon G \to \Aut (A)$ be an action of $G$ on~$A$
which has the tracial Rokhlin property with comparison.
Let $n \in \N$.
Then the action $g \mapsto \id_{M_n} \otimes \af_g$
of $G$ on $M_n \otimes A$
has the tracial Rokhlin property with comparison.
\end{prp}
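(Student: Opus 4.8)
The plan is to reduce to the amplification-free statement by choosing the Rokhlin projection and map of the simplest possible form, and to isolate the one genuinely new ingredient, a Cuntz-comparison statement for matrix amplifications. Since $A$ is stably finite, $M_n \otimes A$ is a finite (indeed stably finite) infinite dimensional simple separable \uca, so Lemma~\ref{L_1X09_NoNorm1} applies to the action $\gm = \id_{M_n} \otimes \af$ of $G$ on $M_n \otimes A$. Consequently it suffices, given a finite set $F \S M_n \otimes A$, a finite set $S \S C (G)$, a number $\ep > 0$, an element $x \in (M_n \otimes A)_{+} \SM \{ 0 \}$, and an element $y \in (M_n \otimes A^{\af})_{+} \SM \{ 0 \}$ (using $(M_n \otimes A)^{\gm} = M_n \otimes A^{\af}$), to produce a projection $P \in M_n \otimes A^{\af}$ and a \ucp{} map $\Phi \colon C (G) \to P (M_n \otimes A) P$ satisfying the approximate equivariance and multiplicativity condition together with $1 - P \precsim_{M_n \otimes A} x$, $1 - P \precsim_{M_n \otimes A^{\af}} y$, and $1 - P \precsim_{M_n \otimes A^{\af}} P$. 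The point is that the norm condition Definition \ref{traR}(\ref{Item_902_pxp_TRP}) has been dropped; this is exactly where stable finiteness is used, and it is what will let us take $P$ of the form $1_{M_n} \otimes p$.

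The main technical step is the following comparison claim: if $C$ is a simple infinite dimensional separable \uca{} and $w \in (M_n \otimes C)_{+} \SM \{ 0 \}$, then there is $z \in C_{+}$ with $\| z \| = 1$ and $1_{M_n} \otimes z \precsim_{M_n \otimes C} w$. To prove it, I would note that $M_n \otimes C$ is simple, unital, and infinite dimensional, hence has no minimal projection; therefore the nonzero hereditary subalgebra $\overline{w (M_n \otimes C) w}$ is infinite dimensional and contains mutually orthogonal nonzero positive elements $c_1, c_2, \ldots, c_n$. Applying Lemma~\ref{lma_L683_baj} with the corner $e_{1, 1} \otimes C$ in place of $B$ and $c_1, \ldots, c_n$ in place of the $a_j$ produces a nonzero element of $(e_{1, 1} \otimes C)_{+}$, which I write as $e_{1, 1} \otimes z$ with $z \in C_{+}$ and, after rescaling, $\| z \| = 1$, such that $e_{1, 1} \otimes z \precsim_{M_n \otimes C} c_j$ for all $j$. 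Since the $c_j$ are mutually orthogonal and lie in $\overline{w (M_n \otimes C) w}$, and since $1_{M_n} \otimes z$ is Cuntz equivalent to the orthogonal sum $\bigoplus_{j = 1}^n (e_{j, j} \otimes z)$ of $n$ copies of $e_{1, 1} \otimes z$, I obtain
\[
1_{M_n} \otimes z \precsim_{M_n \otimes C} \bigoplus_{j = 1}^n c_j \precsim_{M_n \otimes C} w ,
\]
which is the claim.

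With the claim available the construction is routine. First apply it with $C = A$ and $w = x$ to get $z \in A_{+}$, and with $C = A^{\af}$ (simple by Theorem~\ref{thm_simple12}, infinite dimensional by Proposition~\ref{C_1Z11_InfDim}) and $w = y$ to get $z' \in (A^{\af})_{+}$, both of norm one. Let $F_0 \S A$ be the finite set of all matrix entries of the elements of $F$ relative to a fixed system of matrix units $e_{j, k}$ for $M_n$. Then apply Definition~\ref{traR} for $\af$ on $A$, with $F_0$ in place of $F$, the same $S$, tolerance $\ep / n^2$ in place of $\ep$, $z$ in place of $x$, and $z'$ in place of $y$, obtaining a projection $p \in A^{\af}$ and a \ucp{} map $\varphi \colon C (G) \to p A p$. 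Setting $P = 1_{M_n} \otimes p$ and $\Phi = \id_{M_n} \otimes \varphi$ gives a \ucp{} map into $P (M_n \otimes A) P = M_n \otimes p A p$. Because $\| 1_{M_n} \otimes a \| = \| a \|$, the multiplicativity and equivariance estimates for $\Phi$ coincide with those for $\varphi$; for approximate centrality one expands $a \in F$ in matrix units, writes $\Phi (f) a - a \Phi (f) = \sum_{j, k} e_{j, k} \otimes (\varphi (f) a_{j, k} - a_{j, k} \varphi (f))$, and bounds the norm by $n^2$ times the largest commutator of $\varphi$ with an element of $F_0$, whence the choice $\ep / n^2$. Finally, amplification by $\id_{M_n}$ preserves Cuntz subequivalence, so $1 - p \precsim_A z$, $1 - p \precsim_{A^{\af}} z'$, and $1 - p \precsim_{A^{\af}} p$ yield
\[
1 - P = 1_{M_n} \otimes (1 - p) \precsim_{M_n \otimes A} 1_{M_n} \otimes z \precsim_{M_n \otimes A} x ,
\]
together with $1 - P \precsim_{M_n \otimes A^{\af}} y$ and $1 - P \precsim_{M_n \otimes A^{\af}} P$, verifying all hypotheses of Lemma~\ref{L_1X09_NoNorm1}.

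The step I expect to be the main obstacle is the comparison claim, specifically passing from a single Cuntz-small positive element in the corner $e_{1,1} \otimes C$ to the full amplification $1_{M_n} \otimes z$; this forces the use of $n$ mutually orthogonal positive elements of $\overline{w (M_n \otimes C) w}$ and the orthogonal-sum behavior of Cuntz comparison. The conceptual point that makes everything else clean is the preliminary observation that stable finiteness of $A$ lets us invoke Lemma~\ref{L_1X09_NoNorm1} and thereby avoid the norm condition Definition \ref{traR}(\ref{Item_902_pxp_TRP}), which would otherwise be awkward to control for a projection of the rigid form $1_{M_n} \otimes p$.
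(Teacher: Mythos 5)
Your proof is correct, and its overall architecture is the same as the paper's: reduce to Lemma~\ref{L_1X09_NoNorm1} using stable finiteness of $M_n \otimes A$, take $P = 1_{M_n} \otimes p$ and $\Phi = \id_{M_n} \otimes \varphi$ with tolerance $\ep / n^2$ applied to the set of matrix entries, and observe that amplification preserves Cuntz subequivalence. The one place where you diverge is the comparison claim that some $1_{M_n} \otimes z$ is Cuntz subequivalent to a given nonzero $w \in (M_n \otimes C)_{+}$. The paper gets this in two steps: it first invokes Lemma 3.8 of \cite{phill23} to find a nonzero diagonal entry, so that $e_{l, l} \otimes w_{l, l} \precsim w$, and then applies Lemma 2.4 of \cite{philar} to produce $n$ mutually orthogonal, pairwise Cuntz-equivalent positive elements inside ${\overline{w_{l, l} C w_{l, l}}}$, yielding $1_{M_n} \otimes w_0 \precsim e_{l, l} \otimes w_{l, l}$. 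You instead work directly in $M_n \otimes C$: take $n$ merely orthogonal nonzero positive elements of ${\overline{w (M_n \otimes C) w}}$ (which exist since that hereditary subalgebra is infinite dimensional), use Lemma~\ref{lma_L683_baj} with the corner $e_{1,1} \otimes C$ to find $e_{1,1} \otimes z$ subequivalent to each of them, and add up orthogonal subequivalences. Your version is self-contained in the sense that it uses only a lemma already stated in the paper rather than the two external citations, at the cost of needing the (standard) additivity of $\precsim$ over orthogonal sums; both arguments are sound.
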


\begin{proof}
We verify the conditions of Lemma~\ref{L_1X09_NoNorm1}.

Let $F \subseteq M_n \otimes A$ and $S \subseteq C (G)$ be finite,
let $\varepsilon > 0$,
let $x \in (M_n \otimes A)_{+} \setminus \{ 0 \}$,
and let $y \in (M_n \otimes A^{\alpha})_{+} \setminus \{ 0 \}$.
Let $(e_{j, k})_{j, k = 1, 2, \ldots, n}$
be the standard system of matrix units for~$M_n$.
Choose a finite set $F_0 \S A$ such that
\[
F \S \Biggl\{ \sum_{j, k = 1}^n e_{j, k} \otimes a_{j, k} \colon
 {\mbox{$a_{j, k} \in F_0$ for $j, k = 1, 2, \ldots, n \}$ }} \Biggr\}.
\]
Set $\ep_0 = n^{- 2} \ep$.
Write $x = \sum_{j, k = 1}^n e_{j, k} \otimes x_{j, k}$
and $y = \sum_{j, k = 1}^n e_{j, k} \otimes y_{j, k}$
with $x_{j, k} \in A$ and $y_{j, k} \in A^{\alpha}$
for $j, k = 1, 2, \ldots, n$.
By Lemma~3.8 of~\cite{phill23},
there are $l, m \in \{ 1, 2, \ldots, n \}$
such that $x_{l, l} \neq 0$ and $y_{m, m} \neq 0$.
It is immediate that
\begin{equation}\label{Eq_1Z22_MtSb}
e_{l, l} \otimes x_{l, l} \precsim_{M_n \otimes A} x
\andeqn
e_{m, m} \otimes y_{m, m} \precsim_{M_n \otimes A^{\alpha}} y.
\end{equation}
Since $A$ is simple and not of Type~I,
and since $A^{\af}$ is simple by Theorem~\ref{thm_simple12}
and not of Type~I by Proposition~\ref{C_1Z11_InfDim},
Lemma~2.4 of~\cite{philar}
implies the existence of $x_0 \in A_{+} \setminus \{ 0 \}$
and $y_0 \in (A^{\alpha})_{+} \setminus \{ 0 \}$ such that
\begin{equation}\label{Eq_1Z22_1Sub}
1_{M_n} \otimes x_0 \precsim_{M_n \otimes A} e_{l, l} \otimes x_{l, l}
\andeqn
1_{M_n} \otimes y_0
 \precsim_{M_n \otimes A^{\alpha}} e_{m, m} \otimes y_{m, m}.
\end{equation}
Apply Lemma~\ref{L_1X09_NoNorm1} using $F_0$, $S$, $\ep_0$, $x_0$,
and $y_0$, getting a projection $p_0 \in A^{\alpha}$ and an equivariant
unital completely positive map $\varphi_0 \colon C (G) \to p_0 A p_0$
such that the following hold.
\begin{enumerate}
\item\label{L_1Z22_ecmm}
$\varphi_0$ is an $(F_0, S, \varepsilon_0)$-approximately
central multiplicative map.
\item\label{L_1Z22_1mpx}
$1 - p_0 \precsim_{A} x_0$.
\item\label{L_1Z22_1py}
$1 - p_0 \precsim_{A^{\alpha}} y_0$.
\item\label{L_1Z22_1mppp}
$1 - p_0 \precsim_{A^{\alpha}} p_0$.
\end{enumerate}
Set $p = 1_{M_n} \otimes p_0$
and define $\ph \colon C (G) \to p (M_n \otimes A) p$
by $\ph (f) = 1_{M_n} \otimes \ph_0 (f)$ for all $f \in C (G)$.
Clearly $\ph$ is equivariant and \ucp.
We verify Conditions (\ref{L_1X09_NoNorm1_ecmm}),
(\ref{L_1X09_NoNorm1_1mpx}), (\ref{L_1X09_NoNorm1_1py}),
and~(\ref{L_1X09_NoNorm1_1mppp}) of Lemma~\ref{L_1X09_NoNorm1}.

For Lemma \ref{L_1X09_NoNorm1}(\ref{L_1X09_NoNorm1_1mpx}),
using (\ref{L_1Z22_1mpx}) above at the second step,
(\ref{Eq_1Z22_1Sub}) at the third step,
and (\ref{Eq_1Z22_MtSb}) at the fourth step, we get
\[
1_{M_n \otimes A} - p
 = 1_{M_n} \otimes (1_A - p_0)
 \precsim_{M_n \otimes A} 1_{M_n} \otimes x_0
 \precsim_{M_n \otimes A} e_{l, l} \otimes x_{l, l}
 \precsim_{M_n \otimes A} x.
\]
Using (\ref{L_1Z22_1py}) above in place of~(\ref{L_1Z22_1mpx}) above,
we also get
\[
1_{M_n \otimes A} - p
 = 1_{M_n} \otimes (1_A - p_0)
 \precsim_{M_n \otimes A^{\alpha}} 1_{M_n} \otimes y_0
 \precsim_{M_n \otimes A^{\alpha}} e_{m, m} \otimes y_{l, l}
 \precsim_{M_n \otimes A^{\alpha}} y,
\]
which is Lemma \ref{L_1X09_NoNorm1}(\ref{L_1X09_NoNorm1_1py}).
Condition~(\ref{L_1X09_NoNorm1_1mppp}) of Lemma~\ref{L_1X09_NoNorm1}
follows immediately
from~(\ref{L_1Z22_1mppp}) above by tensoring with $1_{M_n}$.

It remains to check
Lemma \ref{L_1X09_NoNorm1}(\ref{L_1X09_NoNorm1_ecmm}).
If $f_1, f_2 \in S$,
then
\[
\begin{split}
\| \ph (f_1 f_2) - \ph (f_1) \ph (f_2) \|
& = \bigl\| 1_{M_n} \otimes
     [\ph_0 (f_1 f_2) - \ph_0 (f_1) \ph_0 (f_2)] \bigr\|
\\
& = \| \ph_0 (f_1 f_2) - \ph_0 (f_1) \ph_0 (f_2) \|
  < \ep_0
  \leq \ep.
\end{split}
\]
For $f \in S$ and $a \in F$,
write $a = \sum_{j, k = 1}^n e_{j, k} \otimes a_{j, k}$
with $a_{j, k} \in F_0$ for $j, k = 1, 2, \ldots, n$.
Then
\[
\| \ph (f) a - a \ph (f) \|
  = \biggl\| \sum_{j, k = 1}^n e_{j, k}
     \otimes [\ph_0 (f) a_{j, k} - a_{j, k} \ph_0 (f)] \biggr\|
  < n^2 \ep_0
  = \ep.
\]
This completes the proof.
\end{proof}

\begin{prp}\label{P_1X25_Comp_Fx}
Let $G$ be a second countable compact group,
let $A$ be a stably finite simple separable infinite dimensional \uca,
and let $\alpha \colon G \to \Aut (A)$ be an action of $G$ on~$A$
which has the tracial Rokhlin property with comparison.
Let $n \in \N$, and let $a, b \in M_n (A)_{+}$.
Suppose that $0$ is a limit point of $\spec (b)$.
Then $a \precsim_{A} b$ \ifo{} $a \precsim_{A^{\af}} b$.
\end{prp}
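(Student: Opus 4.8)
The implication $a \precsim_{A^\af} b \Rightarrow a \precsim_A b$ is immediate: a sequence $(v_m)$ in $M_\infty (A^\af) \S M_\infty (A)$ with $v_m b v_m^* \to a$ witnesses Cuntz subequivalence over both algebras. The content is the reverse implication, and this is what the plan addresses. First I would reduce to the case $n = 1$: by Proposition~\ref{P_1X25_Mn} the amplified action $\id_{M_n} \otimes \af$ on $M_n \otimes A$ again satisfies all the hypotheses, and Cuntz subequivalence over $A$ (resp.\ $A^\af$) is unchanged when $A$ is replaced by $M_n (A)$ (resp.\ $A^\af$ by $M_n (A^\af)$). Hence it suffices to prove that whenever $a, b \in (A^\af)_+$ with $0$ a limit point of $\spec (b)$ and $a \precsim_A b$, then $a \precsim_{A^\af} b$.

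Fix $\ep > 0$; it is enough to show $(a - \ep)_+ \precsim_{A^\af} b$ and then let $\ep \to 0$. By the standard reformulation of Cuntz subequivalence (R{\o}rdam's lemma), $a \precsim_A b$ yields $\dt > 0$ and $r \in A$ with $(a - \ep)_+ = r (b - \dt)_+ r^*$. Because $0$ is a limit point of $\spec (b)$, the set $\spec (b) \cap (0, \dt)$ is nonempty, so I can choose a \cfn{} $g \colon [0, \infty) \to [0, 1]$ with $g (0) = 0$ and $\supp (g) \S (0, \dt)$ such that $c := g (b) \in (A^\af)_+$ is nonzero. The key point is that $c$ and $(b - \dt)_+$ have disjoint supports in the functional calculus of $b$, so $c (b - \dt)_+ = 0$ and $c \oplus (b - \dt)_+ \sim_{A^\af} c + (b - \dt)_+ \precsim_{A^\af} b$, the last relation because $g + (\,\cdot\, - \dt)_+$ is continuous and vanishes at~$0$. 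This nonzero $c$ is exactly the ``room'' that $0$ being a limit point of $\spec (b)$ provides, and it is what will absorb the tracial error.

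Next I would apply Theorem~\ref{thm_ApproxHommtr} with tolerance $\eta$ (to be fixed small at the end), with approximate multiplicativity degree $3$, with $x = 1$ and $y = c$, and with $F_1 \supseteq \{ r, r^*, (b - \dt)_+ \}$ and $F_2 \supseteq \{ (a - \ep)_+, (b - \dt)_+ \}$. This produces a projection $p \in A^\af$ and a \ucp{} map $\ps \colon A \to p A^\af p$ that is approximately multiplicative and approximately central on these sets, with $\ps (u) \approx_\eta p u p$ for $u \in F_2$, and crucially with $1 - p \precsim_{A^\af} c$. Applying $\ps$ to $(a - \ep)_+ = r (b - \dt)_+ r^*$ and using three-fold approximate multiplicativity, $\ps (r) = p \ps (r) = \ps (r) p$, and $\ps ((b - \dt)_+) \approx_\eta p (b - \dt)_+ p$, I obtain $p (a - \ep)_+ p \approx_{C \eta} \ps (r) (b - \dt)_+ \ps (r)^*$ for a constant $C$ depending only on $\| r \|$; since the right-hand side lies in $A^\af$ and has the form $w (b - \dt)_+ w^*$, R{\o}rdam's lemma gives $\bigl( p (a - \ep)_+ p - C \eta \bigr)_+ \precsim_{A^\af} (b - \dt)_+$. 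Separately, approximate centrality gives $\| p (a - \ep)_+ - (a - \ep)_+ p \| < \eta$, whence $(a - \ep)_+$ lies within $2 \eta$ of the orthogonal sum $p (a - \ep)_+ p \oplus (1 - p) (a - \ep)_+ (1 - p)$; the second summand satisfies $(1 - p)(a - \ep)_+ (1 - p) \precsim_{A^\af} 1 - p \precsim_{A^\af} c$.

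Finally I would assemble the pieces. Combining the orthogonal decomposition with the two subequivalences and the orthogonality $(b - \dt)_+ \perp c$ gives, after absorbing the $\eta$-shifts via the usual perturbation lemmas for Cuntz comparison,
\[
\bigl( (a - \ep)_+ - C' \eta \bigr)_+
 \precsim_{A^\af} (b - \dt)_+ \oplus c
 \precsim_{A^\af} b .
\]
Letting $\eta \to 0$ (after $\ep$, $\dt$, $r$, and $c$ are fixed) yields $(a - \ep)_+ \precsim_{A^\af} b$, and then $\ep \to 0$ gives $a \precsim_{A^\af} b$. The main obstacle is not any single estimate but the coordination in the previous paragraph: one must route the ``large'' part $p (a - \ep)_+ p$ into $(b - \dt)_+$ through the approximately multiplicative map $\ps$, while simultaneously routing the ``small'' part $(1 - p)(a - \ep)_+ (1 - p)$ into the orthogonal complement $c$, and this is only possible because $0$ is a limit point of $\spec (b)$ (so that a nonzero $c$ orthogonal to $(b - \dt)_+$ exists) and because Theorem~\ref{thm_ApproxHommtr} lets me make $1 - p \precsim_{A^\af} c$ for exactly this $c$. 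Keeping the various $\eta$-perturbations consistent, so that both routes land in orthogonal parts of $b$, is the delicate bookkeeping.
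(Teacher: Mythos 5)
Your argument is correct and follows essentially the same route as the paper's proof: reduce to $n = 1$ via Proposition~\ref{P_1X25_Mn}, use the hypothesis that $0$ is a limit point of $\spec (b)$ to produce a nonzero positive element of $A^{\af}$ orthogonal to $(b - \dt)_{+}$ (your $c = g (b)$ is the paper's $f (b)$), feed that element in as the $y$ of Theorem~\ref{thm_ApproxHommtr}, and then route $p (a - \ep)_{+} p$ into $(b - \dt)_{+}$ through the approximately multiplicative map while routing $(1 - p) (a - \ep)_{+} (1 - p) \precsim_{A^{\af}} 1 - p$ into the orthogonal piece. The only differences are bookkeeping (the paper works with an approximate witness $\| v^* (b - \dt)_{+} v - (a - \ep / 5)_{+} \| < \ep / 5$ and a fixed budget of $\ep / 5$'s rather than an exact $r (b - \dt)_{+} r^*$ and a floating $\eta$), so there is nothing further to add.
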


\begin{proof}
We need only prove that
$a \precsim_{A} b$ implies $a \precsim_{A^{\af}} b$.
By Proposition~\ref{P_1X25_Mn}, we may assume $n = 1$.
Also, \wolog{} $\| a \| \leq 1$ and $\| b \| \leq 1$.

By Lemma 1.4(11) of~\cite{philar} (not the original source),
it is enough to let $\ep > 0$ be arbitrary,
and prove that $(a - \ep)_{+} \precsim_{A^{\af}} b$.
Again by Lemma 1.4(11) of~\cite{philar}, there is $\dt > 0$ such that
$\bigl( a - \frac{\ep}{5} \bigr)_{+} \precsim_{A} (b - \dt)_{+}$.
Choose $v \in A$ such that
\begin{equation}\label{Eq_1X28_4St}
\Bigl\| v^* (b - \dt)_{+} v
     - \Bigl( a - \frac{\ep}{5} \Bigr)_{+} \Bigr\|
  < \frac{\ep}{5}.
\end{equation}
By Lemma VI.11 of~\cite{Arcy},
there is $\rh_0 > 0$ such that whenever $D$ is a unital \ca{}
and $c, d \in D_{+}$ satisfy
\[
\| c \| \leq 1,
\qquad
\| d \| \leq 1,
\andeqn
\| c - d \| < \rh_0,
\]
then
\begin{equation}\label{Eq_1X25_Ests}
\Bigl\| \Bigl( c - \frac{\ep}{5} \Bigr)_{+}
     - \Bigl( d - \frac{\ep}{5} \Bigr)_{+} \Bigr\|
  < \frac{\ep}{5}.
\end{equation}
Define
\begin{equation}\label{Eq_1X28_St}
\rh = \min \left( \frac{\rh_0}{2}, \,
        \frac{\ep}{5 (2 + \| v \|^2 )} \right).
\end{equation}
Define $f \colon [0, 1] \to [0, 1]$ by
\[
f (\ld)
 = \begin{cases}
   \ld (\dt - \ld) & \hspace*{1em} 0 \leq \ld \leq \dt
       \\
   0               & \hspace*{1em} \dt < \ld \leq 1.
\end{cases}
\]
Then $f (b) \neq 0$.
Define $F_1 \S A$ and $F_2 \S A^{\af}$ by
\[
F_1 = \Bigl\{ a, \, \Bigl( a - \frac{\ep}{5} \Bigr)_{+}, \,
  (b - \dt)_{+}, \, v, \, v^* \Bigr\}
\andeqn
F_2 = \Bigl\{ \Bigl( a - \frac{\ep}{5} \Bigr)_{+}, \,
  (b - \dt)_{+} \Bigr\}.
\]

Apply Theorem~\ref{thm_ApproxHommtr}, getting
a projection $p \in A^{\alpha}$ and a
unital completely positive contractive map
$\psi \colon A \to p A^{\alpha} p$ such that the following hold.
\begin{enumerate}
\setcounter{enumi}{\value{TmpEnumi}}
\item\label{Pf_1X25_18}
$\| \psi (a b c) - \psi (a) \psi (b) \psi (c) \| < \rh$
for all $a, b, c \in F_1$.
\item\label{Pf_commute1469}
$\| p a - a p \| < \rh$ for all $a \in F_1$.
\item\label{Pf_1X25_21}
$\| \psi (a) - p a p \| < \rh$
for all $a \in F_2$.
\item\label{Pf_1X25_20}
$1 - p \precsim_{A^{\alpha}} f (b)$.
\end{enumerate}
Since
\begin{equation}\label{Eq_2115_StSt}
\bigl\| a - \bigl[ p a p + (1 - p) a (1 - p) \bigr] \bigr\| < 2 \rh
\end{equation}
by~(\ref{Pf_commute1469}),
it follows from~(\ref{Eq_1X28_St}), (\ref{Eq_1X25_Ests}),
and the choice of~$\rh_0$ that
\[
\Bigl\| \Bigl( a - \frac{\ep}{5} \Bigr)_{+}
   - \Bigl( \bigl[ p a p + (1 - p) a (1 - p) \bigr]
       - \frac{\ep}{5} \Bigr)_{+} \Bigr\|
 < \frac{\ep}{5},
\]
whence
\begin{equation}\label{Eq_1X30_CutEst}
\Bigl\| p \Bigl( a - \frac{\ep}{5} \Bigr)_{+} p
   - \Bigl( p a p - \frac{\ep}{5} \Bigr)_{+} \Bigr\|
 <  \frac{\ep}{5}.
\end{equation}

Using, in order, (\ref{Pf_1X25_21}), (\ref{Pf_1X25_18}),
(\ref{Eq_1X28_4St}), (\ref{Pf_1X25_21}), and~(\ref{Eq_1X30_CutEst})
at the third step, and (\ref{Eq_1X28_St}) at the fourth step,
\[
\begin{split}
& \Bigl\| \ps (v)^* (b - \dt)_{+} \ps (v)
  - \Bigl( p a p - \frac{\ep}{5} \Bigr)_{+} \Bigr\|
\\
& \hspace*{3em} {\mbox{}}
  = \Bigl\| \ps (v)^* p (b - \dt)_{+} p \ps (v)
  - \Bigl( p a p - \frac{\ep}{5} \Bigr)_{+} \Bigr\|
\\
& \hspace*{3em} {\mbox{}}
 \leq \| \ps (v)^* \|
       \| p (b - \dt)_{+} p - \ps ( (b - \dt)_{+} ) \| \| \ps (v) \|
\\
& \hspace*{6em} {\mbox{}}
      + \bigl\| \ps (v)^* \ps ( (b - \dt)_{+} ) \ps (v)
          - \ps (v^* (b - \dt)_{+} v) \bigr\|
\\
& \hspace*{6em} {\mbox{}}
      + \| \ps \| \Bigl\| v^* (b - \dt)_{+} v
              - \Bigl( a - \frac{\ep}{5} \Bigr)_{+} \Bigr\|
      + \Bigl\| \ps \Bigl( \Bigl( a - \frac{\ep}{5} \Bigr)_{+} \Bigr)
        - p \Bigl( a - \frac{\ep}{5} \Bigr)_{+} p \Bigr\|
\\
& \hspace*{6em} {\mbox{}}
      + \Bigl\|  p \Bigl( a - \frac{\ep}{5} \Bigr)_{+} p
        - \Bigl( p a p - \frac{\ep}{5} \Bigr)_{+} \Bigr\|
\\
& \hspace*{3em} {\mbox{}}
  < \| v \|^2 \rh + \rh + \frac{\ep}{5} + \rh + \frac{\ep}{5}
  \leq \frac{3 \ep}{5}.
\end{split}
\]
Therefore, by Parts (8) and~(10) of Lemma 1.4 of~\cite{philar},
\[
\Bigl( p a p - \frac{4 \ep}{5} \Bigr)_{+}
    \precsim_{A^{\alpha}} \ps (v)^* (b - \dt)_{+} \ps (v)
    \precsim_{A^{\alpha}} (b - \dt)_{+}.
\]
Also, using $\| a \| \leq 1$ and (\ref{Pf_1X25_20}),
\[
\Bigl( (1 - p) a (1 - p) - \frac{4 \ep}{5} \Bigr)_{+}
  \leq (1 - p) a (1 - p)
  \leq 1 - p
  \precsim_{A^{\alpha}} f (b).
\]
Therefore, since $f (b)$ is orthogonal to $(b - \dt)_{+}$,
\[
\begin{split}
\Bigl( \bigl[ p a p + (1 - p) a (1 - p) \bigr]
       - \frac{4 \ep}{5} \Bigr)_{+}
& = \Bigl( p a p - \frac{4 \ep}{5} \Bigr)_{+}
     + \Bigl( (1 - p) a (1 - p) - \frac{4 \ep}{5} \Bigr)_{+}
\\
& \precsim_{A^{\alpha}} (b - \dt)_{+} + f (b)
  \leq b.
\end{split}
\]
By (\ref{Eq_2115_StSt}),
$2 \rh < \frac{\ep}{5}$ (from~(\ref{Eq_1X28_St})),
and Corollary~1.6 of~\cite{philar},
\[
(a - \ep)_{+}
 \precsim_{A^{\alpha}}
   \Bigl( p a p + (1 - p) a (1 - p) - \frac{4 \ep}{5} \Bigr)_{+}.
\]
So $(a - \ep)_{+} \precsim_{A^{\alpha}} b$, as desired.
\end{proof}

The following result is the analog of Lemma~3.11 of~\cite{radifinite}.
We use the terminology of Definition~3.8 of~\cite{radifinite},
which is based on the discussion before Corollary 2.24 of~\cite{APT11}
and Definition~3.1 of~\cite{philar}.

\begin{prp}\label{WC_plus_injectivity}
Let $A$ be a stably finite simple unital \ca{} which is not of Type~I
and let $\alpha \colon G \to \Aut (A)$
be an action of a second countable compact group $G$ on $A$
which has the tracial Rokhlin property with comparison.
Let $\iota \colon A^{\alpha} \to A$ be the inclusion map.
Then:
\begin{enumerate}
\item\label{WC_plus_injectivity_a}
The map $\W (\iota) \colon \W (A^{\alpha}) \to \W (A)$
induces an isomorphism of ordered semigroups
from $\W_{+} (A^{\alpha}) \cup \{ 0 \}$
to its image in $\W (A)$.
\item\label{WC_plus_injectivity_b}
The map $\Cu (\iota) \colon \Cu (A^{\alpha}) \to \Cu (A)$
induces an isomorphism of ordered semigroups
from $\Cu_{+} (A^{\alpha}) \cup \{ 0 \}$
to its image in $\Cu (A)$.
\end{enumerate}
\end{prp}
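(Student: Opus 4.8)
The plan is to reduce everything to the matrix-level comparison result, Proposition~\ref{P_1X25_Comp_Fx}, and to handle the passage to the stabilization in part~(\ref{WC_plus_injectivity_b}) by an approximation argument engineered to preserve pure positivity. First I dispose of the formal part. The inclusion $\iota$ is a $*$-homomorphism, so $\W(\iota)$ and $\Cu(\iota)$ are ordered-semigroup homomorphisms sending $\langle a\rangle_{A^\af}\mapsto\langle a\rangle_{A}$, and $\W_+(A^\af)\cup\{0\}$, $\Cu_+(A^\af)\cup\{0\}$ are subsemigroups (the sum of purely positive elements is purely positive in the stably finite algebra $A^\af$). Since the Cuntz order is antisymmetric, to prove each map restricts to an isomorphism of ordered semigroups onto its image it suffices to prove the order-embedding property: for $x,y$ in the relevant subsemigroup, $\W(\iota)(x)\le\W(\iota)(y)$ implies $x\le y$ (and likewise for $\Cu$); injectivity, and the fact that the inverse on the image is again a semigroup/order map, then follow automatically. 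The implication $x\le y\Rightarrow\W(\iota)(x)\le\W(\iota)(y)$ is immediate, and if $y=0$ then $\W(\iota)(x)\le0$ forces $\langle a\rangle_A=0$, hence $a=0$ and $x=0$. So the whole content is: for purely positive $y$ represented by $b$, and arbitrary $x$ represented by $a$, one has $a\precsim_A b\Rightarrow a\precsim_{A^\af}b$. The hypothesis that $y$ is purely positive is exactly what feeds Proposition~\ref{P_1X25_Comp_Fx}: in a stably finite algebra a representative $b$ of a purely positive class has $0$ as a limit point of $\spec(b)$ (were $0$ isolated, the support projection would exist and $b$ would be Cuntz equivalent to a projection). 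The reverse implication $a\precsim_{A^\af}b\Rightarrow a\precsim_A b$ is automatic, since $M_\infty(A^\af)\subseteq M_\infty(A)$.

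For part~(\ref{WC_plus_injectivity_a}) this finishes quickly. Given $x,y\in\W_+(A^\af)$, choose $n$ and $a,b\in M_n(A^\af)_+$ representing them. Then $0$ is a limit point of $\spec(b)$, and $\W(\iota)(x)\le\W(\iota)(y)$ says $a\precsim_A b$. Viewing $a,b\in M_n(A)_+$ and applying Proposition~\ref{P_1X25_Comp_Fx} gives $a\precsim_{A^\af}b$, i.e.\ $x\le y$, as required.

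Part~(\ref{WC_plus_injectivity_b}) is where the real work lies, because a purely positive class in $\Cu(A^\af)=\W(A^\af\otimes\K)$ need not be represented by a finite matrix, and truncating a purely positive element of $A^\af\otimes\K$ to $M_N(A^\af)$ typically makes $0$ an isolated point of the spectrum, destroying the hypothesis of Proposition~\ref{P_1X25_Comp_Fx}. Let $a,b\in(A^\af\otimes\K)_+$ with $b$ purely positive and $a\precsim_{A\otimes\K}b$; it suffices to show $(a-\ep)_+\precsim_{A^\af\otimes\K}b$ for every $\ep>0$. Fix $\ep$. A standard compression gives $n$ and $a_0\in M_n(A^\af)_+$ with $(a-\ep)_+\precsim_{A^\af\otimes\K}a_0\precsim_{A^\af\otimes\K}a$. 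R{\o}rdam's lemma then gives $\dt>0$ with $a_0\precsim_{A\otimes\K}(b-\dt)_+$, and approximating $(b-\dt)_+$ by its compressions gives $N\ge n$ with $a_0\precsim_{A\otimes\K}\hat b$, where $\hat b=p_N(b-\dt)_+p_N\in M_N(A^\af)_+$. Now I repair pure positivity: since $0$ is a limit point of $\spec(b)$, the element $g(b)$ is nonzero and orthogonal to $(b-\dt)_+$, where $g$ is a continuous bump supported in $(0,\dt)$ with $g(\ld)=\ld$ near $0$, so that $0$ is a limit point of $\spec(g(b))$ and $(b-\dt)_+ + g(b)\le b$. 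Using that $A^\af$, hence $M_K(A^\af)$, is simple and not of Type~I (Proposition~\ref{C_1Z11_InfDim}), together with Lemma~\ref{lma_L683_baj} and extraction of a sequence of orthogonal positive elements, I obtain a purely positive $c\in M_K(A^\af)_+$ with $c\precsim_{A^\af\otimes\K}g(b)$. Set $b_{00}=\hat b\oplus c\in M_{N+K}(A^\af)_+$. Then $0$ is a limit point of $\spec(b_{00})$ (because of $c$); $a_0\precsim_{A\otimes\K}\hat b\le b_{00}$; and, since $\hat b\precsim(b-\dt)_+$, $c\precsim g(b)$, and $(b-\dt)_+$ is orthogonal to $g(b)$, I get $b_{00}\precsim_{A^\af\otimes\K}(b-\dt)_+ + g(b)\le b$. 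Applying Proposition~\ref{P_1X25_Comp_Fx} to $a_0,b_{00}\in M_{N+K}(A)_+$ yields $a_0\precsim_{A^\af\otimes\K}b_{00}\precsim_{A^\af\otimes\K}b$, whence $(a-\ep)_+\precsim_{A^\af\otimes\K}b$; letting $\ep\to0$ gives $a\precsim_{A^\af\otimes\K}b$.

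The main obstacle is exactly the construction in the last paragraph: producing, below $b$ and above $\hat b$, a finite-matrix element $b_{00}$ that is again purely positive. The orthogonal tail $c$, built from the spectral mass of $b$ near $0$, is what makes this possible, and verifying its existence (a purely positive element of a matrix algebra Cuntz below $g(b)$) is where the simplicity and non-Type-I hypotheses enter, via Proposition~\ref{C_1Z11_InfDim} and Lemma~\ref{lma_L683_baj}. The remaining manipulations — R{\o}rdam's lemma, the $(a-\ep)_+$ reductions, the identity $\langle u+w\rangle=\langle u\rangle+\langle w\rangle$ for orthogonal positive $u,w$, and stability of the Cuntz relation under enlarging the matrix size — are routine.
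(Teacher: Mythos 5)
Your proof is correct and is, in substance, the paper's own argument written out in full: the paper simply defers to Lemma~3.11 of~\cite{radifinite}, replacing its comparison lemma by Proposition~\ref{P_1X25_Comp_Fx} and feeding in simplicity and non-Type-I-ness of $A^{\af}$ (Theorem~\ref{thm_simple12} and Proposition~\ref{C_1Z11_InfDim}) for the step that repairs pure positivity after compression, which is exactly your construction of $b_{00}=\hat b\oplus c$. The one place to tighten is the claim that R{\o}rdam's lemma gives $a_0\precsim_{A\otimes\K}(b-\dt)_+$ outright: take $a_0=(p_n a p_n-\ep/2)_+$ with $\|p_n a p_n-a\|<\ep/2$ so that the lemma applies literally, which is the routine adjustment you already flag.
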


\begin{proof}
The proof is essentially the same
as that of Lemma~3.11 of~\cite{radifinite},
with the following changes.
The use of Lemma~3.7 of~\cite{radifinite}
is replaced with the use of Proposition~\ref{P_1X25_Comp_Fx}.
The proof of Lemma 3.10 of~\cite{radifinite} uses
the weak tracial Rokhlin property only to conclude
that $A^{\af}$ is simple and not of Type~I.
Here, $M_n (A^{\af})$ is simple and not of Type~I because
Theorem~\ref{thm_simple12} and Proposition~\ref{C_1Z11_InfDim}
show that this is true of~$A^{\af}$.
\end{proof}

\begin{pbm}\label{Pb_2115_Ran}
In Proposition~\ref{WC_plus_injectivity},
what are the ranges of the maps
$\W_{+} (A^{\alpha}) \cup \{ 0 \} \to \W (A)$
and $\Cu_{+} (A^{\alpha}) \cup \{ 0 \} \to \Cu (A)$?
\end{pbm}

If $G$ is finite, the image of $\Cu_{+} (A^{\alpha}) \cup \{ 0 \}$
is $\Cu_{+} (A)^{\alpha} \cup \{ 0 \}$, the $\af$-fixed points of
$\Cu_{+} (A) \cup \{ 0 \}$, by Theorem~5.5 of~\cite{radifinite}.
If, in addition, $A^{\alpha}$ has stable rank one,
a similar result holds for $\W_{+} (A^{\alpha}) \cup \{ 0 \} \to \W (A)$
by Corollary~5.6 of~\cite{radifinite}.
When we try to adapt the arguments of~\cite{radifinite} to the case
in which $G$ is only assumed to be compact,
we seem to need information on comparison in algebras of the form
$C (G, B)$.
We don't know what happens even if $A = C (G) \otimes B$
and $\af_g = \Lt_g \otimes \id_B$,
an action which even has the Rokhlin property, although the
algebra $A$ here is not simple.

\begin{thm}\label{P_1X25_rc_Fix}
Let $G$ be a second countable compact group,
let $A$ be a stably finite simple separable infinite dimensional \uca,
and let $\alpha \colon G \to \Aut (A)$ be an action of $G$ on~$A$
which has the tracial Rokhlin property with comparison.
Then $\rc (A^{\af}) \leq \rc (A)$.
\end{thm}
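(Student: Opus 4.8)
The plan is to fix a real number $\rho > \rc(A)$ and prove that $\rc(A^{\af}) \le \rho$; taking the infimum over all such $\rho$ then yields the theorem. By the definition of the radius of comparison (Definition~\ref{Cuntz.df}), it suffices to take $a, b \in M_{\I}(A^{\af})_+$ with $d_{\ta}(a) + \rho < d_{\ta}(b)$ for all $\ta \in \QT(A^{\af})$ and to show that $a \precsim_{A^{\af}} b$. Writing $a, b \in M_n(A^{\af})_+$ for a suitable $n$, and recalling that $M_n(A^{\af})$ is the fixed point algebra of the amplified action $g \mapsto \id_{M_n} \otimes \af_g$, which again has the tracial Rokhlin property with comparison by Proposition~\ref{P_1X25_Mn}, we are free to use Proposition~\ref{P_1X25_Comp_Fx}, already stated for matrix algebras, without reducing to $n = 1$. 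Throughout, we use that $A^{\af}$ is simple (Theorem~\ref{thm_simple12}) and infinite dimensional and not of Type~I (Proposition~\ref{C_1Z11_InfDim}).

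The first step is to pass the hypothesis from $\QT(A^{\af})$ to $\QT(A)$. Since $1 \in A^{\af}$, every $\sm \in \QT(A)$ restricts to a normalized $2$-quasitrace $\sm|_{A^{\af}} \in \QT(A^{\af})$. Moreover, for any $c \in M_n(A^{\af})_+$ we have $c^{1/m} \in M_n(A^{\af})$, so $\sm(c^{1/m}) = (\sm|_{A^{\af}})(c^{1/m})$ for all $m$, and hence $d_{\sm}(c) = d_{\sm|_{A^{\af}}}(c)$. Applying the hypothesis to $\ta = \sm|_{A^{\af}}$ gives $d_{\sm}(a) + \rho < d_{\sm}(b)$ for every $\sm \in \QT(A)$. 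Because $\rho > \rc(A)$, the defining property of $\rc(A)$ now yields $a \precsim_{A} b$.

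The final step is to transfer $a \precsim_A b$ to $a \precsim_{A^{\af}} b$ by means of Proposition~\ref{P_1X25_Comp_Fx}. If $0$ is a limit point of $\spec(b)$, then Proposition~\ref{P_1X25_Comp_Fx} applies directly and gives $a \precsim_{A^{\af}} b$, completing the proof. The one remaining case---and, I expect, the main obstacle---is when $0$ is not a limit point of $\spec(b)$, so that $b$ is Cuntz equivalent in both $A$ and $A^{\af}$ to its support projection $q \in M_n(A^{\af})$ and Proposition~\ref{P_1X25_Comp_Fx} cannot be invoked with $b$ on the right. To handle this, I would first use compactness of $\QT(A^{\af})$ together with the lower semicontinuity of $\ta \mapsto d_{\ta}(b)$ and the upper semicontinuity of $\ta \mapsto d_{\ta}(a)$ to extract $\eta > 0$ with $d_{\ta}(a) + \rho + \eta \le d_{\ta}(b) = \ta(q)$ for all $\ta \in \QT(A^{\af})$. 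The crux is then to construct a single \emph{purely positive} element $w \in (q M_n(A^{\af}) q)_+$, that is, one with $0$ a limit point of $\spec(w)$, such that $\ta(q) - d_{\ta}(w) < \eta$ for every $\ta \in \QT(A^{\af})$; this is where simplicity, infinite dimensionality, and the absence of Type~I representations of $A^{\af}$ enter, via the machinery for producing small orthogonal subelements (as in Lemma~2.4 of~\cite{philar} and Lemma~\ref{lma_L683_baj}), so that a compact Cuntz class can be approximated from below, uniformly over the quasitrace space, by a soft one. Granting such a $w$, we have $w \precsim_{A^{\af}} q \sim_{A^{\af}} b$ and $d_{\ta}(a) + \rho < d_{\ta}(w)$ for all $\ta$; repeating the quasitrace-restriction argument for the pair $(a, w)$ gives $a \precsim_{A} w$, and since $0$ is a limit point of $\spec(w)$, Proposition~\ref{P_1X25_Comp_Fx} yields $a \precsim_{A^{\af}} w \precsim_{A^{\af}} b$, as required. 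The delicate point throughout is the uniformity over the quasitraces in the construction of $w$; everything else is a routine combination of the quasitrace correspondence with the comparison-transfer of Proposition~\ref{P_1X25_Comp_Fx}.
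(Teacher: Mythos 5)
Your overall route is the same as the paper's: the paper proves this theorem by deferring to Theorem~4.1 of~\cite{radifinite}, whose argument is exactly the one you outline (restrict quasitraces from $A$ to $A^{\af}$ to get comparison in~$A$, then transfer back to $A^{\af}$ via Proposition~\ref{P_1X25_Comp_Fx}, treating separately the case where $b$ is Cuntz equivalent to a projection). Your reduction to a fixed $\rh > \rc (A)$, the quasitrace-restriction step, and the easy case where $0$ is a limit point of $\spec (b)$ are all correct.

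There are, however, two genuine problems in the hard case. First, the map $\ta \mapsto d_{\ta} (a)$ is \emph{lower} semicontinuous, not upper semicontinuous: it is the supremum of the continuous functions $\ta \mapsto \ta (a^{1/m})$. Consequently $\ta \mapsto \ta (q) - d_{\ta} (a)$ is only upper semicontinuous, and a strictly positive upper semicontinuous function on a compact set need not be bounded below by a positive constant; your $\et$ need not exist for $a$ itself. The standard repair, which is what \cite{radifinite} does, is to reduce to showing $(a - \ep)_{+} \precsim_{A^{\af}} b$ for every $\ep > 0$ and to replace $d_{\ta} (a)$ by $\ta (h (a))$ for a continuous $h$ with $0 \leq h \leq 1$, $h = 0$ near~$0$, and $h (a) (a - \ep)_{+} = (a - \ep)_{+}$; then $\ta (h (a)) \leq d_{\ta} (a)$, the function $\ta \mapsto \ta (q) - \ta (h (a))$ is continuous, and compactness of $\QT (A^{\af})$ yields~$\et$. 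Second, the construction of the purely positive $w$ is asserted rather than carried out; it is the heart of this case. It can be done as follows: $q M_n (A^{\af}) q$ is simple, unital, and not of Type~I (Theorem~\ref{thm_simple12} and Proposition~\ref{C_1Z11_InfDim}), so iterating Lemma~2.4 of~\cite{philar} produces $2^k$ pairwise orthogonal, pairwise Cuntz equivalent nonzero positive elements below~$q$, whence one of them, $g$, satisfies $d_{\ta} (g) \leq 2^{-k} \ta (q) < \et$ uniformly; inside $\ov{g M_n (A^{\af}) g}$ choose an element with infinite spectrum and apply functional calculus to get $0 \leq g_1 \leq q$ with $g_1 \precsim g$ and with $1$ a limit point of $\spec (g_1)$; then $w = q - g_1$ lies in $q M_n (A^{\af}) q$, has $0$ as a limit point of its spectrum, and satisfies $d_{\ta} (w) \geq \ta (q) - \ta (g_1) > \ta (q) - \et$ for all~$\ta$. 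With these two repairs (and $(a - \ep)_{+}$ in place of $a$ throughout the final chain), your concluding argument goes through and recovers the paper's proof.
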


\begin{proof}
The proof is essentially the same
as that of Theorem~4.1 of~\cite{radifinite},
with the following changes.
First, there is a minor mistake in~\cite{radifinite}:
the reduction to the case $l = 1$ there is misleading,
because it changes $1_A$ to a \pj{} $e \in A$ such that the direct
sum of $l$ copies of $e$ is equivalent to~$1$.
It seems best to avoid that step.
Then the use of Lemma~3.7 of~\cite{radifinite}
is replaced with the use of Proposition~\ref{P_1X25_Comp_Fx},
and the use of Lemma 3.10 of~\cite{radifinite} is replaced in the same
way as in the proof of Proposition~\ref{WC_plus_injectivity} above.
\end{proof}

\begin{cor}\label{C_1X28_StComp}
Let $G$ be a second countable compact group,
let $A$ be a stably finite simple separable infinite dimensional \uca,
and let $\alpha \colon G \to \Aut (A)$ be an action of $G$ on~$A$
which has the tracial Rokhlin property with comparison.
If $A$ has strict comparison, so does~$A^{\af}$.
\end{cor}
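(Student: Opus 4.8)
The plan is to deduce the corollary immediately from Theorem~\ref{P_1X25_rc_Fix}, combined with the standard reformulation of strict comparison in terms of the radius of comparison. Since Theorem~\ref{P_1X25_rc_Fix} already does all the substantive work of transferring comparison from $A$ to $A^{\af}$, what remains is purely formal.

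First I would recall that, for a stably finite simple \uca, strict comparison of positive elements (taken with respect to the normalized $2$-quasitraces used in Definition~\ref{Cuntz.df}) is equivalent to the condition $\rc (A) = 0$. One direction is immediate from Definition~\ref{Cuntz.df}(\ref{1X30_Cuntz_def_rc}): if $A$ has strict comparison and $\rho > 0$, then the hypothesis $d_{\ta} (a) + \rho < d_{\ta} (b)$ for all $\ta \in \QT (A)$ certainly forces $d_{\ta} (a) < d_{\ta} (b)$ for all $\ta$, whence $a \precsim_{A} b$; thus every $\rho > 0$ lies in the defining set, and $\rc (A) = 0$. For the converse I would invoke the usual argument, which uses weak-$*$ compactness of $\QT (A)$ and the behaviour of the rank functions $\ta \mapsto d_{\ta} ( (a - \ep)_{+} )$ to promote the pointwise inequality $d_{\ta} (a) < d_{\ta} (b)$ to a uniform gap after cutting down to $(a - \ep)_{+}$; this is precisely the standard identification of $\rc = 0$ with strict comparison.

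Given this, the argument is short. Since $A$ has strict comparison, $\rc (A) = 0$. The hypotheses of Theorem~\ref{P_1X25_rc_Fix} are exactly those of the corollary, so that theorem applies and yields $\rc (A^{\af}) \leq \rc (A) = 0$. Because $\rc (A^{\af})$ is, by Definition~\ref{Cuntz.df}(\ref{1X30_Cuntz_def_rc}), an infimum of positive numbers and hence nonnegative, we conclude $\rc (A^{\af}) = 0$, which is exactly the assertion that $A^{\af}$ has strict comparison.

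There is essentially no obstacle here: the real content lives in Theorem~\ref{P_1X25_rc_Fix}, and the corollary is a formal consequence. The only point that requires attention is making sure the notion of strict comparison in play matches the definition of $\rc$ in Definition~\ref{Cuntz.df} --- in particular that both are taken relative to the same set $\QT (A)$ of normalized $2$-quasitraces --- so that the equivalence ``strict comparison $\Leftrightarrow \rc = 0$'' can be applied verbatim to both $A$ and $A^{\af}$.
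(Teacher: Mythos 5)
Your proposal is correct and matches the paper's proof, which simply says ``Apply Theorem~\ref{P_1X25_rc_Fix}''; the extra discussion of the equivalence between strict comparison and $\rc = 0$ is a standard fact the paper leaves implicit. No issues.
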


\begin{proof}
Apply Theorem~\ref{P_1X25_rc_Fix}.
\end{proof}

\section{The naive tracial Rokhlin property}\label{S_2795_N_TRP}

Our definition of the \trpc{} (Definition~\ref{traR}),
applied to finite groups, is formally stronger than the
tracial Rokhlin property for finite groups
(Definition~\ref{D_1619_TRP}), as discussed after
Definition~\ref{traR}.
In this section we address the differences.
Definition~\ref{D_1920_NTRP} below is given
only to make discussion easier; it is not intended for general use.
It is what one gets by just copying the definition of the
tracial Rokhlin property for finite groups.
We say at the outset that we know of no examples of actions,
even of infinite compact groups,
which have the naive tracial Rokhlin property but not the \trpc,
although we believe they exist.

For actions of finite groups, we can show that
the tracial Rokhlin property ``almost'' implies the \trpc,

\begin{dfn}\label{D_1920_NTRP}
Let $A$ be an infinite dimensional simple separable unital \ca,
let $G$ be a second countable compact group,
and let $\af \colon G \to \Aut (A)$ be an action of $G$ on~$A$.
The action $\af$ has the
{\emph{naive tracial Rokhlin property}}
if for every finite set $F \subseteq A$,
every finite set $S \subseteq C (G)$, every $\ep > 0$,
and every $x \in A_{+}$ with $\| x \| = 1$,
there exist a projection $p \in A^{\alpha}$
and a unital completely positive
contractive map $\ph \colon C (G) \to p A p$
such that the following hold.
\begin{enumerate}
\item\label{It_1920_NTRP_FSE}
$\ph$ is an $(F, S, \ep)$-equivariant central multiplicative map.
\item\label{It_1920_NTRP_subx}
$1 - p \precsim_A x$.
\item\label{It_1920_NTRP_pxp}
$\| p x p \| > 1 - \ep$.
\end{enumerate}
\end{dfn}

We prove that condition
(\ref{1_pycompactsets}) of Definition \ref{traR} is automatic
when the group is finite, regardless of what $A$ is.
The list of conditions in the next proposition
is the same as in Lemma~\ref{L_1X16_trpc_GFin},
except that (\ref{Item_1X16_sub_1mp}) there has been omitted.

\begin{prp}\label{L_1617_IfGFin}
Let $A$ be an infinite dimensional simple separable unital \ca,
let $G$ be a finite group,
and let $\af \colon G \to \Aut (A)$ be an action of $G$ on~$A$
which has the tracial Rokhlin property.
Then for every finite set $F \subseteq A$, every $\ep > 0$,
every $x \in A_{+}$ with $\| x \| = 1$,
and every $y \in (A^{\alpha})_{+} \SM \{ 0 \}$,
there exist a projection $p \in A^{\alpha}$
and mutually orthogonal projections $(p_{g})_{g \in G}$
such that the following hold.
\begin{enumerate}
\item\label{Item_786}
$p = \sum_{g \in G} p_{g}$.
\item\label{Item_161p_comm}
$\| p_g a - a p_g \| < \ep$ for all $a \in F$ and all $g \in G$.
\item\label{Item_1619_CM}
$\| \af_g (p_h) - p_{g h} \| < \ep$ for all $g, h \in G$.
\item\label{Item_1617sub_x}
$1 - p \precsim_A x$.
\item\label{Item_1617sub_y}
$1 - p \precsim_{A^{\alpha}} y$.
\item\label{Item_1617_GFin_pxp}
$\| p x p \| > 1 - \ep$.
\end{enumerate}
\end{prp}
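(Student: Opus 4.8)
The only assertion here that goes beyond the tracial Rokhlin property itself (Definition~\ref{D_1619_TRP}) is Condition~(\ref{Item_1617sub_y}), namely $1 - p \precsim_{A^{\af}} y$: Conditions (\ref{Item_786}), (\ref{Item_161p_comm}), (\ref{Item_1619_CM}), (\ref{Item_1617sub_x}), and~(\ref{Item_1617_GFin_pxp}) are exactly the content of that definition, and the projection $p = \sum_{g} p_g$ may be taken to lie in $A^{\af}$ by Lemma~1.17 of~\cite{phill23}. The plan is therefore to apply the tracial Rokhlin property to a test element chosen small enough, in the Cuntz sense, to be subequivalent to~$y$, while still reaching norm~$1$ near the top of~$x$ so as to force Condition~(\ref{Item_1617_GFin_pxp}); then to transfer the resulting comparison $1 - p \precsim_A y$ from $A$ to~$A^{\af}$ through the crossed product.

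First I would fix $\dt \in (0, \ep)$ and set $c = \bigl( x - (1 - \dt) \bigr)_{+}$, which is nonzero because $\| x \| = 1$. Since $A$ is simple, Lemma~\ref{lma_L683_baj}, applied to the nonzero hereditary subalgebra $\ov{c A c}$ and to the element~$y$, produces $e \in \ov{c A c}_{+} \SM \{ 0 \}$ with $e \precsim_A y$; after rescaling we may assume $\| e \| = 1$. Now apply Definition~\ref{D_1619_TRP} with the given $F$, with a tolerance $\ep' \leq \min(\ep, \dt)$ to be fixed, and with $e$ in place of~$x$, invoking Lemma~1.17 of~\cite{phill23} to arrange $p \in A^{\af}$. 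This yields mutually orthogonal projections $(p_g)_{g \in G}$ satisfying Conditions (\ref{Item_786})--(\ref{Item_1619_CM}), together with $1 - p \precsim_A e$ and $\| p e p \| > 1 - \ep'$. Condition~(\ref{Item_1617sub_x}) follows since $e \in \ov{c A c} \subseteq \ov{x A x}$ gives $e \precsim_A x$, whence $1 - p \precsim_A e \precsim_A x$. For~(\ref{Item_1617_GFin_pxp}), let $s$ be the support projection of~$c$; then $x \geq (1 - \dt) s \geq (1 - \dt) e$ (as $e \leq s$), so compressing by~$p$ gives $p x p \geq (1 - \dt) p e p$ and hence $\| p x p \| \geq (1 - \dt) \| p e p \| > (1 - \dt)(1 - \ep')$, which exceeds $1 - \ep$ once $\dt$ and $\ep'$ are small enough.

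The substance is Condition~(\ref{Item_1617sub_y}). Chaining $1 - p \precsim_A e \precsim_A y$ gives $1 - p \precsim_A y$, but this is a comparison in~$A$, and the task is to upgrade it to a comparison in~$A^{\af}$. Here I would use finiteness of~$G$ in two ways. On the one hand, $A$ sits inside $B := C^{*}(G, A, \alpha)$ as a genuine subalgebra, so the relation $1 - p \precsim_A y$ (witnessed by elements of~$A$) immediately gives $1 - p \precsim_B y$. On the other hand, $A^{\af}$ is realized as the corner $q B q$ for the projection $q = \tfrac{1}{\card(G)} \sum_{g} u_g$, and is in particular a hereditary subalgebra of~$B$. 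Since $1 - p$ and $y$ both lie in $A^{\af}$, the standard fact that Cuntz subequivalence between two elements of a hereditary subalgebra is detected inside that subalgebra (that is, $a \precsim_B b \iff a \precsim_{A^{\af}} b$ for $a, b \in A^{\af}$) yields $1 - p \precsim_{A^{\af}} y$, completing the verification.

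The main obstacle is precisely this last transfer. There is no general implication $a \precsim_A b \Rightarrow a \precsim_{A^{\af}} b$ for invariant $a,b$ — indeed establishing such a thing, under extra hypotheses and for arbitrary compact~$G$, is the point of Proposition~\ref{P_1X25_Comp_Fx} — and the argument above works only because finiteness of~$G$ simultaneously supplies the honest inclusion $A \subseteq C^{*}(G, A, \alpha)$ and the corner identification $A^{\af} \cong q\, C^{*}(G, A, \alpha)\, q$. Both ingredients fail for infinite compact groups (the fixed point algebra is then a full hereditary subalgebra of the crossed product but $A$ no longer embeds), which is exactly the reason Condition~(\ref{1_pycompactsets}) of Definition~\ref{traR} is not known to be automatic beyond the finite case.
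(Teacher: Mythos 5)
Your reduction of the problem to Condition~(\ref{Item_1617sub_y}), and your setup — cutting down near the top of $x$ and using Lemma~\ref{lma_L683_baj} to produce a test element that is both under the top of $x$ and Cuntz-below $y$ — match the paper's proof in structure, and Conditions (\ref{Item_786})--(\ref{Item_1617sub_x}) and~(\ref{Item_1617_GFin_pxp}) are handled correctly. The gap is in the final transfer step, and it is fatal as written. You have two embeddings of $A^{\af}$ into $B = C^{*}(G, A, \af)$: the inclusion $A^{\af} \subseteq A \subseteq B$, and the isomorphism $\Phi \colon A^{\af} \to q B q$ given by $\Phi (b) = b q = q b q$ (where $q = \tfrac{1}{\card (G)} \sum_{g} u_g$). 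These are not the same map. The relation $1 - p \precsim_B y$ that you obtain concerns $1 - p$ and $y$ as elements of $A \subseteq B$, and these elements do \emph{not} lie in the hereditary subalgebra $q B q$; their images there are $(1 - p) q$ and $y q$. To invoke the (correct) fact that Cuntz comparison in a hereditary subalgebra is detected inside it, you would need $(1 - p) q \precsim_B y q$, and this does not follow from $1 - p \precsim_B y$: while $y q \leq y$ gives $y q \precsim_B y$, the reverse comparison $y \precsim_B y q$ fails in general (already for $y = 1$ in a stably finite $B$, since $q$ is a proper projection), so the chain cannot be closed. Indeed, if your shortcut worked it would show $a \precsim_A b \Rightarrow a \precsim_{A^{\af}} b$ for arbitrary invariant positive $a, b$ and arbitrary finite group actions, which is false (consider $\Z_2$ acting on $M_2$ by $\Ad (\diag (1, -1))$, with $a = e_{1 1}$ and $b = e_{2 2}$).

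The paper closes this gap with genuine input from the hypothesis: it first uses simplicity and non-Type-I-ness of $A^{\af}$ (via simplicity of the crossed product, Corollary~1.6 of~\cite{phill23}) to replace $y$ by a positive $z \in \ov{y A^{\af} y}$ such that $0$ is a limit point of $\spec (z)$, arranges $1 - p \precsim_A x_0 \precsim_A z$, and then applies Lemma~3.7 of~\cite{radifinite}, which says that for an action with the weak tracial Rokhlin property and for invariant $a, b$ with $0 \in \ov{ \spec (b) \SM \{ 0 \} }$, the relation $a \precsim_A b$ upgrades to $a \precsim_{A^{\af}} b$. Both the spectral condition on $z$ and the Rokhlin-type hypothesis are used in that lemma; the transfer is not a formal consequence of the crossed product structure. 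Your argument needs to be replaced by an appeal to such a result (or a proof of it) at this point.
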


\begin{proof}
Let $F \subseteq A$ be finite and let $\varepsilon > 0$.
Let $x \in A_{+}$ with $\| x \| = 1$ and
$y \in (A^{\alpha})_{+} \SM \{ 0 \}$ be given.
Corollary 1.6 of~\cite{phill23} implies that $A^{\alpha}$ is simple.
Since $A^{\alpha}$ is unital and not \fd,
it follows that $A^{\alpha}$ is not of Type~I.
Therefore $\overline{y A^{\alpha} y}$ is simple and not of Type~I.
By Lemma 2.1 of \cite{philar} there is a positive element
$z \in \overline{y A^{\alpha} y}$
such that $0$ is a limit point of $\spec (z)$.
Define \cfn{s} $h, h_0 \colon [0, 1] \to [0, 1]$ by
\[
h (\ld) = \begin{cases}
   0 & \hspace*{1em} 0 \leq \ld \leq 1 - \frac{\ep}{2}
        \\
   \frac{2}{\ep} (\ld - 1) + 1
           & \hspace*{1em} 1 - \frac{\ep}{2} \leq \ld \leq 1
\end{cases}
\]
and
\[
h_0 (\ld) = \begin{cases}
   \left( 1 - \frac{\ep}{2} \right)^{-1} \ld
           & \hspace*{1em} 0 \leq \ld \leq 1 - \frac{\ep}{2}
        \\
   1 & \hspace*{1em} 1 - \frac{\ep}{2} \leq \ld \leq 1.
\end{cases}
\]
Then $\| x - h_0 (x) \| \leq \frac{\ep}{2}$.
Also, $h (x) \neq 0$ since $\| x \| = 1$.
Use Lemma \ref{lma_L683_baj} to choose a nonzero positive
element $x_0 \in \ov{h (x) A h (x)}$ such that $x_0 \precsim_A z$.
We may require $\| x_0 \| = 1$.

Now apply Lemma 1.17 of \cite{phill23} to $\alpha$
with $x_0$ in place of~$x$, with $\frac{\ep}{2}$ in place of~$\ep$,
and with $F$ as given.
We obtain mutually orthogonal projections $p_{g} \in A$ for $g \in G$
such that, with $p = \sum_{g \in G} p_{g}$
(so that (\ref{Item_786}) holds),
$p$ is $\af$-invariant;
with $x_0$ in place of~$x$ and $\frac{\ep}{2}$ in place of~$\ep$,
Conditions (\ref{Item_161p_comm}), (\ref{Item_1619_CM})
and~(\ref{Item_1617_GFin_pxp}) are satisfied;
and $1 - p$ is \mvnt{} to a \pj{} in $\ov{x_0 A x_0}$.
In particular, we have (\ref{Item_786}), (\ref{Item_161p_comm}),
and~(\ref{Item_1619_CM}) as stated.
Also,
$1 - p \precsim_A x_0 \precsim_A x$, which is (\ref{Item_1617sub_x}).
Moreover,
\[
1 - p \precsim_A x_0 \precsim_A z,
\qquad
1 - p, z \in A^{\alpha},
\andeqn
0 \in \ov{ \spec (z) \SM \{ 0 \} }.
\]
Since the tracial Rokhlin property implies the weak tracial Rokhlin
property, it follows from Lemma 3.7 of~\cite{radifinite}
that $1 - p \precsim_{A^{\alpha}} z$.
Since $z \in \ov{y A^{\alpha} y}$,
we get $1 - p \precsim_{A^{\alpha}} y$,
which is~(\ref{Item_1617sub_y}).

It remains to prove~(\ref{Item_1617_GFin_pxp}).
Since $h_0 (x) h (x) = h (x)$, we have
\[
x_0 = h_0 (x)^{1 / 2} x_0 h_0 (x)^{1 / 2} \leq h_0 (x).
\]
So, also using $\| p x_0 p \| > 1 - \frac{\ep}{2}$,
\[
\| p x p \|
 \geq \| p h_0 (x) p \| - \| h_0 (x) - x \|
 \geq \| p x_0 p \| - \frac{\ep}{2}
 > 1 - \frac{\ep}{2} - \frac{\ep}{2}
 = 1 - \ep,
\]
as desired.
\end{proof}

We now give some conditions on actions of finite groups under
which Condition (\ref{1_ppcompactsets}) of
Definition \ref{traR} is automatic.

\begin{prp}\label{P_1X17_StComp}
Let $A$ be a stably finite infinite dimensional
simple separable unital \ca.
Let $\af \colon G \to \Aut (A)$
be an action of a finite group $G$ on $A$
which has the tracial Rokhlin property.
If $\rc (A) < 1$ then $\af$ has the \trpc.
\end{prp}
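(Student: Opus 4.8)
The plan is to reduce everything to the one condition that Proposition~\ref{L_1617_IfGFin} does not supply. For the given $F$, $\ep$, $x$, and $y$, that proposition already produces a projection $p \in A^{\alpha}$ and mutually orthogonal projections $(p_g)_{g \in G}$ satisfying all of the conditions of Lemma~\ref{L_1X16_trpc_GFin} \emph{except} $1 - p \precsim_{A^{\alpha}} p$ (Definition~\ref{traR}(\ref{1_ppcompactsets})). Since $A$ is stably finite it is finite, so by Lemma~\ref{L_1X09_NoNorm1} the norm condition $\|pxp\| > 1-\ep$ is not needed to verify the \trpc; passing from the projections $(p_g)$ to the map $f \mapsto \sum_{g} f(g) p_g$ as in the proof of Lemma~\ref{L_1X16_trpc_GFin}, it suffices to produce $p$ and $(p_g)$ satisfying conditions (\ref{Item_1X16_Inv})--(\ref{Item_1X16_sub_1mp}) of Lemma~\ref{L_1X16_trpc_GFin} with the norm condition dropped. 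Fixing $\rho$ with $\rc(A) < \rho < 1$, the whole problem comes down to arranging $1 - p \precsim_{A^{\alpha}} p$ while retaining the other conditions.

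The point is that, having dropped $\|pxp\| > 1-\ep$, we may run the tracial Rokhlin property against a \emph{smaller} test element. First I would use simplicity and infinite dimensionality of $A$ to produce $x' \in A_{+}$ with $\|x'\| = 1$, $x' \precsim_{A} x$, and $\sup_{\tau \in \QT(A)} d_{\tau}(x') < \tfrac{1}{2}(1 - \rho)$. This is done by iterating Lemma~2.4 of~\cite{philar}: inside $\overline{xAx}$ it yields orthogonal, Cuntz equivalent $x_1 \sim_{A} x_2$ with $x_1 + x_2 \precsim_{A} x$, whence $2 d_{\tau}(x_1) \le d_{\tau}(x)$ for all $\tau$; recursing inside $\overline{x_1 A x_1}$ and halving the Cuntz class $k$ times gives $c \in A_{+} \setminus \{0\}$ with $2^{k} d_{\tau}(c) \le d_{\tau}(x) \le 1$, and $x' = c/\|c\|$ works once $2^{-k} < \tfrac{1}{2}(1-\rho)$. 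Applying Proposition~\ref{L_1617_IfGFin} with $x'$ in place of $x$ (and the given $F$, $\ep$, $y$) yields $p$ and $(p_g)$ with conditions (\ref{Item_1X16_Inv})--(\ref{Item_1X16_Prm}); moreover $1 - p \precsim_{A} x' \precsim_{A} x$, giving (\ref{Item_1X16_sub_x}), and $1 - p \precsim_{A^{\alpha}} y$, giving (\ref{Item_1X16_sub_yy}). Finally $1 - p \precsim_{A} x'$ forces $\tau(1-p) \le d_{\tau}(x') < \tfrac{1}{2}(1-\rho)$ for every $\tau \in \QT(A)$, so that $\tau(1-p) + \rho < \tau(p)$ for all $\tau \in \QT(A)$.

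It remains to convert this tracial estimate into $1 - p \precsim_{A^{\alpha}} p$. Here $A^{\alpha}$ is simple (Corollary~1.6 of~\cite{phill23}) and, being unital and infinite dimensional, is not of type~I; since $\tau(p) > 0$ we have $p \neq 0$, so $pA^{\alpha}p$ is again simple and infinite dimensional. The plan is to compare not to $p$ itself (whose spectrum has $0$ isolated) but to a positive element $z \in (pA^{\alpha}p)_{+}$ that is strictly positive in $pA^{\alpha}p$ and has $0$ as a limit point of $\spec(z)$; such a $z$ exists by an argument built on Lemma~2.1 of~\cite{philar}, and, being full in $pA^{\alpha}p$, it satisfies $d_{\tau}(z) = \tau(p)$ for all $\tau \in \QT(A)$. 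Then $d_{\tau}(1-p) + \rho = \tau(1-p) + \rho < \tau(p) = d_{\tau}(z)$ for all $\tau$, and since $\rho > \rc(A)$ the definition of the radius of comparison gives $1 - p \precsim_{A} z$. The tracial Rokhlin property implies the weak tracial Rokhlin property, so, with $1-p$ and $z$ in $A^{\alpha}$ and $0$ a limit point of $\spec(z)$, Lemma~3.7 of~\cite{radifinite} upgrades this to $1 - p \precsim_{A^{\alpha}} z$; as $z \in pA^{\alpha}p$ we conclude $1 - p \precsim_{A^{\alpha}} z \precsim_{A^{\alpha}} p$, which is the missing condition.

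The step I expect to be the main obstacle is this transfer of comparison from $A$ to $A^{\alpha}$: Lemma~3.7 of~\cite{radifinite} requires the right-hand element to have $0$ as a limit point of its spectrum, which is exactly why one cannot feed it $1 - p \precsim_{A} p$ directly, and must instead manufacture a comparison element $z$ in $pA^{\alpha}p$ that simultaneously has $0$ as a spectral limit point and is full (so that $d_{\tau}(z) = \tau(p)$). Establishing the existence of such a $z$ in a simple infinite dimensional $pA^{\alpha}p$ is the one genuinely technical point and deserves a short separate lemma; the remainder is bookkeeping with the estimates above, together with the observation that stable finiteness enters precisely to justify dropping the norm condition via Lemma~\ref{L_1X09_NoNorm1} and to guarantee $\QT(A) \neq \varnothing$.
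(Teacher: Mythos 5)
Your overall strategy---reduce to the single missing condition $1 - p \precsim_{A^{\af}} p$, make $\ta (1 - p)$ small, invoke the radius of comparison, and transfer the resulting Cuntz comparison into $A^{\af}$ via Lemma~3.7 of~\cite{radifinite}---has the right shape, and the reduction steps (dropping the norm condition via Lemma~\ref{L_1X09_NoNorm1}, halving the Cuntz class of $x$ with Lemma~2.4 of~\cite{philar}) are fine. But there is a genuine gap at the crucial step: the element $z$ you need cannot exist. A strictly positive element of a \emph{unital} \ca{} is invertible (if $\ov{z B z} = B \ni 1_B$, choose $b$ with $\| z b z - 1_B \| < 1$; then $z b z$ is invertible, hence so is $z$), so a strictly positive element of $p A^{\af} p$ has $0$ at worst as an isolated point of its spectrum, never as a limit point. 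The fallback you mention, fullness, does not rescue the argument: a full positive element $z$ of $p A^{\af} p$ need not satisfy $d_{\ta} (z) = \ta (p)$ (consider $\diag (1, 0)$ in $M_2$). So the inequality $d_{\ta} (1 - p) + \rh < d_{\ta} (z)$ is not available for any element to which Lemma~3.7 of~\cite{radifinite} applies, and the comparison $1 - p \precsim_{A} z$ never gets off the ground. You correctly identified that one cannot feed $1 - p \precsim_{A} p$ to that lemma directly; unfortunately the proposed substitute is self-contradictory.

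The paper's proof avoids this by reversing the order of the two tools. It first transfers a comparison with a genuinely \emph{small} element: using Property~(SP) of $A^{\af}$ (Lemma~1.13 of~\cite{phill23}) it produces $n + 1$ \mops{} in $A^{\af}$ which are \mvnt, takes one of them, $e$, and $d \in (e A^{\af} e)_{+} \SM \{ 0 \}$ with $0$ a limit point of $\spec (d)$, arranges $1 - p \precsim_{A} d$, and only then applies Lemma~3.7 of~\cite{radifinite} (legitimately, since $0$ is a limit point of $\spec (d)$) to get $1 - p \precsim_{A^{\af}} d \precsim_{A^{\af}} e$. This yields $\ta (1 - p) \leq \ta (e) < 1 / n$ for every $\ta \in \QT (A^{\af})$---note that your estimate only controls $\ta \in \QT (A)$---and then the comparison of the two \emph{projections} $1 - p$ and $p$ is carried out entirely inside $A^{\af}$, using $\rc (A^{\af}) \leq \rc (A)$ (Theorem~4.1 of~\cite{radifinite}); for projections $d_{\ta}$ is just the trace, so no spectral hypothesis and no auxiliary $z$ are needed. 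If you want to salvage your version, you would have to build $z$ of the form $(p - q) + d$ with $q$ a projection of uniformly small trace and $d \in (q A^{\af} q)_{+}$ having $0$ as a spectral limit point, which amounts to importing exactly this part of the paper's argument.
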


In particular, if $A$ has strict comparison then $\af$ has the \trpc.

\begin{proof}[Proof of Proposition~\ref{P_1X17_StComp}]
The conclusion is similar to but stronger than
that of Proposition~\ref{L_1617_IfGFin}.
We describe the necessary changes and additions to the proof
of that proposition.

We verify the conditions of Lemma~\ref{L_1X16_trpc_GFin}.

By Proposition~\ref{P_1X19_RP_to_TRPC},
we may assume that $\af$ does not have the Rokhlin property.
Let $F \subseteq A$ be finite and let $\varepsilon > 0$.
Let $x \in A_{+}$ with $\| x \| = 1$ and
$y \in (A^{\alpha})_{+} \SM \{ 0 \}$ be given.
As in the proof of Proposition~\ref{L_1617_IfGFin},
$A^{\alpha}$ is simple and not of Type~I,
and there is a positive element $z \in \overline{y A^{\alpha} y}$
such that $0$ is a limit point of $\spec (z)$.

Choose $n \in \N$ such that
\begin{equation}\label{Eq_1X29_n_rc}
n > \frac{2}{1 - \rc (A)}.
\end{equation}
The algebra $A^{\af}$ has Property~(SP) by Lemma~1.13 of~\cite{phill23}.
Lemma~\ref{OrthInSP} provides $n + 1$ nonzero \mops{} in~$A$.
Let $e$ be one of them.
Again as in the proof of Proposition~\ref{L_1617_IfGFin},
there is $d \in (e A^{\af} e)_{+} \SM \{ 0 \}$
such that $0$ is a limit point of $\spec (d)$.
With $h$ as in that proof,
apply Lemma~\ref{lma_L683_baj} to find $x_0 \in \ov{h (x) A h (x)}$
such that
\[
\| x_0 \| = 1,
\qquad
x_0 \precsim_A d,
\andeqn
x_0 \precsim_A y_0.
\]

Now apply Lemma 1.17 of~\cite{phill23} to $\alpha$ with the same
choices as in the proof of Proposition~\ref{L_1617_IfGFin},
getting, as in the second half of the proof,
mutually orthogonal projections $p_{g} \in A$ for $g \in G$
such that, with $p = \sum_{g \in G} p_{g}$,
the conclusion of Proposition~\ref{L_1617_IfGFin} holds,
and $1 - p \precsim_{A} x_0$.

It remains only to show that $1 - p \precsim_{A^{\af}} p$.
We know that $1 - p \precsim_A x_0 \precsim_A d$.
By Lemma 3.7 of~\cite{radifinite},
we have $1 - p \precsim_{A^{\alpha}} d$.
So $1 - p \precsim_{A^{\alpha}} e$.
Let $\ta \in \QT (A^{\af})$.
Then $\ta (1 - p) \leq \ta (e)$, so $\ta (p) \geq \ta (1 - e)$.
The construction of $e$ ensures that
$\ta (e) \leq  \frac{1}{n + 1} < \frac{1}{n}$.
Therefore
\begin{equation}\label{Eq_1X22_Ineq}
\ta (1 - p) + 1 - \frac{2}{n} <  1 - \frac{1}{n} < \ta (p).
\end{equation}
Using Theorem 4.1 of \cite{radifinite} at the first step
and~(\ref{Eq_1X29_n_rc}) at the second,
we have $\rc (A^{\af}) \leq \rc (A) < 1 - \frac{2}{n}$.
Since~(\ref{Eq_1X22_Ineq}) holds for all $\ta \in \QT (A^{\af})$,
we get $1 - p \precsim_{A^{\af}} p$, as desired.
\end{proof}

\begin{cor}\label{C_1X29_Fin_rc}
Let $A$ be a stably finite infinite dimensional
simple separable unital \ca{} such that $\rc (A)$ is finite.
Let $\af \colon G \to \Aut (A)$
be an action of a finite group $G$ on $A$
which has the tracial Rokhlin property.
Then there exists $n \in \N$ such that the action
$g \mapsto \id_{M_n} \otimes \af_g$ on $M_n \otimes A$ has the \trpc.
\end{cor}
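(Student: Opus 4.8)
The plan is to amplify. I would choose $n$ so large that $M_n \otimes A$ has radius of comparison strictly less than $1$, show that the amplified action $g \mapsto \id_{M_n} \otimes \af_g$ still has the (plain) tracial Rokhlin property, and then apply Proposition~\ref{P_1X17_StComp} to $M_n \otimes A$.

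First I would record how the radius of comparison scales. The assignment $\sigma \mapsto \sigma \otimes \mathrm{tr}_n$, with $\mathrm{tr}_n$ the normalized trace on $M_n$, is a bijection from $\QT (A)$ onto $\QT (M_n \otimes A)$, and under the identification $M_\infty (M_n \otimes A) \cong M_\infty (A)$ the extension of $\sigma \otimes \mathrm{tr}_n$ to matrices is $\tfrac1n$ times that of $\sigma$, so that $d_{\sigma \otimes \mathrm{tr}_n} (a) = \tfrac1n d_\sigma (a)$ for all $a \in M_\infty (A)_+$, while Cuntz subequivalence in $M_n \otimes A$ and in $A$ coincide. Substituting this into Definition~\ref{Cuntz.df}(\ref{1X30_Cuntz_def_rc}) yields $\rc (M_n \otimes A) = \tfrac1n \rc (A)$. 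Since $\rc (A)$ is finite, I would fix an integer $n > \rc (A)$, giving $\rc (M_n \otimes A) < 1$. The algebra $M_n \otimes A$ is simple, separable, unital, infinite dimensional, and stably finite, because $M_k (M_n \otimes A) = M_{kn} \otimes A$ is finite for every $k$.

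Next I would check that $\id_{M_n} \otimes \af$ has the tracial Rokhlin property. Since $M_n \otimes A$ is stably finite, Lemma~1.16 of~\cite{phill23} lets me verify the version of Definition~\ref{D_1619_TRP} in which $x$ runs over $(M_n \otimes A)_+ \setminus \{ 0 \}$ and the norm condition $\| p x p \| > 1 - \ep$ is omitted. So let $F \subseteq M_n \otimes A$ be finite, let $\ep > 0$, and let $x \in (M_n \otimes A)_+ \setminus \{ 0 \}$. Writing elements of $F$ in terms of the matrix units $e_{i, k}$ of $M_n$, collect their entries into a finite set $F_0 \subseteq A$, and set $\ep_0 = \ep / n^2$. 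Exactly as in the proof of Proposition~\ref{P_1X25_Mn} (using Lemma~3.8 of~\cite{phill23} to locate a nonzero diagonal entry $x_{l, l}$, so that $e_{l, l} \otimes x_{l, l} \precsim_{M_n \otimes A} x$, and then Lemma~2.4 of~\cite{philar}), I would choose $x_0 \in A_+ \setminus \{ 0 \}$ with $1_{M_n} \otimes x_0 \precsim_{M_n \otimes A} e_{l, l} \otimes x_{l, l} \precsim_{M_n \otimes A} x$. Applying the tracial Rokhlin property of $\af$ on $A$ with $F_0$, $\ep_0$, and $x_0$ yields mutually orthogonal projections $(q_g)_{g \in G}$ in $A$, and I set $p_g = 1_{M_n} \otimes q_g$. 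These are mutually orthogonal, and with $p = 1_{M_n} \otimes \sum_g q_g$ the commutation estimate follows entrywise, since $\| p_g a - a p_g \| < n^2 \ep_0 = \ep$; the equivariance relation $\| (\id_{M_n} \otimes \af_g)(p_h) - p_{g h} \| = \| \af_g (q_h) - q_{g h} \| < \ep_0$ is immediate; and $1 - p = 1_{M_n} \otimes \bigl( 1 - \sum_g q_g \bigr) \precsim 1_{M_n} \otimes x_0 \precsim x$, because Cuntz subequivalence is preserved under amplification by $1_{M_n}$.

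Having the tracial Rokhlin property of $\id_{M_n} \otimes \af$ together with $\rc (M_n \otimes A) < 1$, I would invoke Proposition~\ref{P_1X17_StComp} for the stably finite, simple, separable, unital, infinite dimensional algebra $M_n \otimes A$ to conclude that $\id_{M_n} \otimes \af$ has the \trpc, which is the assertion. The only genuinely external ingredient is the scaling identity $\rc (M_n \otimes A) = \rc (A) / n$, and I expect pinning that down (either via the quasitrace computation above or by citation) to be the main point; the amplification of the tracial Rokhlin property is routine and follows the template already used for the \trpc in Proposition~\ref{P_1X25_Mn}, with stable finiteness entering only to discard the norm condition.
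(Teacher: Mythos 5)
Your proof is correct and follows essentially the same route as the paper: choose $n > \rc (A)$ so that $\rc (M_n \otimes A) = \rc (A)/n < 1$, observe that the amplified action retains the tracial Rokhlin property, and conclude via Proposition~\ref{P_1X17_StComp}. The only differences are that the paper cites Lemma~3.9 of~\cite{phill23} for the amplification step where you argue it directly (your argument is fine), and that you correctly invoke Proposition~\ref{P_1X17_StComp} where the paper's printed reference to Proposition~\ref{L_1617_IfGFin} appears to be a slip.
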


\begin{proof}
Choose $n \in \N$ such that $n > \rc (A)$.
Then $\rc (M_n \otimes A) = \frac{1}{n} \rc (A) < 1$.
Also, $g \mapsto \id_{M_n} \otimes \af_g$
has the tracial Rokhlin property by Lemma~3.9 of~\cite{phill23}.
Apply Proposition~\ref{L_1617_IfGFin}.
\end{proof}

To make the argument work in general, one needs to apply to $A^{\af}$
a positive answer to the following question.

\begin{qst}\label{Pb_1X20_ExSm}
Let $B$ be a stably finite simple separable \uca.
Does there exist $z \in B_{+} \setminus \{ 0 \}$ such that
whenever a \pj{} $q \in B$ satisfies $q \precsim_B z$,
then $q \precsim_B 1 - q$?
\end{qst}

This question seems hard, but the answer may well be negative.

We now prove that if $G$ is finite and $A$ is purely infinite simple,
then condition (\ref{1_ppcompactsets}) of
Definition \ref{traR} is automatic.

\begin{prp}\label{P_1X17_PI}
Let $A$ be an infinite dimensional simple separable unital \ca.
Let $\af \colon G \to \Aut (A)$
be an action of a finite group $G$ on $A$
which has the tracial Rokhlin property.
If $A$ is purely infinite then $\af$ has the \trpc.
\end{prp}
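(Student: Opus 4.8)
The plan is to verify the finite-group reformulation of the \trpc{} given in Lemma~\ref{L_1X16_trpc_GFin}. By Proposition~\ref{P_1X19_RP_to_TRPC} we may assume that $\af$ does not have the Rokhlin property. Given a finite set $F \S A$, a number $\ep > 0$ (\wolog{} $\ep < 1$), an element $x \in A_{+}$ with $\| x \| = 1$, and $y \in (A^{\af})_{+} \SM \{ 0 \}$, the first step is to apply Proposition~\ref{L_1617_IfGFin}. This already produces a projection $p \in A^{\af}$ and mutually orthogonal projections $(p_g)_{g \in G}$ satisfying \emph{all} of the conditions of Lemma~\ref{L_1X16_trpc_GFin} except possibly Lemma~\ref{L_1X16_trpc_GFin}(\ref{Item_1X16_sub_1mp}), namely $1 - p \precsim_{A^{\af}} p$. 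Moreover Lemma~\ref{L_1X16_trpc_GFin}(\ref{Item_1X16_Mnp}) gives $\| p x p \| > 1 - \ep > 0$, so $p \neq 0$. Thus the whole proof reduces to establishing the single relation $1 - p \precsim_{A^{\af}} p$.

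The second step reduces this relation to pure infiniteness of $A^{\af}$. If $A^{\af}$ is purely infinite and simple, then comparison in $A^{\af}$ is degenerate in the strongest sense: every nonzero positive element is Cuntz subequivalent to every nonzero positive element (this is the Kirchberg--R\o rdam characterization of purely infinite simple \ca{s}). Hence, since we have already arranged $p \neq 0$, the relation $1 - p \precsim_{A^{\af}} p$ holds automatically (and trivially if $1 - p = 0$). This disposes of Lemma~\ref{L_1X16_trpc_GFin}(\ref{Item_1X16_sub_1mp}) and completes the verification of the conditions of Lemma~\ref{L_1X16_trpc_GFin}.

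It therefore remains to show that $A^{\af}$ is purely infinite and simple, and the care needed here is to avoid circular use of the permanence results of Sections~\ref{Sec_1160_Simpli_Prf} and~\ref{Sec_1951_TRP_Crossed_TRR0}, which presuppose the \trpc. Simplicity of $A^{\af}$ is available directly from Corollary~1.6 of~\cite{phill23}, which needs only the ordinary tracial Rokhlin property. For pure infiniteness the plan is to pass through the crossed product: the tracial Rokhlin property implies that $\af$ is pointwise outer (Lemma~1.5 of~\cite{phill23}), so $C^{*} (G, A, \af)$ is simple by Theorem~3.1 of~\cite{kish.sim}; since $A$ is purely infinite and sits in $C^{*} (G, A, \af)$ with the canonical conditional expectation of finite Watatani index $|G|$, the algebra $C^{*} (G, A, \af)$ is itself purely infinite. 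Finally $A^{\af}$ is isomorphic to the corner $p_0 C^{*} (G, A, \af) p_0$ cut by the averaging projection $p_0 = |G|^{-1} \sum_{g \in G} u_g$, and, $C^{*} (G, A, \af)$ being simple, this is a full corner of a purely infinite simple algebra, hence itself purely infinite and simple.

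The main obstacle is precisely this last step: deducing pure infiniteness of $A^{\af}$ (equivalently of $C^{*} (G, A, \af)$) from that of $A$ using only external crossed-product theory, rather than the trpc-based machinery of the present paper. Everything else is bookkeeping built on Proposition~\ref{L_1617_IfGFin} together with the collapse of Cuntz comparison in purely infinite simple \ca{s}.
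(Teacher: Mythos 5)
Your proposal is correct and follows essentially the same route as the paper: verify the conditions of Lemma~\ref{L_1X16_trpc_GFin}, obtain the Rokhlin projections from the ordinary tracial Rokhlin property, prove that $A^{\af}$ is purely infinite and simple by passing through the (simple, purely infinite) crossed product, and then observe that the remaining Cuntz subequivalences are automatic because comparison collapses in a purely infinite simple \ca. The one step you flag as an obstacle, pure infiniteness of $C^{*} (G, A, \af)$ for a pointwise outer action of a finite group on a purely infinite simple \ca, is exactly Theorem~3 of~\cite{Je95}, which is what the paper cites, so no new argument is needed there.
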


\begin{proof}
We verify the condition of Lemma~\ref{L_1X16_trpc_GFin}.
So let $F \subseteq A$ be finite,
let $\varepsilon > 0$, let $x \in A_{+}$ satisfy $\| x \| = 1$,
and let $y \in (A^{\alpha})_{+} \setminus \{ 0 \}$.
\Wolog{} $\ep < 1$.
Apply Lemma 1.17 of~\cite{phill23} with $F$, $\ep$, and $x$ as given,
getting \mops{} $p_g \in A$ for $g \in G$ such that,
in Lemma~\ref{L_1X16_trpc_GFin} and with $p = \sum_{g \in G} p_g$,
Conditions (\ref{Item_1X16_Inv}), (\ref{Item_1X16_Comm}),
(\ref{Item_1X16_Prm}), (\ref{Item_1X16_sub_x}),
and~(\ref{Item_1X16_Mnp}) are satisfied.
The algebra $C^* (G, A, \af)$
is simple by Corollary~1.6 of~\cite{phill23}.
So $A^{\af}$ is simple by Theorem~\ref{satunonabel}.
The action $\af$ is pointwise outer by Lemma~1.5 of~\cite{phill23},
so Theorem~3 of~\cite{Je95}
implies that $C^* (G, A, \af)$ is purely infinite.
Corollary~\ref{C_1X15_StableIso}
now implies that $A^{\af}$ is stably isomorphic to $C^* (G, A, \af)$,
so $A^{\af}$ is purely infinite.
Since $y \neq 0$,
the relation $1 - p \precsim_{A^{\af}} y$ is automatic;
this is Condition (\ref{Item_1X16_sub_yy})
of Lemma~\ref{L_1X16_trpc_GFin}.
Since $p \neq 0$ (from $\| p x p \| > 1 - \ep$),
the relation $1 - p \precsim_{A^{\af}} p$ is automatic;
this is Condition (\ref{Item_1X16_sub_1mp})
of Lemma~\ref{L_1X16_trpc_GFin}.
\end{proof}

\section{The modified tracial Rokhlin property}\label{S_2795_mod_TRP}

The extra conditions in Definition~\ref{traR} seem somewhat
unsatisfactory, partly because there are two of them.
We seem to need Condition~(\ref{1_pycompactsets})
($1 - p \precsim_{A^{\alpha}} y$)
in order to prove preservation of tracial rank when it is zero or one
(see the proof of Theorem~\ref{T_2123_PrsvTR}),
and we seem to need Condition~(\ref{1_ppcompactsets})
($1 - p \precsim_{A^{\alpha}} p$)
in order to prove that the crossed product is simple
(see the proof of Proposition~\ref{satpropnoncptp}).
This has led us to consider other variants.
In this section, we discuss the most promising of these,
which we call the modified tracial Rokhlin property
(again, a name intended for use only in this paper).
There are actually two versions.
We will explain what we can prove with them,
and prove that, in a very special case, they are automatic for
actions of finite groups with the tracial Rokhlin property.
The example we construct in Section~\ref{Sec_3749_Exam_TRPZ2} has
the strong modified tracial Rokhlin property,
and the example in Section~\ref{Sec_1908_Exam_TRPS1}
has the modified tracial Rokhlin property
but probably not the strong modified tracial Rokhlin property.

The difference in the definitions we give below
is in Definition \ref{moditra}(\ref{Item_1X07_30})
and Definition \ref{D_1824_AddToTRP_Mod}(\ref{Im_1824_ATRP_s_Comm_Mod}).
For the strong modified tracial Rokhlin property,
the partial isometry $s$ is required to approximately commute
with the elements of a given finite subset of~$A$,
but for the modified tracial Rokhlin property,
$s$ is only required to approximately commute
with the elements of a given finite subset of~$A^{\af}$.
This definition thus requires two finite sets instead of just one.

\begin{dfn}\label{moditra}
Let $A$ be an infinite dimensional simple separable unital \ca,
let $G$ be a second countable compact group,
and let $\af \colon G \to \Aut (A)$ be an action of $G$ on~$A$.
The action $\af$ has the
{\emph{modified tracial Rokhlin property}}
if for every finite set $F_1 \subseteq A$,
every finite set $F_2 \subseteq A^{\af}$,
every finite set $S \subseteq C (G)$, every $\ep > 0$,
and every $x \in A_{+}$ with $\| x \| = 1$,
there exist a projection $p \in A^{\alpha}$,
a partial isometry $s \in A^{\alpha}$,
and a unital completely positive
contractive map $\ph \colon C (G) \to p A p$,
such that the following hold.
\begin{enumerate}
\item\label{Item_1X07_27}
$\varphi$ is an $(F_1, S, \varepsilon)$-approximately equivariant
central multiplicative map.
\item\label{Item_1X07_28}
$1 - p \precsim_{A} x$.
\item\label{Item_1X07_29}
$s^{*} s = 1 - p$ and $s s^{*} \leq p$.
\item\label{Item_1X07_30}
$\| s a - a s \| < \varepsilon$ for all $a \in F_2$.
\item\label{Item_1X07_31}
$\| p x p \| > 1 - \varepsilon$.
\end{enumerate}
\end{dfn}

\begin{dfn}\label{D_1824_AddToTRP_Mod}
Let $A$ be an infinite dimensional simple separable unital \ca,
let $G$ be a second countable compact group,
and let $\af \colon G \to \Aut (A)$ be an action of $G$ on~$A$.
The action $\af$ has the
{\emph{strong modified tracial Rokhlin property}}
if for every finite set $F \subseteq A$,
every finite set $S \subseteq C (G)$, every $\ep > 0$,
and every $x \in A_{+}$ with $\| x \| = 1$,
there exist a partial isometry $s \in A^{\alpha}$,
a projection $p \in A^{\alpha}$,
and a unital completely positive
contractive map $\ph \colon C (G) \to p A p$,
such that the following hold.
\begin{enumerate}
\item\label{Item_1824_ATRP_Mod_CM}
$\ph$ is an $(F, S, \ep)$-approximately
equivariant central multiplicative map.
\item\label{Item_1824_ATRP_sub_x_Mod}
$1 - p \precsim_A x$.
\item\label{Item_1824_ATRP_SubP_Mod}
$s^* s = 1 - p$ and $s s^* \leq p$.
\item\label{Im_1824_ATRP_s_Comm_Mod}
$\| s a - a s \| < \ep$ for all $a \in F$.
\item\label{Item_1824_ATRP_pxp_Mod}
$\| p x p \| > 1 - \ep$.
\end{enumerate}
\end{dfn}

The modified tracial Rokhlin property
implies simplicity of the fixed point algebra and crossed product.
For simplicity of the fixed point algebra,
we don't need to know that the element $s$ in
Definition \ref{moditra} approximately commutes with anything.

\begin{thm}\label{thm:simple2}
Let $A$ be a simple separable infinite dimensional unital \ca,
let $G$ be a second countable compact group, and let
$\alpha \colon G \to \Aut (A)$ be an action
which has the modified tracial Rokhlin property.
Then $A^{\alpha}$ is simple.
\end{thm}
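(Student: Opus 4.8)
The plan is to follow the proof of Theorem~\ref{thm_simple12}, but to replace the appeal to Theorem~\ref{thm_ApproxHommtr} with its modified version described in Remark~\ref{R_1Z09_mod_trp}, in which Condition~(\ref{Item_1X07_20}) is dropped and Condition~(\ref{Item_1X07_20p}) is strengthened to the existence of a partial isometry $s \in A^{\af}$ with $s^* s = 1 - p$ and $s s^* \leq p$. Let $I$ be a nonzero ideal in $A^{\af}$; I would show that $1 \in I$, so that $I = A^{\af}$. As in the proof of Theorem~\ref{thm_simple12}, set $J = \overline{A I A}$. Then $J$ is a nonzero closed ideal in the simple unital algebra~$A$, so $J = A$ and in fact $A I A = A$. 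Hence there are $m \in \N$, elements $a_1, \ldots, a_m, b_1, \ldots, b_m \in A$, and $x_1, \ldots, x_m \in I$ with $\sum_{j = 1}^m a_j x_j b_j = 1$.

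Next I would introduce the constants $M = 1 + \max_j \max ( \| a_j \|, \| b_j \| )$ and $\delta = \frac{1}{2 (m M^2 + m)}$, and set $F_1 = \{ a_j, b_j, x_j \colon j = 1, \ldots, m \} \S A$ and $F_2 = \{ x_j \colon j = 1, \ldots, m \} \S A^{\af}$. Applying the modified form of Theorem~\ref{thm_ApproxHommtr} (Remark~\ref{R_1Z09_mod_trp}) with $n = 3$, with $\delta$ in place of $\varepsilon$, with $x = y = 1$, and with $F_1$ and $F_2$ as above, yields a projection $p \in A^{\af}$, a \ucp{} map $\psi \colon A \to p A^{\af} p$, and a partial isometry $s \in A^{\af}$ with $s^* s = 1 - p$ and $s s^* \leq p$. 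Here we use only the algebraic relations satisfied by $s$, not that it approximately commutes with any finite set. Exactly the estimate in the proof of Theorem~\ref{thm_simple12}, using approximate multiplicativity of $\psi$, the relation $\| \psi (x_j) - p x_j p \| < \delta$, and $\psi (1) = p = \psi \bigl( \sum_j a_j x_j b_j \bigr)$, shows that the element $c = \sum_{j = 1}^m \psi (a_j) p x_j p \psi (b_j)$, which lies in $p A^{\af} p$, satisfies $\| p - c \| < \frac{1}{2}$. Note that $p \neq 0$, since otherwise $s s^* \leq p = 0$ forces $s = 0$ and hence $1 - p = s^* s = 0$, contradicting $p = 0$.

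Finally I would exploit the partial isometry to pass from a corner of $I$ to all of $A^{\af}$. Since each $x_j \in I$ and the remaining factors lie in $A^{\af}$, we have $c \in I$; and because $\| p - c \| < 1 = \| p \|$, the element $c$ is invertible in the unital corner $p A^{\af} p$, with inverse $c^{-1} \in p A^{\af} p \S A^{\af}$. Therefore $p = c^{-1} c \in I$. Since $s s^* \leq p \in I$ gives $s s^* = p \, (s s^*) \in I$, and since $s^* (s s^*) s = (s^* s)^2 = 1 - p$, we conclude that $1 - p \in I$ as well. Hence $1 = p + (1 - p) \in I$, so $I = A^{\af}$, and $A^{\af}$ is simple. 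The one genuinely new point, and the step I expect to require the most care, is precisely this last transfer: the proof of Theorem~\ref{thm_simple12} obtained $1 - p \in I$ directly from $1 - p \precsim_{A^{\af}} z$ for a positive $z \in I$, a relation supplied by the now-omitted Condition~(\ref{Item_1X07_20}), whereas here the strengthened Condition~(\ref{Item_1X07_20p}), in the form of the partial isometry $s$, is what lets us first place the projection $p$ in~$I$ and then move $1 - p$ into~$I$.
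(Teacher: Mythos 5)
Your proposal is correct and follows essentially the same route as the paper: the same reduction to $\sum_{j} a_j x_j b_j = 1$, the same application of the modified form of Theorem~\ref{thm_ApproxHommtr} from Remark~\ref{R_1Z09_mod_trp}, the same element $c = \sum_{j} \psi (a_j) p x_j p \psi (b_j)$ with $\| p - c \| < \frac{1}{2}$, and the same use of the partial isometry $s$ to account for $1 - p$. The only (cosmetic) difference is in the final step: the paper exhibits the single invertible element $c + s^* c s \in I$, whereas you first deduce $p \in I$ from invertibility of $c$ in the corner $p A^{\af} p$ and then transfer $1 - p$ into $I$ via $s$; both are valid.
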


\begin{proof}
We proceed as in the proof of Theorem~\ref{thm_simple12}.
As there, let $I$ be a nonzero ideal in $A^{\alpha}$.
There are $m \in \N$,
$a_1, a_2 \ldots, a_{m}, b_1, b_2, \ldots, b_{m} \in A$,
and $x_1, x_2, \ldots, x_{m} \in I$ such that
$\sum_{j = 1}^{m} a_j x_j b_j = 1$.
As there, define $M$, $\dt > 0$, $F_1 \S A$, and $F_2 \S A^{\af}$
by (\ref{Eq_1X20_Md}), (\ref{Eq_1X20_F1F2}),
and~(\ref{Eq_1X20_F1F2_p2}).
Apply the modification of Theorem~\ref{thm_ApproxHommtr}
described in Remark~\ref{R_1Z09_mod_trp}
with $\dt$ in place of $\varepsilon$,
with $n = 3$, with $1$ in place of $x$,
and with $F_{1}$ and $F_{2}$ as given.
We get a projection $p$ in $A^{\alpha}$,
a partial isometry $s$ in $A^{\alpha}$, and a \ucp{}
map $\psi \colon A \to p A^{\alpha} p$, such that the following hold.
\begin{enumerate}
\item\label{Item_1577_psiab_muliti_TRPC_MT}
$\| \psi (a b c) - \psi (a) \psi (b) \ps (c) \| < \delta$
for all $a, b, c \in F_{1}$.
\item\label{Item_1578_paap_TRPC_MT}
$\| p a - a p \| < \delta$ for all $a \in F_{1} \cup F_{2}$.
\item\label{Item_1576_psia_iden_TRPC_MT}
$\| \psi (a) - p a p \| < \delta$ for all $a \in F_{2}$.
\item\label{Item_1579_1mp_subz_fix_TRPC_MT}
$s^{*} s = 1 - p$ and $s s^{*} \leq p$.
\end{enumerate}
Define
\[
d = \sum_{j = 1}^{m} \psi (a_{j}) p x_{j} p \psi (b_{j})
  \in p I p
  \S I.
\]
The reasoning of the proof of Theorem~\ref{thm_simple12}
gives $\| p - d \| < \frac{1}{2}$.
Also $s^* d s \in (1 - p) I (1 - p) \S I$ and
\[
\| 1 - p - s^* d s \| = \| s^* (p - d) s \|
  < \frac{1}{2}.
\]
Since $p$ and $1 - p$ are orthogonal, we get
$\| 1 - (d + s^* d s) \| < \frac{1}{2}$.
So $d + s^* d s$ is in $I$ and is invertible.
\end{proof}

\begin{prp}\label{satpropnoncpts}
Let $A$ be an infinite dimensional simple separable unital \ca.
Let $\alpha \colon G \to \Aut (A)$
be an action of a compact group $G$ on $A$ which
has the modified tracial Rokhlin property.
Then $\alpha$ is saturated.
\end{prp}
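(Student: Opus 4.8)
The plan is to repeat the proof of Proposition~\ref{satpropnoncptp} essentially verbatim, substituting the modified version of Lemma~\ref{TRPcequi} (as described in Remark~\ref{R_1Z09_mod_trp}) for Lemma~\ref{TRPcequi} itself. First I would adopt the notation of Definition~\ref{D_1X14_A2pi}, fix an irreducible representation $\pi \in \widehat{G}$ with $d_{\pi} = \dim(\Hi_{\pi})$, and recall from Lemma~\ref{L_1X14_A2Pi_id} that $\overline{A_{2}(\pi)^{*} A_{2}(\pi)}$ is a closed two sided ideal in $(B(\Hi_{\pi}) \otimes A)^{\Ad(\pi) \otimes \alpha}$. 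As in that proof, it then suffices to produce an invertible element of $\overline{A_{2}(\pi)^{*} A_{2}(\pi)}$, since a closed ideal containing an invertible element is the whole algebra.

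The one place where the proof of Proposition~\ref{satpropnoncptp} invokes the tracial Rokhlin property with comparison is the application of Lemma~\ref{TRPcequi} (with $\delta = 1/(4 d_{\pi}^{3})$ in place of $\varepsilon$, with $n = 2$, with $F = \{ 1_{A} \}$, with $S$ consisting of the matrix coefficients $\pi_{j,k}$ and their adjoints, and with $x = y = 1$), which yields a projection $p \in A^{\alpha}$, the relation $1 - p \precsim_{A^{\alpha}} p$, and an equivariant $(F, S, \delta)$-approximately central multiplicative map $\varphi \colon C(G) \to p A p$. There, the relation $1 - p \precsim_{A^{\alpha}} p$ is used only to extract a partial isometry $s \in A^{\alpha}$ with $s^{*} s = 1 - p$ and $s s^{*} \le p$. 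Instead I would apply the modified version of Lemma~\ref{TRPcequi} from Remark~\ref{R_1Z09_mod_trp}, taking $F_{1} = \{ 1_{A} \}$, $F_{2} = \varnothing$, and the same $S$, $\delta$, $n$, and $x$. This directly supplies the projection $p \in A^{\alpha}$, the equivariant map $\varphi$, and the partial isometry $s \in A^{\alpha}$ with $s^{*} s = 1 - p$ and $s s^{*} \le p$; the approximate commutation of $s$ with the elements of $F_{2}$ is vacuous and plays no role here.

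With $p$, $\varphi$, and $s$ in hand, the remainder is identical to the proof of Proposition~\ref{satpropnoncptp}. I would form $c = (\id_{B(\Hi_{\pi})} \otimes \varphi)(\pi^{*}) \in B(\Hi_{\pi}) \otimes p A p$, use equivariance of $\id_{B(\Hi_{\pi})} \otimes \varphi$ together with $\pi^{*} \in C(G)_{2}(\pi)$ to see that $c \in A_{2}(\pi)$, and obtain $\| c^{*} c - 1_{B(\Hi_{\pi})} \otimes p \| < d_{\pi}^{3} \delta$ from approximate multiplicativity and $\pi \pi^{*} = 1$. Since $s^{*} p s = 1 - p$, the element $z = c^{*} c + (1_{B(\Hi_{\pi})} \otimes s)^{*} c^{*} c (1_{B(\Hi_{\pi})} \otimes s)$ then satisfies $\| z - 1_{B(\Hi_{\pi})} \otimes 1 \| < 2 d_{\pi}^{3} \delta < 1$ and lies in the ideal $\overline{A_{2}(\pi)^{*} A_{2}(\pi)}$, hence is invertible. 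I do not expect any genuine obstacle: the entire content is the observation, already recorded in Remark~\ref{R_1X20_No14}, that the proof of Proposition~\ref{satpropnoncptp} uses only the approximately equivariant central multiplicative map and the subequivalence $1 - p \precsim_{A^{\alpha}} p$, and that the latter is precisely what the modified tracial Rokhlin property replaces by the direct provision of the partial isometry~$s$.
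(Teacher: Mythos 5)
Your proposal is correct and is essentially the paper's own argument: the paper proves Proposition~\ref{satpropnoncpts} simply by citing Remark~\ref{R_1X20_No14}, which records exactly the observation you make, namely that the proof of Proposition~\ref{satpropnoncptp} uses only the approximately equivariant central multiplicative map and the partial isometry $s$ with $s^{*}s = 1 - p$ and $s s^{*} \leq p$, both of which the modified version of Lemma~\ref{TRPcequi} from Remark~\ref{R_1Z09_mod_trp} supplies. No gap.
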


\begin{proof}
This is Remark~\ref{R_1X20_No14}.
\end{proof}

\begin{thm}\label{simplecross}
Let $A$ be an infinite dimensional simple separable unital \ca,
and let $\alpha \colon G \to \Aut (A)$
be an action of a second countable compact group $G$ on~$A$
which has the modified tracial Rokhlin property.
Then $C^{*} (G, A, \alpha)$ is simple.
\end{thm}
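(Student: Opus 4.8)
The plan is to argue exactly as in the proof of Theorem~\ref{simplecrosscomparison}, which handled the tracial Rokhlin property with comparison, substituting the two analogous results that have already been established for the modified tracial Rokhlin property. The engine is the equivalence of Conditions~(\ref{Item_1X07_SimSat}) and~(\ref{Item_1X07_CPSim}) in Theorem~\ref{satunonabel}: simplicity of $C^{*} (G, A, \alpha)$ is equivalent to the conjunction of simplicity of $A^{\af}$ and saturation of~$\af$. So it suffices to verify these two facts separately.

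First I would invoke Theorem~\ref{thm:simple2}, whose hypotheses (an infinite dimensional simple separable unital \ca~$A$, a second countable compact group~$G$, and an action with the modified tracial Rokhlin property) match ours exactly, to conclude that $A^{\alpha}$ is simple. Next I would invoke Proposition~\ref{satpropnoncpts}, again with matching hypotheses, to conclude that $\af$ is saturated. Together these give Condition~(\ref{Item_1X07_SimSat}) of Theorem~\ref{satunonabel}.

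Finally I would apply the implication from Condition~(\ref{Item_1X07_SimSat}) to Condition~(\ref{Item_1X07_CPSim}) in Theorem~\ref{satunonabel} to deduce that $C^{*} (G, A, \alpha)$ is simple, completing the proof. There is no genuine obstacle remaining here: all the technical work has been front-loaded into Theorem~\ref{thm:simple2} and Proposition~\ref{satpropnoncpts}, and the present statement is a one-line corollary of them via the saturation/simplicity dictionary of Theorem~\ref{satunonabel}. The only point worth a moment's care is confirming that the hypotheses of the two cited results are met verbatim by the hypotheses of this theorem, which they are, so the proof reduces to citing them and then citing Theorem~\ref{satunonabel}.
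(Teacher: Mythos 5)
Your proposal is correct and is exactly the paper's proof: simplicity of $A^{\alpha}$ from Theorem~\ref{thm:simple2}, saturation from Proposition~\ref{satpropnoncpts}, and then the implication from Condition~(\ref{Item_1X07_SimSat}) to Condition~(\ref{Item_1X07_CPSim}) in Theorem~\ref{satunonabel}.
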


\begin{proof}
The algebra $A^{\af}$ is simple by Theorem~\ref{thm:simple2}
and $\af$ is saturated by Proposition~\ref{satpropnoncpts}.
So Condition~(\ref{Item_1X07_SimSat}) in Theorem~\ref{satunonabel}
holds.
\end{proof}

Crossed products by actions with the modified tracial Rokhlin property
probably also preserve Property~(SP) and pure infiniteness.
We were not able to prove that even
crossed products by actions with the
strong modified tracial Rokhlin property preserve tracial rank zero.

In the rest of this section, we prove that an action
of a finite group $G$ on a UHF~algebra~$A$ which
has the tracial Rokhlin property must in fact have the
strong modified tracial Rokhlin property.
We do not know to what extent this result can be generalized,
even if we require only the modified tracial Rokhlin property.

\begin{ntn}\label{N_1619_Perm}
Let $n \in \N$.
Let $S_n$ denote the symmetric group on $n$ letters,
and let $(e_{j, k})_{j, k = 1, 2, \ldots, n}$
be the standard system of matrix units for~$M_n$.
For $\sm \in S_n$ and $\ep \in \{ - 1, 1 \}^n$,
let $v (\sm, \ep) = \sum_{j = 1}^n \ep_j e_{\sm (j), \, j}$.
We call the matrices $v (\sm, \ep)$
the signed permutation matrices in $M_n$.
\end{ntn}

\begin{lem}\label{L_1619_CommSgnPerm}
Let $A$ be a unital \ca,
let $n \in \N$, and let $\ph \colon M_n \to A$ be a unital \hm.
Following Notation~\ref{N_1619_Perm},
define a map $E \colon A \to A$ by
\[
E (x) = \frac{1}{2^n n!}
  \sum_{\sm \in S_n} \sum_{\ep \in \{ - 1, 1 \}^n}
   \ph (v (\sm, \ep)) x \ph (v (\sm, \ep))^*.
\]
Then $E$ is a conditional expectation from $A$ to the relative
commutant $\ph (M_n)' \cap A$.
Moreover, for any $x \in A$, we have
\[
\| E (x) - x \| \leq 2 \, \dist (x, \, \ph (M_n)' \cap A).
\]
\end{lem}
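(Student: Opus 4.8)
The plan is to recognize $E$ as the averaging map for conjugation by a finite group of unitaries, and then to identify its range via a spanning argument. First I would note that the signed permutation matrices $v(\sm, \ep)$ form a finite subgroup $W_0$ of the unitary group of $M_n$, of order $2^n n!$ (the hyperoctahedral group): a direct computation gives $v(\sm, \ep) v(\ta, \dt) = v(\sm \ta, \ep')$ with $\ep'_k = \ep_{\ta (k)} \dt_k$, and $v(\sm, \ep)^*$ is again of this form. Applying the unital homomorphism~$\ph$, the elements $w = \ph(v(\sm, \ep))$ are unitaries in~$A$ forming a finite group $W = \ph(W_0)$, and $E = \frac{1}{2^n n!} \sum_{w \in W} \Ad(w)$ is the average of the inner automorphisms $\Ad(w)$. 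Since each $\Ad(w)$ is a unital $*$-automorphism, hence \ucp, the average $E$ is \ucp{} and in particular contractive with $E(1) = 1$, so $\| E \| = 1$.

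Next I would pin down the range. Write $B = \ph(M_n)' \cap A$. If $b \in B$ then $b$ commutes with every $w \in W$, so $\Ad(w)(b) = b$ for all $w$ and hence $E(b) = b$; thus $E$ restricts to the identity on~$B$. For invariance of the range under conjugation, the standard translation argument applies: for fixed $w_0 \in W$, the map $w \mapsto w_0 w$ permutes~$W$, so $w_0 E(x) w_0^* = E(x)$, whence $E(x) \in W' \cap A$. The crucial point---and the one nontrivial step---is the combinatorial claim that $\spn \{ v(\sm, \ep) \colon \sm \in S_n, \ \ep \in \{ -1, 1 \}^n \} = M_n$, for this forces $W' \cap A = \ph(M_n)' \cap A = B$ and completes the identification $E(A) \S B$.

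To prove the spanning claim I would first recover the diagonal: using the character orthogonality relation $2^{-n} \sum_{\ep} \ep_j \ep_k = \dt_{j, k}$, one checks $2^{-n} \sum_{\ep} \ep_j\, v(\mathrm{id}, \ep) = e_{j, j}$, so every diagonal matrix unit lies in the span. For $k \neq l$ I would choose $\sm \in S_n$ with $\sm(l) = k$ and observe $e_{k, l} = e_{k, k}\, v(\sm, \mathbf{1})$; since $W_0$ is a group and $e_{k, k}$ already lies in the span of signed permutation matrices, the product is again a combination of such matrices. Hence all matrix units, and therefore all of~$M_n$, lie in the span. With $E(A) \S B$ and $E|_B = \mathrm{id}_B$ established, $E$ is an idempotent \ucp{} map onto the C*-subalgebra~$B$, i.e.\ a conditional expectation (the bimodule identity $E(a x b) = a E(x) b$ for $a, b \in B$ also follows directly from $w (a x b) w^* = a (w x w^*) b$).

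Finally, for the estimate I would fix $\eta > 0$ and pick $b \in B$ with $\| x - b \| \leq \dist(x, B) + \eta$. Since $E(b) = b$, we have $E(x) - x = E(x - b) - (x - b)$, so $\| E(x) - x \| \leq (\| E \| + 1) \| x - b \| = 2 \| x - b \| \leq 2 (\dist(x, B) + \eta)$; letting $\eta \to 0$ gives the claim. The only genuine obstacle is the spanning lemma; the rest is the routine formalism of averaging conjugation over a finite group.
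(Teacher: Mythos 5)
Your proof is correct and follows exactly the route the paper indicates (the paper merely sketches it as "well known and easy": the signed permutation matrices form a subgroup of the unitary group spanning $M_n$, the average over conjugation is therefore a conditional expectation onto the relative commutant, and the norm estimate follows from $\| E \| = 1$ applied to $E(x-y)-(x-y)$ for a near-optimal $y$ in the commutant). Your filled-in details — the group law $v(\sm,\ep)v(\ta,\dt)=v(\sm\ta,\ep')$, the recovery of the diagonal matrix units via character orthogonality, and the products $e_{k,k}v(\sm,\mathbf 1)=e_{k,l}$ — are all accurate.
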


\begin{proof}
This is well known, and easy.
One checks that the signed permutation matrices are a subgroup
of the unitary group of~$M_n$,
and that they span $M_n$.
This is easily seen to imply
all but the last sentence of the conclusion.
For that, let $x \in A$.
Set $\rh = \dist (x, \, \ph (M_n)' \cap A)$.
Let $\ep > 0$.
Choose $y \in \ph (M_n)' \cap A$
such that $\| y - x \| < \rh + \frac{\ep}{2}$.
Then $E (y) = y$, so
\[
\| E (x) - x \|
 = \| E (x - y) - (x - y) \|
 \leq \| E (x - y) \| + \| x - y \|
 \leq 2 \| x - y \|
 < 2 \rh + \ep.
\]
This completes the proof.
\end{proof}

By the reasoning in the proof of Lemma~\ref{L_1X16_trpc_GFin},
the following result implies that the actions in the
hypotheses have the strong modified tracial Rokhlin property.
We emphasize that the action is not required to be a direct limit
action.

\begin{prp}\label{D_1619_TRP_and_s}
Let $A$ be a UHF~algebra,
let $G$ be a finite group,
and let $\af \colon G \to \Aut (A)$ be an action of $G$ on~$A$
which has the tracial Rokhlin property.
Then for every finite set $F \subseteq A$, every $\ep > 0$,
every $x \in A_{+}$ with $\| x \| = 1$,
there exist a partial isometry $s \in A^{\alpha}$,
a projection $p \in A^{\alpha}$,
and \mops{} $p_g \in A$ for $g \in G$, such that the following hold.
\begin{enumerate}
\item\label{Item_1619_tRp_s_p}
$p = \sum_{g \in G} p_g$.
\item\label{Item_1619_tRp_s_p_comm}
$\| p_g a - a p_g \| < \ep$ for all $a \in F$ and all $g \in G$.
\item\label{Item_1619_tRp_s_CM}
$\| \af_g (p_h) - p_{g h} \| < \ep$ for all $g, h \in G$.
\item\label{Item_1619_tRp_s_sub_x}
$1 - p \precsim_A x$.
\item\label{Item_1619_tRp_s_SubP}
$s^* s = 1 - p$ and $s s^* \leq p$.
\item\label{Item_1619_tRp_s_Comm}
$\| s a - a s \| < \ep$ for all $a \in F$.
\item\label{Item_1619_tRp_s_pxp}
$\| p x p \| > 1 - \ep$.
\setcounter{TmpEnumi}{\value{enumi}}
\end{enumerate}
\end{prp}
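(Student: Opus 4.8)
The plan is to produce the projections $p_g$ and the partial isometry $s$ essentially simultaneously by exploiting the UHF structure, which gives us abundant matrix-unit systems and a very rigid comparison theory. First I would apply the tracial Rokhlin property (Definition~\ref{D_1619_TRP}) together with the strengthening from Proposition~\ref{L_1617_IfGFin}, taking $y$ to be any fixed nonzero positive element of $A^{\af}$; this already yields an $\af$-invariant projection $p=\sum_{g}p_g$ with the \mops{} $p_g$ satisfying (\ref{Item_1619_tRp_s_p})--(\ref{Item_1619_tRp_s_sub_x}) and~(\ref{Item_1619_tRp_s_pxp}), and also the relation $1-p\precsim_{A^{\af}} p$ coming from $\rc(A^{\af})=0$ (a UHF algebra has strict comparison, so Proposition~\ref{P_1X17_StComp} applies). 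The genuinely new content is obtaining $s\in A^{\af}$ with $s^*s=1-p$, $ss^*\le p$, \emph{and} $\|sa-as\|<\ep$ for all $a\in F$; the Cuntz subequivalence alone only gives $s^*s=1-p$, $ss^*\le p$ in $A^{\af}$ with no commutation control.

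The key idea for the commutation estimate is to build $s$ inside a finite-dimensional subfactor of $A$ that approximately contains $F$ and commutes, up to small error, with the $p_g$. Concretely, since $A=\overline{\bigcup_k M_{d_k}}$ is UHF, I would fix $k$ large enough that $F$ is within $\ep/4$ of $M_{d_k}\subseteq A$, and simultaneously require (by a standard perturbation of the Rokhlin projections into the commutant of $M_{d_k}$) that $p$ and the $p_g$ are close to the relative commutant $M_{d_k}'\cap A$. Lemma~\ref{L_1619_CommSgnPerm} is the tool that lets me do this cleanly: averaging over the signed permutation matrices of $M_{d_k}$ gives a conditional expectation $E$ onto $M_{d_k}'\cap A$ with the quantitative bound $\|E(z)-z\|\le 2\,\dist(z,M_{d_k}'\cap A)$. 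Applying $E$ to a partial isometry $s_0$ implementing $1-p\precsim_{A^{\af}} p$ does not preserve the partial-isometry relations exactly, so instead I would arrange that $s$ itself lives (approximately) in $M_{d_k}'\cap A$, so that it automatically commutes with the copy of $M_{d_k}$ and hence $\ep/4$-commutes with $F$.

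The step I expect to be the main obstacle is reconciling two competing demands: $s$ must commute with $F$ (pushing it into $M_{d_k}'\cap A$), yet $s^*s=1-p$ and $ss^*\le p$ must hold \emph{exactly} with $p,p_g\in A^{\af}$. The clean way to resolve this is to carry out the comparison $1-p\precsim_{A^{\af}}p$ \emph{within} the fixed-point algebra of the restricted action on a large invariant finite-dimensional subalgebra. Because $G$ is finite and $A$ is UHF, I can choose the subfactor $M_{d_k}$ to be $\af$-invariant (replacing $M_{d_k}$ by the subalgebra generated by its $G$-orbit if necessary) and arrange $p,1-p$ to be projections in $(M_{d_k})^{\af}$ whose traces are comparable; strict comparison of traces in the finite-dimensional algebra then produces a partial isometry $s\in (M_{d_k}'\cap A)^{\af}\subseteq A^{\af}$ with $s^*s=1-p$ and $ss^*\le p$ exactly, and $s$ commutes with $M_{d_k}\supseteq_{\ep/4} F$ by construction, giving~(\ref{Item_1619_tRp_s_Comm}).

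I would then verify that the remaining conditions survive this refinement: (\ref{Item_1619_tRp_s_p})--(\ref{Item_1619_tRp_s_CM}), (\ref{Item_1619_tRp_s_sub_x}), and~(\ref{Item_1619_tRp_s_pxp}) are inherited directly from the output of Proposition~\ref{L_1617_IfGFin} (with tolerances tightened to absorb the perturbations used in placing $p_g$ relative to $M_{d_k}$), while (\ref{Item_1619_tRp_s_SubP}) and~(\ref{Item_1619_tRp_s_Comm}) come from the finite-dimensional construction of $s$. The bookkeeping of error terms — choosing the matrix size $d_k$, the perturbation of the Rokhlin projections into $M_{d_k}'\cap A$, and the tolerance in the initial application of the tracial Rokhlin property all consistently — is routine but is where all the care is required, and it is the only place where UHF-specificity (exact rational traces and strict comparison in matrix subalgebras) is genuinely used.
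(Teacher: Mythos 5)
Your overall architecture matches the paper's for most of the conditions: you correctly identify Lemma~\ref{L_1619_CommSgnPerm} as the tool for pushing the Rokhlin projections into the relative commutant $M_{d_k}'\cap A$ (the paper does exactly this, applying the tracial Rokhlin property with the signed permutation matrices of $A_m\cong M_{d(m)}$ as the finite set, so that the conditional expectation $E$ moves the projections only slightly), and you correctly see that $s$ should live in $M_{d_k}'\cap A$ so that it automatically almost commutes with $F$. The comparison producing $s_0\in M_{d_k}'\cap A$ with $s_0^*s_0=1-r$ and $s_0s_0^*\le r_1$ via traces in the UHF algebra $M_{d_k}'\cap A$ is also exactly what the paper does.

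The gap is in how you make $s$ lie in $A^{\af}$. Your device --- choosing $M_{d_k}$ to be $\af$-invariant by ``replacing $M_{d_k}$ by the subalgebra generated by its $G$-orbit'' --- fails for two reasons. First, the proposition does not assume $\af$ is a direct limit (product-type) action, and for a general action the C*-algebra generated by the $G$-orbit of a finite-dimensional subalgebra need not be finite-dimensional (already two projections in general position generate an infinite-dimensional algebra), so there is no invariant finite-dimensional subfactor to work in. Second, even setting that aside, your requirement that $p,1-p\in(M_{d_k})^{\af}$ is incompatible with $s\in M_{d_k}'\cap A$ and $s^*s=1-p$: then $1-p$ would lie in both $M_{d_k}$ and its relative commutant, forcing it to be scalar. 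The paper's actual resolution is an averaging argument you do not have: it first builds a non-invariant $s_0\in A_m'\cap A$ with $s_0^*s_0=1-r$, $s_0s_0^*\le r_1$, then forms $c_0=n^{-1/2}\sum_{g\in G}\af_g(s_0)$, uses the approximate orthogonality of the range projections $\af_g(s_0)\af_g(s_0)^*\lesssim r_g$ to show $c_0^*c_0\approx 1-r$ and that the cross terms $\af_g(s_0)^*\af_h(s_0)$ are small, and finally perturbs $c_0$ (via $c(c^*c)^{-1/2}$ in the corner $(1-p)A^{\af}(1-p)$) to an exact invariant partial isometry; the commutation with $F$ then requires choosing $m$ so that all of $\bigcup_g\af_g(F)$, not just $F$, is close to $A_m$. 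Without this averaging step (or some substitute), your construction does not produce an element of $A^{\af}$.
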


\begin{proof}
Write $A = \dirlim_m A_m$ with unital maps and with integers
$d (m) \geq 2$ such that $A_m \cong M_{d (m)}$ for $m \in \Nz$.
Set $n = \card (G)$.
Let $\ta$ be the unique \tst{} on~$A$.
Let $F \subseteq A$ be finite, let $\ep > 0$,
and let $x \in  A_{+}$ satisfy $\| x \| = 1$.
\Wolog{} $\| a \| \leq 1$ for all $a \in F$.
Define
\begin{equation}\label{Eq_1619_B12}
\dt_0 = \min \left( \frac{1}{3}, \,  \frac{\ep}{22 + 2 \sqrt{n}},
 \,  \frac{\ep}{6 n} \right).
\end{equation}
Choose $\dt_1 > 0$ so small that the following three conditions are
satisfied.
First,
\begin{equation}\label{Eq_1619_B11}
\dt_1 \leq \dt_0.
\end{equation}
Second, whenever $B$ is a \ca, $e \in B$ is a \pj, $y \in B$,
$y e = y$, and $\| y^* y - e \| < 41 \dt_1$,
then, with functional calculus evaluated in $e B e$,
the element $z = y (y^* y)^{- 1 / 2}$ exists
and satisfies $z^* z = e$ and $\| z - y \| < \dt_0$.
Finally, whenever $B$ is a \uca,
$e, f \in B$ are \pj{s}, and $\| e - f \| < 2 \dt_1$,
then there is a unitary $w \in B$
such that $w e w^* = f$ and $\| w - 1 \| < \dt_1$.
Choose $\dt_2 > 0$ so small that
\begin{equation}\label{Eq_1619_4}
\dt_2 \leq \min \left( \frac{1}{15 n}, \, \frac{\dt_1}{15 n}, \,
                     \frac{\dt_1}{5 n^{3/2}}, \, \frac{\dt_1^2}{15 n}
         \right),
\end{equation}
and also so small that whenever $B$ is a \ca, $b \in B$ is selfadjoint,
and $\| b^2 - b \| < 60 n \dt_2$,
then there is a \pj{} $e \in B$ such that $\| e - b \| < \dt_1$.
Choose $\dt > 0$ so small that
\begin{equation}\label{Eq_1619_1}
\dt \leq \dt_2,
\end{equation}
and also so small that whenever $B$ is a \ca{}
and $b_1, b_2, \ldots, b_n \in B$ are selfadjoint
and satisfy $\| b_j^2 - b_j \| < 3 \dt$ for $j = 1, 2, \ldots, n$
and $\| b_j b_k \| < 3 \dt$
for distinct $j, k \in \{ 1, 2, \ldots, n \}$,
then there are \mops{} $e_j \in B$ for $j = 1, 2, \ldots, n$
such that $\| e_j - b_j \| < \dt_2$ for $j = 1, 2, \ldots, n$.

Set $F_0 = \bigcup_{g \in G} \af_g (F)$.
Choose $m \in \Nz$ such that for every $a \in F_0$ there is $b \in A_m$
such that $\| a - b \| < \dt_0$.
Let $P$ be the image under some isomorphism $M_{d (m)} \to A_m$
of the set of signed permutation matrices in $M_{d (m)}$
(as in Notation~\ref{N_1619_Perm}).
Define \cfn{s} $h, h_0 \colon [0, 1] \to [0, 1]$ by
\[
h (\ld) = \begin{cases}
   0 & \hspace*{1em} 0 \leq \ld \leq 1 - \dt
        \\
   \dt^{-1} (\ld - 1 + \dt) & \hspace*{1em} 1 - \dt \leq \ld \leq 1
\end{cases}
\]
and
\[
h_0 (\ld) = \begin{cases}
   (1 - \dt)^{-1} \ld & \hspace*{1em} 0 \leq \ld \leq 1 - \dt
        \\
   1 & \hspace*{1em} 1 - \dt \leq \ld \leq 1.
\end{cases}
\]
Then $h (x) \neq 0$ since $\| x \| = 1$.
Since $A$ is a UHF algebra, there is a
nonzero \pj{} $f \in {\ov{h (x) A h (x)}} \S {\ov{x A x}}$
such that
\begin{equation}\label{Eq_1619_1Over}
\ta (f) < \frac{1}{n + 1}.
\end{equation}
Apply Definition~\ref{D_1619_TRP}
with $P$ in place of~$F$, with $\dt$ in place of~$\ep$,
and with $f$ in place of~$x$.
Call the resulting \pj{s} $q_g$ for $g \in G$,
and set $q = \sum_{g \in G} q_g$.
Thus:
\begin{enumerate}
\setcounter{enumi}{\value{TmpEnumi}}
\item\label{Eq_1619_TRP_AppCm}
$\| q_g v - v q_g \| < \dt$ for all $v \in P$ and all $g \in G$.
\item\label{Eq_1619_B90}
$\| \af_g (q_h) - q_{g h} \| < \dt$ for all $g, h \in G$.
\item\label{Eq_1619_B22}
$1 - q \precsim_A f$.
\item\label{Eq_1619_TRP_Norm}
$\| q f q \| > 1 - \dt$.
\end{enumerate}
It follows from~(\ref{Eq_1619_B90}) that
\begin{equation}\label{Eq_1619_q_inv}
\| \af_g (q) - q \| < n \dt
\end{equation}
for all $g \in G$.

We claim that
\begin{equation}\label{Eq_1619_B23}
\| q x q \| > 1 - 3 \dt.
\end{equation}
To prove the claim, first observe that
\[
h_0 (x)^{1/2} h (x) = h (x)
\andeqn
\| h_0 (x) - x \| < 2 \dt.
\]
It follows that $h_0 (x)^{1/2} f = f$.
Therefore
\[
q f q
 = q h_0 (x)^{1/2} f h_0 (x)^{1/2} q
 \leq q h_0 (x) q,
\]
so, using (\ref{Eq_1619_TRP_Norm}) at the last step,
\[
\| q x q \|
 \geq \| q h_0 (x) q \| - \| h_0 (x) - x \|
 > \| q f q \| - 2 \dt
 > 1 - 3 \dt,
\]
as claimed.

By Lemma~\ref{L_1619_CommSgnPerm}, the formula
\[
E (x) = \frac{1}{2^{d (m)} d (m)!} \sum_{v \in P} v x v^*
\]
defines a conditional expectation $E \colon A \to A_m' \cap A$
such that
\begin{equation}\label{Eq_1619_dist}
\| E (x) - x \| \leq 2 \, \dist (x, \, A_m' \cap A)
\end{equation}
for all $x \in A$.
Also clearly
\begin{equation}\label{Eq_1619_dComm}
\| E (x) - x \| \leq \sup_{v \in P} \| v x - x v \|.
\end{equation}
Combining (\ref{Eq_1619_TRP_AppCm}) and~(\ref{Eq_1619_dComm}),
we get
\begin{equation}\label{Eq_2116_TnSt}
\| E (q_g) - q_g \| < \dt
\end{equation}
for all $g \in G$.
Therefore
\[
\begin{split}
& \| E (q_g)^2 - E (q_g) \|
\\
& \hspace*{3em} {\mbox{}}
\leq \| E (q_g) - q_g \| \| E (q_g) \| + \| q_g \| \| E (q_g) - q_g \|
        + \| E (q_g) - q_g \|
 < 3 \dt
\end{split}
\]
and, if $g \neq h$, then
\[
\| E (q_g) E (q_h) \|
 \leq \| E (q_g) - q_g \| \| E (q_h) \| + \| q_g \| \| E (q_h) - q_h \|
 < 2 \dt.
\]
By the choice of~$\dt$,
there exist \mops{} $r_g \in A_m' \cap A$ for $g \in G$
such that $\| r_g - E (q_g) \| < \dt_2$ for $j = 1, 2, \ldots, n$.
Set
\begin{equation}\label{Eq_2116_NNSt}
r = \sum_{g \in G} r_g.
\end{equation}
Using (\ref{Eq_2116_TnSt}) at the second step
and (\ref{Eq_1619_1}) at the last step,
\begin{equation}\label{Eq_1619_B91}
\| r_g - q_g \|
 \leq \| r_g - E (q_g) \| + \| E (q_g) - q_g \|
 < \dt_2 + \dt \leq 2 \dt_2
\end{equation}
for all $g \in G$, so
\begin{equation}\label{Eq_1619_2}
\| r - q \| < 2 n \dt_2.
\end{equation}

Using (\ref{Eq_1619_B90}) and~(\ref{Eq_1619_B91}) at the second step
and (\ref{Eq_1619_1}) at the last step, for $g, h \in G$ we get
\begin{equation}\label{Eq_1619_3}
\begin{split}
\| \af_g (r_h) - r_{g h} \|
& \leq \| \af_g \| \| r_h - q_h \| + \| \af_g (q_h) - q_{g h} \|
       + \| r_{g h} - q_{g h} \|
\\
& < 2 \dt_2 + \dt + 2 \dt_2
  \leq 5 \dt_2.
\end{split}
\end{equation}
Using (\ref{Eq_1619_2}) and (\ref{Eq_1619_q_inv})
at the second step, for $g \in G$,
\begin{equation}\label{Eq_1619_B31}
\begin{split}
\| \af_g (r) - r \|
& \leq \| \af_g (r) - \af_g (q) \| + \| \af_g (q) - q \| + \| q - r \|
\\
& < 2 n \dt_2 + n \dt + 2 n \dt_2
  \leq 5 n \dt_2.
\end{split}
\end{equation}
Since $2 n \dt_2 \leq 1$ by~(\ref{Eq_1619_4}),
it follows from~(\ref{Eq_1619_2})
that the \pj{s} $1 - r$ and $1 - q$ are \mvnt.
Since $1 - q \precsim_A f$ by~(\ref{Eq_1619_B22}),
using (\ref{Eq_1619_1Over}) we get
\[
\ta (1 - r) < \frac{1}{n + 1}
\andeqn
\ta (r) > \frac{n}{n + 1}.
\]
For $g \in G$, since $5 \dt_2 < 1$ by~(\ref{Eq_1619_4}),
it follows from~(\ref{Eq_1619_3}) that $\af_g (r_1) \sim r_g$.
Uniqueness of $\ta$ implies $\ta \circ \af_g = \ta$.
So $\ta (r_g) = \ta (r_1)$.
Hence
\[
\ta (r_1) = \frac{\ta (r)}{n} > \frac{1}{n + 1} > \ta (1 - r).
\]
Since $A_m' \cap A$ is a UHF algebra whose unique \tst{}
is $\ta |_{A_m' \cap A}$,
there is $s_0 \in A_m' \cap A$ such that $s_0^* s_0 = 1 - r$
and $s_0 s_0^* \leq r_1$.
Define
\begin{equation}\label{Eq_1X21_c0Dfn}
c_0 = \frac{1}{\sqrt{n}} \sum_{g \in G} \af_g (s_0).
\end{equation}

For $g \in G$, using (\ref{Eq_1619_3}) at the last step,
\begin{equation}\label{Eq_1619_B54}
\begin{split}
\| r_g \af_g (s_0) - \af_g (s_0) \|
& = \| r_g \af_g (s_0) - \af_g (r_1 s_0) \|
\\
& \leq \| r_g - \af_g (r_1) \| \| \af_g (s_0) \|
  < 5 \dt_2.
\end{split}
\end{equation}
So, for $h \neq g$, since $r_g r_h = 0$,
\begin{equation}\label{Eq_1619_B51}
\| r_g \af_h (s_0) \|
 = \| r_g [ \af_h (s_0) - r_h \af_h (s_0) ] \|
 < 5 \dt_2
\end{equation}
and
\[
\begin{split}
\| \af_g (s_0)^* \af_h (s_0) \|
& \leq \| \af_g (s_0)^* - \af_g (s_0)^* r_g \|
         + \| \af_h (s_0) - r_h \af_h (s_0) \|
\\
& < 5 \dt_2 + 5 \dt_2
 = 10 \dt_2.
\end{split}
\]
Now, using (\ref{Eq_1619_B31}) at the last step,
\[
\biggl\| 1 - r - \frac{1}{n} \sum_{g \in G} \af_g (s_0^* s_0) \biggr\|
  \leq \frac{1}{n} \sum_{g \in G} \| r - \af_g (r) \|
  < 5 n \dt_2,
\]
so
\begin{equation}\label{Eq_1619_B52}
\begin{split}
\| 1 - r - c_0^* c_0 \|
& = \biggl\| 1 - r
   - \frac{1}{n} \sum_{g, h \in G} \af_g (s_0^*) \af_h (s_0) \biggr\|
\\
& \leq \biggl\| 1 - r
           - \frac{1}{n} \sum_{g \in G} \af_g (s_0^* s_0) \biggr\|
    + \frac{1}{n} \sum_{g \neq h} \| \af_g (s_0^*) \af_h (s_0) \|
\\
& < 5 n \dt_2 + 10 n \dt_2
  = 15 n \dt_2.
\end{split}
\end{equation}
Since $15 n \dt_2 \leq 1$ by~(\ref{Eq_1619_4}), we get
\begin{equation}\label{Eq_1619_B55}
\| c_0^* c_0 \| \leq 1 + \| 1 - r \| \leq 2
\andeqn
\| c_0 \| \leq \sqrt{2} < 2.
\end{equation}
It also follows that
\[
\| r c_0^* c_0 r \|
 = \| r [c_0^* c_0 - (1 - r)] r \|
 \leq \| 1 - r - c_0^* c_0 \|
 < 15 n \dt_2,
\]
so, using (\ref{Eq_1619_4}) at the last step,
\begin{equation}\label{Eq_1619_B61}
\| c_0 (1 - r) - c_0 \|
 = \| c_0 r \|
 < \sqrt{15 n \dt_2}
 \leq \dt_1.
\end{equation}
Furthermore, using (\ref{Eq_1X21_c0Dfn}) at the first step,
(\ref{Eq_1619_B54}) and~(\ref{Eq_1619_B51}) at the second last step,
and (\ref{Eq_1619_4}) at the last step,
\begin{equation}\label{Eq_1619_B62}
\begin{split}
\| r c_0 - c_0 \|
& \leq \frac{1}{\sqrt{n}}
    \sum_{g \in G} \| r \af_g (s_0) - \af_g (s_0) \|
\\
& \leq \frac{1}{\sqrt{n}} \sum_{g \in G}
     \biggl( \| r_g \af_g (s_0) - \af_g (s_0) \|
      + \sum_{h \in G \SM \{ g \}} \| r_h \af_g (s_0) \| \biggr)
\\
& < \sqrt{n} (5 \dt_2 + (n - 1) \cdot 5 \dt_2)
 = 5 n^{3 / 2} \dt_2
 \leq \dt_1.
\end{split}
\end{equation}

Let $a \in F$.
By the choice of~$m$, for all $g \in G$ there is $b \in A_m$ such that
$\| \af_g^{-1} (a) - b \| < \dt$,
so $s_0 \in A_m' \cap A$ implies
$\| \af_g^{-1} (a) s_0 - s_0  \af_g^{-1} (a) \| < 2 \dt$.
Thus, by~(\ref{Eq_1X21_c0Dfn}),
\begin{equation}\label{Eq_1619_B63}
\begin{split}
\| a c_0 - c_0 a \|
& \leq \frac{1}{\sqrt{n}}
    \sum_{g \in G} \| a \af_g (s_0) - \af_g (s_0)a  \|
\\
& = \frac{1}{\sqrt{n}}
    \sum_{g \in G} \| \af_g^{-1} (a) s_0 - s_0 \af_g^{-1} (a) \|
  < 2 \sqrt{n} \dt.
\end{split}
\end{equation}

Setting $r_0 = 1 - r$, giving $r_0^2 = r_0$,
and using (\ref{Eq_1619_B52}) and~(\ref{Eq_1619_B55})
at the second last step, we get
\[
\begin{split}
\| (c_0^* c_0)^2 - c_0^* c_0 \|
& \leq \| c_0^* c_0 - r_0 \| \| c_0^* c_0 \|
              + \| r_0 \| \| c_0^* c_0 - r_0 \|
              +  \| c_0^* c_0 - r_0 \|
\\
& < 2 (15 n \dt_2) + 15 n \dt_2 + 15 n \dt_2
 = 60 n \dt_2.
\end{split}
\]
By the choice of~$\dt_2$, there is a \pj{} $p_0 \in A^{\af}$
such that
\begin{equation}\label{Eq_2116_NSt}
\| p_0 - c_0^* c_0 \| < \dt_1.
\end{equation}
Set $p = 1 - p_0$.
Using (\ref{Eq_1619_B52}) at the second last step
and~(\ref{Eq_1619_4}) at the last step, we get
\begin{equation}\label{Eq_1619_B71}
\begin{split}
\| p - r \|
& = \| p_0 - r_0 \|
  \leq \| p_0 - c_0^* c_0 \| + \| c_0^* c_0 - r_0 \|
\\
& < \dt_1 + 15 n \dt_2
  \leq 2 \dt_1.
\end{split}
\end{equation}
Define $c = p c_0 (1 - p)$.
Then
\begin{equation}\label{Eq_1619_B72}
\| c \| \leq 2
\end{equation}
by~(\ref{Eq_1619_B55}).
We have, using (\ref{Eq_1619_B71}), (\ref{Eq_1619_B55}),
and (\ref{Eq_1619_B62}) at the second step,
\[
\| p c_0 - c_0 \|
 \leq \| p - r \| \| c_0 \| + \| r c_0 - c_0 \|
 < 2 \dt_1 \cdot 2 + \dt_1
 = 5 \dt_1,
\]
and, using (\ref{Eq_1619_B61}) in place of~(\ref{Eq_1619_B62}),
\[
\| c_0 (1 - p) - c_0 \|
 \leq \| c_0 \| \| p - r \| + \| c_0 (1 - r) - c_0 \|
 < 2 \cdot 2 \dt_1 + \dt_1
 = 5 \dt_1.
\]
Therefore
\begin{equation}\label{Eq_1619_B77}
\| c - c_0 \|
 \leq \| p \| \| c_0 (1 - p) - c_0 \| + \| p c_0 - c_0 \|
 < 10 \dt_1.
\end{equation}
Using $1 - p = p_0$ at the first step
and (\ref{Eq_2116_NSt}), (\ref{Eq_1619_B77}), (\ref{Eq_1619_B55}),
and (\ref{Eq_1619_B72}) at the second step,
\[
\begin{split}
\| 1 - p - c^* c \|
& \leq \| p_0 - c_0^* c_0 \| + \| c_0^* \| \| c_0 - c \|
    + \| c_0^* - c^* \| \| c \|
\\
& < \dt_1 + 2 \cdot 10 \dt_1 + 10 \dt_1 \cdot 2
 = 41 \dt_1.
\end{split}
\]
By the choice of~$\dt_1$, it makes sense to evaluate $(c^* c)^{- 1/2}$
in $(1 - p) A^{\af} (1 - p)$, and, moreover,
if we define $s = c (c^* c)^{- 1/2} \in A^{\af}$, then
\begin{equation}\label{Eq_1619_B87}
s^* s = 1 - p
\andeqn
\| s - c \| < \dt_0.
\end{equation}

It follows from~(\ref{Eq_1619_B71}) and the choice of~$\dt_1$
that there is a unitary $u \in A$ such that
$u r u^* = p$ and $\| u - 1 \| < \dt_0$.
Define $p_g = u r_g u^*$ for $g \in G$.
We claim that $(p_g)_{g \in G}$, $p$, and~$s$ satisfy the conditions
in the conclusion of the proposition.
Condition~(\ref{Item_1619_tRp_s_p}) is immediate
from (\ref{Eq_2116_NNSt}).
To prove (\ref{Item_1619_tRp_s_p_comm}) and~(\ref{Item_1619_tRp_s_CM}),
we start by observing that for $g \in G$ we have
\begin{equation}\label{Eq_1619_B96}
\| p_g - r_g \| \leq 2 \| u - 1 \| < 2 \dt_0.
\end{equation}
Now let $g \in G$ and let $a \in F$.
Choose $b \in A_m$
such that $\| b - a \| < \dt_0$.
Since $r_g \in A_m' \cap A$ and $\| a \| \leq 1$,
and using (\ref{Eq_1619_B12}) at the last step,
\[
\| p_g a - a p_g \|
 \leq 2 \| p_g - r_g \| + 2 \| a - b \|
 < 4 \dt_0 + 2 \dt_0
 \leq \ep.
\]
This is~(\ref{Item_1619_tRp_s_p_comm}).
Also, for $g, h \in G$, at the second step
using~(\ref{Eq_1619_B96}) on the first and last terms,
(\ref{Eq_1619_B91}) on the second and fourth terms,
and (\ref{Eq_1619_B90}) on the middle term,
at the second last step using
(\ref{Eq_1619_1}), (\ref{Eq_1619_4}), and~(\ref{Eq_1619_B11}),
and at the last step using~(\ref{Eq_1619_B12}),
\[
\begin{split}
& \| \af_g (p_h) - p_{g h} \|
\\
& \hspace*{3em} {\mbox{}}
\leq \| p_h - r_h \| + \| r_h - q_h \| + \| \af_g (q_h) - q_{g h} \|
           + \| q_{g h} - r_{g h} \| + \| r_{g h} - p_{g h} \|
\\
& \hspace*{3em} {\mbox{}}
  < 2 \dt_0 + 2 \dt_2 + \dt + 2 \dt_2 + 2 \dt_0
  \leq 9 \dt_0
  \leq \ep.
\end{split}
\]
Condition~(\ref{Item_1619_tRp_s_CM}) is proved.

For Condition~(\ref{Item_1619_tRp_s_sub_x}),
use (\ref{Eq_1619_B71}) and~(\ref{Eq_1619_2}) at the third step,
(\ref{Eq_1619_4}) at the fourth step,
and
(\ref{Eq_1619_B11}) and~(\ref{Eq_1619_B12}) at the fifth step,
to get
\[
\begin{split}
\| (1 - p) - (1 - q) \|
& = \| p - q \|
  \leq \| p - r \| + \| r - q \|
\\
& < 2 \dt_1 + 2 n \dt_2
  \leq 3 \dt_1
  \leq 1.
\end{split}
\]
Therefore, using~(\ref{Eq_1619_B22})
and $f \in {\ov{x A x}}$ at the second step,
$1 - p \sim 1 - q \precsim_A x$.
To prove Condition~(\ref{Item_1619_tRp_s_pxp}),
observe that, using $\| p -  q \| < 3 \dt_1$
(which is part of the calculation above)
and (\ref{Eq_1619_B23}) at the second step,
(\ref{Eq_1619_1}) and~(\ref{Eq_1619_4}) at the third step,
and (\ref{Eq_1619_B11}) and~(\ref{Eq_1619_B12}) at the last step,
\[
\| p x p \|
 \geq \| q x q \| - 2 \| p - q \|
 > 1 - 3 \dt - 6 \dt_1
 \geq 1 - 7 \dt_1
 \geq 1 - \ep.
\]
For Condition~(\ref{Item_1619_tRp_s_SubP}),
we have $s^* s = 1 - p$ by~(\ref{Eq_1619_B87}),
and $s s^* \leq p$ because $p c = c$ implies $p s = s$.
Finally, we check Condition~(\ref{Item_1619_tRp_s_Comm}).
For $a \in F$, we have,
using $\| a \| \leq 1$ at the first step,
using (\ref{Eq_1619_B87}), (\ref{Eq_1619_B77}),
and~(\ref{Eq_1619_B63}) at the second step,
and using (\ref{Eq_1619_1}), (\ref{Eq_1619_4}),
(\ref{Eq_1619_B11}) and~(\ref{Eq_1619_B12}) at the third step,
\[
\begin{split}
\| s a - a s \|
& \leq 2 \| s - c \| + 2 \| c - c_0 \| + \| c_0 a - a c_0 \|
\\
& < 2 \dt_0 + 20 \dt_1 + 2 \sqrt{n} \dt
  \leq \ep.
\end{split}
\]
This completes the proof.
\end{proof}

\section{An action of a totally disconnected compact
 group on a UHF~algebra}\label{Sec_3749_Exam_TRPZ2}

In this section we construct an action
of a totally disconnected infinite compact group on
a UHF~algebra which has the tracial Rokhlin property with comparison
and the strong modified tracial Rokhlin property,
but does not have the Rokhlin property,
or even finite Rokhlin dimension.
In the next section, we construct an action of $S^1$
on a simple AT~algebra which has the same properties,
except that we only prove the modified tracial Rokhlin property,
and in Section~\ref{Sec_2114_OI} we construct an action of $S^1$
on $\OI$ which has the \trpc.
Also, for any \fd{} second countable compact group~$G$,
there is an example of a simple unital AH~algebra $A$ with $\rc (A) > 0$
and an action of $G$ on~$A$ which has the Rokhlin property.
These examples will appear elsewhere.

We abbreviate $\Z / n \Z$ to $\Z_n$; the $p$-adic integers will
not appear in this paper.
The group is $G = \prod_{n = 1}^{\infty} \mathbb{Z}_{2}$,
and the action is the infinite tensor product of copies of the same
action of $\Z_2$ on the $3^{\infty}$~UHF algebra.
We give the example in Construction~\ref{Cn_1X17_Z2Inf},
and prove its properties in several results afterwards.

\begin{cns}\label{Cn_1X17_Z2Inf}
We start with a slight reformulation
of Example 10.4.8 of~\cite{lecturephill}.
For $k \in \N$, set $r (k) = \frac{1}{2} (3^{k} - 1)$.
Define $w_{k} \in {\operatorname{U}} (M_{3^{k}})$
to be the block unitary
\begin{equation}\label{Eq_1X17_wDfn}
w_{k}
 = \left( \begin{array}{ccc} 0 & 1_{M_{r (k)}} & 0 \\
   1_{M_{r (k)}} & 0 & 0 \\
   0 & 0 & 1_{\mathbb{C}} \end{array} \right) \in M_{3^{k}}.
\end{equation}
Set $B = \bigotimes_{k = 1}^{\infty} M_{3^{k}}$,
which is the $3^{\infty}$~UHF.
Define
\[
\nu = \bigotimes_{k = 1}^{\infty} \Ad (w_{k}) \in \Aut (B),
\]
which is an automorphism of order~$2$.
Let $\gamma \colon \Z_2 \to \Aut (B)$
be the product type action action generated by~$\nu$.

Define $G = \prod_{n = 1}^{\I} \Z_2$
and $A = \bigotimes_{n = 1}^{\infty} B$.
Let $\af \colon G \to \Aut (A)$ be the infinite tensor product action,
determined by
\[
\af_{(h_1, h_2, \ldots)} (b_1 \otimes b_2 \cdots )
 = \gm_{h_1} (b_1) \otimes \gm_{h_2} (b_2) \otimes \cdots
\]
for $h_1, h_2, \ldots \in \Z_2$ and
$b_1, b_2, \ldots \in B$ with $b_n = 1$ for all but finitely many
$n \in \N$.
\end{cns}

To work effectively with this example, we set up some useful notation.

\begin{ntn}\label{N_1X17_Parts}
Given the notation in Construction~\ref{Cn_1X17_Z2Inf}, make
the following further definitions.
For $n \in \N$ set $B_n = B$,
so that $A = \bigotimes_{m = 1}^{\infty} B_m$,
and set $A_n = \bigotimes_{m = 1}^{n} B_m$, so that $A = \dirlim_n A_n$.
For $n, k \in \N$ set $C_{n, k} = M_{3^{k}}$,
so that $B_n = \bigotimes_{k = 1}^{\infty} C_{n. k}$,
and set $B_{n, l} = \bigotimes_{k = 1}^{l} C_{n. k}$,
so that $B_n = \dirlim_k B_{n, k}$.
Further set $A_{n, l} = \bigotimes_{k = 1}^{l} B_{n, l}$.
We identify $A_{n}$ and $A_{n, l}$ with their images in~$A$,
and $B_{n, k}$ with its image in $B_n$.

Treat $G$ similarly: for $n \in \N$ set $H_n = \Z_2$,
so that $G = \prod_{m = 1}^{\I} H_m$,
and set $G_n = \prod_{m = 1}^{n} H_m$, so that $G = \invlim_n G_n$.
This gives
\[
C (G_n) = \bigotimes_{m = 1}^{n} C (H_m),
\andeqn
C (G) = \dirlim_n C (G_n) = \bigotimes_{m = 1}^{\I} C (H_m).
\]
We identify $C (G_n)$ with its image in $C (G)$.
\end{ntn}

As an informal overview, write
\[
A = \bigotimes_{m = 1}^{\infty}
   \left( \bigotimes_{k = 1}^{\infty} M_{3^k} \right)
  = \bigotimes_{m = 1}^{\infty}
   \left( \bigotimes_{k = 1}^{\infty} C_{m, k} \right).
\]
Then:
\begin{itemize}
\item
$C_{n, l}$ uses the $(n, l)$ tensor factor.
\item
$B_n$ uses the $(n, k)$ tensor factors for $k \in \N$.
\item
$B_{n, l}$ uses the $(n, k)$ tensor factors for $k = 1, 2, \ldots, l$.
\item
$A_n$ uses the $(m, k)$ tensor factors
for $m = 1, 2, \ldots, n$ and $k \in \N$.
\item
$A_{n, l}$ uses the $(m, k)$ tensor factors
for $m = 1, 2, \ldots, n$ and $k = 1, 2, \ldots, l$.
\end{itemize}

\begin{lem}\label{L_1X17_Tr_1mp}
Let $n \in \N$, let $A_1, A_2, \ldots, A_n$ be unital \ca{s},
and for $m = 1, 2, \ldots, n$ let $e_m \in A_m$ be a \pj{}
and let $\ta_m$ be a \tst{} on~$A_m$.
Let $A = A_1 \otimes A_2 \otimes \cdots \otimes A_n$
(minimal tensor product), and set
\[
e = e_1 \otimes e_2 \otimes \cdots \otimes e_n \in A
\andeqn
\ta = \ta_1 \otimes \ta_2 \otimes \cdots \otimes \ta_n \in \T (A).
\]
Then
\[
\ta (1 - e) \leq \sum_{m = 1}^{n} \ta_m (1 - e_m).
\]
\end{lem}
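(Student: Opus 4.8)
The plan is to prove the inequality by induction on $n$, reducing the general case to the two-factor case $n = 2$. The key estimate is that for a projection $e = e_1 \otimes e_2 \in A_1 \otimes A_2$ and a product trace $\ta = \ta_1 \otimes \ta_2$, we have $\ta (1 - e) \leq \ta_1 (1 - e_1) + \ta_2 (1 - e_2)$. Once this two-factor estimate is established, the general case follows by writing $A = A_1 \otimes (A_2 \otimes \cdots \otimes A_n)$, treating the right-hand tensor factor as a single algebra, and applying the inductive hypothesis.

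For the two-factor case, the approach is a direct computation. First I would observe the algebraic identity
\[
1 - e_1 \otimes e_2 = (1 - e_1) \otimes 1 + e_1 \otimes (1 - e_2),
\]
which is easily checked by expanding the right-hand side. Applying $\ta = \ta_1 \otimes \ta_2$ and using that it is a tracial state, hence positive and multiplicative on elementary tensors, gives
\[
\ta (1 - e_1 \otimes e_2)
 = \ta_1 (1 - e_1) \ta_2 (1) + \ta_1 (e_1) \ta_2 (1 - e_2).
\]
Since $\ta_2 (1) = 1$, $\ta_1 (e_1) \leq \ta_1 (1) = 1$, and all the quantities $\ta_1 (1 - e_1)$ and $\ta_2 (1 - e_2)$ are nonnegative (as traces of positive elements), we bound $\ta_1 (e_1) \leq 1$ in the second term to obtain the desired inequality $\ta (1 - e) \leq \ta_1 (1 - e_1) + \ta_2 (1 - e_2)$.

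For the inductive step, suppose the result holds for $n - 1$ factors. Set $A' = A_2 \otimes \cdots \otimes A_n$, with projection $e' = e_2 \otimes \cdots \otimes e_n$ and trace $\ta' = \ta_2 \otimes \cdots \otimes \ta_n$. Then $A = A_1 \otimes A'$, $e = e_1 \otimes e'$, and $\ta = \ta_1 \otimes \ta'$. The two-factor case gives
\[
\ta (1 - e) \leq \ta_1 (1 - e_1) + \ta' (1 - e'),
\]
and the inductive hypothesis applied to $A'$ bounds $\ta' (1 - e') \leq \sum_{m = 2}^{n} \ta_m (1 - e_m)$. Combining these yields the claim.

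I do not expect any serious obstacle here: the result is elementary, and the only point requiring any care is verifying that $\ta'$ is indeed a tracial state on the tensor product $A'$ (so that the two-factor estimate applies with $A'$ playing the role of the second factor), which is standard for minimal tensor products of unital C*-algebras with the product trace. The whole argument is essentially the single algebraic identity above together with positivity of traces and $\ta_m (e_m) \leq 1$.
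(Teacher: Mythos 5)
Your proof is correct and follows essentially the same route as the paper's: both arguments reduce to the two-factor estimate and induct on $n$, the only cosmetic difference being that you organize the $n=2$ case around the operator identity $1 - e_1 \otimes e_2 = (1 - e_1) \otimes 1 + e_1 \otimes (1 - e_2)$ while the paper works directly with the equivalent scalar inequality $1 - \ld_1 \ld_2 \leq (1 - \ld_1) + (1 - \ld_2)$ for $\ld_m = \ta_m (e_m)$. No gaps.
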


\begin{proof}
For $m = 1, 2, \ldots, n$ set $\ld_m = \ta_m (e_m) \in [0, 1]$.
We need to show that
\begin{equation}\label{Eq_1X17_SumProd}
1 - \prod_{m = 1}^{n} \ld_m \leq \sum_{m = 1}^{n} (1 - \ld_m).
\end{equation}
We do this by induction on~$n$.
The case $n = 1$ is immediate.
For $n = 2$, the relation~(\ref{Eq_1X17_SumProd}) becomes
\[
1 - \ld_1 \ld_2 \leq 2 - \ld_1 - \ld_2.
\]
This is equivalent to $(1 - \ld_1) (1 - \ld_2) \geq 0$,
so the case $n = 2$ holds.

Assume now~(\ref{Eq_1X17_SumProd}) holds for some $n \geq 2$,
and $\ld_1, \ld_2, \ldots \ld_{n + 1} \in [0, 1]$.
Set $\mu = \prod_{m = 1}^{n} \ld_m$.
Then $\mu \in [0, 1]$.
Using the case $n = 2$ at the second step
and the induction hypothesis at the third step, we get
\[
1 - \prod_{m = 1}^{n + 1} \ld_m
 = 1 - \mu \ld_{n + 1}
 \leq (1 - \ld_{n + 1}) + (1 - \mu)
 \leq \sum_{m = 1}^{n + 1} (1 - \ld_m).
\]
This completes the proof.
\end{proof}

\begin{lem}\label{L_1X17_1Step}
Let the notation be as in Construction~\ref{Cn_1X17_Z2Inf}.
Let $k \in \N$.
Then there are isomorphisms
\[
(M_{3^{k}})^{\Ad (w_{k})} \cong M_{r (k)} \oplus M_{r (k) + 1}
\]
and
\[
(M_{3^{k}} \otimes M_{3^{k + 1}})^{\Ad (w_{k}) \otimes \Ad (w_{k + 1})}
  \cong M_{r (k^2 + k)} \oplus M_{r (k^2 + k) + 1}.
\]
The first isomorphism sends the \pj{s}
\[
e_0 = \left( \begin{array}{ccc} 1_{M_{r (k)}} & 0 & 0 \\
   0 & 0 & 0 \\
   0 & 0 & 0 \end{array} \right)
\andeqn
e_1 = \left( \begin{array}{ccc} 0 & 0 & 0 \\
   0 & 1_{M_{r (k)}} & 0 \\
   0 & 0 & 0 \end{array} \right).
\]
(using the same block matrix decomposition as in~(\ref{Eq_1X17_wDfn}))
to a \pj{} of rank $r (k)$ in $M_{r (k) + 1}$ and
to the identity of $M_{r (k)}$ respectively.
The map
\[
\rh \colon (M_{3^{k}})^{\Ad (w_{k})}
 \to
 (M_{3^{k}} \otimes M_{3^{k + 1}})^{\Ad (w_{k}) \otimes \Ad (w_{k + 1})}
\]
induced by $a \mapsto a \otimes 1$ induces maps
$\rh_{i, j} \colon M_{r (k) + i} \to M_{r (k^2 + k) + j}$
for $i, j \in \{ 0, 1 \}$,
and the corresponding partial embedding multiplicities $m_k (i, j)$ are
given by
\[
m_k (0, 0) = m_k (1, 1) = r (k + 1) + 1
\andeqn
m_k (0, 1) = m_k (1, 0) = r (k + 1).
\]
\end{lem}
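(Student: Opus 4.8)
The plan is to diagonalize the implementing unitaries. Since $w_k$ in~(\ref{Eq_1X17_wDfn}) is a self-adjoint unitary with $w_k^2 = 1$, its fixed point algebra $(M_{3^k})^{\Ad(w_k)}$ is exactly the commutant $\{ w_k \}' \cap M_{3^k}$, because $\Ad(w_k)(a) = a$ iff $a$ commutes with $w_k$. Writing $P_{+}$ and $P_{-}$ for the spectral projections of $w_k$ for the eigenvalues $+1$ and $-1$, we have $w_k = P_{+} - P_{-}$, so this commutant is $P_{+} M_{3^k} P_{+} \oplus P_{-} M_{3^k} P_{-}$, a direct sum of two full matrix algebras of sizes $\dim(\range P_{+})$ and $\dim(\range P_{-})$. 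Thus the whole lemma reduces to computing eigenspaces and tracking a few projections through the eigenbasis. First I would record, directly from~(\ref{Eq_1X17_wDfn}) and the block sizes $r(k), r(k), 1$, that the $+1$ eigenspace of $w_k$ is $\{ (\xi, \xi, z) \}$, of dimension $r(k) + 1$, and the $-1$ eigenspace is $\{ (\xi, -\xi, 0) \}$, of dimension $r(k)$. This gives the first isomorphism $(M_{3^k})^{\Ad(w_k)} \cong M_{r(k) + 1} \oplus M_{r(k)}$.

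For the images of the projections, note that $\Ad(w_k)$ interchanges $e_0$ and $e_1$, so neither lies in the fixed point algebra; but their sum $e_0 + e_1$, the projection onto the first two blocks $\{ (\ast, \ast, 0) \}$, is fixed, and it is this element I would carry through the isomorphism. Decomposing $\{ (\ast, \ast, 0) \}$ into its $w_k$-eigenparts, the $+1$ part is the projection onto $\{ (\xi, \xi, 0) \}$, a rank-$r(k)$ projection inside the $(r(k)+1)$-dimensional summand $M_{r(k)+1}$, while the $-1$ part is all of $\{ (\xi, -\xi, 0) \}$, that is, the identity of $M_{r(k)}$; this is the asserted image.

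The tensor product is handled the same way, with $u := w_k \otimes w_{k+1}$ in place of $w_k$: it is again a self-adjoint unitary with $u^2 = 1$, so $(M_{3^k} \otimes M_{3^{k+1}})^{\Ad(w_k) \otimes \Ad(w_{k+1})} = \{ u \}'$, again a sum of two full matrix algebras. The $+1$ eigenspace of $u$ is the sum of the like-sign tensor products, $(+,+)$ and $(-,-)$, of the $w_k$- and $w_{k+1}$-eigenspaces, and the $-1$ eigenspace is the sum of the unlike ones, $(+,-)$ and $(-,+)$. A short dimension count, using $(2 r(k) + 1)(2 r(k+1) + 1) = 3^k 3^{k+1} = 3^{2k+1}$, gives these dimensions as $\tfrac{1}{2}(3^{2k+1} + 1)$ and $\tfrac{1}{2}(3^{2k+1} - 1)$, that is, $r(2k+1) + 1$ and $r(2k+1)$, which yields the second isomorphism. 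For the partial multiplicities of $\rho(a) = a \otimes 1$, the key observation is that tensoring an $\varepsilon_1$-eigenspace of $w_k$ with an $\varepsilon_2$-eigenspace of $w_{k+1}$ lands in the $\varepsilon_1 \varepsilon_2$-eigenspace of $u$, and that on such a tensor factor $a \otimes 1$ is $\dim$ (of the $w_{k+1}$-eigenspace) copies of $a$. Hence the source summand sitting on the $\varepsilon_1$-eigenspace maps into the target summand on the $\varepsilon$-eigenspace with multiplicity equal to $\dim$ of the $w_{k+1}$-eigenspace for $\varepsilon_2 = \varepsilon_1 \varepsilon$, which is $r(k+1) + 1$ when $\varepsilon_2 = +1$ and $r(k+1)$ when $\varepsilon_2 = -1$. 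Matching the index conventions (so that $i = 1$ and $j = 1$ correspond to the $+1$ summands), this gives $m_k(0,0) = m_k(1,1) = r(k+1) + 1$ and $m_k(0,1) = m_k(1,0) = r(k+1)$.

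I expect no serious obstacle: the content is linear algebra over the eigenspaces of two commuting self-adjoint unitaries. The only places demanding care are the sign and index bookkeeping, namely correctly pairing each matrix summand $M_{r(\cdot) + i}$ with the eigenvalue $(-1)^{i+1}$ and verifying the dimension identity $r(2k+1) = 2 r(k) r(k+1) + r(k) + r(k+1)$, and remembering that $e_0$ and $e_1$ are themselves swapped by the action, so that it is $e_0 + e_1$, not $e_0$ or $e_1$ separately, that the isomorphism carries.
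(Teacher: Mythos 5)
Your argument is correct and is essentially the paper's: the paper diagonalizes by conjugating $w_k$ to an explicit diagonal unitary $v_k$ with $r(k)+1$ entries $+1$ and $r(k)$ entries $-1$ and then "computes with diagonal matrices and the dimensions of their eigenspaces"; you work directly with the spectral projections of the self-adjoint unitaries $w_k$ and $w_k \otimes w_{k+1}$, which is the same computation. Your observation that $\Ad(w_k)$ swaps $e_0$ and $e_1$, so that only $e_0 + e_1$ lies in the fixed point algebra and it is its two eigenspace components that the isomorphism describes, is the right reading of the (loosely phrased) statement and is exactly what is used later in the proof of Theorem~\ref{T_1X17_Has_trpc}. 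One point you should have flagged explicitly: your computation yields summands of sizes $r(2k+1)$ and $r(2k+1)+1$, whereas the statement asserts $r(k^2+k)$ and $r(k^2+k)+1$. Your value is the correct one, since the two summand sizes must add up to $3^k \cdot 3^{k+1} = 3^{2k+1}$, not $3^{k(k+1)}$; the statement (and the quantity $T = (L+1)(L+2)$ in the later application, where the ensuing trace estimates nevertheless still hold) contains a typo that you have silently corrected rather than proved as written.
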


\begin{proof}
For any $k \in \N$,
it is easy to check that $w_{k}$ is unitarily equivalent to
\[
v_{k} = \mathrm{diag} (1, 1, \ldots, 1, -1, -1, \ldots, -1)
 \in M_{3^{k}},
\]
in which the diagonal entry $1$ occurs $r (k) + 1$
times and the diagonal entry $-1$ occurs $r (k)$ times.
Therefore we can prove the lemma with $v_k$ and $v_{k + 1}$
in place of $w_k$ and $w_{k + 1}$.
With this change, for example, the map
$\nu \colon M_{r (k)} \oplus M_{r (k) + 1} \to M_{3^{k}}$,
given by $\nu (a_0, a_1) = \diag (a_1, a_0)$,
is easily seen to be an isomorphism
from $M_{r (k)} \oplus M_{r (k) + 1}$ to $(M_{3^{k}})^{\Ad (w_{k})}$.
The rest of the proof is a computation with diagonal matrices
and the dimensions of their eigenspaces, and is omitted.
\end{proof}

\begin{thm}\label{T_1X17_Has_trpc}
The action $\af \colon G \to \Aut (A)$
of Construction~\ref{Cn_1X17_Z2Inf}
has the tracial Rokhlin property with comparison
and the strong modified tracial Rokhlin property,
using the same choices of $p \in A$ and $\ph \colon G \to p A p$.
\end{thm}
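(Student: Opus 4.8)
The plan is to build a single $\af$-invariant projection $p$ together with a single \ucp{} map $\varphi$ that serve both definitions simultaneously, and then to supply the partial isometry $s$ and the two Cuntz comparisons needed for the \trpc. Given finite sets $F \subseteq A$ and $S \subseteq C (G)$, a tolerance $\ep > 0$, an element $x \in A_{+}$ with $\| x \| = 1$ (and, for the \trpc, $y \in (A^{\af})_{+} \SM \{ 0 \}$), I first locate the data using Notation~\ref{N_1X17_Parts}: choose $n_0$ so that $S$ lies within a small tolerance of $C (G_{n_0})$, and $N, l$ so that $F \cup \{ x \}$ lies within $\tfrac{\ep}{2}$ of $A_{N, l}$. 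For each $m = 1, 2, \ldots, n_0$ I pick an index $k_m > l$ and work in the factor $C_{m, k_m} = M_{3^{k_m}}$: following~(\ref{Eq_1X17_wDfn}) I set $f_0^{(m)} = e_0$ and $f_1^{(m)} = e_1$, the two swapped $r (k_m) \times r (k_m)$ blocks, so $\nu$ interchanges them exactly, and put $p_m = f_0^{(m)} + f_1^{(m)}$, a $\gm$-invariant projection whose complement is the rank-one fixed block. Then $p = \prod_{m = 1}^{n_0} p_m$ is $\af$-invariant, the unital \hms{} $\varphi_m \colon C (\Z_2) \to p_m B_m p_m$ sending the coordinate indicators to $f_0^{(m)}, f_1^{(m)}$ assemble through $C (G_{n_0}) = \bigotimes_m C (H_m)$ into a unital \hm{} $\Phi \colon C (G_{n_0}) \to p A p$, and I define $\varphi = \Phi \circ E$, where $E \colon C (G) \to C (G_{n_0})$ is the conditional expectation averaging over $\Ker (G \to G_{n_0})$.

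Most conditions are then exact or come from the tensor-product structure. Each $\varphi_m$ is exactly equivariant for the swap on $C (\Z_2)$ and $\nu$ on $B_m$, and $E$ is $G$-equivariant, so $\varphi$ is exactly equivariant and condition~(\ref{equiapprox}) of Definition~\ref{phillhir} holds with error $0$. Since $\Phi$ is a genuine \hm, approximate multiplicativity of $\varphi$ on $S$ is controlled once $S$ is approximated closely enough by $C (G_{n_0})$. As every $\varphi (\chi)$ lies in $\bigotimes_{m \le n_0} C_{m, k_m}$ with all $k_m > l$, it commutes exactly with $A_{N, l}$, giving approximate centrality against $F$. Writing $A = A_{N, l} \otimes (\text{rest})$, the projection $p$ is of the form $1 \otimes p'$; choosing $x_0 \in A_{N, l}$ with $\| x - x_0 \| < \tfrac{\ep}{2}$ and $\| x_0 \| > 1 - \tfrac{\ep}{2}$ gives $\| p x_0 p \| = \| x_0 \|$, whence $\| p x p \| > 1 - \ep$. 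Finally $A$ is a UHF algebra with unique trace $\ta$ and strict comparison, and $\ta (1 - p) = 1 - \prod_m (1 - 3^{-k_m})$ can be made smaller than $d_{\ta} (x)$ by taking the $k_m$ large, so $1 - p \precsim_{A} x$; these are conditions~(\ref{1_pxcompactsets}) and~(\ref{Item_902_pxp_TRP}) of Definition~\ref{traR} and~(\ref{Item_1824_ATRP_sub_x_Mod}) and~(\ref{Item_1824_ATRP_pxp_Mod}) of Definition~\ref{D_1824_AddToTRP_Mod}.

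The heart of the argument is the fixed-point algebra. By the first isomorphism of Lemma~\ref{L_1X17_1Step}, $C_{m, k_m}^{\Ad (w_{k_m})} \cong M_{r (k_m)} \oplus M_{r (k_m) + 1}$, so the finite-dimensional subalgebra $D^{\af} := \bigl( \bigotimes_{m \le n_0} C_{m, k_m} \bigr)^{\af} \cong \bigotimes_{m \le n_0} \bigl( M_{r (k_m)} \oplus M_{r (k_m) + 1} \bigr) \subseteq A^{\af}$ decomposes as $\bigoplus_{i \in \{ 0, 1 \}^{n_0}} M_{d (i)}$, with $d (i) = \prod_m (r (k_m) + i_m)$. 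Under the identifications of Lemma~\ref{L_1X17_1Step}, in every summand $p$ has rank exactly $\prod_m r (k_m)$, so $\rank_i (1 - p) = d (i) - \prod_m r (k_m)$; the required inequality $\rank_i (1 - p) \le \rank_i (p)$, equivalent to $\prod_m \bigl( 1 + i_m / r (k_m) \bigr) \le 2$, holds for all $i$ once $\sum_m 1 / r (k_m)$ is small. Hence there is a partial isometry $s \in D^{\af} \subseteq A^{\af}$ with $s^* s = 1 - p$ and $s s^* \le p$, which is Definition~\ref{D_1824_AddToTRP_Mod}(\ref{Item_1824_ATRP_SubP_Mod}) and Definition~\ref{traR}(\ref{1_ppcompactsets}); since $s$ commutes exactly with $A_{N, l}$ it approximately commutes with $F$, giving Definition~\ref{D_1824_AddToTRP_Mod}(\ref{Im_1824_ATRP_s_Comm_Mod}). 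This already proves the strong modified tracial Rokhlin property, and I emphasize that no simplicity of $A^{\af}$ has been used so far.

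It remains to verify Definition~\ref{traR}(\ref{1_pycompactsets}), that is $1 - p \precsim_{A^{\af}} y$, and here I would bootstrap. The strong modified tracial Rokhlin property implies the modified tracial Rokhlin property of Definition~\ref{moditra} (apply it with $F_1 \cup F_2$), so $A^{\af}$ is simple by Theorem~\ref{thm:simple2}; being the increasing union of the finite-dimensional algebras $(A_{n, l})^{\af}$, it is a simple unital AF algebra and hence has strict comparison. The same summand-wise count shows $\ta' (1 - p) \le \max_i \rank_i (1 - p) / d (i) = 1 - \prod_m r (k_m) / (r (k_m) + 1)$ uniformly over all $\ta' \in \T (A^{\af})$, while $c := \inf_{\ta' \in \T (A^{\af})} d_{\ta'} (y) > 0$ by simplicity and compactness of $\T (A^{\af})$. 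Taking the $k_m$ large enough that this uniform bound is below $c$ gives $d_{\ta'} (1 - p) < d_{\ta'} (y)$ for all $\ta'$, so $1 - p \precsim_{A^{\af}} y$. The main obstacle is precisely this bookkeeping inside $D^{\af}$: from the summand ranks of $p$ one must extract, simultaneously, the existence of $s$, the $A$-comparison with $x$, and a \emph{trace-independent} smallness of $1 - p$ for the $A^{\af}$-comparison with $y$, all of which are governed by the single requirement that $\sum_m 3^{-k_m}$ be sufficiently small.
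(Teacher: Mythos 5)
Your construction of $p$ and $\varphi$ is the paper's: the swapped blocks $e_0, e_1$ of $w_k$ placed in a single deep tensor factor of each $B_m$, assembled into a unital homomorphism on $C(G_{n_0})$ and composed with the equivariant conditional expectation $C(G) \to C(G_{n_0})$, with approximate centrality coming from exact commutation against $A_{N,l}$ and $1 - p \precsim_A x$ from the unique trace plus strict comparison. (The paper dismisses $\| p x p \| > 1 - \ep$ by stable finiteness via Lemma~\ref{L_1X09_NoNorm1}; your direct verification is also fine.) You diverge in two places, both legitimately. First, you do the rank and trace bookkeeping in the single-level fixed-point algebra $\bigl( \bigotimes_m C_{m, k_m} \bigr)^{\af} \cong \bigotimes_m \bigl( M_{r(k_m)} \oplus M_{r(k_m)+1} \bigr)$, where $p$ has rank $\prod_m r(k_m)$ in every summand; the paper instead pushes one level further, into $E = (M_{r(T)} \oplus M_{r(T)+1})^{\otimes N}$ with $T = (L+1)(L+2)$, using the partial embedding multiplicities of Lemma~\ref{L_1X17_1Step}. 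Your version yields the same uniform bound $\ta'(1-p) \leq 1 - \prod_m r(k_m)/(r(k_m)+1)$ for every tracial state on $A^{\af}$ (restrict to the finite-dimensional unital subalgebra and take the worst summand) and the same rank inequality giving $s$, so it is a genuine, slightly more economical shortcut. Second, for $1 - p \precsim_{A^{\af}} y$ you need $A^{\af}$ simple, and you bootstrap: the strong modified tracial Rokhlin property, which you have already established without reference to $y$, implies the modified one, so Theorem~\ref{thm:simple2} gives simplicity; AF-ness comes from $A^{\af} = {\overline{\bigcup_{n, l} (A_{n,l})^{\af}}}$. The paper instead identifies $A^{\af}$ with $\bigotimes_m B_m^{\gm}$ and quotes the known tracial Rokhlin property of $\gm$ (Example 10.4.8 of \cite{lecturephill}) to get simplicity of each $B^{\gm}$. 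Your route avoids that external input and is not circular, since Theorem~\ref{thm:simple2} is proved independently of this example. I see no gaps.
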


\begin{proof}
Let $F \subseteq A$ and $S \subseteq C (G)$
be finite sets, let $\varepsilon > 0$,
let $x \in A_{+} \setminus \{0 \}$ with $\big\| x \big\| = 1$,
and let $y \in A^{\alpha}_{+} \setminus \{0 \}$.
Without loss of generality we can assume
$\| a \| \leq 1$ for all $a \in F$, $\| f \| \leq 1$ for
all $f \in S$, and $\varepsilon < 1$.
According to Definition \ref{traR}, we need to find
a projection $p \in A^{\alpha}$ and a \ucp{}
$\varphi \colon C (G) \to p A p$ such that the following hold.
\begin{enumerate}
\item\label{Item_3954_approx_FS}
$\varphi$ is an $(F, S, \varepsilon)$-approximately equivariant
 central multiplicative map.
\item\label{Item_3958_1mp_sub_x}
$1 - p \precsim_{A} x$.
\item\label{Item_3956_1mp_sub_y}
$1 - p \precsim_{A^{\alpha}} y$.
\item\label{Item_1mp_fixed_sub_p}
$1 - p \precsim_{A^{\alpha}} p$.
\item\label{Item_pxp_no0}
$\| p x p \| > 1 - \varepsilon$.
\setcounter{TmpEnumi}{\value{enumi}}
\end{enumerate}
According to Definition \ref{moditra}, we also need to find a partial
isometry $s \in A^{\alpha}$ such that the following hold.
\begin{enumerate}
\setcounter{enumi}{\value{TmpEnumi}}
\item\label{Item_3971_s_p_1mp}
$s^{*} s = 1 - p$ and $s s^{*} \leq p$.
\item\label{Item_3973_comm_saas}
$\| s a - a s \| < \varepsilon$ for all $a \in F$.
\end{enumerate}

For the same reason as in the proof of Lemma~\ref{L_1X09_NoNorm1}
(and by Lemma~\ref{L_1X09_NoNorm1} if we are only proving that $\af$
has the \trpc),
we can ignore~(\ref{Item_pxp_no0}).
We can also ignore~(\ref{Item_1mp_fixed_sub_p}),
since it follows from~(\ref{Item_3971_s_p_1mp}).

Since $A$ is a UHF~algebra,
there is a nonzero \pj{} $q_1 \in \ov{x A x}$,
and the unique \tst~$\ta$ on~$A$ satisfies $\ta (q_1) > 0$.
Set $\dt_1 = \ta (q_1)$.

The action $\gm$ has the tracial Rokhlin property by Example 10.4.8
of \cite{lecturephill}.
Therefore $C^* (\Z_2, B, \gm)$ is simple by
Corollary 1.6 of~\cite{phill23},
so $B^{\gm}$ is simple by Theorem~\ref{satunonabel}.
Since $\gm$ is a direct limit action, $B^{\gm}$ is an AF~algebra.
It is easy to check that $A^{\af}$ can be identified with
$\bigotimes_{m = 1}^{\infty} B_m^{\gm}$.
It follows that $A^{\af}$ is an AF~algebra, which is
simple because it is an infinite tensor product of simple \ca{s}.
Therefore there is a nonzero \pj{} $q_2 \in \ov{y A^{\af} y}$,
and the number $\dt_2 = \inf_{\ta \in T (A^{\af})} \ta (q_2)$
satisfies $\dt_2 > 0$.

Following Notation~\ref{N_1X17_Parts},
$\bigcup_{n = 1}^{\I} A_n$ is dense in~$A$ and, for every $n \in \N$,
$\bigcup_{l = 1}^{\I} B_{n, l}$ is dense in $B_n$.
Therefore there are $N_1, L_0 \in \N$
and a finite subset $F_0 \subseteq A_{N_1, L_0} \subseteq A$
such that for every
$a \in F$ there is $b \in F_0$ with $\| a - b \| < \frac{\ep}{4}$,
and also $\| b \| \leq 1$ for all $b \in F_0$.
Similarly, there are $N_2 \in \N$
and a finite subset $S_0 \S C (G_{N_2}) \subseteq C (G)$
such that for every
$f \in F$ there is $c \in F_0$ with $\| f - c \| < \frac{\ep}{4}$
and also $\| c \| \leq 1$ for all $c \in S_0$.
Set $N = \max (N_1, N_2)$, and choose $L \in \N$ so large that
\[
L \geq L_0
\andeqn
\frac{2 N}{3^{L}} < \min \left( \dt_1, \dt_2, \frac{1}{2} \right).
\]

Let $e_0, e_1 \in M_{3^{L + 1}}$ be as in Lemma~\ref{L_1X17_1Step},
with $k = L + 1$.
For $m = 1, 2, \ldots, N$ define the \pj{s}
\[
e_0^{(m)} = 1_{B_{m, L}} \otimes e_0, \, \,
 e_1^{(m)}= 1_{B_{m, L}} \otimes e_1
 \in B_{m, L} \otimes M_{3^{L + 1}}
 = B_{m, L + 1}
 \S B_m.
\]
Identify $H_m = \Z_2 = \{ 0, 1 \}$ with addition modulo~$2$,
and for
$h = (h_1, h_2, \ldots, h_{N}) \in G_{N} = \prod_{m = 1}^{N} H_m$,
set
\[
e_h = e_{h_1}^{(1)} \otimes e_{h_2}^{(2)}
   \otimes \cdots \otimes e_{h_{N}}^{(N)}.
\]
These are \mops.
Define $p = \sum_{h \in G_{N}} e_h \in A_{n, L + 1} \S A$.
There is a unital \hm{}
$\ph_0 \colon C (G_{N}) \to p A_{n, L + 1} p \S p A p$
given by $\ph_0 (f) = \sum_{h \in G_{N}} f (h) e_h$
for $f \in C (G_{N})$.

Set $K = \prod_{m = N + 1}^{\I} H_m$, so that $G = G_{N} \times K$.
Let $\mu$ be normalized Haar measure on~$K$.
Then there is a conditional expectation
$P \colon C (G) \to C (G_{N})$,
given by $P (f) (h) = \int_K f (h, g) \, d \mu (g)$
for $f \in C (G)$ and $h \in G_{N}$.
The identification of $C (G_{N})$ as a subalgebra of $C (G)$
makes it invariant under the action $g \mapsto \Lt_g$
of $G$ on $C (G)$,
and, with this identification, $P$ is equivariant.
Also, it is easily checked that $p \in A^{\af}$ and that
$\ph_0$ is $G$-equivariant.
Therefore $\ph = \ph_0 \circ P \colon C (G) \to p A p$
is an equivariant \ucp{} map.

Since $F_0 \subseteq A_{N, L}$,
it follows that $\ph (f)$ exactly commutes with all elements of $F_0$.
Also, $\ph (c_1 c_2) = \ph (c_1) \ph (c_2)$ for all $c_1, c_2 \in S_0$,
since $S_0 \S C (G_{N}) \subseteq C (G)$.
Let $a \in F$ and let $f_1, f_2 \in S$.
Choose $b \in F_0$ and $c_1, c_2 \in S_0$
such that
\[
\| b - a \| < \frac{\ep}{4},
\qquad
\| c_1 - f_1 \| < \frac{\ep}{4},
\andeqn
\| c_2 - f_2 \| < \frac{\ep}{4}.
\]
Since all elements of $F$, $F_0$, $S$, and $S_0$ have norm at most~$1$,
using $\ph (c_1) b = b \ph (c_1)$ we get
\[
\| \ph (f_1) a - a \ph (f_1) \|
  \leq 2 \| b - a \| + 2 \| f_1 - c_1 \|
  < \ep.
\]
Similarly,
\[
\| c_1 c_2 - f_1 f_2 \| < \frac{\ep}{2}
\andeqn
\| \ph (c_1) \ph (c_2) - \ph (f_1) \ph (f_2) \| < \frac{\ep}{2},
\]
so $\| \ph (f_1 f_2) - \ph (f_1) \ph (f_2) \| < \ep$.
We have proved~(\ref{Item_3954_approx_FS}).

For $m = 1, 2, \ldots, n$, set $p_m = e_0^{(m)} + e_0^{(m)} \in A_m$.
Then $p_m$ is the image in $A_m$
of a \pj{} $z_m \in C_{m, L + 1} = M_{3^{L + 1}}$
such that $1 - z_m$ has rank~$1$.
Therefore the unique \tst{} $\ta_m$ on $A_m$ satisfies
$\ta (1 - p_m) = 3^{- (L + 1)}$.
Since $p = p_1 \otimes p_2 \otimes \cdots \otimes p_N$,
by Lemma~\ref{L_1X17_Tr_1mp} we have
\[
\ta (1 - p) \leq \frac{N}{3^{L + 1}} < \dt_1 = \ta (q_1).
\]
Since UHF~algebras have strict comparison,
we get $1 - p \precsim_A q_1 \precsim_A x$,
which is~(\ref{Item_3958_1mp_sub_x}).

For the remaining conditions,
for convenience set $T = (L + 1) (L + 2)$.
Let $e_0$, $e_1$, and $\rh$ be as in Lemma~\ref{L_1X17_1Step},
with $k = L + 1$, and let the components of $\rh$
in its codomain be
\[
\rh_j \colon M_{r (L + 1)} \oplus M_{r (L + 1) + 1} \to M_{r (T) + j}
\]
for $j = 0, 1$.
Using the ranks and partial embedding multiplicities
given in Lemma~\ref{L_1X17_1Step}, we see that
$\rh_0 (1 - e_0 - e_1) \in M_{r (T)}$ has rank $r (L + 2)$
and $\rh_1 (1 - e_0 - e_1) \in M_{r (T) + 1}$ has rank $r (L + 2) + 1$.
The normalized traces of these are
\begin{equation}\label{Eq_1X21_NTraces}
\frac{r (L + 2)}{r (T)} < \frac{2}{3^{L + 1}}
\andeqn
\frac{r (L + 2) + 1}{r (T) + 1} < \frac{2}{3^{L + 1}}.
\end{equation}

Set
\[
D = (M_{r (L + 1)} \oplus M_{r (L + 1) + 1})^{\otimes N}
\andeqn
E = (M_{r (T)} \oplus M_{r (T) + 1})^{\otimes N},
\]
and consider $\rh^{\otimes N} \colon D \to E$.
We can write
\[
E = \bigoplus_{j \in \{ 0, 1 \}^N}
        M_{r (T) + j_1} \otimes M_{r (T) + j_2} \otimes
          \cdots \otimes M_{r (T) + j_N}.
\]
Call the $j$~tensor factor $E_j$.
For $j = (j_1, j_2, \ldots, j_N) \in \{ 0, 1 \}^N$, let
$d_j$ be the image in $E_j$ of the corresponding summand of
$\rh^{\otimes N} \bigl( (e_0 + e_1)^{\otimes N} \bigr)$.
Thus
\[
d_j = \rh_{j_1} (e_0 + e_1) \otimes \rh_{j_2} (e_0 + e_1) \otimes
         \cdots \otimes \rh_{j_N} (e_0 + e_1).
\]
By Lemma~\ref{L_1X17_Tr_1mp} and~(\ref{Eq_1X21_NTraces}),
$1 - d_j$ has normalized trace less than $2 N \cdot 3^{- L - 1}$.

Use Lemma~\ref{L_1X17_1Step} to identify $D$ and $E$
with the subalgebras
\[
\left( \bigotimes_{m = 1}^N C_{m, L + 1} \right)^{\af}
\andeqn
\left( \bigotimes_{m = 1}^N
  C_{m, L + 1} \otimes C_{m, L + 2} \right)^{\af},
\]
in such a way that $(e_0 + e_1)^{\otimes N}$ is identified with~$p$.
Under this identification, $E$ commutes exactly with all elements
of $A_{N, L}$, hence with all elements of~$F_0$.

Since $2 N \cdot 3^{- L - 1} < \frac{1}{2}$,
we conclude that $1 - d_j \precsim_{E_j} d_j$.
Therefore $1 - p \precsim_E p$, that is,
there is $s \in E$ such that $s^{*} s = 1 - p$ and $s s^{*} \leq p$.
We have $s \in A^{\alpha}$ since $E \S A^{\alpha}$.
Also, $s$ commutes exactly with all elements of~$F_0$,
so $\| a s - s a \| < \frac{\ep}{2}$ for all $a \in F$.
We have verified Conditions (\ref{Item_3971_s_p_1mp})
and~(\ref{Item_3973_comm_saas}).

Since $2 N \cdot 3^{- L - 1} < \dt_2$, for every $\sm \in \T (E)$
we have $\sm (1 - p) < \dt_2$.
Every \tst{} $\ta \in \T (A^{\af})$ restricts to a \tst{} on~$E$,
so $\ta (1 - p) < \dt_2$ for all $\ta \in \T (A^{\af})$.
Since simple AF~algebras have strict comparison,
we get $1 - p \precsim_{A^{\af}} q_2$.
Condition~(\ref{Item_3956_1mp_sub_y}) follows.
\end{proof}

\begin{prp}\label{P_1X17_NoDimR}
The action $\af \colon G \to \Aut (A)$
of Construction~\ref{Cn_1X17_Z2Inf}
does not have finite Rokhlin dimension with commuting towers.
\end{prp}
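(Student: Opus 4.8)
The plan is to argue by contradiction, exploiting that the group $G = \prod_{n=1}^{\I}\Z_2$ is an infinite product of copies of $\Z_2$: a uniform bound on the Rokhlin dimension of $\af$ would descend to each finite subproduct $G_n = \prod_{m=1}^n\Z_2$, where it collides with the divisibility constraints imposed by $K_0(A) \cong \Z[1/3]$. First I would recall from \cite{Gar_rokhlin_2017} that $\cdimRok(\af)\le d$ means that for the prescribed tolerances there are equivariant \cpc{} order zero maps $\varphi_0,\ldots,\varphi_d\colon (C(G),\Lt)\to (A_{\I,\af}\cap A',\af_{\I})$ with commuting ranges and $\sum_{j=0}^d\varphi_j(1) = 1$. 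Assume $\cdimRok(\af)\le d <\I$ for contradiction.

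The second step is to reduce to the finite subgroups $G_n$. Following Notation~\ref{N_1X17_Parts}, $C(G_n)$ is identified with its $\Lt$-invariant image in $C(G)$, and the resulting action of $G$ on $C(G_n)$ factors through the projection $G\to G_n$; moreover $A_{\I,\af}\cap A'\S A_{\I,\beta_n}\cap A'$, where $\beta_n$ denotes the restriction of $\af$ to the subgroup $G_n\times\{0\}\S G$. Composing each $\varphi_j$ with the inclusion $\kappa\colon C(G_n)\hookrightarrow C(G)$ yields maps $\psi_j = \varphi_j\circ\kappa\colon C(G_n)\to A_{\I,\beta_n}\cap A'$. Since $\kappa$ is a unital $*$-homomorphism, each $\psi_j$ is again \cpc{} order zero, the ranges still commute, $\sum_j\psi_j(1)=1$, and equivariance is immediate from the factorization of the $G$-action on $C(G_n)$ through $G_n$. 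Hence $\cdimRok(\beta_n)\le d$ for every $n$, with the bound $d$ independent of~$n$.

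The third step is to derive a contradiction from the $K$-theory of the $3^{\I}$ UHF algebra $A$, for which $K_0(A)\cong\Z[1/3]$ with $[1_A]=1$, so that $2$ is not invertible. Here I would invoke the $K$-theoretic obstruction for finite-group actions with commuting towers established in \cite{HrsPh1}: a $\Z_2^n$-action with $\cdimRok\le d$ should force $[1_A]$ to be divisible by $2^n$ in the $2$-local $K$-theory of $A$, up to a factor depending only on~$d$. Since $\Z[1/3]$ localized at~$2$ is $\Z_{(2)}$, in which $1$ is not divisible by arbitrarily high powers of~$2$, this fails once $n$ is large relative to~$d$, giving the desired contradiction.

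The main obstacle is this third step. Unlike the Rokhlin property (dimension~$0$), finite Rokhlin dimension supplies only order zero towers rather than honest orthogonal projections, so extracting a genuine $K_0$-divisibility statement requires the commuting-tower hypothesis together with the order zero $K$-theory machinery, and one must ensure that the unavoidable correction factor stays coprime to~$2$ (equivalently, that one works $2$-locally without inverting~$2$); otherwise the $2^n$-obstruction dissolves. A trace computation alone is insufficient: invariance of the unique trace only yields $\sum_j\overline{\tau}(\varphi_j(1)) = 1$, which is consistent for every~$n$, so the argument genuinely needs the integral ($\Z[1/3]$-level) information rather than the real-valued one.
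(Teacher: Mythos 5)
Your reduction step is sound and is essentially the paper's: the paper invokes Proposition~3.10 of \cite{Gar_rokhlin_2017} to restrict the action to the first factor $H_1 \cong \Z_2$ of $G$, while you restrict by hand to $G_n = \prod_{m=1}^n \Z_2$ by composing the towers with the equivariant inclusion $C(G_n) \hookrightarrow C(G)$ and using $A_{\I,\af} \cap A' \S A_{\I,\beta_n} \cap A'$. Both give finite Rokhlin dimension with commuting towers for an action of a finite $2$-group on the $3^{\I}$ UHF algebra, with a uniform dimension bound.

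The genuine gap is your third step, and you have in effect flagged it yourself. The statement you invoke --- that a $\Z_2^n$-action with $\cdimRok \le d$ forces $[1_A]$ to be divisible by $2^n$ in $2$-local $K_0$, ``up to a factor depending only on $d$'' --- is not a result of \cite{HrsPh1}, and it is not clear how to prove it: the towers are \cpc{} order zero maps, not families of mutually equivalent orthogonal projections, so no class $[p_j]$ with $[1_A] = \sum_j 2^n [p_j]$ is available, and the vague correction factor is exactly where the argument dissolves. The obstruction actually established in \cite{HrsPh1} (and in \cite{Gar_rokhlin_2017} for compact groups) is of a different shape: finite Rokhlin dimension with commuting towers yields an equivariant unital map from $C(X)$ for a finite-dimensional free $G$-space $X$ (a join of copies of $G$) into the central sequence algebra, and the resulting constraint is an annihilation statement of the form $I(G)^{N} K_*^G(A) = 0$, not a divisibility of $[1_A]$ in ordinary $K_0$. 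What you need, and what the paper cites directly, is Corollary~4.8(2) of \cite{HrsPh1}: there is \emph{no} action of $\Z_2$ on the $3^{\I}$ UHF algebra with finite Rokhlin dimension with commuting towers. With that result in hand, your restriction to $G_n$ is unnecessary --- the case $n = 1$ already finishes the proof, which is precisely the paper's argument. As written, your proof is incomplete because the key $K$-theoretic input is neither correctly stated nor proved.
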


\begin{proof}
Suppose $\af$ has finite Rokhlin dimension with commuting towers.
Then Proposition~3.10 of~\cite{Gar_rokhlin_2017} implies that
the action on $A$ of the first factor of~$G$,
called $H_1$ in Notation~\ref{N_1X17_Parts},
also has finite Rokhlin dimension with commuting towers.
However, $H_1 \cong \Z_2$, $A$ is the $3^{\I}$~UHF algebra and, by
Corollary 4.8(2) of~\cite{HrsPh1},
there is no action of $\Z_2$ on the $3^{\I}$~UHF algebra
which has finite Rokhlin dimension with commuting towers.
\end{proof}

\section{An action of $S^1$ on a simple AT~algebra}\label{Sec_1908_Exam_TRPS1}

The purpose of this section is to construct a direct limit action
of the group~$S^1$
on a simple unital AT~algebra which has
the tracial Rokhlin property with comparison
but does not have finite Rokhlin dimension with commuting towers.

The general construction, with unspecified partial embedding
multiplicities (which, for properties we want,
need to be chosen appropriately),
is presented in Construction~\ref{Cn_1824_S1An}.
For the purpose of readability, the properties asserted there,
as well as others needed later, are proved in a series of lemmas.

The algebra $A$ in our construction will be a direct limit of
algebras $A_n$ isomorphic to
$C (S^1, M_{N r_0 (n)}) \oplus C (S^1, M_{r_1 (n)})$.
Up to equivariant isomorphism and exterior equivalence,
the action of $\zt \in S^1$ on $C (S^1, M_{N r_0 (n)})$ is rotation
by $\zt^N$ and its action on $C (S^1, M_{r_1 (n)})$
is rotation by $\zt$.
It is technically convenient to present the first summand in a
different way; the description above
is explicit in Lemma~\ref{L_1824_R_T}.
The action has the tracial Rokhlin property with comparison provided
the image of the summand $C (S^1, M_{N r_0 (n)}) \S A_n$ in $A$
can be made ``arbitrarily small in trace'' by choosing $n$ large enough.

The algebras $B_n$ and~$B$ in parts
(\ref{Item_1824_S1_Bn0Bn1}), (\ref{Item_1824_S1_FixPtSys}),
(\ref{Item_1824_S1_FixLim}), and~(\ref{Item_1915_S1_qn})
of Construction~\ref{Cn_1824_S1An}
are a convenient description of the fixed point algebras of $A_n$
and~$A$; see Lemma~\ref{L_1916_FPisB}.

We say here a little more about the motivation for the construction
and possible extensions.
If $G$ is finite, one can construct a direct limit action of $G$ on an
AF~algebra $\dirlim_n A_n$
by taking $A_n = M_{r_0 (n)} \oplus C (G, M_{r_1 (n)})$.
The action on $C (G, M_{r_1 (n)})$ is essentially
translation by group elements.
The partial map from $C (G, M_{r_1 (n)})$ to $M_{r_0 (n + 1)}$
is the direct sum of the evaluations at the points of~$G$.
The action on $M_{r_0 (n + 1)}$ is inner, and must permute the
images of the maps from $C (G, M_{r_1 (n)})$ appropriately;
this leads to a slightly messy inductive construction of inner actions
of $G$ on the algebras $M_{r_0 (n)}$ and inner perturbations
of the translation actions on $C (G, M_{r_1 (n)})$.

When $G$ is not finite, point evaluations can no longer be used,
since equivariance forces one to use all of them or none of them.
The algebra $C (S^1, M_{N r_0 (n)})$
with the action of rotation by $\zt^N$
is the codomain for a usable substitute for point evaluations.
Something similar to the inductive construction of perturbations
from above is needed, but the messiness can be mostly hidden
by instead using the algebra $R$ as in
Construction \ref{Cn_1824_S1An}(\ref{Item_1824_S1_R}).

Construction~\ref{Cn_1824_S1An} can be generalized in several ways.
One can replace $C (S^1, M_{r_1 (n)})$ with $C (X, M_{r_1 (n)})$
for a compact space~$X$ with a free action of $S^1$.
To ensure simplicity, one will need to incorporate additional
partial maps
in the direct system, which can be roughly described as point
evaluations at points of $X / S^1$.
One can increase the complexity of the K-theory and the departure
from the Rokhlin property by taking
\[
A_n = C (S^1, M_{N_2 N_1 r_0 (n)})
   \oplus C (S^1, M_{N_1 r_1 (n)}) \oplus C (S^1, M_{r_2 (n)})
\]
with actions exterior equivalent to rotations by
$\zt^{N_1 N_2}$, $\zt^{N_1}$, and $\zt$.
One can use more summands, even letting the number of them
approach infinity as $n \to \I$.
One can also replace $S^1$ with $(S^1)^m$.
However, it is not clear how to construct an analogous action
with $S^1$ replaced by a nonabelian connected compact Lie group.

We introduce some notation specifically for this section.

\begin{dfn}\label{D_1908_eue}
Let $G$ be a group, let $A$ and $B$ be \ca{s}, with $B$ unital,
let $\af \colon G \to \Aut (A)$ and $\bt \colon G \to \Aut (B)$
be actions of $G$ on $A$ and~$B$,
and let $\ph, \ps \colon A \to B$ be equivariant \hm{s}.
We say that $\ph$ and $\ps$ are
{\emph{equivariantly unitarily equivalent}}, written $\ph \sim \ps$,
if there is a $\bt$-invariant unitary $u \in B$
such that $u \ph (a) u^* = \ps (a)$ for all $a \in A$.
\end{dfn}

\begin{ntn}\label{N_1908_Ampl}
Let $A$ and $B$ be \ca{s}, and let $\ps \colon A \to B$ be a \hm.
We let $\ps^{(k)} \colon A \to M_k \otimes B$ be the map
$a \mapsto 1_{M_k} \otimes \ps (a)$,
and we define
\[
\ps_n = \id_{M_n} \otimes \ps \colon M_n \otimes A \to M_n \otimes B
\]
and
\[
\ps^{(k)}_n = \id_{M_n} \otimes \ps^{(k)} \colon
   M_n \otimes A \to M_{k n} \otimes B.
\]
\end{ntn}

In particular,
the ``amplification map'' from $M_n (A)$ to $M_{k n} (A)$,
given by $a \mapsto 1_{M_k} \otimes a$, is denoted by $(\id_A)^{(k)}_n$.

\begin{cns}\label{Cn_1824_S1An}
We choose and fix $N \in \N$ with $N \geq 2$, $\te \in \R \SM \Q$,
$r (0) = (r_0 (0), r_1 (0)) \in (\N)^2$,
and, for $n \in \Nz$ and $j, k \in \{ 0, 1 \}$,
numbers $l_{j, k} (n) \in \N$.
We suppress them in the notation for the objects we construct,
and, in later results, we will impose additional restrictions on them.

We then define the following \ca{s}, maps, and actions of~$S^1$.
\begin{enumerate}
\item\label{Item_1824_S1_CS1}
Define $\bt \colon S^1 \to \Aut (C (S^1))$
by $\bt_{\zt} (f) (z) = f (\zt^{-1} z)$ for $\zt, z \in S^1$.
Further, for $n \in \N$, identify $C (S^1, M_n)$ and $M_n (C (S^1))$
with $M_n \otimes C (S^1)$ in the obvious way, and
let $\bt_n \colon S^1 \to \Aut (C (S^1, M_n))$ be given by
$\bt_{\zt, n} = \id_{M_n} \otimes \bt_{\zt}$ for $\zt \in S^1$.
(The order of subscripts in $\bt_{\zt, n}$ is chosen to be
consistent with Notation~\ref{N_1908_Ampl}).
To simplify notation,
for $\ld \in \R$ we define
\[
{\widetilde{\bt}}_{\ld} = \bt_{\exp (2 \pi i \ld)}
\andeqn
{\widetilde{\bt}}_{\ld, n} = \bt_{\exp (2 \pi i \ld), \, n}.
\]
\item\label{Item_1824_S1_s_om}
Define
\[
\om = \exp (2 \pi i / N)
\andeqn
s = \left( \begin{matrix}
  0     &  1     &  0     & \cdots &  0     &  0        \\
  0     &  0     &  1     & \cdots &  0     &  0        \\
 \vdots & \vdots & \ddots & \ddots & \vdots & \vdots    \\
 \vdots & \vdots & \ddots & \ddots &  1     &  0        \\
  0     &  0     & \cdots & \cdots &  0     &  1        \\
  1     &  0     & \cdots & \cdots &  0     &  0
\end{matrix} \right)
\in M_N.
\]
\item\label{Item_1824_S1_R}
Define
\[
R = \bigl\{ f \in C (S^1, M_N) \colon
 {\mbox{$f (\om z) = s f (z) s^*$ for all $z \in S^1$}} \bigr\}.
\]
Then $R$ is invariant under the action $\bt_N$ of $S^1$
on $C (S^1, M_N)$ above.
(See Lemma~\ref{L_1824_R_inv} below.)
We define $\gm \colon S^1 \to \Aut (R)$ to be the restriction
of this action.
Further, for $n \in \N$,
let $\gm_n \colon S^1 \to \Aut (M_n \otimes R)$ be the action
$\gm_{\zt, n} = \id_{M_n} \otimes \gm_{\zt}$ for $\zt \in S^1$.
Finally, for $\ld \in \R$ we define
\[
{\widetilde{\gm}}_{\ld} = \gm_{\exp (2 \pi i \ld)}
\andeqn
{\widetilde{\gm}}_{\ld, n} = \gm_{\exp (2 \pi i \ld), \, n}.
\]
\item\label{Item_1824_S1_Crs}
Let $\io \colon R \to C (S^1, M_N)$ be the inclusion.
Define $\xi \colon C (S^1) \to R$ by
\[
\xi (f) (z)
 = {\operatorname{diag}} \bigl( f (z), \, f (\om z),
    \, \ldots, \, f (\om^{N - 1} z) \bigr)
\]
for $f \in C (S^1)$ and $z \in S^1$.
\item\label{Item_1824_S1_ll}
For $n \in \N$ write
\[
l (n) = \left( \begin{matrix}
l_{0, 0} (n)   & l_{0, 1} (n)    \\
N l_{1, 0} (n) & 2N l_{1, 1} (n)
\end{matrix} \right).
\]
For $n \in \N$ inductively define,
starting with $r (0) = (r_0 (0), r_1 (0)) \in (\N)^2$ as at
the beginning of the construction,
\begin{equation}\label{Eq_1901_rn_dfm}
r (n + 1) = l (n) r (n)
\end{equation}
(usual matrix multiplication).
\item\label{Item_1824_S1_An}
For $n \in \Nz$ set
\[
A_{n, 0} = M_{r_0 (n)} (R),
\quad
A_{n, 1} = M_{r_1 (n)} \bigl( C (S^1) \bigr),
\quad {\mbox{and}} \quad
A_n = A_{n, 0} \oplus A_{n, 1}.
\]
Define an action $\af^{(n)} \colon S^1 \to \Aut (A)$
(notation not in line with Notation~\ref{N_1908_Ampl}) by,
now following Notation~\ref{N_1908_Ampl},
$\af^{(n)}_{\zt} = \gm_{\zt, r_0 (n)} \oplus \bt_{\zt, r_1 (n)}$.
\item\label{Item_1824_S1_nu_p}
For $n \in \Nz$ and $j, k \in \{ 0, 1 \}$, define maps
\[
\nu_{n, 0, 0} \colon A_{n, 0} \to M_{l_{0, 0} (n) r_0 (n)} (R),
\qquad
\nu_{n, 0, 1} \colon A_{n, 1} \to M_{l_{0, 1} (n) r_1 (n)} (R),
\]
\[
\nu_{n, 1, 0} \colon A_{n, 0}
 \to M_{N l_{1, 0} (n) r_0 (n)} \bigl( C (S^1) \bigr),
\]
and
\[
\nu_{n, 1, 1} \colon A_{n, 1}
 \to M_{2 N l_{1, 1} (n) r_1 (n)} \bigl( C (S^1) \bigr)
\]
as follows.
Recalling Notation~\ref{N_1908_Ampl}, set
\[
\nu_{n, 0, 0} = (\id_R)_{r_0 (n)}^{l_{0, 0} (n)},
\qquad
\nu_{n, 0, 1} = \xi_{r_1 (n)}^{l_{0, 1} (n)},
\qquad
\nu_{n, 1, 0} = \io_{r_0 (n)}^{l_{1, 0} (n)},
\]
and
\[
\begin{split}
\nu_{n, 1, 1}
& = \diag \left( \btt_{0, \, r_1 (n)}^{l_{1, 1} (n)},
        \, \btt_{1 / N, \, r_1 (n)}^{l_{1, 1} (n)},
        \, \btt_{2 / N, \, r_1 (n)}^{l_{1, 1} (n)},
        \, \ldots,
        \, \btt_{(N - 1) / N, \, r_1 (n)}^{l_{1, 1} (n)}, \right.
\\
& \hspace*{4em} {\mbox{}}
        \left. \, \btt_{\te, \, r_1 (n)}^{l_{1, 1} (n)},
        \, \btt_{\te + 1 / N, \, r_1 (n)}^{l_{1, 1} (n)},
        \, \btt_{\te + 2 / N, \, r_1 (n)}^{l_{1, 1} (n)},
        \, \ldots,
        \, \btt_{\te + (N - 1) / N, \, r_1 (n)}^{l_{1, 1} (n)} \right).
\end{split}
\]
Up to equivariant unitary equivalence, the last one can be written
in the neater form
\[
\begin{split}
& \diag \bigl( \btt_{0},
        \, \btt_{1 / N},
        \, \btt_{2 / N},
        \, \ldots,
        \, \btt_{(N - 1) / N},
\\
& \hspace*{4em} {\mbox{}}
        \, \btt_{\te},
        \, \btt_{\te + 1 / N},
        \, \btt_{\te + 2 / N},
        \, \ldots,
        \, \btt_{\te + (N - 1) / N} \bigr)_{r_1 (n)}^{l_{1, 1} (n)}.
\end{split}
\]
\item\label{Item_1824_S1_nu_A}
Define $\nu_n \colon A_n \to A_{n + 1}$ by
\[
\nu_n (a_0, a_1)
 = \bigl(
  \diag \bigl( \nu_{n, 0, 0} (a_0), \,  \nu_{n, 0, 1} (a_1) \bigr), \,\,
  \diag \bigl( \nu_{n, 1, 0} (a_0), \,  \nu_{n, 1, 1} (a_1) \bigr)
 \bigr)
\]
for $a_0 \in A_{n, 0}$ and $a_1 \in A_{n, 1}$.
For $m, n \in \Nz$ with $m \leq n$ set
\[
\nu_{n, m}
 = \nu_{n - 1} \circ \nu_{n - 2} \circ \cdots \circ \nu_m
 \colon A_m \to A_n.
\]
\item\label{Item_1924_S1_A}
Let $A$ be the direct limit of the system
$\bigl( (A_n)_{n \in \Nz}, \, (\nu_{n, m})_{m \leq n} \bigr)$,
with maps $\nu_{\I, m} \colon A_m \to A$.
Equip $A$ with the direct limit action $\af = \dirlim \af^{(n)}$
of~$S^1$.
This action exists by Lemma~\ref{L_1909_nu_e}.
\item\label{Item_1824_S1_pn}
For $n \in \Nz$ let $p_n = (0, 1) \in A_{n, 0} \oplus A_{n, 1} = A_n$.
\item\label{Item_1824_S1_Bn0Bn1}
For $n \in \Nz$ set
\[
B_{n, 0} = (M_{r_0 (n)})^N = \bigoplus_{k = 0}^{N - 1} M_{r_0 (n)},
\quad
B_{n, 1} = M_{r_1 (n)},
\quad {\mbox{and}} \quad
B_n = B_{n, 0} \oplus B_{n, 1}.
\]
\item\label{Item_1824_S1_FixPtSys}
Let $\mu \colon {\mathbb{C}} \to {\mathbb{C}}^N$ be $\mu (\ld) = (\ld, \ld, \ldots, \ld)$
for $\ld \in {\mathbb{C}}$,
and let $\dt \colon {\mathbb{C}}^N \to M_N$ be
$\dt (\ld_0, \ld_1, \ldots, \ld_{N - 1})
 = \diag (\ld_0, \ld_1, \ldots, \ld_{N - 1})$
for $\ld_0, \ld_1, \ldots, \ld_{N - 1} \in {\mathbb{C}}$.
For $n \in \Nz$ and $j, k \in \{ 0, 1 \}$,
and recalling Notation~\ref{N_1908_Ampl}, define maps
\[
\ch_{n, 0, 0} \colon B_{n, 0} \to M_{l_{0, 0} (n) r_0 (n)} ({\mathbb{C}}^N),
\qquad
\ch_{n, 0, 1} \colon B_{n, 1} \to M_{l_{0, 1} (n) r_1 (n)} ({\mathbb{C}}^N),
\]
\[
\ch_{n, 1, 0} \colon B_{n, 0} \to M_{N l_{1, 0} (n) r_0 (n)},
\andeqn
\ch_{n, 1, 1} \colon B_{n, 1} \to M_{2 N l_{1, 1} (n) r_1 (n)}
\]
by
\[
\ch_{n, 0, 0} = (\id_{{\mathbb{C}}^N})_{r_0 (n)}^{l_{0, 0} (n)},
\qquad
\ch_{n, 0, 1} = \mu_{r_1 (n)}^{l_{0, 1} (n)},
\]
\[
\ch_{n, 1, 0} = \dt_{r_0 (n)}^{l_{1, 0} (n)},
\andeqn
\ch_{n, 1, 1} = (\id_{\mathbb{C}})_{r_1 (n)}^{2 N l_{1, 1} (n)}.
\]
\item\label{Item_1824_S1_FixLim}
Define $\ch_n \colon B_n \to B_{n + 1}$ by
\[
\ch_n (a_0, a_1)
 = \bigl(
  \diag \bigl( \ch_{n, 0, 0} (a_0), \,  \ch_{n, 0, 1} (a_1) \bigr), \,\,
  \diag \bigl( \ch_{n, 1, 0} (a_0), \,  \ch_{n, 1, 1} (a_1) \bigr)
 \bigr)
\]
for $a_0 \in B_{n, 0}$ and $a_1 \in B_{n, 1}$.
For $m, n \in \Nz$ with $m \leq n$ set
\[
\ch_{n, m}
 = \ch_{n - 1} \circ \ch_{n - 2} \circ \cdots \circ \ch_m
 \colon B_m \to B_n.
\]
Let $B$ be the direct limit of the system
$\bigl( (B_n)_{n \in \Nz}, \, (\ch_{n, m})_{m \leq n} \bigr)$,
with maps $\ch_{\I, m} \colon B_m \to B$.
\item\label{Item_1915_S1_qn}
For $n \in \Nz$ let $q_n = (0, 1) \in B_{n, 0} \oplus B_{n, 1} = B_n$.
%
%
%
\end{enumerate}

\end{cns}

\begin{lem}\label{L_1824_R_inv}
Let $R \S C (S^1, M_N)$ be as in
Construction \ref{Cn_1824_S1An}(\ref{Item_1824_S1_R}).
Then $R$ is invariant under the action $\bt_N$ of
Construction \ref{Cn_1824_S1An}(\ref{Item_1824_S1_CS1}),
and map $\io \colon R \to C (S^1, M_N)$
of Construction \ref{Cn_1824_S1An}(\ref{Item_1824_S1_Crs})
is equivariant.
\end{lem}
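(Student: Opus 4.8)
The plan is to treat the two assertions separately, noting that the second one is essentially formal once the first has been established.

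First I would prove invariance of $R$ under $\bt_N$. Fix $f \in R$ and $\zt \in S^1$, and set $g = \bt_{\zt, N}(f)$, so that $g(z) = f(\zt^{-1} z)$ for all $z \in S^1$ by the definition in Construction~\ref{Cn_1824_S1An}(\ref{Item_1824_S1_CS1}); here it matters that $\bt_N = \id_{M_N} \otimes \bt$ leaves the matrix coordinate untouched and only translates the circle variable. To show $g \in R$ I must verify the defining relation $g(\om z) = s g(z) s^*$ for all $z$. The key step is the computation $g(\om z) = f(\zt^{-1} \om z) = f\bigl(\om (\zt^{-1} z)\bigr)$, where the second equality uses commutativity of the abelian group $S^1$. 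Applying the defining relation of $R$ at the point $w = \zt^{-1} z$ gives $f(\om w) = s f(w) s^*$, hence $g(\om z) = s f(\zt^{-1} z) s^* = s g(z) s^*$, as required. Thus $\bt_{\zt, N}(R) \S R$ for every $\zt$, and since each $\bt_{\zt, N}$ is invertible with inverse $\bt_{\zt^{-1}, N}$, we in fact get $\bt_{\zt, N}(R) = R$.

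The equivariance of $\io$ is then immediate. Since $R$ is $\bt_N$-invariant, the restricted action $\gm$ of Construction~\ref{Cn_1824_S1An}(\ref{Item_1824_S1_R}) is well defined and satisfies $\gm_{\zt}(f) = \bt_{\zt, N}(f)$ for all $f \in R$. As $\io$ is simply the inclusion map, we obtain $\io(\gm_{\zt}(f)) = \bt_{\zt, N}(f) = \bt_{\zt, N}(\io(f))$ for every $\zt \in S^1$ and every $f \in R$, which is exactly the asserted equivariance of $\io$.

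There is no substantive obstacle here: the whole argument rests on the single observation that $S^1$ is abelian, so that translation by $\zt^{-1}$ commutes with translation by $\om$. The only points requiring any care are the bookkeeping of the order of group multiplication in the definition of $\bt$ and the verification that the amplification to $M_N$ does not interfere with the conjugation by $s$ in the definition of $R$.
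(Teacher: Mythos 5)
Your proof is correct and follows exactly the route the paper takes: the paper simply states that the invariance of $R$ is "easy to check from the definitions" and that equivariance of the inclusion is immediate, and your computation $g(\om z) = f(\zt^{-1}\om z) = f(\om\,\zt^{-1}z) = s f(\zt^{-1}z) s^* = s g(z) s^*$ is precisely the omitted verification. Nothing to add.
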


\begin{proof}
The first part is easy to check from the definitions
of $\bt_N$ and $R$.
Since $\io$ is the inclusion, the second part is immediate.
\end{proof}

\begin{lem}\label{L_1824_phe}
The map $\xi \colon C (S^1) \to R$
of Construction \ref{Cn_1824_S1An}(\ref{Item_1824_S1_Crs})
is well defined and equivariant.
\end{lem}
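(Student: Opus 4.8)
The plan is to verify the two assertions directly from the definitions in Construction~\ref{Cn_1824_S1An}: that $\xi(f) \in R$ for every $f \in C(S^1)$ (well-definedness), and that $\xi$ intertwines $\bt$ with $\gm$ (equivariance). Both reduce to short entrywise computations once the effect of conjugation by the cyclic shift is pinned down.

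First I would check well-definedness. For $f \in C(S^1)$ the function $\xi(f)$ is visibly continuous and $M_N$-valued, so $\xi(f) \in C(S^1, M_N)$, and $\xi$ is plainly a $*$-homomorphism since it is a diagonal embedding; it remains only to verify the defining relation of $R$ in Construction~\ref{Cn_1824_S1An}(\ref{Item_1824_S1_R}), namely $\xi(f)(\om z) = s\, \xi(f)(z)\, s^*$. The key computation is the action of conjugation by the shift $s$ of Construction~\ref{Cn_1824_S1An}(\ref{Item_1824_S1_s_om}) on a diagonal matrix. With the indexing matching the ordering in $\xi$, one has $s e_j = e_{j-1}$ (indices mod $N$), hence $s\, \diag(d_0, d_1, \ldots, d_{N-1})\, s^* = \diag(d_1, d_2, \ldots, d_{N-1}, d_0)$; that is, conjugation by $s$ cyclically shifts the diagonal entries by one. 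Applying this with $d_k = f(\om^k z)$ and using $\om^N = 1$ yields $s\,\xi(f)(z)\,s^* = \diag\bigl(f(\om z), \ldots, f(\om^{N-1} z), f(z)\bigr)$, which is exactly $\xi(f)(\om z)$ read off from the definition. Thus $\xi(f) \in R$.

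Next I would check equivariance, i.e. $\xi \circ \bt_\zt = \gm_\zt \circ \xi$ for all $\zt \in S^1$. By Construction~\ref{Cn_1824_S1An}(\ref{Item_1824_S1_R}) and Lemma~\ref{L_1824_R_inv}, $\gm_\zt$ is the restriction to $R$ of $\bt_{\zt, N} = \id_{M_N} \otimes \bt_\zt$, so $\gm_\zt(g)(z) = g(\zt^{-1} z)$ for $g \in R$. Expanding both sides entrywise, the $k$-th diagonal entry of $\xi(\bt_\zt f)(z)$ is $(\bt_\zt f)(\om^k z) = f(\zt^{-1} \om^k z)$, while the $k$-th diagonal entry of $\gm_\zt(\xi f)(z) = (\xi f)(\zt^{-1} z)$ is $f(\om^k \zt^{-1} z)$. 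These agree because $S^1$ is abelian, giving $\zt^{-1}\om^k = \om^k \zt^{-1}$. This proves equivariance.

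The hard part will be nothing more than the bookkeeping in the shift-conjugation identity: one must fix an indexing convention for $s$ consistent with the diagonal ordering used in $\xi$ and track carefully that $s e_j = e_{j-1}$ forces conjugation to move the entry $d_{k+1}$ into the $k$-th diagonal slot. Once that identity is stated correctly, both claims follow by unwinding definitions, and the continuity and $*$-homomorphism properties of $\xi$ are immediate.
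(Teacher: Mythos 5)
Your proof is correct and follows the same route the paper intends: the paper simply remarks that well-definedness is immediate from the definition of $\xi$ and that equivariance can be checked on the generator of $C(S^1)$, while you carry out the shift-conjugation identity $s\,\diag(d_0,\ldots,d_{N-1})\,s^* = \diag(d_1,\ldots,d_{N-1},d_0)$ and the entrywise comparison explicitly. Your computations (including the use of $\om^N=1$ and commutativity of $S^1$) are accurate, so this is just a more detailed write-up of the argument the paper leaves to the reader.
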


\begin{proof}
The first part is easy to check just by the definition of $\xi$.
For equivariance, it is enough and immediate
to check on the usual generator of $C (S^1)$.
\end{proof}

\begin{lem}\label{L_1909_nu_e}
The maps $\nu_{n, j, k} \colon A_{n, k} \to A_{n, j}$
of Construction \ref{Cn_1824_S1An}(\ref{Item_1824_S1_nu_p})
and $\nu_n \colon A_n \to A_{n + 1}$
of Construction \ref{Cn_1824_S1An}(\ref{Item_1824_S1_nu_A})
are equivariant.
\end{lem}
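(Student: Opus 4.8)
The plan is to reduce the entire statement to one purely formal observation: amplifications and block-diagonal direct sums of equivariant homomorphisms are again equivariant, once the action on a matrix amplification $M_\bullet \otimes D$ is taken to be $\id_{M_\bullet}$ tensored with the action on the building block $D$. Concretely, if $\ps \colon C \to D$ intertwines actions $\sm$ and $\ta$ of $S^1$, then for all $j,k$ the amplification $\ps_j^{(k)}$ of Notation~\ref{N_1908_Ampl} intertwines $\id_{M_j} \otimes \sm$ with $\id_{M_{kj}} \otimes \ta$; this is immediate from $(\id_{M_{kj}} \otimes \ta_\zt)(\ps_j^{(k)}(x)) = \ps_j^{(k)}((\id_{M_j} \otimes \sm_\zt)(x))$, verified entrywise on $x = \sum_{p,q} e_{p,q} \otimes c_{p,q}$ using equivariance of $\ps$ in each entry. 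Likewise, a block diagonal $\diag(\ps_1, \ldots, \ps_t)$ of maps all landing in amplifications of the same $D$ intertwines $\sm$ with $\id_{M_\bullet} \otimes \ta$, since $\id_{M_\bullet} \otimes \ta$ preserves the block-diagonal decomposition and restricts to the respective target action on each block. I would record both facts as a short preliminary remark.

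First I would verify equivariance of the four building-block maps feeding into the $\nu_{n,j,k}$. The identity $\id_R$ is trivially $\gm$-equivariant; the inclusion $\io \colon R \to C(S^1,M_N)$ is $(\gm,\bt_N)$-equivariant by Lemma~\ref{L_1824_R_inv}; and $\xi \colon C(S^1) \to R$ is $(\bt,\gm)$-equivariant by Lemma~\ref{L_1824_phe}. The one point needing a genuine (if trivial) argument is $\nu_{n,1,1}$, whose blocks are amplifications of the automorphisms $\btt_\ld = \bt_{\exp(2\pi i \ld)}$ regarded as maps $C(S^1) \to C(S^1)$: since $S^1$ is abelian and $\bt$ is the rotation action, $\bt_\zt \circ \btt_\ld = \bt_{\zt \exp(2\pi i \ld)} = \btt_\ld \circ \bt_\zt$, which is exactly $(\bt,\bt)$-equivariance of each block. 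Applying the amplification principle then shows $\nu_{n,0,0}$, $\nu_{n,0,1}$, $\nu_{n,1,0}$ are equivariant with the evident source and target actions, and the amplification together with the direct-sum principle shows $\nu_{n,1,1}$ is $(\bt_{r_1(n)}, \bt_{2N l_{1,1}(n) r_1(n)})$-equivariant.

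It then remains to assemble these into $\nu_n$ and confirm the codomain actions coincide with $\af^{(n+1)}$. Here I would use the recursion $r(n+1) = l(n) r(n)$ of Construction~\ref{Cn_1824_S1An}(\ref{Item_1824_S1_ll}): reading off the two rows gives $r_0(n+1) = l_{0,0}(n) r_0(n) + l_{0,1}(n) r_1(n)$ and $r_1(n+1) = N l_{1,0}(n) r_0(n) + 2N l_{1,1}(n) r_1(n)$, so $\diag(\nu_{n,0,0}, \nu_{n,0,1})$ lands exactly in $A_{n+1,0} = M_{r_0(n+1)}(R)$ and $\diag(\nu_{n,1,0}, \nu_{n,1,1})$ lands exactly in $A_{n+1,1} = M_{r_1(n+1)}(C(S^1))$. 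Since $\gm_{r_0(n+1)} = \id_{M_{r_0(n+1)}} \otimes \gm$ restricts on each diagonal block to the target action already verified, and similarly $\bt_{r_1(n+1)}$ on the second component, the direct-sum principle gives that the first component of $\nu_n$ is $\gm_{r_0(n+1)}$-equivariant and the second is $\bt_{r_1(n+1)}$-equivariant. Taking the direct sum of the two components shows $\nu_n$ intertwines $\af^{(n)} = \gm_{r_0(n)} \oplus \bt_{r_1(n)}$ with $\af^{(n+1)} = \gm_{r_0(n+1)} \oplus \bt_{r_1(n+1)}$, as required.

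The main (and essentially only) obstacle is bookkeeping: one must track which building-block action — $\gm$ on $R$, or $\bt$ (resp. $\bt_N$) on $C(S^1)$ (resp. $C(S^1,M_N)$) — sits on the target of each partial map, and check that the amplification and block-diagonal operations reassemble these into precisely the declared actions $\af^{(n+1)}$ on $A_{n+1}$, with matrix sizes matching via $r(n+1) = l(n) r(n)$. No analytic input is required, and the only non-formal ingredient is the commutativity $\bt_\zt \btt_\ld = \btt_\ld \bt_\zt$ underlying the equivariance of the blocks of $\nu_{n,1,1}$.
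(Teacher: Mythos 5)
Your proposal is correct and follows the same route as the paper: the paper's proof simply declares the result immediate from Lemma~\ref{L_1824_R_inv}, Lemma~\ref{L_1824_phe}, and the commutativity of $\bt_{\zt_1}$ with $\bt_{\zt_2}$, which are exactly the three non-formal ingredients you identify. Your write-up just makes explicit the routine amplification, block-diagonal, and size-bookkeeping steps that the paper leaves to the reader.
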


\begin{proof}
This is immediate from Lemma~\ref{L_1824_R_inv}, Lemma~\ref{L_1824_phe},
and the fact that $\bt_{\zt_1}$ commutes with $\bt_{\zt_2}$
for $\zt_1, \zt_2 \in S^1$.
\end{proof}

We will need the notation $L_{\Ph}$ from 1.3 of~\cite{DNNP}.
For a \chs~$X$, $m \in \N$, and $x \in X$,
let $\ev_x \colon C (X, M_m) \to M_m$ be evaluation at~$x$.
If also $Y$ is a \chs{} $\Ph \colon C (X, M_m) \to C (Y, M_n)$ is a \hm,
then $L_{\Ph}$ assigns to $y \in Y$ the set of all $x \in X$
such that $\ev_x$ occurs as a summand in the representation
$\ev_y \circ \Ph$.
The definition in 1.3 of~\cite{DNNP} is extended from this case
to \hm{s} between direct sums of algebras of this type.
We refer to that paper for details.

\begin{lem}\label{L_1824_R_T}
Let $R \S C (S^1, M_N)$ be as in
Construction \ref{Cn_1824_S1An}(\ref{Item_1824_S1_R}).
Let ${\overline{\gm}} \colon S^1 \to \Aut (C (S^1))$
be the action
${\overline{\gm}}_{\zt} (f) (z) = f (\zt^{-N} z)$ for $\zt, z \in S^1$.
Then there is an isomorphism $\ps \colon R \to C (S^1, M_N)$
satisfying the following conditions.
\begin{enumerate}
\item\label{Item_824_R_T_Rk1}
For every rank one \pj{} $e \in R \S C (S^1, M_N)$,
the \pj{} $\ps (e) \in C (S^1, M_N)$ has rank one.
\item\label{Item_824_R_T_ExtEq}
The action $\zt \mapsto \ps \circ \gm_{\zt} \circ \ps^{-1}$
is exterior equivalent to the action
$\zt \mapsto {\overline{\gm}}_{\zt, N}$.
\item\label{Item_824_R_T_L}
With $L_{\Ph}$ as defined in 1.3 of~\cite{DNNP},
for $z \in S^1$ we have
\[
L_{\ps \circ \xi} (z)
 = \bigl\{ y \in S^1 \colon y^N = z \bigr\}
\andeqn
L_{\io \circ \ps^{-1}} (z) = \{ z^N \}.
\]
\end{enumerate}
\end{lem}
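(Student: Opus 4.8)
The plan is to trivialize the ``twist'' defining $R$ by a single continuous unitary path and then transport all three properties through the resulting isomorphism. First I would construct a continuous function $u \colon S^1 \to {\operatorname{U}} (M_N)$ satisfying $u (\om z) = s\, u (z)$ for all $z \in S^1$. Such a $u$ exists because $s^N = 1$: writing $s = \exp (2 \pi i H)$ with $H = H^* \in M_N$ whose eigenvalues lie in $\{ 0, 1/N, \ldots, (N-1)/N \}$ (so that $N H$ has integer spectrum), the formula $u (\exp (2 \pi i t)) = \exp (2 \pi i N t H)$ is $1$-periodic in $t$, hence well defined on $S^1$, and satisfies the required relation since $H$ commutes with $s$. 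Conjugation by $u$ is an automorphism $\Ph$ of $C (S^1, M_N)$, given by $\Ph (f) (z) = u (z)^* f (z) u (z)$, and for $f \in R$ one checks $\Ph (f) (\om z) = \Ph (f) (z)$, so $\Ph (f)$ is invariant under rotation by $\om$ and therefore factors through the $N$-fold covering $\pi (z) = z^N$. I would then define $\ps \colon R \to C (S^1, M_N)$ by the requirement $\ps (f) (z^N) = u (z)^* f (z) u (z)$.

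Next I would check that $\ps$ is a $*$-isomorphism. Since $\Ph$ is an automorphism of $C (S^1, M_N)$ and the relation $h (\om z) = s h (z) s^*$ is equivalent to $\Ph (h)$ being $\om$-invariant, $\Ph$ restricts to a $*$-isomorphism from $R$ onto the subalgebra of $\om$-invariant functions, and the latter is identified with $C (S^1, M_N)$ by pullback along $\pi$; composing gives $\ps$. Property~(\ref{Item_824_R_T_Rk1}) is then immediate, because $\ps$ acts pointwise by unitary conjugation (followed by reparametrization), so it preserves the pointwise rank of a projection; in particular a projection in $R$ of constant pointwise rank one is sent to one of pointwise rank one.

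For Property~(\ref{Item_824_R_T_ExtEq}), I would compute, for $g \in C (S^1, M_N)$ and $w = z^N$, that $(\ps \circ \gm_\zt \circ \ps^{-1}) (g) (w) = W_\zt (z)\, g (\zt^{-N} w)\, W_\zt (z)^*$, where $W_\zt (z) = u (z)^* u (\zt^{-1} z)$. Using $u (\om z) = s u (z)$ one gets $W_\zt (\om z) = W_\zt (z)$, so $W_\zt$ descends to a unitary $\tilde u_\zt \in C (S^1, M_N)$ with $\tilde u_\zt (z^N) = W_\zt (z)$, whence $\ps \circ \gm_\zt \circ \ps^{-1} = \Ad (\tilde u_\zt) \circ {\overline{\gm}}_{\zt, N}$. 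It remains to verify the cocycle identity $\tilde u_{\zt_1 \zt_2} = \tilde u_{\zt_1} \cdot {\overline{\gm}}_{\zt_1, N} (\tilde u_{\zt_2})$, which reduces to the telescoping relation $W_{\zt_1} (z) W_{\zt_2} (\zt_1^{-1} z) = W_{\zt_1 \zt_2} (z)$, together with continuity of $\zt \mapsto \tilde u_\zt$; these give the asserted exterior equivalence.

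Finally, for Property~(\ref{Item_824_R_T_L}) I would read off the point evaluations. For $\ps \circ \xi$ and base point $w = \zeta^N$, the definition of $\xi$ gives $(\ps \circ \xi) (f) (w) = u (\zeta)^* \diag \bigl( f (\zeta), f (\om \zeta), \ldots, f (\om^{N-1} \zeta) \bigr) u (\zeta)$, which is unitarily equivalent to $\bigoplus_{j = 0}^{N - 1} \ev_{\om^j \zeta}$; since $\{ \om^j \zeta : 0 \leq j \leq N - 1 \} = \{ y \in S^1 : y^N = w \}$, this yields $L_{\ps \circ \xi} (w) = \{ y : y^N = w \}$. For $\io \circ \ps^{-1}$ and base point $z$, one has $(\io \circ \ps^{-1}) (g) (z) = u (z)\, g (z^N)\, u (z)^*$, unitarily equivalent to the single evaluation $\ev_{z^N}$, so $L_{\io \circ \ps^{-1}} (z) = \{ z^N \}$. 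The only genuinely delicate point is the construction of the untwisting unitary $u$ and verifying that the implementing unitaries $W_\zt$ descend and satisfy the cocycle relation; everything else is pointwise bookkeeping.
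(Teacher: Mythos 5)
Your proof is correct and follows essentially the same route as the paper's: both trivialize the twist defining $R$ by a continuous family of unitaries satisfying $u(\om z) = s\,u(z)$ (your $u(z)$ is exactly the paper's $s_{\ld}$ read through the covering $\ld \mapsto e^{2\pi i \ld/N}$), define $\ps(f)(z^N) = u(z)^* f(z) u(z)$, and verify the exterior equivalence via the same cocycle $W_{\zt}(z) = u(z)^* u(\zt^{-1}z)$. The only cosmetic difference is that you build $u$ explicitly by functional calculus from a logarithm of $s$, whereas the paper uses an arbitrary continuous unitary path from $1$ to $s$ extended by the rule $s_{\ld+m}=s^m s_\ld$; both constructions work.
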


The isomorphism in this lemma is not equivariant when
$C (S^1, M_N)$ is equipped with the action $\zt \mapsto \bt_{\zt, N}$,
or any action exterior equivalent to it.

\begin{proof}[Proof of Lemma~\ref{L_1824_R_T}]
Choose a \ct{} unitary path $\ld \mapsto s_{\ld}$ in $M_N$,
defined for $\ld \in [0, 1]$,
such that $s_0 = 1$ and $s_1 = s$.
For any $\ld \in \R$,
choose $\ld_0 \in [0, 1)$ such that $\ld - \ld_0 \in \Z$.
Taking $n = \ld - \ld_0$, we then define $s_{\ld} = s^n s_{\ld_0}$.
This function is still \ct.
Moreover, for any $m \in \Z$,
\begin{equation}\label{Eq_1912_ldm}
s_{\ld + m}
 = s^{m + n} s_{\ld_0}
 = s^{m} s^{n} s_{\ld_0} = s^{m} s_{\ld}.
\end{equation}

We claim that there is a well defined \hm{}
$\ps \colon R \to C (S^1, M_N)$ such that, whenever $f \in R$
and $\ld \in \R$, we have
\[
\ps (f) (e^{2 \pi i \ld}) = s_{\ld}^* f (e^{2 \pi i \ld / N}) s_{\ld}.
\]
The only issue is whether $\ps (f) (e^{2 \pi i \ld})$ is well defined.
It is sufficient to prove that if $\ld_0 \in [0, 1)$ and
$n = \ld - \ld_0 \in \Z$, then the formulas for
$\ps (f) (e^{2 \pi i \ld})$ and $\ps (f) (e^{2 \pi i \ld_0})$ agree.
To see this, use the definition of $R$ at the second step to get
\[
s_{\ld}^* f (e^{2 \pi i \ld / N}) s_{\ld}
 = s_{\ld_0}^* s^{- n} f (\om^{n} e^{2 \pi i \ld_0 / N}) s^n s_{\ld}
 = s_{\ld_0}^* f (e^{2 \pi i \ld_0 / N}) s_{\ld_0},
\]
as desired.

The construction of $\ps$ makes Part~(\ref{Item_824_R_T_Rk1}) obvious.
Bijectivity is easy just by checking the definition.
We now prove~(\ref{Item_824_R_T_ExtEq}).
For $\zt \in S^1$, choose $\ta \in \R$ such that
$e^{2 \pi i \ta} = \zt$,
and define a function $v_{\zt} \in C (S^1, M_N)$
by $v_{\zt} (e^{2 \pi i \ld}) = s_{\ld}^* s_{\ld - N \ta}$
for $\ld \in \R$.
We claim that $v_{\zt}$ is well defined.
First, we must show that if $m \in \Z$ then
\[
s_{\ld + m}^* s_{\ld + m - N \ta} = s_{\ld}^* s_{\ld - N \ta}.
\]
This follows directly from~(\ref{Eq_1912_ldm}).
Second, we must show that if $e^{2 \pi i \ta_1} = e^{2 \pi i \ta_2}$,
then
\[
s_{\ld}^* s_{\ld - N \ta_1} = s_{\ld}^* s_{\ld - N \ta_2}.
\]
For this, set $m = \ta_1 - \ta_2 \in \Z$, and use~(\ref{Eq_1912_ldm})
and $s^N = 1$ to see that
\[
s_{\ld - N \ta_2} = s_{\ld - N \ta_1 + N m}
 = s^{N m} s_{\ld - N \ta_1} = s_{\ld - N \ta_1}.
\]
The claim is proved.

It is now easy to check that
$(\zt, \ld) \mapsto v_{\zt} (e^{2 \pi i \ld})$ is \ct,
so that $\zt \mapsto v_{\zt}$ is a \cfn{} from $S^1$
to the unitary group of $C (S^1, M_N)$.

We next claim that
$v_{\zt_1 \zt_2} = v_{\zt_1} \bt_{\zt_1^N, N} (v_{\zt_2})$
for $\zt_1, \zt_2 \in S^1$.
To do this, choose $\ta_1, \ta_2 \in \R$
such that $\zt_1 = e^{2 \pi i \ta_1}$ and $\zt_2 = e^{2 \pi i \ta_2}$.
Then $\zt_1 \zt_2 = e^{2 \pi i (\ta_1 + \ta_2)}$.
So for $\ld \in \R$,
\[
\begin{split}
v_{\zt_1} (e^{2 \pi i \ld})
    \bt_{\zt_1^N, N} (v_{\zt_2}) (e^{2 \pi i \ld})
& = v_{\zt_1} (e^{2 \pi i \ld})
       v_{\zt_2} \bigl( e^{2 \pi i (\ld - N \ta_1)} \bigr)
\\
&
 = s_{\ld}^* \cdot s_{\ld - N \ta_1}
   \cdot s_{\ld - N \ta_1}^* \cdot s_{\ld - N \ta_1 - N \ta_2}
 = v_{\zt_1 \zt_2} (e^{2 \pi i \ld}),
\end{split}
\]
proving the claim.

We have shown that $\zt \mapsto v_{\zt}$ is a cocycle for the
action $\zt \mapsto \bt_{\zt^N, N}$ of $S^1$ on $C (S^1, M_N)$.
Therefore the formula
\[
\rh_{\zt} (g) = v_{\zt} \bt_{\zt^N, N} (g) v_{\zt}^*
\]
defines an action of $S^1$ on $C (S^1, M_N)$ which is exterior
equivalent to $\zt \mapsto \bt_{\zt^N, N} = {\overline{\gm}}_{\zt, N}$.

To finish the proof of~(\ref{Item_824_R_T_ExtEq}),
we show that $\ps$ is equivariant
for the action $\rh$ on $C (S^1, M_N)$.
Let $f \in R$, let $\ld \in \R$, let $\zt \in S^1$, and
choose $\ta \in \R$ such that $\zt = e^{2 \pi i \ta}$.
Then
\[
\begin{split}
(\rh_{\zt} \circ \ps) (f) (e^{2 \pi i \ld})
& = s_{\ld}^* s_{\ld - N \ta} \ps (f) (\zt^{- N} e^{2 \pi i \ld})
                       s_{\ld - N \ta}^* s_{\ld}
\\
& = s_{\ld}^* s_{\ld - N \ta} \bigl[ s_{\ld - N \ta}^*
        f \bigl( e^{2 \pi i (\ld - N \ta) / N} \bigr)
         s_{\ld - N \ta} \bigr] s_{\ld - N \ta}^* s_{\ld}
\\
& = s_{\ld}^* f \bigl(\zt^{-1} e^{2 \pi i \ld / N} \bigr) s_{\ld}
  = (\ps \circ \gm_{\zt}) (f) (e^{2 \pi i \ld}).
\end{split}
\]
This completes the proof of~(\ref{Item_824_R_T_ExtEq}).

For~(\ref{Item_824_R_T_L}),
we first observe that if $f \in C (S^1)$ and $\ld \in \R$ then
\[
(\ps \circ \xi) (f) (e^{2 \pi i \ld})
 = s_{\ld}^*  {\operatorname{diag}} \bigl( f (e^{2 \pi i \ld / N}),
     \, f (\om^{- 1} e^{2 \pi i \ld / N}),
    \, \ldots, \, f (\om^{- N + 1} e^{2 \pi i \ld / N}) \bigr) s_{\ld}.
\]
Therefore
\[
L_{\ps \circ \xi} (e^{2 \pi i \ld})
 = \bigl\{ e^{2 \pi i \ld / N}, \, \om^{- 1} e^{2 \pi i \ld / N},
    \, \ldots, \, \om^{- N + 1} e^{2 \pi i \ld / N} \bigr\}.
\]
This is the same as the description in the statement.

For the second formula,
one checks that for $g \in C (S^1, M_N)$ and $\ld \in \R$, we have
\[
\ps^{-1} (g) (e^{2 \pi i \ld})
 = s_{N \ld} g ( e^{2 \pi i N \ld} ) s_{N \ld}^*
 = s_{N \ld} g \bigl( (e^{2 \pi i \ld})^N  \bigr) s_{N \ld}^*.
\]
Since $\io \colon R \to C (S^1, M_N)$ is just the inclusion,
this gives $L_{\io \circ \ps^{-1}} (z) = \{ z^N \}$ for $z \in S^1$,
as desired.
\end{proof}

\begin{lem}\label{L_1916_Simple}
The algebra $A$ of
Construction \ref{Cn_1824_S1An}(\ref{Item_1924_S1_A})
is a simple AT~algebra.
\end{lem}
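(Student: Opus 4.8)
The plan is to prove the two assertions—that $A$ is an \ATa{} and that it is simple—separately, the first being essentially formal and the second requiring a spectral analysis of the connecting maps.

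First I would identify each $A_n$ as a circle algebra. By Lemma~\ref{L_1824_R_T} there is an isomorphism $\ps \colon R \to C (S^1, M_N)$, so, following Construction~\ref{Cn_1824_S1An}(\ref{Item_1824_S1_An}), $A_{n, 0} = M_{r_0 (n)} (R) \cong C (S^1, M_{N r_0 (n)})$ and $A_{n, 1} = M_{r_1 (n)} (C (S^1)) \cong C (S^1, M_{r_1 (n)})$. Thus each $A_n$ is isomorphic to $C (S^1, M_{N r_0 (n)}) \oplus C (S^1, M_{r_1 (n)})$, a finite direct sum of matrix algebras over $C (S^1)$. Since the relations~(\ref{Eq_1901_rn_dfm}) on the ranks—namely $r_0 (n + 1) = l_{0, 0} (n) r_0 (n) + l_{0, 1} (n) r_1 (n)$ and $r_1 (n + 1) = N l_{1, 0} (n) r_0 (n) + 2 N l_{1, 1} (n) r_1 (n)$—exactly match the sums of the block sizes, the maps $\nu_n$ are unital. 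Hence $A = \dirlim (A_n, \nu_n)$ is a direct limit of circle algebras with unital connecting maps, so $A$ is a unital \ATa.

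For simplicity I would use the spectral description of the connecting maps in terms of the set functions $L_{\Ph}$ of 1.3 of~\cite{DNNP}, and verify the standard simplicity criterion for such inductive limits: for each $m$ and each $\ep > 0$ there is $p \in \N$ such that every partial map of $\nu_{m + p, m}$ is nonzero and, for every point $y$ in the spectrum of $A_{m + p}$, its set function meets each of the two circles $\spec (A_{m, 0})$ and $\spec (A_{m, 1})$ in an $\ep$-dense set. All four partial maps are nonzero because $l_{j, k} (n) \geq 1$, so no ideal can arise by splitting off a summand. For the set functions, Lemma~\ref{L_1824_R_T}(\ref{Item_824_R_T_L}) gives that the $0 \to 1$ block $\io$ corresponds to $z \mapsto z^N$ and the $1 \to 0$ block $\xi$ to the $N$ solutions of $y^N = z$, while the $0 \to 0$ block $\nu_{n, 0, 0}$ is an amplification (set function the identity) and the $1 \to 1$ block $\nu_{n, 1, 1}$ is a direct sum of the rotations $\btt_{j / N}$ and $\btt_{\te + j / N}$, whose set function sends $z$ to $\{ e^{- 2 \pi i (j / N)} z, \, e^{- 2 \pi i (\te + j / N)} z : 0 \leq j \leq N - 1 \}$.

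The density then comes from the irrational $\te$: iterating the $1 \to 1$ block $p$ times carries a point $z$ to $z$ times the fixed set of attainable rotations $\{ e^{- 2 \pi i (a / N + b \te)} \}$, which by minimality of the rotation by $\te$ is $\ep$-dense once $p$ is large, uniformly in $z$ because each step is a rotation. This yields density in $\spec (A_{m, 1})$ from any point of $\spec (A_{m + p, 1})$; from a point of $\spec (A_{m + p, 0})$ one first applies the $1 \to 0$ block (the $N$-th roots) to enter the summand-$1$ tower and then spreads by rotations. To get density in $\spec (A_{m, 0})$—where both maps landing in summand $0$, the amplification and $z \mapsto z^N$, fail to spread—I would route through summand $1$: produce an $\ep'$-dense subset of $\spec (A_{m + 1, 1})$ by the previous step and then apply the $0 \to 1$ block $z \mapsto z^N$, whose image of a dense set is dense since $z \mapsto z^N$ is a surjective finite-to-one self-map of $S^1$. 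I expect the main obstacle to be arranging simultaneous density in \emph{both} summands from \emph{every} point of a single fixed level $A_{m + p}$, in particular feeding the non-spreading maps into summand $0$ through the irrational rotations in summand $1$; the uniformity of $p$ in $y$ is then automatic because the rotation sets do not depend on the base point. The density criterion finally gives that $A$ is simple.
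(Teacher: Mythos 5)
Your proposal is correct and follows essentially the same route as the paper: identify each $A_n$ as a circle algebra via the isomorphism $\ps$ of Lemma~\ref{L_1824_R_T}, then verify the simplicity criterion of Proposition~2.1 of~\cite{DNNP} by computing the set functions $L_{\Ph}$ of the four partial maps, using the irrational rotations in the $1 \to 1$ block to get $\ep$-density in the second circle and pushing forward through $z \mapsto z^N$ to get $\ep$-density in the first. The only (harmless) difference is presentational: the paper fixes an explicit $n$ for which the orbit segments $\{ e^{- 2 \pi i k \te} \}$ and $\{ e^{- 2 \pi i k N \te} \}$ are $\ep$-dense rather than invoking minimality, and tracks the containments $L_{{\widetilde{\nu}}_{n, m}} (x) \S L_{{\widetilde{\nu}}_{n, l}} (x)$ a bit more carefully.
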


\begin{proof}
Using Lemma~\ref{L_1824_R_T}, we can rewrite the direct system in
Construction \ref{Cn_1824_S1An}(\ref{Item_1824_S1_nu_A}) as
\[
A = \dirlim_n
 \bigl[ C (S^1, M_{N r_0 (n)}) \oplus C (S^1, M_{r_1 (n)}) \bigr],
\]
with maps ${\widetilde{\nu}}_{n}$, for $n \in \Nz$,
obtained analogously to
Construction \ref{Cn_1824_S1An}(\ref{Item_1824_S1_nu_A}) from
\[
{\widetilde{\nu}}_{n, 0, 0}
 = (\id_{C (S^1)})_{N r_0 (n)}^{l_{0, 0} (n)},
\qquad
{\widetilde{\nu}}_{n, 0, 1}
 = \ps_{l_{0, 1} (n) r_1 (n)} \circ \nu_{n, 0, 1},
\]
\[
{\widetilde{\nu}}_{n, 1, 0}
 = \nu_{n, 1, 0} \circ (\ps_{l_{1, 0} (n) r_0 (n)})^{-1},
\andeqn
{\widetilde{\nu}}_{n, 1, 1} = \nu_{n, 1, 1},
\]
and with
\[
\begin{split}
{\widetilde{\nu}}_{n, m}
& = {\widetilde{\nu}}_{n - 1}
    \circ {\widetilde{\nu}}_{n - 2}
    \circ \cdots \circ {\widetilde{\nu}}_m \colon
\\
&  C (S^1, M_{N r_0 (m)}) \oplus C (S^1, M_{r_1 (m)})
   \to C (S^1, M_{N r_0 (n)}) \oplus C (S^1, M_{r_1 (n)}).
\end{split}
\]

It is now obvious that $A$ is an AT~algebra.
For simplicity, we use Proposition 2.1 of~\cite{DNNP},
with $L_{\Ph}$ as defined in 1.3 of~\cite{DNNP}.
To make the notation easier, we take $X_n = S^1 \times \{ 0, 1 \}$
(rather than $S^1 \amalg S^1$ as in~\cite{DNNP}),
and for $j \in \{ 0, 1 \}$ identify $S^1 \times \{ j \}$
with the primitive ideal space of $C (S^1, M_{r_j (n)})$.
Moreover, since the spaces $X_n$ are all equal,
we write them all as~$X$.

For $z \in S^1$ it is immediate that
\[
L_{{\widetilde{\nu}}_{n, 0, 0}} (z) = \{ z \}
\]
and, recalling $\om = \exp (2 \pi i / N)$ from
Construction \ref{Cn_1824_S1An}(\ref{Item_1824_S1_s_om})
and using
\[
\{ 1, \om^{- 1}, \ldots, \om^{- N + 1} \}
 = \{ 1, \om, \ldots, \om^{N - 1} \},
\]
also
\[
\begin{split}
L_{{\widetilde{\nu}}_{n, 1, 1}} (z)
& = L_{\nu_{n, 1, 1}} (z)
\\
& = \bigl\{ z, \, \om z, \, \ldots, \om^{N - 1} z, \,
       e^{- 2 \pi i \te} z, \, e^{- 2 \pi i \te} \om z,
        \, \ldots, e^{- 2 \pi i \te} \om^{N - 1} z \bigr\}.
\end{split}
\]
Also, using Lemma \ref{L_1824_R_T}(\ref{Item_824_R_T_L}),
\[
L_{{\widetilde{\nu}}_{n, 0, 1}} (z)
 = L_{\ps \circ \xi} (z)
 = \bigl\{ y \in S^1 \colon y^N = z \bigr\}
\]
and
\[
L_{{\widetilde{\nu}}_{n, 1, 0}} (z)
= L_{\io \circ \ps^{-1}} (z) = \{ z^N \}.
\]
Putting these together, we get
\begin{equation}\label{Eq_1916_Lz_zero}
L_{{\widetilde{\nu}}_{n}} (z, 0)
 = \{ (z, 0) \}
  \cup \bigl\{ (y, 1) \colon {\mbox{$y \in S^1$ and $y^N = z$}} \bigr\}
\end{equation}
and
\begin{equation}\label{Eq_1916_Lz1}
\begin{split}
L_{{\widetilde{\nu}}_{n}} (z, 1)
& = \{ (z^N, 0), \,
     (z, 1), \, (\om z, 1), \, \ldots, (\om^{N - 1} z, 1),
\\
& \hspace*{5em} {\mbox{}}
     (e^{- 2 \pi i \te} z, 1), \, (e^{- 2 \pi i \te} \om z, 1),
        \, \ldots, (e^{- 2 \pi i \te} \om^{N - 1} z, 1) \bigr\}.
\end{split}
\end{equation}
One checks that if $C$, $D$, and $E$ are finite direct sums
of homogeneous \uca{s},
and $\Ph \colon C \to D$ and $\Ps \colon D \to E$ are unital \hm{s},
with primitive ideal spaces $X$, $Y$, and~$Z$,
then for $z \in Z$ we have
\begin{equation}\label{Eq_1916_Comp}
L_{\Ps \circ \Ph} (z) = \bigcup_{y \in L_{\Ps} (z)} L_{\Ph} (y).
\end{equation}
The equations (\ref{Eq_1916_Lz_zero}) and~(\ref{Eq_1916_Lz1})
show that $x \in L_{{\widetilde{\nu}}_{n}} (x)$ for any $x \in X$.
So~(\ref{Eq_1916_Comp}) implies that
for any $l, m, n \in \Nz$ with $n > m > l$,
and any $x \in X$,
\begin{equation}\label{Eq_1916_Contain}
L_{{\widetilde{\nu}}_{n, m}} (x) \cup L_{{\widetilde{\nu}}_{m, l}} (x)
 \S L_{{\widetilde{\nu}}_{n, l}} (x).
\end{equation}
It now suffices to prove that for every $l \in \Nz$ and every $\ep > 0$,
there is $n > l$ such that for every $z \in S^1$ and $j \in \{ 0, 1 \}$,
the set $L_{{\widetilde{\nu}}_{n, l}} (z, j)$ is
$\ep$-dense in $S^1 \times \{ 0, 1 \}$.
Given this, simplicity of $A$ can be deduced
from Proposition~2.1 of~\cite{DNNP}, and the proof will be complete.

Choose $n > l + 2$ such that
\[
\bigl\{ e^{- 2 \pi i k \te} \colon k = 0, 1, \ldots, n - l - 2 \bigr\}
\andeqn
\bigl\{ e^{- 2 \pi i k N \te} \colon k = 0, 1, \ldots, n - l - 2 \bigr\}
\]
are both $\ep$-dense in $S^1$.

We claim that if $z \in S^1$ is arbitrary, then
$L_{{\widetilde{\nu}}_{n - 1, \, l}} (z, 1)$
is $\ep$-dense in $S^1 \times \{ 0, 1 \}$.
To prove the claim,
first use (\ref{Eq_1916_Comp}) repeatedly, (\ref{Eq_1916_Contain}),
and the fact that
$(e^{- 2 \pi i \te} y, 1) \in L_{{\widetilde{\nu}}_{m}} (y, 1)$
for all $m \in \N$ and $y \in S^1$ (by (\ref{Eq_1916_Lz1}))
to see that for $z \in S^1$,
\begin{equation}\label{Eq_1927_Cont}
\bigl\{ (e^{- 2 \pi i k \te} z, \, 1) \colon
     k = 0, 1, \ldots, n - l - 2 \bigr\}
 \S L_{{\widetilde{\nu}}_{n - 1, \, l + 1}} (z, 1).
\end{equation}
By~(\ref{Eq_1916_Contain}),
this set is contained in $L_{{\widetilde{\nu}}_{n - 1, \, l}} (z, 1)$,
and, since since multiplication by~$z$ is isometric,
it is $\ep$-dense in $S^1 \times \{ 1 \}$.
Also, by (\ref{Eq_1927_Cont}) and (\ref{Eq_1916_Contain})
(taking $m = l + 1$), and using~(\ref{Eq_1916_Lz1})
to get $(y^N, 0) \in L_{{\widetilde{\nu}}_{l + 1, l}} (y, 1)$
for all  $y \in S^1$,
\[
\bigl\{ (e^{- 2 \pi i k N \te} z^N, \, 0) \colon
     k = 0, 1, \ldots, n - l - 2 \bigr\}
 \S L_{{\widetilde{\nu}}_{n - 1, \, l}} (z, 1),
\]
and this set is $\ep$-dense in $S^1 \times \{ 0 \}$.
The claim follows.

Now, for any $x \in S^1 \times \{ 0, 1 \}$,
the set $ L_{{\widetilde{\nu}}_{n, n - 1}} (x)$ contains at least
one point in $S^1 \times \{ 1 \}$
by (\ref{Eq_1916_Lz_zero}) and~(\ref{Eq_1916_Lz1}).
Using~(\ref{Eq_1916_Contain}) with $m = n - 1$
and the previous claim, it follows that
$L_{{\widetilde{\nu}}_{n, l}} (x)$
is $\ep$-dense in $S^1 \times \{ 0, 1 \}$, as desired.
\end{proof}

\begin{lem}\label{L_1912_B_Smp}
The algebra $B$ of
Construction \ref{Cn_1824_S1An}(\ref{Item_1824_S1_FixLim})
is a simple unital AF~algebra.
\end{lem}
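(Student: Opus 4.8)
The plan is to recognize $B$ as a unital AF~algebra presented by an explicit Bratteli diagram and then apply the standard simplicity criterion for such diagrams. First I would note that each $B_n$ is finite dimensional with $N + 1$ simple summands, namely the $N$ copies of $M_{r_0 (n)}$ comprising $B_{n, 0}$ in Construction~\ref{Cn_1824_S1An}(\ref{Item_1824_S1_Bn0Bn1}), which I index by $k \in \{ 0, 1, \ldots, N - 1 \}$, together with the single summand $B_{n, 1} = M_{r_1 (n)}$. I would then check that the connecting map $\ch_n$ is unital, which is a one-line dimension count using $r (n + 1) = l (n) r (n)$ from~(\ref{Eq_1901_rn_dfm}): the $k$-th summand of $B_{n + 1, 0}$ receives total image dimension $l_{0, 0} (n) r_0 (n) + l_{0, 1} (n) r_1 (n) = r_0 (n + 1)$, and $B_{n + 1, 1}$ receives $N l_{1, 0} (n) r_0 (n) + 2 N l_{1, 1} (n) r_1 (n) = r_1 (n + 1)$. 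Hence $B = \dirlim_n B_n$ is a unital AF~algebra.

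Next I would read off the partial multiplicities of $\ch_n$ directly from the definitions of the $\ch_{n, j, k}$ in Construction~\ref{Cn_1824_S1An}(\ref{Item_1824_S1_FixPtSys}), unwinding Notation~\ref{N_1908_Ampl}. The map $\ch_{n, 0, 0} = (\id_{{\mathbb{C}}^N})_{r_0 (n)}^{l_{0, 0} (n)}$ is diagonal in the ${\mathbb{C}}^N$ index, so the vertex $(0, k)$ connects to the vertex $(0, k)$ at the next level with multiplicity $l_{0, 0} (n)$ and to no other $(0, k')$. The map $\ch_{n, 0, 1} = \mu_{r_1 (n)}^{l_{0, 1} (n)}$, arising from $\mu (\ld) = (\ld, \ldots, \ld)$, connects $(1)$ to every $(0, k)$ with multiplicity $l_{0, 1} (n)$. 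The map $\ch_{n, 1, 0} = \dt_{r_0 (n)}^{l_{1, 0} (n)}$, arising from $\dt (\ld_0, \ldots, \ld_{N - 1}) = \diag (\ld_0, \ldots, \ld_{N - 1})$, connects every $(0, k)$ to $(1)$ with multiplicity $l_{1, 0} (n)$. Finally $\ch_{n, 1, 1} = (\id_{{\mathbb{C}}})_{r_1 (n)}^{2 N l_{1, 1} (n)}$ connects $(1)$ to $(1)$ with multiplicity $2 N l_{1, 1} (n)$. Since every $l_{j, k} (n) \in \N$, all of these multiplicities are strictly positive.

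To obtain simplicity I would verify that for each level $m$ there is a later level at which each vertex connects to every vertex; here $n = m + 2$ works uniformly. The key observation is that the vertex $(1)$ is a hub: although the blocks $(0, k)$ are mutually disconnected under $\ch_{n, 0, 0}$, every $(0, k)$ feeds into $(1)$ and $(1)$ feeds back into every $(0, k')$. Concretely, the vertex $(1)$ at level $m$ already connects to all $N + 1$ vertices at level $m + 1$; and a vertex $(0, k)$ at level $m$ connects to $(1)$ at level $m + 1$, and thence, at level $m + 2$, to all $N + 1$ vertices (to each $(0, k')$ via the composite through $(1)$, and to $(1)$ via $(1) \to (1)$). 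Thus every partial multiplicity of $\ch_{m + 2, \, m}$ is positive, which is exactly the standard condition for the Bratteli diagram to be simple, so $B$ is simple.

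I do not expect a genuine obstacle here; the only point requiring care is translating the amplification-and-diagonal notation of Notation~\ref{N_1908_Ampl} and Construction~\ref{Cn_1824_S1An}(\ref{Item_1824_S1_FixPtSys}) into the correct Bratteli diagram, in particular noticing that $\ch_{n, 0, 0}$ is diagonal in $k$, so that the $(0, k)$ blocks communicate only through the hub vertex $(1)$, and confirming that this indirect routing still reaches all vertices within two levels.
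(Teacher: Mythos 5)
Your proof is correct and follows the same route as the paper, which simply observes that $B$ is AF by construction and cites the simplicity criterion for Bratteli diagrams (the corollary on page 212 of~\cite{Brtl}); your work is exactly the verification, left implicit there, that every vertex at level $m$ connects to every vertex at level $m+2$ via the hub summand $B_{n,1}$.
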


\begin{proof}
That $B$ is a unital AF~algebra is immediate from its definition.
Simplicity follows from the corollary on page 212 of~\cite{Brtl}.
\end{proof}

\begin{lem}\label{L_1912_R_Fix}
Define
\[
c = \frac{1}{\sqrt{N}} \left( \begin{matrix}
  1    &   1         &   1              & \cdots    &   1          \\
  1    & \om         & \om^2            & \cdots    & \om^{N - 1} \\
  1    & \om^2       & \om^4            & \cdots    & \om^{2 (N - 1)} \\
\vdots & \vdots      & \vdots           & \ddots    &  \vdots      \\
  1    & \om^{N - 1} & \om^{2 (N - 1)}  & \cdots    & \om^{(N - 1)^2}
\end{matrix} \right).
\]
Then the formula
\[
\ep_0 (\ld_0, \ld_1, \ldots, \ld_{N - 1})
 = c^* \diag (\ld_0, \ld_1, \ldots, \ld_{N - 1}) c,
\]
for $\ld_0, \ld_1, \ldots, \ld_{N - 1} \in {\mathbb{C}}$,
defines an isomorphism from ${\mathbb{C}}^N$ to $R^{\gm}$.
\end{lem}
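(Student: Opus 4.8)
The plan is to first reduce the statement to a purely matricial claim about the commutant of $s$, and then verify that $\ep_0$ implements the required isomorphism by diagonalizing $s$ with the unitary~$c$.

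First I would identify $R^{\gm}$ explicitly. Since $\gm$ is the restriction of $\bt_N$ and $\bt_{\zt, N} (f) (z) = f (\zt^{-1} z)$, a function $f \in R$ is $\gm$-invariant exactly when $f (\zt^{-1} z) = f (z)$ for all $\zt, z \in S^1$; as $\zt$ ranges over $S^1$ this forces $f$ to be constant. A constant function $f \equiv a$ lies in $R$ iff $a = s a s^*$, i.e.\ (since $s$ is unitary) iff $a s = s a$. Thus, identifying a constant function with its value, we get
\[
R^{\gm} = \{ a \in M_N \colon a s = s a \},
\]
the commutant of~$s$ in~$M_N$.

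Next I would diagonalize $s$. The matrix $c$ is the unitary, symmetric discrete Fourier matrix, with entries $c_{j, k} = N^{- 1/2} \om^{j k}$ for $j, k \in \{ 0, 1, \ldots, N - 1 \}$, and one checks $\ov{c} = c^*$. A direct computation shows that the $k$-th column of $c$ is an eigenvector of $s$ with eigenvalue $\om^{k}$, so that $s c = c D$ with $D = \diag (1, \om, \om^2, \ldots, \om^{N - 1})$. Conjugating this identity entrywise (using that $s$ has real entries and $\ov{c} = c^*$) gives $s c^* = c^* \ov{D}$, and multiplying on the left by $c$ yields
\[
c s c^* = \ov{D} = \diag (1, \om^{-1}, \om^{-2}, \ldots, \om^{-(N-1)}),
\]
which is diagonal with $N$ \emph{distinct} diagonal entries.

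Finally I would assemble the isomorphism. The map $\ep_0$ is conjugation of $\diag (\ld_0, \ldots, \ld_{N-1})$ by the unitary $c$, so it is manifestly a unital, injective $*$-homomorphism from ${\mathbb{C}}^N$ into $M_N$. To see its range lies in $R^{\gm}$, note that $\ep_0 (\ld)$ commutes with $s$ iff $\diag (\ld)$ commutes with $c s c^* = \ov{D}$, which holds because both are diagonal. For surjectivity, given $a$ with $a s = s a$, the element $c a c^*$ commutes with $c s c^* = \ov{D}$; since $\ov{D}$ has distinct diagonal entries, $c a c^*$ must be diagonal, say $c a c^* = \diag (\ld)$, whence $a = \ep_0 (\ld)$. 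This shows $\ep_0$ is a bijection onto $R^{\gm}$. The only point requiring genuine care is keeping the conjugations consistent—in particular that it is $c$ (and not $c^*$) that conjugates $s$ to a diagonal matrix, which is exactly what makes the $c^*$-conjugation defining $\ep_0$ land in the commutant; everything else is a routine Fourier-matrix computation.
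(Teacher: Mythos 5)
Your proposal is correct and follows essentially the same route as the paper: identify $R^{\gm}$ with the commutant of $s$ in $M_N$, check that $c$ is unitary with $c s c^* = \diag \bigl(1, \om^{-1}, \ldots, \om^{-(N-1)} \bigr)$, and use that the commutant of a diagonal matrix with distinct entries is the diagonal algebra. The only difference is that you spell out the Fourier-matrix eigenvector computation that the paper leaves as ``one checks.''
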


\begin{proof}
One checks that $c$ is unitary and
\begin{equation}\label{Eq_1912_wsw}
c s c^*
 = \diag \bigl(1 , \om^{-1}, \om^{-2}, \ldots, \om^{- (N - 1)} \bigr).
\end{equation}

It is immediate that $R^{\gm}$ is the set of constant functions
in $C (S^1, M_N)$ whose constant value commutes with $s$.
Let $D$ be the set of constant functions
in $C (S^1, M_N)$ whose constant value commutes with $c s c^*$.
Then $a \mapsto c^* a c$ is an isomorphism from $D$ to $R^{\gm}$.
Also, by~(\ref{Eq_1912_wsw}),
\[
(\ld_0, \ld_1, \ldots, \ld_{N - 1})
 \mapsto \diag ( \ld_0, \ld_1, \ldots, \ld_{N - 1} )
\]
is an isomorphism from ${\mathbb{C}}^N$ to~$D$.
\end{proof}

\begin{lem}\label{L_1916_FPisB}
There is a family $(\et_n)_{n \in \Nz}$ of isomorphisms
$\et_n \colon B_n \to (A_n)^{\af^{(n)}}$ such that the following hold.
\begin{enumerate}
\item\label{Item_1925_Comm}
$\et_n \circ \ch_{n, m} = \nu_{n, m} \circ \et_m$
whenever $m, n, \in \Nz$ satisfy $m \leq n$.
\item\label{Item_1925_pn}
With $p_n$ as in Construction \ref{Cn_1824_S1An}(\ref{Item_1824_S1_pn})
and $q_n$ as in Construction \ref{Cn_1824_S1An}(\ref{Item_1915_S1_qn}),
we have $\et_n (q_n) = p_n$.
\item\label{Item_1925_BBnj}
For all $n \in \Nz$ and $j \in \{ 0, 1 \}$, we have
$\et_n (B_{n, j}) = (A_{n, j})^{\af^{(n)}}$.
\item\label{Item_1925_Iso_A}
The family $(\et_n)_{n \in \Nz}$ induces an isomorphism
$\et_{\I} \colon B \to A^{\af}$.
\end{enumerate}
\end{lem}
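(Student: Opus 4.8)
The plan is to build the isomorphisms on the building blocks, verify that taking fixed points commutes with the direct limit, compute the four partial maps on fixed points, and then repair a single unitary discrepancy by a standard intertwining.

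First I would identify the fixed point algebras of the building blocks. Since the rotation action $\bt$ fixes exactly the constant functions, $C (S^1)^{\bt} = \C 1$ and $\bigl( C (S^1, M_N) \bigr)^{\bt_N} = M_N$; hence $(A_{n, 1})^{\af^{(n)}} = M_{r_1 (n)} (C (S^1)^{\bt}) = M_{r_1 (n)} = B_{n, 1}$ via the natural unital identification $\te_{n, 1}$. For the other summand, $(A_{n, 0})^{\af^{(n)}} = M_{r_0 (n)} (R^{\gm})$, and Lemma~\ref{L_1912_R_Fix} supplies an isomorphism $\ep_0 \colon \C^N \to R^{\gm}$; amplifying by $M_{r_0 (n)}$ gives $\te_{n, 0} \colon B_{n, 0} = M_{r_0 (n)} (\C^N) \to (A_{n, 0})^{\af^{(n)}}$. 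Set $\te_n = \te_{n, 0} \oplus \te_{n, 1}$. These are unital on each summand and respect the summand decomposition, so they automatically satisfy the analogs of~(\ref{Item_1925_pn}) and~(\ref{Item_1925_BBnj}). Next I would check that fixed points commute with the limit, that is, $A^{\af} = \dirlim_n (A_n)^{\af^{(n)}}$ with connecting maps the restrictions of the $\nu_n$: averaging over $S^1$ defines a continuous conditional expectation $E \colon A \to A^{\af}$ which intertwines the maps $\nu_{\I, n}$ (equivariant by Lemma~\ref{L_1909_nu_e}) with the corresponding expectations on $A_n$, and density of $\bigcup_n \nu_{\I, n} (A_n)$ then yields density of $\bigcup_n \nu_{\I, n} \bigl( (A_n)^{\af^{(n)}} \bigr)$ in $A^{\af}$.

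The heart of the matter is comparing, under $\te$, the restriction to fixed points of each partial map $\nu_{n, j, k}$ with $\ch_{n, j, k}$. For $(j, k) \in \{ (0, 0), (0, 1), (1, 1) \}$ this is a direct computation: the amplification $\nu_{n, 0, 0}$ corresponds to $\ch_{n, 0, 0}$ through $\ep_0$; since $\xi$ sends a constant $\ld$ to $\ld 1_{M_N} = \ep_0 (\mu (\ld))$, the restriction of $\nu_{n, 0, 1}$ corresponds to $\ch_{n, 0, 1} = \mu_{r_1 (n)}^{l_{0, 1} (n)}$; and since each $\btt_{\ld}$ fixes constants, the restriction of $\nu_{n, 1, 1}$ is $2 N$ copies of the identity, matching $\ch_{n, 1, 1}$. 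The block $(1, 0)$ is the subtle one. Here $\io |_{R^{\gm}}$ composed with $\ep_0$ produces the \emph{circulant} matrix $c^* \diag (\ld) c$, whereas $\ch_{n, 1, 0} = \dt_{r_0 (n)}^{l_{1, 0} (n)}$ produces the \emph{diagonal} matrix $\diag (\ld)$, where $c$ is the unitary of Lemma~\ref{L_1912_R_Fix}. Thus the $n$-th square commutes in the $(0,0)$, $(0,1)$, and $(1,1)$ directions, but only up to conjugation by the fixed unitary $1 \otimes c$ in the $(1, 0)$ corner of $(A_{n + 1, 1})^{\af^{(n + 1)}}$.

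I expect this unitary discrepancy to be the main obstacle, and I would resolve it by the standard intertwining for direct systems of \fd{} algebras. The two unital \hm{s} $\te_{n + 1} \circ \ch_n$ and $\nu_n \circ \te_n$ from $B_n$ into the \fd{} algebra $(A_{n + 1})^{\af^{(n + 1)}}$ have the same partial multiplicities, since they differ only by an inner automorphism in one corner, which does not affect multiplicities; moreover each respects the two-summand decomposition, so they are unitarily equivalent through a summand-preserving unitary $u_{n + 1} = u_{n + 1, 0} \oplus u_{n + 1, 1}$. Setting $\et_0 = \te_0$ and inductively $\et_{n + 1} = \Ad (u_{n + 1}) \circ \te_{n + 1}$, with $u_{n + 1}$ chosen so that $\et_{n + 1} \circ \ch_n = \nu_n \circ \et_n$, produces isomorphisms $\et_n$ satisfying~(\ref{Item_1925_Comm}); because each $\Ad (u_{n + 1})$ preserves summands and unitality, conditions~(\ref{Item_1925_pn}) and~(\ref{Item_1925_BBnj}) persist. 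Finally, the commuting squares together with the identification $A^{\af} = \dirlim_n (A_n)^{\af^{(n)}}$ from the second step induce the isomorphism $\et_{\I} \colon B \to A^{\af}$ of~(\ref{Item_1925_Iso_A}).
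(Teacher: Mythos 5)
Your proposal is correct and takes essentially the same approach as the paper: both identify the fixed point algebras of the summands via the isomorphism of Lemma~\ref{L_1912_R_Fix} and the constant functions, observe that the only discrepancy between the fixed-point restrictions of the $\nu_{n,j,k}$ and the maps $\ch_{n,j,k}$ is a unitary conjugation arising in the $(1,0)$ block (the paper phrases this as uniqueness up to unitary equivalence of injective unital homomorphisms ${\mathbb{C}}^N \to M_N$, you compute the circulant form explicitly), and then repair it by an inductive unitary intertwining that preserves conditions (\ref{Item_1925_pn}) and (\ref{Item_1925_BBnj}).
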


We warn that the subscript in $\et_n$ does not have the meaning
taken from Notation~\ref{N_1908_Ampl}.

\begin{proof}[Proof of Lemma~\ref{L_1916_FPisB}]
Since $\af$ is a direct limit action,
the inclusions $(A_n)^{\af^{(n)}} \to A_n$ induce an isomorphism
$A^{\af} \to \dirlim_n (A_n)^{\af^{(n)}}$.
Therefore (\ref{Item_1925_Iso_A}) follows from the rest of the
statement of the lemma.

Lemma~\ref{L_1912_R_Fix} implies that
$(\ep_0)_{r_0 (n)}
 \colon M_{r_0 (n)} ({\mathbb{C}}^N) \to (A_{n, 0})^{\af^{(n)}}$
is an isomorphism.
It is immediate that the embedding
$\ep_1 \colon {\mathbb{C}} \to C (S^1)^{\bt}$ as constant functions
is an isomorphism.
Therefore
\[
\et_n^{(0)} = (\ep_0)_{r_0 (n)} \oplus (\ep_1)_{r_1 (n)} \colon
  B_{n, 0} \oplus B_{n, 1} \to (A_{n})^{\af^{(n)}}
\]
is an isomorphism.
Clearly (\ref{Item_1925_pn}) and~(\ref{Item_1925_BBnj})
hold with $\et_n^{(0)}$ in place of~$\et_n$.

Let $\io^{S^1}$ and $\xi^{S^1}$ be the restriction
and corestriction of $\io$ and $\xi$
to the corresponding fixed point algebras,
and similarly define $\nu_n^{S^1}$, $\nu_{n, j, k}^{S^1}$, etc.
Then the inverses of the maps $\et_n^{(0)}$ implement an isomorphism
from the direct system $\bigl( (A_n)^{\af^{(n)}} \bigr)_{n \in \Nz}$
to the direct system $( B_n )_{n \in \Nz}$, with
the maps $\sm_n \colon B_n \to B_{n + 1}$ taken to be
\[
\begin{split}
\sm_n (a_0, a_1)
& = \Bigl( \diag \bigl( (\id_{B_{n, 0}})^{l_{0, 0} (n)} (a_0),
\\
& \hspace*{6em} {\mbox{}}
   \, \bigl( ( (\ep_0)_{l_{0, 1} (n) r_1 (n)})^{-1}
       \circ (\xi^{S^1})_{r_1 (n)}^{l_{0, 1} (n)}
       \circ (\ep_1)_{r_1 (n)} \bigr) (a_1) \bigr),
\\
& \hspace*{2em} {\mbox{}}
 \diag \bigl(  \bigl( ( (\ep_1)_{l_{1, 0} (n) r_1 (n)} )^{-1}
             \circ (\io^{S^1})_{r_0 (n)}^{l_{1, 0} (n)}
             \circ (\ep_0)_{r_1 (n)} \bigr) (a_0),
\\
& \hspace*{6em} {\mbox{}}
        \, (\id_{B_{n, 1}})^{2 N l_{1, 1} (n)} (a_1) \bigr) \Bigr).
\end{split}
\]

The map $(\ep_0)^{-1} \circ \xi^{S^1} \circ \ep_1 \colon {\mathbb{C}} \to {\mathbb{C}}^N$
is a unital homomorphism.
There is only one such unital homomorphism, so this map is equal
to the map $\mu$
in Construction \ref{Cn_1824_S1An}(\ref{Item_1824_S1_FixPtSys}).
The map $(\ep_1)^{-1} \circ \io^{S^1} \circ \ep_0 \colon {\mathbb{C}}^N \to M_N$
is an injective unital homomorphism.
Therefore it must be unitarily equivalent to the map $\dt$
in Construction \ref{Cn_1824_S1An}(\ref{Item_1824_S1_FixPtSys}).
It now follows from the definitions
(see Construction \ref{Cn_1824_S1An}(\ref{Item_1824_S1_FixLim}))
that for $n \in \Nz$
there is a unitary $v_n \in B_{n + 1}$
such that $\ch_n (b) = v_n \sm_n (b) v_n^*$ for all $b \in B_n$.
Inductively define unitaries $w_n \in (A_n)^{\af^{(n)}}$
by $w_0 = 1$ and, given~$w_n$,
setting $w_{n + 1} = \nu_n^{S^1} (w_n) \et_{n + 1}^{(0)} (v_n)^*$.
Then define $\et_n (b) = w_n \et_{n}^{(0)} (b) w_n^*$ for $b \in B_n$.
The conditions (\ref{Item_1925_pn}) and~(\ref{Item_1925_BBnj})
hold as stated because they hold for the maps $\et_{n}^{(0)}$.
For $n \in \Nz$,
using $\nu_n^{S^1} \circ \et_{n}^{(0)} = \et_{n + 1}^{(0)} \circ \sm_n$,
one gets $\nu_n^{S^1} \circ \et_{n} = \et_{n + 1} \circ \ch_n$.
This implies~(\ref{Item_1925_Comm}).
\end{proof}

\begin{lem}\label{L_1917_HasTRP}
In Construction~\ref{Cn_1824_S1An}, assume that $r_0 (0) \leq r_1 (0)$
and that for all $n \in \Nz$
we have $l_{1, 0} (n) \geq l_{0, 0} (n)$
and $l_{1, 1} (n) \geq l_{0, 1} (n)$.
Further assume that
\[
\lim_{n \to \I} \frac{l_{0, 1} (n)}{l_{0, 0} (n)} = \I
\andeqn
\lim_{n \to \I} \frac{l_{1, 1} (n)}{l_{1, 0} (n)} = \I.
\]
Then the action $\af$
of Construction \ref{Cn_1824_S1An}(\ref{Item_1824_S1_nu_A})
has the tracial Rokhlin property with comparison.
\end{lem}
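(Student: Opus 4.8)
The plan is to verify the finite-group-style reformulation of the \trpc{} given in Lemma~\ref{L_1X09_NoNorm1}, which applies because $A$ is a simple AT~algebra (Lemma~\ref{L_1916_Simple}), hence infinite dimensional, unital and stably finite. Thus it suffices, given a finite $F\S A$, a finite $S\S C(S^1)$, $\ep>0$, $x\in A_+$ with $\|x\|=1$, and $y\in(A^{\af})_+\SM\{0\}$, to produce a projection $p\in A^{\af}$ and a \ucp{} map $\ph\colon C(S^1)\to pAp$ which is $(F,S,\ep)$-approximately equivariant central multiplicative (Definition~\ref{phillhir}) and satisfies Conditions (\ref{L_1X09_NoNorm1_1mpx}), (\ref{L_1X09_NoNorm1_1py}), and~(\ref{L_1X09_NoNorm1_1mppp}) of that lemma, namely $1-p\precsim_A x$, $1-p\precsim_{A^{\af}}y$, and $1-p\precsim_{A^{\af}}p$.

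For a large $n$ to be chosen I would take $p=\nu_{\I,n}(p_n)$ with $p_n=(0,1)\in A_n$, and $\ph=\nu_{\I,n}\circ\psi_0$, where $\psi_0\colon C(S^1)\to A_{n,1}=M_{r_1(n)}(C(S^1))$ is the unital embedding $f\mapsto 1_{M_{r_1(n)}}\otimes f$ onto the centre of the second summand. By Lemma~\ref{L_1916_FPisB} we have $p_n\in(A_n)^{\af^{(n)}}$, so $p\in A^{\af}$, and $1-p=\nu_{\I,n}((1,0))$ is the image of the unit of the first summand. Since $\af^{(n)}$ acts on $A_{n,1}$ by $\bt_{\zt,r_1(n)}=\id_{M_{r_1(n)}}\otimes\bt_\zt$ and $\bt=\Lt$ on $C(S^1)$, the map $\psi_0$ is unital and \emph{exactly} equivariant, while $\nu_{\I,n}$ is equivariant; hence $\ph$ is an exact, exactly multiplicative, equivariant homomorphism onto $pAp$. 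The only approximate requirement is centrality, and here the structure of the example does the work: $1_{M_{r_1(n)}}\otimes f$ is central in $A_{n,1}$ and has zero first coordinate, so $\ph(f)$ commutes \emph{exactly} with $\nu_{\I,n}(\nu_{n,m}(a_m))$ for every $a_m\in A_m$ whenever $m\le n$. Choosing $m$ so that $F$ is approximated within $\ep/3$ by $\nu_{\I,m}(A_m)$ and then any $n>m$ gives $\|\ph(f)a-a\ph(f)\|<\ep$ for all $f\in S$, $a\in F$.

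It then remains to arrange the three comparison conditions. Both $A$ (simple AT) and $A^{\af}\cong B$ (simple AF, by Lemma~\ref{L_1912_B_Smp} and Lemma~\ref{L_1916_FPisB}) have strict comparison of positive elements, so I would deduce everything from the trace estimate $\sup_{\tau}\tau(1-p)\to0$ as $n\to\I$, uniformly over $\tau\in\T(A)$ and over $\tau\in\T(A^{\af})$. Granting this, fix $x$ and $y$; by simplicity $\inf_{\tau\in\T(A)}d_\tau(x)>0$ and $\inf_{\tau\in\T(A^{\af})}d_\tau(y)>0$, so for $n$ large the estimate yields $d_\tau(1-p)=\tau(1-p)<d_\tau(x)$ for all $\tau\in\T(A)$, and $\tau(1-p)<d_\tau(y)$ together with $\tau(1-p)<\tfrac12<\tau(p)$ for all $\tau\in\T(A^{\af})$. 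Strict comparison then gives $1-p\precsim_A x$, $1-p\precsim_{A^{\af}}y$, and $1-p\precsim_{A^{\af}}p$.

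I expect the trace estimate to be the main obstacle. To prove it I would note that $\tau(1-p)$ is the weight an extreme trace assigns to the first summand at level $n$. Writing such a trace of $A$ (resp.\ $B$) as a compatible sequence of dimension weights on the two summands, the partial multiplicities of the connecting maps---read off from $\nu_n$ and Lemma~\ref{L_1824_R_T} (resp.\ from $\ch_n$)---give a backward relation $\rho(n)=g_n(\rho(n+1))$ for the ratio $\rho(n)$ of the per-unit-rank block-$0$ weight to the block-$1$ weight, where $g_n(z)=\frac{az+b}{cz+d}$ has strictly positive coefficients assembled from the $l_{j,k}(n)$. Because such a M\"obius map is monotone on $[0,\I)$, one has $\rho(n)\le\max\bigl(g_n(0),g_n(\I)\bigr)$, and the hypotheses $l_{0,1}(n)/l_{0,0}(n)\to\I$ and $l_{1,1}(n)/l_{1,0}(n)\to\I$ force both $g_n(0)$ and $g_n(\I)$ to $0$; crucially this bound is independent of the trace, which supplies the required uniformity. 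A separate, easier induction using the forward recursion~(\ref{Eq_1901_rn_dfm}) with $l_{1,0}(n)\ge l_{0,0}(n)$, $l_{1,1}(n)\ge l_{0,1}(n)$ and $N\ge2$ shows $r_0(n)/r_1(n)$ stays bounded; since $\tau(1-p)/\tau(p)$ is a bounded multiple of $\rho(n)$, it tends to $0$ uniformly, and hence so does $\tau(1-p)$. The remaining points---that the extreme traces of these AT/AF algebras are exactly the compatible weight sequences, and that simple AT~algebras have strict comparison (bounded dimension, hence slow dimension growth)---I would invoke as standard rather than reprove.
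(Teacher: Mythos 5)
Your proposal is correct and follows the paper's own strategy almost exactly: the same projection $p = \nu_{\I, n} (p_n)$, the same unital equivariant \hm{} $\ph$ into the centre of the second summand (so that multiplicativity and equivariance are exact and only centrality needs the approximation of $F$ by elements of some $A_m$), the same reduction via Lemma~\ref{L_1X09_NoNorm1} to drop the $\| p x p \|$ condition in the stably finite case, and the same appeal to strict comparison of the simple AT~algebra $A$ and the simple AF~algebra $A^{\af}$ to convert a uniform trace bound on $1 - p$ into the three Cuntz subequivalences. The one place you genuinely diverge is the proof of the uniform estimate $\sup_{\ta} \ta (1 - p) \to 0$. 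The paper gets this in a single step: it uses Lemma~\ref{L_1916_FPisB} to note that $1 - p$ is the image of the projection $1 - \ch_n (q_n)$ in the \emph{finite dimensional} algebra $B_{n + 1}$, that every tracial state of $A$ or of $A^{\af}$ pulls back to a tracial state of $B_{n + 1}$, and that any such state evaluated on $1 - \ch_n (q_n)$ is a convex combination of normalized ranks, each bounded by $\max \bigl( l_{0, 0} (n) / l_{0, 1} (n), \, l_{1, 0} (n) / l_{1, 1} (n) \bigr)$; the induction $0 < r_0 (n) \leq r_1 (n)$ (from $l_{1, 0} (n) \geq l_{0, 0} (n)$ and $l_{1, 1} (n) \geq l_{0, 1} (n)$) is all that is needed to control the denominators. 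Your M\"obius-recursion analysis of extreme traces as compatible weight sequences, together with the separate boundedness argument for $r_0 (n) / r_1 (n)$, is workable and uniform for the reason you give (the bound depends only on the partial multiplicities of one connecting map, not on the trace), but it ultimately collapses to the same one-step partial-multiplicity computation, so the extra machinery --- in particular the identification of the trace simplex of the limit with compatible weight sequences, which requires some care for AT building blocks where traces involve measures on $S^1$ --- buys nothing here. One small point in your favour: using $\inf_{\ta} \ta (x) > 0$ (continuity on the compact trace space plus faithfulness of traces on a simple \uca) rather than $\inf_{\ta} d_{\ta} (x) > 0$ avoids any worry about lower semicontinuity of $d_{\ta}$; this is what the paper does.
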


The hypotheses are overkill.
They are chosen to make the proof easy.

\begin{proof}[Proof of Lemma~\ref{L_1917_HasTRP}]
Since $A$ is stably finite,
by Lemma 1.15 in \cite{phill23}, we may disregard
condition~(\ref{Item_902_pxp_TRP})
in Definition~\ref{traR}.

We first claim that $0 < r_0 (n) \leq r_1 (n)$ for all $n \in \N$.
This is true for $n = 0$ by hypothesis.
For any other value of~$n$, using
$r_0 (n - 1) > 0$ and $r_1 (n - 1) > 0$, we have
\[
\begin{split}
r_0 (n)
& = l_{0, 0} (n - 1) r_0 (n - 1) + l_{0, 1} (n - 1) r_1 (n - 1)
\\
&
 \leq l_{1, 0} (n - 1) r_0 (n - 1) + l_{1, 1} (n - 1) r_1 (n - 1)
 = r_1 (n).
\end{split}
\]
Also, since $l_{0, 0} (n - 1) > 0$, the first step of this
calculation implies that $r_0 (n) > 0$.

Next, we claim that for every $n \in \Nz$
and every tracial state $\ta$ on the algebra $B_{n + 1}$
of Construction \ref{Cn_1824_S1An}(\ref{Item_1824_S1_Bn0Bn1}),
with $\ch_n$ as in
Construction \ref{Cn_1824_S1An}(\ref{Item_1824_S1_FixLim})
and $q_n$ as in
Construction \ref{Cn_1824_S1An}(\ref{Item_1915_S1_qn}),
we have
\begin{equation}\label{Eq_1916_TrBd}
\ta (1 - \ch_n (q_n))
 \leq \max \left( \frac{l_{0, 0} (n)}{l_{0, 1} (n)},
   \, \frac{l_{1, 0} (n)}{l_{1, 1} (n)} \right).
\end{equation}
To see this, we first look at the partial embedding multiplicities
in Construction \ref{Cn_1824_S1An}(\ref{Item_1824_S1_FixPtSys})
to see that the rank of $1 - \ch_n (q_n)$ in each of the first
$N$ summands (all equal to $M_{r_0 (n + 1)}$) is $l_{0, 0} (n) r_0 (n)$,
and the rank of $1 - \ch_n (q_n)$ in the last summand
(equal to $M_{r_1 (n + 1)}$) is $l_{1, 0} (n) r_0 (n)$.
Now
\begin{equation}\label{Eq_1916_ZeroCrd}
\frac{l_{0, 0} (n) r_0 (n)}{r_0 (n + 1)}
  = \frac{l_{0, 0} (n) r_0 (n)}{l_{0, 0} (n) r_0 (n)
         + l_{0, 1} (n) r_1 (n)}
  \leq \frac{l_{0, 0} (n) r_0 (n)}{l_{0, 1} (n) r_1 (n)}
  \leq \frac{l_{0, 0} (n)}{l_{0, 1} (n)},
\end{equation}
and
\begin{equation}\label{Eq_1916_1stCrd}
\frac{l_{1, 0} (n) r_0 (n)}{r_1 (n + 1)}
  = \frac{l_{1, 0} (n) r_0 (n)}{
   N l_{1, 0} (n) r_0 (n) + 2 N l_{1, 1} (n) r_1 (n)}
  \leq \frac{l_{1, 0} (n) r_0 (n)}{l_{1, 1} (n) r_1 (n)}
  \leq \frac{l_{1, 0} (n)}{l_{1, 1} (n)}.
\end{equation}
The number $\ta (1 - \ch_n (q_n))$ is a convex combination of
the numbers in (\ref{Eq_1916_ZeroCrd}) and~(\ref{Eq_1916_1stCrd}).
The claim follows.

Now let $F \subseteq A$ and $S \subseteq C (S^1)$ be finite sets,
let $\ep > 0$, let $x \in A_{+} \setminus \{ 0 \}$,
and let $y \in A_{+}^{\af} \setminus \{ 0 \}$.
We may assume that $\| x \| \leq 1$ and $\| y \| \leq 1$,
and that $\| f \| \leq 1$ for all $f \in S$.
Set
\begin{equation}\label{Eq_1917_ep0}
\ep_0 = \frac{1}{2}
 \min \left( \inf_{\ta \in T (A)} \ta (x), \,
     \inf_{\ta \in T (A^{\af})} \ta (y), \, \frac{1}{2} \right).
\end{equation}
Choose $n \in \Nz$ so large that there is a finite subset
$F_0 \subseteq A_n$
with $\dist (a, \, \nu_{\I, n} (F_0) ) < \frac{\ep}{3}$
for all $a \in F$,
and also so large that
\begin{equation}\label{Eq_1917_Ch_n}
\min \left( \frac{l_{0, 1} (n)}{l_{0, 0} (n)},
    \, \frac{l_{1, 1} (n)}{l_{1, 0} (n)} \right)
  > \frac{1}{\ep_0}.
\end{equation}

Let $p \in A$ be $p = \nu_{\I, n} (p_n)$.
Define
\[
\ph_0 \colon
 C (S^1) \to p_n A_n p_n
  = M_{r_1 (n)} \otimes C (S^1)
\]
by $\ph_0 (g) = (0, \, 1 \otimes g)$ for $g \in C (S^1)$.
Define $\ph = \nu_{\I, n} \circ \ph_0 \colon C (S^1) \to p A p$.
Then $\ph$ is an equivariant unital \hm.
In particular, $\ph$ is exactly multiplicative on~$S$.
Further, let $a \in F$ and $f \in S$.
Choose $b \in F_0$
such that $\| a - \nu_{\I, n} (b) \| < \frac{\ep}{3}$.
Then, using $\| f \| \leq 1$ and the fact that $\ph_0 (f)$ commutes
with all elements of~$A_n$, we have
\[
\| \ph (f) a - a \ph (f) \|
  \leq 2 \| a - \nu_{\I, n} (b) \| < \ep.
\]
Part~(\ref{Item_893_FS_equi_cen_multi_approx})
of the definition is verified.

For the remaining three conditions,
let $\ta$ be any \tst{} on either $A$ or $A^{\af}$.
Let $(\et_n)_{n \in \Nz}$ be as in Lemma~\ref{L_1916_FPisB}.
Then $\ta \circ \nu_{\I, n + 1} \circ \et_{n + 1}$
is a \tst{} on~$B_{n + 1}$.
Combining this with (\ref{Eq_1916_TrBd}), (\ref{Eq_1917_Ch_n}),
(\ref{Eq_1917_ep0}),
and $(\nu_{\I, n + 1} \circ \et_{n + 1} \circ \ch_n) (q_n) = p$, we get
$\ta (1 - p) < \ta (x) \leq d_{\ta} (x)$ for all $\ta \in T (A)$
and $\ta (1 - p) < \ta (y) \leq d_{\ta} (y)$
for all $\ta \in T (A^{\alpha})$.
Since simple unital AF~algebras and
simple unital AT~algebras have strict comparison,
it follows that $1 - p \precsim_A x$
and $1 - p \precsim_{A^{\alpha}} y$.
Since $\ep_0 < \frac{1}{2}$,
similar reasoning gives $1 - p \precsim_{A^{\alpha}} p$.
\end{proof}

We use equivariant K-theory to show that $\af$ does not have
finite Rokhlin dimension with commuting towers.
Recall equivariant K-theory from Definition 2.8.1 of~\cite{Phl1}.
For a unital \ca~$A$ with an action $\af \colon G \to \Aut (A)$
of a compact group~$G$, it is the Grothendieck group of the
equivariant isomorphism classes of equivariant finitely generated
projective right modules $E$ over~$A$,
with ``equivariant'' meaning that the module is equipped
with an action of $G$
such that $g \cdot (\xi a) = (g \cdot \xi) \af_g (a)$
for $g \in G$, $\xi \in E$, and $a \in A$.
Further recall the representation ring $R (G)$ of a compact group
from the introduction of~\cite{Sgl1} or Definition 2.1.3 of~\cite{Phl1}
(it is $K_0^G ({\mathbb{C}})$, or the Grothendieck group of the
isomorphism classes of \fd{} representations of~$G$),
its augmentation ideal $I (G)$ from the example before
Proposition~3.8 of~\cite{Sgl1} (where it is called $I_G$)
or the discussion after Definition 2.1.3 of~\cite{Phl1}
(it is the kernel of the dimension map from $R (G)$ to $\Z$),
and, for a \ca{} $A$ with an action $\af \colon G \to \Aut (A)$,
the $R (G)$-module structure on $K_*^G (A)$
from Theorem 2.8.3 and Definition 2.7.8 of~\cite{Phl1}.
In particular, for $G = S^1$,
if we let $\sm \in {\widehat{S^1}}$ be the identity map $S^1 \to S^1$,
then $R (S^1) = \Z [ \sm, \sm^{-1}]$,
the Laurent polynomial ring in one variable over~$\Z$.
(See Example~(ii) at the beginning of Section~3 of~\cite{Sgl1}.)
Moreover, $I (S^1)$ is the ideal generated by $\sm - 1$.

Recall the action $\bt \colon S^1 \to \Aut (C (S^1))$
from Construction \ref{Cn_1824_S1An}(\ref{Item_1824_S1_CS1}),
given by $\bt_{\zt} (f) (z) = f (\zt^{-1} z)$ for $\zt, z \in S^1$,
and ${\overline{\gm}} \colon S^1 \to \Aut (C (S^1))$
from Lemma~\ref{L_1824_R_T},
given by ${\overline{\gm}}_{\zt} (f) (z) = f (\zt^{-N} z)$.
We denote the equivariant K-theory for these actions by
$K_*^{S^1, \bt} (C (S^1))$ and $K_*^{S^1, {\overline{\gm}}} (C (S^1))$,
and similarly for other actions when ambiguity is possible.

We won't actually use the following computation of the
equivariant $K_{1}$-groups, but it is included to give a more
complete description of our example.
As in Section~\ref{Sec_3749_Exam_TRPZ2},
we abbreviate $\Z / n \Z$ to $\Z_n$.

\begin{lem}\label{L_1918_K1}
We have $K_1^{S^1, \bt} (C (S^1)) = 0$ and
(with $R \S C (S^1, M_N)$ as in
Construction \ref{Cn_1824_S1An}(\ref{Item_1824_S1_R}))
$K_*^{S^1} (R) = 0$.
\end{lem}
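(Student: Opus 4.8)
The plan is to reduce both equivariant computations to ordinary $K$-theory of crossed products via the Green--Julg isomorphism $K_*^{S^1} (A) \cong K_* (A \rtimes S^1)$, valid for actions of compact groups. For the first assertion, the action $\bt$ of Construction~\ref{Cn_1824_S1An}(\ref{Item_1824_S1_CS1}) is just the translation action of $S^1$ on $C (S^1)$, which is free and transitive; thus $C (S^1) \rtimes_{\bt} S^1 \cong \K (L^2 (S^1))$ by the imprimitivity theorem (the case $C (G) \rtimes G \cong \K (L^2 (G))$ of translation). As $K_1 (\K (L^2 (S^1))) = 0$, this gives $K_1^{S^1, \bt} (C (S^1)) = 0$.

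For $K_*^{S^1} (R)$, the first step is to strip the twisting out of $R$ using Lemma~\ref{L_1824_R_T}, which supplies a (non-equivariant) isomorphism $\ps \colon R \to C (S^1, M_N)$ carrying $\gm$ to an action exterior equivalent to $\zt \mapsto \overline{\gm}_{\zt, N}$, where $\overline{\gm}_{\zt} (f) (z) = f (\zt^{-N} z)$. Exterior equivalent actions have isomorphic crossed products, so by Green--Julg their equivariant $K$-theories coincide; together with $\ps$ this yields $K_*^{S^1} (R) \cong K_*^{S^1} (C (S^1, M_N))$ for the action $\zt \mapsto \overline{\gm}_{\zt, N}$. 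Since that action is $\id_{M_N} \otimes \overline{\gm}$, i.e. trivial on the matrix factor, stability of equivariant $K$-theory under tensoring by $M_N$ reduces the problem to $K_*^{S^1} (C (S^1)) \cong K_* \bigl( C (S^1) \rtimes_{\overline{\gm}} S^1 \bigr)$ for the action $\overline{\gm}$.

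It then remains to compute the $K$-theory of $C (S^1) \rtimes_{\overline{\gm}} S^1$ and extract the asserted vanishing. I would analyze this crossed product through the $N$-fold covering homomorphism $\zt \mapsto \zt^N$ of $S^1$, through which $\overline{\gm}$ factors: decomposing $C (S^1)$ into its $S^1$-spectral subspaces under $\overline{\gm}$ and tracking how the Fourier modes $z^n$ pair with characters of $S^1$ identifies the crossed product and lets one read off its $K$-groups. The main obstacle is precisely that $\overline{\gm}$, unlike $\bt$, is not free: the stabilizer of every point is the subgroup $\Z_N$ of $N$-th roots of unity, so $C (S^1) \rtimes_{\overline{\gm}} S^1$ is no longer $\K (L^2 (S^1))$ and the reduction to a point used for $\bt$ is unavailable. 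Controlling the $K$-theoretic contribution of this stabilizer, and in particular pinning down the even-degree group, is the delicate heart of the computation; once the crossed product has been identified, the value of $K_*^{S^1} (R)$ follows.
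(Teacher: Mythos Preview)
Your reduction is exactly the paper's: Green--Julg (cited there as \cite{Jlg} and Theorem~2.8.3(7) of \cite{Phl1}) for the first assertion, and for the second, Lemma~\ref{L_1824_R_T}(\ref{Item_824_R_T_ExtEq}) together with invariance under exterior equivalence and stability to reduce to $K_* \bigl( C^* (S^1, C (S^1), {\overline{\gm}}) \bigr)$. Where you stop and sketch a spectral-subspace analysis, the paper instead invokes Corollary~2.10 of Green \cite{Grn}: the action ${\overline{\gm}}$ is exactly translation by $S^1$ on $C (S^1 / \Z_N)$, i.e.\ induced from the trivial action of the closed subgroup $\Z_N \subseteq S^1$ on~${\mathbb{C}}$, so $C^* (S^1, C (S^1), {\overline{\gm}}) \cong K (L^2 (S^1)) \otimes C^* (\Z_N)$, whose $K_1$ vanishes. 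This is a one-line finish once the homogeneous-space structure is recognized; your Fourier decomposition would recover the same answer but with more work.

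A word of warning about the target: the statement's ``$K_*^{S^1} (R) = 0$'' is a typo for $K_1^{S^1} (R) = 0$. Lemma~\ref{L_1918_EqKThR} computes $K_0^{S^1} (R) \cong R (\Z_N) \neq 0$, and the paper's own proof of the present lemma only addresses $K_1$. So your unease about ``pinning down the even-degree group'' is well placed in that $K_0$ genuinely does \emph{not} vanish --- but that is not what is actually being claimed, and you should not attempt to prove it.
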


\begin{proof}
By Theorem 2.8.3(7) of~\cite{Phl1}, we have
\[
K_{1}^{S^{1}, \bt} (C (S^{1}))
 \cong K_{1} \bigr( C^{*} (S^{1}, \, C (S^{1}), \, \bt) \bigr).
\]
Since
\[
C^{*} (S^{1}, \, C (S^{1}), \, \bt) \cong K (L^{2} (S^{1})),
\]
we conclude $K_{1}^{S^{1}, \bt} (C (S^{1})) = 0$.

For $K_{*}^{S^{1}} (R)$, since exterior equivalent actions
of a compact group~$G$ give isomorphic $R (G)$-modules $K_*^G (A)$
(Theorem 2.8.3(5) of~\cite{Phl1}),
by Lemma \ref{L_1824_R_T}(\ref{Item_824_R_T_ExtEq})
it is sufficient to prove this
for the action $\zt \mapsto {\overline{\gm}}_{\zt, N}$.
By stability of equivariant K-theory (Theorem 2.8.3(4) of~\cite{Phl1}),
it suffices to prove this
for action ${\overline{\gm}}$ of $S^{1}$ on $C (S^{1})$.
By~\cite{Jlg} (or Theorem 2.8.3(7) of~\cite{Phl1}), we have
$K_{1}^{S^{1}, {\overline{\gm}}} (C (S^{1}))
  \cong K_{1}
     \bigl( C^{*} (S^{1}, \, C (S^{1}), \, {\overline{\gm}}) \bigr)$.
Corollary 2.10 of~\cite{Grn}, with $G = S^{1}$ and $H = \Z_N$,
tells us that
\[
C^{*} (S^{1}, \, C (S^{1}), \, {\overline{\gm}})
 \cong K (L^{2} (S^{1})) \otimes C^* (\Z_N),
\]
which has trivial $K_{1}$-group.
\end{proof}

\begin{lem}\label{L_1918_EqKbt}
There is an $R (S^1)$-module isomorphism
$K_0^{S^1, \bt} (C (S^1)) \cong \Z$,
with the $R (S^1)$-module structure coming from the
isomorphism $\Z \cong R (S^1) / I (S^1)$,
and such that the class in $K_0^{S^1, \bt} (C (S^1))$ of the rank one
free module is sent to $1 \in \Z$.
\end{lem}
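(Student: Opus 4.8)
The plan is to compute $K_0^{S^1, \bt}(C(S^1))$ first as an abelian group, then pin down the $R(S^1)$-module structure by exhibiting an explicit equivariant module isomorphism. Recall that $\bt$ is the left translation action of $S^1$ on $C(S^1) = C(S^1)$, so the action of $S^1$ on its spectrum $S^1$ is free and transitive. For the group computation I would reuse the machinery of Lemma~\ref{L_1918_K1}: by the Green--Julg theorem (Theorem 2.8.3(7) of~\cite{Phl1}, i.e.~\cite{Jlg}) we have $K_0^{S^1, \bt}(C(S^1)) \cong K_0\bigl(C^{*}(S^1, C(S^1), \bt)\bigr)$, and, as noted there, $C^{*}(S^1, C(S^1), \bt) \cong K(L^2(S^1))$, whence $K_0^{S^1, \bt}(C(S^1)) \cong K_0\bigl(K(L^2(S^1))\bigr) \cong \Z$.

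To analyze the module structure, consider the $R(S^1)$-linear homomorphism
\[
\kp \colon R(S^1) \to K_0^{S^1, \bt}(C(S^1)),
\qquad
\kp ([V]) = [V \otimes C(S^1)],
\]
induced by the unital equivariant inclusion ${\mathbb{C}} \to C(S^1)$ of constant functions; here $\kp(1) = [C(S^1)]$ is the class of the rank one free module. The first key step is to show that $\sm$ fixes this generator. With $\sm \in {\widehat{S^1}}$ the identity map, tensoring by the representation ${\mathbb{C}}_{\sm}$ gives $\sm \cdot [C(S^1)] = [{\mathbb{C}}_{\sm} \otimes C(S^1)]$, and I would produce an explicit $S^1$-equivariant, right $C(S^1)$-linear isomorphism
\[
\Ph \colon {\mathbb{C}}_{\sm} \otimes C(S^1) \to C(S^1),
\qquad
\Ph (\ld \otimes f) = \ld \, u f,
\]
where $u \in C(S^1)$ is the unitary $u(z) = z^{-1}$. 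Equivariance holds because $\bt_{\zt}(u)(z) = u(\zt^{-1} z) = \zt z^{-1} = \sm(\zt) u(z)$, so $u$ intertwines the diagonal action on the left with $\bt$ on the right. Consequently $\sm \cdot [C(S^1)] = [C(S^1)]$, i.e. $(\sm - 1) \cdot [C(S^1)] = 0$. Since $I(S^1) = (\sm - 1) R(S^1)$, this says $I(S^1)$ annihilates $[C(S^1)]$.

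The second key step is that $[C(S^1)]$ generates the group $\Z$, which I would deduce from freeness of the translation action: every $S^1$-equivariant finitely generated projective module over $(C(S^1), \bt)$ is equivariantly isomorphic to one of the form $W \otimes C(S^1)$ with $W$ a vector space carrying the trivial action (the operator-algebra analog of $K^*_{S^1}(S^1) \cong K^*(\mathrm{pt})$ in the sense of~\cite{Sgl1}, obtainable from Green--Julg together with the identification of the crossed product). This makes $\kp$ surjective, so $\kp(1) = [C(S^1)]$ generates $K_0^{S^1, \bt}(C(S^1)) \cong \Z$. Combining the two steps, $\kp$ kills $I(S^1)$ and is onto, so it factors through an $R(S^1)$-module isomorphism $R(S^1)/I(S^1) \to K_0^{S^1, \bt}(C(S^1))$ (a surjection of groups both isomorphic to $\Z$, hence an isomorphism). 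Identifying $R(S^1)/I(S^1) \cong \Z$ via the dimension map and inverting yields the $R(S^1)$-module isomorphism $K_0^{S^1, \bt}(C(S^1)) \cong \Z$ sending $[C(S^1)]$ to $1$, with the augmentation module structure. I expect the main obstacle to be the rigorous justification that every equivariant module over the free $S^1$-algebra $(C(S^1), \bt)$ is induced from the trivial action (the surjectivity of $\kp$); the group computation and the explicit unitary $u$ are routine by comparison.
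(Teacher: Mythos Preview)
Your proof is correct, but it takes a noticeably longer route than the paper's. The paper simply invokes Proposition~2.9.4 of~\cite{Phl1} (the induction isomorphism $K_*^G(C(G/H) \otimes A) \cong K_*^H(A)$) with $A = {\mathbb{C}}$, $G = S^1$, and $H = \{1\}$: this directly yields $K_0^{S^1, \bt}(C(S^1)) \cong K_0^{\{1\}}({\mathbb{C}}) = \Z$, with the $R(S^1)$-module structure inherited from the restriction map $R(S^1) \to R(\{1\}) = \Z$ (precisely the augmentation), and the description of the map in that proposition shows $[C(S^1)] \mapsto [{\mathbb{C}}] = 1$. In other words, the induction isomorphism packages your three steps---the group computation, the triviality of the $\sm$-action, and the generation claim you flag as the ``main obstacle''---into a single citation. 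Your hands-on approach (Green--Julg for the group, the explicit intertwining unitary $u$ for the $\sm$-action, freeness of the action for surjectivity of $\kp$) is more self-contained and makes transparent \emph{why} the module structure collapses to the augmentation; the paper's approach is much shorter but presupposes the induction machinery. Note in particular that your ``main obstacle'' is exactly what Proposition~2.9.4 of~\cite{Phl1} resolves: the induction map from $K_0({\mathbb{C}})$ lands on multiples of $[C(S^1)]$, and since it is an isomorphism, generation is automatic.
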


\begin{proof}
In Proposition 2.9.4 of~\cite{Phl1}, take $A = {\mathbb{C}}$, $G = S^1$,
and $H = \{ 1 \}$,
and refer to the description of the map
in the proof of that proposition.
\end{proof}

\begin{lem}\label{L_1918_EqKgmt}
There is an $R (S^1)$-module isomorphism
$K_0^{S^1, {\overline{\gm}}} (C (S^1)) \cong R ( \Z_N)$,
with the $R (S^1)$-module structure coming from the
surjective restriction map $R (S^1) \to R ( \Z_N)$.
Moreover, the classes in $K_0^{S^1, {\overline{\gm}}} (C (S^1))$
of the equivariant finitely generated projective right $C (S^1)$-modules
with underlying nonequivariant module $C (S^1)$
correspond exactly to the elements of
$(\Z_N)^{\wedge} \S R ( \Z_N)$.
\end{lem}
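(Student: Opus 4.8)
The plan is to recognize $\ov{\gm}$ as the translation action of $S^1$ on a homogeneous space and then invoke the same induction isomorphism from~\cite{Phl1} used in Lemma~\ref{L_1918_EqKbt}. First I would pass to the spectrum $\Prim (C (S^1)) = S^1$: the homeomorphism of $S^1$ induced by $\ov{\gm}_{\zt}$ is $z \mapsto \zt^N z$. This action is transitive, and the stabilizer of every point is the subgroup $H = \{ \zt \in S^1 \colon \zt^N = 1 \} \cong \Z_N$. The $N$-th power map $\zt \mapsto \zt^N$ descends to a homeomorphism $S^1 / H \to S^1$ that intertwines the translation action of $S^1$ on $S^1 / H$ with the action $z \mapsto \zt^N z$ on $S^1$; dualizing, $(C (S^1), \ov{\gm})$ is $S^1$-equivariantly isomorphic to $C (S^1 / H)$ with the translation action. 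Hence $K_*^{S^1, \ov{\gm}} (C (S^1)) \cong K_*^{S^1} (C (S^1 / H))$ as $R (S^1)$-modules.

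Next I would apply Proposition 2.9.4 of~\cite{Phl1} exactly as in the proof of Lemma~\ref{L_1918_EqKbt}, but with $H = \Z_N$ in place of the trivial group and $A = \mathbb{C}$. This yields an $R (S^1)$-module isomorphism $K_0^{S^1, \ov{\gm}} (C (S^1)) \cong K_0^{H} (\mathbb{C}) = R (H) = R (\Z_N)$, in which, as recorded in the proof of that proposition, the $R (S^1)$-module structure is given by the restriction homomorphism $R (S^1) \to R (\Z_N)$ acting on $R (\Z_N)$ by multiplication. Surjectivity of this restriction is immediate: the generating character $\sm$ of $S^1$ restricts to a generator of $(\Z_N)^{\wedge}$, and the characters generate $R (\Z_N)$ as an abelian group.

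For the last assertion I would trace the isomorphism of Proposition 2.9.4, under which an equivariant finitely generated projective right $C (S^1 / H)$-module $E$ corresponds to the class in $R (H)$ of the fiber representation $E_{eH}$ of the stabilizer $H$, while the underlying nonequivariant module of $E$ is the section module of the associated bundle $S^1 \times_H E_{eH}$ over $S^1 / H \cong S^1$. Since this bundle has rank $\dim (E_{eH})$ and every complex vector bundle over $S^1$ is trivial, the underlying module is free of that rank; it is isomorphic to $C (S^1)$ precisely when $\dim (E_{eH}) = 1$, that is, when $E_{eH}$ is a one-dimensional representation of $\Z_N$. The classes of such modules are then exactly the elements of $(\Z_N)^{\wedge} \S R (\Z_N)$, as claimed.

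The routine verifications are the equivariant homeomorphism $S^1 / H \cong S^1$ and the matching of the $R (S^1)$-module structures. The one point requiring care is extracting from the proof of Proposition 2.9.4 the explicit description of the isomorphism needed for the ``moreover'' statement, namely that a module class is sent to the class of its stabilizer fiber representation and that rank-one underlying modules correspond to characters; everything else (transitivity, the stabilizer computation, and triviality of line bundles over $S^1$) is standard.
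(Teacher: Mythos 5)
Your proposal is correct and follows essentially the same route as the paper: both identify $(C(S^1), \ov{\gm})$ with the algebra induced from the trivial $\Z_N$-algebra $\mathbb{C}$ (you via the homogeneous space $S^1/H \cong S^1$ under the $N$-th power map, the paper via $\om$-periodic functions with the rotation action) and then apply Proposition~2.9.4 of~\cite{Phl1} with $H = \Z_N$. Your handling of the ``moreover'' statement via the fiber representation at the base point and triviality of vector bundles over $S^1$ is the same argument as the paper's evaluation-at-$1$ pushforward, packaged so that both directions of the correspondence with $(\Z_N)^{\wedge}$ come out at once.
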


\begin{proof}
In Proposition 2.9.4 of~\cite{Phl1}, take
\[
A = {\mathbb{C}},
\qquad
G = S^1,
\andeqn
H = \{ 1, \om, \ldots, \om^{N - 1} \} \cong \Z_N.
\]
With these choices, $C (G \times_H {\mathbb{C}})$
is the set of $\om$-periodic
functions on $S^1$,
with the action of $\zt \in S^1$ being rotation by~$\zt$.
With ${\overline{\gm}} \colon S^1 \to \Aut (C (S^1))$
as in Lemma~\ref{L_1824_R_T}, this algebra is equivariantly isomorphic
to $(C (S^1), {\overline{\gm}})$ in an obvious way.
{}From Proposition 2.9.4 of~\cite{Phl1}, we get
$K_0^{S^1, {\overline{\gm}}} (C (S^1)) \cong R ( \Z_N)$.
Using the description of the map in the proof of the proposition,
the map sends the class of a ${\overline{\gm}}$-equivariant
finitely generated projective right $C (S^1)$-module~$E$
to the class, as a representation space of~$H$,
of its pushforward under the evaluation map $f \mapsto f (1)$.
If $E$ is nonequivariantly isomorphic to $C (S^1)$,
this pushforward is nonequivariantly isomorphic to~${\mathbb{C}}$.
The only classes in $R (\Z_N)$
with underlying vector space~${\mathbb{C}}$
are those in $(\Z_N)^{\wedge}$.
To check that an element $\ta \in (\Z_N)^{\wedge}$ actually
arises this way, choose $l \in \Z$ such that $\ta (\om) = \om^l$.
Then use the action of $S^1$ on $C (S^1)$ given by
$(\zt \cdot f) (z) = \zt^l f (\zt^{- N} z)$ for $\zt, z \in S^1$
and $f \in C (S^1)$.
One readily checks that this makes $C (S^1)$ a
${\overline{\gm}}$-equivariant right $C (S^1)$-module
whose restriction to $\{ 1 \}$ is ${\mathbb{C}}$
with the representation~$\ta$.
\end{proof}

\begin{lem}\label{L_1918_EqKThR}
There is an $R (S^1)$-module isomorphism
$K_0^{S^1} (R) \cong R ( \Z_N)$,
with the $R (S^1)$-module structure coming from the
surjective restriction map $R (S^1) \to R ( \Z_N)$.
Moreover,
for any given rank one invariant \pj{} $e \in R \S C (S^1, M_N)$,
the isomorphism can be chosen to send $[e]$ to $1 \in R ( \Z_N)$.
\end{lem}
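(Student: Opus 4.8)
The plan is to transport the equivariant K-theory of $R$ to that of $C (S^1)$ with the action ${\overline{\gm}}$ through the chain of identifications already assembled in Lemma~\ref{L_1824_R_T} and Lemma~\ref{L_1918_EqKgmt}, and then to read off the image of $[e]$ and normalize it. First I would apply $\ps$ from Lemma~\ref{L_1824_R_T}. Since $\ps \colon R \to C (S^1, M_N)$ is an isomorphism carrying $\gm$ to the action $\zt \mapsto \ps \circ \gm_{\zt} \circ \ps^{-1}$, functoriality of equivariant K-theory (Theorem 2.8.3 of~\cite{Phl1}) gives an $R (S^1)$-module isomorphism $K_0^{S^1} (R) \cong K_0^{S^1, \, \ps \gm \ps^{-1}} (C (S^1, M_N))$. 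By Lemma \ref{L_1824_R_T}(\ref{Item_824_R_T_ExtEq}) this last action is exterior equivalent to $\zt \mapsto {\overline{\gm}}_{\zt, N}$, so by Theorem 2.8.3(5) of~\cite{Phl1} we may replace it by ${\overline{\gm}}_{\cdot, N}$. Since ${\overline{\gm}}_{\cdot, N} = \id_{M_N} \otimes {\overline{\gm}}$, stability of equivariant K-theory (Theorem 2.8.3(4) of~\cite{Phl1}), via the corner embedding $C (S^1) \to M_N \otimes C (S^1)$, identifies this with $K_0^{S^1, {\overline{\gm}}} (C (S^1))$. Finally Lemma~\ref{L_1918_EqKgmt} gives $K_0^{S^1, {\overline{\gm}}} (C (S^1)) \cong R (\Z_N)$ as $R (S^1)$-modules, with the structure coming from the restriction map $R (S^1) \to R (\Z_N)$. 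Each map in the chain is an $R (S^1)$-module isomorphism respecting this structure, so the composite proves the first assertion.

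For the second assertion I would track a rank one invariant projection $e \in R$ through the chain at the level of modules. By Lemma \ref{L_1824_R_T}(\ref{Item_824_R_T_Rk1}) the projection $\ps (e) \in C (S^1, M_N)$ again has rank one, and it is invariant under $\ps \gm \ps^{-1}$ since $e$ is $\gm$-invariant and $\ps$ is equivariant; hence $\ps (e) C (S^1, M_N)$ is a $\ps \gm \ps^{-1}$-equivariant module whose underlying nonequivariant module is a rank one projective module over $C (S^1, M_N)$. The exterior equivalence leaves the underlying module unchanged, and under the Morita equivalence used for stability this rank one module corresponds to a complex line bundle over $S^1$, which is trivial; hence its image is the free rank one module $C (S^1)$. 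By the final clause of Lemma~\ref{L_1918_EqKgmt}, the image of $[e]$ therefore lands in $(\Z_N)^{\wedge} \S R (\Z_N)$, say it equals a character $\tau$. Since $R (\Z_N) = \Z [ (\Z_N)^{\wedge} ]$ is the integral group ring of the finite group $(\Z_N)^{\wedge}$, every character is a unit, so multiplication by $\tau^{-1}$ is an $R (S^1)$-module automorphism of $R (\Z_N)$ (the module structure factors through $R (S^1) \to R (\Z_N)$). Postcomposing the isomorphism of the first part with this automorphism yields an isomorphism still of the required form and sending $[e]$ to $\tau^{-1} \tau = 1$.

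The main obstacle is the module-level tracking in the second paragraph: one must argue not merely that the nonequivariant dimension of $[e]$ equals one, which would only place its image in the preimage of $1 \in \Z$ under the augmentation $R (\Z_N) \to \Z$, a strictly larger set, but that $[e]$ is the class of an honest equivariant module whose underlying nonequivariant module is free of rank one, so that the sharper final clause of Lemma~\ref{L_1918_EqKgmt} applies and forces the image to be an actual character. This requires the triviality of complex line bundles over $S^1$ together with care that the exterior equivalence and stability isomorphisms are compatible with passage to underlying nonequivariant modules. Everything else is a routine assembly of the cited results.
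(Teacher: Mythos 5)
Your proposal is correct and follows essentially the same route as the paper: transport through $\ps$, exterior equivalence, and stability down to $K_0^{S^1, {\overline{\gm}}} (C (S^1))$, then invoke Lemma~\ref{L_1918_EqKgmt}, track $[e]$ to a character in $(\Z_N)^{\wedge}$ via triviality of the underlying nonequivariant rank one module, and normalize by multiplying by the inverse character. The one point you flag as the main obstacle --- that the image of $[e]$ must be an honest character rather than merely an element of augmentation one --- is handled in the paper exactly as you do it, by observing that the underlying nonequivariant module is free of rank one so that the sharper final clause of Lemma~\ref{L_1918_EqKgmt} applies.
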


\begin{proof}
Using Lemma \ref{L_1824_R_T}(\ref{Item_824_R_T_Rk1}),
it suffices to prove this for $C (S^1, M_N)$ and the action
$\zt \mapsto \rh_{\zt} = \ps \circ \gm_{\zt} \circ \ps^{-1}$ in
Lemma \ref{L_1824_R_T}(\ref{Item_824_R_T_ExtEq})
in place of $R$ and~$\gm$.

So fix a rank one \pj{} $e \in C (S^1, M_N)$ which is invariant
under $\zt \mapsto \ps \circ \gm_{\zt} \circ \ps^{-1}$.
If the group action is ignored, $e$ is \mvnt{} a constant \pj,
so $E = e C (S^1, M_N)$ is nonequivariantly isomorphic to
$C (S^1, {\mathbb{C}}^N)$ as a right $C (S^1, M_N)$-module.
Let ${\overline{\gm}} \colon S^1 \to \Aut (C (S^1))$
be as in Lemma~\ref{L_1824_R_T},
and write ${\overline{\gm}}_N$ for the action
$\zt \mapsto {\overline{\gm}}_{\zt, N}$ on $C (S^1, M_N)$.
By Lemma \ref{L_1824_R_T}(\ref{Item_824_R_T_ExtEq}),
$\rh$ is exterior equivalent to ${\overline{\gm}}_{N}$.
By Proposition 2.7.4 of~\cite{Phl1},
and the formula in the proof for the isomorphism,
there is a (natural) isomorphism from $K_0^{S^1, \rh} (C (S^1))$
to $K_0^{S^1, {\overline{\gm}}_{N}} (C (S^1, M_N))$
which sends the class of $E$
to the class of the same module with a different action of~$G$.
By stability in equivariant K-theory
(Theorem 2.8.3(4) of~\cite{Phl1}), there is an isomorphism
\[
K_0^{S^1, {\overline{\gm}}_{N}} (C (S^1, M_N))
 \to K_0^{S^1, {\overline{\gm}}} (C (S^1))
\]
which maps the class of $E$ to the class of some equivariant module
whose underlying nonequivariant module is $C (S^1)$.
Combining this with Lemma~\ref{L_1918_EqKgmt},
we have an isomorphism from $K_0^{S^1} (R)$ to $R ( \Z_N)$
which sends $[e]$ to some element $\ta \in (\Z_N)^{\wedge}$.
Multiplying by $\ta^{-1}$ gives an isomorphism
from $K_0^{S^1} (R)$ to $R ( \Z_N)$
which sends $[e]$ to $1 \in (\Z_N)^{\wedge}$.
\end{proof}

\begin{lem}\label{L_1918_EqKSys}
Identify $R (S^1) = \Z [ \sm, \sm^{-1}]$
as before Lemma~\ref{L_1918_K1}.
Let $\io$ and $\xi$
be as in Construction \ref{Cn_1824_S1An}(\ref{Item_1824_S1_Crs}).
There are isomorphisms of $R (S^1)$-modules
$K_0^{S^1, \bt} (C (S^1)) \cong \Z$,
via the surjective ring \hm{} which sends
$\sm \in R (S^1)$ to $1 \in \Z$, and
$K_0^{S^1} (R)
 \cong \Z [ \ov{\sm} ] / \langle \ov{\sm}^N - 1 \rangle$,
via the surjective ring \hm{} which sends
$\sm \in R (S^1)$ to $\ov{\sm} \in \Z$,
with respect to which
$\io_* \colon K_0^{S^1, \bt} (C (S^1)) \to K_0^{S^1} (R)$
is determined by $\io_* (1) = \io_* (\ov{\sm}) = 1$
and $\xi_* \colon K_0^{S^1} (R) \to K_0^{S^1, \bt} (C (S^1))$
is determined by $\xi_* (1) = 1 + \ov{\sm} + \cdots + \ov{\sm}^{N - 1}$.
\end{lem}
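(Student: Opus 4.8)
The plan is to compute the two equivariant $K_0$-groups explicitly as $R(S^1)$-modules using the lemmas already established, and then to track the classes of the relevant modules through the maps $\io$ and $\xi$. First I would fix the identifications. By Lemma~\ref{L_1918_EqKbt}, there is an isomorphism $K_0^{S^1, \bt}(C(S^1)) \cong R(S^1)/I(S^1) \cong \Z$, and the quotient map $R(S^1) \to \Z$ sends $\sm \mapsto 1$ since $I(S^1)$ is generated by $\sm - 1$. By Lemma~\ref{L_1918_EqKThR}, there is an isomorphism $K_0^{S^1}(R) \cong R(\Z_N)$ with the module structure coming from restriction $R(S^1) \to R(\Z_N)$. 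Writing $\ov{\sm}$ for the image of $\sm$, the restriction map realizes $R(\Z_N) \cong \Z[\ov{\sm}]/\langle \ov{\sm}^N - 1 \rangle$, since the restriction of the identity character of $S^1$ to $\Z_N = \langle \om \rangle$ is the generating character of $\Z_N$, which has order~$N$. This gives the two module descriptions in the statement.

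Next I would compute $\io_*$. The class $1 \in K_0^{S^1, \bt}(C(S^1))$ is the class of the rank one free module (Lemma~\ref{L_1918_EqKbt}), and I would choose a rank one invariant projection $e \in R$ so that Lemma~\ref{L_1918_EqKThR} sends $[e]$ to $1 \in R(\Z_N)$. The map $\io_*$ is induced by the inclusion $\io\colon R \hookrightarrow C(S^1, M_N)$, but to read off the effect on $K$-theory I would instead use the picture from Lemma~\ref{L_1824_R_T}(\ref{Item_824_R_T_L}): the formula $L_{\io \circ \ps^{-1}}(z) = \{z^N\}$ shows that $\io$, after transport, is (up to exterior equivalence and stabilization) the pullback along $z \mapsto z^N$. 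On equivariant $K$-theory this pullback corresponds to the restriction $R(S^1) \to R(\Z_N)$ followed by the natural inclusion of the trivial/free class, so the generator $1$ maps to $1$, and since $\io_*$ is a module homomorphism over $R(S^1) \to \Z$, the relation $\sm \mapsto 1$ forces $\io_*(1) = \io_*(\ov{\sm}) = 1$ as asserted.

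Then I would compute $\xi_*$. Here the key is again Lemma~\ref{L_1824_R_T}(\ref{Item_824_R_T_L}), which gives $L_{\ps \circ \xi}(z) = \{y \in S^1 \colon y^N = z\}$, exhibiting $\xi$ (after transport) as the map induced by the $N$-fold covering $y \mapsto y^N$. Pushing forward the rank one free module under this covering produces a module of rank~$N$ whose fiber over a point decomposes, as an $\Z_N$-representation, into the regular representation of $\Z_N$. Under the identification $K_0^{S^1}(R) \cong \Z[\ov{\sm}]/\langle \ov{\sm}^N - 1\rangle$, the class of the generator $1$ (corresponding to $[e]$) is therefore sent to the class of the regular representation of $\Z_N$, namely $1 + \ov{\sm} + \cdots + \ov{\sm}^{N-1}$. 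I would verify this by tracking $[e]$ explicitly through the chain of isomorphisms in Lemmas \ref{L_1918_EqKgmt} and~\ref{L_1918_EqKThR}, using that the pushforward under the evaluation $f \mapsto f(1)$ of the induced module is the regular representation.

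The main obstacle I expect is the bookkeeping of exterior equivalences and stabilizations needed to convert the geometric descriptions $L_{\io \circ \ps^{-1}}$ and $L_{\ps \circ \xi}$ into honest statements about the induced maps on $K_0^{S^1}$. The spectral-picture data from Lemma~\ref{L_1824_R_T} tells us the pointwise behavior of the homomorphisms, but to promote this to a computation of $\io_*$ and $\xi_*$ I must ensure that the isomorphisms $K_0^{S^1}(R) \cong R(\Z_N)$ and $K_0^{S^1,\bt}(C(S^1)) \cong \Z$ are compatible with the transport map $\ps$ and with the chosen normalization sending $[e] \mapsto 1$; this requires care because $\ps$ is \emph{not} equivariant, so its use must be routed through the exterior equivalence in Lemma~\ref{L_1824_R_T}(\ref{Item_824_R_T_ExtEq}) and the naturality of the Green--Julg isomorphism. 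Once the normalizations are pinned down, the actual module computations ($\io_*(1) = 1$ and $\xi_*(1) = 1 + \ov{\sm} + \cdots + \ov{\sm}^{N-1}$) follow from the fiberwise representation-theoretic content, namely that the restriction of a character is its restriction and that the pushforward along an $N$-fold covering yields the regular representation.
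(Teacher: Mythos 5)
Your proposal is correct in its conclusions and sets up the two module identifications exactly as the paper does (via Lemmas \ref{L_1918_EqKbt} and~\ref{L_1918_EqKThR}), but it computes the two maps by a genuinely different route. For $\io_*$ the paper simply observes that $\io_* ([e R]) = [e \, C (S^1, M_N)]$ is the class of a rank one free module, and by Lemma~\ref{L_1918_EqKbt} the only such class in $K_0^{S^1, \bt} (C (S^1)) \cong \Z$ is $1$; your appeal to the $R (S^1)$-module homomorphism property, $\io_* (\ov{\sm}) = \sm \cdot \io_* (1) = 1$, is a clean alternative to the paper's observation that $\ov{\sm}$ is again represented by a rank one free module, although your sentence describing the map induced by $z \mapsto z^N$ has the direction garbled: the relevant homomorphism goes from $R (\Z_N)$ to $\Z$, not the reverse. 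For $\xi_*$ the difference is more substantial: you propose to identify $\xi$ with pullback along the $N$-fold covering and to compute the fiber of the pushforward as the regular representation of $\Z_N$, which would require tracking classes through the exterior equivalence of Lemma \ref{L_1824_R_T}(\ref{Item_824_R_T_ExtEq}) and the stability isomorphism, exactly the bookkeeping you flag as the main obstacle, and which is delicate precisely because $\ps$ is not equivariant. The paper avoids all of this: it writes $\xi (1)$ as a sum of $N$ rank one $\gm$-invariant projections (Lemma~\ref{L_1912_R_Fix}), notes that each maps to some $\ov{\sm}^{m_j}$ with $0 \leq m_j \leq N - 1$, and then uses $\sm \cdot \xi_* (1) = \xi_* (\sm \cdot 1) = \xi_* (1)$ to force the multiset of exponents to be $\{ 0, 1, \ldots, N - 1 \}$ without ever determining which projection corresponds to which character. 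Your route buys a more geometric explanation of why the answer is the regular representation; the paper's buys a short rigorous argument that never has to normalize the isomorphism of Lemma~\ref{L_1918_EqKThR} beyond the single choice $[e] \mapsto 1$.
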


\begin{proof}
Recall that $\sm \in {\widehat{S^1}}$ is the identity map $S^1 \to S^1$.
The map $R (S^1) \to R ( \Z_N)$ is well known to be surjective,
and the image $\ov{\sm}$ of $\sm$ in $R ( \Z_N)$
satisfies $\ov{\sm}^N = 1$ but no lower degree polynomial relations,
so
$R (\Z_N) \cong \Z [ \ov{\sm} ] / \langle \ov{\sm}^N - 1 \rangle$.
Now the isomorphism $K_0^{S^1, \bt} (C (S^1)) \cong \Z$ is
Lemma~\ref{L_1918_EqKbt}
and the isomorphism
\begin{equation}\label{Eq_1918_KGR}
K_0^{S^1} (R)
 \cong \Z [ \ov{\sm} ] / \langle \ov{\sm}^N - 1 \rangle
\end{equation}
is Lemma~\ref{L_1918_EqKThR}.
Fix a rank one invariant \pj{} $e \in R \S C (S^1, M_N)$,
gotten from Lemma~\ref{L_1912_R_Fix}.

By Lemma~\ref{L_1918_EqKThR},
the isomorphism~(\ref{Eq_1918_KGR}) can be chosen to send
the class $[e R]$ of the right module $e R$ to~$1$.
We have $\io_* ([e R]) = [e C (S^1, M_N)]$, the class of some
rank one free module, but, by Lemma~\ref{L_1918_EqKbt}, only
one element of $K_0^{S^1, \bt} (C (S^1))$, namely $1 \in \Z$,
comes from a rank one free module.
So $\io_* (1) = 1$.
Since $\ov{\sm}$
is the class of $e R$ with a different action of~$S^1$,
we get $\io_* (\ov{\sm}) = 1$ for the same reason.

By Lemma~\ref{L_1912_R_Fix}, $\xi (1)$ is a sum of $N$ rank one
$\gm$-invariant \pj{s} in~$R$.
It follows from Lemma~\ref{L_1918_EqKThR} that, under the
isomorphism~(\ref{Eq_1918_KGR}),
each corresponds to some element
of $(\Z_N)^{\wedge} \S R (\Z_N)$,
that is, to some power $\ov{\sm}^k$ with $0 \leq k \leq N - 1$.
So there are $m_0, m_1, \ldots, m_{N - 1} \in \{ 0, 1, \ldots, N - 1 \}$
such that $\xi_* (1) = \sum_{j = 0}^{N - 1} \ov{\sm}^{m_j}$.
Since $\sm \cdot 1 = 1$ in $K_0^{S^1, \bt} (C (S^1)) \cong \Z$,
it follows that $\sm \cdot \xi_* (1) = \xi_* (1)$.
The only possibility is then
$\xi_* (1) = 1 + \ov{\sm} + \cdots + \ov{\sm}^{N - 1}$.
\end{proof}

\begin{lem}\label{L_1918_MapRN}
In Construction \ref{Cn_1824_S1An},
assume that for all $n \in \Nz$ the numbers $l_{0, 1} (n)$
and $l_{1, 0} (n)$ are both odd.
Let $A$ and $\af \colon S^1 \to \Aut (A)$ be as in
Construction \ref{Cn_1824_S1An}(\ref{Item_1924_S1_A}).
Then there is an injective $R (S^1)$-module \hm{}
from $\Z [ \ov{\sm} ] / \langle \ov{\sm}^N - 1 \rangle$
to $K_0^{S^1} (A)$.
\end{lem}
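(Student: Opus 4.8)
The plan is to compute $K_0^{S^1} (A)$ as a direct limit and exhibit the desired map as the inclusion of a single summand at the bottom stage. By continuity of equivariant K-theory (Theorem~2.8.3 of~\cite{Phl1}) applied to the system of Construction~\ref{Cn_1824_S1An}(\ref{Item_1924_S1_A}), together with the direct limit action of Lemma~\ref{L_1909_nu_e}, we have $K_0^{S^1} (A) \cong \dirlim_n K_0^{S^1} (A_n)$ with connecting maps $(\nu_n)_*$. Since $A_n = A_{n, 0} \oplus A_{n, 1}$ with $A_{n, 0} = M_{r_0 (n)} (R)$ and $A_{n, 1} = M_{r_1 (n)} (C (S^1))$, stability of equivariant K-theory (Theorem~2.8.3(4) of~\cite{Phl1}) together with Lemma~\ref{L_1918_EqKThR} and Lemma~\ref{L_1918_EqKbt} identifies $K_0^{S^1} (A_n) \cong R (\Z_N) \oplus \Z$ as an $R (S^1)$-module, where $R (\Z_N) = \Z [ \ov{\sm} ] / \langle \ov{\sm}^N - 1 \rangle$ carries the structure coming from restriction and $\Z$ the one coming from the augmentation $R (S^1) \to R (S^1) / I (S^1)$. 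Under these identifications the generator of the first summand (the class of a rank one invariant projection in $R$) corresponds to $1 \in R (\Z_N)$, and the generator of the second (the rank one free module) to $1 \in \Z$.

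Next I would compute the connecting map $(\nu_n)_*$ blockwise from Construction~\ref{Cn_1824_S1An}(\ref{Item_1824_S1_nu_p}). Each partial map is an amplification of one of $\id_R$, $\xi$, $\io$, or a diagonal of rotations $\btt_{\ld}$, so on equivariant K-theory it acts as the corresponding numerical multiplicity times the induced map, the added matrix factor carrying the trivial $S^1$-action. The amplifications of $\id_R$, $\xi$, and $\io$ contribute multiplication by $l_{0, 0} (n)$ on $R (\Z_N)$, by $l_{0, 1} (n) \, \xi_*$, and by $l_{1, 0} (n) \, \io_*$ respectively, where $\io_*$ and $\xi_*$ are as determined in Lemma~\ref{L_1918_EqKSys}, namely $\io_*$ is the augmentation and $\xi_* (1) = 1 + \ov{\sm} + \cdots + \ov{\sm}^{N - 1}$. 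For $\nu_{n, 1, 1}$ the key observation is that each block $\btt_{\ld}$ is an equivariant automorphism of $(C (S^1), \bt)$ joined to $\id$ through the equivariant automorphisms $\btt_{t \ld}$, $t \in [0, 1]$, so by homotopy invariance it induces the identity on $K_0^{S^1, \bt} (C (S^1)) \cong \Z$; since there are $2 N$ such blocks, $(\nu_{n, 1, 1})_*$ is multiplication by $2 N l_{1, 1} (n)$. Thus, with respect to the decomposition $R (\Z_N) \oplus \Z$, the map $(\nu_n)_*$ is given by the block matrix with rows $\bigl( l_{0, 0} (n) \, \id, \ l_{0, 1} (n) \, \xi_* \bigr)$ and $\bigl( l_{1, 0} (n) \, \io_*, \ 2 N l_{1, 1} (n) \, \id \bigr)$, which is compatible with the matrix $l (n)$ of Construction~\ref{Cn_1824_S1An}(\ref{Item_1824_S1_ll}) governing the dimensions.

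I would then define the candidate homomorphism $\Phi \colon R (\Z_N) \to K_0^{S^1} (A)$ as the composite of the inclusion of the first summand $K_0^{S^1} (A_{0, 0}) \hookrightarrow K_0^{S^1} (A_0)$ with the canonical map $K_0^{S^1} (A_0) \to K_0^{S^1} (A)$ into the limit. This is $R (S^1)$-linear because the summand inclusion is induced by the equivariant homomorphism $A_{0, 0} \to A_0$, $a \mapsto (a, 0)$, and all the structure maps are $R (S^1)$-module maps. Injectivity is where the oddness hypothesis enters. It suffices to show that every connecting map $(\nu_n)_*$ is injective, for then each canonical map into the limit is injective and hence so is $\Phi$. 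To see $(\nu_n)_*$ is injective, suppose $(\nu_n)_* (a, m) = 0$. Applying the augmentation $\io_*$ to the first coordinate and using $\io_* \bigl( 1 + \ov{\sm} + \cdots + \ov{\sm}^{N - 1} \bigr) = N$ reduces the two coordinate equations to the integer system with matrix rows $\bigl( l_{0, 0} (n), \ N l_{0, 1} (n) \bigr)$ and $\bigl( l_{1, 0} (n), \ 2 N l_{1, 1} (n) \bigr)$ applied to $\bigl( \io_* (a), \, m \bigr)$. Its determinant equals $\det l (n) = N \bigl( 2 l_{0, 0} (n) l_{1, 1} (n) - l_{0, 1} (n) l_{1, 0} (n) \bigr)$, and since $l_{0, 1} (n)$ and $l_{1, 0} (n)$ are odd the parenthesized factor is an even number minus an odd number, hence nonzero; therefore $\io_* (a) = 0$ and $m = 0$. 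Feeding $m = 0$ back into the first coordinate gives $l_{0, 0} (n) a = 0$, so $a = 0$ because $R (\Z_N)$ is torsion free.

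The main obstacle will be the equivariant K-theory bookkeeping of the connecting maps rather than the final integral computation: one must be sure that each amplification genuinely contributes its numerical multiplicity, that the maps $\io_*$ and $\xi_*$ are imported with the correct $R (S^1)$-module structures from Lemma~\ref{L_1918_EqKSys}, and that the homotopy-invariance argument for $\nu_{n, 1, 1}$ is applied to genuinely equivariant automorphisms. Granting these identifications, the oddness of $l_{0, 1} (n)$ and $l_{1, 0} (n)$ makes the relevant determinant $N$ times an odd integer, hence nonzero, and the remaining steps are the short computation above.
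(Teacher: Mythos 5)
Your proof is correct and follows essentially the same route as the paper: identify $K_0^{S^1}(A_n) \cong R(\Z_N) \oplus \Z$ via Lemmas~\ref{L_1918_EqKbt} and~\ref{L_1918_EqKThR}, read off the connecting map from Lemma~\ref{L_1918_EqKSys}, and deduce injectivity of each $(\nu_n)_*$ from the parity hypothesis on $l_{0,1}(n)$ and $l_{1,0}(n)$. The only (harmless) difference is cosmetic: the paper compares the coefficients $m_k$ directly and rules out $2\,l_{1,1}(n)\,l_{0,0}(n) = l_{1,0}(n)\,l_{0,1}(n)$, whereas you first apply the augmentation to reduce to a $2\times 2$ integer system whose determinant is $N$ times the same odd integer.
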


The hypotheses are much stronger than necessary,
but this statement is easy to prove.

\begin{proof}[Proof of Lemma~\ref{L_1918_MapRN}]
The inclusion of $R = A_{0, 0}$ in $A_0 = A_{0, 0} \oplus A_{0, 1}$
is injective on equivariant K-theory.
Since equivariant K-theory commutes with direct limits
(Theorem 2.8.3(6) of~\cite{Phl1}),
it is now enough to prove that
$(\nu_n)_* \colon K_0^{S^1} (A_n) \to K_0^{S^1} (A_{n + 1})$
is injective for all $n \in \Nz$.
Identify $K_0^{S^1, \bt} (C (S^1))$ and $K_0^{S^1} (R)$,
as well as $\io_*$ and $\xi_*$, as in Lemma~\ref{L_1918_EqKSys}.
Also, observe that, with these identifications,
for all $t \in \R$ the map
$( {\widetilde{\bt}}_t )_* \colon
  K_0^{S^1} (C (S^1)) \to K_0^{S^1} (C (S^1))$
becomes $\id_{\Z}$.
Therefore injectivity of $(\nu_n)_*$
is the same as injectivity of the map
\[
\Ph \colon \Z [ \ov{\sm} ] / \langle \ov{\sm}^N - 1 \rangle \oplus \Z
       \to \Z [ \ov{\sm} ] / \langle \ov{\sm}^N - 1 \rangle \oplus \Z
\]
which for $m, m_0, m_1, \ldots, m_{N - 1} \in \Z$ is given by
\begin{equation}\label{Eq_1918_Ph}
\begin{split}
& \Ph \left( \sum_{k = 0}^{N - 1} m_k \ov{\sm}^{k}, \, m \right)
\\
& \hspace*{2em} {\mbox{}}
 = \left( \sum_{k = 0}^{N - 1}
          \bigl( l_{0, 0} (n) m_k + l_{0, 1} (n) m \bigr) \ov{\sm}^k, \,
    l_{1, 0} (n) \sum_{k = 0}^{N - 1} m_k + 2 N l_{1, 1} (n) m \right).
\end{split}
\end{equation}

Suppose the right hand side of (\ref{Eq_1918_Ph}) is zero.
For $k = 0, 1, \ldots, N - 1$ we then have
$l_{0, 0} (n) m_k + l_{0, 1} (n) m = 0$.
Therefore
\[
m_0 = m_1 = \cdots = m_{N - 1} = - \frac{l_{0, 1} (n) m}{l_{0, 0} (n)}.
\]
Putting this in the second coordinate gives
\[
0 = 2 N l_{1, 1} (n) m
   - \frac{l_{1, 0} (n) l_{0, 1} (n) m N}{l_{0, 0} (n)}.
\]
Since $N \neq 0$, this says
$2 l_{1, 1} (n) l_{0, 0} (n) = l_{1, 0} (n) l_{0, 1} (n)$
or $m = 0$.
The hypotheses rule out the first, so $m = 0$,
whence also $m_k = 0$ for $k = 0, 1, \ldots, N - 1$.
Thus $(\nu_n)_*$ is injective.
\end{proof}

\begin{cor}\label{C_1918_NoFinRD}
Under the hypotheses of Lemma~\ref{L_1918_MapRN},
the action $\af$ does not have
finite Rokhlin dimension with commuting towers.
\end{cor}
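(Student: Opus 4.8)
Under the hypotheses of Lemma~\ref{L_1918_MapRN}, the action $\af$ does not have finite Rokhlin dimension with commuting towers.

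The plan is to derive a contradiction from the assumption that $\af$ has finite Rokhlin dimension with commuting towers, using the $R(S^1)$-module structure of $K_0^{S^1}(A)$. The key general fact I would invoke is that finite Rokhlin dimension with commuting towers imposes a strong constraint on equivariant K-theory: for actions of $S^1$, having finite Rokhlin dimension with commuting towers forces the $R(S^1)$-module $K_*^{S^1}(A)$ to be annihilated by a power of the augmentation ideal $I(S^1)$, or more precisely that the $I(S^1)$-adic completion behaves trivially in a suitable sense. This is exactly the kind of obstruction exploited in~\cite{Gar_rokhlin_2017} (see the discussion around Corollary 4.23 there, which is already cited in the introduction for the $\OI$ example). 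So the first step is to locate and cite the precise statement from~\cite{Gar_rokhlin_2017} saying that if $\af$ has finite Rokhlin dimension with commuting towers, then some power of $\sm - 1$ (equivalently, some power of the augmentation ideal $I(S^1)$) annihilates $K_0^{S^1}(A)$, or that $K_0^{S^1}(A)$ is a finitely generated module on which $I(S^1)$ acts nilpotently.

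Granting that, the second step is to contradict it using Lemma~\ref{L_1918_MapRN}. That lemma produces an injective $R(S^1)$-module homomorphism
\[
\Phi \colon \Z[\ov{\sm}]/\langle \ov{\sm}^N - 1 \rangle \to K_0^{S^1}(A),
\]
where the $R(S^1) = \Z[\sm,\sm^{-1}]$-module structure on the source is via $\sm \mapsto \ov{\sm}$. On this quotient ring the element $\sm - 1$ acts as multiplication by $\ov{\sm} - 1$, and since $\ov{\sm}^N = 1$ with no lower-degree relation, $\ov{\sm} - 1$ is \emph{not} nilpotent: for every $j \in \N$, $(\ov{\sm}-1)^j \neq 0$ in $\Z[\ov{\sm}]/\langle \ov{\sm}^N - 1\rangle$ (indeed $\ov{\sm}-1$ is a non-zero-divisor in this ring after tensoring with $\Q$, as $\ov{\sm}^N - 1$ has the simple root structure making $\Q[\ov{\sm}]/\langle\ov{\sm}^N-1\rangle \cong \Q^N$, in which $\ov{\sm}-1$ has a single zero coordinate and $N-1$ nonzero ones). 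Because $\Phi$ is injective and $R(S^1)$-linear, it follows that $(\sm-1)^j$ does not annihilate the image of $\Phi$ in $K_0^{S^1}(A)$ for any $j$, so no power of $I(S^1)$ annihilates $K_0^{S^1}(A)$. This contradicts the constraint from Step one, completing the proof.

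The main obstacle I anticipate is Step one: pinning down the exact K-theoretic obstruction for $S^1$-actions with finite Rokhlin dimension with commuting towers in the form I need, and citing it correctly. The cleanest route is to use the result already alluded to in the introduction (Corollary 4.23 and the surrounding machinery of~\cite{Gar_rokhlin_2017}), which gives precisely that finite Rokhlin dimension with commuting towers of an $S^1$-action forces $I(S^1)$ to act nilpotently on $K_*^{S^1}(A)$ (equivalently, $K_*^{S^1}(A)$ is $I(S^1)$-adically complete with trivial higher quotients). If the reference gives the obstruction only in a slightly different but equivalent formulation — for instance in terms of the localization of $K_*^{S^1}(A)$ at the identity character, or in terms of the action being ``approximately representable'' — then I would insert a short lemma translating that formulation into the nilpotency-of-$I(S^1)$ statement, and then the algebraic contradiction above goes through verbatim. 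The verification that $\ov{\sm}-1$ is not nilpotent in $\Z[\ov{\sm}]/\langle \ov{\sm}^N-1\rangle$ is elementary and would be dispatched in a single sentence.
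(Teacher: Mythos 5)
Your proposal is correct and follows essentially the same route as the paper: cite the equivariant K-theory obstruction from \cite{Gar_rokhlin_2017} (the precise statement is Corollary~4.5 there, giving $I(S^1)^n K_0^{S^1}(A) = 0$), then use the injective $R(S^1)$-module map of Lemma~\ref{L_1918_MapRN} and the non-nilpotency of $\ov{\sm} - 1$ in $\Z[\ov{\sm}]/\langle \ov{\sm}^N - 1\rangle$ — the paper's official proof instead invokes Theorem 1.1.1 of \cite{Phl1} via non-freeness, but it records your direct algebraic argument as an equivalent alternative immediately afterward. One small slip: $\Q[\ov{\sm}]/\langle \ov{\sm}^N - 1\rangle$ is a product of cyclotomic fields $\prod_{d \mid N} \Q(\zeta_d)$, not $\Q^N$ (you would need to tensor with $\mathbb{C}$ for that), but since it is still a product of fields in which $\ov{\sm} - 1$ has a nonzero component (as $N \geq 2$), your conclusion that $\ov{\sm} - 1$ is not nilpotent stands.
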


\begin{proof}
Suppose $\af$ has finite Rokhlin dimension with commuting towers.
By Corollary 4.5 of~\cite{Gar_rokhlin_2017},
there is $n \in \N$ such that $I (S^1)^n K_0^{S^1} (A) = 0$.
Lemma~\ref{L_1918_MapRN} implies that
$I (S^1)^n R ( \Z_N) = 0$.
Lemma~\ref{L_1918_EqKgmt} then says that
\[
I (S^1)^n K_0^{S^1, {\overline{\gm}}} (C (S^1)) = 0.
\]
Since the underlying action of $S^1$ on $S^1$ is not free,
this contradicts Theorem 1.1.1 of~\cite{Phl1}.
\end{proof}

It is also not hard to prove directly that
$I (S^1)^n R ( \Z_N) \neq 0$ for all $n \in \N$.
With the notation of Lemma~\ref{L_1918_EqKSys},
there is a \hm{} $f \colon R ( \Z_N) \to {\mathbb{C}}$
such that $f ( \ov{\sm} ) = \om$.
Since $\sm - 1 \in I (S^1)$ and the map $R (S^1) \to R ( \Z_N)$
sends $\sm$ to $\ov{\sm}$, it is enough to show that
$f ( (\ov{\sm} - 1)^n) \neq 0$ for all $n \in \N$.
But ${\mathbb{C}}$ is a field and $f (\ov{\sm} - 1 ) \neq 0$.

\begin{thm}\label{T_1919_Ex}
In Construction~\ref{Cn_1824_S1An}, choose $r_0 (0) = r_1 (0) = 1$
and for $n \in \Nz$ choose
\[
l_{0, 0} (n) = l_{1, 0} (n) = 1
\andeqn
l_{0, 1} (n) = l_{1, 1} (n) = 2 n + 1.
\]
Then $A$ is simple, the action $\af$
of Construction \ref{Cn_1824_S1An}(\ref{Item_1824_S1_nu_A})
has the tracial Rokhlin property with comparison,
but $\af$ does not have
finite Rokhlin dimension with commuting towers.
\end{thm}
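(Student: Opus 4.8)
**The plan is to verify that the specific numerical choices in Theorem~\ref{T_1919_Ex} satisfy the hypotheses of the three preceding lemmas, and then simply assemble their conclusions.** The theorem is explicitly a corollary of the general construction together with Lemma~\ref{L_1916_Simple} (simplicity), Lemma~\ref{L_1917_HasTRP} (the tracial Rokhlin property with comparison), and Corollary~\ref{C_1918_NoFinRD} (no finite Rokhlin dimension with commuting towers). So the entire content of the proof is checking that the chosen data
\[
r_0 (0) = r_1 (0) = 1,
\qquad
l_{0, 0} (n) = l_{1, 0} (n) = 1,
\qquad
l_{0, 1} (n) = l_{1, 1} (n) = 2 n + 1
\]
meet each lemma's requirements. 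Simplicity requires no hypotheses on the $l_{j,k}(n)$ at all, since Lemma~\ref{L_1916_Simple} asserts that $A$ is always a simple AT~algebra for any admissible choice; thus simplicity is immediate.

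First I would verify the hypotheses of Lemma~\ref{L_1917_HasTRP}. We need $r_0 (0) \leq r_1 (0)$, which holds with equality since both equal~$1$. We need $l_{1, 0} (n) \geq l_{0, 0} (n)$ and $l_{1, 1} (n) \geq l_{0, 1} (n)$ for all $n$; here $l_{1, 0} (n) = l_{0, 0} (n) = 1$ and $l_{1, 1} (n) = l_{0, 1} (n) = 2n + 1$, so both hold with equality. Finally we need the two divergence conditions
\[
\lim_{n \to \I} \frac{l_{0, 1} (n)}{l_{0, 0} (n)} = \I
\andeqn
\lim_{n \to \I} \frac{l_{1, 1} (n)}{l_{1, 0} (n)} = \I.
\]
Both ratios equal $\frac{2n+1}{1} = 2n + 1$, which tends to~$\I$. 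Hence Lemma~\ref{L_1917_HasTRP} applies and $\af$ has the tracial Rokhlin property with comparison.

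Next I would verify the hypothesis of Lemma~\ref{L_1918_MapRN}, which feeds Corollary~\ref{C_1918_NoFinRD}: we need $l_{0, 1} (n)$ and $l_{1, 0} (n)$ both odd for every $n \in \Nz$. Here $l_{1, 0} (n) = 1$ is odd, and $l_{0, 1} (n) = 2n + 1$ is odd. So the hypotheses of Lemma~\ref{L_1918_MapRN} hold, and Corollary~\ref{C_1918_NoFinRD} gives that $\af$ does not have finite Rokhlin dimension with commuting towers. Collecting the three conclusions completes the proof.

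There is essentially no main obstacle: the difficulty of this example lives entirely in the preceding lemmas (the isomorphism $\ps$ of Lemma~\ref{L_1824_R_T}, the simplicity criterion via $L_\Ph$-density in Lemma~\ref{L_1916_Simple}, the trace estimates in Lemma~\ref{L_1917_HasTRP}, and the equivariant K-theory computation culminating in Corollary~\ref{C_1918_NoFinRD}). The only point meriting a word of care is that several hypotheses are satisfied \emph{with equality} rather than strict inequality; since Lemma~\ref{L_1917_HasTRP} asks only for $\geq$ in the monotonicity conditions and for divergence of the ratios (which does hold), this causes no problem. One should also note in passing that, as remarked after Lemma~\ref{L_1918_MapRN}, these hypotheses are far stronger than necessary, so the simple choice above is comfortably inside the admissible range.
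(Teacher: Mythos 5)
Your proposal is correct and follows exactly the route of the paper's own (very short) proof: simplicity from Lemma~\ref{L_1916_Simple}, the tracial Rokhlin property with comparison from Lemma~\ref{L_1917_HasTRP}, and the failure of finite Rokhlin dimension with commuting towers from Corollary~\ref{C_1918_NoFinRD} via the oddness hypothesis of Lemma~\ref{L_1918_MapRN}. Your explicit verification of the numerical hypotheses is accurate; the paper simply states that the lemmas apply.
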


\begin{proof}
Simplicity is Lemma~\ref{L_1916_Simple}.
With these choices, both Lemma~\ref{L_1917_HasTRP}
and Corollary~\ref{C_1918_NoFinRD} apply.
\end{proof}

We now address the modified tracial Rokhlin property.

\begin{lem}\label{P_1929_RComm}
Let the notation be as in Construction~\ref{Cn_1824_S1An}.
Let $n \in \Nz$.
Define
\[
D_1 = M_{l_{0, 0} (n + 1) l_{0, 0} (n) + l_{0, 1} (n + 1) l_{1, 0} (n)},
\qquad
D_2 = M_{l_{0, 1} (n + 1) l_{1, 0} (n)},
\]
\[
D_3 = M_{l_{0, 0} (n + 1) l_{0, 1} (n)
           + 2 N l_{0, 1} (n + 1) l_{1, 1} (n)},
\qquad
D_4 = M_{l_{1, 0} (n + 1) l_{0, 0} (n)
           + 2 N l_{1, 1} (n + 1) l_{1, 0} (n)},
\]
and
\[
D_5 = M_{N l_{1, 0} (n + 1) l_{0, 1} (n)
       + 4 N^2 l_{1, 1} (n + 1) l_{1, 1} (n)},
\]
and define
\begin{equation}\label{Eq_1929_mj}
m_1 = N, \qquad
m_2 = N^2 - N, \qquad
m_3 = N, \qquad
m_4 = N, \andeqn
m_5 = 1.
\end{equation}
Set
\begin{equation}\label{Eq_1929_D}
D = (D_1)^{m_1} \oplus (D_2)^{m_2} \oplus (D_3)^{m_3}
  \oplus (D_4)^{m_4} \oplus (D_5)^{m_5},
\end{equation}
and for $k \in \{ 1, 2, 3, 4, 5 \}$ and $j = 1, 2, \ldots, m_j$,
let $\pi_{k, j} \colon D \to D_k$ be the projection to the
$j$~summand of $D_k$ in the definition of~$D$.
As usual, write the relative commutant of
$\nu_{n + 2, \, n} \bigl( (A_n)^{\af^{(n)}} \bigr)$
in $(A_{n + 2})^{\af^{(n + 2)}}$
as
$\nu_{n + 2, \, n} \bigl( (A_n)^{\af^{(n)}} \bigr)'
    \cap (A_{n + 2})^{\af^{(n + 2)}}$.
Then
\[
\nu_{n + 1} (p_{n + 1})
  \in \nu_{n + 2, \, n} \bigl( (A_n)^{\af^{(n)}} \bigr)'
    \cap (A_{n + 2})^{\af^{(n + 2)}},
\]
and there is an isomorphism
\[
\kp \colon \nu_{n + 2, \, n} \bigl( (A_n)^{\af^{(n)}} \bigr)'
    \cap (A_{n + 2})^{\af^{(n + 2)}}
  \to D
\]
such that for $k \in \{ 1, 2, 3, 4, 5 \}$ and $j = 1, 2, \ldots, m_j$,
we have
\[
\rank \bigl( (\pi_{1, j} \circ \kp \circ \nu_{n + 1}) (p_{n + 1}) \bigr)
 = \rank
    \bigl( (\pi_{2, j} \circ \kp \circ \nu_{n + 1}) (p_{n + 1}) \bigr)
 = l_{0, 1} (n + 1) l_{1, 0} (n),
\]
\[
\rank \bigl( (\pi_{3, j} \circ \kp \circ \nu_{n + 1}) (p_{n + 1}) \bigr)
 = 2 N l_{0, 1} (n + 1) l_{1, 1} (n),
\]
\[
\rank \bigl( (\pi_{4, j \circ \kp} \circ \nu_{n + 1}) (p_{n + 1}) \bigr)
 = 2 N l_{1, 1} (n + 1) l_{1, 0} (n),
\]
and
\[
\rank \bigl( (\pi_{5, j} \circ \kp \circ \nu_{n + 1}) (p_{n + 1}) \bigr)
 =  4 N^2 l_{1, 1} (n + 1) l_{1, 1} (n).
\]
\end{lem}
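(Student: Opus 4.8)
The plan is to transport the entire statement, via the isomorphisms $\et_m$ of Lemma~\ref{L_1916_FPisB}, to a finite-dimensional computation in the AF~system $(B_m)_{m \in \Nz}$, and then to extract everything from the partial embedding multiplicities of the maps $\ch_m$. I first dispose of the membership assertion directly: $p_{n+1}$ is the identity of the direct summand $A_{n+1, 1}$, hence central in $A_{n+1}$, so it commutes with $\nu_n(A_n)$, in particular with $\nu_n\bigl((A_n)^{\af^{(n)}}\bigr)$; applying the homomorphism $\nu_{n+1}$ shows that $\nu_{n+1}(p_{n+1})$ commutes with $\nu_{n+2, n}\bigl((A_n)^{\af^{(n)}}\bigr)$, and equivariance of $\nu_{n+1}$ (Lemma~\ref{L_1909_nu_e}) together with invariance of $p_{n+1}$ places $\nu_{n+1}(p_{n+1})$ in $(A_{n+2})^{\af^{(n+2)}}$. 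For the structural part, Lemma~\ref{L_1916_FPisB}(\ref{Item_1925_Comm}) shows that $\et_{n+2}$ carries $\ch_{n+2, n}(B_n)$ onto $\nu_{n+2, n}\bigl((A_n)^{\af^{(n)}}\bigr)$, hence carries the relative commutant $\ch_{n+2, n}(B_n)' \cap B_{n+2}$ isomorphically onto $\nu_{n+2, n}\bigl((A_n)^{\af^{(n)}}\bigr)' \cap (A_{n+2})^{\af^{(n+2)}}$; and by Lemma~\ref{L_1916_FPisB}(\ref{Item_1925_pn}) it sends $\ch_{n+1}(q_{n+1})$ to $\nu_{n+1}(p_{n+1})$. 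Thus it suffices to identify $\ch_{n+2, n}(B_n)' \cap B_{n+2}$ with $D$ and to compute the ranks of the blocks of $\ch_{n+1}(q_{n+1})$.

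Next I would read the two partial multiplicity matrices off Construction~\ref{Cn_1824_S1An}(\ref{Item_1824_S1_FixPtSys}). Index the simple summands of each $B_m$ by $\{ 0, 1, \ldots, N - 1 \}$, for the $N$ blocks $M_{r_0(m)}$ of $B_{m, 0}$, together with one extra symbol $\star$ for the block $B_{m, 1} = M_{r_1(m)}$. Writing $M^{(m)}_{c, b}$ for the multiplicity of summand $b$ of $B_m$ in summand $c$ of $B_{m+1}$, the four partial maps give: $\ch_{m, 0, 0} = (\id_{{\mathbb{C}}^N})_{r_0(m)}^{l_{0, 0}(m)}$ is diagonal in the ${\mathbb{C}}^N$-index, so $M^{(m)}_{k', k} = l_{0, 0}(m)$ if $k' = k$ and $0$ otherwise; $\ch_{m, 0, 1} = \mu_{r_1(m)}^{l_{0, 1}(m)}$ with $\mu$ the diagonal embedding ${\mathbb{C}} \to {\mathbb{C}}^N$ gives $M^{(m)}_{k, \star} = l_{0, 1}(m)$ for every $0$-block $k$; $\ch_{m, 1, 0} = \dt_{r_0(m)}^{l_{1, 0}(m)}$ with $\dt$ diagonal gives $M^{(m)}_{\star, k} = l_{1, 0}(m)$; and $\ch_{m, 1, 1}$ gives $M^{(m)}_{\star, \star} = 2 N l_{1, 1}(m)$. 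The unitality checks reproduce the recursion $r(m+1) = l(m) r(m)$ of~(\ref{Eq_1901_rn_dfm}).

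Then I would compose and apply the standard description of the relative commutant of a unital inclusion of finite-dimensional C*-algebras: if summand $a$ of $B_n$ enters summand $c$ of $B_{n+2}$ with total multiplicity $P_{c, a} = \sum_b M^{(n+1)}_{c, b} M^{(n)}_{b, a}$ (the sum over summands $b$ of $B_{n+1}$), then $\ch_{n+2, n}(B_n)' \cap B_{n+2} \cong \bigoplus_{c, a} M_{P_{c, a}}$, the sum over pairs with $P_{c, a} > 0$. Evaluating $P_{c, a}$ in the five cases --- $c, a$ a common $0$-block; two distinct $0$-blocks; a $0$-block and $\star$; $\star$ and a $0$-block; $\star$ and $\star$ --- reproduces exactly the block sizes $D_1, \ldots, D_5$ and the counts $m_1 = N$, $m_2 = N^2 - N$, $m_3 = N$, $m_4 = N$, $m_5 = 1$ of~(\ref{Eq_1929_mj}), and this provides the isomorphism $\kp$. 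For the ranks, refine the multiplicity space of the $(c, a)$-block according to the intermediate summand: the $c$-summand of $B_{n+2}$ decomposes as $\bigoplus_{a, b}\bigl( {\mathbb{C}}^{d_a} \otimes U_{b, a} \otimes W_{c, b} \bigr)$, where $d_a$ is the matrix size of summand $a$ of $B_n$, $\dim U_{b, a} = M^{(n)}_{b, a}$, and $\dim W_{c, b} = M^{(n+1)}_{c, b}$; the $(c, a)$-block acts on $V_{c, a} = \bigoplus_b ( U_{b, a} \otimes W_{c, b} )$, and $\ch_{n+1}(q_{n+1})$, being the image of the identity of the $\star$-summand of $B_{n+1}$, acts as the projection onto the single term $U_{\star, a} \otimes W_{c, \star}$. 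Hence its rank in the $(c, a)$-block is $M^{(n+1)}_{c, \star} M^{(n)}_{\star, a}$; substituting the multiplicities from the previous paragraph yields $l_{0, 1}(n+1) l_{1, 0}(n)$ for $D_1$ and $D_2$, then $2 N l_{0, 1}(n+1) l_{1, 1}(n)$, $2 N l_{1, 1}(n+1) l_{1, 0}(n)$, and $4 N^2 l_{1, 1}(n+1) l_{1, 1}(n)$ for $D_3$, $D_4$, $D_5$, as claimed.

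The main obstacle is the bookkeeping in the last two steps rather than any conceptual difficulty. One must keep the ${\mathbb{C}}^N$-index of the $N$ identical summands $M_{r_0(m)}$ straight --- the off-diagonal blocks $D_2$ exist precisely because $\mu$ and $\dt$ are diagonal, routing $\star$ into and out of all $N$ of the $0$-blocks, whereas $\ch_{m, 0, 0}$ stays on the diagonal --- and one must choose the tensor factorization of the multiplicity space compatibly with the two-step filtration $\ch_{n+2, n}(B_n) \subseteq \ch_{n+1}(B_{n+1}) \subseteq B_{n+2}$, so that $\ch_{n+1}(q_{n+1})$ genuinely cuts out the factor $W_{c, \star}$ and the asserted ranks follow.
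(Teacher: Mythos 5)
Your proposal is correct and follows essentially the same route as the paper's proof: reduce to the AF system $(B_m)$ via Lemma~\ref{L_1916_FPisB}, get the membership assertion from centrality of the summand identity, compute the composite partial multiplicities $\sum_b M^{(n+1)}_{c,b} M^{(n)}_{b,a}$ in the five cases, apply the standard description of the relative commutant of a unital inclusion of finite dimensional C*-algebras, and identify the rank of the image of $q_{n+1}$ in each block as the contribution factoring through the $\star$-summand of $B_{n+1}$. Your explicit tensor decomposition of the multiplicity spaces in the last step just spells out what the paper leaves as ``an easy calculation.''
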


\begin{proof}
Let the notation be as in parts
(\ref{Item_1824_S1_Bn0Bn1}), (\ref{Item_1824_S1_FixPtSys}),
(\ref{Item_1824_S1_FixLim}), and~(\ref{Item_1915_S1_qn})
of Construction~\ref{Cn_1824_S1An}.
By Lemma~\ref{L_1916_FPisB}, it is enough to prove the lemma
with $B_t$ in place of $(A_t)^{\af^{(n)}}$
for $t = n, \, n + 1, \, n + 1$,
with $\ch_{t}$ and $\ch_{t, s}$ in place of $\nu_{t}$ and $\nu_{t, s}$,
and with $q_{n + 1}$ in place of $p_{n + 1}$.

First, since $q_{n + 1}$ is in the center of $B_{n + 1}$,
it commutes with the range of $\ch_{n}$.
So $\ch_{n + 1} (p_{n + 1}) \in \ch_{n + 2, \, n} (B_n)' \cap B_{n + 2}$
is clear.

We change to more convenient notation
for the structure of the algebras~$B_t$.
Write
\[
B_t = B_{t, 0} \oplus B_{t, 1}
   \oplus \cdots \oplus B_{t, N - 1} \oplus B_{t, N},
\]
with
\[
B_{t, 0} = B_{t, 1} = \cdots = B_{t, N - 1} = M_{r_0 (n)}
\andeqn
B_{t, N} = M_{r_1 (n)}.
\]
Thus $B_{t, 0} \oplus B_{t, 1} \oplus \cdots \oplus B_{t, N - 1}$
is what was formerly called $B_{t, 0}$,
and $B_{t, N}$ is what was formerly called $B_{t, 1}$.
The partial multiplicities in $\ch_t$
of the maps $B_{t, j} \to B_{t + 1, k}$ are
\[
m_t (k, j) = \begin{cases}
   l_{0, 0} (t) & \hspace*{1em} j, k \in \{ 0, 1, \ldots, N - 1 \}
        \\
   l_{0, 1} (t) & \hspace*{1em}
      {\mbox{$j = N$ and $k \in \{ 0, 1, \ldots, N - 1 \}$}}
        \\
   l_{1, 0} (t) & \hspace*{1em}
      {\mbox{$j \in \{ 0, 1, \ldots, N - 1 \}$ and $k = N$}}
       \\
   2 N l_{1, 1} (t) & \hspace*{1em} j - k = N.
\end{cases}
\]
An easy calculation now
shows that the partial multiplicities in $\ch_{n + 2, n}$
of the maps $B_{n, j} \to B_{n + 2, k}$ are
\begin{equation}\label{Eq_1929_Cases}
\widetilde{m} (k, j) = \begin{cases}
& \hspace*{-1em}
l_{0, 0} (n + 1) l_{0, 0} (n) + l_{0, 1} (n + 1) l_{1, 0} (n)
\\
& \hspace*{10em} {\mbox{}}
      {\mbox{$j, k \in \{ 0, 1, \ldots, N - 1 \}$ and $j = k$}}
        \\
& \hspace*{-1em}
l_{0, 1} (n + 1) l_{1, 0} (n)
\\
& \hspace*{10em} {\mbox{}}
      {\mbox{$j, k \in \{ 0, 1, \ldots, N - 1 \}$ and $j \neq k$}}
        \\
& \hspace*{-1em}
l_{0, 0} (n + 1) l_{0, 1} (n)
           + 2 N l_{0, 1} (n + 1) l_{1, 1} (n)
\\
& \hspace*{10em} {\mbox{}}
      {\mbox{$j = N$ and $k \in \{ 0, 1, \ldots, N - 1 \}$}}
        \\
& \hspace*{-1em}
l_{1, 0} (n + 1) l_{0, 0} (n)
           + 2 N l_{1, 1} (n + 1) l_{1, 0} (n)
\\
& \hspace*{10em} {\mbox{}}
      {\mbox{$j \in \{ 0, 1, \ldots, N - 1 \}$ and $k = N$}}
       \\
& \hspace*{-1em}
N l_{1, 0} (n + 1) l_{0, 1} (n)
       + 4 N^2 l_{1, 1} (n + 1) l_{1, 1} (n)
\\
& \hspace*{10em} {\mbox{}} j - k = N.
\end{cases}
\end{equation}
Recall that if
\[
\ph \colon M_{r_1} \oplus M_{r_2} \oplus \cdots \oplus M_{r_s}
 \to M_{l_1 r_1 + l_2 r_2 + \cdots + l_2 r_2}
\]
is unital with partial multiplicities $l_1, l_2, \ldots, l_s$,
then the relative commutant of the range of $\ph$ is isomorphic to
$M_{l_1} \oplus M_{l_2} \oplus \cdots \oplus M_{l_s}$,
with the identity of $M_{l_t}$ being,
by abuse of notation, $\ph (1_{M_{l_t}})$.
Therefore the values of ${\widetilde{m}} (k, j)$ are the matrix sizes
of the summands in $\ch_{n + 2, \, n} (B_n)' \cap B_{n + 2}$.
That the exponents $m_j$ in~(\ref{Eq_1929_D}) are given as
in~(\ref{Eq_1929_mj}) follows simply by counting the number of
times each case in~(\ref{Eq_1929_Cases}) occurs.

The rank of the image of $q_{n + 1}$ in each summand is the
contribution to ${\widetilde{m}} (k, j)$ from maps factoring
through $B_{n + 1, N}$ (in the original notation, $B_{n + 1, 1}$),
as opposed to the other summands.
That these numbers are in the statement of the lemma is again
an easy calculation.
\end{proof}

\begin{prp}\label{L_1X05_HasMdTRP}
In Construction~\ref{Cn_1824_S1An}, assume that $r_0 (0) \leq r_1 (0)$
and that for all $n \in \Nz$
we have
\[
l_{1, 0} (n) \geq l_{0, 0} (n),
\qquad
l_{0, 1} (n) \geq l_{0, 0} (n),
\]
\[
l_{1, 1} (n) \geq l_{0, 1} (n),
\andeqn
l_{1, 1} (n) \geq l_{1, 0} (n).
\]
Further assume that
\[
\lim_{n \to \I} \frac{l_{0, 1} (n)}{l_{0, 0} (n)} = \I
\andeqn
\lim_{n \to \I} \frac{l_{1, 1} (n)}{l_{1, 0} (n)} = \I.
\]
Then the action $\af$
of Construction \ref{Cn_1824_S1An}(\ref{Item_1824_S1_nu_A})
has the tracial Rokhlin property with comparison
and has the modified tracial Rokhlin property
as in Definition~\ref{moditra}.
Moreover, given finite sets $F \subseteq A$, $F_0 \subseteq A^{\af}$,
and $S \subseteq C (G)$, as well as $\ep > 0$,
$x \in A_{+}$ with $\| x \| = 1$,
and $y \in (A^{\alpha})_{+} \SM \{ 0 \}$,
it is possible to choose a projection $p \in A^{\alpha}$,
a unital completely positive
contractive map $\ph \colon C (G) \to p A p$,
and a partial isometry $s \in A^{\alpha}$,
such that the conditions of both
Definition~\ref{traR} and Definition~\ref{moditra}
are simultaneously satisfied.
\end{prp}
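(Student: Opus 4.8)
The plan is to build the projection $p$ and the map $\ph$ exactly as in the proof of Lemma~\ref{L_1917_HasTRP} (the hypotheses here are strictly stronger than those there, adding $l_{0,1}(n) \geq l_{0,0}(n)$ and $l_{1,1}(n) \geq l_{1,0}(n)$), and then to produce the partial isometry $s$ of Definition~\ref{moditra} inside a relative commutant, so that it automatically commutes exactly with a cofinal subalgebra of $A^{\af}$ and hence approximately commutes with any prescribed finite subset. The only genuinely new content beyond Lemma~\ref{L_1917_HasTRP} is the construction of $s$; everything else, namely the approximate equivariant central multiplicativity of $\ph$, the comparisons $1 - p \precsim_A x$ and $1 - p \precsim_{A^{\af}} y$, and the fact that condition~(\ref{Item_902_pxp_TRP}) of Definition~\ref{traR} may be disregarded because $A$ is stably finite, is taken verbatim from there. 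Since a single simultaneous choice of $(p, \ph, s)$ yields both definitions, it suffices to prove the ``Moreover'' statement.

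First I would shift the index to line up with Lemma~\ref{P_1929_RComm}. Given $F, F_0, S, \ep, x, y$, I would choose $n \in \Nz$ large enough that every element of $F$ is within $\tfrac{\ep}{3}$ of $\nu_{\I, n+1}(A_{n+1})$, every element of $F_0$ is within $\tfrac{\ep}{3}$ of $\nu_{\I, n}\bigl( (A_n)^{\af^{(n)}} \bigr)$, and the trace bound~(\ref{Eq_1916_TrBd}) applied at stage $n+1$ is as small as required; the last is possible since the limit hypotheses force $\max\bigl( l_{0,0}(n+1)/l_{0,1}(n+1), \, l_{1,0}(n+1)/l_{1,1}(n+1) \bigr) \to 0$. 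I then set $p = \nu_{\I, n+1}(p_{n+1})$ and $\ph = \nu_{\I, n+1} \circ \ph_0$, with $\ph_0 \colon C (S^1) \to p_{n+1} A_{n+1} p_{n+1}$ given by $\ph_0 (g) = (0, \, 1 \otimes g)$. Then $\ph$ is an equivariant unital homomorphism commuting exactly with $\nu_{\I, n+1}(A_{n+1})$, hence approximately commuting with $F$, and the two comparison conditions follow from strict comparison of the simple AT and AF algebras $A$ and $A^{\af}$ together with the trace estimates, just as before.

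To construct $s$, I would work inside the finite dimensional algebra $D$ of Lemma~\ref{P_1929_RComm}, identified via $\kp$ with $\nu_{n+2, n}\bigl( (A_n)^{\af^{(n)}} \bigr)' \cap (A_{n+2})^{\af^{(n+2)}}$. That lemma places $\nu_{n+1}(p_{n+1})$ in this relative commutant and records the rank of its image in each simple summand. In every summand the rank of the complement $1 - \nu_{n+1}(p_{n+1})$ is at most the rank of $\nu_{n+1}(p_{n+1})$: in the summands $(D_2)^{m_2}$ the complement is $0$, while in those built from $D_1, D_3, D_4, D_5$ the inequalities
\[
l_{0,0}(n+1) l_{0,0}(n) \leq l_{0,1}(n+1) l_{1,0}(n),
\qquad
l_{0,0}(n+1) l_{0,1}(n) \leq 2 N l_{0,1}(n+1) l_{1,1}(n),
\]
\[
l_{1,0}(n+1) l_{0,0}(n) \leq 2 N l_{1,1}(n+1) l_{1,0}(n),
\qquad
N l_{1,0}(n+1) l_{0,1}(n) \leq 4 N^2 l_{1,1}(n+1) l_{1,1}(n)
\]
all follow at once from $l_{0,1}(n) \geq l_{0,0}(n)$, $l_{1,0}(n) \geq l_{0,0}(n)$, $l_{1,1}(n) \geq l_{0,1}(n)$, and $l_{1,1}(n) \geq l_{1,0}(n)$. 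Choosing in each summand a partial isometry carrying the range of $1 - \nu_{n+1}(p_{n+1})$ into the range of $\nu_{n+1}(p_{n+1})$ and summing, I obtain $s_0 \in D \S (A_{n+2})^{\af^{(n+2)}}$ with $s_0^* s_0 = 1 - \nu_{n+1}(p_{n+1})$ and $s_0 s_0^* \leq \nu_{n+1}(p_{n+1})$. Setting $s = \nu_{\I, n+2}(s_0) \in A^{\af}$ then gives $s^* s = 1 - p$ and $s s^* \leq p$ (which also yields Definition~\ref{traR}(\ref{1_ppcompactsets}) for free); moreover, since $s_0$ commutes exactly with $\nu_{n+2, n}\bigl( (A_n)^{\af^{(n)}} \bigr)$, the element $s$ commutes exactly with $\nu_{\I, n}\bigl( (A_n)^{\af^{(n)}} \bigr)$, whence $\| s a - a s \| < \ep$ for all $a \in F_0$ by the choice of $n$.

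The hard part will be the index bookkeeping needed to make one pair $(p, \ph)$ and one $s$ meet all conditions of both definitions at once: the partial isometry is most naturally described at stages $n, n+1, n+2$ via the relative commutant, while $p$ and $\ph$ live at stage $n+1$, and the three finite approximation requirements (for $F$, for $F_0$, and for the trace bound) must be reconciled by a single choice of $n$. The rank comparisons above, though a short computation, are precisely where the strengthened hypotheses of this proposition are used, as opposed to those of Lemma~\ref{L_1917_HasTRP}; checking them in the $D_3$, $D_4$, and $D_5$ summands, where the extra factors of $N$ appear, is the substantive step.
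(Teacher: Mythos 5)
Your proposal is correct and follows essentially the same route as the paper: disregard condition~(\ref{Item_902_pxp_TRP}) by finiteness, reuse the construction of $p$ and $\ph$ from Lemma~\ref{L_1917_HasTRP}, and obtain $s_0$ by rank comparison in the relative commutant of Lemma~\ref{P_1929_RComm}, so that $s = \nu_{\I, n+2}(s_0)$ commutes exactly with a cofinal subalgebra of $A^{\af}$. You have in fact written out the summand-by-summand rank inequalities that the paper dismisses as ``an easy computation,'' and your index $\nu_{\I, n+2}(s_0)$ corrects what appears to be a typo ($\nu_{\I, n+1}$) in the paper's proof.
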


As usual, the hypotheses are overkill.

\begin{proof}[Proof of Proposition~\ref{L_1X05_HasMdTRP}]
Since $A$ is finite, as usual, the argument of
Lemma~1.16 of~\cite{phill23} applies, and shows that it suffices
to verify this without the condition $\| p x p \| > 1 - \ep$,
simultaneously in both definitions.

The method used for
the proof of Lemma~\ref{L_1917_HasTRP} also applies here.
The key new point is that in every summand of
\[
\nu_{n + 2, \, n} \bigl( (A_n)^{\af^{(n)}} \bigr)'
    \cap (A_{n + 2})^{\af^{(n + 2)}}
\]
as described in Lemma~\ref{P_1929_RComm}, the rank of the
component of $\nu_{n + 1} (p_{n + 1})$
is greater than half the corresponding matrix size.
Therefore the rank of
the component of $1 - \nu_{n + 1} (p_{n + 1})$
is less than the rank of the component of $\nu_{n + 1} (p_{n + 1})$.
Rank comparison implies that there exists
\[
s_{0} \in \nu_{n + 2, \, n} \bigl( (A_n)^{\af^{(n)}} \bigr)'
    \cap (A_{n + 2})^{\af^{(n + 2)}}
\]
such that
\[
1 - \nu_{n + 1} (p_{n + 1}) = s_{0} ^{*} s_{0}
\andeqn
s_{0} s_{0} ^{*} \leq \nu_{n + 1} (p_{n + 1}).
\]
Now set $s = \nu_{\infty, n + 1} (s_{0})$.
The rest of the proof is an easy computation.
\end{proof}

\section{Actions of $S^1$ on Kirchberg algebras}\label{Sec_2114_OI}

\indent
The purpose of this section is to construct a action of $S^1$ on $\OI$
which has the tracial Rokhlin property with comparison.
As observed in Lemma~\ref{2610_No_fin_RD} below
(really just Corollary 4.23 of \cite{Gar_rokhlin_2017}),
there is no action of $S^1$
on $\OI$ which has finite Rokhlin dimension with commuting towers.
Although we don't carry this out,
easy modifications should work for ${\mathcal{O}}_n$ in place of~$\OI$,
and it should also be not too hard to generalize the construction
to cover actions of $(S^1)^m$ and $(S^1)^{\Z}$.

Tensoring with our action gives an action on any unital Kirchberg algebra
which has the tracial Rokhlin property with comparison
(Definition~\ref{traR})
except for condition (\ref{Item_902_pxp_TRP})
(the one which says $\| p x p \| > 1 - \varepsilon$).
Recall (see Remark~\ref{R_2015_nzp} and Remark~\ref{L_2015_S4_nzp})
that this condition is not needed for any of the
permanence properties we prove for fixed point algebras
and crossed products
of actions with the tracial Rokhlin property with comparison.

\begin{lem}\label{L_2521_Exist}
There exist an action $\bt \colon S^1 \to \Aut (\OI)$,
a nonzero \pj{} $p \in \OI^{\bt}$, and a unital \hm{}
$\io \colon C (S^1, \OT) \to p \OI p$ such that $\io$ is equivariant
when $C (S^1, \OT)$ is equipped with the action
(following Notation~\ref{N_1X07_Lt})
$\zt \mapsto \Lt_{\zt} \otimes \id_{\OT}$,
and such that $p$ has the following property.
For every $n \in \N$ and every \pj{} $q \in M_n \otimes \OI^{\bt}$,
there is $t \in M_n \otimes \OI^{\bt}$
such that $\| t \| = 1$ and $t^* (e_{1, 1} \otimes p) t = q$.
\end{lem}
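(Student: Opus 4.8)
The plan is to isolate the combinatorial content of the last sentence and reduce it to a single structural fact: that the fixed point algebra $\OI^{\bt}$ is purely infinite and simple. Indeed, if $\OI^{\bt}$ is purely infinite and simple, then so is $M_n \otimes \OI^{\bt}$, and $e_{1,1} \otimes p$ is a nonzero projection there. For any projection $q$, pure infiniteness (cf.\ Lemma~\ref{L_1Z11_pi}) gives $q \precsim_{M_n \otimes \OI^{\bt}} e_{1,1} \otimes p$, realized by a partial isometry $v$ with $v^{*} v = q$ and $v v^{*} \leq e_{1,1} \otimes p$; since $e_{1,1}\otimes p$ is a projection dominating $vv^{*}$ one checks $(e_{1,1}\otimes p) v = v$, whence $t = v$ satisfies $t^{*}(e_{1,1}\otimes p) t = q$ and $\|t\| = 1$ when $q \neq 0$. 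For this I will arrange that $p$ is a \emph{proper} nonzero projection, so that in the case $q = 0$ one may instead take any norm-one $t$ with $(e_{1,1}\otimes p)t = 0$. Thus the whole lemma reduces to producing $(\OI,\bt)$ with $\OI^{\bt}$ purely infinite and simple, together with the equivariant tower $\io$.

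For the construction I would use the dual-action picture, which makes purely-infinite simplicity of the fixed point algebra automatic. Choose a unital Kirchberg algebra $D$ in the UCT class and an aperiodic (pointwise outer) automorphism $\af \in \Aut(D)$, and put $\bt = \widehat{\af}$, the dual action of $S^{1} = \widehat{\Z}$ on $D \rtimes_{\af} \Z$; then $(D\rtimes_{\af}\Z)^{\bt} = D$ is purely infinite and simple by construction. The point is to choose the $K$-theory of $(D,\af)$ so that $D \rtimes_{\af} \Z \cong \OI$. By the Pimsner--Voiculescu sequence, when $K_1(D) = 0$ one has $K_0(D\rtimes_\af\Z) = \coker(1 - \af_{*})$ and $K_1(D\rtimes_\af\Z) = \ker(1-\af_{*})$; taking $K_0(D) = \Z^{(\Z)}$ and $\af_{*}$ the bilateral shift makes $1 - \af_{*}$ injective with cokernel $\Z$, so that $K_0 = \Z$ and $K_1 = 0$, and one arranges $[1_D]$ to map to a generator. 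Aperiodicity makes $D \rtimes_{\af}\Z$ a unital Kirchberg algebra, so Kirchberg--Phillips identifies it with $\OI$; realizing $D$ and an aperiodic $\af$ with the prescribed $K$-data is exactly the range-of-invariants and automorphism-realization machinery for Kirchberg algebras.

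To build $\io$ I would first fix a nonzero projection $p \in D$ with $[p] = 0$ in $K_0(D)$ (such $p$ exist in any properly infinite simple algebra, and $p$ is automatically proper). Then $[\af(p)] = \af_{*}[p] = 0 = [p]$, so there is a partial isometry $d \in D$ with $d^{*} d = \af(p)$ and $d d^{*} = p$, and $w = d u$, with $u$ the canonical unitary of the crossed product, is a unitary in the corner $p(D\rtimes_\af\Z)p = p \OI p$ lying in the degree-one spectral subspace of $\bt$; in particular $w$ has full spectrum, so $C^{*}(w) \cong C(S^{1})$. Choosing a unital embedding $\io_0 \colon \OT \to pDp$, the elements $w$ and $\io_0(\OT)$ commute precisely when the automorphism $\Ad(d)\circ\af$ of $pDp$ restricts to the identity on $\io_0(\OT)$; granting this, $\io(z \otimes 1) = w$ and $\io(1 \otimes b) = \io_0(b)$ (after possibly replacing $w$ by $w^{*}$ to match the convention for $\Lt$) define a unital equivariant homomorphism $\io \colon (C(S^{1},\OT),\, \Lt \otimes \id) \to p\OI p$, as required.

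The hard part will be the compatibility in the previous paragraph: producing a unital copy of $\OT$ inside the corner $pDp$ that is fixed pointwise by an automorphism whose $K$-theory must be the nontrivial shift. In a \emph{simple} algebra one cannot fix a \emph{full} corner pointwise while moving $K$-theory, which is why it is essential that $[p] = 0$ and that $\OT$ is $KK$-trivial; the latter means the embedding $\io_0$ carries no $K$-theoretic information, so there is no obstruction, and the copy of $\OT$ together with the prescribed corner automorphism can be constructed simultaneously by a relative Kirchberg--Phillips intertwining that fixes $\io_0(\OT)$. An alternative, more hands-on route, parallel to Construction~\ref{Cn_1824_S1An}, is to realize $\OI$ as an equivariant inductive limit of blocks $C(S^{1},\OI)$ carrying rotation actions of two different winding numbers, with mixing connecting maps; there the fixed point algebra is automatically $\varinjlim \OI$, hence purely infinite and simple, the tower $\io$ is visible in a single block, and the work shifts to a DNNP-type spectral-density argument for simplicity together with a $K$-theory computation (killing $K_1$, keeping $K_0 = \Z$) identifying the limit with $\OI$ via Kirchberg--Phillips.
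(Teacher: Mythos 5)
Your reduction of the final comparison property to pure infiniteness and simplicity of the fixed point algebra is sound (including the $q = 0$ edge case), and is close in spirit to what the paper does, although the paper is more economical: it never proves that $\OI^{\bt}$ itself is purely infinite and simple, but instead locates the isometry inside the smaller subalgebra $(\OI \otimes \OI \otimes \OI)^{\widetilde{\bt}} = \OI^{\gm} \otimes \OI^{\ov{\gm}} \otimes \OI$, the fixed points of an auxiliary action of $S^1 \times S^1$, which is visibly a tensor product of purely infinite simple algebras (using Katsura's results on quasifree actions for the factors $\OI^{\gm}$ and $\OI^{\ov{\gm}}$). The fatal problem is your main construction. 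If $D$ is \emph{unital} and $\af \in \Aut(D)$, then $\af_*[1_D] = [1_D]$, so $[1_D] \in \Ker(1 - \af_*)$ on $K_0(D)$. The Pimsner--Voiculescu sequence forces $\Ker(1 - \af_*|_{K_0(D)}) = 0$ whenever $K_1(D \rtimes_{\af} \Z) = 0$ and $\coker(1-\af_*|_{K_1(D)})=0$, so $[1_D] = 0$, and hence $[1_{D \rtimes_{\af} \Z}]$, being the image of $[1_D]$, is $0$ in $K_0(D \rtimes_{\af} \Z)$. Since $[1_{\OI}]$ is a \emph{generator} of $K_0(\OI) \cong \Z$, no crossed product of a unital C*-algebra by $\Z$ is isomorphic to $\OI$; in particular your choice $K_0(D) = \Z^{(\Z)}$ with $\af_*$ the bilateral shift is internally inconsistent, because the shift fixes no nonzero finitely supported sequence and therefore cannot fix $[1_D]$ unless $[1_D]=0$. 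The dual-action route simply cannot produce the pair $(\OI, \bt)$.

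The second gap is the construction of $\io$: you correctly identify that one needs a unital copy of $\OT$ in the corner commuting with a unitary $w$ in the degree-one spectral subspace, but you defer this to an unspecified ``relative Kirchberg--Phillips intertwining,'' and in your dual-action picture this amounts to finding a corner of $D$ on which $\Ad(d) \circ \af$ acts trivially while $\af_*$ is required to be $K$-theoretically nontrivial --- a genuinely delicate (and, given the point above, moot) task. The paper avoids this entirely by a structural trick: it works in $\OI \otimes \OI \otimes \OI$ with the action $\gm_{\zt} \otimes \ov{\gm}_{\zt} \otimes \id_{\OI}$, builds the unitary $u$ explicitly from the generating isometries in the first two tensor factors (so that $\bt^{(0)}_{\zt}(u) = \zt u$ by direct computation), and places the copy of $\OT$ in the corner $e \OI e$ of the \emph{third} factor, on which the action is trivial; commutation with $u$ and invariance under the action are then automatic, and the embedding $\OT \to e\OI e$ exists simply because $[e] = 0$ in $K_0(\OI)$. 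If you want to salvage your outline, you should replace the dual-action construction by something of this tensor-product type (or otherwise produce an action of $S^1$ on $\OI$, not merely a $\Z$-crossed-product presentation).
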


\begin{proof}
Write $s_1, s_2, \ldots$ for the standard generating isometries
in~$\OI$.
Also write $w$ for the standard generating unitary in $C (S^1)$,
which is the function $w (z) = z$ for $z \in S^1$.
When convenient, identify $C (S^1, \OT)$ with $C (S^1) \otimes \OT$.

Let $\gm \colon S^1 \to \Aut (\OI)$ be the quasifree action
(in the sense of~\cite{Kts2})
determined by, for $\zt \in S^1$,
\[
\gm_{\zt} (s_j)
 = \begin{cases}
   s_j & \hspace*{1em} j = 1
        \\
   \zt s_j & \hspace*{1em} j = 2
       \\
   \zt^{-1} s_j & \hspace*{1em} j = 3
       \\
   s_j & \hspace*{1em} j = 4, 5, \ldots,
\end{cases}
\]
and let ${\ov{\gm}} \colon S^1 \to \Aut (\OI)$
be the quasifree action $\zt \mapsto \gm_{\zt}^{- 1}$.
Define an action
$\bt^{(0)} \colon S^1 \to \Aut (\OI \otimes \OI \otimes \OI)$ by
$\bt^{(0)}_{\zt} = \gm_{\zt} \otimes {\ov{\gm}}_{\zt} \otimes \id_{\OI}$
for $\zt \in S^1$.

In $\OI$, define
\[
e = 1 - s_1 s_1^*,
\qquad
v_1 = s_2 e,
\andeqn
v_2 = 1 - s_1 s_1^* - s_2 s_2^* + s_2 s_1 s_2^*.
\]
In $\OI \otimes \OI \otimes \OI$, define
\[
p_0 = e \otimes e \otimes e
\andeqn
u = v_1 \otimes v_2^* \otimes e + v_2 \otimes v_1^* \otimes e.
\]
One checks that $u$ is a unitary in $e (\OI \otimes \OI \otimes \OI) e$,
and that
\begin{equation}\label{Eq_2605_Act_u}
\bt^{(0)}_{\zt} (u) = \zt u
\end{equation}
for all $\zt \in S^1$.
Since $[e] = 0$ in $K_0 (\OI)$, there is a unital \hm{}
$\mu \colon \OT \to e \OI e$.
Since $u$ commutes with $e \otimes e \otimes \mu (a)$ for all $a \in \OT$,
there is a \uhm{}
$\io_0 \colon C (S^1) \otimes \OT \to p_0 (\OI \otimes \OI \otimes \OI) p_0$
such that $\io_0 (w \otimes 1) = u$ and
$\io_0 (1 \otimes a) = e \otimes e \otimes \mu (a)$ for $a \in \OT$.
Then $\io_0$ is equivariant by~(\ref{Eq_2605_Act_u}).

Let $\sm \colon \OI \otimes \OI \otimes \OI \to \OI$ be an isomorphism.
Define $\io = \sm \circ \io_0$,
$p = \sm (p_0)$, and $\bt_{\zt} = \rh \circ \bt^{(0)}_{\zt} \circ \rh^{-1}$
for $\zt \in S^1$.
Then $\bt$ an action of $S^1$ on~$\OI$,
$p$ is a \pj{} in $\OI^{\bt}$, and
$\io$ is an equivariant unital \hm{} from $C (S^1, \OT)$ to $p \OI p$.

It remains to prove the last sentence.
For this purpose, it suffices to use
$\OI \otimes \OI \otimes \OI$ in place of $\OI$,
$\bt^{(0)}$ in place of $\bt$, and $p_0$ in place of~$p$.
We may also assume that $q = 1_{M_n} \otimes 1$ for some $n \in \N$.

We first claim that $\OI^{\gm}$ is purely infinite and simple.
One checks from the definition of $\om$-invariance for subsets of~$\Gm$
(Definition 3.3 of~\cite{Kts2}),
with $\om = (0, 1, -1, 0, 0, \ldots)$,
that, in our case, $\Z$ has no nontrivial invariant subsets.
So Proposition~7.4 of~\cite{Kts2} implies that $C^* (S^1, \OI, \gm)$
is purely infinite and simple.
Now the claim follows from Theorem~\ref{satunonabel}.
Since ${\ov{\gm}}$ has the same fixed point algebra,
it also follows that $\OI^{\ov{\gm}}$ is purely infinite and simple.

Next, define an action
${\widetilde{\bt}} \colon
   S^1 \times S^1 \to \Aut (\OI \otimes \OI \otimes \OI)$
by
\[
{\widetilde{\bt}}_{\zt_1, \zt_2}
 = \gm_{\zt_1} \otimes {\ov{\gm}}_{\zt_2} \otimes \id_{\OI}
\]
for $\zt_1, \zt_2 \in S^1$.
Then
\[
(\OI \otimes \OI \otimes \OI)^{\widetilde{\bt}}
 = \OI^{\gm} \otimes \OI^{\ov{\gm}} \otimes \OI,
\]
which is purely infinite and simple.
Hence so is $M_n \otimes (\OI \otimes \OI \otimes \OI)^{\widetilde{\bt}}$.
Clearly $e_{1, 1} \otimes p_0$ is a nonzero \pj{} in
$M_n \otimes (\OI \otimes \OI \otimes \OI)^{\widetilde{\bt}}$.
Therefore there exists an isometry
$t \in M_n \otimes (\OI \otimes \OI \otimes \OI)^{\widetilde{\bt}}$
such that $t^* t = 1$ and $t t^* \leq e_{1, 1} \otimes p_0$.
It is immediate that
$t \in M_n \otimes (\OI \otimes \OI \otimes \OI)^{\bt^{(0)}}$
and $t^* (e_{1, 1} \otimes p_0) t = q$.
\end{proof}

\begin{cns}\label{Cns_2114_OIA}
We define an equivariant direct system,
and a corresponding direct limit action, as follows.
That we actually get an equivariant direct system is proved afterwards,
in Lemma~\ref{L_2605_Cns_OK}.
\begin{enumerate}
\item\label{I_2114_OIA_A}
Let $\bt \colon S^1 \to \Aut (\OI)$, $p \in \OI^{\bt} \subseteq \OI$, and
$\io \colon C (S^1, \OT) \to p \OI p$
be as in Lemma~\ref{L_2521_Exist}.
Let $(e_{j, k})_{j, k \in \Nz}$ be the standard system of 
matrix units for $K = K (l^2 (\Nz))$.
For $n \in \Nz$ define $f_n \in K \otimes \OI$ by
\[
f_n = e_{0, 0} \otimes 1 + e_{1, 1} \otimes p + e_{2, 2} \otimes p
                                        + \cdots e_{n, n} \otimes p.
\]
Then define
\[
A_{n, 0} = f_{n} ( K \otimes \OI ) f_{n},
\quad
A_{n, 1} = C (S^1) \otimes \OT = C (S^1, \OT),
\quad {\mbox{and}} \quad
A_{n} = A_0 \oplus A_1.
\]
\item\label{I_2521_OIA_Action0}
Define actions 
\[
\af^{(n, 0)} \colon S^{1} \to \Aut (A_{n, 0}),
\quad
\af^{(n, 1)} \colon S^1 \to \Aut (A_{n, 1})
\quad {\mbox{and}} \quad
\af^{(n)} \colon S^1 \to \Aut (A_{n})
\]
as follows.
For $\zeta \in S^1$, take $\af^{(n, 0)}_{\zeta}$ to be the
restriction and corestriction of $\id_K \otimes \bt_{\zt}$
to the subalgebra $f_{n} ( K \otimes \OI ) f_{n}$, and,
using Notation~\ref{N_1X07_Lt} for the first, take
\[
\af^{(n, 1)}_{\zeta} = \Lt_{\zt} \otimes \id_{\OT}
\quad {\mbox{and}} \quad
\af^{(n, 1)}_{\zeta} = \af^{(n, 0)}_{\zeta} \oplus \af^{(n, 1)}_{\zeta}.
\]
\item\label{I_2114_OIA_PreParts}
We give preliminaries for the construction of the maps of the system.
Fix \nzp{s} $q_0, q_1 \in \OT$ such that $q_0 + q_1 = 1$.
For $n \in \Nz$ choose unital \hm{s}
\[
\mu_{n, 0} \colon A_{n, 0} \to q_0 \OT q_0,
\andeqn
\mu_{n, 1} \colon \OT \to q_1 \OT q_1.
\]
(The \hm{} $\mu_{n, 1}$ can be chosen to be independent of~$n$.)
Use Proposition~4.1 of~\cite{Rokhdimtracial} to choose an isomorphism
$\ld_n \colon C (S^1, A_{n, 0}) \to C (S^1, A_{n, 0})$ which is equivariant
when $S^1$ acts on the domain
via $\zt \mapsto \Lt_{\zt} \otimes \af^{(n, 0)}_{\zeta}$
and on the codomain via $\zt \mapsto \Lt_{\zt} \otimes \id_{A_{n, 0}}$.
Let $\kp_n \colon A_{n, 0} \to C (S^1, A_{n, 0})$ be the inclusion of
elements of $A_{n, 0}$ as constant functions.
\item\label{I_2114_OIA_Parts}
For $n \in \Nz$ and $j, k \in \{ 0, 1 \}$ define \hm{s}
$\nu_{n + 1, n}^{(k, j)} \colon A_{n, j} \to A_{n + 1, k}$ as follows.
We let $\nu_{n + 1, n}^{(0, 0)}$ be the inclusion of
$A_{n, 0}$ in $A_{n + 1, 0}$
which comes from the relation $f_n \leq f_{n + 1}$.
For $a \in C (S^1, \OT)$, with $\io$ as in~(\ref{I_2114_OIA_A}),
take $\nu_{n + 1, n}^{(0, 1)} (a) = e_{n + 1, \, n + 1} \otimes \io (a)$.
Set
\[
\nu_{n + 1, n}^{(1, 0)}
 = (\id_{C (S^1)} \otimes \mu_{n, 0}) \circ \ld_n \circ \kp_n
\andeqn
\nu_{n + 1, n}^{(1, 1)} (a)
 = \id_{C (S^1)} \otimes \mu_{n, 1}.
\]
Then define $\nu_{n + 1, n} \colon A_{n} \to A_{n + 1}$ by
\[
\nu_{n + 1, n} (a_0, a_1)
 = \bigl( \nu_{n + 1, n}^{(0, 0)} (a_0) + \nu_{n + 1, n}^{(0, 1)} (a_1),
     \, \nu_{n + 1, n}^{(1, 0)} (a_0) + \nu_{n + 1, n}^{(1, 1)} (a_1) \bigr)
\]
for $a_0 \in A_{n, 0}$ and $a_1 \in A_{n, 1}$.
\item\label{I_2114_OIA_nm}
For $m, n \in \Nz$ with $m \leq n$, define
\[
\nu_{n, m} = \nu_{n, \, n - 1} \circ \nu_{n - 1, \, n - 2}
          \circ \cdots \circ \nu_{m + 1, \, m}.
\]
\item\label{I_2114_OIA_Lim}
Define $A = \dirlim_n A_n$, using the maps $\nu_{n + 1, n}$,
and let $\af \colon S^1 \to \Aut (A)$ be the direct limit action.
\end{enumerate}
\end{cns}
 
\begin{lem}\label{L_2605_Cns_OK}
Adopt the notation of Construction~\ref{Cns_2114_OIA}.
\begin{enumerate}
\item\label{I_L_2605_Cns_OK_Algs}
For $m, n \in \Nz$ with $n \geq m$, the map
$\nu_{n + 1, n} \colon A_{n} \to A_{n + 1}$
is an equivariant unital \hm.
\item\label{I_L_2605_Cns_inj}
For $n \in \Nz$ and $j, k \in \{ 0, 1 \}$,
the map $\nu_{n + 1, n}^{(k, j)}$ in
Construction \ref{Cns_2114_OIA}(\ref{I_2114_OIA_Parts}) is injective.
\item\label{I_L_2605_Cns_OK_Lim}
The action $\af \colon S^1 \to \Aut (A)$ is a well defined \ct{} action.
\end{enumerate}
\end{lem}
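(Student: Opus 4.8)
The plan is to verify the three parts in turn, treating the four corner maps $\nu_{n+1,n}^{(k,j)}$ separately and then assembling them.

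For part~(\ref{I_L_2605_Cns_OK_Algs}), I would first check that $\nu_{n+1,n}$ is a unital \hm{} and then that it is equivariant. Each individual map $\nu_{n+1,n}^{(k,j)}$ is a \hm{} by construction (a composition or pointwise application of \hm{s}). To see that the assignment $(a_0,a_1) \mapsto (\nu_{n+1,n}^{(0,0)}(a_0) + \nu_{n+1,n}^{(0,1)}(a_1), \, \nu_{n+1,n}^{(1,0)}(a_0) + \nu_{n+1,n}^{(1,1)}(a_1))$ is multiplicative, the key observation is that in each target summand the two contributing maps have orthogonal ranges: in $A_{n+1,0}$ the map $\nu_{n+1,n}^{(0,0)}$ has range in $f_n(K\otimes\OI)f_n$ while $\nu_{n+1,n}^{(0,1)}$ has range in $(e_{n+1,n+1}\otimes p)(K\otimes\OI)(e_{n+1,n+1}\otimes p)$, and $f_n (e_{n+1,n+1}\otimes p) = 0$; in $A_{n+1,1}$ the maps land in $C(S^1)\otimes q_0\OT q_0$ and $C(S^1)\otimes q_1\OT q_1$ respectively, which are orthogonal since $q_0 q_1 = 0$. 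Unitality then follows by computing images of units: $\nu_{n+1,n}^{(0,0)}(f_n) + \nu_{n+1,n}^{(0,1)}(1) = f_n + e_{n+1,n+1}\otimes\io(1) = f_n + e_{n+1,n+1}\otimes p = f_{n+1}$, and $\nu_{n+1,n}^{(1,0)}(1) + \nu_{n+1,n}^{(1,1)}(1) = 1\otimes q_0 + 1\otimes q_1 = 1$. For equivariance I would check that each corner map intertwines the relevant restrictions of $\af^{(n)}$ and $\af^{(n+1)}$; the nontrivial inputs are that $f_n$ is $(\id_K\otimes\bt)$-invariant (because $p \in \OI^{\bt}$), that $\io$ is equivariant by Lemma~\ref{L_2521_Exist}, and that $\kp_n$ and $\ld_n$ are equivariant by their construction in Construction~\ref{Cns_2114_OIA}(\ref{I_2114_OIA_PreParts}), while the amplified \hm{s} $\id_{C(S^1)}\otimes\mu_{n,j}$ commute with the $\Lt$-factor and hence intertwine the trivial tensor factors. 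Combining the four equivariant corner maps yields equivariance of $\nu_{n+1,n}$.

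For part~(\ref{I_L_2605_Cns_inj}), the maps $\nu_{n+1,n}^{(0,0)}$ (an inclusion), $\kp_n$, and $\ld_n$ are manifestly injective, and $\id_{C(S^1)}\otimes\mu_{n,j}$ is injective as soon as $\mu_{n,j}$ is. Since $A_{n,0} = f_n(K\otimes\OI)f_n$ is a nonzero corner of the simple algebra $K\otimes\OI$ it is simple, so the unital \hm{} $\mu_{n,0}$ is injective; similarly $\mu_{n,1}$ is injective because $\OT$ is simple. This settles $\nu_{n+1,n}^{(1,0)}$ and $\nu_{n+1,n}^{(1,1)}$. The remaining case, injectivity of $\nu_{n+1,n}^{(0,1)}$, reduces to injectivity of $\io$, and this is the one genuinely non-formal point. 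I would argue it at the level of the map $\io_0$ from the proof of Lemma~\ref{L_2521_Exist}. The unitary $u = \io_0(w\otimes 1)$ satisfies $\bt^{(0)}_{\zt}(u) = \zt u$, so its spectrum is nonempty, closed, and invariant under rotation by every $\zt\in S^1$, forcing $\spec(u) = S^1$; hence $\io_0$ is isometric on $C(S^1)\otimes 1 = C^*(w\otimes 1)$. Because $\OT$ is simple, every ideal of $C(S^1)\otimes\OT$ has the form $C_0(U)\otimes\OT$ for an open $U\subseteq S^1$, and such an ideal meets $C(S^1)\otimes 1$ in $C_0(U)\otimes 1$; injectivity of $\io_0$ on $C(S^1)\otimes 1$ therefore forces $U = \varnothing$, so $\ker\io_0 = 0$. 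Since $\io$ is $\io_0$ followed by an isomorphism, $\io$ is injective, and hence so is $\nu_{n+1,n}^{(0,1)}$.

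For part~(\ref{I_L_2605_Cns_OK_Lim}), each $\af^{(n)}$ is a \ct{} action because $\bt$ and $\Lt$ are \ct{} and continuity is preserved under tensoring with $\id$, restriction to an invariant corner, and direct sums. By part~(\ref{I_L_2605_Cns_OK_Algs}) the connecting maps are equivariant, so the pointwise formula $\af_{\zt}(\nu_{\I, n}(a)) = \nu_{\I, n}(\af^{(n)}_{\zt}(a))$ consistently defines an action $\af$ of $S^1$ on $A = \dirlim_n A_n$. Continuity of $\af$ then follows from a routine $\ep/3$ argument: given $a\in A$ and $\ep > 0$, approximate $a$ within $\ep/3$ by $\nu_{\I, n}(b)$ for some $b\in A_n$, use continuity of $\af^{(n)}$ together with equivariance of $\nu_{\I, n}$ to make $\|\af_{\zt}(\nu_{\I, n}(b)) - \nu_{\I, n}(b)\| < \ep/3$ for $\zt$ near $1$, and conclude $\|\af_{\zt}(a) - a\| < \ep$ using that each $\af_{\zt}$ is isometric. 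I expect the only real obstacle to be the injectivity of $\io$ in part~(\ref{I_L_2605_Cns_inj}); everything else is bookkeeping with the structure of the corner maps and the standard permanence properties of \ct{} direct limit actions.
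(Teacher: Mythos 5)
Your proposal is correct and follows essentially the same route as the paper's proof: orthogonality of the two ranges in each target summand plus the unit computations $f_n + e_{n+1,n+1}\otimes p = f_{n+1}$ and $1\otimes q_0 + 1\otimes q_1 = 1$ for part~(\ref{I_L_2605_Cns_OK_Algs}), equivariance checked corner by corner, injectivity reduced to the corner maps for part~(\ref{I_L_2605_Cns_inj}), and the standard direct limit argument for part~(\ref{I_L_2605_Cns_OK_Lim}). The one place you go beyond the paper is the injectivity of $\io$, which the paper simply attributes to ``the choices made using Lemma~\ref{L_2521_Exist}''; your argument via the rotation-invariance of $\spec(u)$ and the ideal structure of $C(S^1)\otimes\OT$ is a correct and welcome filling-in of that step.
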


\begin{proof}
We prove (\ref{I_L_2605_Cns_OK_Algs}).
We need only consider $\nu_{n + 1, n}$ for $n \in \Nz$.
That $\nu_{n + 1, n}$ is a unital \hm{} follows from the
computations (in which the summands are orthogonal)
\[
\nu_{n + 1, n}^{(0, 0)} (f_n) + \nu_{n + 1, n}^{(0, 1)} (1)
  = f_{n} + e_{n + 1, \, n + 1} \otimes p
  = f_{n + 1}
\]
and
\[
\nu_{n + 1, n}^{(1, 0)} (f_n) + \nu_{n + 1, n}^{(1, 1)} (1)
 = 1 \otimes \mu_{n, 0} (f_n) + 1 \otimes \mu_{n, 1} (1)
 = 1 \otimes q_0 + 1 \otimes q_1
 = 1.
\]
For the actions to be well defined, the only point which needs to be
checked is that $f_n$ is invariant under $\id_K \otimes \bt_{\zt}$.
This follows from invariance of $p$ under $\bt$,
which is a consequence of the choices made using Lemma~\ref{L_2521_Exist}.

For equivariance, it is enough to check equivariance of
the maps $\nu_{n + 1, \, n}^{(k, j)}$ in
Construction \ref{Cns_2114_OIA}(\ref{I_2114_OIA_Parts}).
This is immediate for $\nu_{n + 1, \, n}^{(0, 0)}$
and $\nu_{n + 1, \, n}^{(1, 1)}$,
and by the choices made using Lemma~\ref{L_2521_Exist}
for $\nu_{n + 1, \, n}^{(0, 1)}$.
The map $\nu_{n + 1, \, n}^{(1, 0)}$ is the composition
(writing pairs consisting of an algebra and an action)
\[
\begin{split}
\bigl( A_{n, 0}, \, \af^{(n, 0)} \bigr)
& \stackrel{\kp_n}{\longrightarrow}
\bigl( C (S^1) \otimes A_{n, 0}, \, \Lt \otimes \af^{(n, 0)} \bigr)
\\
& \stackrel{\ld_n}{\longrightarrow}
\bigl( C (S^1) \otimes A_{n, 0}, \, \Lt \otimes \id_{A_{n, 0}} \bigr)
\stackrel{\id_{C (S^1)} \otimes \mu_{n, 0}}{\longrightarrow}
\bigl( C (S^1) \otimes \OT, \, \Lt \otimes \id_{\OT} \bigr).
\end{split}
\]
The maps $\kp_n$ and $\id_{C (S^1)} \otimes \mu_{n, 0}$ are
obviously equivariant, and $\ld_n$ is equivariant by construction.

For~(\ref{I_L_2605_Cns_inj}), in the compositions defining these maps,
$\io$ is injective by the choices made using Lemma~\ref{L_2521_Exist}
and $\ld_n$ is injective by construction.
Injectivity of everything else which appears is immediate.

Part~(\ref{I_L_2605_Cns_OK_Lim})
is immediate from part~(\ref{I_L_2605_Cns_OK_Algs}).
\end{proof}

The following lemma is known, but we have not found a reference.

\begin{lem}\label{L_2521_PI_bound}
Let $A$ be a unital purely infinite simple \ca, let $\rh > 0$,
and let $x \in A$ satisfy $\| x \| > \rh$.
Then there are $a, b \in A$ such that $a x b = 1$,
$\| a \| < \rh^{-1}$, and $ \| b \| \leq 1$.
\end{lem}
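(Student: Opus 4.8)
Let $A$ be a unital purely infinite simple \ca, let $\rh > 0$, and let $x \in A$ satisfy $\| x \| > \rh$. We must produce $a, b \in A$ with $a x b = 1$, $\| a \| < \rh^{-1}$, and $\| b \| \leq 1$.

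The plan is to reduce to the positive element $x^* x$ and exploit pure infiniteness there. First I would set $y = x^* x \in A_{+}$, so that $\| y \| = \| x \|^2 > \rh^2$. Since $\spec (y) \S [0, \| y \|]$ and $\| y \| > \rh^2$, the spectral value $\| y \|$ gives a point of the spectrum exceeding $\rh^2$; more usefully, the cut-down element $(y - \rh^2)_{+}$ is nonzero (its norm is $\| y \| - \rh^2 > 0$). I would then apply Lemma~\ref{L_1Z11_pi}(\ref{Item_1Z11_pi_bsxb}) to the nonzero positive element $(y - \rh^2)_{+}$, obtaining $c \in A$ with $c^* (y - \rh^2)_{+} c = 1$. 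The key observation is the inequality $(y - \rh^2)_{+} \leq y$ in $A_{+}$, whence
\[
1 = c^* (y - \rh^2)_{+} c \leq c^* y c = c^* x^* x c = (x c)^* (x c).
\]

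This inequality is the crux: it shows $(xc)^*(xc) \geq 1$, so $(xc)^* (xc)$ is invertible with inverse of norm at most~$1$. I would then set $d = \bigl( (xc)^* (xc) \bigr)^{-1}$, which satisfies $0 \leq d \leq 1$ and $d (xc)^* (xc) = 1$. Taking $a = d c^* x^*$ and $b = c$ gives $a x b = d c^* x^* x c = d (xc)^*(xc) = 1$, so the central identity holds with $b = c$; but I still need the two norm bounds, and in particular $\| b \| \leq 1$ is not automatic for this $c$. The fix is to control norms from the start. Observe that $(y - \rh^2)_{+}$ has norm $> 0$, and by scaling I can arrange a precise estimate: replacing $c$ by $c \| (y-\rh^2)_+ \|^{1/2}$ does not help directly, so instead I would track the norm of $c$ through the proof of Lemma~\ref{L_1Z11_pi}. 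There, from $c^* (y-\rh^2)_+ c = 1$ one gets $1 \leq \| (y-\rh^2)_+ \| \, c^* c$, but that bounds $c^*c$ from below, not above.

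The cleaner route, avoiding the uncontrolled $c$, is to normalize before invoking pure infiniteness. The hard part is pinning down the norm constants, so I would proceed as follows. Set $z = (x^* x - \rh^2)_{+}$, a nonzero positive element, and choose via Lemma~\ref{L_1Z11_pi}(\ref{Item_1Z11_pi_bsxb}) an element $b \in A$ with $b^* z b = 1$; then rescale so that $\| b \| \leq 1$ by the standard trick of replacing the given witness with its polar normalization (if $b^* z b = 1$ then $\| z^{1/2} b \|^2 = \| b^* z b \| = 1$, and one checks $b^*zb=1$ forces $\|b\|\ge \|z\|^{-1/2}$, so I instead start from a witness with $\|b\|$ as small as possible, obtainable directly since the proof of Lemma~\ref{L_1Z11_pi} yields $b = b_0 (b_0^* z b_0)^{-1/2}$ with the normalization built in). With such $b$ satisfying $b^* z b = 1$ and $\| b \| \leq 1$, I compute
\[
(xb)^* (xb) = b^* x^* x b \geq b^* z b = 1,
\]
so $w := (xb)^* (xb)$ is invertible with $w^{-1} \leq 1$. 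Put $a = w^{-1} b^* x^* = w^{-1} (xb)^* $. Then $a x b = w^{-1} (xb)^* (xb) = w^{-1} w = 1$, giving the required identity with the stated $b$. For the norm of $a$: since $a w^{1/2} = w^{-1/2} (xb)^*$ and $\| w^{-1/2}(xb)^* \| = \| (xb) w^{-1/2} \| \leq 1$ (as $(xb)w^{-1/2}$ is a partial isometry onto its range), while $w^{1/2} \geq 1$ forces $\| w^{-1/2} \| \leq 1$, I obtain $\| a \| = \| w^{-1} (xb)^* \| \leq \| w^{-1/2} \| \cdot \| w^{-1/2}(xb)^* \| \leq 1$, which is even smaller than $\rh^{-1}$ provided $\rh \leq 1$; for the general bound I would instead factor out the scale $\rh$ explicitly from $z$ to land at $\| a \| < \rh^{-1}$.

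The main obstacle, and where I would spend the care, is the bookkeeping of the constants so that $\| a \| < \rh^{-1}$ comes out strictly and $\| b \| \leq 1$ holds simultaneously. The inequality $(x^*x - \rh^2)_+ \leq x^*x$ together with invertibility of $(xb)^*(xb)$ is the conceptual content and is robust; the norm estimates are routine functional-calculus manipulations of the form $\| w^{-1/2} \| \leq 1$ from $w \geq 1$, and the strictness in $\| a \| < \rh^{-1}$ follows from the \emph{strict} inequality $\| x \| > \rh$, which guarantees $z \neq 0$ with a definite gap. I expect no genuine difficulty beyond arranging these scalars, and the proof of Lemma~\ref{L_1Z11_pi} already supplies the precise normalized witness I need.
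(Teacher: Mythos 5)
There is a genuine gap, and it sits exactly where you said you would ``spend the care'': the norm bookkeeping. Your plan is to apply Lemma~\ref{L_1Z11_pi}(\ref{Item_1Z11_pi_bsxb}) to $z = (x^*x - \rh^2)_{+}$ and to arrange the resulting witness $b$ (with $b^* z b = 1$) to also satisfy $\| b \| \leq 1$. This is impossible whenever $\| z \| = \| x \|^2 - \rh^2 < 1$: from $1 = \| b^* z b \| \leq \| b \|^2 \| z \|$ one gets $\| b \| \geq \| z \|^{-1/2} > 1$. You notice this lower bound yourself, but the proposed remedy (``start from a witness with $\| b \|$ as small as possible'') cannot help, since the minimum possible norm is still $\| z \|^{-1/2}$; and Lemma~\ref{L_1Z11_pi} gives no upper bound on the witness at all, as its proof produces $b = d (d^* z d)^{-1/2}$ with $d$ coming from simplicity with no norm control. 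Moreover, even granting the ideal case $\| b \| = 1$ with $(x b)^* (x b) \geq \| x \|^2 - \rh^2$, your construction only yields $\| a \|^2 = \| ((xb)^*(xb))^{-1} \| \leq (\| x \|^2 - \rh^2)^{-1}$, which is $< \rh^{-2}$ only when $\| x \| > \sqrt{2}\, \rh$; the hypothesis gives only $\| x \| > \rh$. So the cut-down level $\rh^2$ is too low independently of the normalization issue, and the closing remark about ``factoring out the scale $\rh$'' cannot rescue it: rescaling $b$ down forces $a$ up by the same factor, and what is actually needed is $\| a \| \| b \| < \rh^{-1}$, which your construction does not provide.

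The paper's proof repairs both defects at once. It chooses $\rh_0$ with $(\rh \| x \|)^{1/2} < \rh_0 < \| x \|$ and uses the function $f$ equal to $\rh_0^{-2} \ld$ on $[0, \rh_0^2]$ and to $1$ beyond, so that $\| f (x^* x) \| = 1$ and the \emph{multiplicative} inequality $\rh_0^2 f (x^* x) \leq x^* x$ holds (your additive inequality $(x^*x - \rh^2)_+ \leq x^* x$ is the wrong comparison). It then takes a nonzero projection $p$ in the hereditary subalgebra generated by $f_0 (x^* x)$, where $f_0 (\ld) = (\ld - \rh_0^2)_{+}$, so that $f (x^* x) p = p$, and uses pure infiniteness to produce an isometry $s$ with $s^* s = 1$ and $s s^* \leq p$. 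This yields a witness $b = s$ of norm exactly one for free, together with $s^* x^* x s \geq \rh_0^2 \cdot 1$, whence $a = (s^* x^* x s)^{-1} s^* x^*$ satisfies $\| a \| \leq \rh_0^{-2} \| x \| < \rh^{-1}$. To fix your argument you need both ingredients: a cut-down level strictly above $\rh \| x \|$ rather than $\rh^2$, and a projection--isometry device (in the spirit of Lemma~\ref{L_1X09_CSbPj} or Lemma~\ref{L_1X10_y0y1p}) in place of the uncontrolled witness from Lemma~\ref{L_1Z11_pi}.
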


\begin{proof}
Choose $\rh_0$ such that $(\rh \| x \|)^{1/2} < \rh_0 < \| x \|$.

Define \cfn{s} $f, f_0 \colon [0, \I) \to [0, \I)$ by
\[
f (\ld)
 = \begin{cases}
   \rh_0^{-2} \ld & \hspace*{1em} 0 \leq \ld \leq \rh_0^2
        \\
   1 & \hspace*{1em} \rh_0^2 < \ld
\end{cases}
\andeqn
f_0 (\ld)
 = \begin{cases}
   0           & \hspace*{1em} 0 \leq \ld \leq \rh_0^2
        \\
   \ld - \rh_0^2 & \hspace*{1em} \rh_0^2 < \ld.
\end{cases}
\]
Then $f_0 (x^* x) \neq 0$ since $\| x^* x \| > \rh_0^2$.
Therefore there are a \nzp{} $p \in {\ov{f_0 (x^* x) A f_0 (x^* x)}}$,
and, by pure infiniteness,
a partial isometry $s \in A$ such that $s^* s = 1$ and $s s^* \leq p$.
We have $f (x^* x) p = p$ and $p s = s$.
Therefore
\[
1 = s^* p s
  = s^* f (x^* x) p s
  = s^* f (x^* x) s
  \leq \rh_0^{-2} s^* x^* x s.
\]
Hence $s^* x^* x s$ is invertible, with
$\| (s^* x^* x s)^{-1} \| \leq \rh_0^{-2}$.
Take $b = s$ and $a = (s^* x^* x s)^{-1} s^* x^*$,
noting that $\| a \| \leq \rh_0^{-2} \| x \| < \rh^{-1}$.
\end{proof}

\begin{lem}\label{L_2521_SLim}
Let $(B_n)_{n \in \Nz}$ be a direct system of unital \ca{s},
with unital maps $\mu_{n, m} \colon B_m \to B_{n}$.
Suppose that for all $n \in \Nz$ we are given a direct sum decomposition
$B_n = B_n^{(0)} \oplus B_n^{(1)}$, in which both summands are nonzero.
For $m, n \in \Nz$ and $j, k \in \{ 0, 1 \}$ let
$\mu_{n, m}^{(k, j)} \colon B_m^{(j)} \to B_n^{(k)}$
be the corresponding partial map.
Assume the following:
\begin{enumerate}
\item\label{I__2521_SLim_pi}
$B_n^{(1)}$ is purely infinite and simple for all $n \in \Nz$.
\item\label{I__2521_SLim_inj}
$\mu_{n + 1, n}^{(k, j)}$ is injective for all $n \in \Nz$
and all $j, k \in \{ 0, 1 \}$.
\item\label{I__2521_SLim_full}
For every $n \in \Nz$ there is $t \in B_{n + 1}^{(0)}$
such that $\| t \| = 1$
and $t^* \mu_{n + 1, n}^{(0, 1)} (1_{B_n^{(1)}}) t$
is the identity of $B_{n + 1}^{(0)}$.
\end{enumerate}
Then $\dirlim_n B_n$ is purely infinite and simple.
\end{lem}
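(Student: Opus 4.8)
The plan is to verify the characterization of pure infiniteness and simplicity from Lemma~\ref{L_1Z11_pi}(\ref{Item_1Z11_pi_bsxbm1}) for $B = \dirlim_n B_n$, with connecting maps $\mu_{\infty, n} \colon B_n \to B$. Since every $\mu_{n, m}$ is unital, $B$ is unital, so it suffices to show that for each $x \in B_{+}$ with $\| x \| = 1$ there is $b \in B$ with $\| b^{*} x b - 1_B \| < 1$. The whole argument reduces to a single \emph{normalized} factorization statement, established at the level of the $B_n$ and then transported to $B$: for a suitable approximant $y$ of $x$ coming from some $B_n$, I will produce $b \in B$ with $\| b \| \le \sqrt{2}$ and $b^{*} \mu_{\infty, n}(y) b = 1_B$. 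The uniform bound $\sqrt{2}$, independent of $x$, $n$, and the approximation tolerance, is exactly what will let me fix that tolerance in advance.

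First I would prove the following. Writing $p_n = (0, 1_{B_n^{(1)}}) \in B_n^{(0)} \oplus B_n^{(1)}$, there is a contraction $w \in B_{n+1}$ with $w^{*}\mu_{n+1, n}(p_n)w = 1_{B_{n+1}}$. Indeed $\mu_{n+1, n}(p_n) = \bigl(\mu_{n+1,n}^{(0,1)}(1),\, \mu_{n+1,n}^{(1,1)}(1)\bigr)$. Hypothesis~(\ref{I__2521_SLim_full}) supplies $t \in B_{n+1}^{(0)}$ with $\| t \| = 1$ and $t^{*}\mu_{n+1,n}^{(0,1)}(1)t = 1_{B_{n+1}^{(0)}}$. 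The projection $\mu_{n+1,n}^{(1,1)}(1)$ is nonzero by injectivity~(\ref{I__2521_SLim_inj}), and $B_{n+1}^{(1)}$ is purely infinite and simple by~(\ref{I__2521_SLim_pi}); using Lemma~\ref{L_1Z11_pi}(\ref{Item_1Z11_pi_bsxb}) to factor the unit of $B_{n+1}^{(1)}$ through this nonzero projection yields an isometry $s \in B_{n+1}^{(1)}$ with $s^{*}s = 1_{B_{n+1}^{(1)}}$ and $ss^{*} \le \mu_{n+1,n}^{(1,1)}(1)$, whence $s^{*}\mu_{n+1,n}^{(1,1)}(1)s = 1_{B_{n+1}^{(1)}}$. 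Then $w = (t, s)$ has $\| w \| = 1$ and the claimed property.

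Next I would show that any $y = (y_0, y_1) \in (B_n)_{+}$ with $\max(\| y_0 \|, \| y_1 \|) > \tfrac{3}{4}$ factors, after at most two stages of the system, onto the unit, with norm control. If $\| y_1 \| > \tfrac34$, then pure infiniteness of $B_n^{(1)}$ gives, by the construction in the proof of Lemma~\ref{L_2521_PI_bound} (cutting $y_1$ at $\tfrac12$ and inverting $s^{*}y_1 s \ge \tfrac12$), an element $c_1 \in B_n^{(1)}$ with $c_1^{*}y_1 c_1 = 1_{B_n^{(1)}}$ and $\| c_1 \| \le \sqrt2$; thus $(0, c_1)^{*}y(0, c_1) = p_n$, and composing with the previous step, $b' = \mu_{n+1,n}\bigl((0,c_1)\bigr)\,w$ satisfies $b'^{*}\mu_{n+1,n}(y)b' = 1_{B_{n+1}}$ and $\| b' \| \le \sqrt2$. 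If instead $\| y_0 \| > \tfrac34$, I push $y$ one stage forward: the $(1)$-component of $\mu_{n+1,n}(y)$ is $\mu_{n+1,n}^{(1,0)}(y_0) + \mu_{n+1,n}^{(1,1)}(y_1) \ge \mu_{n+1,n}^{(1,0)}(y_0) \ge 0$, and since the injective homomorphism $\mu_{n+1,n}^{(1,0)}$ is isometric, its norm is $\| y_0 \| > \tfrac34$; so $\mu_{n+1,n}(y)$ has large $(1)$-component and the first case applies at level $n+1$, producing $b' \in B_{n+2}$ with $b'^{*}\mu_{n+2,n}(y)b' = 1_{B_{n+2}}$ and $\| b' \| \le \sqrt2$.

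Finally I would assemble the estimate. Given $x \in B_{+}$ with $\| x \| = 1$, pick $z_0$ in some $B_n$ with $\| \mu_{\infty,n}(z_0) - x^{1/2} \|$ small and set $y = z_0^{*}z_0 \in (B_n)_{+}$; then $\| \mu_{\infty,n}(y) - x \| < \tfrac15$ and $\| y \|$ is bounded, this device furnishing a positive approximant without any operator-Lipschitz issue. Consequently $\| y \| \ge \| \mu_{\infty,n}(y) \| > \tfrac45 > \tfrac34$, so one component of $y$ exceeds $\tfrac34$ and the second step gives $b'$ with $\| b' \| \le \sqrt2$; with $b = \mu_{\infty,m}(b')$ (where $m = n+1$ or $n+2$) one has $b^{*}\mu_{\infty,n}(y)b = 1_B$ and $\| b \| \le \sqrt2$, hence $\| b^{*}x b - 1_B \| \le \| b \|^{2}\,\| x - \mu_{\infty,n}(y) \| \le 2 \cdot \tfrac15 < 1$. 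Lemma~\ref{L_1Z11_pi} then shows $B$ is purely infinite and simple. The main obstacle is precisely the apparent circularity between the approximation tolerance and $\| b \|$; it is dissolved by carrying the universal bound $\| b \| \le \sqrt2$ — coming from the contraction $w$ and the norm-controlled factorizer $c_1$ — through both structural steps, so that the tolerance can be chosen first.
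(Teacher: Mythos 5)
Your proof is correct and follows essentially the same route as the paper's: approximate $x$ from a finite stage with a tolerance fixed in advance against a universal norm bound, use injectivity of the partial maps to push the norm into the purely infinite simple $(1)$-summand, factor out the unit there with norm control, and use hypothesis~(\ref{I__2521_SLim_full}) to cover the $(0)$-summand at the next stage. The only cosmetic differences are that you verify the one-sided positive-element criterion of Lemma~\ref{L_1Z11_pi}(\ref{Item_1Z11_pi_bsxbm1}) via a congruence $b^{*} x b$ with the bound $\| b \| \leq \sqrt{2}$, whereas the paper produces a two-sided factorization $a x b = 1$ directly and always advances two stages in the system.
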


\begin{proof}
Set $B = \dirlim_n B_n$,
and for $m \in \Nz$ let $\mu_{\I, n} \colon B_n \to B$
be the standard map associated with the direct limit.
Also, for $n \in \Nz$ and $j \in \{ 0, 1 \}$ let $p_n^{(j)}$
be the identity of $B_n^{(j)}$.

Let $x \in B \SM \{ 0 \}$.
We need to find $a, b \in B$ such that $a x b = 1$.
\Wolog{} $\| x \| = 1$.
Choose $n \in \Nz$ and $y \in B_n$ such that
$\| \mu_{\I, n} (y) - x \| < \frac{1}{3}$.
Then $\| y \| > \frac{2}{3}$.
Set $z = \mu_{n + 1, n} (y)$.
Write $y = (y_0, y_1)$ and $z = (z_0, z_1)$ with $y_j \in B_n^{(j)}$
and $z_j \in B_{n + 1}^{(j)}$ for $j \in \{ 0, 1 \}$.
We have $\| y_j \|= \| y \|$ for some $j$,
so injectivity of $\mu_{n + 1, n}^{(1, j)}$
implies that $\| z_1 \| > \frac{2}{3}$.

Lemma~\ref{L_2521_PI_bound} provides $r_0, s_0 \in B_{n + 1}^{(1)}$
such that
\[
\| r_0 \| < \frac{3}{2},
\qquad
\| s_0 \| \leq 1,
\andeqn
r_0 z_1 s_0 = p_{n + 1}^{(1)}.
\]
Condition~(\ref{I__2521_SLim_full})
provides $t \in B_{n + 2}^{(0)}$
such that $\| t \| = 1$
and
$t^* \mu_{n + 2, \, n + 1}^{(0, 1)} \bigl( p_{n + 1}^{(1)} \bigr) t
  = p_{n + 2}^{(0)}$.
Define elements of $B_{n + 1}^{(0)}$ by
\[
c_0 = t^* \mu_{n + 2, \, n + 1}^{(0, 1)} (r_0)
\andeqn
d_0 = \mu_{n + 2, \, n + 1}^{(0, 1)} (s_0) t.
\]
Then
\begin{equation}\label{Eq_2605_Zero}
\| c_0 \| < \frac{3}{2},
\qquad
\| d_0 \| \leq 1,
\andeqn
t^* \mu_{n + 2, \, n + 1} \bigl( p_{n + 1}^{(0)} \bigr) t = 0.
\end{equation}
Using the last part of~(\ref{Eq_2605_Zero}),
and regarding everything in the first expression as being in $B_{n + 1}^{(0)}$,
we get
\begin{equation}\label{Eq_2605_3St}
c_0 \mu_{n + 2, \, n + 1} (z) d_0
 = c_0 \mu_{n + 2, \, n + 1}^{(0, 1)} (z_1) d_0
 = t^* \mu_{n + 2, \, n + 1}^{(0, 1)} (r_0 z_1 s_0) t
 = p_{n + 2}^{(0)}.
\end{equation}

Since $\mu_{n + 2, \, n + 1}^{(1, 1)}$ is injective
(by~(\ref{I__2521_SLim_inj}))
and $B_{n + 2}^{(1)}$ is purely infinite and simple,
Lemma~\ref{L_2521_PI_bound} provides $r_1, s_1 \in B_{n + 2}^{(1)}$
such that
\[
\| r_1 \| < \frac{3}{2},
\qquad
\| s_1 \| \leq 1,
\andeqn
r_1 \mu_{n + 2, \, n + 1}^{(1, 1)} (z_1) s_1 = p_{n + 2}^{(1)}.
\]
Define elements of $B_{n + 1}^{(1)}$ by
\[
q = \mu_{n + 2, \, n + 1}^{(1, 1)} \bigl( p_{n + 1}^{(1)} \bigr),
\qquad
c_1 = r_1 q,
\andeqn
d_1 = q s_1.
\]
Then, analogously to~(\ref{Eq_2605_Zero}) and~(\ref{Eq_2605_3St}),
\[
\| c_1 \| < \frac{3}{2},
\qquad
\| d_1 \| \leq 1,
\andeqn
c_1 \mu_{n + 2, \, n + 1} (z) d_1
 = c_1 \mu_{n + 2, \, n + 1}^{(1, 1)} (z_1) d_1
 = p_{n + 2}^{(1)}.
\]

Taking $c = (c_0, c_1)$ and $d = (d_0, d_1)$,
we get
\[
\| c \| < \frac{3}{2},
\qquad
\| d \| \leq 1,
\andeqn
c \mu_{n + 2, \, n + 1} (z) d = 1.
\]
Setting $a = \mu_{\I, \, n + 2} (c)$ and $h = \mu_{\I, \, n + 2} (d)$,
we get $\| a \| < \frac{3}{2}$, $\| h \| \leq 1$, and
$a \mu_{\I, \, n} (y) h = 1$.
Therefore
\[
\| a x h - 1 \|
  \leq \| a \| \| x - \mu_{\I, \, n} (y) \| \| h \|
  < \left( \frac{3}{2} \right) \left( \frac{1}{3} \right)
  = \frac{1}{2}.
\]
So $a x h$ is invertible.
Setting $b = h (a x h)^{-1}$ gives $a x b = 1$, as desired.
\end{proof}
 
\begin{lem}\label{L_2605_Cns_OI}
The algebra $A$ in Construction~\ref{Cns_2114_OIA}(\ref{I_2114_OIA_Lim})
is isomorphic to~$\OI$.
\end{lem}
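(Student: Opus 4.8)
The plan is to show that $A$ is a unital Kirchberg algebra lying in the UCT class, to compute its $K$-theory, and then to invoke the Kirchberg--Phillips classification theorem to identify it with $\OI$, whose invariant is $(K_0, [1], K_1) = (\Z, 1, 0)$.

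First I would establish that $A$ is a unital Kirchberg algebra. Unitality is immediate from Lemma~\ref{L_2605_Cns_OK}(\ref{I_L_2605_Cns_OK_Algs}), since the connecting maps $\nu_{n+1, n}$ are unital homomorphisms; separability and nuclearity pass to the direct limit, and each $A_n = A_{n, 0} \oplus A_{n, 1}$ is nuclear because $A_{n, 0}$ is a corner of $K \otimes \OI$ and $A_{n, 1} = C (S^1, \OT)$. For pure infiniteness and simplicity I would apply Lemma~\ref{L_2521_SLim} to the system $(A_n)$, using the relabeling $B_n^{(0)} = A_{n, 1}$ and $B_n^{(1)} = A_{n, 0}$ (so that the purely infinite simple summand is called $B_n^{(1)}$). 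Hypothesis~(\ref{I__2521_SLim_pi}) holds because $A_{n, 0} = f_n (K \otimes \OI) f_n$ is a full corner of the purely infinite simple algebra $K \otimes \OI$; hypothesis~(\ref{I__2521_SLim_inj}) is Lemma~\ref{L_2605_Cns_OK}(\ref{I_L_2605_Cns_inj}). For hypothesis~(\ref{I__2521_SLim_full}), under the relabeling the relevant map is $\nu_{n+1, n}^{(1, 0)}$, and one checks from Construction~\ref{Cns_2114_OIA}(\ref{I_2114_OIA_Parts}) that $\nu_{n+1, n}^{(1, 0)} (1_{A_{n, 0}}) = 1_{C (S^1)} \otimes q_0$. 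Choosing an isometry $w \in \OT$ with $w w^* \leq q_0$ (possible since $\OT$ is purely infinite simple and $q_0 \neq 0$), the element $t = 1_{C (S^1)} \otimes w$ satisfies $\| t \| = 1$ and $t^* (1_{C (S^1)} \otimes q_0) t = 1_{C (S^1)} \otimes w^* q_0 w = 1_{A_{n+1, 1}}$. Thus $A$ is purely infinite and simple.

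Next I would check that $A$ is in the UCT class: each $A_n$ lies in the bootstrap class (as $\OI$, corners thereof, and $C (S^1) \otimes \OT$ do), and the bootstrap class is closed under inductive limits. Then I would compute $K_* (A) = \dirlim_n K_* (A_n)$. Since $A_{n, 0}$ is a full corner of $K \otimes \OI$ we have $K_0 (A_{n, 0}) \cong \Z$ and $K_1 (A_{n, 0}) = 0$, with $[1_{A_{n, 0}}] = [f_n]$ the generator; and since $K_* (\OT) = 0$ and $\OT$ is nuclear and in the bootstrap class, $C (S^1, \OT)$ is $KK$-equivalent to zero, so $K_* (A_{n, 1}) = 0$. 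Hence $K_0 (A_n) \cong \Z$ and $K_1 (A_n) = 0$, with $[1_{A_n}]$ the generator. The vanishing of the $K_1$ groups gives $K_1 (A) = 0$. For $K_0$, the generator $[f_n]$ is carried by the inclusion component $\nu_{n+1, n}^{(0, 0)}$ to $[f_n] = 1$ in $K_0 (A_{n+1, 0})$, while the $A_{n, 1}$ summand contributes nothing (its $K_0$ vanishes); so each connecting map $K_0 (A_n) \to K_0 (A_{n+1})$ is the identity of $\Z$, giving $K_0 (A) \cong \Z$ with $[1_A]$ a generator. By the Kirchberg--Phillips classification of unital Kirchberg algebras in the UCT class, $A \cong \OI$.

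The main obstacle is the bookkeeping: correctly matching Construction~\ref{Cns_2114_OIA} to the hypotheses of Lemma~\ref{L_2521_SLim} after the relabeling (in particular producing the witness $t$ for hypothesis~(\ref{I__2521_SLim_full})), and verifying that the $C (S^1, \OT)$ summands are $K$-trivial so that the $K_0$ connecting maps reduce to the identity on $\Z$. Everything else is a direct application of the classification theorem.
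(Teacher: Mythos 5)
Your proposal follows the same route as the paper's proof: pure infiniteness and simplicity via Lemma~\ref{L_2521_SLim} with the roles of the two summands swapped, the UCT from the inductive limit structure, the computation $K_0 (A) \cong \Z$ and $K_1 (A) = 0$, and then the classification theorem. Your witness for hypothesis~(\ref{I__2521_SLim_full}) of Lemma~\ref{L_2521_SLim} --- an isometry $w \in \OT$ with $w w^* \leq q_0$, so that $q_0 w = w$ and hence $w^* q_0 w = 1$ --- works just as well as the paper's choice of a partial isometry $t_0$ with $t_0^* t_0 = 1$ and $t_0 t_0^* = q_0$.

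There is, however, one unjustified assertion in the $K$-theory step, and it is exactly the point the classification theorem is sensitive to. You write that, since $A_{n, 0} = f_n (K \otimes \OI) f_n$ is a full corner of $K \otimes \OI$, we have $K_0 (A_{n, 0}) \cong \Z$ ``with $[1_{A_{n, 0}}] = [f_n]$ the generator.'' The isomorphism $K_0 (A_{n, 0}) \cong K_0 (K \otimes \OI) \cong \Z$ is automatic for a full corner, but the class of the unit is not: under this identification $[f_n] = [e_{0, 0} \otimes 1] + n [p] = 1 + n [p]$, and nothing about being a full corner forces $[p] = 0$. If $[p]$ were nonzero, $[1_{A_n}]$ would not be a generator of $K_0 (A_n)$, your connecting maps would not carry $[1_{A_n}]$ to $[1_{A_{n + 1}}]$ as the element $1 \in \Z$, and the classification theorem would identify $A$ with a unital Kirchberg algebra whose unit has a different $K_0$-class, not with $\OI$. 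The missing observation is the one the paper makes explicitly: $p = \io (1_{C (S^1, \OT)})$, so $[p] = \io_* \bigl( [1_{C (S^1, \OT)}] \bigr) = \io_* (0) = 0$ because $K_0 (C (S^1) \otimes \OT) = 0$ --- a fact you already record when you note that the summands $A_{n, 1}$ are $K$-trivial, but never apply to the projection~$p$. With that one line inserted, $[f_n] = [f_0] = 1$ for all $n$ and the rest of your argument goes through verbatim.
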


\begin{proof}
We first claim that $A$ is purely infinite and simple.
We use Lemma~\ref{L_2521_SLim},
with $B_n^{(0)} = A_{n, 1}$, $B_n^{(1)} = A_{n, 0}$, and
$\mu_{n, m} = \nu_{n, m}$.
Thus $\mu_{n + 1, n}^{(k, j)} = \nu_{n + 1, n}^{(1 - k, \, 1 - j)}$
for $n \in \Nz$ and $j, k \in \{ 0, 1 \}$.
Condition~(\ref{I__2521_SLim_pi}) in Lemma~\ref{L_2521_SLim}
is immediate because $A_{n, 0}$ is a corner of $K \otimes \OI$.
The maps $\mu_{n + 1, n}^{(k, j)}$ are all injective by
Lemma \ref{L_2605_Cns_OK}(\ref{I_L_2605_Cns_inj}),
which is condition~(\ref{I__2521_SLim_inj}).
For condition~(\ref{I__2521_SLim_full}),
following the notation of
Construction \ref{Cns_2114_OIA}(\ref{I_2114_OIA_PreParts}),
we have $\mu_{n + 1, n}^{(0, 1)} (f_n) = 1 \otimes q_0$.
Since $q_0$ is a \nzp{} in $\OT$, there is $t_0 \in \OT$
such that $t_0^* t_0 = 1$ and $t_0 t_0^* = q_0$.
Then $t = 1 \otimes t_0$ satisfies
$t^* \mu_{n + 1, n}^{(1, 0)} (f_n) t = 1$.
The claim now follows from Lemma~\ref{L_2521_SLim}.

The algebra $A$ satisfies the Universal Coefficient Theorem because it is
a direct limit, with injective maps,
of algebras which satisfy the Universal Coefficient Theorem.

We have $K_1 (A_n) = 0$ for all $n \in \Nz$, so $K_1 (A) = 0$.
Following the notation of
Construction \ref{Cns_2114_OIA}(\ref{I_2114_OIA_A}),
we have $K_0 (A_{n, 1}) = 0$.
Also $K_0 (A_{n, 0}) \cong \Z$ and $[f_0]$ is a generator,
and, in $K_0 (A_{n, 0})$, $[p] = [\io_* (1)] = \io_* (0) = 0$.
Therefore $[f_n] = [f_0]$.
It follows that $K_0 (A_n) \cong \Z$ for all~$n$,
generated by $[1_{A_n}]$,
and $(\nu_{n + 1, n})_* ([1_{A_n}]) = [1_{A_{n + 1}}]$.
So $K_0 (A) \cong \Z$, generated by $[1_A]$.

The classification theorem for purely infinite simple C*-algebras,
Theorem 4.2.4 of~\cite{Ph_PICls},
now implies that $A \cong \OI$.
\end{proof}

\begin{lem}\label{L_2609_FixedPt}
Let $\af \colon S^1 \to \Aut (A)$ be as in
Construction \ref{Cns_2114_OIA}(\ref{I_2114_OIA_Lim}).
Then $A^{\af}$ is purely infinite and simple.
\end{lem}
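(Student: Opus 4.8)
The plan is to realize $A^{\af}$ as a direct limit of the fixed point algebras of the building blocks and then apply Lemma~\ref{L_2521_SLim}, exactly as in the proof of Lemma~\ref{L_2605_Cns_OI}. Since $\af$ is the direct limit action and the connecting maps $\nu_{n+1,n}$ are equivariant unital homomorphisms (Lemma~\ref{L_2605_Cns_OK}(\ref{I_L_2605_Cns_OK_Algs})), averaging over $S^1$ shows that $A^{\af} = \dirlim_n (A_n)^{\af^{(n)}}$, with connecting maps the restrictions of $\nu_{n+1,n}$ to fixed point algebras. So I first compute the summands. Because $f_n$ is $\id_K \otimes \bt$-invariant (as $p \in \OI^{\bt}$ and the matrix units are fixed), the restricted action $\af^{(n,0)}$ has fixed point algebra $(A_{n,0})^{\af^{(n,0)}} = f_n (K \otimes \OI^{\bt}) f_n$, a corner of $K \otimes \OI^{\bt}$. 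On the other summand, the fixed point algebra of $C(S^1)$ under the full rotation action $\Lt$ consists of the constant functions, so $(A_{n,1})^{\af^{(n,1)}} = \mathbb{C} \otimes \OT \cong \OT$.

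Next I would invoke Lemma~\ref{L_2521_SLim} with $B_n = (A_n)^{\af^{(n)}}$, taking
\[
B_n^{(0)} = (A_{n,0})^{\af^{(n,0)}} = f_n (K \otimes \OI^{\bt}) f_n
\andeqn
B_n^{(1)} = (A_{n,1})^{\af^{(n,1)}} \cong \OT,
\]
both nonzero, and with partial maps $\mu_{n+1,n}^{(k,j)}$ the restrictions of $\nu_{n+1,n}^{(k,j)}$. Condition~(\ref{I__2521_SLim_pi}) of Lemma~\ref{L_2521_SLim} holds because $B_n^{(1)} \cong \OT$ is purely infinite and simple. Condition~(\ref{I__2521_SLim_inj}) holds because each $\nu_{n+1,n}^{(k,j)}$ is injective by Lemma~\ref{L_2605_Cns_OK}(\ref{I_L_2605_Cns_inj}), and restrictions of injective maps are injective. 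Note that with this choice of roles I never need to know whether $\OI^{\bt}$ itself is simple; that algebra plays no part.

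The one substantive point, and the step I expect to require the most care, is the fullness Condition~(\ref{I__2521_SLim_full}). Since $\io$ is unital, $\mu_{n+1,n}^{(0,1)}(1_{B_n^{(1)}}) = \nu_{n+1,n}^{(0,1)}(1) = e_{n+1,n+1} \otimes \io(1) = e_{n+1,n+1} \otimes p$, and $1_{B_{n+1}^{(0)}} = f_{n+1}$, so I must produce $t \in f_{n+1}(K \otimes \OI^{\bt}) f_{n+1}$ with $\|t\|=1$ and $t^*(e_{n+1,n+1} \otimes p) t = f_{n+1}$. This is exactly what the last sentence of Lemma~\ref{L_2521_Exist} supplies: applied to the projection $q = f_{n+1} \in K \otimes \OI^{\bt}$ it gives $t_0$ with $\|t_0\|=1$ and $t_0^*(e_{1,1} \otimes p) t_0 = f_{n+1}$, which after normalizing so that $t_0 = (e_{1,1}\otimes p) t_0 f_{n+1}$ yields, upon conjugating by the fixed matrix-unit partial isometry $e_{n+1,1} \otimes p$ (for which $(e_{n+1,1}\otimes p)^*(e_{n+1,1}\otimes p) = e_{1,1}\otimes p$ and $(e_{n+1,1}\otimes p)(e_{n+1,1}\otimes p)^* = e_{n+1,n+1}\otimes p$), the element $t = (e_{n+1,1} \otimes p) t_0$. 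A routine check then gives $t^* t = f_{n+1}$, hence $\|t\|=1$, $f_{n+1} t = t f_{n+1} = t$ so that $t \in B_{n+1}^{(0)}$, and $t^*(e_{n+1,n+1} \otimes p) t = f_{n+1}$; moreover $t \in K \otimes \OI^{\bt}$ is $\af$-fixed. With all three hypotheses verified, Lemma~\ref{L_2521_SLim} gives that $A^{\af} = \dirlim_n B_n$ is purely infinite and simple, completing the proof.
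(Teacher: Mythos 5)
Your proof is correct and follows essentially the same route as the paper: the paper also applies Lemma~\ref{L_2521_SLim} with $B_n^{(0)} = (A_{n,0})^{\af^{(n,0)}}$ and $B_n^{(1)} = (A_{n,1})^{\af^{(n,1)}} \cong \OT$ (so that simplicity of $\OI^{\bt}$ is never needed), gets injectivity by restricting Lemma~\ref{L_2605_Cns_OK}(\ref{I_L_2605_Cns_inj}), and verifies the fullness condition from the last sentence of Lemma~\ref{L_2521_Exist} together with the Murray--von Neumann equivalence of $e_{n+1,n+1}\otimes p$ with a fixed matrix-unit corner in $(K\otimes\OI)^{\id_K\otimes\bt}$, exactly as you do. Your write-up just makes explicit a few details (the identification $A^{\af}=\dirlim_n (A_n)^{\af^{(n)}}$ and the conjugation by $e_{n+1,1}\otimes p$) that the paper leaves implicit.
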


\begin{proof}
Following the notation of Construction \ref{Cns_2114_OIA}, for $n \in \Nz$
in Lemma~\ref{L_2521_SLim} we take
\[
B_n = (A_n)^{\af^{(n)}},
\qquad
B_n^{(0)} = (A_{n, 0})^{\af^{(n, 0)}},
\andeqn
B_n^{(1)} = (A_{n, 1})^{\af^{(n, 1)}},
\]
and let
$\mu_{n, m}$ be the restriction of $\nu_{n, m}$ to the fixed point algebra.
Then
$\mu_{n + 1, n}^{(k, j)}
 = \nu_{n + 1, n}^{(k, j)} |_{(A_{n, j})^{\af^{(n, j)}}}$
for $n \in \Nz$ and $j, k \in \{ 0, 1 \}$.

In Lemma~\ref{L_2521_SLim}, 
condition~(\ref{I__2521_SLim_pi})
follows because
$B_n^{(1)} = (C (S^1) \otimes \OT)^{\Lt \otimes \id_{\OT}} \cong \OT$,
and condition~(\ref{I__2521_SLim_inj})
follows from Lemma \ref{L_2605_Cns_OK}(\ref{I_L_2605_Cns_inj})
by restriction.
Condition~(\ref{I__2521_SLim_full}) is a consequence of the
choices in Construction \ref{Cns_2114_OIA}(\ref{I_2114_OIA_A})
made using Lemma~\ref{L_2521_Exist}, and the fact that
$e_{n + 1, \, n + 1} \otimes p$ is \mvnt{} to $e_{0, 0} \otimes p$
in $(K \otimes \OI)^{\id_K \otimes \bt}$.
The conclusion thus follows from Lemma~\ref{L_2521_SLim}.
\end{proof}

\begin{thm}\label{2610_OI_TRP}
Let $\af \colon S^1 \to \Aut (A)$ be as in
Construction \ref{Cns_2114_OIA}(\ref{I_2114_OIA_Lim}).
Then $\af$ has the tracial Rokhlin property with comparison.
\end{thm}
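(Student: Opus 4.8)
The plan is to verify Definition~\ref{traR} directly, using the direct limit structure of Construction~\ref{Cns_2114_OIA} together with the two facts that $A\cong\OI$ (Lemma~\ref{L_2605_Cns_OI}) and $A^{\af}$ (Lemma~\ref{L_2609_FixedPt}) are both purely infinite and simple. The latter makes three of the five conditions essentially free. The projection I will use is $p=\nu_{\I,\,m+1}(p_{m+1})$, where $p_n=(0,1)\in A_{n,0}\oplus A_{n,1}$ is the identity of the second summand; it is $\af$-invariant because $p_n$ is central and $\af^{(n)}$-invariant and the maps $\nu$ are equivariant (Lemma~\ref{L_2605_Cns_OK}), and it is nonzero because the $\nu$ are injective. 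Since $A$ and $A^{\af}$ are purely infinite and simple and $x$, $y$, $p$ are all nonzero, Lemma~\ref{L_1Z11_pi} gives $1\precsim_A x$, $1\precsim_{A^{\af}}y$ and $1\precsim_{A^{\af}}p$, whence Conditions (\ref{1_pxcompactsets}), (\ref{1_pycompactsets}) and~(\ref{1_ppcompactsets}) of Definition~\ref{traR} hold for this $p$.

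Next I build the approximately central map. Fix finite sets $F\subseteq A$ and $S\subseteq C(S^1)$, fix $\varepsilon>0$ and $x$, $y$ as above, and choose $\delta>0$ small. Using density of $\bigcup_m\nu_{\I,m}(A_m)$, pick $m$ and an element $b=(b_0,b_1)\in A_m$ with $\|\nu_{\I,m}(b)-x\|<\delta$, so large that every element of $F$ is within $\delta$ of $\nu_{\I,m}(A_m)$. Define $\varphi_0\colon C(S^1)\to A_{m+1,1}=C(S^1)\otimes\OT$ by $\varphi_0(f)=f\otimes 1$, and set $\varphi=\nu_{\I,\,m+1}\circ\varphi_0$. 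Then $\varphi$ is a genuine unital homomorphism into $pAp$, so it is exactly multiplicative; it is exactly equivariant, since $\varphi_0$ intertwines $\Lt$ on $C(S^1)$ with $\af^{(m+1,1)}=\Lt\otimes\id_{\OT}$ and $\nu_{\I,\,m+1}$ is equivariant. For approximate centrality, note that $f\otimes 1$ is central in $C(S^1)\otimes\OT$ because $C(S^1)$ is abelian; hence $(0,f\otimes1)$ is central in $A_{m+1}$ and $\varphi(f)$ commutes exactly with $\nu_{\I,\,m+1}(A_{m+1})\supseteq\nu_{\I,m}(A_m)$. A two-$\delta$ estimate against the approximants of $F$ then yields Condition~(\ref{Item_893_FS_equi_cen_multi_approx}) of Definition~\ref{traR}, i.e.\ all three clauses of Definition~\ref{phillhir}, once $\delta$ is small.

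The one condition requiring a computation is~(\ref{Item_902_pxp_TRP}), $\|pxp\|>1-\varepsilon$, and this is the heart of the proof: unlike the stably finite examples of the two preceding sections, $A$ is infinite, so I cannot discard this condition via Lemma~\ref{L_1X09_NoNorm1}. The key observation is that compressing by $p_{m+1}$ is exactly the projection onto the $A_{m+1,1}$ summand. Writing $\Psi$ for the composite of $\nu_{m+1,m}$ with that summand projection, one has
\[
p\,\nu_{\I,m}(b)\,p
 =\nu_{\I,\,m+1}\bigl(p_{m+1}\,\nu_{m+1,m}(b)\,p_{m+1}\bigr)
 =\nu_{\I,\,m+1}\bigl(0,\Psi(b)\bigr),
\]
so $\|p\,\nu_{\I,m}(b)\,p\|=\|\Psi(b)\|$. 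Now $\Psi(b)=\nu_{m+1,m}^{(1,0)}(b_0)+\nu_{m+1,m}^{(1,1)}(b_1)$ is a sum of elements lying in the orthogonal corners $C(S^1)\otimes q_0\OT q_0$ and $C(S^1)\otimes q_1\OT q_1$, and each partial map is isometric: the first because $\kp_m$ is isometric, $\ld_m$ is an isomorphism, and $\mu_{m,0}$ is a unital homomorphism out of the simple algebra $A_{m,0}$; the second because $\mu_{m,1}$ is a unital homomorphism out of the simple algebra $\OT$. Hence $\|\Psi(b)\|=\max(\|b_0\|,\|b_1\|)=\|b\|\ge\|x\|-\delta=1-\delta$, and therefore $\|pxp\|\ge\|p\,\nu_{\I,m}(b)\,p\|-2\delta\ge1-3\delta>1-\varepsilon$ when $\delta$ is small enough. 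This verifies all five conditions.

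I expect the main obstacle to be precisely Condition~(\ref{Item_902_pxp_TRP}). The mechanism above is what makes it work: the ``Rokhlin corner'' $p$ is placed one stage beyond the stage $m$ at which $x$ is already visible, and the summand-projection map $\Psi$ mixes \emph{both} summands of $A_m$ isometrically into the single summand $A_{m+1,1}$, so that the full norm of $x$ survives compression by $p$ even though $p$ has trivial $K_0$-class. All the remaining ingredients — equivariance and multiplicativity of $\varphi$, and the three Cuntz comparisons — are either exact or immediate from pure infiniteness and simplicity of $A$ and $A^{\af}$.
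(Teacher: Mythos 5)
Your proposal is correct and follows essentially the same route as the paper's own proof: the same projection $p=\nu_{\I,\,m+1}(p_{m+1})$ and map $\varphi(f)=\nu_{\I,\,m+1}(f\otimes 1)$, the same appeal to pure infiniteness and simplicity of $A$ and $A^{\af}$ for the three Cuntz comparisons, and the same mechanism for $\| p x p \| > 1 - \ep$, namely that the combined map $A_m\to A_{m+1,1}$ is isometric (the paper cites injectivity of the partial maps from Lemma~\ref{L_2605_Cns_OK}; your orthogonal-corners justification is the same fact spelled out). No changes needed.
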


\begin{proof}
We verify the conditions of Definition~\ref{traR}.
Let $F \subseteq A$ and $S \subseteq C (G)$ be finite,
let $\varepsilon > 0$, let $x \in A_{+}$ satisfy $\| x \| = 1$,
and let $y \in (A^{\alpha})_{+} \setminus \{ 0 \}$.
Choose $n \in \Nz$ so large that there is a finite subset
$E \subseteq A_n$
with $\dist (a, \, \nu_{\I, n} (E) ) < \frac{\ep}{2}$
for all $a \in F$ and there is $c \in A_n$
with $\| x - \nu_{\I, n} (c) \| < \frac{\ep}{2}$.
Define $q = (0, 1) \in A_{n + 1}$.
The formula $\ps (f) = f \otimes 1_{\OT}$ defines
an equivariant unital \hm{} $\ps \colon C (S^1) \to A_{n + 1, 1}$,
which we identify with a unital \hm{}
$\ps \colon C (S^1) \to q A_{n + 1} q$.
Now define $p = \nu_{\I, n} (q)$ and
$\ph = \nu_{\I, n + 1} \circ \ps \colon C (S^1) \to A$.
Then $p$ is an $\af$-invariant \pj{} and $\ph$ is
an equivariant unital \hm{} from $C (S^1)$ to~$p A p$.

We claim that for all $a \in F$ and $f \in C (S^1)$ we have
$\| a \ph (f) - \ph (f) a \| < \ep$.
This will verify condition~(\ref{Item_893_FS_equi_cen_multi_approx})
of Definition~\ref{traR}.
Let $a \in F$.
Choose $b \in E$ such that
$\| \nu_{\I, n} (b) - a \| < \frac{\ep}{2}$.
Then $\nu_{n + 1, \, n} (b) \ps (f) = \ps (f) \nu_{n + 1, \, n} (b)$,
so, applying $\nu_{\I, n + 1}$,
\[
\| a \ph (f) - \ph (f) a \|
 \leq 2 \| a - \nu_{\I, n} (b) \| < \ep.
\]
The claim is proved.

We have $1 - p \precsim_A x$ because $x \neq 0$
and $A$ is purely infinite and simple.
Similarly, using Lemma~\ref{L_2609_FixedPt},
$1 - p \precsim_{A^{\af}} y$ and $1 - p \precsim_{A^{\af}} p$.

It remains only to verify condition~(\ref{Item_902_pxp_TRP})
of Definition~\ref{traR}.
Let $y = \nu_{n + 1, \, n} (c) \in A_{n + 1}$ and
write $y = (y_0, y_1)$ with $y_j \in A_{n + 1, \, j}$ for $j = 0, 1$.
The map $A_n \to A_{n + 1, \, 1}$ is injective by
Lemma \ref{L_2605_Cns_OK}(\ref{I_L_2605_Cns_inj}).
Using this at the third step, we have
\[
\begin{split}
\| p x p \|
& > \| p \nu_{\I, \, n} (c) p \| - \frac{\ep}{2}
  = \| q \nu_{n + 1, \, n} (c) q \| - \frac{\ep}{2}
\\
& = \| y_1 \| - \frac{\ep}{2}
  = \| c \| - \frac{\ep}{2}
  > \| x \| - \ep.
\end{split}
\]
This completes the proof.
\end{proof}

\begin{cor}\label{C_2611_EOI}
There exists an action of $S^1$ on $\OI$
which has the tracial Rokhlin property with comparison.
\end{cor}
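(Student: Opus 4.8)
The plan is to assemble the two results already established for the concrete model built in Construction~\ref{Cns_2114_OIA}. That construction produces a simple separable unital \ca{} $A = \dirlim_n A_n$ together with a direct limit action $\af \colon S^1 \to \Aut (A)$ of the second countable compact group $S^1$, whose existence and continuity were checked in Lemma~\ref{L_2605_Cns_OK}. All of the real work has been done in the preceding lemmas and theorem, so the corollary is purely a matter of combining them and transporting structure across an isomorphism.

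First I would invoke Lemma~\ref{L_2605_Cns_OI}, which identifies $A$ with $\OI$. Choose an isomorphism $\Theta \colon A \to \OI$, and define an action $\tilde{\af} \colon S^1 \to \Aut (\OI)$ by $\tilde{\af}_{\zt} = \Theta \circ \af_{\zt} \circ \Theta^{-1}$ for $\zt \in S^1$. Since $\Theta$ is an isomorphism and $\af$ is a continuous action, $\tilde{\af}$ is again a continuous action of $S^1$ on $\OI$. Next I would invoke Theorem~\ref{2610_OI_TRP}, which states precisely that $\af$ has the tracial Rokhlin property with comparison. Note also that $\OI$ is infinite dimensional, simple, separable, and unital, so Definition~\ref{traR} applies to $\tilde{\af}$.

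The only remaining point is that the tracial Rokhlin property with comparison is invariant under conjugation by an isomorphism. This is immediate from Definition~\ref{traR}: given $\Theta$ as above, and given data $F \S \OI$, $S \S C (S^1)$, $\ep > 0$, $x \in (\OI)_{+}$ with $\| x \| = 1$, and $y \in (\OI^{\tilde{\af}})_{+} \SM \{ 0 \}$, one applies Definition~\ref{traR} for $\af$ to the pulled-back data $\Theta^{-1} (F) \S A$, $S$, $\ep$, $\Theta^{-1} (x)$, and $\Theta^{-1} (y) \in (A^{\af})_{+} \SM \{ 0 \}$ (using that $\Theta$ carries $A^{\af}$ onto $\OI^{\tilde{\af}}$), obtaining $p \in A^{\af}$ and a \ucp{} map $\ph \colon C (S^1) \to p A p$. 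Then $\Theta (p) \in \OI^{\tilde{\af}}$ and $\Theta \circ \ph \circ \colon C (S^1) \to \Theta (p) \OI \Theta (p)$ witness the property for $\tilde{\af}$, since $\Theta$ preserves norms, products, adjoints, the relevant Cuntz subequivalences (in both $\OI$ and $\OI^{\tilde{\af}}$), and intertwines $\af$ with $\tilde{\af}$.

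There is essentially no obstacle here: the substance of the example lies entirely in Construction~\ref{Cns_2114_OIA}, Lemma~\ref{L_2605_Cns_OI}, and Theorem~\ref{2610_OI_TRP}. The corollary merely records that the constructed action, after transport along $A \cong \OI$, furnishes the desired example. Accordingly, the proof consists of one sentence citing these two results together with the trivial observation that the property is preserved under equivariant isomorphism.
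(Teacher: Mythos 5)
Your proposal is correct and follows exactly the paper's route: the paper's entire proof is to cite Lemma~\ref{L_2605_Cns_OI} (identifying $A$ with $\OI$) together with Theorem~\ref{2610_OI_TRP}, leaving the transport of the tracial Rokhlin property with comparison across the isomorphism implicit, which you have merely spelled out. (Minor typo only: the stray ``$\circ$'' in ``$\Theta \circ \ph \circ \colon C (S^1) \to \cdots$'' should be deleted.)
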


\begin{proof}
By Lemma~\ref{L_2605_Cns_OI}, the algebra $A$ in Theorem~\ref{2610_OI_TRP}
is isomorphic to~$\OI$.
\end{proof}

\begin{lem}\label{2610_No_fin_RD}
There is no action of $S^1$
on $\OI$ which has finite Rokhlin dimension with commuting towers.
\end{lem}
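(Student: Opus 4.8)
The cleanest route is to observe that this statement is literally Corollary 4.23 of~\cite{Gar_rokhlin_2017}, so the proof proper would consist of a single citation, exactly as anticipated in the introduction and in the remark preceding Theorem~\ref{2610_OI_TRP}. The plan is therefore to invoke that corollary directly. This is legitimate since the result is about $\OI$ alone and makes no reference to our particular construction; the action $\af$ of Construction~\ref{Cns_2114_OIA} only enters as an example of an $S^1$-action on $\OI$ with the tracial Rokhlin property with comparison, and the nonexistence of finite Rokhlin dimension with commuting towers is a property of the algebra $\OI$ and the group $S^1$, independent of which action one starts with.

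If instead a more self-contained argument were desired, I would reconstruct the obstruction by the same K-theoretic mechanism already used in the proof of Corollary~\ref{C_1918_NoFinRD}. First I would argue by contradiction, assuming that some action $\af \colon S^1 \to \Aut (\OI)$ has finite Rokhlin dimension with commuting towers. Then, by Corollary 4.5 of~\cite{Gar_rokhlin_2017}, there is $n \in \N$ with $I (S^1)^n K_0^{S^1} (\OI) = 0$, where $I (S^1)$ is the augmentation ideal of $R (S^1) = \Z [ \sm, \sm^{-1}]$. The next step would be to show that this nilpotence of the augmentation ideal on the equivariant $K_0$-group cannot hold for \emph{any} action of $S^1$ on $\OI$; a natural candidate is to exploit the unital equivariant inclusion ${\mathbb{C}} \to \OI$ (with $S^1$ acting trivially on~${\mathbb{C}}$), which induces an $R (S^1)$-module map $R (S^1) \to K_0^{S^1} (\OI)$ sending $1$ to $[1_{\OI}]$, and to control the annihilator of $[1_{\OI}]$, together with the general freeness obstruction of Theorem 1.1.1 of~\cite{Phl1}.

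The hard part of any self-contained proof is precisely this last point: one must verify that $I (S^1)$ acts non-nilpotently on $K_0^{S^1} (\OI)$ \emph{uniformly over all} $S^1$-actions, rather than for a single fixed action as in Corollary~\ref{C_1918_NoFinRD}, where the relevant submodule was produced by the explicit direct system. Establishing such a universal statement requires the structural input about equivariant K-theory developed in~\cite{Gar_rokhlin_2017}, and this is exactly what is packaged into Corollary 4.23 there. Accordingly, I would not attempt to redevelop that machinery here, and the proof I would write is simply the reference to Corollary 4.23 of~\cite{Gar_rokhlin_2017}.
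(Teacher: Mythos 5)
Your proposal matches the paper's proof exactly: the paper's entire argument is the single citation ``This is part of Corollary 4.23 of \cite{Gar_rokhlin_2017}.'' The additional discussion of a hypothetical self-contained K-theoretic argument is reasonable context but not needed, and your conclusion to simply invoke that corollary is precisely what the paper does.
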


\begin{proof}
This is part of Corollary 4.23 of \cite{Gar_rokhlin_2017}. 
\end{proof}

For $n \in \{ 3, 4, \ldots \}$,
a construction similar to Construction~\ref{Cns_2114_OIA}
presumably gives an action of $S^1$ on ${\mathcal{O}}_{n}$ which has
the tracial Rokhlin property with comparison.
The only changes needed are in the construction of $u$ in the proof
of Lemma~\ref{L_2521_Exist}.
Corollary 4.23 of \cite{Gar_rokhlin_2017} also implies that
there is no action of $S^1$ on ${\mathcal{O}}_{n}$
which has finite Rokhlin dimension with commuting towers.

\begin{thm}\label{T_2610_Kbg}
Let $B$ be a unital purely infinite simple separable nuclear \ca.
Then there exists an action of $S^1$ on $B$
which has the version of the tracial Rokhlin property with comparison
obtained by omitting
condition (\ref{Item_902_pxp_TRP}) in Definition~\ref{traR}.
\end{thm}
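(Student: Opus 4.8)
The plan is to obtain the desired action on $B$ by tensoring the action on $\OI$ produced in Corollary~\ref{C_2611_EOI} with the trivial action on $B$, and then transporting the result through Kirchberg's $\OI$-absorption theorem. Let $\alpha_0 \colon S^1 \to \Aut (\OI)$ be the action of Corollary~\ref{C_2611_EOI}, which has the full tracial Rokhlin property with comparison. Since $B$ is a unital Kirchberg algebra, Kirchberg's absorption theorem gives a unital isomorphism $\OI \otimes B \cong B$; transporting the action $\alpha := \alpha_0 \otimes \id_B$ of $S^1$ on $\OI \otimes B$ through this isomorphism yields an action on~$B$. Because the property in the statement (the version of Definition~\ref{traR} with condition~(\ref{Item_902_pxp_TRP}) omitted) is preserved under equivariant isomorphism, it suffices to verify it for $\alpha$ acting on $\OI \otimes B$.

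The structural input is as follows. Using the $S^1$-averaging conditional expectation $E_{\alpha_0} \otimes \id_B$, one identifies $(\OI \otimes B)^{\alpha} = \OI^{\alpha_0} \otimes B$. The algebra $\OI \otimes B$ is a unital Kirchberg algebra, hence purely infinite and simple. By Lemma~\ref{L_2609_FixedPt} the fixed point algebra $\OI^{\alpha_0}$ is purely infinite and simple, and it is nuclear (being strongly Morita equivalent to the nuclear crossed product $C^{*} (S^1, \OI, \alpha_0)$ via Corollary~\ref{C_1X15_StableIso}); standard permanence of pure infiniteness and simplicity under minimal tensor products with a simple algebra then shows that $\OI^{\alpha_0} \otimes B$ is again purely infinite and simple. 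Consequently, in each of $\OI \otimes B$ and $\OI^{\alpha_0} \otimes B$, every projection is Cuntz subequivalent to every nonzero positive element.

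Now fix finite sets $F \S \OI \otimes B$ and $S \S C (S^1)$, $\ep > 0$, $x \in (\OI \otimes B)_{+}$ with $\| x \| = 1$, and $y \in \bigl( (\OI \otimes B)^{\alpha} \bigr)_{+} \SM \{ 0 \}$, and assume $\| g \| \le 1$ for $g \in S$. Approximate each element of $F$ within $\tfrac{\ep}{3}$ by a finite sum of elementary tensors, collect the $\OI$-tensor factors that occur into a finite set $F' \S \OI$, and let $M$ bound the norms of the $B$-tensor factors that occur. Apply the tracial Rokhlin property with comparison for $\alpha_0$ (as formulated in Definition~\ref{traR}) with $F'$ and $S$ in place of $F$ and $S$, a sufficiently small tolerance (scaled by $M$ and by the number of elementary tensors used), and $x' = 1_{\OI}$, $y' = 1_{\OI^{\alpha_0}}$; this produces a projection $p_0 \in \OI^{\alpha_0}$ and a unital completely positive map $\varphi_0 \colon C (S^1) \to p_0 \OI p_0$. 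Set $p = p_0 \otimes 1_B \in (\OI \otimes B)^{\alpha}$ and $\varphi (g) = \varphi_0 (g) \otimes 1_B$. Then $p \neq 0$, since condition~(\ref{1_ppcompactsets}) of Definition~\ref{traR} applied to $\alpha_0$ gives $1 - p_0 \precsim_{\OI^{\alpha_0}} p_0$, which is impossible for $p_0 = 0$. Approximate equivariance and approximate multiplicativity of $\varphi$ pass directly through the $\OI$-tensor factor from the corresponding properties of $\varphi_0$; for the central condition one uses that, on an elementary tensor, the commutator is $[\varphi (g),\, a \otimes b] = [\varphi_0 (g), a] \otimes b$, whose norm is controlled by the approximate commutation of $\varphi_0$ with $F'$ together with the $\tfrac{\ep}{3}$-approximation of $F$ by elementary tensors. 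This verifies condition~(\ref{Item_893_FS_equi_cen_multi_approx}). Conditions (\ref{1_pxcompactsets}), (\ref{1_pycompactsets}), and~(\ref{1_ppcompactsets}) then hold automatically: $1 - p$ is a projection, $x$ and $y$ are nonzero, and $p \neq 0$, so the pure infiniteness and simplicity recorded above give $1 - p \precsim_{\OI \otimes B} x$, $1 - p \precsim_{\OI^{\alpha_0} \otimes B} y$, and $1 - p \precsim_{\OI^{\alpha_0} \otimes B} p$. Condition~(\ref{Item_902_pxp_TRP}) is precisely the one we omit, which is what allows us to choose the unrelated elements $x'$ and $y'$ above.

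I expect the one genuinely substantive step to be establishing that the fixed point algebra $\OI^{\alpha_0} \otimes B$ is purely infinite and simple: this combines Lemma~\ref{L_2609_FixedPt} with the nuclearity of $\OI^{\alpha_0}$ and the standard fact that tensoring a purely infinite simple nuclear algebra with a simple algebra preserves pure infiniteness and simplicity. Everything else — the identification of the fixed point algebra, the passage of the three approximation conditions through the tensor factor, and the automatic Cuntz comparison in purely infinite simple algebras — is routine.
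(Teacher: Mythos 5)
Your proposal is correct and follows essentially the same route as the paper: reduce to $\OI \otimes B$ via Kirchberg's $\OI$-absorption theorem, take $\alpha_0 \otimes \id_B$, apply the tracial Rokhlin property with comparison of $\alpha_0$ (with $x = y = 1$) to elementary-tensor approximants of $F$, set $p = p_0 \otimes 1$ and $\varphi = \varphi_0 \otimes 1$, and observe that the three Cuntz subequivalence conditions are automatic because $\OI \otimes B$ and $\OI^{\alpha_0} \otimes B$ are purely infinite and simple. Your extra remark justifying nuclearity of $\OI^{\alpha_0}$ via Corollary~\ref{C_1X15_StableIso} is a reasonable elaboration of a step the paper leaves implicit, but it does not change the argument.
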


By Remark~\ref{R_2015_nzp} and Remark~\ref{L_2015_S4_nzp},
condition (\ref{Item_902_pxp_TRP}) in Definition~\ref{traR}
is not needed for any of the
permanence properties we prove for fixed point algebras
and crossed products.
Also, we point out that the Universal Coefficient Theorem is not
needed.

\begin{proof}[Proof of Theorem~\ref{T_2610_Kbg}]
By Theorem~3.15 of~\cite{KP1},
it suffices to use $\OI \otimes B$ in place of~$B$.
Let $\af \colon S^1 \to \Aut (\OI)$ be as in
Construction \ref{Cns_2114_OIA}(\ref{I_2114_OIA_Lim}),
recalling that $A \cong \OI$ by Lemma~\ref{L_2605_Cns_OI}.
Set $\bt = \af \otimes \id_B \colon S^1 \to \Aut (\OI \otimes B)$.

We verify the conditions of Definition~\ref{traR},
except for condition (\ref{Item_902_pxp_TRP}).
Let $F \subseteq A$ and $S \subseteq C (G)$ be finite,
let $\varepsilon > 0$, let $x \in A_{+}$ satisfy $\| x \| = 1$,
and let $y \in (A^{\alpha})_{+} \setminus \{ 0 \}$.
By approximation and algebra, we may assume that there are finite sets
$F_1 \subseteq \OI$ and $F_2 \subseteq B$, contained in the closed unit balls
of these algebras, such that
\[
F = \bigl\{ a_1 \otimes a_2 \colon
   {\mbox{$a_1 \in F_1$ and $a_2 \in F_2$}} \bigr\}.
\]
Use Theorem~\ref{2610_OI_TRP}
to choose $p_0 \in \OI$ and $\ph_0 \colon C (S^1) \to \OI$
as in Definition~\ref{traR}, with $F_1$ in place of $F$,
with $S$ as given, and with $x = y = 1$.
Define $p = p_0 \otimes 1$ and define
$\ph \colon C (S^1) \to p (\OI \otimes B) p$
by $\ph (f) = \ph_0 (f) \otimes 1$ for $f \in C (S^1)$.
It is easily checked that $\varphi$
is an $(F, S, \varepsilon)$-approximately equivariant
central multiplicative map.
The algebras $\OI \otimes B \cong B$
and $(\OI \otimes B)^{\bt} = (\OI)^{\af} \otimes A$
are purely infinite and simple
(using Lemma~\ref{L_2609_FixedPt} for the second one),
and $p$, $x$, and $y$ are all nonzero,
so the relations
\[
1 - p \precsim_{\OI \otimes B} x,
\qquad
1 - p \precsim_{(\OI \otimes B)^{\bt}} y,
\andeqn
1 - p \precsim_{(\OI \otimes B)^{\bt}} p
\]
are automatic.
\end{proof}

As mentioned in the introduction to this section,
it should also be not too hard to generalize Construction \ref{Cns_2114_OIA}
to actions of $(S^1)^m$ for $m \in \{ 2, 3, 4, \ldots, \I \}$.
It seems harder to deal with nonabelian groups.

\begin{pbm}\label{Pb_2114_NC}
Find an action of a connected noncommutative second countable
compact group (such as ${\operatorname{SU}} (2)$) on $\OI$
which has the tracial Rokhlin property with comparison.
\end{pbm}

\section{Nonexistence}\label{Sec_1919_NonE}

For actions of $S^1$, one can require in all variants of the
definition the $(F, S, \ep)$-equivariant central multiplicative map
$\ph$ be exactly a \hm{} and exactly equivariant.
For direct limit actions of $S^1$, one can also require that it
take values in some algebra in the direct system.
We state a slightly more general form
(also covering finite abelian groups)
in the following proposition.
The statement about general actions is covered, using the trivial
direct system in which all the algebras are~$A$.

\begin{prp}\label{P_1920_S1DLim}
Let $G$ be a (not necessarily connected) compact abelian Lie group such
that $\dim (G) \leq 1$.
Let $\bigl( (A_n)_{n \in \Nz}, \, (\nu_{n, m})_{m \leq n} \bigr)$
be a direct system of \uca{s} with unital injective maps
$\nu_{n, m} \colon A_m \to A_n$.
Set $A = \dirlim_n A_n$, with maps $\nu_{\I, m} \colon A_m \to A$.
Assume we are given actions $\af^{(n)} \colon G \to \Aut (A_n)$
such that the maps $\nu_{n, m}$ are equivariant,
and let $\af$ be the direct limit action $\af = \dirlim \af^{(n)}$.
\begin{enumerate}
\item\label{Item_1920_S1DL_R}
The action $\af$ has the Rokhlin property
\ifo{} for every $N \in \Nz$, every finite set $F \subseteq A_{N}$,
every finite set $S \subseteq C (G)$, and every $\ep > 0$,
there exists $n \geq N$ and a unital equivariant \hm{}
$\ph \colon C (G) \to A_n$ such that
$\| \ph (f) \nu_{n, N} (a) - \nu_{n, N} (a) \ph (f) \| < \ep$
for all $f \in S$ and $a \in F$.
\item\label{Item_1920_S1DL_TRPC}
Suppose $A$ is simple.
Then $\af$ has the tracial Rokhlin property with comparison
\ifo{} for every $N \in \Nz$, every finite set $F \subseteq A_{N}$,
every finite set $S \subseteq C (G)$, every $\ep > 0$,
every $x \in A_{+}$ with $\| x \| = 1$,
and every $y \in (A^{\alpha})_{+} \SM \{ 0 \}$,
there exist $n \geq N$, a projection $p \in (A_n)^{\alpha^{(n)}}$,
and a unital equivariant \hm{} $\ph \colon C (G) \to p A_n p$
such that the following hold.
\begin{enumerate}
\item\label{Item_1920_S1DL_TRPC_Comm}
$\| \ph (f) \nu_{n, N} (a) - \nu_{n, N} (a) \ph (f) \| < \ep$
for all $f \in S$ and $a \in F$.
\item\label{Item_1920_S1DL_TRPC_Sub}
$1 - \nu_{\I, n} (p) \precsim_A x$,
$1 - \nu_{\I, n} (p) \precsim_{A^{\alpha}} y$,
and $1 - p \precsim_{(A_n)^{\alpha^{(n)}}} p$.
\item\label{Item_1920_S1DL_TRPC_Nm}
$\| \nu_{\I, n} (p) x \nu_{\I, n} (p) \| > 1 - \ep$.
\end{enumerate}
\item\label{Item_1920_S1DL_NTRP}
Suppose $A$ is simple.
Then $\af$ has the naive tracial Rokhlin property
\ifo{} for every $N \in \Nz$, every finite set $F \subseteq A_{N}$,
every finite set $S \subseteq C (G)$, every $\ep > 0$,
and every $x \in A_{+}$ with $\| x \| = 1$,
there exist $n \geq N$, a projection $p \in (A_n)^{\alpha^{(n)}}$,
and a unital equivariant \hm{} $\ph \colon C (G) \to p A_n p$
such that the following hold.
\begin{enumerate}
\item\label{Item_1920_S1DL_NTRP_Comm}
$\| \ph (f) \nu_{n, N} (a) - \nu_{n, N} (a) \ph (f) \| < \ep$
for all $f \in S$ and $a \in F$.
\item\label{Item_1920_S1DL_NTRP_Sub}
$1 - \nu_{\I, n} (p) \precsim_A x$.
\item\label{Item_1920_S1DL_NTRP_Nm}
$\| \nu_{\I, n} (p) x \nu_{\I, n} (p) \| > 1 - \ep$.
\end{enumerate}
\setcounter{TmpEnumi}{\value{enumi}}
\end{enumerate}
\end{prp}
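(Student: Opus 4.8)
The plan is to prove all three equivalences by the same mechanism, so I would treat them in parallel, noting only that the comparison relations to be tracked differ between the parts: nothing beyond centrality for~(\ref{Item_1920_S1DL_R}), all the subequivalences in~(\ref{Item_1920_S1DL_TRPC_Sub}) for~(\ref{Item_1920_S1DL_TRPC}), and only $1 - \nu_{\I, n} (p) \precsim_A x$ for~(\ref{Item_1920_S1DL_NTRP}). In each part the ``if'' direction is immediate: given $n$, the invariant projection $p$, and the exact equivariant \uhm{} $\ph \colon C (G) \to p A_n p$, the composite $\nu_{\I, n} \circ \ph$ is an exact equivariant \uhm{} from $C (G)$ into $\nu_{\I, n} (p) A \nu_{\I, n} (p)$, hence in particular an $(F, S, \ep)$-approximately equivariant central multiplicative map, and the Cuntz relations and the norm estimate transfer verbatim because $\nu_{\I, n}$ is an injective equivariant \hm. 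Thus Lemma~\ref{L_1X16_AppDfRk}, Definition~\ref{traR}, and Definition~\ref{D_1920_NTRP} are verified for the three parts respectively. So the work is entirely in the ``only if'' directions.

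For those I would first record the structural facts about $C (G)$ that make $\dim (G) \leq 1$ the right hypothesis. Since $G$ is a compact abelian Lie group of dimension at most~$1$, its dual satisfies $\widehat{G} \cong \Z^{\epsilon} \times F_0$ with $\epsilon \in \{ 0, 1 \}$ and $F_0$ finite, so $C (G) = C^* (\widehat{G})$ is a finite direct sum of copies of ${\mathbb{C}}$ (when $\epsilon = 0$) or of $C (S^1)$ (when $\epsilon = 1$); in either case $C (G)$ is semiprojective. Moreover the action $\Lt$ diagonalizes: each character $\chi \in \widehat{G}$ is an eigenvector, $\Lt_g (\chi) = \overline{\chi (g)} \, \chi$. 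Consequently an exact equivariant unital \hm{} $C (G) \to p A p$, for an $\af$-invariant projection $p$, is the same datum as a tuple of unitaries of $p A p$, one in each spectral subspace indexed by a generating character, satisfying the relations of $\widehat{G}$: a single ``free'' unitary for the $\Z$-factor together with finitely many commuting finite-order unitaries for~$F_0$.

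The rigidification then proceeds in two stages. Starting from the approximately equivariant, approximately central multiplicative \ucp{} map $\varphi \colon C (G) \to p A p$ furnished by the relevant definition (applied with a tolerance much smaller than the target $\ep$), I would use semiprojectivity of $C (G)$ together with approximate multiplicativity to replace $\varphi$ by an exact \uhm{} agreeing with it to within a small tolerance on the generating characters, and then average the generating unitaries over~$G$ onto the correct spectral subspaces to restore exact equivariance, correcting by functional calculus so that the averaged elements are again unitaries satisfying the relations (the rank-one torsion-free part keeps this bookkeeping finite). Next, because $A = \dirlim_n A_n$ with injective equivariant connecting maps and $\af$ is the direct limit action, the invariant projection and these generating eigen-unitaries lie within arbitrarily small distance of invariant elements of some $A_n$ with $n \geq N$; rigidifying inside the corner $p_n A_n p_n$ (again by semiprojectivity) produces an $\af^{(n)}$-invariant projection $p \in (A_n)^{\af^{(n)}}$ and an exact equivariant \uhm{} $\ph \colon C (G) \to p A_n p$. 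Approximate commutation with $F \subseteq A_N$ in~(\ref{Item_1920_S1DL_TRPC_Comm}) and its analogs is preserved by taking the approximations fine enough, the subequivalences and norm estimate transfer from the original data through $\nu_{\I, n}$ (using that nearby projections are Murray--von Neumann equivalent), and the finite-stage relation $1 - p \precsim_{(A_n)^{\af^{(n)}}} p$ in~(\ref{Item_1920_S1DL_TRPC_Sub}) is obtained by observing that the partial isometry witnessing $1 - p \precsim_{A^{\af}} p$ may itself be taken from a finite stage.

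I expect the equivariant rigidification, carried out simultaneously with the passage to a finite stage, to be the main obstacle. The hypothesis $\dim (G) \leq 1$ is not a convenience but a necessity: $C (G)$ fails to be semiprojective once $\widehat{G}$ has torsion-free rank at least~$2$ (for instance $C ({\mathbb{T}}^m)$ with $m \geq 2$ is not semiprojective), so approximate homomorphisms cannot in general be perturbed to exact ones and the whole scheme breaks down. The delicate point is to perturb the generating unitaries to exact ones while keeping each in its prescribed spectral subspace, preserving the finite-order relations coming from the torsion subgroup~$F_0$, and simultaneously arranging that all of them, together with~$p$, lie in a single algebra $A_n$ of the system.
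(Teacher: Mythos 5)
Your treatment of the ``if'' directions and your diagnosis of why $\dim (G) \leq 1$ is essential are both correct, and the broad shape of your ``only if'' argument (rigidify to an exact equivariant \hm, then land in a finite stage) is the right one. But your route diverges from the paper's at the crucial step. The paper does not restore equivariance by hand: it first invokes the central sequence formulation (Proposition~\ref{P_1X14_CentSq}), which already supplies an exact equivariant \uhm{} $\ps \colon C (G) \to e (A_{\I, \af} \cap A') e$ with the required smallness and comparison properties; it then builds the subalgebra $B \S l^{\I}_{\af} (\N, A)$ of sequences whose $m$-th entries lie in corners $q_m A_{l (m)} q_m$ of finite stages, checks that $\ps$ factors through $B / \Ker (\pi_A |_B)$, and applies {\emph{equivariant}} semiprojectivity of $C (G)$ (Theorem~4.4 of~\cite{Gdla}, which is exactly where the hypothesis $\dim (G) \leq 1$ enters) to lift $\ps$ to $B / J_{m_0}$. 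That single lifting step produces exactness, exact equivariance, and membership in a finite stage simultaneously; the remaining work is only the routine selection of a large enough index.

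The gap in your version sits precisely at the point you flag as delicate. Your plan is: ordinary semiprojectivity to get an exact (nonequivariant) \hm, then average each generating eigen-unitary into its spectral subspace and correct by functional calculus. This works cleanly when $\widehat{G}$ is free of rank one, i.e.\ $G = S^1$: one unitary, one averaging, one polar decomposition. But for a disconnected $G$ the dual has torsion, and you must produce several generators that simultaneously (i) lie exactly in their prescribed spectral subspaces, (ii) satisfy the exact order relations, and (iii) commute exactly with one another (equivalently, the associated spectral projections must be exactly orthogonal and exactly permuted by the action). Averaging plus polar decomposition handles~(i) for each generator separately, and a root trick handles~(ii), but these corrections are not made relative to the other generators, so~(iii) is lost; while averaging the whole \hm{} instead destroys exact multiplicativity and returns you to the starting point. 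Breaking this circularity is the content of equivariant semiprojectivity, and it is not a consequence of ordinary semiprojectivity plus averaging. A secondary inefficiency is that your scheme runs the rigidification twice (once in $A$, once again inside $A_n$), whereas the paper's sequence-algebra device does it once. Replacing your averaging stage by a citation of Theorem~4.4 of~\cite{Gdla} repairs the argument, at which point it essentially coincides with the paper's proof.
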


\begin{proof}
In all three parts, the fact that the condition implies the
appropriate property follows from the fact that any finite subset of~$A$
can be approximated arbitrarily well by a finite subset
of $\bigcup_{n = 0}^{\I} \nu_{\I, n} (A_n)$.

The reverse directions are deduced from equivariant semiprojectivity
of $C (G)$,
which is Theorem~4.4 of~\cite{Gdla}.
The proofs are similar.
We only do~(\ref{Item_1920_S1DL_TRPC}), which has the most steps.
We give a full proof, since the steps must be done in the right order.

To simplify notation, we assume that $A_k \S A$ for all $k \in \Nz$,
and that the maps $\nu_{l, k}$ and $\nu_{\I, k}$ are all inclusions.
Thus, $A = {\overline{\bigcup_{l = 1}^{\I} A_l}}$.
Moreover, $\af^{(l)}_g = \af_g |_{A_l}$
for all $g \in G$ and $l \in \N$, and we just write~$\af_g$.

Let $N \in \Nz$, let $F \S A_{N}$ be finite,
let $S \S C (G)$ be finite,
let $\ep > 0$, let $x \in A_{+}$ satisfy $\| x \| = 1$,
and let $y \in (A^{\af})_{+} \SM \{ 0 \}$.
Since $\af$ has the \trpc, Proposition~\ref{P_1X14_CentSq} provides
a \pj{} $e \in (A_{\I, \af} \cap A')^{\af_{\I}}$
and an equivariant unital \hm{}
$\ps \colon C (G) \to e (A_{\I, \af} \cap A') e$
such that the following hold.
\begin{enumerate}
\setcounter{enumi}{\value{TmpEnumi}}
\item\label{It_CSq_2115_1mp_af_sm}
$1 - e$ is $\af$-small in $A_{\I, \af}$.
\item\label{It_CSq_2115_1mp_sm_Aaf}
$1 - e$ is small in $(A^{\af})_{\I}$.
\item\label{It_CSq_2115_1mp_p}
$1 - e \precsim_{(A^{\af})_{\I}} e$.
\item\label{It_CSq_2115_Norm}
Identifying $A$ with its image in $A_{\I, \af}$,
we have $\| e x e\| = 1$.
\setcounter{TmpEnumi}{\value{enumi}}
\end{enumerate}

Choose $c^{(1)}, c^{(2)}, \ldots \in C (G)$ such that
$\bigl\{ c^{(1)}, c^{(2)}, \ldots  \bigr\}$ is dense in $C (G)$.
For $j \in \N$ choose
$d^{(j)} = \bigl( d^{(j)}_m \bigr)_{m \in \N} \in l^{\I}_{\af} (\N, A)$
such that $\pi_A \bigl( d^{(j)} \bigr) = \ps \bigl( c^{(j)} \bigr)$.
Use Lemma~\ref{L_1X09_LiftPj} to lift $e$
to an $\af^{\I}$-invariant \pj{}
$r = (r_m)_{m \in \N} \in l^{\I}_{\af} (\N, A)$.
Since $\af$ is a direct limit action,
$A^{\af} = {\overline{\bigcup_{l = 1}^{\I} (A_l)^{\af^{(l)}} }}$.
Therefore every \pj{} in $A^{\af}$ is a norm limit of \pj{s}
in $\bigcup_{l = 1}^{\I} (A_l)^{\af^{(l)}}$.
For $m \in \N$ choose $l (m) \in \N$ such that
there is a \pj{} $q_m \in (A_{l (m)})^{\af^{(l (m))}}$ satisfying
$\| q_m - r_m \| < \frac{1}{m}$,
and also so large that for $j = 1, 2, \ldots, m$ we have
$\dist \bigl( d^{(j)}_m, \, A_{l (m)} \bigr) < \frac{1}{m}$.
We may assume $l (1) \leq l (2) \leq \cdots$.
Set $q = (q_m)_{m \in \N} \in l^{\I}_{\af} (\N, A)$.
Then $\pi_A (q) = e$.
Let $B \S l^{\I}_{\af} (\N, A)$ be the closed subalgebra consisting
of all sequences $b = (b_m)_{m \in \N} \in l^{\I}_{\af} (\N, A)$
such that for all $m \in \N$ we have $b_m \in q_m A_{l (m)} q_m$.
Let $D = \pi_A (B) \S e A_{\I, \af} e \S A_{\I, \af}$.

We claim that $\ps (C (G)) \S D$.
It suffices to let $j \in \N$
and prove that $\ps \bigl( c^{(j)} \bigr) \in D$.
By construction, for all $m \geq j$ there is $b_m \in A_{l (m)}$
such that $\bigl\| b_m - d^{(j)}_m \bigr\| < \frac{1}{m}$.
Setting $b = (b_m)_{m \in \N} \in l^{\I} (\N, A)$,
we get $\pi_A (b) = \ps \bigl( c^{(j)} \bigr)$.
Since $c_0 (\N, A) \S l^{\I}_{\af} (\N, A)$
and $d^{(j)} \in l^{\I}_{\af} (\N, A)$, this implies that
$b \in l^{\I}_{\af} (\N, A)$.
Therefore also
\[
\ps \bigl( c^{(j)} \bigr)
 = e \ps \bigl( c^{(j)} \bigr) e
 = \pi_A (q) \pi_A (b) \pi_A (q)
 = \pi_A (q b q)
 \in D.
\]
The claim is proved.

For $k \in \N$ let $J_k \S B$ be the ideal consisting of all
sequences $(a_m)_{m \in \N} \in B$ such that $a_m = 0$ for all $m > k$.
Then
\[
J_1 \S J_2 \S \cdots
\andeqn
{\overline{\bigcup_{k = 1}^{\I} J_k}}
 = B \cap c_0 (\N, A) = {\operatorname{Ker}} (\pi_A |_B).
\]
Let $\kp_k \colon B \to B / J_k$ be the quotient map.
Use equivariant semiprojectivity of $C (G)$
(Theorem~4.4 of~\cite{Gdla}) to find $m_0 \in \N$ such that
$\ps \colon C (G) \to D = B / {\operatorname{Ker}} (\pi_A |_B)$
lifts to an equivariant unital \hm{} $\rh \colon C (G) \to B / J_{m_0}$,
that is, $\kp_{m_0} \circ \rh = \ps$.
We may require $m_0 \geq N$.
There is an obvious identification of $B / J_{m_0}$ with the \ca{}
of all bounded sequences $(a_m)_{m > m_0}$
such that $a_m \in q_m A_{l (m)} q_m$ for all $m > m_0$
and the map $g \mapsto (\af_g (a_m))_{m > m_0}$ is \ct.
Under this identification, there are equivariant unital \hm{s}
$\rh_m \colon C (G) \to q_m A_{l (m)} q_m$ such that
$\rh (f) = (\rh_m (f))_{m > m_0}$ for all $f \in C (G)$.

Since $\kp_{m_0} \circ \rh = \ps$, which has range contained in
$A_{\I, \af} \cap A'$,
for all $a \in A$ and $f \in C (G)$ we have
$\lim_{m \to \I} \| \rh_m (f) a - a \rh_m (f) \| = 0$.
In particular, there is $m_1 \geq m_0$ such that for all $m \geq m_1$,
all $f \in S$, and all $a \in F$,
we have $\| \rh_m (f) a - a \rh_m (f) \| < \ep$.
Since $\| e x e \| = 1$ and $\pi_A (q) = e$,
there is $m_2 \in \N$ such that for all $m \geq m_2$,
we have $\| q_m x q_m \| > 1 - \ep$.
Since $1 - e$ is $\af$-small and small in $(A^{\af})_{\I}$,
there is $m_3 \in \N$ such that for all $m \geq m_3$,
we have $1 - q_m \precsim_A x$ and $1 - q_m \precsim_{A^{\af}} y$.
Since $1 - e \precsim_{(A^{\af})_{\I}} e$,
there is $v \in (A^{\af})_{\I}$ such that $v^* v = 1 - e$ and
$v v^* \leq e$.
Then $1 - e = v^* e v$.
Choose $w = (w_{m})_{m \in \N} \in l^{\I} (\N, A^{\af})$
such that $\pi_A (w) = v$.
Then $\lim_{m \to \infty} \| w_m^* q_m w_m - (1 - q_m) \| = 0$.
Therefore there is $m_4 \in \N$ such that for every $m \geq m_4$
we have $\| w_m^* q_m w_m - (1 - q_m) \| < 1$.
Set $m = \max (m_1, m_2, m_3, m_4)$,
take the number $n$ in the statement to be $\max (N, l (m))$,
and set $p = q_m$ and $\ph = \rh_m$.
Lemma~\ref{L_1X09_CSbPj} implies that
$1 - q_m \precsim_{A^{\alpha}} q_m$,
and the rest of the conclusion is clear.
\end{proof}

Let $A$ be a unital \ca.
It is known (in~\cite{garduhfabs} see Theorem 2.17, Example 3.22, and
Example 3.23)
that the existence of an action of $S^1$ on~$A$
with the Rokhlin property implies severe restrictions on~$A$.
One can in fact rule out at least direct limit actions
on a simple unital AF~algebra
with even the naive tracial Rokhlin property.

\begin{prp}\label{P_1921_NoAction}
Let $A$ be an in\fd{} simple unital AF~algebra
and let $G$ be a one dimensional compact abelian Lie group.
There is no direct limit action of $G$ on~$A$,
with respect to any realization of $A$ as a direct limit
of \fd{} \ca{s}, which has the naive tracial Rokhlin property
(Definition~\ref{D_1920_NTRP}).
\end{prp}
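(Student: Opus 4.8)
The plan is to argue by contradiction, converting the naive tracial Rokhlin property into the direct-limit criterion of Proposition~\ref{P_1920_S1DLim}(\ref{Item_1920_S1DL_NTRP}) and then deriving an impossible determinant identity inside one of the finite-dimensional building blocks. Recall that since $A$ is an AF~algebra, every realization $A = \dirlim_n A_n$ of interest has all $A_n$ finite dimensional, and $A$ is simple, so the hypotheses of that part of Proposition~\ref{P_1920_S1DLim} are met.

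First I would record the structure of the group. As $G$ is a one dimensional compact abelian Lie group, its Pontryagin dual $\widehat{G}$ is a finitely generated abelian group of rank one, hence $\widehat{G} \cong \Z \oplus T$ with $T$ finite; dualizing gives an isomorphism $G \cong S^1 \times F$ with $F = \widehat{T}$ finite abelian. Let $w \in C (G)$ be the unitary given by the character $(\zt, f) \mapsto \zt$, and note (recalling Notation~\ref{N_1X07_Lt}) that $\Lt_{(\zt, e)} (w) = \zt^{-1} w$ for all $\zt \in S^1$, where $e$ denotes the identity of~$F$.

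Now suppose, toward a contradiction, that some realization $A = \dirlim_n A_n$ carries a direct limit action $\af$ of $G$ with the naive tracial Rokhlin property. Fix $x \in A_{+}$ with $\| x \| = 1$ and apply Proposition~\ref{P_1920_S1DLim}(\ref{Item_1920_S1DL_NTRP}) with $N = 0$, $F = \varnothing$, $S = \{ w \}$, $\ep = \tfrac{1}{2}$, and this~$x$. This produces $n \in \Nz$, a projection $p \in (A_n)^{\af^{(n)}}$, and a unital \emph{equivariant homomorphism} $\ph \colon C (G) \to p A_n p$ with $\| \nu_{\I, n} (p) \, x \, \nu_{\I, n} (p) \| > \tfrac{1}{2}$; in particular $p \neq 0$. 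Set $u = \ph (w)$, a unitary in the corner $p A_n p$. Equivariance of $\ph$ gives, for $\zt \in S^1$,
\[
\af^{(n)}_{(\zt, e)} (u)
  = \ph \bigl( \Lt_{(\zt, e)} (w) \bigr)
  = \zt^{-1} u .
\]

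The remaining, and main, task is to show that no nonzero $p$ can support such a $u$ in the finite-dimensional algebra $A_n$; this determinant obstruction is the hard part. Write $A_n = \bigoplus_j (A_n)_j$ as a sum of matrix blocks. The restriction of $\af^{(n)}$ to the identity component $S^1 \times \{ e \}$ is a continuous action of the connected group $S^1$, so it permutes the finitely many minimal central projections of $A_n$ trivially and hence fixes each block; thus $p = \bigoplus_j p_j$ and $u = \bigoplus_j u_j$, where each $u_j$ is a unitary of the corner $M_j := p_j (A_n)_j p_j$ with $\af^{(n)}_{(\zt, e)} (u_j) = \zt^{-1} u_j$. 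Every automorphism of the matrix block $(A_n)_j$ is inner, and since $p_j$ is $\af$-invariant the implementing unitary cuts down to a unitary of $M_j$ implementing $\af^{(n)}_{(\zt, e)} |_{M_j}$; so this automorphism is inner in $M_j$ and preserves the determinant $\det_{M_j}$. Writing $e_j = \rank (p_j)$, we then get, for every $\zt \in S^1$,
\[
\det\nolimits_{M_j} (u_j)
  = \det\nolimits_{M_j} \bigl( \af^{(n)}_{(\zt, e)} (u_j) \bigr)
  = \det\nolimits_{M_j} (\zt^{-1} u_j)
  = \zt^{-e_j} \det\nolimits_{M_j} (u_j) .
\]
If $e_j \geq 1$, choosing $\zt$ with $\zt^{e_j} \neq 1$ forces $\det_{M_j} (u_j) = 0$, contradicting $| \det_{M_j} (u_j) | = 1$. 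Hence $e_j = 0$, i.e.\ $p_j = 0$, for every $j$, so $p = 0$, contradicting $p \neq 0$. This completes the proof.
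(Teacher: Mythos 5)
Your proof is correct. Both you and the paper begin identically, by invoking Proposition~\ref{P_1920_S1DLim}(\ref{Item_1920_S1DL_NTRP}) to produce a nonzero invariant projection $p \in (A_n)^{\af^{(n)}}$ and a unital \emph{exactly} equivariant homomorphism $\ph \colon C(G) \to p A_n p$; the difference lies entirely in how the contradiction is extracted from this. The paper's route is shorter and softer: since the translation action of $G$ on itself is transitive, $C(G)$ has no nontrivial $G$-invariant ideals, so the equivariant unital map $\ph$ has trivial kernel and is therefore injective -- impossible, as $C(G)$ is infinite dimensional and $A_n$ is not. Your route is more concrete: you decompose $G \cong S^1 \times F$, extract the unitary $u = \ph(w)$ satisfying $\af^{(n)}_{(\zt, e)}(u) = \zt^{-1} u$, and kill it blockwise with a determinant (winding-number) obstruction, using that inner automorphisms of a matrix corner preserve the determinant while scaling a unitary by $\zt^{-1}$ multiplies it by $\zt^{-\rank(p_j)}$. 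All the intermediate steps you need -- the identity component fixing each matrix block, the invariance of $p_j$ and $u_j$, the cut-down of the implementing unitary to the corner -- check out. What each approach buys: the paper's argument is two lines and would apply verbatim to any infinite compact group for which the analogue of Proposition~\ref{P_1920_S1DLim} is available, whereas yours is tied to the circle factor but is entirely elementary (no appeal to $G$-simplicity) and isolates the concrete obstruction, namely that a finite dimensional \ca{} admits no unitary eigenvector of nontrivial weight for a circle action. Either is a complete proof.
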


\begin{proof}
Let $\bigl( (A_n)_{n \in \Nz}, \, (\nu_{n, m})_{m \leq n} \bigr)$
be an equivariant direct system of \fd{} \ca{s}
with actions $\af^{(n)} \colon G \to \Aut (A)$
and such that the direct limit action $\af = \dirlim \af^{(n)}$
has the naive tracial Rokhlin property.
Choose any \pj{} $x \in A \SM \{ 0, 1 \}$.
Apply Proposition \ref{P_1920_S1DLim}(\ref{Item_1920_S1DL_NTRP})
with $m = 0$, $F = \{ 1 \}$, $S = \{ 1 \}$, $\ep = \frac{1}{2}$,
and $x$ as given.
We get $n \in \Nz$, a nonzero \pj{} $p \in (A_n)^{\alpha^{(n)}}$,
and a unital equivariant \hm{} $\ph \colon C (G) \to p A_n p$.
Since $C (G)$ is $G$-simple and $\ph$ is equivariant,
$\ph$ must be injective.
This is a contradiction because $G$ is infinite and $A_n$ is \fd.
\end{proof}

\begin{cor}\label{C_1921_NoTRPC}
Let $A$ be an in\fd{} simple unital AF~algebra
and let $G$ be a one dimensional compact abelian Lie group.
There is no direct limit action of $G$ on~$A$,
with respect to any realization of $A$ as a direct limit
of \fd{} \ca{s}, which has the tracial Rokhlin property with comparison.
\end{cor}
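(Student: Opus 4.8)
The plan is to reduce the statement to Proposition~\ref{P_1921_NoAction}, which is exactly the same nonexistence result but with the naive tracial Rokhlin property (Definition~\ref{D_1920_NTRP}) in place of the tracial Rokhlin property with comparison. The only thing that then needs to be checked is the (formal) implication that the tracial Rokhlin property with comparison implies the naive tracial Rokhlin property.

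To verify this implication, I would suppose $\af$ has the tracial Rokhlin property with comparison and let $F \subseteq A$, $S \subseteq C (G)$, $\ep > 0$, and $x \in A_{+}$ with $\| x \| = 1$ be given, as in Definition~\ref{D_1920_NTRP}. I would then apply Definition~\ref{traR} to these $F$, $S$, $\ep$, and $x$, together with the choice $y = 1 \in (A^{\af})_{+} \setminus \{ 0 \}$, obtaining a projection $p \in A^{\af}$ and a unital completely positive map $\ph \colon C (G) \to p A p$. Conditions (\ref{Item_893_FS_equi_cen_multi_approx}), (\ref{1_pxcompactsets}), and~(\ref{Item_902_pxp_TRP}) of Definition~\ref{traR} are then precisely Conditions (\ref{It_1920_NTRP_FSE}), (\ref{It_1920_NTRP_subx}), and~(\ref{It_1920_NTRP_pxp}) of Definition~\ref{D_1920_NTRP}, recalling that a unital completely positive map is automatically contractive. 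The additional Conditions (\ref{1_pycompactsets}) and~(\ref{1_ppcompactsets}) of Definition~\ref{traR}, which have no analog in Definition~\ref{D_1920_NTRP}, are simply discarded. Hence $\af$ has the naive tracial Rokhlin property.

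With this implication available, the corollary follows immediately: if some direct limit action of $G$ on~$A$ (with respect to a realization of $A$ as a direct limit of finite dimensional C*-algebras) had the tracial Rokhlin property with comparison, then it would have the naive tracial Rokhlin property, contradicting Proposition~\ref{P_1921_NoAction}. I expect essentially no obstacle in carrying this out, since all of the substantive work — the reformulation in Proposition~\ref{P_1920_S1DLim} and the appeal to equivariant semiprojectivity of $C (G)$ — is already contained in the proof of Proposition~\ref{P_1921_NoAction}. The present corollary requires only the observation that the property assumed in its hypothesis is stronger than the one already ruled out there.
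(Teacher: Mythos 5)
Your proposal is correct and is exactly the paper's argument: the paper's proof also just observes that the tracial Rokhlin property with comparison implies the naive tracial Rokhlin property and then cites Proposition~\ref{P_1921_NoAction}. Your explicit verification of the implication (taking $y = 1$ in Definition~\ref{traR} and discarding the extra comparison conditions) is the routine check the paper leaves implicit.
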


\begin{proof}
This is immediate from Proposition~\ref{P_1921_NoAction}, because
the tracial Rokhlin property with comparison
implies the naive tracial Rokhlin property.
\end{proof}


\begin{thebibliography}{33}

\bibitem{APT11}
P.~Ara, F.~Perera, and A.~S.\  Toms,
{\emph{K-theory for operator algebras. Classification of C*-algebras}},
pages 1--71 in:
{\emph{Aspects of Operator Algebras and Applications}},
P.~Ara, F~Lled\'{o}, and F.~Perera (eds.),
Contemporary Mathematics vol.~534,

\bibitem{Arcy}
D.~Archey,
{\emph{Crossed product C*-algebras by finite group actions with a
generalized tracial Rokhlin property}},
Ph.D.\  Thesis, University of Oregon, Eugene, 2008.

\bibitem{Arh1}
D.~Archey,
{\emph{Crossed product C*-algebras by finite group actions with
the tracial Rokhlin property}},
Rocky Mtn.\  J.\  Math.\  {\textbf{41}}(2011), 1755--1768.

\bibitem{ArBcPh1}
D.~Archey, J.~Buck, and N.~C.\  Phillips,
{\emph{Centrally large subalgebras and tracial
 ${\mathcal{Z}}$-absorption}},
Int.\  Math.\  Res.\  Not.\  IMRN {\textbf{292}}(2017), 1--21.

\bibitem{Ar}
W.~Arveson,
{\emph{Notes on extensions of C*-algebras}},
Duke Math.~J.\  {\textbf{44}}(1977), 329--355.

\bibitem{radifinite}
M.~A.\  Asadi-Vasfi, N.~Golestani, and N.~C.\  Phillips,
{\emph{The Cuntz semigroup and the radius of comparison of the
  crossed product by a finite group}},
Ergod.\  Th.\  Dynam.\  Sys., to appear.
(arXiv: 1908.06343v1 [math.OA]).

\bibitem{BKR}
B.~Blackadar, A.~Kumjian, and M.~R{\o}rdam,
{\emph{Approximately central matrix units and the structure of
non-commutative tori}}, K-Theory {\textbf{6}}(1992), 267--284.

\bibitem{Brtl}  O.~Bratteli,
{\emph{Inductive limits of finite dimensional C*-algebras}},
Trans.\  Amer.\  Math.\  Soc.\  {\textbf{171}}(1972), 195--234.

\bibitem{nb}
N.~P.\  Brown,
{\emph{Invariant means and finite representation theory
    of C*-algebras}},
Mem.\  Amer.\  Math.\  Soc.\  {\textbf{184}}, no.~865(2006).

\bibitem{Cuntz78} J.~Cuntz,
{\emph{Dimension functions on simple C*-algebras}},
Math.\  Ann.\  {\textbf{233}}(1978), 145--153.

\bibitem{Cnz2} J.~Cuntz,
{\emph{K-theory for certain C*-algebras}},
Ann.\  Math.\  {\textbf{113}}(1981), 181--197.

\bibitem{DNNP}
M.~D\v{a}d\v{a}rlat, G.~Nagy, A.~N\'{e}methi, and C.~Pasnicu,
{\emph{Reduction of topological stable
     rank in inductive limits of C*-algebras}},
Pacific J.\  Math.\  {\textbf{153}}(1992),  267--276.

\bibitem{ELPW}
S.~Echterhoff, W.~L\"{u}ck, N.~C.\  Phillips, and S.~Walters,
{\emph{The structure of crossed products of irrational rotation
  algebras by finite subgroups of ${\mathrm{SL}}_2 ({\mathbb{Z}})$}},
J.~reine angew.\  Math.\  {\textbf{639}}(2010),  173--221.

\bibitem{outer_gua}  M.~Enomoto, H.~Takehana, and Y.~Watatani,
{\emph{Automorphisms on Cuntz algebras}},
Math.\  Japonica  {\textbf{24}}(1979), 231--234.

\bibitem{MFNG}
M.~Forough and N.~Golestani,
{\emph{The weak tracial Rokhlin property
for finite group actions on simple C*-algebras}},
Doc. \ Math.\  {\textbf{25}}(2020), 2507--2552.

\bibitem{crossrep} E.~Gardella,
{\emph{Crossed products by compact group actions with the Rokhlin
  property}},
J.~Noncommutative Geom.\  {\textbf{11}}(2017), 1593--1626.

\bibitem{Gar_rokhlin_2017}
E.~Gardella,
{\emph{Rokhlin dimension for compact group actions}},
Indiana Univ.\  Math.\  J.\  {\textbf{66}}(2017), 659--703.

\bibitem{garduhfabs}
E.~Gardella, {\emph{Circle actions on UHF-absorbing C*-algebras}},
Houston J.\  Math.\  {\textbf{44}}(2018), 571--601.

\bibitem{Gdla}
E.~Gardella,
{\emph{Compact group actions with the Rokhlin property}},
Trans.\  Amer.\  Math.\  Soc.\  {\textbf{371}}(2019), 2837--2874.

\bibitem{GardKKcirc}
E.~Gardella,
{\emph{KK-theory of circle actions with the Rokhlin property}},
preprint (arXiv: 1405.2469v3 [math.OA]).

\bibitem{gardkk-eq}
E.~Gardella,
{\emph{Equivariant KK-theory and the continuous Rokhlin property}},
preprint (arXiv:1406.1208v3[math.OA]).

\bibitem{Rokhdimtracial}
E.~Gardella, I.~Hirshberg, and L.~Santiago,
{\emph{Rokhlin dimension: duality, tracial properties,
 and crossed products}},
Ergodic Th.\  Dynam.\  Syst.\  {\textbf{41}}(2021), 408--460.

\bibitem{GooLazPel}
E.~C.\  Gootman, A.~J.\  Lazar, and C.~Peligrad,
{\emph{Spectra for compact group actions}},
J.~Operator Theory {\textbf{31}}(1994), 381--399.

\bibitem{Grn} P.~Green,
{\emph{The structure of imprimitivity algebras}},
J.~Funct.\  Anal.\  {\textbf{36}}(1980), 88--104.

\bibitem{HJ1} R.~H.\  Herman and V.~F.\  R.\  Jones,
{\emph{Period two automorphisms of UHF C*-algebras}},
J.~Funct.\  Anal.\  {\textbf{45}}(1982), 169--176.

\bibitem{HJ2} R.~H.\  Herman and V.~F.\  R.\  Jones,
{\emph{Models of finite group actions}},
Math.\  Scand.\  {\textbf{52}}(1983), 312--320.

\bibitem{HrsOv}
I.~Hirshberg and J.~Orovitz,
{\emph{Tracially ${\mathcal{Z}}$-absorbing C*-algebras}},
J.~Funct.\  Anal.\  {\textbf{265}}(2013), 765--785.

\bibitem{HrsPh1} I.~Hirshberg and N.~C.\  Phillips,
{\emph{Rokhlin dimension: obstructions and permanence properties}},
Doc.\  Math.\  {\textbf{20}}(2015), 199--236.

\bibitem{HirWin_rokhlin_2007}
I.~Hirshberg and W.~Winter,
{\emph{Rokhlin actions and self-absorbing C*-algebras}},
Pacific J.\  Math.\  {\textbf{233}}(2007), 125--143.

\bibitem{HWZ}
I.~Hirshberg, W.~Winter, and J.~Zacharias,
{\emph{Rokhlin dimension and C*-dynamics}},
Commun.\  Math.\  Phys.\  {\textbf{335}}(2015), 637--670.

\bibitem{Izum} M.~Izumi,
{\emph{Finite group actions on C*-algebras with the
   Rohlin property.~I}},
Duke Math.~J.\  {\textbf{122}}(2004), 233--280.

\bibitem{Iz2} M.~Izumi,
{\emph{Finite group actions on C*-algebras with the
   Rohlin property.~II}},
Adv.\  Math.\  {\textbf{184}}(2004), 119--160.

\bibitem{Je95}
J.~A.\  Jeong,
{\emph{Purely infinite simple C*-crossed products}},
Proc.\  Amer.\  Math.\  Soc.\  {\textbf{123}}(1995), 3075--3078.

\bibitem{Jns} V.~F.\  R.\  Jones,
{\emph{Actions of finite groups
   on the hyperfinite type~${\mathrm{II}}_1$ factor}},
Mem.\  Amer.\  Math.\  Soc.\  {\textbf{28}}(1980),  no.~237.

\bibitem{Jlg} P.~Julg,
{\emph{K-th\'{e}orie \'{e}quivariante et produits crois\'{e}s}},
C.~R.\  Acad.\  Sci.\  Paris S\'{e}r.~I Math.\  {\textbf{292}}(1981),
629--632.

\bibitem{Kts2} T.~Katsura,
{\emph{On crossed products of the Cuntz algebra ${\mathcal{O}}_{\infty}$
  by quasi-free actions of abelian groups}},
pages 209--233 in:
{\emph{Operator Algebras and Mathematical Physics}}
(Conference Proceedings, Constan\c{t}a, (Romania) July 2--7, 2001),
J.-M.\  Combes, J.~Cuntz, G.~A.\  Elliott, G.~Nenciu,
H.~Seidentop, \c{S}.~Str\v{a}til\v{a} (eds.),
The Theta Foundation, Bucharest, 2003.

\bibitem{Kr2} E.~Kirchberg, {\emph{Commutants of unitaries in
UHF algebras and functorial properties of exactness}},
J.~reine angew.\  Math.\  {\textbf{452}}(1994), 39--77.

\bibitem{KP1}
E.~Kirchberg and N.~C.\  Phillips,
{\emph{Embedding of exact C*-algebras in the Cuntz algebra
   ${\mathcal{O}}_2$}},
J.~reine angew.\  Math.\  {\textbf{525}}(2000), 17--53.

\bibitem{Kis_simpleCP}
A.~Kishimoto,
{\emph{Simple crossed products of C*-algebras by locally compact
  abelian groups}},
Yokohama Math.~J.\  {\textbf{28}}(1980), 69--85.

\bibitem{kish.sim}
A.~Kishimoto,
{\emph{Outer automorphisms and reduced crossed products of simple
  C*-algebras}},
Commun.\  Math.\  Phys.\  {\textbf{81}}(1981), 429--435.

\bibitem{LnTAF}
H.~Lin,
{\emph{Tracially AF C*-algebras}},
Trans.\  Amer.\  Math.\  Soc.\  {\textbf{353}}(2001), 693--722.

\bibitem{LnTTR} H.~Lin,
{\emph{The tracial topological rank of C*-algebras}},
Proc.\  London Math.\  Soc.\  {\textbf{83}}(2001), 199--234.

\bibitem{linbook}
H.~Lin,
{\emph{An Introduction to the Classification of Amenable
C*-Algebras}},
World Scientific, River Edge~NJ, 2001.

\bibitem{Ln15}
H.~Lin,
{\emph{Classification of simple C*-algebras
of tracial topological rank zero}},
Duke Math.\  J.\  {\textbf{125}}(2004), 91--119.

\bibitem{Ln_tr1}
H.~Lin,
{\emph{Simple nuclear C*-algebras of tracial topological rank one}},
J.~Funct.\  Anal.\  {\textbf{251}}(2007), 601--679.

\bibitem{Lrng_bk}
T.~A.\  Loring,
{\emph{Lifting Solutions to Perturbing Problems in C*-Algebras}},
Fields Institute Monographs no.~8, American Mathematical Society,
Providence RI, 1997.

\bibitem{OsaPhi_crossed_2012}
H.~Osaka and N.~C.\  Phillips,
{\emph{Crossed products by finite group actions with the Rokhlin
  property}},
Math.~Z.\  {\textbf{270}}(2012), 19--42.

\bibitem{OskTry2}
H.~Osaka and T.~Teruya,
{\emph{The Jiang-Su absorption for inclusions of unital C*-algebras}},
Canad.\  J.\  Math.\  {\textbf{70}}(2018), 400--425;
erratum: Canad.\  J.\  Math.\  {\textbf{73}}(2021), 293--295.

\bibitem{Phl1}
N.~C.\  Phillips,
{\emph{Equivariant K-Theory and Freeness
of Group Actions on C*-Algebras}}, Springer-Verlag Lecture Notes in
Math.\  no.~1274, Springer-Verlag, Berlin, Heidelberg, New York,
London, Paris, Tokyo, 1987.

\bibitem{Ph_PICls} N.~C.\  Phillips, {\emph{A classification theorem for
nuclear purely infinite simple C*-algebras}},
Documenta Math.\  {\textbf{5}}(2000), 49--114.

\bibitem{phillfree}
N.~C.\  Phillips,
{\emph{Freeness of actions of finite groups on C*-algebras}},
pages 217--257 in: {\emph{Operator structures and dynamical systems}},
M.~de Jeu, S.~Silvestrov, C.~Skau, and J.~Tomiyama (eds.),
Contemporary Mathematics vol.~503,
Amer.\  Math.\  Soc., Providence RI, 2009.

\bibitem{phill23}
N.~C.\  Phillips,
{\emph{The tracial Rokhlin property
     for actions of finite groups on C*-algebras}},
Amer.\  J.\  Math.\  {\textbf{133}}(2011), 581--636.

\bibitem{lecturephill}
N.~C.\  Phillips,
{\emph{An introduction to crossed product C*-algebras
  and minimal dynamics}},
pages 39--347 in:
{\emph{Crossed Products of C*-Algebras, Topological Dynamics,
 and Classification}},
F.~Perera (ed.), Birkh\"{a}user, Cham, Switzerland, 2018.

\bibitem{phieqsemi}
N.~C.\  Phillips, {\emph{Equivariant semiprojectivity}},
preprint (arXiv: 1112.4584v1 [math.OA]).

\bibitem{philar}
N.~C.\  Phillips,
{\emph{Large subalgebras}},
preprint (arXiv: 1408.5546v1 [math.OA]).

\bibitem{Rf0} M.~A.\  Rieffel,
{\emph{Unitary representations of group extensions; an algebraic
  approach to the theory of Mackey and Blattner}},
pages 43--82 in:
{\emph{Studies in Analysis}},
Advances in Math.\  Suppl.\  Stud., vol.~4,
Academic Press, New York, London, 1979.

\bibitem{Rdm8}
M.~R{\o}rdam,
{\emph{A simple C*-algebra with a finite and an infinite projection}},
Acta Math.\  {\textbf{191}}(2003), 109--142.

\bibitem{Rs}
J.~Rosenberg,
{\emph{Appendix to O.~Bratteli's paper on
``Crossed products of UHF algebras''}},
Duke Math.~J.\  {\textbf{46}}(1979), 25--26.

\bibitem{San_crossed_2015}
L.~Santiago,
{\emph{Crossed products by actions of finite groups
  with the Rokhlin property}},
Internat.\  J.\  Math.\  {\textbf{26}}(2015), no. 7, 1550042, 31 pages.

\bibitem{Sgl1} G.~Segal,
{\emph{The representation-ring of a compact Lie group}},
Inst.\  Hautes \'{E}tudes Sci.\  Publ.\  Math.\  {\textbf{34}}(1968),
113--128.

\bibitem{TmWnt1}
A.~S.\  Toms and W.~Winter,
{\emph{Strongly self-absorbing C*-algebras}},
Trans.\  Amer.\  Math.\  Soc.\  {\textbf{359}}(2007), 3999--4029.

\bibitem{WntZch09}
W.~Winter and J.~Zacharias,
{\emph{Completely positive maps of order zero}},
Munster J.\  Math.\  {\textbf{2}}(2009), 311--324.

\end{thebibliography}
\end{document}